\date{}
\numberwithin{equation}{section}
\newtheorem {pro}{Proposition}[section]
\newtheorem {thm}[pro]{Theorem}
\newtheorem {cor}[pro]{Corollary}
\newtheorem{lem}[pro]{Lemma}
\theoremstyle{definition}
 \newtheorem{rem}[pro]{Remark}
 \newtheorem{rems}[pro]{Remarks}
\newtheorem{dfn}[pro]{Definition}
\newtheorem{exa}[pro]{Example}
\newtheorem{bas}[pro]{Basic properties}
\newcommand{\Q} {\mathbb{Q}}
\newcommand{\dis}{\mathbf{\rho}}
\newcommand{\pr}{\pi}
\newcommand{\aba} {\mathcal{R}_{an}}
\newcommand{\bou}{\mathbf{B}}
\newcommand{\sph}{\mathbf{S}}
\newcommand{\Bb}{\overline{ \mathbf{B}}}
\newcommand{\bir}{\big{)}}
\newcommand{\bil}{\big{(}}
\newcommand{\lepa}{\left(}
\newcommand{\ripa}{\right)}
\newcommand{\eto}{\mbox{ and }}
\newcommand{\pa}{\partial}
\newcommand{\mba}{\overline{M}}
\newcommand{\pai}{\pa^i \hspace{-0.4mm}}
\newcommand{\supp}{\mbox{supp}}
\newcommand{\jac}{\mbox{jac}\,}
\newcommand{\ccc}{ \mathscr{C}}
\newcommand{\lf}{\mathcal{L}}
\newcommand{\G} {\mathbb{G}}
\newcommand{\grr} {\mathcal{G}}
\newcommand{\R} {\mathbb{R}}
\newcommand{\Z} {\mathbb{Z}}
\newcommand{\N} {\mathbb{N}}
\newcommand{\C} {\mathcal{C}}
\newcommand{\A} {\mathcal{A}}
\newcommand{\hn}{\mathcal{H}}
\newcommand{\psit}{\widetilde{\psi}}
\newcommand{\pc}{\widehat{\Psi}}
\newcommand{\St}{\mathcal{S}}
\newcommand{\card}{\mbox{card}}
\newcommand{\dbb}{\mathbf{C}}
\newcommand{\D}{\mathcal{D}}
\newcommand{\M}{\mathcal{M}}
\newcommand{\mfr}{\mathfrak{M}}
\newcommand{\I}{\mathcal{I}}
\newcommand{\F}{\mathcal{F}}
\newcommand{\E}{\mathcal{E}}
\newcommand{\la}{\mathcal{L}}
\newcommand{\eqr}{\sim}
\newcommand{\Qp}{\mathbb{R}_{+}}
\newcommand{\ep}{\varepsilon}
\newcommand{\xo}{{x_0}}
\newcommand{\s}{\mathcal{S}}
\newcommand{\sno}{\mathcal{S}_{n,0}}
\newcommand{\ccn}{\mathcal{C}_n(R)}
\newcommand{\xxc}{\check{X}}
\newcommand{\xt}{\tilde{x}}
\newcommand{\qt}{\tilde{q}}
\newcommand{\fln}{\mathcal{L}_{og}}
\newcommand{\Pa}{\mathcal{P}}
\newcommand{\hh}{\mathcal{V}}
\newcommand{\smn}{\mathcal{S}_{m+n}}
\newcommand{\tim}{{t \in \R^m}}
\newcommand{\ns}{\N^*}
\newcommand{\orn}{{0_{\R^n}}}
\newcommand{\orm}{{0_{\R^m}}}
\title {{\scshape\mdseries  On subanalytic geometry}}
\author{Guillaume Valette}
\begin{document}
  \maketitle

%
%
%
 \paragraph{Abstract.}
 These notes constitute a survey on the geometric properties of globally subanalytic sets. We start with their definition and some fundamental  results such as Gabrielov's Complement Theorem or existence of cell decompositions. We then give the main basic tools of subanalytic geometry, such as Curve Selection Lemma, \L ojasiewicz's inequalities, existence of tubular neighborhood, Tamm's theorem (definability of regular points), or existence of regular stratifications (Whitney or Verdier).  We then present the developments of Lipschitz geometry obtained by various authors during the four last decades, giving a proof of existence of metric triangulations, introduced by the author of these notes,   definable bi-Lipschitz triviality, Lipschitz conic structure, as well as invariance of the link under definable bi-Lipschitz mappings. The last chapter is devoted to geometric integration theory, studying the Hausdorff measure of globally subanalytic sets, integrals of subanalytic functions, as well as the density of subanalytic sets (the Lelong number) and Stokes' formula.

\tableofcontents

\cleardoublepage
\phantomsection
\addcontentsline{toc}{chapter}{Introduction}
\chapter*{Introduction}
Semi-analytic sets are the subsets of $\R^n$ that are locally defined by finitely many equalities and
inequalities on analytic functions. The study of the geometry of these sets started\footnote{This introduction includes a few historical facts in order to motivate our approach, but  is by no means exhaustive. Some complements may however be found at the end of each chapter.}
when S. \L ojasiewicz answered positively a question of L. Schwartz about the possibility of dividing a distribution by a nonzero analytic function \cite{lojasiewicz59}. \L ojasiewicz
understood that this required to show that, given an analytic function $f$ on a neighborhood of a compact subset $K$ of $\R^n$, there are a constant $C$ and a positive integer $N$ such
that for all $x \in K$:
$$d(x, f^{-1} (0))^N\le C|f (x)|,$$
 where $d(x, f^{-1}(0))$ stands for the distance from $x$ to $f^{-1} (0)$. He also established many
of the basic properties of semi-analytic sets that will be presented in Chapter \ref{chap_basic}, such
as Curve Selection Lemma (Lemma \ref{curve_selection_lemma}), existence of Whitney regular stratifications (Proposition \ref{pro_w_stratifying}), or
existence of $\ccc^0$ triangulations, which yields that semi-analytic sets are locally contractible. He introduced for this purpose the normal partitions
and showed existence of such a partition for every given semi-analytic set \cite{lojasiewicz59,  lojasiewicz64b, ds}.

A few years later, H. Hironaka’s resolution of singularities \cite{hironakares} offered an
alternative powerful approach of semi-analytic  geometry, on which some authors were nevertheless reluctant
to rely due to the high level of difficulty of the proof of Hironaka’s theorem. The
proof of this theorem was however simplified \cite{bmres}, giving a very adequate
 tool to investigate the geometry of semi-analytic sets. The two approaches
\cite{lojasiewicz64b} and \cite{bmres} have in common that they proceed by induction
on the dimension of the ambient space, applying their induction assumptions to a
suitable submanifold of lower dimension.

One problem to reduce the dimension of the ambient space is that, unlike its complex counterpart,
the real semi-analytic category is not stable under linear projections, even if they are
proper\footnote{
 “Proper” means that the preimage of a compact set is compact.} \cite{osgood}.  A. Gabrielov \cite{gabrielov} thus
 suggested to work with the sets  that are the image of a semi-analytic set under a proper mapping that is the
restriction of a linear projection. Such a set  is now called a {\it subanalytic set}.
 Gabrielov’s idea, which simplified the theory, was relevant because he was able to show that the complement of a subanalytic set is subanalytic (Theorem \ref{thm_gabrielov}), which is not at all an
obvious fact.  \L ojasiewicz's and Hironaka's approaches both turned out to be efficient  to prove Gabrielov's theorem and to investigate the geometry of subanalytic sets \cite{hironaka, bmsemisub, sus90, lojasiewicz93,ds}.

One inconvenience of the subanalytic category was still however that,  the definition of semi-analytic sets being local, the subanalytic
category is only preserved by the projections that are proper. To remediate to this inconvenience, analytic geometers therefore started working with {\it globally subanalytic sets} (see Definition \ref{dfn_glob_sub}). It proved to be a well-behaved category as
J. Denef and L. van den Dries
\cite{denefvandendries}  showed a quantifier elimination result, which made it possible to decompose any globally subanalytic set in basic pieces called cells which are constructed inductively on the dimension by equalities and inequalities on simple functions, which when the domain is $1$-dimensional are expressed locally in terms of Puiseux series (see for instance Theorem \ref{thm_existence_cell_dec} and Proposition \ref{pro_la_and_globally subanalytic}).

Slightly later, A. Parusinski \cite{parusinskiprep,parucamb} (see also \cite[Remark $1.7$]{bmarc}), relying on Hironaka’s techniques, proved the so-called
preparation theorem for subanalytic functions (Theorem \ref{thm_preparation}) that may be regarded as an $n$-variable Puiseux theorem with a description of the leading
coefficient as a normal crossing (called here a reduction, see Definition \ref{dfn_reduced}).
A few years later, J.-M. Lion and J.-P. Rolin \cite{lionrolin} gave an alternative proof of this
preparation theorem  which was inspired by Denef
and van den Dries’ article and the proof of Puiseux Theorem.

%
In Chapter \ref{chap_analytic}, we
shall prove the preparation theorem for globally subanalytic functions (Theorem \ref{thm_preparation}) following very closely Lion and
Rolin’s proof, which yields Gabrielov’s Complement Theorem (Theorem \ref{thm_gabrielov}), as
well as existence of cell decompositions (Theorem \ref{thm_existence_cell_dec}) and description of subanalytic sets and functions in terms of $\la$-functions
(Proposition \ref{pro_la_and_globally subanalytic}). This is certainly not the shortest proof of Gabrielov’s theorem,
but this approach has the advantage to establish simultaneously all the just described results (existence of cell decompositions, description in terms of $\la$-functions, reduction of functions, Puiseux lemma with parameters),
which we shall need in the subsequent chapters.

In Chapter \ref{chap_basic}, we derive from the results of the first chapter the most basic
properties of globally subanalytic sets, such as Quantifier Elimination (Theorem \ref{thm_s_formula}), Curve Selection Lemma (Lemma \ref{curve_selection_lemma}), \L ojasiewicz's inequalities (Theorem \ref{thm_lojasiewicz_inequality} and Corollary \ref{cor_lojasiewicz_gradient}), Tamm’s Theorem (Theorem \ref{thm_X_reg}).
 We will thus include a proof of existence of  Whitney $(b)$ regular stratifications (Definitions \ref{dfn_stratifications}, \ref{dfn_conditon_b_et_w}, and Proposition \ref{pro_existence}) of every subanalytic set, which was established by S. \L ojasiewicz  \cite{lojasiewicz64b},  and discuss several related results, such as existence of stratifications of mappings (Propositions \ref{pro_strat_mapping} and \ref{pro_h_hor_C1}). All these results are needed for the next chapters.

 Many problems of analysis on subanalytic sets, like the above Schwartz's division problem, demand to have a precise description of the geometry of the singularities of the considered set or function.   Many areas of analysis, like for instance the theory of PDE's, require to have bounds for the derivatives of the studied functions,
which often forces to work in the Lipschitz category.  Chapter \ref{chap_lipschitz_geometry} will thus be devoted to Lipschitz geometry. Chapter \ref{chap_gmt} will then give several results of geometric integration theory of subanalytic sets, which is useful to functional analysis as well.

Motivated by the classification of singularities, T. Mostowski \cite{m} introduced stratifications that admit a Lipschitz version of Thom-Mather's First Isotopy Theorem \cite{mather}, which makes these stratifications locally bi-Lipschitz trivial along the strata (see Definition \ref{dfn_stra_bil_triv} and Remark \ref{rem_lipstrat}). Mostowski could show that every complex analytic set can be stratified in this way, result that was later extended to the (real)  subanalytic category  by A. Parusinski \cite{parusinskiprep} (see also \cite{lipsomin, halupczok}).

We will give in Chapter \ref{chap_lipschitz_geometry} an alternative approach of Lipschitz geometry, also providing bi-Lipschitz trivial stratifications (Corollary \ref{cor_stra_bili_triv}).
 We first give some useful results obtained by K. Kurdyka and A. Parusinski  \cite{kurdykawhitney,parusinskiprep,kp}, such as decomposition into Lipschitz cells (Theorem \ref{thm_lipschitz_cells}), and then enter the precise description of the Lipschitz geometry of subanalytic sets by presenting the concept of {\it metric triangulations} (Definition \ref{triangulation}), which appeared in \cite{vlt} (where they are called Lipschitz triangulations). These are triangulations that contain information on the Lipschitz geometry of the triangulated set (see the introductions to Chapter \ref{chap_lipschitz_geometry} and section \ref{sect_metric triangulations} for more). We prove that every globally subanalytic set admits such a triangulation (Theorem \ref{thm existence des triangulations}) and then derive consequences about the classification of globally subanalytic singularities from the metric point of view, showing countability of classes (Corollary \ref{cor_sullivan}), a bi-Lipschitz triviality theorem (Corollary \ref{cor_hardt}), and existence of definably bi-Lipschitz trivial stratifications (Corollary \ref{cor_stra_bili_triv}). We also give a metric triangulation theorem for germs (Theorem \ref{thm_triangulations_locales}), with some extra properties  that enable us to establish a ``Lipschitz conic structure theorem'' (Theorem \ref{thm_local_conic_structure}), which ensures that every globally subanalytic germ is Lipschitz contractible (Corollary \ref{cor_Lipschitz_retract}), which can be regarded as a Lipschitz version of a theorem of \L ojasiewicz \cite{lojasiewicz64a}. We also establish the definable bi-Lipschitz invariance of the link (Corollary \ref{cor_unicite_du_link}).

We finally focus in Chapter \ref{chap_gmt} on geometric measure and integration theory of globally subanalytic sets. We show in particular that the integrals of globally subanalytic functions can be expressed as polynomials of some globally subanalytic functions and of their logarithms (Corollary \ref{cor_integration_fonctions}). We also study the density (sometimes called the Lelong number, see Proposition \ref{pro_density}) of globally subanalytic stratified sets. We finish with some Stokes' theorems on globally subanalytic sets, possibly singular (Theorems \ref{thm_stokes_leaves} and \ref{thm_stokes}).

The results of the last two chapters, especially the Lipschitz Conic Structure Theorem (Theorem \ref{thm_local_conic_structure}) and existence of locally bi-Lipschitz trivial stratifications (Corollary \ref{cor_stra_bili_triv}),  recently turned out to make it possible to carry out a satisfying theory of Sobolev spaces and partial differential equations on subanalytic domains, possibly singular \cite{lebeau, poincwirt, trace, lprime, laplace, depauw, dukulan}. Although these applications to analysis go beyond the scope of this survey, these notes aim at providing the material necessary for this purpose in a way which is accessible to both geometers and specialists of PDE's.

\medskip

 \noindent{\bf Some notations and conventions.} We denote by $\R_+$\nomenclature[aa]{$\R_+$}{set of nonnegative real numbers\nomrefpage} the set of nonnegative real numbers. Throughout these notes $i,j,k,m,$ and $n$ stand for  integers. We denote by $\ns$ the set of all the positive integers.\nomenclature[ab]{$\N^*$}{positive integers\nomrefpage}

 By convention, $\R^0=\{0\}$\nomenclature[ab]{$\R^0$}{by convention $\R^0=\{0\}$\nomrefpage}. The origin of $\R^n$ is denoted $0$ for all $n$ but we will write $0_{\R^n}$\nomenclature[ab]{$0_{\R^n}$}{origin in $\R^n$\nomrefpage} if $n$ is not obvious from the context.

 We denote by $|.|$\nomenclature[ac]{$\vert.\vert$}{Euclidean norm\nomrefpage} the Euclidean norm of $\R^n$.
Given $A\subset \R^n$, we respectively write $cl(A)$\nomenclature[ad]{$cl(A)$}{closure of the set $A$\nomrefpage} and $int(A)$\nomenclature[ad]{$int(A)$}{interior of the set $A$\nomrefpage} for the closure and interior of $A$ (in this norm).  If we say that $M\subset \R^n$ is a manifold, we always mean  that it is a submanifold\footnote{We say that  $M\subset \R^n$ is a {\bf $\ccc^i$ submanifold of $\R^n$} (of dimension $k$) if it is locally the graph of a $\ccc^i$ mapping $\phi:\R^k \to \R^{n-k}$, after a possible linear automorphism of $\R^n$.} of $\R^n$.\index{manifold} The word ``smooth''\index{smooth} will mean $\ccc^\infty$.


Given a mapping $F:A \to B$, with $A \subset \R^n$ and $B \subset \R ^m$, we denote by $\Gamma_F$ {\bf the graph of $F$},\index{graph!of a mapping}\nomenclature[ae]{$\Gamma_F$}{graph of a mapping $F$\nomrefpage} which is the set $\{(x,y)\in A \times B: y=F(x)\}.$  We denote by $F_{|C}$ the restriction\nomenclature[af]{$F_{|C}$}{restriction of the mapping $F$ to $C$\nomrefpage} of $F$ to $C$, if $C\subset A$.

By ``{\bf homeomorphism}'', we mean an invertible continuous  map $h:A \to B$ such that $h^{-1}:B \to A$ is continuous. In particular, we mean that $h$ is an onto map. If it is not onto, we speak about a {\it homeomorphism onto its image}. \index{homeomorphism}

A mapping $\gamma:(a,b)\to \R^n$, $a<b$ in $\R\cup \{\pm\infty\}$, will be sometimes called an {\bf arc}\index{arc}.


As usual, if $U$ is an open subset of $\R^n$, we say that a $\ccc^\infty$ function $f:U\to \R$ is {\bf analytic}\index{analytic function} if for every $x_0\in U$, the Taylor series of $f$ at $\xo$ converges to $f$ pointwise locally near $\xo$. A mapping $F:U\to \R^k$, $x\mapsto (F_1(x),\dots,F_k(x))$ is {\bf analytic} if so is each of its components $F_i$.
More generally, we will say that a mapping $g$ defined  on a subset $A\subset \R^n$ is analytic  if it coincides  with the restriction to $A$ of a mapping $f$ which is analytic on an open neighborhood $U$ of $A$ in $\R^n$.

A {\bf germ}\index{germ} of mapping (resp. set) at $x_0\in \R^n$ is an equivalence class of the equivalence relation that identifies two mappings (resp. sets) that coincide on a neighborhood of $x_0$.   Given a germ of mapping $f:X\to Y$ at $x_0$, we shall write $f:(X,x_0)\to (Y,y_0)$\nomenclature[afa]{$f:(X,x_0)\to (Y,y_0)$}{germ of mapping at $x_0$ satisfying $f(x_0)=y_0$\nomrefpage} as a shortcut to express that $f(x_0)=y_0$.

Given any couple of functions $\zeta$ and $\xi$\nomenclature[afb]{$[\xi,\zeta]$,$(\xi,\zeta)$, $[\xi,\zeta)$, $(\xi,\zeta]$}{\nomrefpage} on a set $A \subset
\R^n$ with $\xi \leq \zeta$ we define the {\bf closed interval $[\xi,\zeta]$} as the set:
\begin{equation}\label{eq_intervals}[\xi,\zeta]:=\{(x,y)\in A \times \R: \xi(x)\leq y \leq \zeta(x)\}.\end{equation}
The open and semi-open intervals $(\xi,\zeta)$, $(\xi,\zeta]$, and $[\xi,\zeta)$ are defined analogously.
 We will sometimes admit $\xi$ or $\zeta$ to be (identically) $\pm \infty$  (the interval will still be a subset of $A\times \R$ however). When $n=0$, by convention, the graph of a function $\xi$ on $A=\{0\}$ will be the singleton $\{\xi(0)\}\subset \R$, and the above intervals will  stand for the corresponding intervals of $\R$.

Given two real valued functions  $f $ and  $g$ on a set $X$ and a subset $A$ of $X$, we write ``$f \lesssim g$ on $A$''\nomenclature[afbf]{$\lesssim$}{inequality up to some constant\nomrefpage}, or  ``for $x\in A$, $f(x) \lesssim g(x)$'', if there exists a positive
real number  $C$ such that $f \leq Cg$ on $A$.  We write  ``$f \sim g$ on $A$'',  or ``for $x\in A$, $f(x)\sim g(x)$''  (and
say that $f$ is {\bf equivalent}\index{equivalent} to $g$ on $A$), if  $f \lesssim g$ and $ g\lesssim f$ on $A$\nomenclature[afbg]{$\sim$}{equivalent functions\nomrefpage}.

Quite often, we will work with a family of sets, say $X_t$, where  $t$ is a parameter that browses a set, say $B$, and families of functions $f_t:X_t\to \R$ and $g_t:X_t \to \R$. It is important to notice that  when we write ``for $t\in B$ and $x\in  X_t$,  $f_t(x) \lesssim g_t(x)$'', the constant of this estimate is assumed to be {\it independent of $t$}. However,  when we write  that ``{\it for each $t$ we have} for  $x\in X_t$, $f_t(x)\lesssim g_t(x)$'', the constant {\it may of course depend on $t$},  except otherwise specified. This is the same difference as between $\exists C,\forall t,\forall x$ and $\forall t,\exists C,\forall x$. We will nevertheless sometimes emphasize the dependence of the constants when important for the theory.

We also wish to stress the fact that the word ``{\bf definable}'' will be used as a shortcut of ``globally subanalytic'' from Chapter \ref{chap_basic} after Proposition \ref{pro_csq_qe}.

     \markboth{G. Valette}{Subanalytic sets}
  \chapter{Subanalytic sets and functions}\label{chap_analytic}

\section{Definitions and basic facts}
\begin{dfn}\label{dfn_semianalytic}
A subset $E\subset \R^n$ is called {\bf semi-analytic}\index{semi-analytic} if it is {\it locally}
defined by finitely many real analytic equalities and inequalities. Namely, for each $a \in   \R^n$, there are
a neighborhood $U$ of $a$ as well as real analytic  functions $f_{ij}$ and $ g_{ij}$ on $U$, where $i = 1, \dots, r\in \N, j = 1, \dots , s_i\in \N$, such that
\begin{equation}\label{eq_definition_semi}
E \cap   U = \bigcup _{i=1}^r\bigcap _{j=1} ^{s_i} \{x \in U : g_{ij}(x) > 0 \mbox{ and } f_{ij}(x) = 0\}.
\end{equation}
\end{dfn}

\begin{exa} In the above definition, a description as displayed in the right-hand-side of (\ref{eq_definition_semi})  is required near each point of $a\in \R^n$ (and not only near the points of $E$).  It thus can be seen that the graph of  $f(x)=\sin \frac{1}{x}$, $x\in (0,1)$, is not a semi-analytic set, although this function is analytic. Condition  (\ref{eq_definition_semi}) fails at the points of the $y$-axis that are in the closure of the graph.\end{exa}


\begin{dfn}\label{dfn_glob_semi}
 A subset $Z$ of $\R^n$ is  {\bf globally semi-analytic}\index{globally semi-analytic set} if $\hh_n(Z)$ is  a semi-analytic subset of $\R^n$, where $\hh_n : \R^n  \to (-1,1) ^n$ is the homeomorphism defined by
  $$\hh_n(x_1, \dots, x_n) :=  (\frac{x_1}{\sqrt{1+|x|^2}},\dots, \frac{x_n}{\sqrt{1+|x|^2}} ).$$\nomenclature[ag]{$\hh_n$}{compactification of $\R^n$\nomrefpage}
\end{dfn}  Of course, globally semi-analytic sets are semi-analytic. The set $\N$ is  an example of set which is analytic but not globally semi-analytic.
Roughly speaking, we can say that a semi-analytic  subset $Z$ of $\R^n$ is   globally semi-analytic if it is still semi-analytic after compactifying $\R^n$. Clearly,  a bounded subset of $\R^n$ is semi-analytic if and only if it is globally semi-analytic.  Unbounded examples are easy to produce:
\begin{exa}\label{exa_glob_semi}
It is easy to see that semi-algebraic sets\index{semi-algebraic set}, that is to say, sets of type $$\bigcup _{i=1}^r\bigcap _{j=1} ^{s_i} \{x \in \R^n : P_{ij}(x) > 0 \mbox{ and } Q_{ij}(x) = 0\},$$ where  $P_{ij}$ and $ Q_{ij}$ are $n$-variable polynomials for all $i$ and $j$, are all  globally semi-analytic.
 Of course, not every  semi-analytic set is semi-algebraic.  It is however worth mentioning here that, thanks to Tarski-Seidenberg's principle (see \cite{bcr, costesemialg}), the semi-algebraic sets constitute a nice class to work, and  most of the results of these notes have their semi-algebraic counterpart  (see \cite{bcr,costeomin, costesemialg, gvhandbook}), with sometimes simpler proofs.
\end{exa}


\medskip

Working with {\it globally} semi-analytic sets  makes it possible to avoid some pathological situations at infinity and provides finiteness properties. 
The flaw of semi-analytic and globally semi-analytic sets is that these classes of sets are not preserved under linear projections. In other words, the projection of a globally semi-analytic set is not always globally semi-analytic:

\medskip

\begin{exa}\label{exa_osgood}(Osgood's example \cite{osgood})
Define a globally semi-analytic subset of $\R^4$ by: $$E:=\{(x,xy,xe^y,y):x \in (0,1) \mbox{ and } y \in (0,1)\}.$$
Let now $\pi:\R^4 \to \R^3$ be the projection omitting the last coordinate.  If $\pi(E)$ were semi-analytic then there would exist a germ of analytic function (at the origin), not identically zero, vanishing at every point of $\pi(E)$ in this neighborhood. Examining the Taylor expansion of this function at the origin quickly leads to a contradiction \cite{bmsemisub,lojasiewicz64b, lojasiewicz93}. 
\end{exa}

To overcome this problem, we will work with a bigger class of sets: the globally subanalytic sets, which are the projections of  globally semi-analytic sets.

\medskip

\begin{dfn}\label{dfn_glob_sub}
 A subset $E\subset \R^n$  is  {\bf globally subanalytic}\index{globally subanalytic!set} if 
there exists a globally semi-analytic
set $Z \subset   \R^{n+p}$, $p\in \N$, such that $E  = \pi(Z)$, where $\pi :    \R^{n+p} \to    \R^n $ is the projection onto the $n$ first coordinates.
We shall denote by $\St_n$\nomenclature[ah]{$\s_n$}{globally subanalytic subsets of $\R^n$\nomrefpage} the set of  globally subanalytic subsets of $\R^n$. 

   We say that {\bf a mapping $f:A \to B$ is globally subanalytic}\index{globally subanalytic!mapping}\index{globally subanalytic!function}, $A \in \s_n$, $B\in \s_m$, if its graph is a globally subanalytic subset of $\R^{n+m}$. In the case $B=\R$, we say that  $f$ is a {\bf globally subanalytic function}\index{globally subanalytic!function}.
\end{dfn}
%



\begin{exa}\label{exa_sub}Globally semi-analytic sets (see example \ref{exa_glob_semi}) provide examples of globally subanalytic sets. The function $\sin x$ is a typical example of a function which is  subanalytic but not globally subanalytic.
The set $E$ of Example \ref{exa_osgood} being globally semi-analytic, its projection $\pi(E)$ (with the notations of the latter example) is  globally subanalytic, although not globally semi-analytic. 
\end{exa}

 We stress the fact that there is no smoothness assumption on subanalytic mappings. Unlike analytic functions, they even can be discontinuous.

\begin{bas}\label{pro_basic_properties_from_dfn}
Below we list some very important properties of globally subanalytic sets and mappings which are direct consequences of their definition.  

{\setlength{\leftmargini}{3pt}
\begin{enumerate} [(1)]
\item\label{item_projection} If $A\in \s_n$ and if $\mu:\R^n \to \R^{m}$, $m\le n$, denotes the projection  onto the $m$ first coordinates then $\mu(A)\in \s_m$.

\item\label{item_union} If $A \in \s_n$ and $B \in \s_n$ then $A \cup B$ and $A \cap B$ both belong to $\s_n$. 

\item\label{item_produit} If $A\in \s_n$ and $B \in \s_m$ then $A \times B \in \s_{n+m}$.

\item \label{item_image} Images and preimages of globally subanalytic sets under globally subanalytic mappings are globally subanalytic.

\item\label{item_F_i} A mapping $F: A \to \R^p$, $F=(F_1,\dots,F_p)$,  $A \in \s_n$, is globally subanalytic if and only if $F_i$ is globally subanalytic for every $i$.

\item\label{item_composition} If $f:A \to B$ and $g:B \to C$ are both globally subanalytic then so is $g\circ f$.

\item \label{item_la_functions} Sums and    products  of globally subanalytic functions are globally subanalytic.
\end{enumerate} }
\end{bas}
\begin{proof}(\ref{item_projection}) is clear from the definition of globally subanalytic sets.
 To prove (\ref{item_union}) take $A$ and $B$ in $\s_n$.  By definition of globally subanalytic sets, there exists  a globally semi-analytic set $Z \subset \R^{n+p}$ (resp.  $Z'\subset \R^{n+p'}$) such that $A=\pi(Z)$ (resp. $B=\pi'(Z')$) where $\pi:\R^{n}\times \R^p  \to \R^n$ (resp. $\pi':\R^{n}\times \R^{p'} \to \R^n$) denotes the projection onto the first factor.  Then $A \cup B=\pi''(Y),$ where $\pi'' :  \R^{n+p+p'} \to \R^n$ is the obvious projection and
 $$Y:=\{(x,z,z')\in \R^n \times \R^p \times\R^{p'}:(x,z)\in Z \mbox{ or } (x,z')\in Z'\}.$$ 
 Since $Z$ and $Z'$ are globally semi-analytic, the sets $\hh_{n+p}(Z)$ and $\hh_{n+p'}(Z)$ can be described by inequalities on analytic functions (as in (\ref{eq_definition_semi})). Consequently, so does $\hh_{n+p+p'}(Y)$, which means that $A\cup B$ is globally subanalytic. Moreover, $A \cap B =\pi''(Y')$ where $$Y' :=\{(x,z,z')\in \R^n \times \R^p \times \R^{p'}:(x,z)\in Z \mbox{ and } (x,z')\in Z'\},$$ which entails that $A \cap B$ is globally subanalytic as well. 
  The proof of (\ref{item_produit}) is similar to the proof of (\ref{item_union}) and is left to the reader.

Proof of (\ref{item_image}). Let $F:A\to B$ be globally subanalytic, with  $A \in \s_n$ and $B \in \s_p$.  Observe that  $F(C)=\pi_2(\pi_1^{-1}(C)\cap \Gamma_F)$ and $F^{-1}(D)=\pi_1(\pi_2^{-1}(D)\cap \Gamma_F)$ where $\pi_1:\R^{n}\times \R^p\to \R^n$ and $\pi_2: \R^{n}\times \R^p\to \R^p$ are the obvious orthogonal projections. Hence, it is enough to consider the case where $F$ is a canonical projection, which follows from (\ref{item_projection}) and (\ref{item_produit}).

Proof of (\ref{item_F_i}). Observe that  $\Gamma_{F_i} =\mu_i(\Gamma_F)$, where $\mu_i:\R^n\times \R^p \to \R^n \times \R$, $i=1,\dots,p$, is the projection defined by $\mu_i(x,v)=(x,v_i)$ if $(x,v)\in \R^n \times \R^p$. By (\ref{item_projection}), it means that $\Gamma_{F_i}$ is globally subanalytic if so is $F$. Conversely, if all the $F_i$'s are globally subanalytic then, in virtue of (\ref{item_produit}), the Cartesian product of their graphs is globally subanalytic, and hence, so is $\Gamma_F$, which can be expressed as a suitable projection of this Cartesian product.
 
Proof of (\ref{item_composition}). If $g$ is globally subanalytic then by (\ref{item_produit})  the map $h:A \times B \to C$ defined by $h(x,y):= g(y)$ is globally subanalytic. Therefore, if  $f$ is  also globally subanalytic,  by (\ref{item_union}), so is the set $E:=\Gamma_h\cap (\Gamma_f\times C)$. But since $\Gamma_{g\circ f}=\nu(E) $, where $\nu:A \times B \times C \to A \times C$ is the projection omitting the second factor, by (\ref{item_projection}), the result follows.

Proof of (\ref{item_la_functions}). It is easily checked that the mappings $(x,y) \mapsto (x+y)$ and $(x,y) \mapsto x\cdot y$ are globally subanalytic. By (\ref{item_composition}),  the sum and product of globally subanalytic functions are thus globally subanalytic.
\end{proof}

We can summarize by saying that globally subanalytic sets and mappings possess all the most basic properties that one would need to perform geometric constructions. Actually,  a very useful one is still missing: the stability under complement. If it was obvious from the definition that the complement of a semi-analytic set is semi-analytic, it is far from being easy to show that the complement of a globally subanalytic set is globally subanalytic. This is nevertheless true and it is generally referred as the Gabrielov's Complement Theorem (Theorem \ref{thm_gabrielov}).  The proof of this theorem  will  use almost all the material introduced in this chapter.

\section{Cell decompositions}
Cell decompositions constitute the central tool of these notes.
\begin{dfn}\label{dfn_cell_decomposition}
 Let us define the  cell decompositions of $\R^n$ inductively on $n$. These are finite partitions  of $\R^n$ into globally subanalytic sets, called  {\bf cells}.

$n = 1:$ A {\bf cell decomposition $\C$ of $\R$}\index{cell!decomposition} is a finite subdivision of $\R$ given by some real numbers $a_1 < .. . < a_l$. The {\bf cells of $\C$}\index{cell} are then the singletons $\{a_i\}$, $0 < i \leq l$, and the intervals $(a_i,
a_{i+1})$,
$0 \leq  i \leq l$, where $a_0 = -\infty$ and $a_{l+1} = +\infty$.

\medskip

$n > 1:$ A {\bf cell decomposition $\C$ of $\R^n$} is given by a cell decomposition $\D$ of $\R^{n-1}$ and,
for each cell $D \in \D$, some globally subanalytic functions, analytic on $D$:
$$\zeta_{D,1} < ... < \zeta_{D,l(D)} : D \to \R.$$
The {\bf cells of $\C$} are then the $\ccc^\infty$ manifolds given by the  graphs 
$$\{(x, \zeta_{D,i}(x)) : x \in D\} ,\; 1 \le i \leq  l(D) ,$$
and the {\bf bands}\index{band}
$$(\zeta_{D,i}, \zeta_{D,i+1}) := \{(x, y) : x \in D
\mbox{ and }\zeta_{D,i}(x) <y<\zeta_{D,i+1}(x)\},$$
for $0 \leq  i \leq l(D)$, where $\zeta_{D,0} \equiv -\infty$ and $\zeta_{D,l(D)+1} \equiv +\infty$. The cell $D$ is then called the {\bf basis}\index{basis of a cell} of the cells defined as above. 

We started this inductive definition at $n=1$ to make it more explicit. It is convenient to set that a cell decomposition of $\R^0=\{0\}$ is constituted (exclusively) by $\{0\}$ (we also will adopt the conventions introduced after (\ref{eq_intervals})).

A cell decomposition is said to be {\bf compatible with finitely many sets $A_1,\dots,A_k$}\index{compatible!cell decomposition} if the
$A_i$'s are unions of cells. A {\bf refinement}\index{refinement! of a cell decomposition} of a cell decomposition $\C$ is a cell decomposition compatible with all the elements of $\C$. 
\end{dfn}

\begin{rem}\label{rem_projection_cell_decomposition} 
 If $\pi:\R^{n+p}\to \R^n$ is the projection onto the $n$ first coordinates and $\C$ is a cell decomposition of $\R^{n+p}$ then 
		the finite  family of sets  $\pi(C)$, $C \in \C$, constitutes a cell decomposition of $\R^n$.  We will denote it by $\pi(\C)$\nomenclature[aj]{$\pi(\C)$}{ projection of a cell decomposition\nomrefpage}.
\end{rem}
It is also worthy of notice that it follows from  this inductive definition that every cell  is $\ccc^\infty$ diffeomorphic to $(0,1)^d$ for some $d$ (with $(0,1)^0=\{0\}$).
The following theorem is fundamental to describe the geometry of globally subanalytic sets:

\begin{thm}\label{thm_existence_cell_dec}
Given  $A_1,\dots, A_k$
in $\St_n$, there is a cell decomposition of $\R^{n}$ compatible with all the $A_i$'s.
\end{thm}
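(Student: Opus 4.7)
My plan is to proceed by induction on the ambient dimension $n$, leaning on the preparation theorem (Theorem \ref{thm_preparation}) announced earlier in this chapter as the main engine. The theorem is in some sense the $n$-variable Puiseux statement that tells us, after partitioning $\R^{n-1}$ suitably, how each globally subanalytic function looks as a function of the last variable $x_n$.

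Base case $n=1$: A globally subanalytic subset of $\R$ is, via $\hh_1$, a subanalytic subset of $(-1,1)$, hence (by a one-variable preparation / Puiseux argument) a finite union of points and open intervals. Collecting all finitely many endpoints arising in $A_1,\dots,A_k$ produces the required subdivision.

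Inductive step: Suppose the statement holds in dimension $n-1$. Each $A_i \in \s_n$ is, by Definition \ref{dfn_glob_sub}, the projection of a globally semi-analytic set, hence can be described by a quantifier-free formula on finitely many analytic functions $f_{ij}$ defined on pieces of $\R^n$ (via the compactification $\hh_n$). Apply the preparation theorem simultaneously to all the $f_{ij}$'s in the last variable: this yields a finite partition of $\R^n$ into globally subanalytic pieces and a finite collection of globally subanalytic functions $\theta_1,\dots,\theta_N$ of $x'=(x_1,\dots,x_{n-1})$ such that on each piece and above the relevant domain in $\R^{n-1}$, each $f_{ij}(x',x_n)$ takes a normal form whose sign and zero locus are controlled by the position of $x_n$ relative to the $\theta_l(x')$. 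Consequently, the membership of a point $(x',x_n)$ in any $A_i$ depends only on (a) which cell of $\R^{n-1}$ contains $x'$ and (b) the position of $x_n$ with respect to the totally ordered collection of values $\theta_l(x')$. Apply the inductive hypothesis in $\R^{n-1}$ to produce a cell decomposition $\D$ compatible with the domains of the $\theta_l$'s, their pairwise coincidence loci, and the projections onto $\R^{n-1}$ of the preparation pieces. On each $D \in \D$ the relevant $\theta_l|_D$'s are totally ordered by (\ref{item_la_functions}) of Basic Properties \ref{pro_basic_properties_from_dfn}; rename them $\zeta_{D,1}<\cdots<\zeta_{D,l(D)}$. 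The graphs $\Gamma_{\zeta_{D,i}}$ and the bands $(\zeta_{D,i},\zeta_{D,i+1})$ now form a cell decomposition of $\R^n$, and by construction it is compatible with every $A_i$.

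The main obstacle is the analyticity requirement built into Definition \ref{dfn_cell_decomposition}: the functions $\zeta_{D,i}$ must be analytic on their cell of definition, and each cell itself must be a $\ccc^\infty$ manifold diffeomorphic to an open cube. The preparation theorem delivers functions that are globally subanalytic and analytic off a subanalytic subset of strictly lower dimension, but not necessarily analytic everywhere on the pieces produced by the induction hypothesis. To close the loop, one must refine $\D$ further by cutting out the (lower-dimensional, globally subanalytic) loci of non-analyticity of each $\zeta_{D,i}$ and of non-smoothness of each candidate cell, and then iterate on these lower-dimensional sets. This is where a secondary induction on dimension enters, exploiting that a proper globally subanalytic subset of a cell has strictly smaller dimension and that the non-analytic locus of a globally subanalytic function is itself globally subanalytic of lower dimension. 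This is precisely why in the Lion–Rolin framework followed by this chapter, the preparation theorem, the existence of cell decompositions, and the $\la$-function description are all established by a single intertwined induction rather than in isolation.
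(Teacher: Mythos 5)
Your proposal has two genuine logical gaps. The first is a circularity: you name Theorem \ref{thm_preparation} (the Preparation Theorem for globally subanalytic functions) as your main engine, but in this chapter that theorem is proved \emph{as a consequence} of Theorem \ref{thm_existence_cell_dec}, through Proposition \ref{pro_la_and_globally subanalytic}, whose proof starts by taking a cell decomposition compatible with the graph of the function. The tool the paper actually has available at this point is the more primitive Proposition \ref{pro_preparation_la_fonctions} (reducibility of $\la$-functions) and especially Lemma \ref{lem_la_cell_decomposition} (existence of $\la$-cell decompositions on which finitely many given reduced $\la$-functions have constant sign); those two are established \emph{before} Theorem \ref{thm_existence_cell_dec} by a joint induction on $n$, and the theorem's proof simply calls Lemma \ref{lem_la_cell_decomposition} rather than running its own induction. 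If you replace ``Theorem \ref{thm_preparation}'' by ``Proposition \ref{pro_preparation_la_fonctions} together with Lemma \ref{lem_la_cell_decomposition}'' you also dissolve your worry about analyticity at the end: reduced functions are by definition analytic on the cell on which they are reduced, so the decompositions produced by Lemma \ref{lem_la_cell_decomposition} already have analytic defining functions, and no secondary induction to excise non-analytic loci is needed.

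The second gap is the sentence asserting that each $A_i$, being a projection of a globally semi-analytic set, ``can be described by a quantifier-free formula on finitely many analytic functions $f_{ij}$ defined on pieces of $\R^n$.'' That does not follow from Definition \ref{dfn_glob_sub}: a priori the description carries an existential quantifier over the extra coordinates in $\R^m$, $m>n$, and eliminating it is precisely the content of Gabrielov's Complement Theorem, which this chapter is building towards. The paper sidesteps this by never trying to describe $A_i$ in $\R^n$ at all: since the image under the canonical projection of a cell decomposition of $\R^m$ is a cell decomposition of $\R^n$ (Remark \ref{rem_projection_cell_decomposition}), it suffices to produce, in the higher-dimensional ambient space $\R^m$, an $\la$-cell decomposition compatible with the globally semi-analytic preimage $Z$ -- where one genuinely does have analytic defining data via $\hh_m$ -- and then push forward. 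Your inductive step, applied in $\R^m$ rather than $\R^n$ and fed by Lemma \ref{lem_la_cell_decomposition} rather than Theorem \ref{thm_preparation}, would then be sound, and is in essence what the intertwined induction behind Lemma \ref{lem_la_cell_decomposition} carries out.
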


 The proof of this theorem is postponed to section \ref{sect_existence_of_cell_dec}.  We start with an easy example and some hints of proofs.

\begin{exa}\label{exa_cell_dec}
Let $A:=\{(x,y)\in \R^2:f(x,y):=y^2-x^3=0\}$. As $f$ can be factorized $(y-x^{\frac{3}{2}})(y+x^{\frac{3}{2}})$, the set $A\setminus \{(0,0)\}$ is the union of the two half-branches of curves $C_i:=\{(x,(-1)^i x^{\frac{3}{2}}):x>0\}$, $i=1,2$. There  exists a cell decomposition of $\R^2$ compatible with  $A$  and containing the cells $\{0\}$, $C_1$, and $C_2$.
 \end{exa}

In this example, the situation is very simple since the set $A$ is described by an equation which is easily factorized. In particular, the cells have a parametrization as a Puiseux series.
 This example  points out the importance of having a nice factorization of the equations defining our set $A$.

\paragraph{Outline of proof of Theorem \ref{thm_existence_cell_dec}} Let $A\in \s_n$ (we restrict ourselves in this outline to the case of one single set $A$, i.e., we assume $k=1$ for simplicity). Observe first that, since $A$ is the projection of a globally semi-analytic set $Z \subset \R^{n+p}$, it is enough to find a cell decomposition of $\R^{n+p}$ compatible with such a set $Z$ (see Remark \ref{rem_projection_cell_decomposition}).
In other words, we can assume $A$ to be globally  semi-analytic.  

We have to find a cell decomposition such that the functions defining the globally semi-analytic set $A$ (see (\ref{eq_definition_semi})) are of constant sign on every cell.

The proof will be by induction on $n$.
 The basic idea is to proceed in the same way as in Example \ref{exa_cell_dec}: we factorize the analytic functions defining the globally semi-analytic set $A$ until we reach an expression which is sufficiently simple to decompose the set $A$ into graphs and bands. The basic idea of this factorization relies on Weierstrass Preparation Theorem and some related finiteness results of algebraic nature that we present below as preliminaries  (section \ref{sect_preliminaries}, see Theorem \ref{thm_weierstrass} and Proposition \ref{pro_de_finitude}).

 Functions that we have put in such a  nice factorized form will be said to be {\it reduced} (Definition \ref{dfn_reduced}). This form, although a bit more complicated than the expression as a Puiseux series obtained in Example \ref{exa_cell_dec}, is of the same type.

As we will argue inductively on the number of variables, it will not be possible to stay in the semi-analytic category: making use of the inductive assumptions requires to drop some variables and Example \ref{exa_osgood} shows that this forces to exit the semi-analytic category. We thus shall introduce a bigger category of functions than restricted analytic functions: {\it the $\la$-functions} (see section \ref{sect_la_functions}).

We shall show that every $\la$-function can be reduced (Proposition \ref{pro_preparation_la_fonctions}), from which it will follow that we can find a cell decomposition such that finitely many given $\la$-functions have constant sign on  every cell (Lemma \ref{lem_la_cell_decomposition}). The reader is invited to glance at the proof of this lemma which  unravels the close links between reducibility of $\la$-functions (Proposition \ref{pro_preparation_la_fonctions}) and existence of what we call $\la$-cell decompositions (Lemma \ref{lem_la_cell_decomposition}),  accounting for the fact that we prove this lemma and this proposition simultaneously in a joint induction.
To complete this outline, we  detail  separately in section \ref{sect_reduced_functions} the strategy of the proof of this proposition, which is the main technical difficulty of the proof of Theorem \ref{thm_existence_cell_dec} (although our cell decompositions will rather be provided by the closely related Lemma \ref{lem_la_cell_decomposition}).


  \begin{section}{Preliminaries on analytic functions}\label{sect_preliminaries}
  \begin{subsection}{The Weierstrass Preparation Theorem}
 We give a proof this theorem that relies on basic  facts of complex analysis. This is the only place in these notes where  complex numbers will be involved.

\begin{dfn}\label{dfn_x_n_regular} 
 Let $U\subset \R^{m-1}\times \R$ be an open set and let  $(u_0,z_0)  \in U$. An analytic  function  $\psi(u,z) $ on $U$ is  {\bf $z$-regular}\index{z@$z$-regular} at $(u_0,z_0)$ if the function $\varphi:[0,\ep) \to \R$, $\ep>0$ small, defined by  $ \varphi(t):=  \psi(u_0,z_0 +t)$    is not identically zero. It is said to be  {\bf $z$-regular of order $d$}\index{z@$z$-regular!of order $d$} at $(u_0,z_0)$ if $\varphi(t)=a t^d+\dots$ with $a \ne 0$.
\end{dfn}


\begin{thm}\label{thm_weierstrass} (Weierstrass Preparation Theorem)
 Let $\psi$ be an analytic function on a neighborhood of $(u_0,z_0)\in \R^{m-1}\times \R$. If $\psi(u,z)$ is $z$-regular of order $d$ at $(u_0,z_0)$ then there exists a neighborhood of $(u_0,z_0)$ on which  $\psi$ has a representation $$\psi(u,z) = W(u,z)\cdot  f(u,z),$$ where $W$ is an analytic function  satisfying $W(u_0,z_0)\ne 0$, and $$f(u,z)=z^d+a_1(u)z^{d-1}+\dots+a_d(u)$$ is a monic polynomial of degree $d$ in $z$ with analytic coefficients in $u$.
\end{thm}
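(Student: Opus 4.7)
The plan is to prove this via a standard complex-analytic argument, extending $\psi$ to a holomorphic function on a complex neighborhood and using the argument principle to construct $f$ from the zeros of $z \mapsto \psi(u,z)$.

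First, I would reduce to $(u_0, z_0) = (0,0)$ by translation. Since $\psi$ is real analytic, it extends to a holomorphic function (still denoted $\psi$) on a polydisc neighborhood $V \times D \subset \C^{m-1} \times \C$ of the origin, with real Taylor coefficients. The $z$-regularity of order $d$ at the origin means $\psi(0,z) = a z^d + O(z^{d+1})$ with $a \neq 0$, so the holomorphic function $z \mapsto \psi(0,z)$ has an isolated zero of order exactly $d$ at $z=0$. Choose $r > 0$ small enough that $\psi(0,z) \neq 0$ on $\{|z| = r\} \cup (\{0 < |z| \le r\})$; by continuity and compactness, shrink $V$ so that $\psi(u,z) \neq 0$ on $V \times \{|z| = r\}$.

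Next, I would invoke the argument principle: for each $u \in V$, the number of zeros of $z \mapsto \psi(u,z)$ in $\{|z| < r\}$ (counted with multiplicity) equals
\[
N(u) = \frac{1}{2\pi i} \oint_{|z| = r} \frac{\partial_z \psi(u,z)}{\psi(u,z)}\, dz.
\]
This is continuous in $u$ and integer-valued, hence equal to $N(0) = d$ throughout $V$. Label the zeros $z_1(u), \dots, z_d(u)$ (with multiplicity) and set
\[
f(u,z) := \prod_{j=1}^{d}(z - z_j(u)) = z^d + a_1(u) z^{d-1} + \dots + a_d(u).
\]
The coefficients $a_k(u)$ are, up to sign, the elementary symmetric functions of the $z_j(u)$. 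By Newton's identities it suffices to show that the power sums $s_k(u) = \sum_j z_j(u)^k$ are analytic in $u$; but the residue theorem gives
\[
s_k(u) = \frac{1}{2\pi i} \oint_{|z|=r} z^k \, \frac{\partial_z \psi(u,z)}{\psi(u,z)}\, dz,
\]
which is visibly holomorphic in $u$ on $V$. Hence each $a_k$ is holomorphic on $V$. Since $\psi$ has real Taylor coefficients, complex conjugation of real $u$ permutes the roots $z_j(u)$, so the elementary symmetric functions are real on $V \cap \R^{m-1}$; thus $a_k$ restricts to a real analytic function on a real neighborhood of $u_0 = 0$.

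Finally, I would set $W(u,z) := \psi(u,z)/f(u,z)$ on the set where $f$ does not vanish. For each $u \in V$, the numerator and denominator have exactly the same zeros in $\{|z| < r\}$ with the same multiplicities (by construction), so $z \mapsto W(u,z)$ extends to a holomorphic function on $\{|z| < r\}$. A joint holomorphy argument (e.g.\ Cauchy's integral formula applied on the fixed circle $|z| = r$, $W(u,z) = \frac{1}{2\pi i}\oint_{|\zeta|=r}\frac{\psi(u,\zeta)/f(u,\zeta)}{\zeta-z}d\zeta$ for $|z|<r$) shows $W$ is jointly holomorphic on a neighborhood of $(0,0)$, and real-valued on real inputs, hence real analytic. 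At the origin $W(0,0)$ equals the leading coefficient $a \neq 0$, giving the required factorization. The main point to watch is ensuring joint analyticity of $W$ and of the $a_k$; this is handled cleanly by writing both as contour integrals in $z$ over the fixed circle $|z| = r$, which manifestly inherits analyticity from the integrand.
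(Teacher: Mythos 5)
Your proof is correct and follows essentially the same route as the paper: extend to a holomorphic function, use the argument principle/residue theorem to show the zero count is constantly $d$ and that the power sums of the roots are holomorphic in $u$, invoke Newton's identities to get analytic coefficients $a_k$, and then establish joint holomorphy of $W=\psi/f$ via the Cauchy integral formula over the fixed circle $|z|=r$. The only cosmetic difference is that the paper denotes the roots by $b_j(u)$ and the power sums by $c_j(u)$, whereas you use $z_j(u)$ and $s_k(u)$.
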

\begin{proof}We may assume $(u_0,z_0)=(0_{\R^{m-1}},0_\R)$ and $\psi(0,0)=0$. The Taylor series of $\psi$ being convergent near the origin, this function extends to a holomorphic function (still denoted $\psi$) on a neighborhood of this point in $\mathbb{C}^{m-1}\times \mathbb{C}$.

Let $\gamma$ be a small circle around $0$ in $ \mathbb{C} $ (oriented counterclockwise)  and let for $u\in \mathbb{C}^{m-1}$ close to the origin:$$c_0(u):=\frac{1}{2i\pi}\int_\gamma \frac{\frac{\pa \psi}{\pa z}(u,z)}{\psi(u,z)}dz.  $$
By the residue theorem, if $\gamma$ is sufficiently small,  $c_0$ is  a continuous integer valued function on a small neighborhood of the origin in $\mathbb{C}^{m-1}$, which must be equal to $d$.
 For every $u\in \mathbb{C}^{m-1}$ close to $ 0_{\mathbb{C}^{m-1}}$, let $b_1(u),\dots, b_d(u)$ be the (complex) roots of the  function $z\mapsto \psi(u,z)$ (repeated with multiplicity) that lie inside the disk delimited by $\gamma$.  Let then for $j=1,\dots,d$
$$c_j:=b_1^j +\dots+b_d^j\;,  $$ 
 and observe that, again due to the residue theorem, we have $$c_j(u) =\frac{1}{2\pi i}\int_\gamma z^j\frac{\frac{\pa \psi}{\pa z}(u,z)}{\psi(u,z)}dz ,$$
 which shows that the $c_j$'s are holomorphic functions on a neighborhood of the origin in $\mathbb{C}^{m-1}$, real valued on $\R^{m-1}$ (the $b_j$'s are pairwise conjugate). We now set for $(u,z)\in \mathbb{C}^{m-1} \times \mathbb{C}$:
 $$f(u,z):=\prod_{j=1} ^d(z-b_j(u))= z^d+a_1(u)z^{d-1}+\dots+a_d(u),$$
with $a_1,\dots,a_d$ analytic. Since the $a_i$'s are polynomial functions of the $c_j$'s (by Girard-Newton's identities), by the above, these are holomorphic functions. We then set $W(u,z):=\frac{\psi(u,z)}{f(u,z)}$ and notice that this function is real valued on a neighborhood of the origin in $\R^m$. Because for every $u$ near $0$, the  functions $z\mapsto \psi(u,z)$ and $z\mapsto f(u,z)$ have the same zeros with the same multiplicities, $W$ is holomorphic with respect to $z$ and nonzero at the origin. By Cauchy formula, we thus have on a neighborhood of the origin \begin{equation}\label{eq_cauchy}                                                                                                                                                                                                                                                                                                                                                                                                                     W(u,z)=\int_\gamma \frac{W(u,\zeta)}{z-\zeta}d\zeta.\end{equation}
where $\gamma$ is as above. Since $f$ and $\psi$ are both holomorphic functions, $W(u,\zeta)$ is also holomorphic with respect to $u$ on the complement of the zeros of $f$, which, by (\ref{eq_cauchy}), means that it is holomorphic everywhere on a neighborhood of the origin.
\end{proof}

\medskip

Given $y_0\in \R^m$, we write $\A_{y_0}$\nomenclature[ak]{$\A_{y_0}$}{ring of analytic functions germs at $y_0$\nomrefpage}  for the ring of analytic function-germs  at $y_0$.

  
  \begin{cor}\label{cor_noetherian}
   The ring $\A_{y_0}$ is Noetherian.
  \end{cor}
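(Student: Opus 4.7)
The plan is to argue by induction on $m$. The base case $m=0$ is trivial since $\mathcal{A}_{y_0} = \mathbb{R}$ is a field. For the inductive step, I may assume $y_0 = 0$ by translation. Let $I \subset \mathcal{A}_0$ be an ideal; I want to show it is finitely generated. Pick a nonzero $f \in I$, and look at the lowest-order nonzero homogeneous component of its Taylor series at $0$; this is a nonzero polynomial, so there is some direction $v \in \mathbb{R}^m$ where it is nonvanishing. Composing with a linear change of coordinates that sends $v$ to the last axis, I may assume $f$ is $z$-regular at $0$ of some order $d$. The Weierstrass Preparation Theorem just proved then gives $f = W \cdot p$ with $W$ a unit of $\mathcal{A}_0$ and $p(u,z) = z^d + a_1(u)z^{d-1} + \cdots + a_d(u)$ a monic polynomial in $z$ with coefficients in $\mathcal{A}'_0 := \mathcal{A}_{0_{\mathbb{R}^{m-1}}}$. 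Then $p = W^{-1} f \in I$.

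The crucial intermediate step, which I expect to be the main obstacle, is the \emph{Weierstrass Division Theorem}: every germ $g \in \mathcal{A}_0$ admits a unique decomposition
$$g(u,z) = q(u,z)\, p(u,z) + r(u,z),$$
where $q \in \mathcal{A}_0$ and $r$ is a polynomial in $z$ of degree less than $d$ with coefficients in $\mathcal{A}'_0$. I would derive this by contour integration, in the spirit of the proof of Preparation. Choose a small circle $\gamma$ about $0 \in \mathbb{C}$ such that, for $u$ near $0_{\mathbb{C}^{m-1}}$, the polynomial $\zeta \mapsto p(u,\zeta)$ has exactly $d$ zeros inside $\gamma$; then for $z$ inside $\gamma$ set
$$q(u,z) := \frac{1}{2\pi i}\int_\gamma \frac{g(u,\zeta)}{p(u,\zeta)(\zeta-z)}\, d\zeta, \qquad r := g - q p.$$
Polynomial division of $p(u,\zeta) - p(u,z)$ by $\zeta - z$ in the variable $\zeta$ exhibits $r$ as a polynomial in $z$ of degree $<d$ whose coefficients are contour integrals in $u$ alone, hence elements of $\mathcal{A}'_0$. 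Holomorphicity of $q$ in both $u$ and $z$ follows from the integral representation exactly as in Theorem \ref{thm_weierstrass}, and uniqueness is immediate because the difference of two decompositions would give a polynomial in $z$ of degree $<d$ divisible by the monic $p$ of degree $d$.

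Granted Weierstrass Division, the quotient ring $\mathcal{A}_0 / (p)$ is generated, as a module over $\mathcal{A}'_0$, by the classes of $1, z, \ldots, z^{d-1}$, and is in particular finitely generated. By the induction hypothesis, $\mathcal{A}'_0$ is Noetherian, and finitely generated modules over a Noetherian ring are Noetherian. Therefore every $\mathcal{A}'_0$-submodule of $\mathcal{A}_0/(p)$ is finitely generated; applying this to the image $I/(p)$ produces finitely many elements $\bar g_1, \ldots, \bar g_s$ generating $I/(p)$ as an $\mathcal{A}'_0$-module, a fortiori as an $\mathcal{A}_0$-module. Lifting these to $g_1,\ldots,g_s \in I$ and adjoining $p$ gives a finite generating set of $I$, completing the induction.
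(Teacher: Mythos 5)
Your proof is correct, and it follows the same underlying strategy as the paper's (Weierstrass Preparation plus induction on the number of variables), but it handles the algebra differently and more completely. The paper instead invokes Hilbert's Basis Theorem to conclude that the subring $\A_{u_0}[z]$ of polynomials in $z$ with coefficients in $\A_{u_0}$ is Noetherian, and then asserts that any ideal $\I$ of $\A_{y_0}$ is generated by $\mathfrak{I} := \I \cap \A_{u_0}[z]$, which is an ideal of $\A_{u_0}[z]$ and hence finitely generated. That assertion, however, implicitly rests on the \emph{Weierstrass Division Theorem} (dividing an arbitrary $g \in \I$ by a Weierstrass polynomial $p \in \I$ produces a remainder $r = g - qp \in \mathfrak{I}$), which the paper neither states nor proves. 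You, on the other hand, prove Division explicitly by contour integration in the style of the paper's proof of Preparation, and then finish via a module argument: $\A_0/(p)$ is a finite $\A'_0$-module, hence Noetherian, so $I/(p)$ is finitely generated and $I$ is obtained by adjoining $p$. Both finishing moves are standard and essentially interchangeable; the paper's is more compressed, while yours makes the dependence on Division visible, which is arguably a point in its favor given that Division is the genuinely nontrivial ingredient beyond Preparation.
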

\begin{proof}
Set $y_0=(u_0,z_0)\in \R^{m-1}\times \R$. Arguing inductively, we can suppose that $\A_{u_0}$ is Noetherian, which, by Hilbert's Basis Theorem, means that so is the ring
 $$\A_{u_0}[z]:=\{f\in \A_{y_0} :f(u,z)=\sum_{i=0} ^d a_i(u)z^i, \;\; \mbox{ for some }\;a_i \in \A_{u_0}, d\in \N \}.$$
 Let $\I$ be an ideal of $\A_{y_0}$.  Thanks to Weierstrass Preparation Theorem, we know that $\I$ is generated by $\mathfrak{I}:=\I\cap \A_{u_0}[z]$. As $\mathfrak{I}$ is an ideal of $\A_{u_0}[z]$, which is a Noetherian ring, it is finitely generated. 
\end{proof}

\end{subsection}
\begin{subsection}{Some finiteness properties}
\noindent{\bf Algebraic machinery.} 
 We recall that a ring $R$ is {\bf local}\index{local ring} if it has only one maximal ideal.  It is well-known that the maximal ideal of a local ring is constituted by all the non invertible elements of this ring.


Throughout this section $R$ stands for a Noetherian local ring and $\mfr$ for its maximal ideal.
\begin{dfn}Let $M$ be a finitely generated $R$-module. 
 We say that a decreasing sequence $M_*=\{M_i\}_{i\in \N}$  of submodules of $M$ is an  {\bf $\mfr$-filtration of $M$}\index{m-filtration@$\mfr$-filtration} if $\mfr M_i\subset M_{i+1}$ for all $i\ge 0$.
 
  An $\mfr$-filtration $\{M_i\}_{i\in \N}$ of $M$ is {\bf $\mfr$-stable}\index{m-stable@$\mfr$-stable} if $\mfr M_i= M_{i+1}$ for each integer $i$ sufficiently large.
  
 Given an $\mfr$-filtration $M_*$,  we denote by $\grr(M_*)$, the set of formal polynomials 
 whose $T^i$-coefficient lies in $M_i$, that is to say \nomenclature[al]{$\grr(M_*)$}{graded ring of formal polynomials associated to an $\mfr$-filtration\nomrefpage}
 $$\grr(M_*)=\{ \sum_{i=0}^d a_i T^i: a_i\in M_i, d\in \N\}. $$
In particular, since we can regard $\mfr$ as an $R$-module, $\mfr_*:=(\mfr^i)_{i\in \N}$ is an $\mfr$-filtration of $\mfr$ and 
  $$\grr(\mfr_*)=\{ \sum_{i=0}^d a_i T^i: a_i\in \mfr^i,d\in \N\}. $$
 Clearly, $\grr(M_*)$ is a $\grr(\mfr_*)$-module. 
 

 \end{dfn}
 
\begin{rem}\label{rem_noeth_modules}
 We recall that a module is said to be  Noetherian if every submodule is finitely generated, and that a finitely generated  module over a Noetherian ring is always Noetherian. In particular, in the above definition, all the $M_i$'s are finitely generated.  
\end{rem}
 
\begin{lem}\label{lem_grrM} Let $M$ be a finitely generated $R$-module and let  $M_*=\{M_i\}_{i\in \N}$ be an $\mfr$-filtration of $M$. The  $\grr(\mfr_*)$-module
  $\grr(M_*)$ is finitely generated  if and only if $M_*$ is $\mfr$-stable.
%
 
\end{lem}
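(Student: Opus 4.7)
The plan is to exploit the natural $\N$-grading on $\grr(M_*)$, whose degree-$i$ piece is $M_i$, and likewise on $\grr(\mfr_*)$ whose degree-$i$ piece is $\mfr^i$. Observe first that, by Remark \ref{rem_noeth_modules}, $M$ is Noetherian, hence so is every $M_i$, so each $M_i$ admits a finite generating set as an $R$-module.

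\emph{Sufficiency.} Assume $M_*$ is $\mfr$-stable, so there exists $n_0$ with $\mfr M_i = M_{i+1}$ for all $i \geq n_0$. For each $i \in \{0,\dots,n_0\}$, pick a finite generating set $S_i \subset M_i$ of $M_i$ as an $R$-module, and regard the elements of $S_i$ as homogeneous elements of $\grr(M_*)$ of degree $i$. I claim that $S := \bigcup_{i=0}^{n_0} S_i$ generates $\grr(M_*)$ over $\grr(\mfr_*)$. It suffices to check this on homogeneous components: for $i \leq n_0$ this is the choice of $S_i$; for $i > n_0$, iterating $\mfr$-stability yields $M_i = \mfr^{i-n_0} M_{n_0}$, and any element of $M_i$ is therefore an $\mfr^{i-n_0}$-combination of elements of $S_{n_0}$, which is precisely a $\grr(\mfr_*)$-combination of degree $n_0$ generators.

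\emph{Necessity.} Assume $\grr(M_*)$ is finitely generated over $\grr(\mfr_*)$. Since both objects are graded and any generating family can be replaced by the (finitely many) homogeneous components of its elements, we may take generators $f_1,\dots,f_r$ with each $f_j$ homogeneous of some degree $d_j$. Set $n_0 := \max_j d_j$. For any $i > n_0$ and any $m \in M_i$, writing $m = \sum_j \lambda_j f_j$ with $\lambda_j \in \grr(\mfr_*)$ and extracting the degree-$i$ component shows that $\lambda_j$ contributes only through its degree $(i-d_j)$ part, which lies in $\mfr^{i-d_j}$. Hence
\[
M_i \;=\; \sum_{j=1}^r \mfr^{i-d_j} \cdot (\text{coefficient of } f_j) \;\subset\; \mfr\, M_{i-1},
\]
the last inclusion because $i - d_j \geq 1$ and $\mfr^{i-d_j}\cdot f_j\subset \mfr\cdot(\mfr^{i-1-d_j}\cdot f_j)\subset\mfr M_{i-1}$. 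Combined with the filtration inclusion $\mfr M_{i-1} \subset M_i$, this gives equality, so $M_*$ is $\mfr$-stable.

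The main obstacle is essentially bookkeeping: one has to be careful to pass to homogeneous generators in the necessity direction and to use the Noetherian hypothesis in the sufficiency direction to obtain finite generating sets of the initial $M_i$'s. Once homogeneity is in place, both directions reduce to a one-line manipulation of the degree filtration.
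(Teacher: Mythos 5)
Your proof is correct and follows essentially the same route as the paper's: exploit the $\N$-grading on $\grr(M_*)$, pass to homogeneous generators in the necessity direction, and use the iterated identity $M_{n_0+\kappa}=\mfr^\kappa M_{n_0}$ together with finite generation of each $M_i$ in the sufficiency direction. Your write-up is a bit more explicit than the paper's (the paper passes to the homogeneous components $a_{j,k}T^k$ of its generators without spelling out the degree-extraction step, and its sufficiency direction lists generators of $Z_1,\dots,Z_d$, omitting $Z_0$, which you correctly include), but there is no substantive difference in the argument.
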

\begin{proof}
 Assume that the $\grr(\mfr_*)$-module $\grr(M_*)$ is finitely generated, say by $f_1,\dots f_p$, with for each $j$, $f_j:=\sum_{k=0} ^{d_j} a_{j,k} T^k$, where $a_{j,k} \in M_k$. Set for $i\in \N$ $$Z_i:=\{g\in \grr(M_*):g=c T^i,\mbox{ for some } c\in M_i\} .$$
 
 Note that the elements $a_{j,k} T^k$, $ k\le d_j$, $j\le p$, also generate $\grr(M_*)$. Since $M_*$ is an $\mfr$-filtration, this implies that for $i\ge \max\{ d_j: j=1,,\dots,p\}$
we have $\grr(\mfr_*)Z_i\supset Z_{i+1}$, which means that $\mfr M_i\supset M_{i+1} $, as required.

Conversely, if $M_*$ is $\mfr$-stable then there is $d$ such that $\mfr M_i=M_{i+1}$ for each $i\ge d$, which means that $\mfr^{\kappa}M_i=M_{i+\kappa}$ for every such $i$ and every positive integer $\kappa$. This establishes that $\grr(M_*)$ is generated by the union of the respective generators of $Z_1,\dots, Z_d$, which, since every $M_i$ is finitely generated, is a finite set.
\end{proof}
\begin{lem}\label{lem_subm}Let $M$ be a finitely generated $R$-module and let  $M_*=\{ M_i\}_{i\in \N}$ be an  $\mfr$-stable $\mfr$-filtration of $M$. 
 For every submodule $N$ of $M$, the filtration $N_*:=\{ N \cap M_i\}_{i\in \N}$ of $N$ is $\mfr$-stable. 
\end{lem}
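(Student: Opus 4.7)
The plan is to reduce to \emph{Lemma \ref{lem_grrM}} by showing that the associated graded module $\grr(N_*)$ is finitely generated over $\grr(\mfr_*)$, which by Lemma \ref{lem_grrM} applied in the converse direction will yield the $\mfr$-stability of $N_*$.

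First I would check that $N_*$ is indeed an $\mfr$-filtration: for any $i$, one has
\[\mfr(N\cap M_i)\subset N\cap \mfr M_i \subset N\cap M_{i+1},\]
so $N_*$ makes sense as an $\mfr$-filtration of $N$. Next, observe that $\grr(N_*)$ sits inside $\grr(M_*)$ in the obvious way (as the formal polynomials whose $T^i$-coefficient lies in $N\cap M_i$), and this inclusion is a $\grr(\mfr_*)$-submodule. Since $M_*$ is $\mfr$-stable by hypothesis, Lemma \ref{lem_grrM} gives that $\grr(M_*)$ is finitely generated over $\grr(\mfr_*)$.

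The key step — and the one where the main work lies — is to show that $\grr(\mfr_*)$ is itself a Noetherian ring; this will force finite generation of $\grr(N_*)$ via Remark \ref{rem_noeth_modules}. Because $R$ is Noetherian, the maximal ideal $\mfr$ is finitely generated, say by $x_1,\dots,x_r$. Then any monomial of degree $i$ in $\mfr^i$ can be written as an $R$-linear combination of products $x_{j_1}\cdots x_{j_i}$, which shows that the surjective ring homomorphism $R[T_1,\dots,T_r]\to \grr(\mfr_*)$ sending $T_j\mapsto x_j T$ is well defined and surjective. Hilbert's basis theorem gives that $R[T_1,\dots,T_r]$ is Noetherian, hence so is its quotient $\grr(\mfr_*)$.

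Finally, since $\grr(M_*)$ is a finitely generated module over the Noetherian ring $\grr(\mfr_*)$, it is a Noetherian $\grr(\mfr_*)$-module, so every submodule of it is finitely generated; in particular $\grr(N_*)$ is finitely generated. Invoking the forward direction of Lemma \ref{lem_grrM} for the filtration $N_*$ of $N$ then yields that $N_*$ is $\mfr$-stable, completing the proof. The only real obstacle is the observation that $\grr(\mfr_*)$ is Noetherian, which bridges Noetherianness of $R$ with finite generation of the submodule via Hilbert's theorem — the rest is bookkeeping on graded modules.
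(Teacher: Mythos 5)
Your proof is correct and follows essentially the same route as the paper: both reduce to Lemma \ref{lem_grrM}, note that $\grr(M_*)$ is finitely generated over $\grr(\mfr_*)$, establish that $\grr(\mfr_*)$ is Noetherian via Hilbert's Basis Theorem, and conclude that the submodule $\grr(N_*)$ is finitely generated, hence $N_*$ is $\mfr$-stable. The only cosmetic difference is that you spell out the surjection $R[T_1,\dots,T_r]\to\grr(\mfr_*)$ explicitly, whereas the paper simply remarks that $\grr(\mfr_*)$ is a finitely generated $R$-algebra.
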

\begin{proof}
 Since $M_*$ is $\mfr$-stable, by Lemma \ref{lem_grrM}, $\grr(M_*)$ is a finitely generated $\grr(\mfr_*)$-module. Notice that $\grr(\mfr_*)$, regarded as an $R$-algebra, is finitely generated.  Since $R$ is Noetherian, by Hilbert's Basis Theorem, this implies that $\grr(\mfr_*)$ is Noetherian as well, which implies in turn that $\grr(M_*)$ is a Noetherian $\grr(\mfr_*)$-module (see Remark \ref{rem_noeth_modules}). This establishes that  $\grr(N_*)$, which is a submodule of $\grr(M_*)$, is a finitely generated $\grr(\mfr_*)$ module, which, by Lemma \ref{lem_grrM}, yields that $N_*$ is $\mfr$-stable.
\end{proof}

\begin{lem}\label{lem_artin_rees}(Artin-Rees) Let $M$ be a finitely generated $R$-module and let $N$ be a submodule. For all $i$ large enough, $$N\cap \mfr^{i+1}M=\mfr(N\cap \mfr^i M).  $$
 In other words, the $\mfr$-filtration $\{N\cap \mfr^i M\}_{i\in \N}$ of $N$  is $\mfr$-stable.
\end{lem}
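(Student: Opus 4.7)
The plan is to obtain Artin-Rees as a direct corollary of Lemma \ref{lem_subm}, by applying it to the canonical $\mfr$-adic filtration of $M$.

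First, I would take $M_* = \{\mfr^i M\}_{i\in \N}$ (with the convention $\mfr^0 M = M$) and check that this is an $\mfr$-stable $\mfr$-filtration of $M$. The filtration axiom $\mfr \cdot \mfr^i M \subset \mfr^{i+1} M$ holds by definition of the product, and the stability axiom $\mfr \cdot \mfr^i M = \mfr^{i+1} M$ even holds for every $i$, not only for $i$ large. So $M_*$ satisfies the hypotheses of Lemma \ref{lem_subm}.

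Second, I would check that the induced filtration $N_* := \{N \cap \mfr^i M\}_{i\in \N}$ on the submodule $N$ is an $\mfr$-filtration of $N$: for any $i$, the inclusion
$$\mfr(N \cap \mfr^i M) \subset N \cap \mfr^{i+1} M$$
holds because $\mfr(N \cap \mfr^i M) \subset \mfr N \subset N$ on one side (since $N$ is a submodule) and $\mfr(N \cap \mfr^i M) \subset \mfr \cdot \mfr^i M = \mfr^{i+1} M$ on the other.

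Finally, Lemma \ref{lem_subm} applied to $M_*$ and $N$ yields that $N_*$ is $\mfr$-stable, which by definition means that there exists $i_0$ such that for every $i \ge i_0$,
$$\mfr(N \cap \mfr^i M) = N \cap \mfr^{i+1} M,$$
which is exactly the conclusion. There is essentially no obstacle: all the real work has been done upstream — namely, in Lemma \ref{lem_grrM} (which links $\mfr$-stability of the filtration to finite generation of $\grr(M_*)$ as a $\grr(\mfr_*)$-module) and in Lemma \ref{lem_subm} itself, whose proof crucially uses that $\grr(\mfr_*)$ is Noetherian (via Corollary \ref{cor_noetherian}, Hilbert's Basis Theorem, and the fact that $\grr(\mfr_*)$ is a finitely generated $R$-algebra). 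The Artin-Rees statement itself is just the specialization of Lemma \ref{lem_subm} to the obviously $\mfr$-stable filtration $\{\mfr^i M\}$.
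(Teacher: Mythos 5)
Your proof is correct and takes exactly the same route as the paper: specialize Lemma \ref{lem_subm} to the canonical $\mfr$-stable filtration $\{\mfr^i M\}_{i\in\N}$. You have simply written out the easy verifications (that $\{\mfr^i M\}$ is an $\mfr$-stable $\mfr$-filtration and that the conclusion of Lemma \ref{lem_subm} unwinds to the stated identity) which the paper leaves implicit.
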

\begin{proof}
 It suffices to apply Lemma \ref{lem_subm} to the $\mfr$-stable $\mfr$-filtration $\{\mfr^i M\}_{i\in \N}$. 
\end{proof}
%
This leads us to the following famous result which is sometimes rephrased by saying that ideals are
 closed in the $\mfr$-adic topology.
\begin{thm}\label{thm_kit}(Krull's intersection theorem)  Every ideal $\I$ of $R$ satisfies\begin{equation*}\label{eq_krull_int}
                                                                    \bigcap_{i\in \N}(\I +\mfr^i) =\I.
                                                                      \end{equation*}
\end{thm}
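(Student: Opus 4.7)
The inclusion $\I\subset\bigcap_{i\in\N}(\I+\mfr^i)$ is obvious. For the reverse, my strategy is to reduce the statement to $\bigcap_i\mfr^iM=0$ in the finitely generated $R$-module $M:=R/\I$, and then to combine the Artin--Rees Lemma (Lemma~\ref{lem_artin_rees}) with Nakayama's Lemma.

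First, if $\I\not\subset\mfr$ then $\I$ contains an element outside the unique maximal ideal of the local ring $R$, which is therefore invertible; hence $\I=R$ and the equality is trivial. I may therefore assume $\I\subset\mfr$. Since $\mfr^iM=(\mfr^i+\I)/\I$ as submodules of $M=R/\I$, the desired inclusion $\bigcap_i(\I+\mfr^i)\subset\I$ is equivalent to $\bigcap_i\mfr^iM=0$.

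Set $N:=\bigcap_i\mfr^iM$. Applying Lemma~\ref{lem_artin_rees} to the submodule $N\subset M$, the $\mfr$-filtration $\{N\cap\mfr^iM\}_{i\in\N}$ of $N$ is $\mfr$-stable. But $N\subset\mfr^iM$ for every $i$ forces $N\cap\mfr^iM=N$, so this filtration is constantly equal to $N$, and $\mfr$-stability collapses to the single identity $\mfr N=N$. Since $R$ is Noetherian and $M$ is finitely generated, $M$ is a Noetherian $R$-module (Remark~\ref{rem_noeth_modules}); hence its submodule $N$ is itself finitely generated, say by $n_1,\ldots,n_k$.

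To conclude I invoke Nakayama's Lemma: from $\mfr N=N$ one writes $n_i=\sum_{j=1}^k a_{ij}n_j$ with $a_{ij}\in\mfr$, and the classical Cayley--Hamilton determinant trick yields $\det(\mathbf{1}-A)\cdot n_i=0$ for each $i$, where $A:=(a_{ij})$. Since $\det(\mathbf{1}-A)\equiv 1\pmod{\mfr}$ and $\mfr$ coincides with the set of non-units of the local ring $R$, this determinant is invertible; thus every $n_i$ vanishes, $N=0$, and the theorem follows. The only delicate point is bridging the gap between the a priori weak information provided by Artin--Rees (stability of a filtration) and the conclusion $N=0$; this bridge is precisely what Nakayama's Lemma supplies, using in an essential way that $R$ is local.
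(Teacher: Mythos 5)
Your proof is correct and follows essentially the same route as the paper's: both rely on the Artin--Rees Lemma to get $N=\mfr N$ for the intersection $N$, and then apply Nakayama's Lemma (using that $R$ is local) to conclude $N=0$. The only cosmetic differences are that the paper first treats the case $\I=0$ in $R$ itself and then passes to the quotient $R/\I$, whereas you apply Artin--Rees directly inside $M=R/\I$, and that the paper phrases Nakayama via a minimal-generating-set argument while you invoke the Cayley--Hamilton determinant version; both are the same reduction in substance.
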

\begin{proof}
 We start with the case $\I=0$.  Applying Lemma \ref{lem_artin_rees} to $N:=\bigcap_{i\in \N} \mfr^i$ and $M=R$, we get:
 $$N=\mfr  N .$$
 Assume that $N\ne 0$  and take a minimal system of generators $f_1,\dots, f_p$. We have: $$N=\mfr N=\mfr f_1\oplus \dots\oplus \mfr f_p.$$
 In particular $f_1=\sum_{i=1}^p x_i f_i$ with $x_i\in \mfr$ for all $i$, which implies that $(1-x_1)f_1=\sum_{i=2}^p x_i f_i$. As $R$ is local, $(1-x_1)$ is invertible, which, thanks to the latter equality, means that $f_2,\dots, f_p$ span $N$, in contradiction with our minimality assumption on the system of generators. This yields  $\bigcap_{i\in \N} \mfr^i=\{0\}$, as required.
 
 The general case can now be deduced from this particular case by quotienting by the ideal $\I$. Namely, set $A:=R/\I$  and let $q:R\to A$ be the quotient map. The ring $A$ is local and its maximal ideal is $\overline{\mfr}:=q(\mfr)$.   By the above, we thus have $$\bigcap_{i\in \N} \I+\mfr^i=\bigcap_{i\in \N}q^{-1}( \overline{\mfr}^i)=q^{-1}(\bigcap_{i\in \N} \overline{\mfr}^i)= q^{-1}(\{0\})= \I. $$
\end{proof}

\noindent{\bf Application to rings of analytic function-germs.} Let $y_0\in \R^m$.  We have seen that the ring $\A_{y_0}$ is Noetherian. This ring is moreover local and its  maximal ideal is the ideal $\mfr_{y_0}$ constituted by all the elements of $\A_{y_0}$ that vanish at the origin. 
 The following corollary of the above theorem, sometimes referred as Krull's Theorem, will be useful to us. 

\begin{cor}\label{cor_krull}
Let $h:\A_{y_0}^d\to \A_{y_0}$ be an $\A_{y_0}$-linear mapping and let  $a\in \A_{y_0}$. 
 If the equation $$h(f_1,\dots, f_d)=a$$ has  solutions $f_1,\dots, f_d$ in the ring of formal power series $\R[[Y_1,\dots, Y_m]]$ then it has  solutions $g_1,\dots, g_d$ in the ring $\A_{y_0}$. 
\end{cor}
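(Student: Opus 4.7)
The plan is to use Krull's intersection theorem (Theorem \ref{thm_kit}) applied to the image ideal of $h$, and approximate the formal solution by the truncations of its Taylor series.

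First, since $h$ is $\A_{y_0}$-linear, it is completely determined by the elements $b_j:=h(e_j)\in \A_{y_0}$ (for $e_1,\dots,e_d$ the canonical basis of $\A_{y_0}^d$), so that $h(\varphi_1,\dots,\varphi_d)=\sum_{j=1}^d b_j\varphi_j$ for every $(\varphi_1,\dots,\varphi_d)\in \A_{y_0}^d$. Note that the same formula makes sense for formal power series and gives an extension $\tilde h:\R[[Y]]^d\to \R[[Y]]$ (where $Y=(Y_1,\dots,Y_m)$), and the hypothesis reads $\tilde h(f_1,\dots,f_d)=a$ in $\R[[Y]]$. Let $\I:=h(\A_{y_0}^d)=\sum_{j=1}^d b_j\A_{y_0}$, which is an ideal of $\A_{y_0}$. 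The corollary amounts to showing that $a\in \I$.

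The ring $\A_{y_0}$ is Noetherian (Corollary \ref{cor_noetherian}) and local with maximal ideal $\mfr_{y_0}$, so by Theorem \ref{thm_kit} it suffices to establish that $a\in \I+\mfr_{y_0}^i$ for every $i\in\N$. Fix such an $i$, and let $P_j$ be the polynomial obtained by truncating the Taylor series of $f_j$ (as a formal power series at $y_0$) at order $i$, so that $f_j-P_j$ has no terms of degree $<i$ in $\R[[Y]]$. Each $P_j$ is a polynomial and thus belongs to $\A_{y_0}$, so $\sum_{j=1}^d b_j P_j\in \I$. Writing
$$a-\sum_{j=1}^d b_j P_j=\tilde h(f_1-P_1,\dots,f_d-P_d)=\sum_{j=1}^d b_j(f_j-P_j),$$
the left-hand side lies in $\A_{y_0}$ while the right-hand side shows that, as a formal power series, it has no terms of degree $<i$ at $y_0$.

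The last step is to identify the analytic germs whose Taylor series at $y_0$ vanishes up to order $i-1$ with $\mfr_{y_0}^i$. Assume for simplicity $y_0=0$. By iterating the standard Hadamard-type factorization $g(Y)=g(0)+\sum_{k=1}^m Y_k g_k(Y)$ (with $g_k$ analytic), any analytic germ vanishing to order $\ge i$ at the origin can be written as $\sum_{|\alpha|=i} Y^\alpha h_\alpha$ with $h_\alpha\in \A_{y_0}$. Since each $Y_k\in \mfr_{y_0}$, such a germ belongs to $\mfr_{y_0}^i$. Applying this to $a-\sum_j b_j P_j$, we obtain $a\in \I+\mfr_{y_0}^i$, as desired. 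Taking the intersection over all $i$ and invoking Theorem \ref{thm_kit} yields $a\in \I$, which provides the required $g_1,\dots,g_d\in \A_{y_0}$.

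The only delicate point is the final identification of $\mfr_{y_0}^i$ with the analytic germs vanishing to order $\ge i$; the nontrivial direction relies on the Hadamard-type factorization of analytic germs, which is standard and inherited from their convergent power series representation.
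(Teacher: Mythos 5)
Your proof is correct and follows essentially the same route as the paper: truncate the formal solution at order $i$, observe that the resulting error term lies in $\mfr_{y_0}^i$, conclude $a\in\I+\mfr_{y_0}^i$ for every $i$, and invoke Krull's intersection theorem. You have merely filled in the last step — the identification of germs vanishing to order $\ge i$ with elements of $\mfr_{y_0}^i$ via the Hadamard factorization — which the paper dismisses with a ``clearly''.
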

\begin{proof}Let $f_1,\dots, f_d$ be some formal power series which are solutions and let,  for every $j\le d$ and $i\in \N$,  $f_{j,i}$ denote the polynomial of degree $i$ obtained by truncating the formal series $f_j$ at the order $i$. Clearly $(a-h(f_{1,i},\dots,f_{d,i}))\in \mfr_{y_0}^{i+1}$ which means that $a\in \I +\mfr^i $ for all $i$, where $\I$ is the ideal  $h(\A_{y_0}^d)$. The result thus follows from Theorem \ref{thm_kit}.
\end{proof}
 Artin-Rees' Lemma can be established for a finitely generated module $M$, which entails that Krull's intersection theorem  can actually be proved not only for an ideal, but for every finitely generated $R$-module. Consequently, the above corollary is still valid for a linear mapping   $h:\A_{y_0}^d\to \A_{y_0}^p$, i.e., for a {\it linear system of equations}.
 Furthermore, it is not difficult to see that we can require the $g_i$'s to coincide with the  $f_i$'s at any prescribed order. 
 The just above corollary is however enough for our purpose, as we shall only need the following consequence in the proof of Proposition \ref{pro_preparation_la_fonctions} (which is crucial to establish Theorem \ref{thm_existence_cell_dec}). 


\smallskip

\begin{pro}\label{pro_de_finitude}
Let $y_0=(u_0,z_0)\in \R^{m-1}\times \R$. For each  $\psi \in \A_{y_0}$,  there exist  $A_0,\dots,A_d\in \A_{y_0}$ satisfying $A_i(y_0) \ne 0$ for all $i $, as well as  some $(m-1)$-variable analytic function-germs $c_0,\dots,c_d \in \A_{u_0}$,   such that for all $(u,z)$ near $y_0$:
$$ \psi(u,z)=\sum_{i=0}^d c_i(u)(z-z_0)^i A_i(u,z) .$$
\end{pro}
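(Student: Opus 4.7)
The plan is to expand $\psi$ as a power series in $z$, use Noetherianness of $\A_{u_0}$ to extract a finite ``triangular'' generating set among the coefficients, and then invoke Corollary~\ref{cor_krull} to promote a formal decomposition to an analytic one.

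Translate so that $y_0 = (0,0)$ and expand $\psi(u,z) = \sum_{j \ge 0} \psi_j(u)\, z^j$, where each $\psi_j \in \A_{u_0}$ (e.g., via Cauchy's integral formula in the $z$-variable, around a small circle avoiding $z_0$). The case $\psi \equiv 0$ is handled by $d = 0$, $c_0 = 0$, $A_0 = 1$, so assume $\psi \ne 0$. Since $\A_{u_0}$ is Noetherian (Corollary~\ref{cor_noetherian}), a greedy procedure --- add $j$ to a growing index set whenever $\psi_j$ is not already in the ideal generated by the previously added coefficients --- must terminate after finitely many additions, and yields indices $j_1 < \cdots < j_k$ with the crucial triangular property $\psi_j \in (\psi_{j_s} : j_s \le j)$ for every $j \ge 0$. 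I would then fix, for each $j$, a decomposition $\psi_j = \sum_{s : j_s \le j} \alpha_{j,s}(u)\, \psi_{j_s}(u)$ with $\alpha_{j,s} \in \A_{u_0}$, and in particular take $\alpha_{j_s, s} = 1$, $\alpha_{j_s, r} = 0$ for $r \ne s$.

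Plugging these decompositions into the $z$-expansion yields, at the level of $\R[[u,z]]$, the formal identity
$$\psi \;=\; \sum_{s=1}^{k} \psi_{j_s}(u)\, z^{j_s}\, \hat A_s(u,z), \qquad \hat A_s \;:=\; \sum_{\ell \ge 0} \alpha_{j_s+\ell,\, s}(u)\, z^{\ell},$$
with $\hat A_s(0,0) = \alpha_{j_s, s}(0) = 1$. Since $\hat A_s - 1$ is then divisible by $z$ in the formal sense, write $\hat A_s = 1 + z \hat C_s$ with $\hat C_s \in \R[[u,z]]$, and set $R := \psi - \sum_s \psi_{j_s}(u) z^{j_s} \in \A_{y_0}$. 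Formally one has $R = \sum_s \psi_{j_s}(u)\, z^{j_s+1}\, \hat C_s$, so Corollary~\ref{cor_krull} applied to the $\A_{y_0}$-linear map $(C_1, \dots, C_k) \mapsto \sum_s \psi_{j_s}(u)\, z^{j_s+1}\, C_s$ produces analytic $C_s \in \A_{y_0}$ satisfying $R = \sum_s \psi_{j_s}(u)\, z^{j_s+1}\, C_s$. Setting $A_s := 1 + z C_s$ we get $A_s(0,0) = 1$, hence $A_s$ is a unit at $y_0$, and $\psi = \sum_{s=1}^{k} \psi_{j_s}(u)\, z^{j_s}\, A_s(u,z)$. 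Reindexing into consecutive powers $i = 0, \dots, d := j_k$ by padding with $c_i = 0$, $A_i = 1$ at the missing indices gives the stated form.

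The main obstacle I anticipate is guaranteeing that the $A_i$ produced are genuine units, i.e.\ nonzero at $y_0$, using only the basic version of Corollary~\ref{cor_krull}. A direct application of Krull to the map $(A_1, \dots, A_k) \mapsto \sum_s \psi_{j_s}(u)\, z^{j_s}\, A_s$ would yield some analytic solution, but would give no information about its value at $y_0$. The trick of extracting the formal constant term $\hat A_s(0,0) = 1$ and solving instead for the correction $\hat C_s$ sidesteps this difficulty: because the new map carries an extra factor of $z$, any analytic solution $C_s$ automatically makes $1 + z C_s$ a unit. This is exactly what makes the basic corollary --- rather than its prescribed-order strengthening mentioned in the text --- sufficient for this proposition.
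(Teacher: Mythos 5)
Your proof is correct and uses essentially the same strategy as the paper's: expand in powers of $z$, use Noetherianness of $\A_{u_0}$ to extract a finite generating subset of coefficients, form a formal decomposition, and apply Corollary~\ref{cor_krull} to the linear equation in which the unknown already carries an extra factor of $z$, so that the analytic solution automatically yields units $1+zC_s$. The only cosmetic difference is that the paper takes the consecutive block $c_0,\dots,c_d$ (with $d$ large enough that it generates the full ideal of coefficients) whereas you extract a sparse ``triangular'' index set by a greedy procedure and then pad; both routes produce the same linear system and invoke Krull in the same way.
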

\begin{proof}We can assume that $y_0$ is the origin.
Write then for $(u,z)$ close to the origin $$\psi(u,z)=\sum_{i\in \N} c_i(u)z^i,$$ with $c_i$ $(m-1)$-variable analytic function-germ for every $i$. Since the ring of germs of analytic functions is Noetherian, the ideal generated by all the $c_i$'s is  generated by the germs at $0$ of a finite family   $c_0,\dots, c_d$,  $d\in \N$. For every $j >d$,  there thus exist  $(d+1)$ germs of analytic functions $b_{0,j},\dots,b_{d,j}$ such that $$c_j(u)=\sum_{i=0}^d b_{i,j}(u)c_i(u).$$
  Hence, as formal power series in the indeterminates $(U,Z)=(U_1,\dots,U_{m-1},Z)$ we have:
\begin{eqnarray*}
\psi(U,Z)&=&\sum_{i=0}^d c_i(U)Z^i+\sum_{j>d} \sum_{i=0}^d b_{i,j}(U) c_i(U)Z^j\\
&=&  \sum_{i=0}^d c_i(U)Z^i(1+Z f_i(U,Z)), 
\end{eqnarray*}
for some formal power series $f_i$, $i=0,\dots,d$. Let us consider the linear equation of formal power series:
$$\psi(U,Z)= \sum_{i=0}^d c_i(U)Z^i(1+Z F_i),$$
in the unknowns $F_0,\dots,F_d$.  This equation has a solution $F_i=f_i$, $i=0,\dots,d$, in the ring of formal power series. By Corollary \ref{cor_krull}, it must have a solution $g_i$, $i=0,\dots,d$, in the ring of convergent power series. It is then enough to set $A_i:=1+z g_i$.  
\end{proof}
\end{subsection}
\end{section}

\section{$\la$-functions and $\la$-cells}\label{sect_la_functions}
As we said in the outline of proof of Theorem \ref{thm_existence_cell_dec}, we will have to work with another class of functions, {\it the $\la$-functions}, that we introduce in this section. It will turn out that every globally subanalytic function is piecewise given by $\la$-functions (Proposition \ref{pro_la_and_globally subanalytic}).

The {\bf cube}\index{cube} of radius $\ep>0$ and centered at $a=(a_1,\dots,a_n) \in \R^n$ is
the set:
 $$\dbb(a,\ep):=\{(x_1,\dots,x_n) \in \R^n:\forall i,\; |x_i-a_i|\leq \varepsilon\}.$$
\nomenclature[am]{$\dbb(a,\alpha)$}{cube centered at $a$ of radius $\alpha$\nomrefpage}

\medskip

A {\bf restricted analytic function}\index{restricted analytic functions} is a function $\psi:C \to \R$, with $C$ cube of $\R^n$, which can be extended  analytically to an open neighborhood of  $C$.
 Let $\aba$ be the set of all the restricted analytic functions (of all the cubes).  

Let now $\lf$ be the set of functions obtained by adding the functions $(a,b)\ni x\mapsto x^\lambda$ (the power functions),
$\lambda \in \Q$ and $a<b \in \R\cup \{\pm \infty\}$ (with $a\ge 0$ if $\lambda\notin \N$ and $a b \ge 0$ if $\lambda<0$), to the family $\aba$. We then introduce the {\bf $\lf$-functions}\index{la-@$\la$-!function} inductively on what we will call the complexity, as follows.
\begin{enumerate}
	\item If $A$ is a globally subanalytic set then the restriction to $A$ of an element of $\lf$ is an $\lf$-function.
	\item If $\psi_1$ and $\psi_2$ are $\lf$-functions then so are $(\psi_1+\psi_2)$ and $\psi_1\cdot \psi_2$.
	\item Any function $\Psi:A\to \R$ of type $x\mapsto \psi(\phi_1(x),\dots,\phi_k(x))$, with $\psi$ as well as $\phi_1,\dots,\phi_k$ $\lf$-functions and $A$ globally subanalytic set, is an $\lf$-function.
\end{enumerate}

 Roughly speaking, the $\lf$-functions are the functions that coincide with the restriction to a globally subanalytic set of a function given by finite sums, products, and
composites of  elements of  $\lf$.
The minimal number of operations (sum, product, composition with an element of $\la$) needed to generate an $\la$-function $f$ is called the {\bf complexity}\index{complexity! of an $\la$-function} of the function $f$.  A mapping whose components are $\lf$-functions is  called an   {\bf $\lf$-mapping}\index{la-@$\la$-!mapping}.

Let us emphasize that $\la$-functions are not only the elements of the class $\la$. They are all what we can generate (using rules (i-iii)) with the elements of $\la$. In (ii) and (iii), we do not stipulate the domain of the functions, which, in virtue of (i), can be any subanalytic set such that the sums, products, and
composites are well-defined.

Since, by Properties \ref{pro_basic_properties_from_dfn}, the class of globally subanalytic mappings is closed under sums, products, and compositions, it directly follows from the above definition that $\la$-mappings are globally subanalytic. Proposition \ref{pro_la_and_globally subanalytic} below can be seen as a partial converse of this fact.  

The class of $\lf$-functions is much bigger than the class of restricted analytic functions, as shown by the following examples.
\begin{exa}
Since every polynomial is an $\la$-function, so is for instance the function $\frac{x-3}{\sqrt{x^2+y^4}}$ (on its domain). The function $e^x$, defined on $\R$, although analytic, is not an $\la$-function.  Its restriction to any bounded interval is however an $\la$-function, which entails for instance that so is the function $\R^2\setminus \{(0,0)\}  \ni (x,y)\mapsto e^{-\frac{x^2}{x^2+y^2}}$. 
\end{exa}

	 We then define the {\bf $\la$-cells}\index{la-@$\la$-!cell} of $\R^n$ by induction on $n$. 
Let $C$ be a cell of $\R^n$ and denote by  $B$ its basis. If $ n=0$ then $C$ is always an $\la$-cell. If $n\ge 1$, we say that $C$ is an  $\la$-cell if so is $B$  and if in addition one of the following properties holds:
	\begin{enumerate}[(i)]
	\item  $C$ is the graph of some  $\la$-function $\xi :B \to \R$. 
	\item  $C$ is a band $(\xi_1,\xi_2)$, $\xi_1<\xi_2$, where  $\xi_1$ is either   $ -\infty$  or an  $\la$-function on $B$, and $\xi_2$ is either $ +\infty$ or  an $\la$-function on $B$.
\end{enumerate}
 A cell decomposition of $\R^n$ consisting exclusively of $\la$-cells is called an {\bf $\la$-cell decomposition}. \index{la-@$\la$-!cell}  \index{la-@$\la$-!cell decomposition}

\section{Reduced functions}\label{sect_reduced_functions}  Roughly speaking, the reduced functions will be the functions that have a nice form, up to an $\la$-unit, which requires to define $\la$-units.

\begin{dfn}Let $C\subset \R^n$ denote an $\la$-cell, $B$ its basis, and let $\theta:B\to \R$ be an $\la$-function  satisfying $\Gamma_\theta\cap C=\emptyset$.

An {\bf $\la$-unit}\index{la-@$\la$-!unit} (in the variable $y$) of $C$ is a function $U$ on $C$ that can be written $\psi (V(x))$ with, for $x:=(\xt,x_n)\in C\subset \R^{n-1} \times \R$: $$V(x)=( b_1(\xt),\dots ,b_k(\xt), u(\xt)\,y^{\frac{1}{s}},  v(\xt)\,
y^{-\frac{1}{s}} ), \quad y:=|x_n-\theta(\xt)|,$$
 where $s \in \N^*$,  $b_1,\dots, b_k, u,v $ are analytic
$\la$-functions  on $B$ such that $V(C)$ is relatively compact,  and where  $\psi$ is an  analytic function on a neighborhood of   $cl(V(C))$  nowhere vanishing on this set.
\end{dfn}

%

We can say that an $\la$-unit is a Puiseux series in $y$ and $\frac{1}{y}$ which has coefficients that are analytic $(n-1)$-variable $\la$-functions on the basis of $C$, and which is bounded away from zero and infinity.
\begin{dfn}\label{dfn_reduced} Let $E\subset \R^n$ and let $C\subset E$ be an $\la$-cell of basis $B$.
A  function $\xi:E\to \R$ is {\bf reduced}\index{reduced function} on  $C$ if
 we can find analytic $\la$-functions   $a:B\to \R$ and $\theta:B\to \R$, as well as  $r \in \Q$, such that
 $\Gamma_\theta\cap C=\emptyset$ and
\begin{equation}\label{eq_preparation_dfn}
\xi(\xt ,x_n)= a(\xt)\cdot y^r\cdot U(\xt,y) , \quad y:=|x_n-\theta(\xt)|,
  \end{equation}
for all  $x=(\xt,x_n) \in C\subset \R^{n-1} \times \R$, where  $U$ is an $\la$-unit of $C$ in the variable $y$.

The function $\theta$ is then called the {\bf $\la$-translation of the reduction}\index{la-@$\la$-!translation}.
If $r$
is nonnegative, we say that the reduction is {\bf nondegenerate}\index{nondegenerate reduction}.

A function $\xi:E \to \R$ is {\bf reducible}\index{reducible function} if there is an  $\la$-cell decomposition compatible with $E$ such that $\xi$ is reduced on every cell $C \subset E$ of this cell decomposition.
\end{dfn}

In short, a  reduced function is merely, up to
a product with an $\la$-unit,  a power of $|x_n-\theta(\xt)|$ times an $(n-1)$-variable analytic $\la$-function $a$.

\begin{rem}\label{remark_unit}
Let a function $\xi$ be  reduced  with $\mathcal{L}$-translation $\theta$ on an $\la$-cell $C$ and let $\theta'$ be an $\la$-function on the basis of $C$. It easily follows from the above definitions that if $(x_n - \theta)$ is reduced on $C$ {\it with $\mathcal{L}$-translation $\theta'$} then so is $\xi$.
\end{rem}

\begin{exa}
In the above definition, the graph of $\theta$ lies outside the cell $C$ and thus can be bigger than the zero locus or the poles of the function.
  Let $f(x,y):=\frac{x^3}{y}$ be defined on the set
  $A:=\{(x,y)\in (0,1]^2: x^2 \le y\le x  \}$.   Although $f$ is smooth and nonzero on a neighborhood of $A$, any reduction of this function will involve on some cell an $\la$-translation $\theta$ whose graph is tangent to the $x$-axis. Note also that,  although $f$ is bounded, it is not possible to reduce $f$ in such a way that the exponent $r$ in (\ref{eq_preparation_dfn}) is positive on every cell.
\end{exa}

The main difficulty of this chapter is to show the following fact.

 \begin{pro}\label{pro_preparation_la_fonctions}
 If $C$ is an $\la$-cell then every $\la$-function on $C$  is reducible.
 \end{pro}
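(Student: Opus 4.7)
The plan is to prove this by a double induction: an outer induction on the dimension $n$ of the ambient space, and an inner induction on the complexity of the $\la$-function $\xi$. This must be carried out jointly with Lemma \ref{lem_la_cell_decomposition}, so that whenever the argument needs an $\la$-cell decomposition of $\R^{n-1}$ making specified $(n{-}1)$-variable analytic $\la$-functions of constant sign on the basis $B$, one has access to such a decomposition. The case $n=0$ is vacuous, and $n=1$ is handled directly: a power function is already in reduced form with $\theta\equiv 0$, and a one-variable restricted analytic function reduces via the classical Weierstrass/Puiseux factorization.

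For the complexity base case in dimension $n\ge 2$, where $\xi$ is the restriction to $C$ of an element of $\la$, a power function $x_n\mapsto x_n^\lambda$ is already in the form (\ref{eq_preparation_dfn}). The delicate subcase is when $\xi$ is restricted analytic: I would apply Proposition \ref{pro_de_finitude} at each point $(u_0,z_0)$ in (a compactification of) the closure of $C$ to obtain a local representation
\[
\psi(u,z)=\sum_{i=0}^{d}c_{i}(u)(z-z_{0})^{i}A_{i}(u,z),
\]
with $A_i$ non-vanishing near $(u_0,z_0)$ and $c_i\in\A_{u_0}$. Finitely many such cubes cover the compactification of $C$. On each cube, using the inductive hypothesis of Lemma \ref{lem_la_cell_decomposition} applied to the $(n{-}1)$-variable analytic $\la$-functions $c_i$ (and their ratios), one partitions $B$ into $\la$-subcells on which a single term $c_{i_0}(u)(z-z_0)^{i_0}A_{i_0}$ dominates the others; the remaining terms are absorbed into an $\la$-unit, yielding a reduction with $\theta(\tilde{x})=z_0$, exponent $r=i_0$, and leading coefficient $a=c_{i_0}$.

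For the complexity inductive step, $\xi$ is built from strictly simpler $\la$-functions by a product, a composition with a single element of $\la$, or a sum. The product case $\xi=\xi_1\xi_2$ is immediate once both factors are reduced with a common $\la$-translation: the powers add and the units multiply. Composition with a power function $\xi=\xi_1^{\lambda}$ is equally transparent, after using the joint induction to refine so that $a_1$ has constant sign. Composition with a restricted analytic $\psi$ is reduced to the sum/product cases by applying the base-case treatment to $\psi$ via Proposition \ref{pro_de_finitude} and then substituting the reduced forms of the $\phi_j$.

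The main obstacle, and the technical heart of the argument, is the sum case $\xi_1+\xi_2$, and specifically the need to produce a \emph{common} $\la$-translation. If $\xi_j$ is reduced with translation $\theta_j$ and $\theta_1\ne\theta_2$, I would use the joint induction to refine $B$ according to the sign of the $(n{-}1)$-variable analytic $\la$-function $|\theta_1-\theta_2|-\kappa\min(|x_n-\theta_1|,|x_n-\theta_2|)$ for a suitable constant $\kappa$. On a refined subcell where $|x_n-\theta_1|\le\tfrac12|x_n-\theta_2|$, the triangle inequality gives $|x_n-\theta_2|\sim|\theta_1-\theta_2|$, an $\la$-function of $\tilde{x}$ alone; its powers can be absorbed into the coefficient and the unit, effectively replacing $\theta_2$ by $\theta_1$ (this is exactly the content of Remark \ref{remark_unit}). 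Once a common translation is achieved and, say, $r_1\le r_2$, one writes
\[
\xi_1+\xi_2=y^{r_1}\bigl(a_1U_1+a_2\,y^{r_2-r_1}U_2\bigr).
\]
After a further $\la$-refinement (again via Lemma \ref{lem_la_cell_decomposition}) the bracket is either bounded away from zero, in which case it is itself an $\la$-unit and the reduction is complete, or it vanishes, in which case one applies Proposition \ref{pro_de_finitude} once more to the bracket regarded as an analytic $\la$-function of the auxiliary variable, extracting the order of vanishing and finishing the reduction. This sign-comparison/translation-unification step, together with the intertwining of the two inductions, is where the proof is really paid for.
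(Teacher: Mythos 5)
Your overall architecture is right: the joint induction with Lemma~\ref{lem_la_cell_decomposition}, the complexity induction, and the case split into power/product/composition/sum are exactly the paper's scheme, and your treatment of the base case (via Proposition~\ref{pro_de_finitude}), of the product (Remark~\ref{rem_produit}), and of the unification of $\la$-translations (the argument behind Lemma~\ref{lem_meme_morphisme} and Remark~\ref{remark_unit}) all match. But you have inverted which case carries the real weight, and this creates a genuine gap.

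You declare the sum $\xi_1+\xi_2$ to be ``the technical heart'' and dispose of composition with a restricted analytic $\psi$ in one sentence, claiming it ``is reduced to the sum/product cases by applying\dots Proposition~\ref{pro_de_finitude}\dots and then substituting the reduced forms of the~$\phi_j$.'' This does not terminate: Proposition~\ref{pro_de_finitude} yields $\psi=\sum_{i\le d}c_i(u)(z-z_0)^iA_i(u,z)$ with the $A_i$ still $m$-variable restricted analytic functions, so after substituting the reduced $\phi_j$ each $A_i(\phi_1,\dots,\phi_k)$ is \emph{again} a composition with a restricted analytic function of the same $\phi_j$'s --- the complexity has not dropped. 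Worse, the paper actually proves the sum case \emph{from} the composition case (write $\xi_1+\xi_2=\xi_2\cdot(1+\xi_1/\xi_2)$ and apply Proposition~\ref{pro_composition_analytique} to $f(y)=1+y$), so your chain of reductions is circular: you send composition to sum/product, while the paper sends sum back to composition. Even inside your sum argument you hit the same wall: the bracket $a_1U_1+a_2\,y^{r_2-r_1}U_2$ involves the $\la$-units $U_1,U_2$, which are themselves composites with restricted analytic functions of bounded fractional powers of $y$, so ``applying Proposition~\ref{pro_de_finitude} once more to the bracket'' is again precisely the composition problem, not something simpler.

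What is actually needed, and what is missing from your proposal, is the two-step mechanism in section~\ref{sect_reducing_la_functions}: use the non-vanishing of the $A_i$ from Proposition~\ref{pro_de_finitude} to reduce to a $z$-regular composite, then invoke \emph{Weierstrass Preparation} (Theorem~\ref{thm_weierstrass}) to pass to a monic polynomial in $z$, and run a \emph{separate descending induction on the order of $z$-regularity} $d$, distinguishing whether the Tschirnhaus-normalized coefficients dominate $x_n$ or not (Proposition~\ref{pro_prep_A_functions}). Then handle the negative powers introduced by a reduced $\phi_k$ by splitting the composite into a part analytic in $x_n$ and a part analytic in $c(\xt)/x_n$ (Lemma~\ref{lem_splitting} and Proposition~\ref{pro_lionrolin_1}) before assembling both into Proposition~\ref{pro_composition_analytique}. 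You only mention Weierstrass in the $n=1$ base case; without it in the composition step and without the splitting lemma, the argument does not close.
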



\medskip

The proof of this result occupies the whole of section \ref{sect_reducing_la_functions}.  To motivate the preliminaries that we shall carry out, we sketch the main strategy of the proof.

\paragraph{Strategy of the proof of Proposition \ref{pro_preparation_la_fonctions}.}
 The proof is carried out by induction on the number of variables of the function.
 $\la$-functions are explicitly known: they are finite sums, products, and  composites of power functions and restricted analytic functions. Arguing also by induction on the complexity of the function, we just have to show that each of these  operations (sum, product, power, composition with a restricted analytic function) preserves reducible functions. The main issue is indeed to show that composition with a restricted analytic function preserves reduced functions (Proposition \ref{pro_composition_analytique}). The difficulty in studying  $\la$-functions is that  they involve negative powers. The strategy is  to split the proof of  Proposition \ref{pro_composition_analytique} into two steps: we will first show, relying on Weierstrass' Preparation Theorem, that we can reduce $n$-variable $\la$-functions that are analytic with respect to the last variable $x_n$ (Proposition \ref{pro_prep_A_functions}) and then show that we can handle $n$-variable $\la$-functions that are  analytic functions of both  $x_n$  and $\frac{c(\xt)}{x_n}$, where $c$ is an $\la$-function (Proposition \ref{pro_lionrolin_1}).

\section{Reduction of $\la$-functions}\label{sect_reducing_la_functions}
All this section is devoted to the proof of  Proposition \ref{pro_preparation_la_fonctions}, which will be carried out by induction on the dimension of the ambient space. More precisely, we shall establish the  following facts by induction on  $n$:

\medskip

\noindent $(\mathcal{H}_n) \quad$ If $C$ is an $\la$-cell of $ \R^n$ then every $\la$-function $\xi:C \to \R$ is reducible.
\nomenclature[an]{$(\mathcal{H}_n)$}{induction hypothesis of the proof of proposition \ref{pro_preparation_la_fonctions}\nomrefpage}

\bigskip

The assertion $(\hn_0)$ is vacuous.  Fix $n\ge 1$ and assume that $(\hn_i)$ holds true for all $i<n$.

\medskip

 \noindent {\bf Note.} All the propositions and  lemmas of this section will also be proved by
induction on $n$. Hence, we
 will assume that all of them are true when $n$ is replaced by $(n-1)$ and simply check
them for this fixed value of  $n$ (the case $n=0$ always being trivial).

\begin{dfn}\label{dfn_comparable} We say that two functions $f_1:A \to \R$ and $f_2:A \to \R$ are {\bf comparable}\index{comparable} if either $f_1(x)\le f_2(x)$ for all $x \in A$ or $f_2(x)\le f_1(x)$ for all $x \in A$. 
We say that a finite collection of functions $f_i:A \to \R$, $i=1,\dots,k$, is {\bf totally ordered}\index{totally ordered} if  the $f_i$'s are all pairwise comparable with each other.

Given $x\in \R$\index{sign}\index{sign!constant}, let $sign(x):=1$ if $x$ is positive and $sign(x):=-1$ whenever $x$ is negative, and set  $sign(0):=0$. 
We say that a function $\xi:A \to \R$ {\bf has constant sign}\index{constant sign} on $B \subset A$ if the function $sign(\xi(x))$ is constant on $B$.\footnote{A nonnegative function is thus not always of constant sign (since it can vanish).}

\end{dfn}

\subsection{A few consequences of $(\hn_{n-1})$}
The proof of Proposition \ref{pro_preparation_la_fonctions} requires the following lemmas. As we mentioned, we shall just prove these lemmas for our fixed value of $n$.
\begin{lem}\label{lem_la_cell_decomposition} Let $C_1,\dots,C_k$ be $\la$-cells of $\R^{n}$ and let $\xi_i:C_i \to \R$, $i=1,\dots,k$ be reduced  $\la$-functions.
 There is an $\la$-cell decomposition of $\R^{n}$ compatible with the $C_i$'s  and such that for every $i$ the function $\xi_i$ has constant sign on every cell $D \subset C_i$ of this $\la$-cell decomposition.
\end{lem}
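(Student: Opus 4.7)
For each $i$, let $B_i$ be the basis of $C_i$ and write the reduced form
$$\xi_i(\xt, x_n) = a_i(\xt) \cdot y_i^{r_i} \cdot U_i(\xt, y_i), \quad y_i := |x_n - \theta_i(\xt)|,$$
with $a_i, \theta_i$ analytic $\la$-functions on $B_i$, $\Gamma_{\theta_i}\cap C_i=\emptyset$, and $U_i=\psi_i\circ V_i$ an $\la$-unit. Since $C_i$ is $\ccc^\infty$ diffeomorphic to a cube, it is connected, hence so is $V_i(C_i)$; as $\psi_i$ is nowhere vanishing on $cl(V_i(C_i))$, the unit $U_i$ has a constant sign $\epsilon_i\in\{\pm1\}$ on $C_i$. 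Moreover, $\Gamma_{\theta_i}\cap C_i=\emptyset$ together with the connectedness of $C_i$ forces $y_i>0$ on $C_i$, so $y_i^{r_i}>0$. Therefore $sign(\xi_i(\xt,x_n))=\epsilon_i\cdot sign(a_i(\xt))$, and the problem reduces to producing an $\la$-cell decomposition of $\R^{n-1}$, compatible with all the $B_i$'s, on which each $a_i$ has constant sign on every sub-cell of $B_i$, then lifting it to $\R^n$ in a way that remains compatible with the $C_i$'s.

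Collect the following $(n-1)$-variable $\la$-functions: the $a_i$'s themselves; the $\la$-functions $\eta$ describing the $C_i$'s (graphs or bounds of bands); and all pairwise differences $\eta_j-\eta_l$ on the intersections of the corresponding bases. By the induction hypothesis $(\hn_{n-1})$, each of these functions is reducible, which yields for each such function $f$ an $\la$-cell decomposition of $\R^{n-1}$ compatible with the domain of $f$ and on whose sub-cells contained in that domain $f$ is reduced. Applying Lemma \ref{lem_la_cell_decomposition} in dimension $n-1$ (available by the induction on $n$) to the resulting finite collection of (sub-cell, reduced function) pairs yields a single $\la$-cell decomposition $\D$ of $\R^{n-1}$, compatible with all the $B_i$'s, such that each $a_i$ and each difference $\eta_j-\eta_l$ has constant sign on every sub-cell of $\D$ contained in its domain.

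Now lift $\D$ to $\R^n$. For each $B'\in\D$, consider the restrictions to $B'$ of those $\la$-functions $\eta$ that describe some $C_i$ with $B_i\supset B'$. The constant-sign property of their pairwise differences on $B'$ forces these restrictions to be totally ordered on $B'$, so together with $\pm\infty$ they partition $B'\times\R$ into graphs and bands that are $\la$-cells over $B'$. Assembling these over all $B'\in\D$ yields an $\la$-cell decomposition $\E$ of $\R^n$; by construction, each $C_i$ is a union of cells of $\E$. Finally, for any sub-cell $D\subset C_i$ of $\E$, the projection of $D$ to $\R^{n-1}$ lies in a sub-cell of $B_i$ in $\D$ on which $a_i$ has constant sign, and hence $\xi_i$ has constant sign on $D$.

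The principal technical difficulty is the simultaneous control, within one and the same $\la$-cell decomposition of $\R^{n-1}$, of the reductions and sign properties of several $\la$-functions whose domains (in particular the bases $B_i$) may overlap in arbitrary ways; neither $(\hn_{n-1})$ nor the sign-refinement step of the lemma in the smaller dimension can be applied function by function without having to take a common refinement of several $\la$-cell decompositions at the end, which is itself the kind of problem the lemma solves. This is precisely why Lemma \ref{lem_la_cell_decomposition} and Proposition \ref{pro_preparation_la_fonctions} must be carried by a joint induction on $n$.
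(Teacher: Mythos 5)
Your proof is correct and follows essentially the same skeleton as the paper's: reduce the $a_i$'s by $(\hn_{n-1})$, invoke the lemma in dimension $n-1$ (joint induction) to arrange constant signs and a total order on the defining functions of the $C_i$'s, then lift the $(n-1)$-dimensional decomposition to $\R^n$ via graphs and bands. The one genuine difference is your opening observation: since each $C_i$ is connected, $U_i$ has a fixed sign $\epsilon_i$ on all of $C_i$, and $\Gamma_{\theta_i}\cap C_i=\emptyset$ forces $y_i>0$ throughout $C_i$, so $\mathrm{sign}\,\xi_i=\epsilon_i\cdot\mathrm{sign}\,a_i$ globally on $C_i$. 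This lets you omit the $\la$-translations $\theta_i$ from the list of functions used to build the lift, whereas the paper does put the $\theta_i$'s into the graph-and-band construction and then checks the sign of $y_i$ and $U_i$ cell by cell; both are valid, but yours isolates the fact that the $\theta_i$'s contribute nothing to the sign analysis (they only become relevant in later lemmas such as Lemma \ref{lem_meme_morphisme}). Your closing paragraph correctly identifies why this lemma must be proved jointly with Proposition \ref{pro_preparation_la_fonctions}: one needs $(\hn_{n-1})$ to reduce the $(n-1)$-variable functions before one can even feed them into the lower-dimensional instance of the lemma.
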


\begin{proof}
 For every $i\leq k$, since  $\xi_i$  is reduced,
 there  are  $(n-1)$-variable $\la$-functions $a_{i}$ and $\theta_{i}$ on $B_i$ (here $B_i$ stands for the basis  of $C_i$) such that:
$$\xi_i(\xt ,x_n)= a_{i}(\xt)\cdot y^{r_i}_i\cdot U_{i}(\xt,y_i) , \quad y_i:=|x_n-\theta_{i}(\xt)|,$$
where  $U_{i}(x,y_i)$ is an $\la$-unit in the variable $y_i$ and $r_i\in \Q$.
 Let also $\zeta_{1},\dots,\zeta_p$ be all the $(n-1)$-variable $\la$-functions defining the cells  $C_1,\dots, C_k$ (see Definition \ref{dfn_cell_decomposition}).

 By  $(\mathcal{H}_{n-1})$, the $a_i$'s are reducible.  Moreover, by induction on $n$, this lemma holds true if $n$ is replaced with $(n-1)$. We therefore can find an $\la$-cell decomposition $\C$ of $\R^{n-1}$ compatible with  the $B_i$'s and  such that for every $i\le k$, the $(n-1)$-variable function $a_{i}$ has constant sign on every cell (included in $B_i$).  For the same reason, we may also assume that the family constituted by the respective restrictions of the functions $\theta_{i}$, $i \le k$,  together with the functions $\zeta_i$, $i\le p$,  to each cell of $ \C$ (on which these functions are defined), is totally ordered.

 The cells of the desired $\la$-cell decomposition $\C'$ of $\R^n$ are now given by the  graphs and bands defined by  the respective restrictions   of the functions $\theta_{i}$, $i=1,\dots, k$,   and $\zeta_i$, $i=1,\dots, p$,   to the cells of $\C$  on which they are defined. 

Fix  $i \le k$ and let  $D$ be a cell of $\C'$ included in  $C_i$.
On  $D$,  since the functions $y_i$, $U_{i}$ (by definition of $\la$-units), and $a_{i}$ have constant sign, we see that  $\xi_i$ has constant sign as well.
\end{proof}

\begin{rem}\label{rem_la_cell}This lemma entails that,
given an $\la$-cell $C$ of $\R^n$ of basis $B$ and an $\la$-function $\phi:B\to \R$, we can find an $\la$-cell decomposition of $\R^n$ compatible with $C$ such that on every cell $E\subset C$ we have for all $(\xt,x_n)\in E$
either  $|x_n|\le |\phi(\xt)|$ or $|x_n| \ge |\phi(\xt)|$. 
Indeed, it is enough to apply Lemma \ref{lem_la_cell_decomposition} to the reduced $\la$-functions $\phi$, $x_n$, $(x_n+\phi)$, and $(x_n-\phi)$.
\end{rem}

\begin{rem}\label{rem_common_refinement} Applying the above lemma to the case where the $C_i$'s are the cells of two given cell $\la$-decompositions of $\R^n$, we can conclude that two $\la$-cell decompositions of $\R^n$ have a common refinement.
\end{rem}

\begin{rem}
 A reduced function on a cell is analytic on this cell. By $(\hn_{n-1})$ and the preceding remark, it means that, up to a refinement of the cell decomposition, we can always assume that some given $(n-1)$-variable $\la$-functions are analytic on every cell. 
\end{rem}

\begin{lem}\label{lem_image_inverse}
Let $C$ be an $\la$-cell of $\R^{n}$, $\phi$ be an  $\la$-function on the basis $B$ of $C$, $p\in\Z$,  and define a function on $C$ by $\xi(x):=\phi(\xt) x_n^p$, where $x=(\xt,x_n)$. Given two real numbers $a <b$, there is an $\la$-cell
 decomposition compatible with $\xi^{-1}([a,b])$.
\end{lem}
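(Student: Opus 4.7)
The idea is to translate the condition $a \le \phi(\xt)x_n^p \le b$ into explicit bounds on the variable $x_n$, expressed by $\la$-functions on the basis $B$, and then use Lemma~\ref{lem_la_cell_decomposition} to assemble an $\la$-cell decomposition from these data. As in all the lemmas of this section, I argue for the fixed value $n$, assuming the statement for $n-1$.

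First I would use $(\mathcal{H}_{n-1})$ together with Lemma~\ref{lem_la_cell_decomposition} to refine the cell decomposition of $\R^{n-1}$ carrying $B$ into an $\la$-cell decomposition $\mathcal{C}$ on which $\phi$ is reduced (in particular analytic) and has constant sign on each cell $D\subset B$. I would then lift $\mathcal{C}$ to an $\la$-cell decomposition $\mathcal{C}'$ of $\R^n$ compatible with $C$, refining further, via Remark~\ref{rem_la_cell} applied with $\phi\equiv 0$, so that the coordinate $x_n$ has constant sign on every band of $\mathcal{C}'$ contained in $C$. After these reductions it suffices to find, for each cell $C'\subset C$ of $\mathcal{C}'$, an $\la$-cell decomposition of $\R^n$ compatible with $\xi^{-1}([a,b])\cap C'$, and take a common refinement using Remark~\ref{rem_common_refinement}.

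If $p=0$, or if $C'$ is the graph of an $\la$-function $\zeta:D\to \R$, then $\xi_{|C'}$ pulls back to the $\la$-function $\phi(\xt)\,\zeta(\xt)^p$ on $D$ (using that the power function $t\mapsto t^p$ belongs to $\la$, and that $\zeta$ has constant sign on $D$ when $p<0$). The result then follows by applying $(\mathcal{H}_{n-1})$ and Lemma~\ref{lem_la_cell_decomposition} to the reductions of the $\la$-functions $\phi\zeta^p-a$ and $\phi\zeta^p-b$ on $D$, and lifting back to $C'$. Assume now that $p\ne 0$ and that $C'$ is a band of basis $D$ on which $\phi$ and $x_n$ both have constant, nonzero sign. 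Then the inequality $a\le \phi(\xt)x_n^p\le b$ rewrites, after dividing by $\phi(\xt)$ and taking $|p|$-th roots according to the sign of $x_n$, as $x_n$ lying between two explicit bounds. These bounds have the form $\pm (\alpha/\phi(\xt))^{1/|p|}$ with $\alpha\in\{a,b\}$ when $\alpha/\phi$ is nonnegative on $D$, and $\pm\infty$ (or a vacuous condition) otherwise. Since $\phi$ does not vanish on $D$, the function $\alpha/\phi$ is an $\la$-function on $D$, and composition with the power function $t\mapsto t^{1/|p|}$ keeps it in the class of $\la$-functions; the resulting bounding functions are therefore $\la$-functions on $D$.

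It then remains to apply Lemma~\ref{lem_la_cell_decomposition} to the family consisting of the $\la$-functions defining $C'$ and these new bounding functions to obtain an $\la$-cell decomposition of $\R^n$ compatible with $C'$ and with $\xi^{-1}([a,b])\cap C'$, which is what we want. The main obstacle I anticipate is the bookkeeping in this last step: one must enumerate the sign configurations of $\phi$, of $x_n$, of $a$ and $b$, and of the numerators $\alpha/\phi$ in a uniform way so that the bounding $\la$-functions are genuinely well-defined on the corresponding cell, and then verify that the union of these formulas does describe $\xi^{-1}([a,b])\cap C'$; once this is done systematically, the construction of the cell decomposition itself is routine via Lemma~\ref{lem_la_cell_decomposition} and Remark~\ref{rem_common_refinement}.
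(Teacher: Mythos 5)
Your proposal is correct and follows essentially the same route as the paper: reduce to cells on which $\phi$ and $x_n$ have constant sign, observe that the conditions $a\le\xi\le b$ then become sign conditions on reduced functions (explicit bounds on $x_n$ in terms of $\la$-functions of $\xt$), and apply Lemma~\ref{lem_la_cell_decomposition} once more. The paper's version states this in a few lines, leaving the case-by-case conversion of the inequality $a\le\phi(\xt)x_n^p\le b$ into such sign conditions as "easily checked," whereas you spell that bookkeeping out; the mathematical content is the same.
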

\begin{proof}
By Lemma \ref{lem_la_cell_decomposition}, there is an $\la$-cell decomposition compatible with $C$ such that $\phi$ and $x_n$ have constant sign on every cell. 
If $E\subset C$ is a cell of this cell decomposition, it is easily checked that $\xi^{-1}_{|E}([a,b])$ can be described by sign conditions on reduced functions. The result thus  follows after applying again Lemma \ref{lem_la_cell_decomposition}.
\end{proof}

\begin{lem}\label{lem_meme_morphisme}
 Given finitely many
reduced functions $\xi_1,\dots,\xi_k$ on an $\la$-cell $C$ of $\R^{n}$, we can find an $\la$-cell decomposition compatible with $C$
 such that on every cell $E \subset C$, all these functions are reduced {\it with the same $\la$-translation}.
\end{lem}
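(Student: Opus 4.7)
The plan starts with Remark~\ref{remark_unit}: since each $\xi_j$ is already reduced on $C$ with $\la$-translation $\theta_j$, it suffices to construct a refinement of $C$ on each cell $E$ of which there is some $\la$-function $\theta_E$ such that each $(x_n - \theta_j)$ is reduced on $E$ with $\la$-translation $\theta_E$. On each band cell of the refinement I will take $\theta_E$ to be one of the $\theta_m$'s, chosen as the one closest to $x_n$ on that cell.

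To build the refinement, I will first use $(\hn_{n-1})$ and Lemma~\ref{lem_la_cell_decomposition} applied to the $(n-1)$-variable $\la$-functions $\theta_i$ and $\theta_i - \theta_j$ so that, on every sub-cell of the basis, all of them are analytic and the $\theta_i$'s are totally ordered; after relabeling, $\theta_1 \le \dots \le \theta_k$. Above each such basis cell, I add to the refinement the graphs of the midpoints $(\theta_i + \theta_{i+1})/2$: this splits the bands into sub-bands on each of which some $\theta_m$ satisfies $|x_n - \theta_m| \le |x_n - \theta_i|$ for every $i$. A further refinement using the graphs of the reflections $2\theta_i - \theta_j$ (and applications of Remark~\ref{rem_la_cell}) then decides, for each $j$, whether $|x_n - \theta_m| \ge |\theta_j - \theta_m|$ or $|x_n - \theta_m| \le |\theta_j - \theta_m|$ on every remaining cell. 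I may also assume that for each $j$ the function $\delta_j := \theta_m - \theta_j$ is either identically zero or nowhere zero on the basis of the cell; in the first case $\theta_j = \theta_m$ and $\xi_j$ is immediately reduced with translation $\theta_m$.

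Fix a band cell $E$ and set $\eta := x_n - \theta_m$, $y := |\eta|$, $\epsilon := \mathrm{sign}(\eta)$ (constant on $E$). In Case~1 ($y \ge |\delta_j|$) set $V := \epsilon \delta_j / y$; then $|V| \le 1$, and the closeness inequality $|\eta| \le |\eta + \delta_j|$ expands, after division by $y^2$, to $V(V+2) \ge 0$, forcing $V(E) \subset [0,1]$. Hence $U := 1 + V$ is an $\la$-unit in $y$ (take $v = \epsilon\delta_j$, $s = 1$), and $x_n - \theta_j = \epsilon y (1 + V)$ exhibits $(x_n - \theta_j)$ as reduced on $E$ with translation $\theta_m$, $r = 1$, and $a = \epsilon$. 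In Case~2 ($y \le |\delta_j|$) set $V' := \epsilon y / \delta_j$; the same closeness now gives $V' \ge -1/2$, so $V'(E) \subset [-1/2, 1]$, and $1 + V'$ is an $\la$-unit (take $u = \epsilon/\delta_j$, analytic since $\delta_j$ is nowhere zero). Then $x_n - \theta_j = \delta_j (1 + V')$ is reduced on $E$ with translation $\theta_m$, $r = 0$, and $a = \delta_j$. On a graph cell $E \subset \Gamma_{\theta_i}$, the function $x_n - \theta_j = \theta_i - \theta_j$ depends only on $\tilde x$ and, after a further refinement from $(\hn_{n-1})$, is analytic; it is thus reduced on $E$ with any common $\theta$ satisfying $\Gamma_\theta \cap E = \emptyset$ (one can take $\theta = \theta_i + 1$), with $r = 0$, $a = \theta_i - \theta_j$, and $U \equiv 1$.

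The hard part is the non-vanishing requirement for the $\la$-unit: the function $\psi(t) = 1 + t$ vanishes at $t = -1$, and a priori the closure of $V(E)$ or $V'(E)$ could contain $-1$. The reflection refinement alone only yields $|V|, |V'| \le 1$, which is not enough. Selecting $\theta_m$ as the closest $\theta_i$ to $x_n$ via the midpoint refinement is precisely what confines $V(E)$ to $[0,1]$ and $V'(E)$ to $[-1/2, 1]$, both safely bounded away from $t = -1$. Once this is in place, Remark~\ref{remark_unit} transports the reduction from $(x_n - \theta_j)$ to $\xi_j$ with the same common translation $\theta_m$, completing the argument.
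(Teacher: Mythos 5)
Your proof is correct and follows essentially the same strategy as the paper's: choose, on each cell, the $\theta_m$ closest to $x_n$, then split into the two cases $|x_n-\theta_m|\lessgtr|\theta_m-\theta_j|$ and exhibit $(x_n-\theta_j)$ as an $\la$-unit times either $\epsilon y$ or $\delta_j$, concluding via Remark~\ref{remark_unit}. The only difference is cosmetic: where the paper abstractly invokes Lemma~\ref{lem_la_cell_decomposition} and Remark~\ref{rem_la_cell} to obtain comparability of $|x_n-\theta_i|$, $|\theta_i-\theta_j|$, you realize the same refinement explicitly by adjoining the graphs of the midpoints $(\theta_i+\theta_{i+1})/2$ and reflections $2\theta_i-\theta_j$, and your bounds $V\in[0,1]$, $V'\in[-1/2,1]$ are exactly the paper's ``same sign or $\le 1/2$'' dichotomy.
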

\begin{proof}Let $\theta_1,\dots,\theta_k$ denote the respective $\la$-translations of the reductions of the $\xi_i$'s. 
 As a consequence of Lemma \ref{lem_la_cell_decomposition} (see Remark \ref{rem_la_cell}), there is an $\la$-cell decomposition of $\mathbb{R}^n$ compatible with $C$ such that on each cell $D \subset C$, the functions $|x_n - \theta_i|,  |\theta_i - \theta_j|$, $i<j\le k$, are comparable with each other (see Definition \ref{dfn_comparable}) and the functions $(x_n - \theta_i)$,  $(\theta_i - \theta_j)$, $i<j\le k$, have constant sign.  Fix a cell $D$ and choose $j$ such that on $D$ we have for all $i\le k$:
\begin{equation}\label{fm2.3}
|x_n - \theta_j| \leq |x_n - \theta_i|.
\end{equation}
We are going to show that for all $i$, the function  $(x_n - \theta_i)$ is reduced on  $ D$ with $\mathcal{L}$-translation  $\theta_j$, so that the statement of the lemma will then follow from Remark \ref{remark_unit}. Fix $i \le k$.
The proof now breaks down into two cases.

\medskip

\underline{Case $1$}: $|x_n - \theta_j| \leq |\theta_j - \theta_i|$ on $D$.

\medskip 

If there is $\xt$ in the basis of  $D$ such that $\theta_j(\xt) = \theta_i(\xt)$ then $x_n \equiv \theta_j \equiv \theta_i$ on $D$ (since $(\theta_i - \theta_j)$ has constant sign), and the result is trivial. Otherwise, either the two functions $(x_n - \theta_j)$ and $(\theta_j - \theta_i)$ have the same sign or $\big|\dfrac{x_n - \theta_j}{\theta_j- \theta_i}\big| \leq \dfrac{1}{2}$ (by (\ref{fm2.3})). It means that $(1+\dfrac{x_n - \theta_j}{\theta_j- \theta_i} )$ is an $\mathcal{L}$-unit in the variable $(x_n-\theta_j)$ and hence, the function $(x_n - \theta_i)$ can be reduced by
\begin{equation}\label{eq_reduced_theta12}
x_n - \theta_i = (\theta_j - \theta_i)(1+\dfrac{x_n - \theta_j}{\theta_j- \theta_i}).
\end{equation}

\medskip

\underline{Case $2$}:  $|x_n - \theta_j| \geq |\theta_j - \theta_i|$ on $D$.

\medskip

If there is $x=(\xt,x_n) \in C$ such that $x_n = \theta_j(\xt)$ then $x_n\equiv \theta_j$ on $C$,  and the result is trivial. Otherwise, by (\ref{fm2.3}), $(x_n - \theta_j)$ and $(\theta_j - \theta_i)$ are of the same sign, and hence the function $(1+\dfrac{ \theta_j - \theta_i}{x_n -\theta_j}) $ is an $\mathcal{L}$-unit. As a matter of fact, the function $(x_n - \theta_i)$ can be reduced by writing
\begin{equation}\label{eq_reduced_theta21}
x_n - \theta_i= (x_n - \theta_j)(1 + \dfrac{\theta_j- \theta_i}{x_n -\theta_j}).
\end{equation}
\end{proof}

\begin{rem}\label{rem_produit}
A direct consequence of Remark \ref{rem_common_refinement} and  Lemma \ref{lem_meme_morphisme} is that the product of two reducible functions $\xi_1:C\to \R$ and $\xi_2:C\to \R$, where $C$ is an $\la$-cell of $\R^n$, is reducible. The quotient, if well-defined, is also reducible.
\end{rem}

\begin{lem}\label{lem_refinement_fonctions_dominantes}
Given two  reduced functions $f$ and $g$  on an $\la$-cell  $C$ of $\R^n$ and a positive constant $\ep$, there is an $\la$-cell decomposition compatible with $C$ such that on every cell $E \subset C$ either $|f| \le \ep |g|$, or $|g| \le \ep |f|$, or $|f|\sim |g|$.
\end{lem}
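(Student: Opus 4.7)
The plan is to reduce the trichotomy for the pair $(f,g)$ to a trichotomy for the single reduced function $h := f/g$, and then to case split on the exponent appearing in its reduction.

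By Lemma \ref{lem_meme_morphisme}, we first refine the cell decomposition so that on each subcell $D \subset C$ the functions $f$ and $g$ are reduced with the \emph{same} $\la$-translation $\theta$:
\begin{equation*}
f(\xt,x_n) = a(\xt)\, y^r\, U(\xt,y), \qquad g(\xt,x_n) = b(\xt)\, y^s\, V(\xt,y),
\end{equation*}
with $y = |x_n - \theta(\xt)|$ and $U,V$ $\la$-units. Invoking $(\mathcal{H}_{n-1})$ and Lemma \ref{lem_la_cell_decomposition} applied in dimension $n-1$, we further refine so that $a$ and $b$ have constant sign on the basis of $D$. On the subcells where $a \equiv 0$ (resp.\ $b \equiv 0$) one has $f\equiv 0$ (resp.\ $g\equiv 0$) and the conclusion $|f|\le\ep|g|$ (resp.\ $|g|\le\ep|f|$) is immediate, so we may assume $a$ and $b$ are both nowhere zero on $D$.

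Under this assumption, the quotient $h = (a/b)\, y^{r-s}\,(U/V)$ is itself reduced on $D$ with $\la$-translation $\theta$, since $U/V$ is readily seen to be an $\la$-unit. Let $M \ge 1$ be such that $1/M \le |U/V| \le M$ on $D$. The three target inequalities $|f|\le\ep|g|$, $|g|\le\ep|f|$, $|f|\sim|g|$ become respectively $|h|\le\ep$, $|h|\ge 1/\ep$, $|h|\sim 1$, so up to the bounded factor $|U/V|$ it suffices to control the quantity $|\alpha(\xt)|\, y^q$, where $\alpha := a/b$ and $q := r-s \in \Q$. If $q = 0$, then $|h| \sim |\alpha(\xt)|$, and one applies $(\mathcal{H}_{n-1})$ together with Lemma \ref{lem_la_cell_decomposition} in dimension $n-1$ to the $(n-1)$-variable functions $|\alpha| - \ep/M$ and $|\alpha| - M/\ep$ to split the basis into pieces on which $|h|$ belongs to exactly one of the three required regimes. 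If $q > 0$ (the case $q<0$ is symmetric, or reduces to this one by exchanging $f$ and $g$), we introduce the two $\la$-functions
\begin{equation*}
\mu_1(\xt) := \Bigl( \tfrac{\ep}{M\,|\alpha(\xt)|} \Bigr)^{1/q}, \qquad \mu_2(\xt) := \Bigl( \tfrac{M}{\ep\,|\alpha(\xt)|} \Bigr)^{1/q},
\end{equation*}
which really are $\la$-functions because $a,b$ have been given constant sign (so that $|\alpha|$ is an $\la$-function) and because $\la$ is closed under products, quotients and rational powers. The conditions $|h| \le \ep$, $|h| \ge 1/\ep$, and $|h| \sim 1$ then correspond respectively to $y \le \mu_1$, $y \ge \mu_2$, and $\mu_1 < y < \mu_2$, so applying Lemma \ref{lem_la_cell_decomposition} via Remark \ref{rem_la_cell} to compare $y$ with $\mu_1$ and $\mu_2$ produces an $\la$-cell refinement of $D$ on which exactly one of the three alternatives holds.

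The argument is essentially bookkeeping; the only real point of care, and therefore the main obstacle, is to verify at every step that the auxiliary expressions formed out of $f$ and $g$ — namely $a/b$, $U/V$, $|\alpha|^{1/q}$, and the thresholds $\mu_i$ — genuinely belong to the class of $\la$-functions, respectively of $\la$-units. This is precisely what is ensured by the preliminary normalization that fixes the signs of $a$ and $b$ on the basis (and that places $f$ and $g$ on a common $\la$-translation) before any absolute value or rational power is extracted.
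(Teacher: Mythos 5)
Your proof is correct and follows essentially the same strategy as the paper: normalize $f$ and $g$ to a common $\la$-translation via Lemma \ref{lem_meme_morphisme}, observe that the comparison then amounts to an inequality in $y = |x_n - \theta(\xt)|$ against $(n-1)$-variable $\la$-functions, and conclude by Lemma \ref{lem_la_cell_decomposition}. The only stylistic difference is that the paper dispenses with the units at the outset by noting it suffices to prove the lemma for arbitrarily small $\ep$ (since the trichotomy at $\ep'$ implies it at any $\ep \geq \ep'$, and units are bounded away from $0$ and $\infty$), whereas you carry the unit bound $M$ explicitly into the thresholds $\mu_1, \mu_2$ — both routes are sound and cost roughly the same. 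One tiny nit: Remark \ref{rem_la_cell} as stated compares $|x_n|$ to $|\phi(\xt)|$, whereas you need $|x_n-\theta(\xt)|$ against $\mu_i(\xt)$; the same argument applies (feed Lemma \ref{lem_la_cell_decomposition} the reduced functions $\mu_i$, $(x_n-\theta)$, and $(x_n-\theta\pm\mu_i)$), but it is worth citing the lemma directly rather than the remark.
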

\begin{proof}
By Lemma \ref{lem_meme_morphisme}, we may assume that $f$ and $g$ are reduced on the cells of an $\la$-cell decomposition with {\it the same} $\la$-translation on every cell.
Since it is enough to establish the lemma for arbitrarily small values of $\ep>0$ and because $\la$-units are bounded away from zero and infinity, it is enough to prove the result for some functions of the form $f(x)=a(\xt)|x_n-\theta(\xt)|^r$ and $g(x)=b(\xt)|x_n-\theta(\xt)|^s$, with $a$, $\theta$, and $b$ $\la$-functions on the basis $B$ of an $\la$-cell $C$, $r$ and $s$ in $\Q$, and $x=(\xt,x_n)\in C$.

 By Lemma \ref{lem_la_cell_decomposition}, we may assume that $a$,  $(x_n-\theta)$, and $b$ have constant sign on $C$. The inequality $|f|\leq \ep |g|$ on a cell $C$ now amounts to inequalities of type $x_n \le \phi(\xt)$ or $x_n\ge \phi(\xt)$ with $\phi$ $\la$-function on its basis $B$. But, again by Lemma \ref{lem_la_cell_decomposition}, given any $\la$-function $\phi$ on $B$, there is a refinement of our $\la$-cell decomposition such that the function $(x_n-\phi)$ has constant sign on every cell.
\end{proof}

\begin {lem}\label{lem_xn_et_theta}
Given an $n$-variable reducible function $\xi$, we may always assume, up to a refinement of the $\la$-cell decomposition, that on every cell
either $x_n \sim \theta(\xt)$ or $\theta(\xt)\equiv 0 $, where $\theta$ is the $\la$-translation of the reduction on the cell. 
\end {lem}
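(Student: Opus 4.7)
The plan is to process the given $\la$-cell decomposition cell by cell. Fix a cell $C$ of basis $B$ on which $\xi$ has reduction
$\xi(\xt,x_n)=a(\xt)\,y^{r}\,U(\xt,y)$ with $y=|x_n-\theta(\xt)|$. If $\theta\equiv 0$ on $B$ nothing is required, so assume $\theta\not\equiv 0$. First I would refine $B$ using $(\hn_{n-1})$ and Lemma \ref{lem_la_cell_decomposition} so that $\theta$ has constant sign on each piece and hence may be taken nowhere vanishing; a further application of Lemma \ref{lem_la_cell_decomposition} ensures $x_n$ has constant sign on every subcell of $C$ too.

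Next I would apply Lemma \ref{lem_refinement_fonctions_dominantes} to $f=x_n$ and $g=\theta$ with $\ep=1/2$, obtaining a refinement on each piece of which exactly one of
\[
(i)\; |x_n|\sim|\theta|,\qquad (ii)\; |\theta|\le\tfrac12|x_n|,\qquad (iii)\; |x_n|\le\tfrac12|\theta|
\]
holds. In case (i), combined with the constancy of signs, one gets $x_n\sim\theta$, and the original reduction already witnesses the claim.

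In cases (ii) and (iii) the strategy is to replace $\theta$ by the trivial $\la$-translation $\theta'\equiv 0$, using the factorisations
\[
x_n-\theta=x_n\bigl(1-\theta/x_n\bigr)\quad\text{in (ii)},\qquad x_n-\theta=-\theta\bigl(1-x_n/\theta\bigr)\quad\text{in (iii)}.
\]
In each case the parenthesised factor lies in $[1/2,3/2]$ and is an $\la$-unit in the variable $y':=|x_n|$: writing $\sigma:=sign(x_n)\in\{\pm1\}$, the factor equals $\psi(V(x))$ with $\psi(t)=1-t$ analytic and nonvanishing on a neighbourhood of $[-1/2,1/2]$, and $V$ a single component of the form $v(\xt)(y')^{-1}$ with $v=\sigma\theta$ in case (ii), respectively $u(\xt)\,y'$ with $u=\sigma/\theta$ in case (iii); in both situations $V(C)\subset[-1/2,1/2]$ is relatively compact. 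Thus $(x_n-\theta)$ is reduced on the piece with $\la$-translation $0$ (exponent $r'=1$ in (ii), $r'=0$ in (iii)), and by Remark \ref{remark_unit} so is $\xi$.

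For $\theta'\equiv 0$ to be an admissible $\la$-translation the piece must avoid $\{x_n=0\}$. In case (ii) this is automatic since $|x_n|\ge 2|\theta|>0$. In case (iii) the prior sign-constancy refinement on $x_n$ splits the piece into subpieces with $x_n$ of constant nonzero sign or the graph cell $x_n\equiv 0$; the first are handled as above, while the graph cell is essentially $(n-1)$-dimensional and is dealt with by invoking $(\hn_{n-1})$ directly on $\xi(\xt,0)$. The main obstacle will be verifying that the residual factors $(1-\theta/x_n)$ and $(1-x_n/\theta)$ genuinely fit the template of an $\la$-unit in $y'$; once the appropriate components of $V$ and the analytic function $\psi$ are identified, matching the definition is routine.
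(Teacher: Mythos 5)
Your argument is structurally the same as the paper's — refine via Lemma~\ref{lem_la_cell_decomposition} to make $x_n$ and $\theta$ of constant sign, split via Lemma~\ref{lem_refinement_fonctions_dominantes} into $|x_n|\sim|\theta|$, $|\theta|\le\frac12|x_n|$, $|x_n|\le\frac12|\theta|$, and replace $\theta$ by $0$ using the factorisations $x_n(1-\theta/x_n)$ and $\theta(x_n/\theta-1)$ in the last two cases. The verification that the residual factor really is an $\la$-unit in the new variable $|x_n|$ is correct.

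There is, however, a genuine gap in the treatment of case~(i). You assert that $|x_n|\sim|\theta|$ together with constancy of signs already yields $x_n\sim\theta$. But ``$x_n$ of constant sign'' and ``$\theta$ of constant sign'' does not force them to have the \emph{same} sign: a perfectly legitimate configuration after the refinement is $x_n>0$ and $\theta<0$ on the whole cell (for a band with $\theta$ below the lower bounding function, this happens as soon as that lower bound is positive). In that sub-case $x_n\not\sim\theta$ in the paper's sign-sensitive sense ($f\sim g$ requires $f\le Cg$ and $g\le Cf$ with $C>0$, which fails across a sign change), and the original reduction does not already satisfy the lemma's conclusion. The paper handles this explicitly: when $x_n$ and $\theta$ have opposite signs and $|x_n|\sim|\theta|$, the ratio $\theta/x_n$ is negative and bounded away from $0$ and $\infty$, so $(1-\theta/x_n)$ is bounded between $1$ and some constant, hence is again an $\la$-unit in the variable $|x_n|$; one then replaces the $\la$-translation by $0$ exactly as in your cases (ii)–(iii). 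You need to insert this sub-case; without it, cells where $x_n$ and $\theta$ are comparable in absolute value but opposite in sign fall through the argument entirely.
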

\begin{proof} Take a cell decomposition such that $\xi$ is reduced on every cell. 
 By Lemma \ref{lem_refinement_fonctions_dominantes} and Remark \ref{rem_common_refinement}, up to a refinement of the $\la$-cell decomposition we may assume that on every cell either $|x_n| \sim |\theta(\xt)|$ or $|x _n| \leq \frac
 {1}{2} |\theta(\xt)|$ or $|\theta (\xt)|\leq  \frac
 {1}{2}  |x_n| $.  
By  Lemma \ref{lem_la_cell_decomposition}, $x_n$ and $\theta$ may be assumed to  be of constant  sign on every cell, and we will assume them to be nonzero (since otherwise we are done).
If on a cell  $|\theta (\xt)|\leq  \frac
 {1}{2}  |x_n| $ then writing \begin{equation}\label{eq_x_n_moins_theta}
x_n-\theta(\xt)=x_n (1-\frac{\theta(\xt)}{x_n}),
  \end{equation}
we see that, since $ (1-\frac{\theta(\xt)}{x_n})$ is an $\la$-unit, we can assume the $\la$-translation of the reduction to be identically $ 0$. 

 Similarly, in the case where  $|x _n| \leq \frac
 {1}{2} |\theta(\xt)|$,  writing  $x_n-\theta(\xt)=\theta(\xt) (\frac{x_n}{\theta(\xt)} -1)$ also  immediately reduces to the case where the $\la$-translation of the reduction is identically $ 0$.

Finally, assume that  $|x_n|\sim |\theta(\xt)|$. If  these two functions are of the same sign, then $x_n\sim \theta(\xt)$ and we are done. Otherwise,   by  (\ref{eq_x_n_moins_theta}), 
we see that since $ (1-\frac{\theta(\xt)}{x_n})$ is an $\la$-unit when $x_n$ and $\theta$ have opposite signs and are equivalent, we can assume the $\la$-translation of the reduction to be identically $ 0$.
\end{proof}

\begin{lem}\label{lem_up_to_H}  Let $C\subset \R^{n} $ be an $\la$-cell of basis $B$ and  $c:B\to \R$  an $\la$-function. Let  $p\in \Z$ be such that the   $\la$-mapping $$H(x):=(\xt,c(\xt)| x_n|^{1/p}), \quad x=(\xt,x_n)\in C,$$ is well-defined on $C$.
If an $\la$-function $\zeta:H(C)\to \R$ is reducible then so is $\xi:=\zeta\circ H:C\to \R$. 
\end{lem}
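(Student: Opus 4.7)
The plan is to use the given reducibility of $\zeta$ to produce an $\la$-cell decomposition of $\R^n$ compatible with $C$ on which $\xi=\zeta\circ H$ is reduced, obtained by pulling back the cells under $H$ and then refining further. First, since $\zeta$ is reducible on $H(C)$, fix an $\la$-cell decomposition of $\R^n$ compatible with $H(C)$ on which $\zeta$ is reduced on every cell $D\subset H(C)$, say
$$\zeta(\xt,z)\;=\;a_D(\xt)\,|z-\theta_D(\xt)|^{r_D}\,U_D\bigl(\xt,|z-\theta_D(\xt)|\bigr),$$
with $a_D,\theta_D$ analytic $\la$-functions on the basis $B_D$ of $D$, $r_D\in\Q$, and $U_D$ an $\la$-unit. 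Because $H$ is the identity in the $\xt$-coordinates, $H^{-1}(D)\cap C$ is carved out by conditions of the form $c(\xt)|x_n|^{1/p}\lessgtr \eta(\xt)$ where $\eta$ is one of the $\la$-functions defining $D$; using $(\hn_{n-1})$ on $c$ and the $\eta$'s, Lemma~\ref{lem_la_cell_decomposition}, and Lemma~\ref{lem_image_inverse}, these conditions translate into inequalities of the form $|x_n|\lessgtr(\eta/c)^p$ whose thresholds are $\la$-functions on $B_D$. This produces an $\la$-cell decomposition of $\R^n$ compatible with $C$ on whose cells $c$, $\theta_D$ and $x_n$ all have constant sign.

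One then reduces $\xi$ on each resulting cell $E\subset C$ by case analysis. If $c\equiv 0$ on the basis of $E$, then $\xi|_E$ depends only on $\xt$ and is reducible by $(\hn_{n-1})$; otherwise set $\phi(\xt):=\theta_D(\xt)/c(\xt)$ and apply Lemma~\ref{lem_refinement_fonctions_dominantes} to arrange that on each cell exactly one of three alternatives holds: $|x_n|^{1/p}\sim|\phi|$, $|x_n|^{1/p}\ll|\phi|$, or $|\phi|\ll|x_n|^{1/p}$. In either dominance case the smaller term is absorbed into an $\la$-unit; for instance when $|\phi|\ll|x_n|^{1/p}$ one writes
$$c(\xt)|x_n|^{1/p}-\theta_D(\xt)\;=\;c(\xt)\,|x_n|^{1/p}\Bigl(1-\phi(\xt)\,|x_n|^{-1/p}\Bigr),$$
in which the bracketed factor is an $\la$-unit in the variable $y=|x_n|$, and $\xi$ becomes reduced on $E$ with $\la$-translation identically $0$ (invoking Remark~\ref{rem_produit} to combine the pieces). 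The symmetric case collapses $\xi$, up to an $\la$-unit, to an $(n-1)$-variable $\la$-function, reducible by $(\hn_{n-1})$.

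The central case $|x_n|^{1/p}\sim|\phi|$ exploits the factorization
$$|x_n|^{1/p}-\phi(\xt)\;=\;\frac{|x_n|-\phi(\xt)^p}{Q(\xt,x_n)},\qquad Q:=\sum_{j=0}^{p-1}|x_n|^{j/p}\phi(\xt)^{p-1-j},$$
valid for $p\in\N^*$ once signs are sorted out (the case $p<0$ being handled analogously after extracting a suitable power of $(|x_n|\,\phi^{|p|})^{-1}$). In this regime all $p$ summands of $Q$ are pairwise comparable, so $Q$ factors as $\phi(\xt)^{p-1}$ times an $\la$-unit depending on the ratio $|x_n|^{1/p}/\phi$; the numerator is of the form $\pm(x_n-\psi(\xt))$ with $\psi:=\pm\phi^p$ an $\la$-function on $B_D$ (reducible by $(\hn_{n-1})$), so $(x_n-\psi)$ is reduced on $E$ with $\la$-translation $\psi$. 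Assembling the factors via Remark~\ref{rem_produit} produces the desired reduced form for $\xi|_E$. The main obstacle is precisely this equivalence case: one must verify that the factor $Q$, after being raised to the rational power $-r_D$, genuinely fits the $\la$-unit template (its image under the appropriate auxiliary $\la$-mapping lies in a relatively compact region on which a nowhere-vanishing analytic function is defined), and that the signs of $\phi$ and of $p$ are handled so that $\phi^p$ and the $p$-th root factorization are meaningful throughout; it is for these fine comparisons that the refinement provided by Lemma~\ref{lem_refinement_fonctions_dominantes} is indispensable.
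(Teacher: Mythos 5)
Your proof is correct and follows essentially the same route as the paper's. You write $\xi=\zeta\circ H$ in the form $b(\xt)\,y^r\,U(\xt,y)$ with $y=|c(\xt)|x_n|^{1/p}-\theta(\xt)|$ and reduce the problem to reducing $y$, normalize $c$ to $1$, split into cases on the relative sizes of $|x_n|^{1/p}$ and $|\theta/c|$, dispose of the two dominance cases by absorbing the smaller term into an $\la$-unit, and handle the equivalence case via the difference-of-$p$-th-powers factorization with a denominator that is $\phi^{p-1}$ times an $\la$-unit — exactly the paper's argument, the only cosmetic difference being that the paper packages the trichotomy through Lemma~\ref{lem_xn_et_theta} rather than invoking Lemma~\ref{lem_refinement_fonctions_dominantes} directly.
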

\begin{proof}
By Lemma \ref{lem_la_cell_decomposition}, we can  assume that $x_n$ and $c$ are of constant (nonzero) sign on $C$  (we will assume that they are positive for simplicity). Observe that  $H(C)$ is then an $\la$-cell and that the inverse image under  $H$  of an $\la$-cell included in $H(C)$ is an $\la$-cell.
If $\zeta$ is reducible then $\xi$ may  be written on every cell $E \subset C$ of a suitable  $\la$-cell decomposition compatible with $C$: $$\xi(x)=\zeta(H(x))=b(\xt)\cdot y(x)^r \cdot U(\xt,y(x)), \quad y(x):=|c(\xt)\cdot x_n^{1/p}-\theta(\xt)|,$$
where $U(\xt,y(x))$ is an $\la$-unit in the variable $y$ on $E$, $r \in \Q$, and $\theta$ as well as $b$ are $\la$-functions on the basis of $E$. It suffices to show that $y$ is reducible. 

By Lemma \ref{lem_la_cell_decomposition}, refining the $\la$-cell decomposition if necessary, we may assume that  $\theta$ has constant sign on every cell included in $C$. 
If  $\theta \equiv 0$ on a cell   then the result is clear on this cell. Moreover,
possibly rewriting $y$ as $x_n^{1/p} \cdot |c(\xt)-\theta(\xt)\cdot x_n^{-1/p}|$, we see that it suffices to address the case where $p$ is positive (see Remark \ref{rem_produit}).
As $c$ nowhere vanishes, $y$ can be factorized $c(\xt)|x_n^{1/p}-\frac{\theta(\xt)}{c(\xt)}| $, which means that, up to a change of $\theta$, we may assume that $c\equiv 1$. %

 Thanks to Lemma \ref{lem_xn_et_theta}, we can suppose that $x_n\sim \theta(\xt)$ on $H(E)$. As we assume $c\equiv 1$, it means that we have on $E$ 
\begin{equation}\label{eq_c_x_theta}
 x_n\sim \theta(\xt)^p   .                                                                                                                                                                                                                                                                                                                                                                                                                                                                                                                                                                                                                                                                                                                                      \end{equation}
Write then \begin{equation}\label{eq_fractionx_n_theta}
y= x_n^{1/p}-\theta(\xt)=\frac{x_n-\theta(\xt)^p}{x_n^{\frac{p-1}{p}}+x_n^{\frac{p-2}{p}}\theta(\xt)+\dots + \theta(\xt)^{p-1}}.
\end{equation}
Let $D(x)$ denote the denominator of this fraction. As $D$ is a sum of positive terms which are all $\sim$ to $ \theta(\xt)^{p-1}$ (by (\ref{eq_c_x_theta})), we clearly have $D(x)\sim \theta(\xt)^{p-1}$. Hence,   the function $W(x):=\theta(\xt)^{1-p}D(x)$ is  bounded away from zero and infinity. It is therefore an $\la$-unit, which means in particular that it is reduced. 
 Therefore, by Lemma \ref{lem_meme_morphisme} (see Remark \ref{rem_produit}) and  (\ref{eq_fractionx_n_theta}), $y$ is reducible. 
\end{proof}

 \subsection{Reduction of $\mathcal{L}$-functions}
The first step of the reduction process deals with functions which, roughly speaking, are ``analytic in the last variable'':
\begin{pro}\label{pro_prep_A_functions} Let $C\subset \R^n$ be an $\la$-cell of basis $B$ and let $\phi_1,\dots,\phi_k$ be $\la$-functions on $B$. Set  $$\Phi(x):=(\phi_1(\xt),\dots,\phi_k(\xt),x_n), \quad x=(\xt,x_n) \in C,$$  and let  $\psi$ be an analytic function on a neighborhood of $cl(\Phi(C))$.  If $\Phi(C)$ is bounded then the function $\xi:=\psi\circ \Phi$ admits a nondegenerate reduction.
 \end{pro}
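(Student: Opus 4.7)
}
The plan is to expand $\psi$ locally at each point of $cl(\Phi(C))$ via Proposition \ref{pro_de_finitude}, to patch the expansions through compactness and cell decomposition, and then to extract a dominant term so as to produce a reduction with a constant $\la$-translation.

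First I would apply Proposition \ref{pro_de_finitude} at every point $y_p:=(u_p,z_p)\in cl(\Phi(C))$, obtaining on an open cube $N_p$ around $y_p$ a representation
\[ \psi(u,z)=\sum_{i=0}^{d_p} c_{p,i}(u)\,(z-z_p)^{i}\,A_{p,i}(u,z) \]
with $A_{p,i}$ analytic and, after shrinking $N_p$, nowhere vanishing on $N_p$. Compactness of $cl(\Phi(C))$ provides a finite subcover $N_1,\dots,N_\ell$, and Lemma \ref{lem_la_cell_decomposition} gives a refinement of $C$ into $\la$-cells on each of which $\Phi$ maps into a single $N_p$. On such a cell $E$ assigned to $N_p$, the induction hypothesis $(\mathcal{H}_{n-1})$ applied to the $(n-1)$-variable $\la$-functions $c_{p,i}\circ\phi$, together with Lemma \ref{lem_la_cell_decomposition}, allows one more refinement ensuring that each $c_{p,i}\circ\phi$ is analytic and of constant sign on every subcell. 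Let $I$ be the set of indices $i$ for which $c_{p,i}\circ\phi$ is nowhere zero on the cell; the case $I=\emptyset$ gives $\xi\equiv 0$ directly.

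Each function $c_{p,i}(\phi(\xt))|x_n-z_p|^i$ is then reduced on the cell (with $\la$-translation $\theta\equiv z_p$ and trivial $\la$-unit). Iterating Lemma \ref{lem_refinement_fonctions_dominantes} for an $\epsilon>0$ to be chosen, I refine so that on each resulting cell a single dominant index $i_0\in I$ satisfies $|c_{p,i}(\phi(\xt))|\cdot|x_n-z_p|^i\leq \epsilon\,|c_{p,i_0}(\phi(\xt))|\cdot|x_n-z_p|^{i_0}$ for every $i\in I\setminus\{i_0\}$. Factoring the dominant term out then gives
\[ \xi(x)=c_{p,i_0}(\phi(\xt))\,(x_n-z_p)^{i_0}\,A_{p,i_0}(\phi(\xt),x_n)\cdot U(x), \]
where $U(x)=1+\sum_{i\in I\setminus\{i_0\}}\beta_i(\xt)(x_n-z_p)^{i-i_0}A_{p,i}/A_{p,i_0}$ with $\beta_i:=c_{p,i}/c_{p,i_0}$; taking $\epsilon$ small enough with respect to the uniform bounds of $A_{p,i}/A_{p,i_0}$ on $N_p$ forces $|U-1|<1/2$. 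Dealing with the lower-dimensional subcell $\{x_n=z_p\}$ separately via $(\mathcal{H}_{n-1})$ applied to $\xt\mapsto\psi(\phi(\xt),z_p)$, the constant $\theta\equiv z_p$ satisfies $\Gamma_\theta\cap E=\emptyset$ on the remaining subcells. Setting $a(\xt)=\pm c_{p,i_0}(\phi(\xt))$, $r=i_0\geq 0$ (which provides the nondegeneracy) and $y=|x_n-z_p|$ then yields the required nondegenerate reduced form, provided the residual factor $A_{p,i_0}(\phi(\xt),x_n)\cdot U(x)$ can be exhibited as an $\la$-unit in $y$.

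This last verification is the main obstacle: the $\la$-functions $\beta_i$ are generally unbounded and so cannot be inserted directly as $b$-inputs of the $\la$-unit format. To circumvent this I would apply Lemma \ref{lem_refinement_fonctions_dominantes} once more, this time to the functions $|\beta_i|^{1/|i-i_0|}$, separately for indices $i>i_0$ and $i<i_0$, and pick dominant indices $i_1^+>i_0$ and $i_1^-<i_0$ (whenever they exist in $I$). With the choice $s=1$, $u(\xt)=|\beta_{i_1^+}|^{1/(i_1^+ - i_0)}$ and $v(\xt)=|\beta_{i_1^-}|^{1/(i_0 - i_1^-)}$ (taking $u\equiv 1$ or $v\equiv 0$ if the corresponding side of $i_0$ is empty), the dominance inequalities force $u(\xt)\,y$ and $v(\xt)/y$ to be bounded on $E$; simultaneously, all ratios $\beta_i/u^{i-i_0}$ (for $i>i_0$) and $\beta_i/v^{i_0-i}$ (for $i<i_0$) become bounded analytic $\la$-functions of $\xt$, which I adjoin as $b$-inputs together with $\phi_1,\dots,\phi_k$. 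The analytic function $\psit$ is then assembled from these inputs so as to represent $A_{p,i_0}\cdot U$, evaluating each $A_{p,i}(\phi,x_n)$ at $x_n=z_p+\sigma y$ where $\sigma$ is the constant sign of $x_n-z_p$ on the cell; non-vanishing of $\psit$ on $cl(V(C))$ follows at once from $|U-1|<1/2$.
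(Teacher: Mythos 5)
Your plan breaks down at the dominant-term extraction step. Lemma~\ref{lem_refinement_fonctions_dominantes} gives, on each cell, one of \emph{three} alternatives for a pair of reduced functions: $|f|\le \ep|g|$, or $|g|\le\ep|f|$, or $|f|\sim|g|$. Your argument tacitly assumes you can always refine into cells where only the first two occur, producing a strictly dominant index $i_0$; but the third alternative cannot be eliminated. Already for $\psi(z)=1-z$ (so $c_0\equiv 1$, $c_1\equiv -1$, $A_0\equiv A_1\equiv 1$) on a cell where $x_n$ ranges over, say, $(\tfrac12,\tfrac32)$ and $z_p=0$, the two monomials $|c_0||x_n|^0$ and $|c_1||x_n|^1$ are $\sim$ to one another and no further refinement makes one $\ep$-small relative to the other, because $\xi(x)=1-x_n$ changes sign across $x_n=1$ inside the cell. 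In that situation the sum cannot be a single monomial $|x_n-z_p|^{i_0}$ times a unit: a unit never vanishes, while $\xi$ does.

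The underlying reason is that you fix the $\la$-translation $\theta\equiv z_p$ (the constant given by the cube), whereas in general the reduction requires a nonconstant $\la$-function $\theta(\xt)$ that tracks the zeros of $\xi$ on the cell. This is precisely what the paper's proof produces, and it does so by a mechanism your proposal does not attempt: after your Step~1-analogue (your treatment via Proposition~\ref{pro_de_finitude} and compactness does track the paper's first step), the paper invokes Weierstrass Preparation to replace $\psi$ (up to a unit) by a monic polynomial $z^d+a_1(u)z^{d-1}+\cdots+a_d(u)$, uses the Tschirnhaus change of variable $z\mapsto z-a_1(u)/d$ (which is exactly where a nonconstant $\theta$ is injected), and then descends on the order $d$ of $z$-regularity via the rescaling $x_n\mapsto |b_j|^{1/j}x_n$ (Case I) or reads off the reduction directly when $|x_n|$ dominates all $|b_j|^{1/j}$ (Case II). Without an analogue of this translation-plus-descent, your construction has no way to place the $\la$-translation at the zero set of $\xi$, and the assembly of the $\la$-unit in your final paragraph is then attempting to patch a setup that cannot be correct.

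To repair the proof you would need to incorporate Weierstrass Preparation (or some equivalent device for locating and factoring roots) and allow $\theta$ to be a genuine $\la$-function produced by a change of variable, together with an induction that reduces the number of roots; the compactness and $(\hn_{n-1})$ bookkeeping you set up is fine, but it is only the outer layer of the argument.
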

\begin{proof}
\noindent {\bf Step 1.} Reduction to the case where $\psi(u_1,,\dots,u_k,z)$ is $z$-regular on a neighborhood of $cl(\Phi(C))$.

By Proposition \ref{pro_de_finitude}, for every  $y_0=(u_0,z_0) \in cl(\Phi(C))\subset \R^k \times \R$,  there is $\ep_{y_0}>0$ and $d\in \N$ for which $\psi$ has the following form on the cube $\dbb(y_0,\ep_{y_o})\subset \R^k \times \R$:
\begin{equation}\label{eq_psi_horm}
 \psi(u,z)=\sum_{i=0}^d c_i(u)(z-z_0)^i A_i(u,z) ,
\end{equation}
where the $c_i$'s are  analytic functions on $\dbb(u_0,\ep_{y_0})$ and the $A_i$'s are  analytic functions on $\dbb(y_0,\ep_{y_0})$ nowhere zero on this set.

 As $cl(\Phi(C))$ is compact, we can extract a finite covering by such cubes. By Remark \ref{rem_common_refinement} and Lemma \ref{lem_image_inverse}, there is an $\la$-cell decomposition compatible with the inverse images under $\Phi$ of the elements of this finite covering.

 Fix a cell $E\subset C$ (to show that $\xi$ is reducible, we may focus on one single cell, by Remark \ref{rem_common_refinement}). By construction, $\Phi(E)$ fits in some cube $\dbb(y_0,\ep_{y_0})$, with $y_0=(u_0,z_0) \in cl(\Phi(E))$, on which (\ref{eq_psi_horm}) holds.  Define some  $\la$-functions  on the basis $D$ of $E$ by: $$g_i(\xt):=c_i(\phi_1(\xt)\dots,\phi_k(\xt)), \quad   i=0,\dots, d.$$ By $(\hn_{n-1})$ and Lemma \ref{lem_la_cell_decomposition},  up to a refinement of the $\la$-cell decomposition, we can assume that the family $|g_0|,\dots, |g_d|$ is totally ordered on $D$, and that  if some $g_i$  vanishes on  $D$ then it is identically zero on this set.

 Let $m\leq d$ be the smallest integer  such that $|g_m|=\max_{i\leq d} |g_i|$ on $D$.  If all the $g_i$'s  are zero on $D$, the function $\xi$ is reduced on $E$, since  identically zero. We thus will assume that  $g_m$ is nowhere zero.

Define now a function $\varphi$ by setting for $(t,u,z)\in \R^{d+1}\times \dbb(y_0,\ep_{y_0})$:
 $$\varphi(t,u,z):=\sum_{i=0} ^d t_i\, (z-z_0)^i\, A_i(u,z).$$
 A straightforward computation of partial derivatives shows that this analytic function is $z$-regular of order at most $m$ at any $(t,u,z)$, with $t \in [0,1]^{d+1}$ satisfying $t_m=1$ and $(u,z) \in \dbb(y_0,\ep_{y_0})$ (if $\ep_{y_0}$ was chosen small enough). To complete  Step $1$, we are going to show that it suffices to work with $\varphi$ instead of $\psi$.

 As $g_m$ nowhere vanishes on $D$, by (\ref{eq_psi_horm}), for every $x\in E$ we have  $\xi(x)=g_m(\xt) \cdot \varphi( \Theta(x))$, where $\Theta$ is  the bounded  $\la$-mapping $$\Theta(x):=(g_0(\xt)/g_m(\xt),\dots,g_d(\xt)/g_m(\xt),\Phi(x)).$$
As $g_m$ is an $(n-1)$-variable function, it is enough to reduce $\varphi \circ \Theta$. As $\varphi(u,z)$ is $z$-regular at any point of $cl(\Theta(E))$, this completes Step $1$.

  \medskip

  \noindent {\bf Step 2.} Proof in the case where $\psi(u_1,\dots,u_k,z)$ is   $z$-regular near  $cl(\Phi(C))$.

   Let $d$ be the greatest order of  $z$-regularity of $\psi$  near $cl(\Phi(C))$ and let us argue by induction on $d$ (the case $d=0$ being trivial).

 By Weierstrass Preparation Theorem, given  a point of $cl(\Phi(C))$, there is a cube centered at this point such that the function $\psi$ is,
 up to a unit, a polynomial with analytic coefficients.
 As $cl(\Phi(C))$ is compact, we can extract a finite covering by such cubes.
 By  Lemma \ref{lem_image_inverse} (and Remark \ref{rem_common_refinement}), there is  an $\la$-cell decomposition $\E$ of $\R^n$ compatible with $C$ and the respective preimages under $\Phi$ of all these cubes. Fix a cell $E \in \E$ included in $C$.
Since we can argue up to a unit and up to a translation, we will assume that $cl(\Phi(E))$ fits in a  cube centered at the origin on which $\psi$ coincides with a monic polynomial in $z$ with analytic coefficients in $u$:
$$\psi(u,z)=z^d+a_1(u)z^{d-1}+ \dots+a_d(u), \quad (u,z)\in\R^k\times \R.$$
If we make the change of variable $z \mapsto z -\frac{a_1(u)}{d}$, the coefficient of  $z^{d-1}$ of this polynomial  becomes zero.
 Consequently, since it is enough to show that the  function $\xi(\xt,x_n+\frac{a_1(\phi(\xt))}{d})$, where $\phi=(\phi_1,\dots,\phi_k)$, is reducible, we will assume that $a_1 \equiv 0$.
For simplicity, set for $\xt$ in the basis of $E$
 $$b_i(\xt):=a_i(\phi_1(\xt),\dots,\phi_k(\xt)), \quad i=2,\dots, d.$$  
By Lemma \ref{lem_la_cell_decomposition}  and $(\hn_{n-1})$,  refining the  $\la$-cell decomposition if necessary, we can assume  the $b_i$'s to have constant sign  and    the family of $\la$-functions $| b_i|^{\frac{1}{i}}$, $i=2,\dots,d$, to be totally ordered
   on  $E$.
 Let $j$ be such that on $E$:\begin{equation}\label{eq_dfn_phi_j}
                                                                | b_j|^{1/j}=\max_{2\le i\le d} |b_i|^{1/i}.
                                                               \end{equation}
If all the $b_i$'s are identically zero, then $\xi$ is indeed already reduced. Since $b_j$ is of constant sign on $E$, we will therefore assume below that it nowhere vanishes on $E$.
Up to one more refinement of the $\la$-cell decomposition (see Remark \ref{rem_la_cell}),
we may assume that one of the following two cases occurs on $E$:

  \begin{equation}\label{eq_caseI}
   \hskip -7.5cm
                 \mbox{\bf Case I: } |x_n |\leq 2 |b_j(\xt)|^{\frac{1}{j}}, \;\mbox{for } (\xt,x_n)\in
E .
  \end{equation}

In this case, we first are going to change  $\xi$ for a function  $\hat{\xi}=\hat{\psi}\circ \hat{\Phi}$, with $\hat{\psi}$ $z$-regular of order less than $d$ (and $\hat{\Phi}$ as in (\ref{eq_phic}) below).
Let  $$\hat{\xi}(\xt,x_n):=|b_j(\xt)|^{-d/j}\cdot \xi(\xt,|b_j(\xt)|^{1/j}\cdot x_n).$$
This function is defined on the cell $E'$ which is the image of $E$ under the mapping $(\xt,x_n)\mapsto (\xt,|b_j(\xt)|^{-1/j}\cdot x_n)$. 
It is clearly enough to show that $\hat{\xi}$ is reducible. Observe that $\hat{\xi}$ is nothing but the composite of the $d$-variable polynomial function $$\hat{\psi}(v_2,\dots,v_d,z):= z^d+v_2z^{d-2}+ \dots+ v_d$$ with the $\la$-mapping on $E'$:
\begin{equation}\label{eq_phic}\hat{\Phi}(x)=\lepa\frac{b_2(\xt)}{|b_j(\xt)|^{2/j}}\,,\dots,\, \frac{b_d(\xt)}{|b_j(\xt)|^{d/j}},\, x_n\ripa .\end{equation}
By (\ref{eq_dfn_phi_j}) and (\ref{eq_caseI}), the set $\hat{\Phi}(E')$ is bounded ((\ref{eq_caseI}) entails that the coordinate $x_n$ of $x \in E'$, is bounded away from infinity on $E'$). As
  $\hat{\psi}(v,z)$ is $z$-regular of order at most $(d-1)$ at every $(v,z) \in cl(\hat{\Phi}(E'))$ (since $v_j \neq 0$  for all $(v,z)=(v_2,\dots,v_{d},z)$ in this set and $\frac{\pa^{d-1} \hat{\psi}}{\pa z^{d-1}}(v,z)=d!\, z$), the result follows by induction on $d$.

  \begin{equation}\label{eq_caseII}
   \hskip -7.3cm
                 \mbox{\bf Case II: }  |x_n|  \geq 2 |b_j(\xt)|^{\frac{1}{j}}, \; \mbox{ for  }  (\tilde{x},x_n)\in
E.
  \end{equation}

In this case,    $\xi$ can be easily reduced as follows.
By (\ref{eq_dfn_phi_j}) and (\ref{eq_caseII}), $
|\frac{b_k(\xt)}{x_n^k}|\le \frac{1}{2^k}$ on $E$, for every $k $.
 As a matter of fact,
 $$|\frac{b_2(\xt)}{x_n^2}+\dots+\frac{b_{d}(\xt)}{x_n^{d}}|
<\frac{1}{2},$$
so that  the equality
$$\xi(\xt,x_n)=x_n^d  \lepa 1+\frac{b_2(\xt)}{x_n^2}+\dots+\frac{b_{d}(\xt)}{x_n^d}\ripa $$
reduces  $\xi$ on the given
cell.
\end{proof}

We now are going to deal with $\la$-functions that are analytic in both  $x_n$  and $\frac{c(\xt)}{x_n}$, where $c$ is an $\la$-function (see Proposition \ref{pro_lionrolin_1} below). The strategy is to split the considered function into two functions, one analytic in $x_n$ and one analytic in  $\frac{c(\xt)}{x_n}$, in order to apply Proposition \ref{pro_prep_A_functions}. To this end, the following lemma will be needed.

\begin{lem}\label{lem_splitting}
Let $\psi$ be an analytic function on a neighborhood of $(a,0,0)$ in $\R^{k+2}$, $a \in \R^k$. There exist $\ep>0$ and two analytic functions $\psi_1$ and $\psi_2$ on some compact cubes  such that
\begin{equation}\label{eq_lem_splitting} \psi (u,z,\frac{c}{z})=z\psi_1(u,c,z)+\psi_2(u,c,\frac{c}{z}),\end{equation}
for every  $(u,c,z)$  satisfying $(u,z,\frac{c}{z})\in \dbb(a,\ep)\times [-\ep,\ep]^2$.
\end{lem}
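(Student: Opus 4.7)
The plan is to expand $\psi$ in its Taylor series at $(a,0,0)$, substitute formally $w = c/z$, and then partition the resulting monomials according to the sign of $(i-j)$, where $i,j$ are the respective exponents of $z$ and $w$. Each monomial $a_{ij}(u)z^i w^j$ becomes $a_{ij}(u)z^{i-j}c^j$. When $i>j$ this is divisible by $z$ and has the form $z\cdot a_{ij}(u) z^{i-j-1}c^j$, a genuine monomial in $(u,c,z)$; when $i<j$ it can be rewritten $a_{ij}(u) c^i (c/z)^{j-i}$, a monomial in $(u,c,w)$ with $w = c/z$; when $i=j$ it is simply $a_{jj}(u)c^j$, which may be absorbed into the $(u,c)$ part of $\psi_2$ (the part constant in $w$).

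First I would fix $R>0$ so that the Taylor expansion
\[
\psi(u,z,w) \;=\; \sum_{i,j\ge 0} a_{ij}(u)\, z^i w^j
\]
converges normally on the polydisk $\dbb(a,R)\times[-R,R]\times[-R,R]$, and obtain Cauchy-type bounds $\sup_{u\in \dbb(a,R)}|a_{ij}(u)| \le M R^{-(i+j)}$. Then I would define the two formal series
\[
\psi_1(u,c,z) \;:=\; \sum_{m\ge 0,\,j\ge 0} a_{j+m+1,\,j}(u)\, z^m c^j,
\qquad
\psi_2(u,c,w) \;:=\; \sum_{j\ge 0} a_{jj}(u)\, c^j \;+\; \sum_{i\ge 0,\,m\ge 1} a_{i,\,i+m}(u)\, c^i w^m,
\]
which are precisely the rearrangements corresponding to the partition above.

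Next I would check convergence. Using the Cauchy bounds, the general term of $\psi_1$ is majorized by $M R^{-(2j+m+1)}|z|^m |c|^j$, so the series converges absolutely on $\dbb(a,R)\times\{|c|<R^2\}\times\{|z|<R\}$; similarly the general term of $\psi_2$ is majorized by $M R^{-(2i+m)}|c|^i |w|^m$, so it converges on $\dbb(a,R)\times\{|c|<R^2\}\times\{|w|<R\}$. Hence $\psi_1$ and $\psi_2$ are analytic on the stated polydisks, and it suffices to take $\ep>0$ small enough (for instance $\ep<R/2$ and $\ep^2<R/2$) to ensure both series converge on $\dbb(a,\ep)\times[-\ep,\ep]^2$ and to make sense of the identity (\ref{eq_lem_splitting}) for $(u,z,c/z)\in \dbb(a,\ep)\times[-\ep,\ep]^2$ (note that $|z|\le\ep$ and $|c/z|\le\ep$ force $|c|\le\ep^2$).

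Finally, the identity itself is immediate once convergence is established: for $(u,z,c/z)$ in the prescribed cube one has $z\ne 0$ unless $c=0$ (in which case both sides equal $\psi_2(u,0,0)=a_{00}(u)$), and for $z\ne 0$ the substitution $w=c/z$ into the Taylor series of $\psi$ is a term-by-term absolutely convergent rearrangement that reproduces precisely $z\psi_1(u,c,z)+\psi_2(u,c,c/z)$. The main technical point, rather than an obstacle, is to recognize that the negative powers of $z$ which appear after the substitution $w=c/z$ are innocuous: they are always offset by at least as many powers of $c$, so that the ``bad part'' is analytic in the bounded variable $c/z$ rather than in $z$ alone.
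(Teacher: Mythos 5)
Your proof is correct and takes essentially the same approach as the paper: expand $\psi$ in a Taylor series at $(a,0,0)$, substitute $w=c/z$, and split the monomials according to whether the $z$-exponent exceeds the $w$-exponent (factoring out $z$ in that case) or not (expressing the rest in powers of $c$ and $c/z$). The paper merely writes down the two resulting series without further comment, whereas you add the useful details of the Cauchy-bound convergence check and the degenerate case $z=0$.
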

\begin{proof}
If $\sum b_{m,i,j} (u-a)^m z^i t^j $ denotes  the Taylor expansion at $(a,0,0)$ of the function $\psi(u,z,t)$, it suffices to set:
    $$\psi_1(u,c,z):=\sum_{\underset{m\in \N^k}{0\le j< i} } b_{m,i,j} (u-a)^m c^j z^{i-j-1}\,  \mbox{ and }\, \psi_2(u,c,t):=\sum_{\underset{m\in \N^k}{0\le i\le j}}  b_{m,i,j} (u-a)^m c^{i}t^{j-i} .$$
\end{proof}

\begin{pro}\label{pro_lionrolin_1}
 Let $C\subset \R^n$ be an $\la$-cell of basis $B$ and let $c,\phi_1,\dots,\phi_k$ be  $\la$-functions on $B$. Set  $$\Phi(x):=\big{(}\phi_1(\xt),\dots,\phi_k(\xt),x_n,\frac{c(\xt)}{x_n}\big{)}, \quad x=(\xt,x_n) \in C,$$  and let  $\psi$ be an analytic function on a neighborhood of $cl(\Phi(C))$.  If $\Phi(C)$ is bounded then the function $\xi:=\psi\circ \Phi$  is reducible.
\end{pro}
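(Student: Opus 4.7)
The plan is to combine the splitting Lemma~\ref{lem_splitting} with the already-established Proposition~\ref{pro_prep_A_functions} and Lemma~\ref{lem_up_to_H}. The strategy is to cover the compact set $cl(\Phi(C))$ by finitely many cubes on each of which $\psi(u,z,c/z)$ admits a description reducing, possibly after a change of variable, to the setting of Proposition~\ref{pro_prep_A_functions}. First, by Lemma~\ref{lem_la_cell_decomposition}, refine the $\la$-cell decomposition so that both $x_n$ and $c$ have constant sign on every cell; the case $c\equiv 0$ reduces trivially to Proposition~\ref{pro_prep_A_functions}, and otherwise one may assume $x_n>0$ and $c>0$. Note that $c(\xt)=x_n\cdot(c(\xt)/x_n)$ is bounded on $C$ since both factors are coordinates of the bounded set $\Phi(C)$.

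Next, cover $cl(\Phi(C))$ by finitely many open cubes, each belonging to one of three types: \textbf{(a)} cubes disjoint from $\{z=0\}$, on which $\tilde\psi(u,v,z):=\psi(u,z,v/z)$ is analytic; \textbf{(b)} cubes disjoint from $\{t=0\}$, on which $\hat\psi(u,v,t):=\psi(u,v/t,t)$ is analytic; \textbf{(c)} cubes centered at points of the form $(a,0,0)$, to which Lemma~\ref{lem_splitting} applies and yields $\psi(u,z,c/z)=z\psi_1(u,c,z)+\psi_2(u,c,c/z)$. Such a cover exists because any point of $cl(\Phi(C))$ either has $z\ne 0$, or has $z=0$ but $t\ne 0$, or has $(z,t)=(0,0)$. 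Using Lemma~\ref{lem_image_inverse} and Remark~\ref{rem_common_refinement}, refine the $\la$-cell decomposition to be compatible with all the $\Phi$-preimages of these cubes, so that it suffices to treat one cell $E\subset C$ of each type.

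In type~(a), set $\Phi_1(x):=(\phi_1(\xt),\dots,\phi_k(\xt),c(\xt),x_n)$, which has bounded image; then $\xi_{|E}=\tilde\psi\circ \Phi_1$, so Proposition~\ref{pro_prep_A_functions} gives reducibility. In type~(b), let $H(x):=(\xt, c(\xt)/x_n)$ and $\tilde\Phi(\xt,y):=(\phi_1(\xt),\dots,\phi_k(\xt),c(\xt),y)$ on $H(E)$; then $\xi_{|E}=(\hat\psi\circ \tilde\Phi)\circ H$, and Proposition~\ref{pro_prep_A_functions} shows that $\hat\psi\circ \tilde\Phi$ is reducible on $H(E)$, so Lemma~\ref{lem_up_to_H} (applied with $p=-1$) yields reducibility of $\xi_{|E}$. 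In type~(c), the first summand $x_n\cdot \psi_1(\phi(\xt),c(\xt),x_n)$ is the product of the trivially reducible function $x_n$ with $\psi_1\circ\Phi_1$, which is reducible by the argument of type~(a); Remark~\ref{rem_produit} takes care of this summand, and the second summand $\psi_2(\phi(\xt),c(\xt),c(\xt)/x_n)$ is treated exactly as in type~(b).

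It remains to use the fact that the sum of two reducible functions on $C$ is reducible: using Lemmas~\ref{lem_meme_morphisme} and~\ref{lem_refinement_fonctions_dominantes}, one may refine so that both summands are reduced with the same $\la$-translation on every cell, and then comparison of magnitudes factors the sum as the dominant summand times an expression of $\la$-unit form. The main obstacle is the bookkeeping of this three-case covering: in particular, verifying that the splitting of Lemma~\ref{lem_splitting} combined with the change of variable of Lemma~\ref{lem_up_to_H} genuinely reduces the analysis of $\psi(u,z,c/z)$ to the polynomial-preparation setting of Proposition~\ref{pro_prep_A_functions}.
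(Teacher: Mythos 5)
Your case split (a)/(b)/(c) matches the paper's Step 2 casework (the paper partitions according to whether $|x_n|$, $|c(\xt)/x_n|$, or $|c(\xt)|$ is bounded below by $\ep$, or none of these), and your treatment of (a) and (b) is sound. The gap is at the very end, in how you treat the sum $\xi = \xi_1 + \xi_2$ arising from Lemma~\ref{lem_splitting} in case (c).

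You claim that after applying Lemmas~\ref{lem_meme_morphisme} and \ref{lem_refinement_fonctions_dominantes}, ``comparison of magnitudes factors the sum as the dominant summand times an expression of $\la$-unit form.'' But Lemma~\ref{lem_refinement_fonctions_dominantes} produces \emph{three} alternatives: $|\xi_1|\le \ep|\xi_2|$, $|\xi_2|\le \ep|\xi_1|$, or $|\xi_1|\sim|\xi_2|$. Your factorisation $\xi=\xi_2(1+\xi_1/\xi_2)$ only yields an $\la$-unit in the first two cases; when $|\xi_1|\sim|\xi_2|$ with $\xi_1$ and $\xi_2$ of opposite signs, the factor $1+\xi_1/\xi_2$ is \emph{not} bounded away from zero, so it is not an $\la$-unit, and the sum can be of arbitrarily smaller order than either summand. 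You cannot appeal to the general statement that sums of reducible functions are reducible, since that is part of the induction step $(\hn_n)$ of Proposition~\ref{pro_preparation_la_fonctions}, which itself depends on Proposition~\ref{pro_composition_analytique}, which depends on the very Proposition~\ref{pro_lionrolin_1} you are proving --- so invoking it here would be circular.

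The paper handles the troublesome $|\xi_1|\sim|\xi_2|$ case with an argument you miss entirely: using the fact that Proposition~\ref{pro_prep_A_functions} yields a \emph{nondegenerate} reduction (i.e.\ with nonnegative exponent), it proves a Claim that on such a cell $x_n$ is equivalent to an $(n-1)$-variable $\la$-function $b(\xt)$. Once $x_n\sim b(\xt)$, one rewrites $\psi(u,z,c/z)$ as $\psi'(u,z,c/b,b/z)$, where $\psi'(u,w,z,t)=\psi(u,w,zt)$, and falls back to the bounded-ratio setting of type (b): both $c(\xt)/b(\xt)$ (an $(n-1)$-variable function) and $b(\xt)/x_n$ (bounded away from $0$ and $\infty$) are under control. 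Your proposal needs this Claim, its reliance on nondegeneracy, and the ensuing reduction to the Step-1/type-(b) case; without it, the proof is incomplete in precisely the case where the two summands produced by Lemma~\ref{lem_splitting} cancel.
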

\begin{proof}{\bf Step 1.}
We  show that the restriction of $\xi$ to an $\la$-cell $C$ on which  $\frac{x_n}{c(\xt)}$ is bounded (assuming that $c(\xt)$ nowhere vanishes on $C$) is reducible.

 On such a cell $C$, as by assumption $\Phi(C)$ is bounded, we have  $|c(\xt)|\sim |x_n|$.
 Consequently, as $x_n$ is bounded on $C$, so is $c$, as well as the $\la$-mapping
$$\Phi'(x):= \big{(}\phi_1(\xt),\dots,\phi_k(\xt), c(\xt), \frac{x_n}{c(\xt)}\big{)},\quad x=(\xt,x_n)\in C. $$
Since $\frac{x_n}{c(\xt)}$ is bounded away from zero on $C$, the function $$\psi'(u_1,\dots,u_m,z,t):=\psi(u_1,\dots,u_m,zt,\frac{1}{t})$$ is analytic on a neighborhood of $cl(\Phi'(C))$
($1/t$ is analytic on the complement of the origin).  By Lemma \ref{lem_la_cell_decomposition}, we may assume that $x_n$ is of constant sign on  $C$.
 As  $\xi=\psi' \circ \Phi'$,  thanks to Lemma \ref{lem_up_to_H},  if we set $H(\xt,x_n):=(\xt,\frac{x_n}{c(\xt)})$, it suffices to show that   $\zeta:=\psi' \circ \Phi'\circ H^{-1}$ is reducible. Since on $H(C)$: $$\Phi'\circ H^{-1}(x)=(\phi_1(\xt),\dots,\phi_k(\xt), c(\xt),x_n),$$ 
the result follows from Proposition \ref{pro_prep_A_functions}.

\noindent {\bf Step 2.} We  show the proposition in its full generality.

For $\xt$ in  $B$, let $\phi(\xt):=(\phi_1(\xt),\dots,\phi_k(\xt))$. By Lemma \ref{lem_splitting}, for every $a \in cl(\phi(B))$, there are $\ep>0$ and two analytic functions $\psi_1$ and $\psi_2$ satisfying  (\ref{eq_lem_splitting}) on $\dbb(a,\ep)\times [-\ep,\ep]^2$ (for our function $\psi$). As $cl(\phi(B))$ is compact, it can be covered by finitely many such cubes $\dbb(a_i,\ep)$, $i=1 ,\dots ,l$. 

By   Lemma \ref{lem_image_inverse} (and Remark \ref{rem_common_refinement}), we can find an $\la$-cell decomposition $\C$  compatible with the sets 
 $\phi^{-1}(\dbb(a_i,\ep))$,  $i=1,\dots,l$. 
Refining $\C$  if necessary, we can assume it to be compatible with the respective inverse images of $[-\ep,\ep]$ under the functions $c(\xt)$, $x_n$, and $\frac{c(\xt)}{x_n}$.  By Lemma \ref{lem_la_cell_decomposition}, we also can suppose that $c(\xt)$ and $x_n$ are of constant sign on every cell of $\C$ included in $C$. Fix  $E \in \C$ included in $C$. 

 If  $|x_n| \ge \ep$ on  $E$, by Proposition \ref{pro_prep_A_functions}, we are done since $t \mapsto \frac{1}{t}$ is analytic on the complement of the origin.  

 If $|\frac{c(\xt)}{x_n}|\ge \ep$ on  $E$ then $|\frac{x_n}{c(\xt)}|$ is bounded and the result directly follows from Step 1.

If $|c(\xt)|\ge \ep$ on  $E$ then (since $x_n$ is bounded on $C$ by assumption) $|\frac{x_n}{c(\xt)}|$ is still bounded and the result also follows from Step $1$.

We thus can assume that $|c|$, $|x_n|$, and $|\frac{c(\xt)}{x_n}|$ are all smaller than $\ep$ on $E$. 
For  $x=(\xt ,x_n)\in C$, let $$\Phi_1(\xt,x_n):=(\phi_1(\xt),\dots,\phi_k(\xt),c(\xt),x_n ), $$
as well as $$\Phi_2(\xt,x_n):=(\phi_1(\xt),\dots,\phi_k(\xt),c(\xt),\frac{c(\xt)}{x_n}).$$
 By construction,  the basis of $E$ is comprised in $ \phi^{-1}(\dbb(a_i,\ep))$ for some $i\le l$, which means that $E\subset\Phi^{-1}_j(\dbb(a_i,\ep)\times [-\ep,\ep]^2)$, for
$j=1,2$. 
As a matter of fact,   (\ref{eq_lem_splitting}) holds on $\Phi(E)$.

 By Proposition \ref{pro_prep_A_functions} and Remark \ref{rem_produit}, the function $\xi_1(x):=x_n\cdot \psi_1 \circ \Phi_1(x)$ is reducible. We claim that  $\xi_2:=\psi_2 \circ \Phi_2$ is  reducible as well  (note that by (\ref{eq_lem_splitting}) we have $\xi=\xi_1+\xi_2$).
Indeed, let $H(x):=(\xt,\frac{c(\xt)}{x_n})$,  and define a bounded $\la$-mapping on $H(E)$ by:$$\Phi_2'(\xt,x_n):=(\phi_1(\xt),\dots,\phi_k(\xt),c(\xt),x_n).$$ By Proposition \ref{pro_prep_A_functions}, $\xi'_2:=\psi_2\circ \Phi'_2$, defined on $H(E)$, is reducible, so that, by  Lemma \ref{lem_up_to_H}, $\xi_2=\xi_2'\circ H $ is reducible, as claimed.

We are ready to check that $\xi$ is reducible. By Lemma \ref{lem_la_cell_decomposition}, up to a refinement of $\C$, we may assume that $\xi_1$ and $\xi_2$ are of constant sign on $E$ (recall that so are $x_n$ and $c(\xt)$). By Lemma \ref{lem_refinement_fonctions_dominantes}, refining again the obtained cell
decomposition of $\R^n$,
 we may assume in addition that one of the two following situations occurs:

\noindent {\bf First Case.}  $|\xi_1| \leq \frac{1}{2}|\xi_2|$ or  $
|\xi_2|\leq \frac{1}{2}  |\xi_1|$ on $E$.

For simplicity, we will assume that the first inequality holds.
It means that $(1+\frac{\xi_1}{\xi_2})$ is  
bounded away from zero and infinity (if  $\xi_2\equiv 0$   on $E$, the result is clear). Since, thanks to Lemma \ref{lem_meme_morphisme}, $\xi_1$ and $\xi_2$ can be reduced with the same $\la$-translation on every cell, the function  $(1+\frac{\xi_1}{\xi_2})$ induces an $\la$-unit on every cell included in $E$ of some $\la$-cell decomposition. Hence, it suffices to rewrite $\xi$ as
$\xi_2\cdot (1+\frac{\xi_1}{\xi_2})$.

\noindent{\bf Second case:}  $\label{eq_casII_proof_prep}
 |\xi_1|\sim |\xi_2|$ on $E$.

 As it will make no difference, we will assume that $\xi_1$, $\xi_2$, $c(\xt)$, and $x_n$ are positive on $E$ (if one of them is zero the result is clear).
We first establish the following

\noindent {\bf Claim.} $x_n$ is $\sim$ on $E$ to an $\la$-function $b(\xt)$.

To check this  claim, observe that Proposition \ref{pro_prep_A_functions} actually ensures that  both $\psi_1 \circ \Phi_1$ and $\xi_2'$ admit {\it nondegenerate} reductions. We thus have  (recall that $\xi_1=x_n \psi_1\circ \Phi_1$)$$\xi_1(\xt,x_n) \sim x_n\cdot a_1(\xt)|x_n-\theta_1(\xt)|^r \;\; \mbox{ and } \; \;\xi_2(\xt,x_n) \sim a_2(\xt)|\frac{c(\xt)}{x_n}-\theta_2(\xt)|^s,$$ for some $\la$-functions $a_1,a_2,\theta_1$, and $\theta_2$ on the basis $E'$ of $E$ and some {\it nonnegative}  rational numbers $r$ and  $s$.   If $x_n \sim \theta_1(\xt)$ or $\frac{c(\xt)}{x_n} \sim \theta_2(\xt)$ on $E$ then the claim clearly holds true. Otherwise, by Lemma \ref{lem_xn_et_theta}, we may assume $\theta_1=\theta_2=0$, so that  $$a_1(\xt) x_n ^{r +1}\sim \xi_1(x) \sim  \xi_2(x) \sim a_2(\xt) \frac{c(\xt)^s}{x_n^s}  ,$$
which  entails that $x_n\sim b(\xt):=(\frac{a_2(\xt)c(\xt)^s}{a_1(\xt)})^{\frac{1}{s+r+1}}$ (here $(s+r+1)$ is nonzero for $s$ and $r$ are both nonnegative), yielding the claim.

Let now $$\psi'(u_1,\dots,u_k, w, z,t):=\psi (u_1,\dots,u_k,w,zt),$$
and $$\Phi'(\xt,x_n):=(\phi_1(\xt),\dots,\phi_k(\xt),\,x_n\,,\, \frac{c(\xt)}{b(\xt)},\, \frac{b(\xt)}{x_n}).$$
 Since $b(\xt)\sim x_n$ and $\Phi(E)$ is bounded, the set $\Phi'(E)$ is bounded as well. By Step $1$,  as $\frac{b(\xt)}{x_n}$ is bounded below away from zero on the cell $E$,  the function $\psi'\circ \Phi'$ must be reducible. As $\xi=\psi\circ \Phi=\psi' \circ \Phi'$, we are done.
\end{proof}

\begin{pro}\label{pro_composition_analytique}
Let $g_1,\dots,g_m$ be  reducible functions on an $\la$-cell $C \subset \R^n$. Set $$G(x):=(g_1(x),\dots ,g_m(x)),$$ and let $f$ be a function which is analytic on a neighborhood of   $cl(G(C))$.  If  $G(C)$ is bounded then $\xi:=f\circ G$ is reducible.
\end{pro}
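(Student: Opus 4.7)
My approach is to bring the reductions of the $g_i$'s into a common form and then apply Proposition \ref{pro_lionrolin_1}, transferring the result back via Lemma \ref{lem_up_to_H}. The argument is carried out on one cell at a time.

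By $(\mathcal{H}_{n-1})$ and Lemma \ref{lem_la_cell_decomposition}, I first refine the $\la$-cell decomposition so that every $g_i$ is reduced on each cell $E\subset C$. Lemma \ref{lem_meme_morphisme} then lets me assume that the reductions all share a common $\la$-translation $\theta$ on $E$:
$$g_i(\xt,x_n) = a_i(\xt)\,y^{r_i}\,\psi_i\bigl(b_{i,1}(\xt),\dots,b_{i,k_i}(\xt),\,u_i(\xt)\,y^{1/s_i},\,v_i(\xt)\,y^{-1/s_i}\bigr),\quad y=|x_n-\theta(\xt)|,$$
with $\psi_i$ analytic on a compact neighborhood of the image of the inner mapping, and the inner arguments $u_i y^{1/s_i}$, $v_i y^{-1/s_i}$ bounded on $E$ (by the very definition of an $\la$-unit). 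Applying Lemmas \ref{lem_la_cell_decomposition} and \ref{lem_xn_et_theta}, I may further assume that $(x_n-\theta)$ has constant sign and that $\theta\equiv 0$ on $E$ (the alternative case $x_n\sim\theta$ reduces to the previous one via the factorization $x_n-\theta=x_n(1-\theta/x_n)$ already exploited in the proof of Lemma \ref{lem_xn_et_theta}).

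Next I pick a common positive integer $s$ with $sr_i\in\Z$ and $s_i\mid s$ for every $i$, and set $H(\xt,x_n):=(\xt,|x_n|^{1/s})$. By Lemma \ref{lem_up_to_H} (with $c\equiv 1$ and $p=s$) it is enough to show that $\zeta:=\xi\circ H^{-1}$ is reducible on $H(E)$. Writing $w$ for the last coordinate on $H(E)$, we have $y=w^s$ and
$$g_i\circ H^{-1}(\xt,w) = a_i(\xt)\,w^{sr_i}\,\psi_i\bigl(b_{i,1}(\xt),\dots,u_i(\xt)\,w^{s/s_i},\,v_i(\xt)\,w^{-s/s_i}\bigr),$$
i.e., an integer power of $w$ (possibly negative) times an analytic function of $(n-1)$-variable $\la$-functions of $\xt$, of $w$, and of $1/w$. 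Consequently, after collecting all the $(n-1)$-variable $\la$-functions that appear into components $\phi_1,\dots,\phi_K$, one can write $\zeta(\xt,w)=F(\phi_1(\xt),\dots,\phi_K(\xt),w,1/w)$ for a single analytic function $F$ on a neighborhood of the closure of its evaluation image.

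This is the shape of Proposition \ref{pro_lionrolin_1} with $c\equiv 1$, \emph{provided} that the image of $\Phi(\xt,w):=(\phi_1,\dots,\phi_K,w,1/w)$ is bounded. Since $w$ need not be bounded a priori on $H(E)$, I split $H(E)$ using Lemma \ref{lem_refinement_fonctions_dominantes} into cells on which $w\sim 1$, $w\ll 1$, or $w\gg 1$. In the first case both $w$ and $1/w$ are bounded and Proposition \ref{pro_lionrolin_1} applies directly; in the second, the boundedness of the inner arguments $u_i\,w^{s/s_i}$ permits me to choose an $\la$-function $c(\xt)$ built from appropriate $u_i$'s so that $c(\xt)/w$ is bounded, again fitting the hypothesis of Proposition \ref{pro_lionrolin_1}; the third case is symmetric to the second, the roles of $w$ and $1/w$ being swapped by one further application of Lemma \ref{lem_up_to_H} to the map $w\mapsto 1/w$. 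The genuine difficulty is precisely this boundedness management: the simultaneous presence of $y^{1/s}$ and $y^{-1/s}$ inside the $\la$-units prevents a single choice of $c$ from working uniformly on all of $H(E)$ and forces the above case analysis.
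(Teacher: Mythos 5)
Your high-level plan — normalize to a common $\la$-translation $\theta\equiv 0$, straighten the fractional power via Lemma \ref{lem_up_to_H}, and feed the result into Proposition \ref{pro_lionrolin_1} — is the same as the paper's. The divergence, and the source of the trouble, is in the choice of $H$ and in what you then expect to be bounded.

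The paper first observes (``since the $g_i$'s are reduced'') that $\xi=\psi\circ\Phi$ with $\Phi$ a \emph{bounded} map of the very specific form
$\Phi(x)=(\phi_1(\xt),\dots,\phi_k(\xt),\,b(\xt)|x_n|^{1/s},\,c(\xt)|x_n|^{-1/s})$,
where $b$ and $c$ are $\la$-functions built from the coefficients $a_i,u_i,v_i$ of the reductions. The point of choosing such $b,c$ is that the $\phi_j$'s can then be taken to be the \emph{normalized} ratios (like $a_i/b^{sr_i}$, $u_i/b^{s/s_i}$, $v_i/c^{s/s_i}$), which are bounded precisely because each of $a_i|x_n|^{r_i}$, $u_i|x_n|^{1/s_i}$, $v_i|x_n|^{-1/s_i}$ is. The paper then applies Lemma \ref{lem_up_to_H} with the nontrivial scaling $H(\xt,x_n)=(\xt,b(\xt)|x_n|^{1/s})$ (that is, with the $c$ of Lemma \ref{lem_up_to_H} equal to $b$), which turns $\Phi$ into exactly the shape of Proposition \ref{pro_lionrolin_1} with last two coordinates $w$ and $c'(\xt)/w$, $c'=bc$, and with the boundedness hypothesis holding \emph{automatically} because $\Phi$ was bounded to begin with. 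No case analysis is needed.

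Your version drops the scaling and uses $H(\xt,x_n)=(\xt,|x_n|^{1/s})$. The claimed rewriting $\zeta(\xt,w)=F(\phi_1(\xt),\dots,\phi_K(\xt),w,1/w)$ with $F$ analytic on a neighborhood of the closure of the image then does not hold in the form you need: not only can $w$ or $1/w$ be unbounded on $H(E)$, but the $(n-1)$-variable coefficients you have collected into the $\phi_j$'s — namely $a_i$, $u_i$, $v_i$ — can themselves be unbounded, since only the products $a_i|x_n|^{r_i}$, $u_i|x_n|^{1/s_i}$, $v_i|x_n|^{-1/s_i}$ are constrained to be bounded. Proposition \ref{pro_lionrolin_1} requires the whole image $\Phi(C)$ to be bounded, and your case split on $w$ does not touch the unboundedness of those coefficients.

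Even setting that aside, the case split itself is off: in the regime $w\ll 1$ the lower bound on $w$ (needed to make $c(\xt)/w$ bounded) comes from the boundedness of the $v_i(\xt)\,w^{-s/s_i}$ terms (and of $a_i(\xt)\,w^{sr_i}$ when $r_i<0$), not from the $u_i\,w^{s/s_i}$ terms, which are only more bounded when $w$ shrinks. You should also address the corner case where all these negative-power coefficients vanish identically — then $\zeta$ involves no $1/w$ at all and one should invoke Proposition \ref{pro_prep_A_functions} directly (or Proposition \ref{pro_lionrolin_1} with $c\equiv 0$). Finally, once a candidate $c(\xt)$ is chosen, you still must compare the various $v_j^{s_j/s}$ with each other (via Lemma \ref{lem_la_cell_decomposition}) so that the rewritten coefficients $v_j^{s_j/s}/c$ are bounded; this is exactly the normalization step the paper builds into the definition of $\Phi$ from the start.

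In short: the crucial missing idea is that the fractional-power scaling factors $b(\xt)$, $c(\xt)$ supplied by the $\la$-units must be absorbed into the map $H$ (and into the normalized $\phi_j$'s) so that the boundedness hypothesis of Proposition \ref{pro_lionrolin_1} is met once and for all; trying to substitute a case analysis on $w$ for this normalization leaves both the coefficients and the $1/w$ factor uncontrolled.
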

\begin{proof}By Remark \ref{rem_common_refinement}, we can assume that all the $g_i$'s are reduced on the cells of one single $\la$-cell decomposition $\C$ compatible with $C$.
  Moreover, by Lemma \ref{lem_meme_morphisme},  we can assume that on every cell of  $\C$, the $g_i$'s are reduced with the same  $\la$-translation $\theta$, which,   up to a change $(\xt,x_n)\mapsto (\xt,x_n+\theta(\xt))$ can be assumed to be  zero.    The function $\xi$ may therefore   be written on a given cell $E\in \C$, $E\subset C$, $\psi\circ \Phi$  with $\psi$ restricted analytic function and $\Phi$ bounded mapping of type $$\Phi(x)= (\phi(\xt),\dots \phi_k(\xt), b(\xt) \cdot |x_n|^{1/s}, c(\xt)\cdot |x_n|^{-1/s}),\;\; x=(\xt,x_n)\in E\subset\R^{n},$$
where $b,c,\phi_1,\dots, \phi_k$ are $\la$-functions and $s \in \N$ (since the $g_i$'s are reduced). For such a cell $E$, by Lemma \ref{lem_la_cell_decomposition}, we can assume  $x_n$ to be of constant sign on every cell, and,
 by Lemma \ref{lem_up_to_H}, we may assume that $s=1$ and $b\equiv 1$, so that, by Proposition \ref{pro_lionrolin_1}, the function $\xi_{|E}$ must be reducible.
\end{proof}

We are now ready to carry out the induction step of Proposition
\ref{pro_preparation_la_fonctions}:
\begin{proof}[proof of $(\hn_n)$]
By definition, any $\la$-function may be expressed as  a finite sum, product, and  composite of restricted analytic functions and power
functions. 
 Arguing by induction on the complexity of the expression of
$\xi$,
 it is enough to show that each of these operations (sum, product, power, composition with a restricted analytic function) preserves reducible functions.

 The power of a reduced function is clearly reduced.  We have seen that the product of two reduced functions is also reduced (see Remark \ref{rem_produit}).
By Proposition \ref{pro_composition_analytique}, composition with a restricted analytic function preserves reducible functions.

 It remains to show that the sum of two reducible functions $\xi_1:C \to \R$ and $\xi_2:C \to \R$  is reducible, when  $C$ is an $\la$-cell of $\R^n$.  Indeed, by Remark \ref{rem_common_refinement}, there is  an $\la$-cell decomposition such that $\xi_1$ and $\xi_2$ are reduced on every cell $E\subset C$. By Lemma \ref{lem_la_cell_decomposition}, up to a refinement, we can assume  that $\xi_1$ and $\xi_2$ have constant sign (taking values in $\{-1, 0,1\}$) on every cell. By Lemma \ref{lem_refinement_fonctions_dominantes}, we can also assume  that for every cell $E\subset C$ there is a constant $M>0$ such either $|\xi_1|\le M|\xi_2|$ or $|\xi_2|\le M|\xi_1|$ on $E$. We will suppose for simplicity that  $|\xi_1|\le M|\xi_2|$ on a fixed cell $E\subset C$.
  Then, writing $\xi=\xi_2 \cdot (1+ \frac{\xi_1}{\xi_2})$, by Remark \ref{rem_produit},  we see that it suffices to show that  $(1+ \frac{\xi_1}{\xi_2})$ is reducible (if $\xi_2$ is identically zero on $E$ we are done). But since  $G(x):=\frac{\xi_1(x)}{\xi_2(x)}$ is bounded and  reducible   (again due to Remark \ref{rem_produit}),   this follows from Proposition \ref{pro_composition_analytique}  (applied to the one variable function $f(y):=1+y$).
\end{proof}

\section{Existence of cell decompositions}\label{sect_existence_of_cell_dec}
We are now ready to establish Theorem \ref{thm_existence_cell_dec}.  The cell decomposition that we are going to construct will indeed be an $\la$-cell decomposition.
%
%
Observe first that thanks to Remark \ref{rem_common_refinement}, we can assume that $l=1$, i.e., it is enough to construct an $\la$-cell decomposition compatible with one single given set $A \in \s_n$.

\noindent {\it Reduction to the case where $A$ is globally semi-analytic.} Since  $A$ is globally subanalytic, there exists  a globally semi-analytic subset $Z \subset \R^{m}$, $m>n$, such that $\pi(Z)=A$, where $\pi:\R^{m} \to \R^n $ is the projection onto the $n$ first coordinates.  As the images under $\pi$ of the cells of a cell decomposition  of $\R^m$ constitute a cell decomposition of $\R^n$ (see Remark \ref{rem_projection_cell_decomposition}), it is enough to construct an $\la$-cell decomposition of $\R^m$ which is compatible with  the globally semi-analytic set $Z$.

\noindent {\it Proof in the case where $A$ is globally semi-analytic.} By definition of globally semi-analytic sets, for every $ z\in [-1,1]^n$, there are some analytic functions
 $f_{ij}, g_{ij}$,  $i = 1, \dots, r, j = 1, \dots , s_i$, on  a neighborhood $U_z$ of $z$   such that \begin{equation}\label{eq_image_cell_dec}
\hh_n(A)\cap U_z= \bigcup _{i=1}^r\bigcap _{j=1} ^{s_i} \{x \in U_z: g_{ij}(x) > 0 \mbox{ and } f_{ij}(x) = 0\}.                                                                                                                                                                                                  \end{equation}
Let, for each $z\in [-1,1]^n$,  $V_z$ be a cube containing $z$  and included in $U_z$.
 Since $[-1,1]^n$ is compact, it may be covered by finitely many such cubes. By Remark \ref{rem_common_refinement}, this means that it is enough to construct an $\la$-cell decomposition of $\R^n$ compatible with $A\cap W_{z} $ for every $z \in [-1,1]^n$, where $W_z:=\hh_n^{-1}(V_z)$.

Fix for this purpose  $z \in cl(\hh_n(A))$. As $V_z$ can be described by sign conditions on analytic functions and $\hh_n$ is an $\la$-mapping, by Lemma \ref{lem_la_cell_decomposition} (and Proposition \ref{pro_preparation_la_fonctions}), there is a cell decomposition $\D$ compatible with $W_z$. 
Moreover, by (\ref{eq_image_cell_dec}), we see that
\begin{equation}\label{eq_hh_de_A}
 A\cap W_z = \bigcup _{i=1}^r\bigcap _{j=1} ^{s_i} \{x \in W_z: g_{ij}(\hh_n(x)) > 0 \mbox{ and } f_{ij}(\hh_n(x)) = 0\}.
\end{equation}
As  $\xi_{ij}(x):=f_{ij}(\hh_n(x))$ and $\zeta_{ij}(x):=g_{ij}(\hh_n(x))$  are $\la$-functions,
 again thanks to Lemma \ref{lem_la_cell_decomposition}, there is a refinement $\E$ of $\D$ such that the  $\xi_{ij}$'s and the $\zeta_{ij}$'s have constant sign on every cell included in $W_z$, which, by (\ref{eq_hh_de_A}), entails that $A \cap W_z$ is a union of cells of this cell decomposition, as required.
\begin{rem}\label{rem_cell_la_cell}
The cell decomposition that we have constructed is indeed an $\la$-cell decomposition.  
\end{rem}

\section{The Preparation Theorem  and Gabrielov's Complement Theorem }\label{sect_gabrielov}
In this section, we gather some consequences of Theorem \ref{thm_existence_cell_dec} and Proposition \ref{pro_preparation_la_fonctions}.
 The first thing we establish in this section is that globally subanalytic functions are piecewise given by $\la$-functions.   This gives a very precise description of globally subanalytic functions and will lead us to the Preparation Theorem.

\begin{pro}\label{pro_la_and_globally subanalytic}
Let $\xi:E \to \R$ be a globally subanalytic function, $E \in \s_n$. There is an $\la$-cell decomposition $\C$ of $\R^n$ compatible with $E$, such that for every cell $C\subset E$ of $\C$ the function  $\xi_{|C}$ coincides with an $\la$-function.
 \end{pro}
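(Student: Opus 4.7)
The plan is to lift the problem from $\R^n$ to $\R^{n+1}$ via the graph of $\xi$. Since $\xi$ is globally subanalytic, the graph $\Gamma_\xi\subset\R^{n+1}$ is globally subanalytic, and so is $E\times\R$ (by Properties \ref{pro_basic_properties_from_dfn}). I would apply Theorem \ref{thm_existence_cell_dec}, together with Remark \ref{rem_cell_la_cell}, to obtain an $\la$-cell decomposition $\C'$ of $\R^{n+1}$ compatible simultaneously with $\Gamma_\xi$ and with $E\times\R$.

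Next, I would define $\C:=\pi(\C')$, where $\pi:\R^{n+1}\to\R^n$ drops the last coordinate, and verify that this is an $\la$-cell decomposition of $\R^n$ compatible with $E$. That it is a cell decomposition is Remark \ref{rem_projection_cell_decomposition}; that it is an $\la$-cell decomposition is immediate from the recursive definition, which requires the basis of every $\la$-cell of $\R^{n+1}$ to be an $\la$-cell of $\R^n$, and the cells of $\C$ are precisely these bases. Compatibility with $E$ uses that $E\times\R$ is cylindrical in the last coordinate: for a cell $C'\in\C'$, the property $C'\subset E\times\R$ depends only on $\pi(C')$, so compatibility of $\C'$ with $E\times\R$ transfers directly into compatibility of $\C$ with $E$.

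Finally, I would fix a cell $D\subset E$ of $\C$ and inspect the cells of $\C'$ whose basis is $D$. Each such cell is either the graph of an $\la$-function on $D$ or a band between two such functions (possibly $\pm\infty$). A cell contained in $\Gamma_\xi$ cannot be a band, since a non-empty band $(\zeta_1,\zeta_2)$ over $D$ is never a subset of the graph of a single function; thus any cell of $\C'$ above $D$ that lies in $\Gamma_\xi$ must be the graph of some $\la$-function $\zeta:D\to\R$. Because for each $x\in D$ the point $(x,\xi(x))$ belongs to a unique cell of $\C'$, which is necessarily contained in $\Gamma_\xi$, there is exactly one such graph above $D$, and we must have $\zeta\equiv\xi_{|D}$. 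In particular $\xi_{|D}$ coincides with an $\la$-function, as required.

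The argument is essentially a bookkeeping exercise once the right auxiliary sets $\Gamma_\xi$ and $E\times\R$ are fed into Theorem \ref{thm_existence_cell_dec}. The only point that requires any care is the ``graph versus band'' alternative, which, combined with the cylindrical nature of $E\times\R$, forces the $\la$-function defining the graph cell above $D$ to be exactly $\xi_{|D}$; no genuine technical obstacle arises.
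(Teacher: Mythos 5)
Your proposal is correct and follows essentially the same route as the paper: decompose $\R^{n+1}$ by an $\la$-cell decomposition compatible with $\Gamma_\xi$, project to $\R^n$, and observe that the cell of $\R^{n+1}$ lying above a given basis cell and inside $\Gamma_\xi$ cannot be a band, hence is the graph of an $\la$-function coinciding with $\xi$. The only minor deviation is that you also insist on compatibility with $E\times\R$ to guarantee compatibility of the projection with $E$; the paper leaves this implicit (it follows automatically, since any cell of $\pi(\D)$ meeting $E$ must carry a cell of $\D$ inside $\Gamma_\xi$, forcing the whole base cell into $E$), but your extra precaution is perfectly reasonable.
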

\begin{proof}
The graph of $\xi$ being a globally subanalytic set, by Theorem \ref{thm_existence_cell_dec} (see Remark \ref{rem_cell_la_cell}), there is an $\la$-cell decomposition $\D$ of $\R^{n+1}$ compatible with $\Gamma_\xi$. This $\la$-cell decomposition induces an $\la$-cell decomposition $\C$ of $\R^n$ (see Remark \ref{rem_projection_cell_decomposition}). Let $C\in \C$ with  $C \subset E$.  There is an $\la$-cell $D\in \D$ included in $\Gamma_\xi$ which projects onto $C$.  This $\la$-cell cannot be a band since it is a subset of $\Gamma_\xi$. It is thus the graph of an $\la$-function $\zeta:C \to \R$ which coincides with $\xi_{|C}$.  
\end{proof}


\begin{thm}\label{thm_preparation}
(The Preparation Theorem) Every  globally subanalytic function is reducible.
\end{thm}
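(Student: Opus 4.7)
The plan is to combine Proposition \ref{pro_la_and_globally subanalytic} with Proposition \ref{pro_preparation_la_fonctions}, noting that the concept of reducibility has already been defined (Definition \ref{dfn_reduced}) for functions on an arbitrary subset $E$, so all the machinery is in place. There is essentially no new work to do beyond assembling previously established results, and I expect the main (very mild) obstacle to be bookkeeping: ensuring that the various cell decompositions obtained cell by cell can be amalgamated into a single $\la$-cell decomposition compatible with $E$.

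Starting from a globally subanalytic function $\xi : E \to \R$ with $E \in \s_n$, I would first apply Proposition \ref{pro_la_and_globally subanalytic} to obtain an $\la$-cell decomposition $\C_0$ of $\R^n$ compatible with $E$ such that for each cell $C \subset E$ of $\C_0$, the restriction $\xi_{|C}$ agrees with an $\la$-function $\xi_C$. Next, for each such cell $C$, Proposition \ref{pro_preparation_la_fonctions} applies to $\xi_C$: there is an $\la$-cell decomposition $\C_C$ of $\R^n$ compatible with $C$ on each of whose subcells of $C$ the function $\xi_C$ (and hence $\xi$) is reduced in the sense of Definition \ref{dfn_reduced}.

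The remaining step is to merge the finitely many cell decompositions $\C_C$, $C \subset E$ a cell of $\C_0$, into a single $\la$-cell decomposition of $\R^n$. This is exactly what Remark \ref{rem_common_refinement} provides: any finite collection of $\la$-cell decompositions of $\R^n$ admits a common refinement, which is itself an $\la$-cell decomposition. The common refinement is automatically compatible with $E$ (since $\C_0$ is and the $\C_C$'s are compatible with the individual $C$'s) and, on each of its cells contained in $E$, it refines some $\C_C$, so that $\xi$ is reduced there. This exhibits $\xi$ as reducible in the sense of Definition \ref{dfn_reduced} and finishes the proof.
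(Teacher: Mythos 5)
Your proposal is correct and matches the paper's proof exactly: the paper likewise derives the theorem by combining Proposition~\ref{pro_la_and_globally subanalytic} with Proposition~\ref{pro_preparation_la_fonctions}, citing Remark~\ref{rem_common_refinement} for the merging step. You have simply spelled out the bookkeeping that the paper leaves implicit.
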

\begin{proof}
It is a consequence  of Propositions \ref{pro_preparation_la_fonctions} and \ref{pro_la_and_globally subanalytic} (see Remark \ref{rem_common_refinement}).
\end{proof}

\begin{rem}\label{rem_globally subanalytic_implies_analytic}
A reducible function induces analytic functions on the cells of some cell decomposition.  Theorem \ref{thm_preparation} thus entails that a globally subanalytic function is analytic on the cells of a suitable cell decomposition.
\end{rem}

In the case $n=1$, the Preparation Theorem (Theorem \ref{thm_preparation}) yields  the famous Puiseux Lemma for globally subanalytic functions:

\begin{pro}\label{pro_puiseux}(Puiseux Lemma)\index{Puiseux Lemma}
Let $f:(0,\eta) \to \R$ be a globally subanalytic function, with $\eta$ positive real number. There exist $\ep\in (0,\eta)$, $m \in \Z$, and  $p\in \N^*$ such that  $f$ has a convergent Puiseux expansion on $(0,\ep)$: $$f(t)=\sum_{i=m}
^{+\infty} a_i t^\frac{i}{p}, \quad a_i \in \R, \;\forall i \ge m.$$
\end{pro}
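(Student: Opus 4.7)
The plan is to invoke the Preparation Theorem (Theorem \ref{thm_preparation}) and then read off the Puiseux expansion directly from the reduced form of $f$ near $t=0^+$.

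First I would apply Theorem \ref{thm_preparation} to the globally subanalytic function $f$, producing an $\la$-cell decomposition of $\R$ compatible with $(0,\eta)$ on which $f$ is reduced on every cell. Since cell decompositions of $\R$ are finite partitions by breakpoints (Definition \ref{dfn_cell_decomposition}), and compatibility with $(0,\eta)$ forces $0$ to be among these breakpoints, some cell has the form $C=(0,\ep)$ with $\ep\in(0,\eta]$. Invoking Lemma \ref{lem_xn_et_theta} (after a further refinement if needed), I may assume that on $C$ either $t\sim\theta$ or $\theta\equiv 0$, where $\theta$ is the $\la$-translation of the reduction of $f$. The basis of $C$ is $\R^0=\{0\}$, so $\theta$ is a constant; and no nonzero constant is $\sim t$ as $t\to 0^+$. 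Hence $\theta\equiv 0$ (which is consistent with $\Gamma_\theta\cap C=\emptyset$).

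Next I would unpack the reduced form (Definition \ref{dfn_reduced}): there are constants $a\in\R$, $r\in\Q$ and an $\la$-unit $U$ of $C$ in the variable $y=t$ such that
\[ f(t)=a\cdot t^{r}\cdot U(t),\qquad t\in(0,\ep). \]
If $a=0$ the result is trivial, so assume $a\neq 0$. By definition of $\la$-unit,
\[ U(t)=\psi\bigl(b_1,\dots,b_k,\,u\,t^{1/s},\,v\,t^{-1/s}\bigr), \]
for some $s\in\ns$, constants $b_i,u,v\in\R$, with the image $V(C)$ relatively compact and $\psi$ analytic and nowhere vanishing on a neighborhood of $cl(V(C))$. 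Relative compactness forces $v=0$, since $t^{-1/s}\to\infty$ as $t\to 0^+$. Thus $U(t)=\widetilde\psi(t^{1/s})$ for a one-variable function $\widetilde\psi$ analytic and nonzero at $0$, which therefore admits a convergent Taylor expansion $\widetilde\psi(z)=\sum_{j\ge 0}c_j z^j$ on a neighborhood of $0$.

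Finally, write $r=m_0/s_0$ with $s_0\in\ns$, set $p:=\mathrm{lcm}(s_0,s)$, and let $m:=m_0\,p/s_0$ and $\sigma:=p/s$. Then
\[ f(t)=a\,t^{m/p}\sum_{j=0}^{\infty}c_j\,t^{j\sigma/p}=\sum_{i=m}^{+\infty}a_i\,t^{i/p}, \]
with $a_i=a\,c_{(i-m)/\sigma}$ when $\sigma\mid(i-m)$ and $a_i=0$ otherwise; shrinking $\ep$ if necessary guarantees convergence on $(0,\ep)$. The only slightly delicate point is forcing $\theta\equiv 0$ on the cell adjacent to $0^+$ rather than some other admissible constant, but this is handled in one stroke by Lemma \ref{lem_xn_et_theta} combined with the triviality of the basis $\R^0$; everything else is a mechanical unpacking of the reduced form.
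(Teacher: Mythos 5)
Your proof is correct and follows exactly the same route as the paper's: apply the Preparation Theorem to get a reduced form on a right-hand neighborhood of $0$, then unpack the reduced one-variable form into a Puiseux series. The paper compresses the second step into the single sentence ``by definition, the germs of reduced one-variable functions are germs of Puiseux series,'' whereas you spell out the details (the translation $\theta$ must vanish, the coefficient $v$ of the negative power in the unit must vanish by boundedness, and the resulting powers recombine over a common denominator), but the underlying argument is identical.
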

\begin{proof}
By the Preparation Theorem (Theorem \ref{thm_preparation}), there is a right-hand-side neighborhood of $0$ on which $f$  is reduced. By definition, the germs of reduced one-variable functions are germs of Puiseux series.
\end{proof}

The following two propositions may be considered as  {\it Puiseux Lemmas with parameters}. 

\begin{dfn}\label{dfn_def_partition}
 Let $A \in \s_n$. A {\bf globally subanalytic partition}\index{globally subanalytic!partition} of A is a {\it finite} partition of this set into globally subanalytic sets. A globally subanalytic partition is {\bf compatible}\index{compatible! definable partition} with a set if this set is the union of some elements of the partition.
\end{dfn}

\begin{pro}\label{pro_puiseux_avec_parametres}
 Let $A \in \St_n$ and let  $f$ be a continuous  globally subanalytic function on a neighborhood $U$ of $A \times \{0\}$ in
$A \times  \R_+$. There exist a globally subanalytic partition of $A$ into $\ccc^\infty$ manifolds and a positive integer $p$ such that for every element $C$ of this partition,  $f(x,t^p)$ is analytic on a neighborhood of $C\times \{0\}$ in $C \times  \R$.
\end{pro}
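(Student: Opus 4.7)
The approach is to apply the Preparation Theorem (Theorem \ref{thm_preparation}) to $f$, viewed as a globally subanalytic function on the open set $U\subset A\times\R_+\subset\R^{n+1}$, in order to extract a uniform Puiseux-type expansion near $A\times\{0\}$. I would combine Theorem \ref{thm_preparation} with Theorem \ref{thm_existence_cell_dec} (to get compatibility with $A$, $A\times\{0\}$, and $U$), Lemma \ref{lem_xn_et_theta} (to normalize the $\la$-translation $\theta$), and Remark \ref{rem_globally subanalytic_implies_analytic} (to upgrade all the $\la$-functions in the reduction to analytic ones on their cells), using common refinement (Remark \ref{rem_common_refinement}) to apply them simultaneously. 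This yields an $\la$-cell decomposition of $\R^{n+1}$ on each cell of which $f$ is reduced, with all $\la$-data of the reduction analytic, and with the $\la$-translation $\theta$ of each cell either equivalent to the last coordinate $t$ or identically zero. Projecting to $\R^n$ (Remark \ref{rem_projection_cell_decomposition}), the cells contained in $A$ form the desired globally subanalytic partition of $A$ into $\ccc^\infty$ manifolds.

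For each cell $B$ of this partition, $B\times\{0\}$ is itself a cell of the $\R^{n+1}$-decomposition (since $A\times\{0\}$ is a union of cells), so the band immediately above it has the form $C_B=\{(x,t):x\in B,\,0<t<\zeta_B(x)\}$, and is contained in $U$ after shrinking $\zeta_B$ if necessary. The key geometric observation is that on $C_B$ the $\la$-translation of the reduction of $f$ must vanish identically: as $\theta$ depends only on $x$, if $\theta(x_0)\neq 0$ for some $x_0\in B$, the alternative $|t|\sim|\theta(x)|$ from Lemma \ref{lem_xn_et_theta} would force $|t|\gtrsim|\theta(x_0)|>0$ on the slice $\{x_0\}\times(0,\zeta_B(x_0))$, contradicting that $t$ ranges down to $0$; hence $\theta\equiv 0$ on $B$.

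With $\theta\equiv 0$, the reduction of $f$ on $C_B$ reads
\[
f(x,t)=a(x)\,t^r\,\psi\bigl(b_1(x),\dots,b_k(x),\,u(x)\,t^{1/s},\,v(x)\,t^{-1/s}\bigr).
\]
The relative compactness required of the argument of the $\la$-unit forces $v\equiv 0$ on $B$ (else $v(x_0)\,t^{-1/s}\to\infty$ as $t\to 0^+$), and the continuity of $f$ at $(x,0)$ forces $r\geq 0$ (if $a\not\equiv 0$; cells with $a\equiv 0$ give $f\equiv 0$ and are trivial). I would then take $p$ to be a common multiple of all the integers $s$ and of all the denominators of the rationals $r$, over the finitely many cells $B$. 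The substitution $t=\tau^p$ then yields
\[
f(x,\tau^p)=a(x)\,\tau^{pr}\,\psi\bigl(b_1(x),\dots,b_k(x),\,u(x)\,\tau^{p/s}\bigr),
\]
with $pr\in\N$ and $p/s\in\ns$. Since $a$, $b_i$, $u$ are analytic on $B$ and $\psi$ is analytic on a neighborhood of the (compact) closure of its argument set, the right-hand side is a genuine real-analytic function of $(x,\tau)$ on a neighborhood of $B\times\{0\}$ in $B\times\R$, providing the required analytic extension.

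The main obstacle I expect is the second paragraph: carefully arranging the refinements so that Lemma \ref{lem_xn_et_theta} forces $\theta\equiv 0$ precisely on the band adjacent to $A\times\{0\}$. Once that is in place, the boundedness condition on the $\la$-unit kills the negative-power term $v\,t^{-1/s}$ and the continuity of $f$ kills any potential negative exponent $r$, after which the substitution $t=\tau^p$ is essentially bookkeeping.
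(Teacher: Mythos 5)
Your proof is correct and takes essentially the same approach as the paper: apply the Preparation Theorem, work on the band adjacent to $A\times\{0\}$, and use continuity of $f$ and boundedness of the $\la$-unit to kill the negative exponent $r$ and the negative-power factor $v\,t^{-1/s}$ before substituting $t=\tau^p$. The one cosmetic difference is that you invoke Lemma~\ref{lem_xn_et_theta} to force $\theta\equiv 0$ directly, whereas the paper splits into the two cases ``$\theta\equiv 0$'' and ``$\theta$ never vanishes'' and observes in the latter that $|t-\theta(x)|$ is bounded away from zero near $t=0$, so that $f$ is already analytic in $t$ there; both routes arrive at the same conclusion.
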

\begin{proof}
 Apply the Preparation Theorem to $f:U\to \R$. This provides a cell decomposition $\D$ of $\R^{n+1}$ compatible with $U$ such that $f(x,t)$ is reduced on
every cell $D\subset U$, that is to say, we can find
 $\la$-functions $\theta$ and $a$ on $C:=\pi(D)$ (where $\pi:\R^{n+1}\to \R^n$ is the canonical projection),
a bounded  $\la$-mapping $W$ on $D$ of type:
$$W(x,t):=(u(x), v(x)(t-\theta(x))^\frac{1}{s}, w(x)(t-\theta(x))^{-\frac{1}{s}})$$
with $u:C \to \R^d$  $\la$-mapping, $v:C \to \R$ and $w:C \to \R$  $\la$-functions, $s \in \N^*$,
as well as a function $\psi$, analytic and nowhere vanishing on a neighborhood of $cl(W(D))$ and such that for some $r \in \Q$:
\begin{equation}\label{eq_reduction_proof_puiseux_param}
f(x,t)=a(x) (t-\theta(x))^r \psi(W(x,t)).\end{equation}
 By Theorem \ref{thm_existence_cell_dec}, we may assume that our cell
decomposition is compatible with $A\times \{0\}$, and consequently, that the cell
decomposition $\pi(\D)$ of $\R^n$  (see Remark \ref{rem_projection_cell_decomposition}) is compatible with $A$. 

Let  $C\in \pi(\D)$ be a cell  included in $A$.  Refining the cell decomposition, we may assume
that either $\theta  \equiv 0$ or that $\theta$ never vanishes on $C$. We may also assume that $a$
is nonzero on $C$ (if $a \equiv 0$ the result is clear). 

Since $\D$ is  compatible with $A\times \{0\}$, there is a unique cell $D$ of $ \D$  which is a band  $(0,\xi)$, with $\xi:C \to \R$  positive  globally subanalytic function (as the integer $p$ in the statement of the proposition can be chosen even, it is enough to deal with the values of $f$ at the positive values of $t$). Let $\theta':=\min (|\theta|,\xi)$.

 If $\theta(x)$ is nonzero on $C$
then for every $x \in C$ the function $t\mapsto (t-\theta(x))^r$ induces on $[0,\frac{|\theta'(x)|}{2}]$ an analytic function.  The function $f$ therefore extends in this case to a function which is analytic with respect to $t$ on a neighborhood of $C\times \{0\}$   and the proposition is clear (in this case).

If $\theta \equiv 0$ then $W(x,t)$ is a Puiseux series in $t$ and $\frac{1}{t}$, analytic in $x$,  and hence,  by (\ref{eq_reduction_proof_puiseux_param}), so is $f$. Observe that, as $f$ is locally bounded (it is continuous), we have $r\ge 0$, and  since $ w(x)\cdot t^{-\frac{1}{s}}$ is  bounded (by definition of reduced functions, $W$ is a bounded mapping), we then see that $w\equiv 0$. As a matter of fact,
 (\ref{eq_reduction_proof_puiseux_param}) indeed gives the desired expansion in this case.
\end{proof}

In the case where $f$ does not necessarily extend continuously to $A\times \{0\}$, we have the following result. 

\begin{pro}\label{pro_puiseux_non_cont}
 Let $A \in \St_n$ and let  $f:(0,\zeta)\to \R$ be a globally subanalytic function, with $\zeta$   positive globally subanalytic function on $A$. There is a globally subanalytic partition of $A$ into $\ccc^\infty$ manifolds  such that for every element $C$ of this partition,  $f(x,t)$ coincides with a Puiseux series with analytic coefficients: $$f(x,t)=\sum_{i \ge k} a_i(x) t^\frac{i}{p},\qquad k \in \Z, \;\; p \in \ns,$$    on  $(0,\xi)$, where $\xi$  is a positive continuous globally subanalytic function on $C$ satisfying $\xi \le \zeta_{|C}$.
\end{pro}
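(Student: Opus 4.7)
The approach mirrors the proof of Proposition \ref{pro_puiseux_avec_parametres}, with one key difference: since $f$ need not extend continuously to $A\times\{0\}$, I will allow the interval of convergence $(0,\xi(x))$ to shrink below $(0,\zeta(x))$ and permit negative exponents in the Puiseux expansion. First, I apply the Preparation Theorem (Theorem \ref{thm_preparation}) together with Theorem \ref{thm_existence_cell_dec} to obtain an $\la$-cell decomposition $\D$ of $\R^{n+1}$ compatible with both the domain $\{(x,t):x\in A,\,0<t<\zeta(x)\}$ and $A\times\{0\}$, and such that $f$ is reduced on each cell of $\D$ lying in the domain. Then $\pi(\D)$, where $\pi:\R^{n+1}\to\R^n$ is the projection onto the first $n$ coordinates, induces a partition of $A$ into $\ccc^\infty$ $\la$-cells.

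Fix such a cell $C\subset A$. Compatibility of $\D$ with $A\times\{0\}$ forces $C\times\{0\}$ to be itself a cell of $\D$, and compatibility with the domain then forces the band of $\D$ lying immediately above $C\times\{0\}$ to have the form $D=\{(x,t):x\in C,\,0<t<\xi_C(x)\}$ for some positive $\la$-function $\xi_C:C\to\R$ with $\xi_C\le\zeta_{|C}$. On $D$, $f$ is reduced as
$$f(x,t)=a(x)\,|t-\theta(x)|^r\,\psi(V(x,t)),$$
with $V(x,t)=(b_1(x),\dots,b_k(x),\,u(x)|t-\theta|^{1/s},\,v(x)|t-\theta|^{-1/s})$, $\psi$ analytic on a neighborhood of the relatively compact set $cl(V(D))$, and $a,b_i,u,v,\theta$ analytic $\la$-functions on $C$ (after a further refinement via Lemma \ref{lem_la_cell_decomposition} and Remark \ref{rem_globally subanalytic_implies_analytic}). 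Since the graph of $\theta$ is disjoint from $D$, one more refinement reduces to the two cases $\theta\equiv 0$ on $C$, or $\theta$ of constant nonzero sign on $C$.

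If $\theta\equiv 0$, the boundedness of $V$ on $D$ as $t\to 0^+$ forces $v\equiv 0$. Expanding $\psi(b(x),y,0)=\sum_{j\ge 0}\alpha_j(b(x))\,y^{j}$ as its Taylor series in $y$ at $y=0$ and substituting $y=u(x)t^{1/s}$ then gives, after multiplication by $a(x)t^r$ and choosing $p$ a common denominator of $r$ and $1/s$,
$$f(x,t)=\sum_{i\ge k}a_i(x)\,t^{i/p}, \qquad k:=rp\in\Z,$$
with $a_i$ analytic on $C$. If $\theta$ is nowhere zero on $C$, set $\xi(x):=\min(\xi_C(x),|\theta(x)|/2)$; on $(0,\xi)$ the factor $|t-\theta(x)|$ is pinched between $|\theta(x)|/2$ and $3|\theta(x)|/2$, so $|t-\theta|^{r}$ and $|t-\theta|^{\pm 1/s}$ all admit convergent Taylor expansions in $t$ at $0$ whose coefficients are analytic functions of $|\theta(x)|$; substituting into $\psi$ then expresses $f(x,t)$ as a convergent Taylor series in $t$ (a Puiseux series with $p=1$) with analytic coefficients on $C$.

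The main technical point is exhibiting the radius function $\xi$ as a positive continuous globally subanalytic function on $C$. For $\theta$ nonvanishing this is immediate from the formula $\xi=\min(\xi_C,|\theta|/2)$. For $\theta\equiv 0$ one uses the relative compactness of $V(D)$ to extract a uniform radius of convergence $R>0$ for the Taylor series of $\psi$ around points of $cl(V(D))$; after further refining so that $u$ either vanishes identically (in which case $f$ equals the single term $a(x)\psi(b(x),0,0)\,t^{r}$ on $(0,\xi_C)$) or never vanishes on $C$, one can then take $\xi(x):=\min(\xi_C(x),(R/(2|u(x)|))^{s})$, which is positive, continuous and globally subanalytic. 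The only remaining verifications, namely the analyticity of the coefficients and the bookkeeping of the common denominator $p$, are routine consequences of the reducibility of $\la$-functions.
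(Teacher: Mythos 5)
Your approach works, but it is substantially more laborious than the paper's, and has a concrete gap in one case. The paper's proof is a one-line reduction to Proposition~\ref{pro_puiseux_avec_parametres}: since $f$ is reduced on $D$ with exponent $r$, for any integer $k>|r|$ the function $t^kf$ tends to $0$ as $(x,t)\in D$ approaches $C\times\{0\}$, hence extends continuously there; Proposition~\ref{pro_puiseux_avec_parametres} applied to $t^kf$ gives the expansion, and dividing by $t^k$ produces the negative powers. You instead re-derive the whole expansion from the reduced form, which forces you to rebuild all of the estimates already hidden in the continuous case.

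Your $\theta\equiv 0$ case is sound: boundedness of $V$ on the band down to $t=0$ does force $v\equiv 0$, and your choice of $\xi$ correctly incorporates the uniform radius $R$ of analyticity of $\psi$ near $cl(V(D))$. The gap is in the $\theta\neq 0$ case. The claim that $\xi:=\min(\xi_C,|\theta|/2)$ is an interval of convergence for the Taylor series of $t\mapsto f(x,t)$ at $t=0$ is not justified. The radius of convergence of the composite $t\mapsto\psi(V(x,t))$ depends not only on $|\theta(x)|$ (which controls where $(\theta(x)-t)^{\pm 1/s}$ is analytic) but also on $|u(x)|$, $|v(x)|$, $s$, and the radius $\rho$ of analyticity of $\psi$ around $cl(V(D))$: the complexified curve $V(x,\cdot)$ can exit the domain of $\psi$ already for $|t|$ much smaller than $|\theta(x)|/2$. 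Using the boundedness of $V(D)$ one can check that $\xi=\min(\xi_C, c\,|\theta|)$ works for a sufficiently small constant $c$ depending on $\rho$, $s$ and the sup bounds on $u|\theta|^{1/s}$ and $v|\theta|^{-1/s}$ — exactly the same type of adjustment you correctly made in the $\theta\equiv 0$ case via $R$. With that repair the direct argument goes through, but the paper's reduction avoids the issue entirely.
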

\begin{proof}
  Apply the Preparation Theorem (Theorem \ref{thm_preparation}) to the function $f:(0,\zeta)\to \R$.  This provides a cell decomposition $\D$ of $\R^{n+1}$ compatible with $(0,\zeta)$ such that (\ref{eq_reduction_proof_puiseux_param}) holds on every cell comprised in $ (0,\zeta)$.  We may assume that our cell
decomposition is compatible with $A\times \{0\}$. Take a cell $C$ of $\pi(\D)$ (where $\pi:\R^{n+1}\to \R^n$ is the canonical projection)  included in $A$. As in the proof of the preceding theorem, there is a unique cell $D$ of $ \D$  which is a band  $(0,\xi)$, with $\xi:C \to \R_+$   globally subanalytic function.

It now follows from (\ref{eq_reduction_proof_puiseux_param}) that for any integer  $k >|r|$, the function $t^k f(x,t)$ goes to zero as $(x,t)\in D$ tends to a point of $ C\times \{0\}$, which entails that it extends continuously at every point of $C\times \{0\}$. It thus suffices to
apply  Proposition \ref{pro_puiseux_avec_parametres} to this function, for such $k$.
\end{proof}

\begin{thm}\label{thm_gabrielov}
	(Gabrielov's Complement Theorem)\index{Gabrielov's Complement Theorem} If $A\in \s_n$ then $\R^n\setminus A\in \s_n$.
\end{thm}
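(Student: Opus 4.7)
The plan is to reduce the statement to a direct consequence of the existence of cell decompositions (Theorem \ref{thm_existence_cell_dec}). The key observation is that cell decompositions are, by Definition \ref{dfn_cell_decomposition}, \emph{finite partitions of $\R^n$ into globally subanalytic sets}. Therefore, if $A$ is a union of cells of some cell decomposition, the complement $\R^n \setminus A$ is automatically the union of the remaining cells of that same decomposition.

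More precisely, I would proceed as follows. First, apply Theorem \ref{thm_existence_cell_dec} to the single set $A \in \s_n$ to obtain a cell decomposition $\C$ of $\R^n$ compatible with $A$. By the definition of compatibility, this means $A = \bigcup_{C \in \C_A} C$ for some subfamily $\C_A \subset \C$. Since $\C$ partitions $\R^n$, we then have
\[
\R^n \setminus A = \bigcup_{C \in \C \setminus \C_A} C,
\]
which is a \emph{finite} union of globally subanalytic sets (each cell belongs to $\s_n$ by the definition of cell decomposition). By the stability of $\s_n$ under finite unions (Basic Property \ref{pro_basic_properties_from_dfn}\,(\ref{item_union})), we conclude that $\R^n \setminus A \in \s_n$.

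There is no real obstacle at this stage: the main technical difficulty of the Complement Theorem has been absorbed into the preceding chain of results, namely the reduction of $\la$-functions (Proposition \ref{pro_preparation_la_fonctions}) and the construction of $\la$-cell decompositions (Lemma \ref{lem_la_cell_decomposition}) used to prove Theorem \ref{thm_existence_cell_dec}. Once one has a cell decomposition compatible with $A$, the complement-stability is immediate, which is precisely the strategic payoff of developing the cell decomposition machinery first.
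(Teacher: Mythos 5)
Your proof is correct and is essentially identical to the paper's own argument: both apply Theorem \ref{thm_existence_cell_dec} to obtain a cell decomposition compatible with $A$, observe that $\R^n\setminus A$ is then a finite union of cells, and conclude via Property \ref{pro_basic_properties_from_dfn}~(\ref{item_union}).
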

\begin{proof}Let $A\in \s_n$.
	By Theorem \ref{thm_existence_cell_dec}, there is a  cell decomposition of $\R^n$ compatible with $A$. The complement of $A$ being a finite union of cells of this cell decomposition, it is a globally subanalytic set, in virtue of Property \ref{pro_basic_properties_from_dfn} (\ref{item_union}).
\end{proof}

 We now give three finiteness results. 

\begin{cor} \label{cor_cc}
	Globally subanalytic sets have only finitely many connected components. They are globally subanalytic.
\end{cor}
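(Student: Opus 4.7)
The plan is to invoke the cell decomposition theorem (Theorem \ref{thm_existence_cell_dec}) to write $A$ as a finite union of cells, and then exploit the fact that each cell is $\mathcal{C}^\infty$ diffeomorphic to a cube $(0,1)^d$ (noted just before Theorem \ref{thm_existence_cell_dec}), hence connected.

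More precisely, given $A\in \s_n$, I would first apply Theorem \ref{thm_existence_cell_dec} to produce a cell decomposition $\C$ of $\R^n$ compatible with $A$. Let $C_1,\dots,C_k$ be the cells of $\C$ contained in $A$, so that $A=\bigsqcup_{i=1}^k C_i$. Each $C_i$ is connected (being diffeomorphic to some $(0,1)^{d_i}$, with the convention $(0,1)^0=\{0\}$), hence every $C_i$ is entirely contained in a single connected component of $A$. Consequently, each connected component of $A$ is a union of some subfamily of $\{C_1,\dots,C_k\}$. Since there are only finitely many such subfamilies, $A$ has finitely many connected components.

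Finally, each connected component is a finite union of cells of $\C$, each of which is globally subanalytic by construction, so the component is globally subanalytic by Property \ref{pro_basic_properties_from_dfn}(\ref{item_union}).

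There is really no obstacle here: the entire content is packaged in the cell decomposition theorem and the observation that cells are connected. The only thing one might want to double-check is the case $d_i=0$ (singleton cells), which is handled by the convention $(0,1)^0=\{0\}$ introduced in Definition \ref{dfn_cell_decomposition}.
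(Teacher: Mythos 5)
Your proof is correct and follows exactly the same route as the paper: apply Theorem \ref{thm_existence_cell_dec}, note that cells are connected, conclude that components are unions of cells. You merely spell out the connectedness of cells (via the diffeomorphism to $(0,1)^d$) a bit more explicitly than the paper does.
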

\begin{proof}
	Cells of $\R^n$ are connected. As, by Theorem \ref{thm_existence_cell_dec}, every set $A\in \s_n$ is the union of finitely many cells, it has at most finitely many connected components, which are unions of cells.
\end{proof}

It is natural to regard a set $A \in \s_{m+n}$ as a family of subsets of $\R^n$ parametrized by $\R^m$. Let us make it more precise.

\begin{dfn}\label{dfn_familles}Let $A \in \s_{m+n}$. We define   for  $t$ in $\R^m$,  the {\bf fiber}\index{fiber} of $A$ at $t$ as:  $$A_t:=\{x \in \R^n : (t,x)\in A\}.$$\nomenclature[apv]{$A_t$}{$t$-fiber  of a set $A$\nomrefpage}
	We thus get a family  $(A_t)_{t \in \R^m}$ of globally subanalytic subsets of $\R^n$.  
	Any  family constructed in this way is said to be a {\bf globally subanalytic family of sets}\index{globally subanalytic! family of sets}.  
\end{dfn}

\noindent{\it Note.} A globally subanalytic family of sets $(A_t)_{t \in \R^m}$ is not only a collection of globally subanalytic subsets of $\R^n$. We demand that the set $$A=\bigcup_{\tim} \{t\} \times A_t$$
be itself globally subanalytic.

\begin{rem}\label{rem_fibres_cell_dec}
	Let $A \in \s_{m+n}$ and let $\C$ be a cell decomposition of $\R^{m+n}$ compatible with $A$. For $t\in \R^m$, let then $\C_t:=\{C_t:C\in \C\}$. It follows from the definition of  cell decompositions that for every $\tim$, $\C_t$ is a cell decomposition of $\R^n$ compatible with $A_t$.  
\end{rem}

We now have the following parameterized version of Corollary \ref{cor_cc}:
\begin{cor}\label{cor_cc_families} ({\it Uniform finiteness})
	Let $A \in \s_{m+n}$. The number of connected components of $A_t$ is bounded independently of $t \in \R^m$.
\end{cor}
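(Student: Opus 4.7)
\medskip

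\noindent\textbf{Proof proposal.} The plan is to reduce everything to the fact that cells are connected and that fibers of a cell decomposition yield a cell decomposition of the fiber. Concretely, I would apply Theorem \ref{thm_existence_cell_dec} to the globally subanalytic set $A\subset \R^{m+n}$ to obtain a cell decomposition $\C$ of $\R^{m+n}$ compatible with $A$. Set $N:=\mathrm{card}(\C)$; this integer does not depend on $t$.

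Next, I would invoke Remark \ref{rem_fibres_cell_dec}: for every $t\in\R^m$, the family $\C_t=\{C_t:C\in\C\}$ is a cell decomposition of $\R^n$, and it is compatible with $A_t$ because $A$ is a union of some cells of $\C$ and taking $t$-fibers commutes with unions. Hence $A_t$ is the union of those non-empty $C_t$ for which $C\subset A$, and there are at most $N$ such fibers.

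The key observation is that each cell is connected: as noted right after Definition \ref{dfn_cell_decomposition}, every cell of $\R^n$ is $\ccc^\infty$-diffeomorphic to some $(0,1)^d$ (with the convention $(0,1)^0=\{0\}$), and $(0,1)^d$ is connected. Consequently each non-empty $C_t$ in $\C_t$ is connected, so $A_t$ is covered by at most $N$ connected sets, whence it has at most $N$ connected components. Since $N$ depends only on $\C$, i.e.\ only on $A$, this bound is uniform in $t\in\R^m$.

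There is no real obstacle here: the entire argument is a direct packaging of Theorem \ref{thm_existence_cell_dec} with Remark \ref{rem_fibres_cell_dec} and the connectedness of cells. The only point one should be careful about is that one is genuinely using the \emph{same} cell decomposition $\C$ for all values of the parameter $t$ — this is what provides the uniform bound and is precisely what Remark \ref{rem_fibres_cell_dec} guarantees.
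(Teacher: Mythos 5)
Your proof is correct and takes exactly the same route as the paper, which simply says that the proof of Corollary \ref{cor_cc} applies fiberwise via Remark \ref{rem_fibres_cell_dec}: fix one cell decomposition $\C$ of $\R^{m+n}$ compatible with $A$, observe that each $\C_t$ is a cell decomposition of $\R^n$ compatible with $A_t$, and use that cells are connected. The uniform bound $\card(\C)$ is the point, and you correctly emphasize it.
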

\begin{proof}
	The same proof as for Corollary \ref{cor_cc} applies (see Remark \ref{rem_fibres_cell_dec}).
\end{proof}

We introduce in a similar way the globally subanalytic families of functions and mappings.

\begin{dfn}\label{dfn_familles_fn}A {\bf globally subanalytic family of mappings}\index{globally subanalytic!family of mappings} is a family of mappings  $f_t:A_t \to B_t$, $\tim$, with $A \in \s_{m+n}$ and $B\in \s_{m+k}$, such that the mapping $f: A  \to B $, $(t,x)\mapsto (t,f_t(x))$ is globally subanalytic.

	In the case $B_t=\R$, for all $\tim$, we call such a family a {\bf globally subanalytic family of functions}\index{globally subanalytic!family of functions}. We shall sometimes (abusively) regard a function $f:A\to \R$, $A\in \s_{m+n}$, as a family of functions $f_t:A_t\to \R$, $\tim$, setting $f_t(x):=f(t,x)$. \nomenclature[apxa]{$f_t$}{$t$-fiber of a function\nomrefpage} 
\end{dfn}

Observe that it follows from the definitions that $\Gamma_{f_t}=(\Gamma_f)_t$.
Here is an important property of globally subanalytic families of mappings:

\begin{cor}\label{cor_uniform_bound}
	Let  $f_t : A_t \to B_t, \tim$, be a globally subanalytic family of mappings,  with $A \in \s_{m+n}$, $B \in \s_{m+k}$. The number of connected components of  $ f^{-1}_t(b)$ is bounded by a constant independent of $(t,b)$ in $ B$.
\end{cor}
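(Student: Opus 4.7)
The plan is to reduce this statement to the uniform finiteness result for families of \emph{sets} (Corollary \ref{cor_cc_families}) by exhibiting the collection $\{f_t^{-1}(b)\}_{(t,b)\in B}$ as the fibers of a single globally subanalytic set over the parameter space $\R^{m+k}$.

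First, I would introduce the set
$$E:=\{(t,b,x)\in \R^m\times \R^k\times \R^n:(t,x)\in A \mbox{ and } f_t(x)=b\}.$$
The point is that, up to a permutation of the coordinates, $E$ is essentially the graph of $f$: if one writes $f(t,x)=(t,\tilde f(t,x))$, then $\tilde f:A\to \R^k$ is the composition of $f$ with the canonical projection $\R^{m+k}\to \R^k$, and is therefore globally subanalytic by Properties \ref{pro_basic_properties_from_dfn}(\ref{item_composition}). Its graph $\Gamma_{\tilde f}$ is a globally subanalytic subset of $\R^{m+n+k}$, and reordering coordinates via the linear isomorphism $(t,x,b)\mapsto(t,b,x)$ (itself a globally subanalytic mapping) produces the set $E\subset \R^{m+k+n}$, which is therefore globally subanalytic.

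Next I would identify the fibers of $E$ regarded as a globally subanalytic family of subsets of $\R^n$ parametrized by $(t,b)\in \R^{m+k}$ in the sense of Definition \ref{dfn_familles}. By the very definition of $E$, we have for every $(t,b)\in \R^{m+k}$
$$E_{(t,b)}=\{x\in \R^n:(t,x)\in A \mbox{ and } f_t(x)=b\}=f_t^{-1}(b),$$
where the right-hand side is empty whenever $(t,b)\notin B$ (since then $b\notin B_t$). Applying Corollary \ref{cor_cc_families} to the globally subanalytic set $E\in\s_{(m+k)+n}$ yields a constant $N\in \N$ such that $E_{(t,b)}$ has at most $N$ connected components for all $(t,b)\in \R^{m+k}$; restricting to $(t,b)\in B$ gives the desired uniform bound. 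There is no real obstacle here: the entire content is packaging the preimages $f_t^{-1}(b)$ as the fibers of one globally subanalytic set, after which the parameterized version of connectedness finiteness applies directly.
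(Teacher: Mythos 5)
Your proof is correct and takes essentially the same route as the paper: both apply Corollary \ref{cor_cc_families} to the globally subanalytic family $(f_t^{-1}(b))_{(t,b)\in B}$. The paper justifies subanalyticity of this family more tersely via Property \ref{pro_basic_properties_from_dfn}(\ref{item_image}), while you spell out the graph-plus-coordinate-permutation argument, but the underlying idea is identical.
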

\begin{proof}
	Apply Corollary  \ref{cor_cc_families} to the family  $ (f^{-1}_t(b))_{(t,b)\in  B}$ (by Property \ref{pro_basic_properties_from_dfn} (\ref{item_image}) it is a globally subanalytic family).
\end{proof}

\paragraph{Historical notes.} 
The first deep insight into real semi-analytic geometry was achieved by S. \L ojasiewicz \cite{lojasiewicz59, lojasiewicz64a, lojasiewicz64b} (see also \cite{ds} for a similar content).
Subanalytic sets were introduced by A. Gabrielov \cite{gabrielov} (rewritten in \cite{gabrielov2}) who showed his Complement Theorem.
 The description of subanalytic sets in terms of convergent series with  negative rational powers that we provide in sections \ref{sect_la_functions}-\ref{sect_gabrielov} is due to several people and it is not easy to quote all the references. The first major contribution seems to be  H. Hironaka's resolution of singularities \cite{hironakares}, which lead him to establish the rectilinearization and uniformization theorems \cite{hironaka} (see also \cite{bmsemisub, bmarc}), closely related to the Preparation Theorem. As well-known, easier proofs of resolution of singularities  appeared later \cite{bmres, wlodarczyk, kollar}.   J. Denef and L. Van den Dries \cite{denefvandendries} established a quantifier elimination result, showing existence of what we call $\la$-cell decompositions (Theorem \ref{thm_existence_cell_dec}, see Remark \ref{rem_cell_la_cell}), as well as the fact that globally subanalytic functions are piecewise given by $\la$-functions (Proposition \ref{pro_la_and_globally subanalytic}).  A few years later, A. Parusi\'nski, relying on rectilinearization procedures and Hironaka's local flattening, proved the Preparation Theorem (Theorem \ref{thm_preparation}) \cite{parusinskiprep,parusinskirect}, that also yields  quantifier elimination (see also \cite[Remarks $1.7$ and $6.9 (2)$]{bmarc}).
The proofs of Proposition \ref{pro_preparation_la_fonctions} and Theorems \ref{thm_existence_cell_dec} and \ref{thm_preparation} that are presented here nevertheless follow very closely the slightly more recent proof of the Preparation Theorem given in the article of J.-M. Lion and J.-P. Rolin \cite{lionrolin}, which is inspired from the proof of the classical Puiseux Lemma and  Denef and van den Dries' article \cite{denefvandendries}, from which
 Proposition \ref{pro_de_finitude} is 
directly taken.
 Puiseux Lemma with parameters (Proposition \ref{pro_puiseux_non_cont}) is due to W. Paw\l ucki \cite{pawlucki}.

          \markboth{G. Valette}{Basic Theorems of subanalytic geometry}

  \chapter{Basic results of  subanalytic geometry}\label{chap_basic}
We describe some basic properties of globally subanalytic sets which  are consequences of the results of the previous chapter.  We start with   the very useful quantifier elimination principle (Theorem \ref{thm_s_formula}), which is a well-known logic principle that provides a convenient way to check that a set is globally subanalytic.  We then establish the famous \L ojasiewicz's inequalities as well as ``{\it  definable choice}'' (Proposition \ref{pro_globally subanalytic_choice}) and Curve Selection Lemma (Lemma \ref{curve_selection_lemma}), which will be of service many times in the next chapters.  We then shift our interest to the description of the geometric properties of globally subanalytic sets, showing existence of globally subanalytic tubular neighborhoods for globally subanalytic manifolds (sections  \ref{sect_closure} and \ref{sect_retraction}), and establishing the famous Tamm's theorem, asserting that the regular locus of a globally subanalytic set is globally subanalytic (Theorem \ref{thm_X_reg}). We also show that globally subanalytic sets and mappings can be stratified (with regularity conditions, see section \ref{sect_strat}), which will be of service in the next two chapters.

Given  $r\ge 0$ and $x\in \R^n$,   $\bou(x,r)$ (resp. $\Bb(x,r$)) will stand for the open (resp. closed) ball \nomenclature[aq]{$\bou(x,r)$}{open ball of radius $r$  centered at $x$\nomrefpage} \nomenclature[aq]{$\Bb(x,r)$}{closed ball of radius $r$  centered at $x$\nomrefpage} of radius $r$ centered at $x$ and $\sph(x,r)$ for the corresponding sphere\nomenclature[aq]{$\sph(x,r)$}{sphere of radius $r$  centered at $x$\nomrefpage}.  Balls and spheres will be taken with respect to the Euclidean norm $|.|$.
  The unit sphere of $\R^n$ centered at the origin is denoted $\sph^{n-1}$\nomenclature[aqd]{$\sph^{n-1}$}{unit sphere of $\R^n$\nomrefpage} for simplicity.

  We define the {\bf topological boundary}\index{topological boundary} of $A$, by setting $\delta A:=cl(A)\setminus int(A)$\nomenclature[ar]{$\delta A$}{topological boundary of the set $A$\nomrefpage} as well as the {\bf frontier}\index{frontier} of $A$ by setting $fr(A):=cl(A)\setminus A$\nomenclature[ar]{$fr( A)$}{frontier of the set $A$\nomrefpage}. 

   We denote by $e_1,\dots,e_n$\nomenclature[arm]{$e_1,\dots,e_n$}{canonical basis of $\R^n$\nomrefpage} the canonical basis of $\R^n$ (for all $n$, we will make it more precise when it is not obvious from the context in which $\R^n$ lies $e_i$).
  
 We denote by $d_x F$\nomenclature[at]{$d_x F$}{derivative of a mapping\nomrefpage} the derivative of a differentiable map $F$ and by $\pa_x f$ the gradient of a differentiable function $f$\nomenclature[au]{$\pa_x f$}{gradient of a function\nomrefpage}.

\section{Quantifier elimination}\label{sect_quantifier_elimination}
We give a brief introduction to quantifier elimination. This model-theoretic principle will provide an efficient tool  to check that a set is globally subanalytic.  These facts are not proper to the theory of globally subanalytic sets and play a central role in the theory of o-minimal structures \cite{costeomin, vdd_omin}, as well as in even larger frameworks.
To motivate our purpose, we start with a proposition. 
\begin{pro}\label{pro_closure}
 If $A \in \s_n$ then $cl(A)$ and $int(A)$ also belong to  $\s_n$.
\end{pro}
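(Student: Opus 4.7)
The plan is to express the closure and interior by projections and complements of sets we already know to be globally subanalytic, and then invoke Gabrielov's Complement Theorem (Theorem \ref{thm_gabrielov}) together with the basic stability properties listed in \ref{pro_basic_properties_from_dfn}. I would treat $cl(A)$ first and deduce the statement for $int(A)$ from the identity $int(A)=\R^n\setminus cl(\R^n\setminus A)$, which reduces it to the closure case once $\R^n\setminus A$ is known to belong to $\s_n$ (which it does, again by Theorem \ref{thm_gabrielov}).

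Concretely, to handle the closure I would introduce
$$D:=\{(x,\ep,y)\in\R^n\times\R\times\R^n : \ep>0,\ |x-y|^2<\ep^2,\ y\in A\}.$$
This set lies in $\s_{2n+1}$: the factor $\R^n\times\R\times A$ belongs to $\s_{2n+1}$ by (\ref{item_produit}), the remaining conditions describe a semi-algebraic (hence globally subanalytic, by Examples \ref{exa_glob_semi} and \ref{exa_sub}) subset of $\R^{2n+1}$, and the intersection is globally subanalytic by (\ref{item_union}). Let $\mu:\R^{2n+1}\to\R^{n+1}$ be the projection that omits the $y$-coordinate; by (\ref{item_projection}), $\mu(D)\in\s_{n+1}$, and by construction $\mu(D)$ is exactly the set of pairs $(x,\ep)$ such that $\ep>0$ and $\bou(x,\ep)\cap A\neq\emptyset$. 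Set
$$E:=\bigl(\R^n\times\R_{>0}\bigr)\setminus\mu(D),$$
which is globally subanalytic by Theorem \ref{thm_gabrielov} and (\ref{item_union}). If $\nu:\R^{n+1}\to\R^n$ denotes the projection omitting $\ep$, then $\nu(E)\in\s_n$ by (\ref{item_projection}) and
$$\nu(E)=\{x\in\R^n:\exists\,\ep>0,\ \bou(x,\ep)\cap A=\emptyset\}=\R^n\setminus cl(A).$$
A final application of Theorem \ref{thm_gabrielov} yields $cl(A)\in\s_n$. For the interior, $\R^n\setminus A\in\s_n$ by Theorem \ref{thm_gabrielov}, hence $cl(\R^n\setminus A)\in\s_n$ by what has just been established, and one more application of Theorem \ref{thm_gabrielov} gives $int(A)\in\s_n$.

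The main obstacle here is conceptual rather than technical: the naive description of $cl(A)$ involves a universal quantifier on $\ep>0$, which is not directly presentable as a projection of a globally subanalytic set. The whole point is to re-express it as the complement of an \emph{existential} statement, which is exactly the manoeuvre that Gabrielov's theorem makes legitimate. Beyond this reformulation, no analytic work is needed; all that is used are the closure properties of the subanalytic category already at our disposal.
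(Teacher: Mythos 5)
Your proof is correct and follows the same route as the paper: both express $\R^n\setminus cl(A)$ as a projection of the complement (inside $\R^n\times(0,+\infty)$) of a projection of an explicit globally subanalytic set, and then appeal to Gabrielov's theorem and the basic closure properties. You additionally spell out the reduction of $int(A)$ to the closure case via $int(A)=\R^n\setminus cl(\R^n\setminus A)$, which the paper leaves implicit; this is a harmless and slightly more explicit presentation of the same argument.
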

\begin{proof}
Observe first that the closure of $A$ may be defined as:
\begin{equation}\label{eq_formula}
\{x \in \R^n: \forall \ep >0 , \exists y \in A, |x-y|^2< \ep\}.
\end{equation}
This set coincides with the set:
$$\R^n \setminus \mu \left(\R^n \times (0,+\infty) \setminus \pi(B) \right), $$
where
$$B=\{ (x,\ep,y)\in \R^n \times \R \times A: |x-y|^2< \ep\}, $$
 and where  $\mu:\R^{n+1} \to \R^{n}$ (resp. $\pi:\R^{2n+1} \to \R^{n+1}$) is the projection omitting the last coordinate (resp. $n$ last coordinates).  It follows from Property \ref{pro_basic_properties_from_dfn} (\ref{item_image}) that  $B$ is globally subanalytic, so that Property \ref{pro_basic_properties_from_dfn} (\ref{item_projection})  and  Gabrielov's
Complement Theorem establish that $cl(A)$ and $int(A)$ are globally subanalytic. \end{proof}

The above proposition shows how stability under projections is useful to establish that a set is globally subanalytic.  It also emphasizes that  it can be tedious to prove that a set is globally subanalytic by describing it in terms of projections of globally semi-analytic sets. The following basic logic principle  makes it possible to get rid of the technical difficulties.

\begin{dfn}\label{dfn_s_formula} We define the 
	{\bf $\s$-formulas}\index{s-formula@$\s$-formula} inductively as follows.
\begin{enumerate}
 \item If $A \in \s_n$ then the formula $\Phi(x)$:=``$x\in A$'' is an $\s$-formula.
\item If $\Phi(x)$, where $x=(x_1,\dots,x_n)$, is an $\s$-formula then ``$not \; \Phi(x)$'' is an $\s$-formula.
\item If $\Phi(x)$ and $\Psi(x)$ are $\s$-formulas, where $x=(x_1,\dots,x_n)$, then ``$\Phi \;  and \;\Psi$'' and ``$\Phi \; or \; \Psi$'' are also $\s$-formulas.
\item If $\Phi(x,y)$ is an $\s$-formula, where $x=(x_1,\dots,x_n)$ and $y=(y_1,\dots,y_p)$, then  ``$\exists y\in \R^p,\; \Phi(x,y)$'' and ``$\forall y \in \R^p,\; \Phi(x,y)$'' are $\s$-formulas. 
\end{enumerate} 
\end{dfn}

Roughly speaking,  $\s$-formulas are first order mathematical sentences involving globally subanalytic sets. The point $(i)$ defines the most elementary formulas and the other axioms explain how to build new sentences from these sentences.  The minimal number of steps needed to generate a formula is called {\bf the complexity}\index{complexity! of an $\s$-formula} of the formula. The above definition of $\s$-formulas is thus by induction on the complexity of the sentence.
\begin{rems}\label{rems_s_formula}
\begin{itemize} \renewcommand{\labelitemi}{$\diamond$}
\item By  $(i)$, if $f$ is globally subanalytic then the sentences $f(x)>0$ and $f(x)=0$ are equivalent to $\s$-formulas (see Property  \ref{pro_basic_properties_from_dfn} (\ref{item_image})).  We will thus regard them as $\s$-formulas.
\item  It is important to note that the variable $y$ in $(iv)$ has to range over the whole of $\R^p$, i.e., we cannot write ``$\exists y \in \N^p\,$''.  Thanks to $(i)$, we can nevertheless write ``$\exists y \in A$'',  if $A \in \s_p$.
\item We restrict ourselves to what is called by logicians {\it first order formulas}, in the sense that the quantified variables cannot be functions or sets: they have to stand for real numbers. The sentences starting like ``$\exists$ a globally subanalytic function...''  or  ``$\exists$ a globally subanalytic set...'' are {\it not} $\s$-formulas.  
\item The formulas depend on finitely many variables $x=(x_1,\dots,x_n)$. These are called {\bf the free variables}\index{free variables}. The free variables of $\Phi(x)$ are the variables which are not quantified in the assertion $\Phi(x)$. The value of the assertion (true or false) of course depends on the chosen value for $x\in \R^n$. 
\end{itemize}
\end{rems}

\begin{thm}\label{thm_s_formula}
If $\Phi(x)$ is an $\s$-formula, $x=(x_1,\dots,x_n)$, then the set 
 \begin{equation*}\label{eq_A_phi}E_\Phi:=\{x\in \R^n:\Phi(x) \mbox{ holds true  } \}\end{equation*}
belongs to $\s_n$.
\end{thm}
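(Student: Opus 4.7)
The plan is to proceed by induction on the complexity of the $\s$-formula $\Phi$, using Property \ref{pro_basic_properties_from_dfn} to handle the Boolean connectives and existential quantification, and Gabrielov's Complement Theorem (Theorem \ref{thm_gabrielov}) to handle negation (which will then take care of the universal quantifier via the standard equivalence with a negated existential).

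For the base case, if $\Phi(x)$ is the atomic formula ``$x\in A$'' with $A \in \s_n$, then $E_\Phi = A$ belongs to $\s_n$ by hypothesis. For the inductive step, I would treat each construction of Definition \ref{dfn_s_formula} separately. If $\Phi = \neg \Psi$, then $E_\Phi = \R^n \setminus E_\Psi$, and the induction hypothesis together with Theorem \ref{thm_gabrielov} gives $E_\Phi \in \s_n$. If $\Phi = \Psi_1 \wedge \Psi_2$ (resp.\ $\Psi_1 \vee \Psi_2$), then $E_\Phi = E_{\Psi_1} \cap E_{\Psi_2}$ (resp.\ $E_{\Psi_1}\cup E_{\Psi_2}$), which belongs to $\s_n$ by Property \ref{pro_basic_properties_from_dfn}\,(\ref{item_union}).

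The quantifier case is the one requiring the most care in the bookkeeping. If $\Phi(x)$ is ``$\exists y \in \R^p,\, \Psi(x,y)$'', then by induction the set $E_\Psi \subset \R^{n+p}$ is globally subanalytic, and
$$E_\Phi = \pi(E_\Psi),$$
where $\pi: \R^{n+p} \to \R^n$ is the projection onto the first $n$ coordinates; by Property \ref{pro_basic_properties_from_dfn}\,(\ref{item_projection}), $E_\Phi \in \s_n$. For a universally quantified formula ``$\forall y \in \R^p,\, \Psi(x,y)$'', I would rewrite it as ``$\neg\, \exists y \in \R^p,\, \neg\, \Psi(x,y)$'', which is an $\s$-formula of strictly smaller total depth once we grant that the previous cases have been treated, and conclude by combining the existential case with the complement case.

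The only genuinely nontrivial ingredient in the argument is Gabrielov's Complement Theorem; once that is available, each inductive step is a direct translation of a logical connective into a set-theoretic operation preserving the class $\s_n$. I do not anticipate any serious obstacle beyond being careful that the induction is really on the complexity (i.e.\ the number of operations used to build the formula), and that the formula one reduces to in the $\forall$ case has smaller complexity — which it does, since one removes one $\forall$ at the cost of adding two negations applied to a strictly simpler subformula.
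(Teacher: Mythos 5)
Your approach matches the paper's essentially line for line: induction on complexity, base case from the definition, $\neg$ via Gabrielov, $\wedge/\vee$ via Property \ref{pro_basic_properties_from_dfn}\,(\ref{item_union}), $\exists$ via Property \ref{pro_basic_properties_from_dfn}\,(\ref{item_projection}), and $\forall$ via the rewriting $\forall y\,\Psi \equiv \neg\,\exists y\,\neg\Psi$. However, the justification in your final sentence is wrong: the rewritten formula ``$\neg\,\exists y\,\neg\Psi$'' does \emph{not} have smaller complexity than ``$\forall y\,\Psi$.'' The original has complexity $1+\mathrm{comp}(\Psi)$ (one application of rule (iv)), whereas the rewriting costs one application of (iv) plus two of (ii), so it has complexity $3+\mathrm{comp}(\Psi)$, which is strictly larger. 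Applying the induction hypothesis to the rewritten formula is therefore circular.

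The fix is immediate and is implicitly what the paper does: you never apply the induction hypothesis to the rewritten formula, only to $\Psi$ itself, which genuinely has complexity $\mathrm{comp}(\Psi) < \mathrm{comp}(\forall y\,\Psi)$. This gives $E_\Psi \in \s_{n+p}$, and then you observe directly that
\begin{equation*}
E_{\forall y\,\Psi} \;=\; \R^n \setminus \pi\bigl(\R^{n+p}\setminus E_\Psi\bigr),
\end{equation*}
which belongs to $\s_n$ because complements (Theorem \ref{thm_gabrielov}) and coordinate projections (Property \ref{pro_basic_properties_from_dfn}\,(\ref{item_projection})) both preserve $\s$. In other words, the equivalence $\forall y\,\Psi \equiv \neg\,\exists y\,\neg\Psi$ is used only to reduce to set-theoretic operations on $E_\Psi$, not to produce a simpler formula on which to re-run the induction.
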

\begin{proof}
We prove it by induction on the complexity of the formula.  
 If $\Phi(x)$ is the formula ``$x \in A$'', for some $A \in \s_n$, then $E_\Phi=A$ is  a globally subanalytic set.  We thus have to show that conditions  $(ii-iv)$ of Definition \ref{dfn_s_formula} also produce globally subanalytic sets.

 Indeed, if $\Phi(x)$ is an $\s$-formula then $\Phi':=$``$not \; \Phi$'' defines the complement of $E_{\Phi}$ in $\R^n$, which is a globally subanalytic set, by Theorem \ref{thm_gabrielov}      
 (and induction on the complexity). Thus, $(ii)$ provides assertions which only give rise to globally subanalytic sets. Similarly, since $\s_n$ is stable under finite union and intersection (see Property \ref{pro_basic_properties_from_dfn} (\ref{item_union})), $(iii)$ only gives rise to globally subanalytic sets. 

 For  $(iv)$, we will proceed as in the proof of Proposition \ref{pro_closure}. Let $\Phi(x,y)$  be an $\s$-formula, where $x=(x_1,\dots,x_n)$ and $y=(y_1,\dots,y_p)$, and let $\Psi(x)$ be the formula  ``$\exists y\in\R^p, \, \Phi(x,y)$''. We have: $$E_\Psi=\pi(E_\Phi),$$ where $\pi:\R^n \times \R^p \to \R^n$ is the canonical projection. By Property \ref{pro_basic_properties_from_dfn} (\ref{item_projection}), the set $E_\Psi$ is globally subanalytic since so is $E_{\Phi}$, by induction on the complexity.  
Finally, as the formula   ``$\forall y \in \R^p, \,\Phi(x,y)$'' amounts to ``$not\; (\exists y \in \R^p, \; not \;\Phi(x,y))$'', it defines a globally subanalytic set as well. 
\end{proof}

By way of conclusion, let us give the following useful facts. These are striking examples of how the above theorem is convenient to establish that a set is globally subanalytic.

\begin{pro}\label{pro_csq_qe}$ $
 \begin{enumerate}[(1)]
 \item\label{item_sup}  If $A\in \s_{m+n}$  then the set $B:=\{t \in \R^m: A_t \mbox{ is closed }\}$ belongs to $\s_m$.  Moreover, if $f:A\to \R$ is bounded and definable then $\varphi(t):=\sup_{x\in A_t} f(t,x)$ is definable.
\item If $A\in \s_n$ then the function $\R^n \ni x \mapsto d(x,A):=\inf\{|x-y|:y\in A\}$\nomenclature[axb]{$d(x,A)$}{Euclidean distance from $x$ to the set $A$\nomrefpage} is globally subanalytic.
\item \label{item_C_k}If $f:U \to \R^p$ is globally subanalytic, $U\subset \R^n$ open, and $k \in \N$ then the set of points of $U$ at which $f$ is $\ccc^k$ is globally subanalytic.
\item\label{item_der_bundle}  If   a differentiable submanifold $M$ of $\R^n$ is globally subanalytic  then so is its tangent bundle $TM:=\{(x,v)\in \R^n\times \R^n:v\in T_xM\}$. The derivative of a (differentiable) globally subanalytic mapping on $M$ is globally subanalytic.
 \end{enumerate}
\end{pro}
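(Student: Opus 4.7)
The single tactic for all four statements is to describe the object by an $\s$-formula and invoke Theorem \ref{thm_s_formula}. For (1), closedness of $A_t$ is expressed by: for every $x \in \R^n$, if $\forall \ep > 0, \exists y \in \R^n$ with $(t,y) \in A$ and $|x-y|^2 < \ep$, then $(t,x) \in A$; since $A$ is definable by axiom (i) of Definition \ref{dfn_s_formula}, this is an $\s$-formula in the free variable $t$. For $\varphi(t) = \sup_{x\in A_t} f(t,x)$, the graph $\Gamma_\varphi$ is cut out by ``$A_t \ne \emptyset$, and $\forall x \in A_t, s \ge f(t,x)$, and $\forall \ep > 0, \exists x \in A_t, f(t,x) > s - \ep$'', again an $\s$-formula (boundedness of $f$ guarantees the $\sup$ is finite). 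For (2), the graph of $d(\cdot,A)$ is similarly $\{(x,r) : r \ge 0,\ \forall y \in A, r^2 \le |x-y|^2,\ \forall \ep > 0, \exists y \in A, |x-y|^2 < (r+\ep)^2\}$, an $\s$-formula.

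For (3), I will argue by induction on $k$. The base case $k=0$ is the set of points at which $f$ is continuous on a neighborhood, expressible by one existential neighborhood quantifier followed by the usual $\forall \ep, \exists \delta$. For the inductive step, the set $\mathcal{D}(f)$ of points where $f$ is differentiable is $\{x \in U : \exists L \in \R^{np}, \forall \ep > 0, \exists \delta > 0, \forall y \in U \cap \bou(x,\delta),\ |f(y)-f(x)-L(y-x)| \le \ep\,|y-x|\}$; this is an $\s$-formula, hence $\mathcal{D}(f) \in \s_n$. Uniqueness of $L$ further shows that the graph of $df : \mathcal{D}(f) \to \R^{np}$ is defined by the same formula, so $df$ is globally subanalytic. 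The condition ``$f$ is $\ccc^k$ on a neighborhood of $x$'' then translates to ``$\exists r > 0$ such that $\bou(x,r) \subset \mathcal{D}(f)$ and every component of $df$ is $\ccc^{k-1}$ at every point of $\bou(x,r)$'', whose inner part is globally subanalytic by the inductive hypothesis applied to the globally subanalytic function $df$.

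For (4), the tangent space of a differentiable submanifold at a point coincides with its Peano tangent cone, so $v \in T_xM$ iff $x \in M$ and $\forall \ep > 0, \exists y \in M, \exists \lambda \in \R$ with $0 < |y-x| < \ep$ and $|\lambda(y-x) - v| < \ep$; this is an $\s$-formula, and hence $TM \in \s_{2n}$. The derivative is handled analogously: for $f:M\to\R^p$ differentiable and globally subanalytic, $(x,v,w) \in \Gamma_{df}$ iff $x\in M$, $v \in T_xM$, and $\forall \ep > 0, \exists y \in M, \exists \lambda \in \R$ with $0 < |y-x| < \ep$, $|\lambda(y-x) - v| < \ep$, and $|\lambda(f(y) - f(x)) - w| < \ep$. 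The main obstacle will be (3): one must be careful at each inductive step to record that the derivative, as a partial function on a globally subanalytic open set, is itself globally subanalytic, so that the induction hypothesis can be applied to it; a secondary pitfall is to state the inductive claim in terms of ``$f$ is $\ccc^k$ on a neighborhood of $x$'' rather than pointwise $\ccc^k$-ness, since the neighborhood version interacts better with the existential quantifiers.
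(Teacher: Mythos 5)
Your proposal takes exactly the approach the paper intends: the paper's own proof is a one-liner, saying that $B$ ``could be described by a formula similar as in (\ref{eq_formula})'' and leaving the other three cases to the reader. You have simply written out the $\s$-formulas that the paper declines to display, so there is no divergence in method. The formulas for (1), (2), and (4) are correct (the secant/Peano-tangent characterization in (4) is valid for merely differentiable submanifolds, not just $\ccc^1$ ones, because differentiability of the local graphing map is exactly what one needs to pass the secants to the limit), and the induction for (3) is the only genuinely non-one-line item; your handling of it is right, including the remark that $\bou(x,r)\subset \mathcal{D}(f)$ forces $\bou(x,r)$ into the interior of $\mathcal{D}(f)$, so that the inductive hypothesis (stated for functions on open sets) applies to $df$ restricted to $int(\mathcal{D}(f))$. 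Your closing caveat about stating the inductive claim in the ``on a neighborhood of $x$'' form matches the convention the paper uses implicitly in Proposition \ref{pro_regular_locus_function} and Theorem \ref{thm_X_reg}.
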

\begin{proof}
 The set $B$ displayed in $(\ref{item_sup})$   could be described by a formula similar as in (\ref{eq_formula}). It is left to the reader to write the corresponding formulas in either of the other cases.
\end{proof}

 It is however not  easy to prove that the set of points at which a globally subanalytic function is $\ccc^\infty$ is globally subanalytic  (see (\ref{item_C_k}) of the above proposition). We shall nevertheless establish that this is true (Corollary \ref{cor_reg_locus__definissable}).

  From now on, in order to shorten the statements,  globally subanalytic sets (resp. mappings, families, partitions) will generally be called {\bf definable}\index{definable} sets (resp. mappings, families, partitions). This terminology is usual to logicians or ``o-minimal geometers''. It is motivated by the fact that Theorem \ref{thm_s_formula} yields that
 all what can be defined by an $\s$-formula is globally subanalytic.

\section{Curve selection Lemma and \L ojasiewicz's inequalities} Curve Selection Lemma comes down from the following useful result.

\begin{pro}\label{pro_globally subanalytic_choice}(Definable choice)\index{definable choice} Let $A \in \St_{m+n}$ and let
$\pi:\R^{m}\times \R^n \to \R^m$ be the canonical projection. There exists a definable mapping $f:B\to \R^n$,
where $B:=\pi(A)$, such that $\Gamma_f\subset A$.
 \end{pro}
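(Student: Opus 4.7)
The plan is to prove this by induction on $n$, using cell decompositions (Theorem \ref{thm_existence_cell_dec}) as the main tool. The key point is that cells have enough structure that a definable section can be exhibited explicitly on each cell, and there are only finitely many cells.

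First I would treat the base case $n=1$. Apply Theorem \ref{thm_existence_cell_dec} to obtain a cell decomposition $\C$ of $\R^{m+1}$ compatible with $A$, so $\pi(\C)$ is a cell decomposition of $\R^m$ compatible with $B$ (Remark \ref{rem_projection_cell_decomposition}). For each cell $D \in \pi(\C)$ with $D \subset B$, choose one cell $C \in \C$ with $\pi(C) = D$ and $C \subset A$; such a cell exists because $A\cap\pi^{-1}(D)$ is a nonempty union of cells of $\C$, all of which project onto $D$ or onto a proper subset (but compatibility forces $\pi(C)=D$ for at least one). If $C$ is the graph of a definable function $\zeta_D: D \to \R$, set $f_{|D} := \zeta_D$. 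If $C$ is a band $(\xi_1,\xi_2)$ over $D$, define
\[
f_{|D}(\xt) := \tfrac{1}{2}(\xi_1(\xt) + \xi_2(\xt)) \text{ if both are finite,}
\]
and use $\xi_1+1$, $\xi_2-1$, or $0$ in the cases where one or both endpoints are $\pm\infty$. In each case $f_{|D}$ is definable and $\Gamma_{f_{|D}} \subset C \subset A$. Gluing over the finitely many cells $D\subset B$ yields a definable $f:B\to\R$ with $\Gamma_f \subset A$.

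For the inductive step, assume the result holds for all smaller values of the second factor. Let $\pi_1 : \R^{m+n} \to \R^{m+n-1}$ denote projection omitting the last coordinate, and set $A' := \pi_1(A) \in \s_{m+n-1}$ (Property \ref{pro_basic_properties_from_dfn} (\ref{item_projection})). Viewing $A$ as a subset of $\R^{(m+n-1)+1}$, the base case produces a definable $g : A' \to \R$ with $\Gamma_g \subset A$. Now decompose the projection $\pi$ as $\pi = \mu \circ \pi_1$, where $\mu : \R^m \times \R^{n-1} \to \R^m$ is the projection onto the first $m$ coordinates; note $\mu(A') = \pi(A) = B$. The inductive hypothesis applied to the definable set $A' \subset \R^{m+(n-1)}$ and the projection $\mu$ yields a definable $h : B \to \R^{n-1}$ with $\Gamma_h \subset A'$. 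Finally define
\[
f : B \to \R^n,\qquad f(\xt) := \bigl(h(\xt),\,g(\xt,h(\xt))\bigr).
\]
This mapping is definable by Property \ref{pro_basic_properties_from_dfn} (composition and cartesian product of definable mappings), and by construction $(\xt, h(\xt)) \in A'$ and hence $(\xt, h(\xt), g(\xt,h(\xt))) \in A$, i.e. $\Gamma_f \subset A$.

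The only subtle point is making the ``choice of cell over $D$'' definable in the base case; the point is that we do not quantify over choices, we simply pick one cell per $D$ (finitely many times) using the fact that the cell decomposition $\C$ is a finite object. Once the right cell is fixed, the explicit formulas for $f_{|D}$ (midpoint of the defining $\la$-functions, or translate by $\pm 1$) are manifestly definable, so there is no genuine obstacle; the inductive step is then a routine reduction.
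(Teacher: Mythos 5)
Your proof is correct and follows essentially the same route as the paper's: induction on $n$, an explicit section on each cell of a compatible cell decomposition for the base case $n=1$ (midpoint of the defining functions for a bounded band, translate by $\pm 1$ or use $0$ otherwise), and for the induction step a section $h$ of $\pi_1(A)\to B$ composed with a section $g$ of $A\to\pi_1(A)$. The only differences are cosmetic (you spell out the finitely-many-cells gluing and the choice of a cell over each $D$, where the paper simply says "it is enough to address the case where $A$ is a cell"), and your final formula $f(\xt)=(h(\xt),g(\xt,h(\xt)))$ is the same as the paper's $f(x):=h(x,g(x))$ once one reads the latter as the full $n$-tuple.
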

\begin{proof}
We prove it by induction on $n$.
Assume first that $n=1$. Taking a cell decomposition adapted to $A$ if necessary, it is enough to address the case where $A$ is a cell. 
If $A$ is a graph over a cell of $\R^m$, the result is trivial. If $A$ is a band $(\zeta,\zeta')$ (see Definition \ref{dfn_cell_decomposition}), with $\zeta$
and $\zeta'$ not infinite, then take $f:= \frac{\zeta+\zeta'}{2}$. If for instance
$\zeta'=+\infty$ and $\zeta>-\infty$  take $f=\zeta+1$. If $\zeta$ and $\zeta'$ are both infinite, we set $f\equiv 0$. This
completes the proof in the case $n=1$.

Assume the result true for $(n-1)$. Let $A \in \St_{m+n}$ and let $\mu:\R^{m+n}\to \R^{m+n-1}$ be the  projection omitting the last coordinate.   Applying the
induction hypothesis to $A':=\mu(A)$, we get a definable mapping $g:B \to \R^{n-1}$ with $\Gamma_g \subset A'$. Applying the
case $n=1$ to $A$, we get a definable mapping $h:A' \to \R$ satisfying $\Gamma_h \subset A$. It suffices to set $f(x):=h(x,g(x))$.
\end{proof}

\begin{rem}\label{rem_definable_choice}
Combining the latter proposition with Theorem \ref{thm_s_formula} provides the following version of definable choice. Let  $\Phi(x,y)$ be an $\s$-formula, with $x=(x_1,\dots,x_m)$ and $y=(y_1,\dots,y_n)$ free variables, and assume that there are $A\in  \s_m$ and $ B \in \s_n$, such that  for every $x\in A$ there is $y\in B$ for which $\Phi(x,y)$ holds. Then there is a definable mapping $f:A\to B$ such  that  $\Phi(x,f(x))$ holds for all $x\in A$.
 \end{rem}

\begin{lem}\label{curve_selection_lemma}(Curve Selection Lemma)\index{Curve Selection Lemma} Let $A \in \St_n$ and let $x_0 \in
cl(A)$. There exists an analytic arc $\gamma: [0,\ep) \to \R^n$ such that
$\gamma(0) =x_0$ and  $\gamma((0,\ep))\subset A$.
\end{lem}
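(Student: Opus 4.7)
The plan is to reduce to a one–variable situation via definable choice and then apply the Puiseux Lemma coordinate by coordinate, reparametrizing to absorb the fractional exponents into an analytic parametrization.

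First I would invoke definable choice (Proposition \ref{pro_globally subanalytic_choice}). Consider the definable set
\[
E := \{(t,x)\in (0,+\infty)\times A : |x-x_0|\le t\}.
\]
Because $x_0\in cl(A)$, for every small $t>0$ there exists $x\in A$ with $|x-x_0|\le t$; hence the projection of $E$ onto the first coordinate contains an interval $(0,\delta)$ for some $\delta>0$. By Proposition \ref{pro_globally subanalytic_choice} there is a definable map $\varphi:(0,\delta)\to A\subset \R^n$ with $(t,\varphi(t))\in E$ for every $t\in(0,\delta)$. In particular $|\varphi(t)-x_0|\le t$, so $\lim_{t\to 0^+}\varphi(t)=x_0$.

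Next I would apply Puiseux Lemma (Proposition \ref{pro_puiseux}) to each component $\varphi_i:(0,\delta)\to\R$ of $\varphi$. This gives, for each $i\in\{1,\dots,n\}$, some $\eta_i\in(0,\delta)$, integers $m_i\in\Z$ and $p_i\in\ns$ and real coefficients $(a_{i,k})_{k\ge m_i}$ such that
\[
\varphi_i(t)=\sum_{k\ge m_i} a_{i,k}\, t^{k/p_i}, \qquad t\in(0,\eta_i).
\]
Taking a common denominator $p$ for the $p_i$'s, I may rewrite every component as a convergent series in $t^{1/p}$ on some common interval $(0,\eta)$. The continuity of $\varphi_i$ at $0$, guaranteed by $\varphi(t)\to x_0$, forces the exponent of every nonzero term to be $\ge 0$: indeed, if the expansion started at a negative power, $\varphi_i(t)$ would tend to $\pm\infty$ as $t\to 0^+$. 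The constant term of the expansion must therefore equal $x_{0,i}$.

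Finally I would reparametrize. Define $\gamma:[0,\eta^{1/p})\to\R^n$ by $\gamma(s):=\varphi(s^p)$ for $s>0$ and $\gamma(0):=x_0$. Substituting $t=s^p$ into each component's Puiseux expansion produces an honest convergent power series in $s$,
\[
\gamma_i(s)=x_{0,i}+\sum_{k\ge 1}a_{i,k}\, s^{k},
\]
so $\gamma$ is analytic on $[0,\eta^{1/p})$, satisfies $\gamma(0)=x_0$, and $\gamma((0,\eta^{1/p}))=\varphi((0,\eta))\subset A$, as required. The only mildly delicate point is the observation that the limit $\varphi(t)\to x_0$ rules out negative exponents in the Puiseux expansions; everything else is a direct application of the tools assembled in the previous chapter.
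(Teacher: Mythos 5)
Your proof is correct and follows essentially the same route as the paper's: definable choice applied to the set $\{(t,x): x\in A,\ |x-x_0|\le t\}$ to produce a definable arc converging to $x_0$, followed by Puiseux Lemma to obtain the analytic parametrization. The only difference is that you spell out explicitly the reparametrization $t = s^p$ and the elimination of negative exponents by boundedness, steps that the paper leaves implicit when it says "existence of an analytic parametrization then follows from Puiseux Lemma".
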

\begin{proof}
 Applying Proposition
\ref{pro_globally subanalytic_choice} (with $m:=1$) to the  definable set
$$A':=\{(r,x)\in (0,+\infty) \times A:|x-x_0|<r\},$$  we get a definable map $\gamma :(0,+\infty)\to \R^n$  satisfying $\gamma(r) \in  A \cap \bou(x_0,r)$, for all $r>0$.  This yields that there is a definable arc in $A$ tending to $\xo$. Existence of an analytic parametrization then follows from Puiseux Lemma (Proposition \ref{pro_puiseux}).
\end{proof}

\begin{pro}\label{lem_connexe_par_arc}
 Any two points of a connected definable set $X$ may be joint by a continuous definable arc in $ X$. 
\end{pro}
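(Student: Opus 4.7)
The plan is to introduce the equivalence relation $\sim$ on $X$ where $p \sim q$ iff there exists a continuous definable arc in $X$ from $p$ to $q$ (transitivity follows from concatenation, reflexivity and symmetry being trivial). Since $X$ is connected, it suffices to show that each $\sim$-equivalence class is clopen in $X$; there can then be only one class.

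First, I would apply Theorem \ref{thm_existence_cell_dec} to obtain a finite cell decomposition $\C$ of $\R^n$ compatible with $X$, so that $X$ is partitioned by the cells $C \in \C$ with $C \subset X$. Each such cell is, by its inductive construction, definably diffeomorphic to some $(0,1)^d$ (via parametrizations built from the $\la$-functions defining it), hence definably arcwise connected, so lies entirely in a single $\sim$-class. The key lemma will be: \emph{if $C \subset X$ is a cell of $\C$ and $p \in cl(C) \cap X$, then $p$ lies in the $\sim$-class of $C$}. I would prove it by invoking Curve Selection Lemma (Lemma \ref{curve_selection_lemma}) at $p$ inside $C$ to obtain an analytic arc $\gamma : [0,\ep) \to \R^n$ with $\gamma(0)=p$ and $\gamma((0,\ep)) \subset C$, and then concatenating $\gamma_{|[0,\ep/2]}$ with a definable arc inside $C$ joining $\gamma(\ep/2)$ to any prescribed target point, yielding a continuous definable arc in $X$.

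With the connecting lemma in hand, each $\sim$-class $Y$ is a finite union of cells of $\C$. To see that $Y$ is open in $X$: given $p \in Y$, say $p \in C_i \subset Y$, the finiteness of $\C$ allows choosing $r>0$ so small that $\bou(p,r)$ is disjoint from every cell whose closure does not contain $p$; consequently $\bou(p,r) \cap X$ lies in the union of the cells $C_k \subset X$ with $p \in cl(C_k)$. For each such $C_k$, the connecting lemma applied to $p$ (which belongs to $C_i$ as well) forces $C_k$ to be in the same $\sim$-class as $C_i$, so $\bou(p,r) \cap X \subset Y$. The identical argument applied to the union of the remaining classes shows $X \setminus Y$ is open, so $Y$ is clopen, and the connectedness of $X$ forces $Y = X$.

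The main obstacle is the openness step: it simultaneously needs the finiteness of $\C$ (to make the ball small enough so that only cells having $p$ in their closure are met), the connecting lemma (to transfer $\sim$-equivalence between cells sharing a common boundary point), and Curve Selection inside the connecting lemma. The remaining ingredients—concatenation and reparametrization of arcs, and the definable arcwise-connectedness of a single cell via its parametrization over $(0,1)^d$—are routine.
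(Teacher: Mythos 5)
Your proof is correct and takes essentially the same approach as the paper's: take a cell decomposition compatible with $X$, use arcwise connectedness of individual cells together with Curve Selection Lemma to show that the arc-connectability class of a point is clopen in $X$, and conclude by connectedness. The paper establishes clopenness by showing that both $E$ (the class of a base point $x$) and $X\setminus E$ are closed in $X$, each time invoking Curve Selection Lemma, whereas you show each class is open via finiteness of the cell decomposition and your connecting lemma (which is precisely the Curve Selection step); these are dual formulations of the same argument.
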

\begin{proof}  Let $X\in \s_n$ be a connected set. By Theorem \ref{thm_existence_cell_dec}, $X$ is a finite union of cells $C_1,\dots,C_k$ of $\R^n$. Let $x \in X$ and let $E$ be the set of points of $X$ that can be joint to $x$ by means of a continuous definable curve in $X$. As any two  points of a cell  may be joint by a continuous definable arc, $E$ must be the  union of some of the $C_i$'s, which entails that it is a definable set. By Curve Selection Lemma (Lemma \ref{curve_selection_lemma}), $E$ is a closed subset of $X$. Moreover,  again due to Curve Selection Lemma,  $ X\setminus E$ is closed in $X$ as well.  As $X$ is connected and $E$ is nonempty (it contains $x$), $X=E$. 
\end{proof}


This leads us to the famous \L ojasiewicz's inequalities.

\begin{thm}\label{thm_lojasiewicz_inequality}(\L ojasiewicz's inequality)
Let $f$ and $g$ be two  definable functions on a  definable set $A$. Assume that $f$ is bounded and that  
\begin{equation}\label{eq_hyp_loj_weak}\lim_{t\to 0}f(\gamma(t))=0,\end{equation}
 for every definable arc $\gamma:(0,\ep)\to A$ such that $\lim_{t\to 0} g(\gamma(t))=0$.
 Then there exist $N \in \N^*$ and $C \in \R$ such that for any $x \in A$:
$$|f(x)|^N \leq C|g(x)|.$$
\end{thm}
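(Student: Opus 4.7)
The plan is to reduce to a one-variable Puiseux estimate by ``projecting'' the problem onto the $(f,g)$-plane. More precisely, I would introduce the definable function
\[
\varphi(t):=\sup\{|f(x)|:x\in A,\ |g(x)|\le t\},\qquad t>0,
\]
which is well defined and bounded since $f$ is bounded, and is definable by Proposition \ref{pro_csq_qe}\,(\ref{item_sup}). Then $\varphi$ is monotone nondecreasing, and the whole proof reduces to controlling $\varphi$ near $0$.

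The first and main step is to show $\lim_{t\to 0^+}\varphi(t)=0$. Suppose instead that there is $\delta>0$ with $\varphi(t)\ge \delta$ for all small $t>0$. Then the definable set
\[
E:=\{(t,x)\in (0,\infty)\times A:|g(x)|\le t,\ |f(x)|\ge \delta/2\}
\]
projects onto a right-hand neighborhood of $0$ in the first coordinate. By definable choice (Proposition \ref{pro_globally subanalytic_choice}), there is a definable map $t\mapsto h(t)\in A$ with $|g(h(t))|\le t$ and $|f(h(t))|\ge \delta/2$ for all $t$ small. Applying Curve Selection Lemma (Lemma \ref{curve_selection_lemma}) to $cl(\Gamma_{(g\circ h,\,f\circ h)})$ at $(0,s_0)$ for some accumulation value $|s_0|\ge \delta/2$, one extracts a definable arc $\gamma:(0,\ep)\to A$ along which $g(\gamma(\tau))\to 0$ but $|f(\gamma(\tau))|\ge \delta/3$, contradicting the hypothesis. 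This is the main obstacle of the proof, since it is the only place where one uses the curve-by-curve assumption of the statement.

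The second step is straightforward. Having established $\varphi(t)\to 0$, apply the Puiseux Lemma (Proposition \ref{pro_puiseux}) to the one-variable definable function $\varphi$: on a small interval $(0,T]$ one has
\[
\varphi(t)=a_m t^{m/p}+\text{higher order terms},\qquad a_m\ge 0,
\]
with $p\in\N^*$ and some integer $m\ge 1$ (the exponent is positive because $\varphi(0^+)=0$). Consequently there exists $C_0>0$ with $\varphi(t)\le C_0\,t^{m/p}$ for $t\in(0,T]$. Plugging in $t=|g(x)|$ gives, for every $x\in A$ with $|g(x)|\le T$,
\[
|f(x)|\le \varphi(|g(x)|)\le C_0\,|g(x)|^{m/p},
\]
so $|f(x)|^p\le C_0^p\,|g(x)|^m$.

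It remains to globalize. On the region $\{x\in A:|g(x)|\le \min(1,T)\}$, the inequality $|g(x)|^m\le |g(x)|$ (valid because $m\ge 1$ and $|g(x)|\le 1$) yields $|f(x)|^p\le C_0^p|g(x)|$. On the complementary region $\{x\in A:|g(x)|\ge \min(1,T)\}$ the function $|g|$ is bounded below by a positive constant, while $|f|^p$ is bounded above since $f$ is bounded, so a trivial constant $C_1$ works there. Taking $N:=p$ and $C:=\max(C_0^p,C_1)$ proves the claim.
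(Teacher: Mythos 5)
Your proof is correct and follows essentially the same route as the paper's: introduce a definable one-variable supremum function $\varphi$, show $\varphi(0^+)=0$ via definable choice applied to the curve hypothesis, and then apply the Puiseux Lemma to get the power estimate. The only cosmetic differences are that the paper takes $\varphi(t)=\sup_{g(x)=t}|f(x)|$ over level sets rather than sublevel sets, and that your extra invocation of the Curve Selection Lemma in the first step is redundant, since the definable map $h$ produced by definable choice is already itself a definable arc in $A$ with $g(h(t))\to 0$ and $|f(h(t))|\ge\delta/2$, contradicting the hypothesis directly.
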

\begin{proof}
 Possibly replacing $f$ and $g$ with their respective absolute values we may assume that these
 functions are nonnegative. For $t\in g(A)$, let  $$\varphi(t):= \sup_{x\in g^{-1}(t)} f(x)  .$$
By  Proposition \ref{pro_csq_qe} (\ref{item_sup}), the function $\varphi$ is definable. Thanks to  assumption (\ref{eq_hyp_loj_weak}) (and definable choice, see Remark \ref{rem_definable_choice}), we see that $\varphi$ tends to zero  as $t$ goes to zero. By  Puiseux Lemma (Proposition \ref{pro_puiseux}), there is a
real number $a$ and
a positive rational number $\alpha$ such that for $t$ positive small enough
 $$\varphi(t)=at^\alpha+\dots,$$
which  implies that there must be a constant $C$ such that 
$\varphi(t)\leq
C  t^\alpha$, for $t$ positive small enough.
Therefore,   for $g(x)$ positive small enough we can write:
 $$f(x)\leq \varphi(g(x)) \leq C
g(x)^\alpha,$$
 which means that, if we choose an integer   $N \geq \frac{1}{\alpha}$ then the desired inequality holds on
$g^{-1}([0,\ep))$, $\ep>0$ small enough.

On $g^{-1}([\ep,+\infty))$, as $g$ is bounded below away from zero and $f$ is bounded, the desired inequality will continue to hold if $C$ is chosen large enough.
\end{proof}

\begin{cor}\label{cor_lojasiewicz_inequality}
If two continuous definable functions $f$ and $g$ on a compact definable set $A$ satisfy
\begin{equation}\label{eq_hyp_loj}
g^{-1}(0)\subset f^{-1}(0),
\end{equation}
then there exist $N \in \N^*$ and $C \in \R$ such that for any $x \in A$:
$$|f(x)|^N \leq C|g(x)|.$$
\end{cor}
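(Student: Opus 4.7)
The plan is to deduce this from Theorem \ref{thm_lojasiewicz_inequality} by verifying that the compactness hypothesis together with $g^{-1}(0)\subset f^{-1}(0)$ implies the arc-wise vanishing assumption (\ref{eq_hyp_loj_weak}) in that theorem. Since $A$ is compact and $f$ is continuous, $f$ is automatically bounded on $A$, which is one of the two things needed.

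First I would take an arbitrary definable arc $\gamma:(0,\ep)\to A$ such that $g(\gamma(t))\to 0$ as $t\to 0^+$, and show that $\gamma(t)$ admits a limit $x_0\in A$. For this, observe that each coordinate $\gamma_i:(0,\ep)\to\R$ is a bounded definable function of one variable (bounded because $A$ is compact), so Puiseux Lemma (Proposition \ref{pro_puiseux}) applies to each $\gamma_i$ and yields a convergent Puiseux expansion on some right-neighborhood of $0$ whose leading exponent must be nonnegative. Hence $\lim_{t\to 0^+}\gamma_i(t)$ exists for each $i$, so $\lim_{t\to 0^+}\gamma(t)=x_0$ exists, and $x_0\in A$ since $A$ is closed.

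Next, by continuity of $g$ we have $g(x_0)=\lim_{t\to 0^+}g(\gamma(t))=0$, so $x_0\in g^{-1}(0)\subset f^{-1}(0)$ by hypothesis (\ref{eq_hyp_loj}), whence $f(x_0)=0$. Continuity of $f$ then gives $\lim_{t\to 0^+}f(\gamma(t))=f(x_0)=0$, which is precisely the arc condition (\ref{eq_hyp_loj_weak}) required by Theorem \ref{thm_lojasiewicz_inequality}. Applying that theorem to $f$ and $g$ on $A$ produces the desired $N\in\N^*$ and $C\in\R$ with $|f(x)|^N\le C|g(x)|$ on $A$.

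There is no real obstacle; the only slightly delicate point is the existence of $\lim_{t\to 0^+}\gamma(t)$ for a definable arc into a compact set, which is handled coordinate-wise by Puiseux Lemma. Everything else is a direct translation of (\ref{eq_hyp_loj}) into the hypothesis of Theorem \ref{thm_lojasiewicz_inequality} using continuity of $f$ and $g$.
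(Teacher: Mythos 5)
Your argument is essentially identical to the paper's: both reduce to Theorem \ref{thm_lojasiewicz_inequality} by using compactness to get boundedness of $f$, then verifying (\ref{eq_hyp_loj_weak}) by noting that a definable arc $\gamma$ into the compact set $A$ has a limit $x_0\in A$ (via Puiseux), forcing $g(x_0)=0$, hence $f(x_0)=0$ by (\ref{eq_hyp_loj}), hence $f(\gamma(t))\to 0$ by continuity. Your coordinate-wise justification of the existence of $\lim\gamma(t)$ is just a slightly more explicit rendering of what the paper cites as Proposition \ref{pro_puiseux}.
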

\begin{proof}
Since $f$ is continuous on a compact set, it is a bounded function. Moreover, if $\gamma:(0,\ep)\to A$ is a definable  arc such that $\lim_{t\to 0} g(\gamma(t))=0$ then, setting $a=\lim_{t \to 0} \gamma(t)$ (which exists since $A$ is compact and $\gamma$ is definable, see Proposition \ref{pro_puiseux}), we get    $g(a)=0$, which, via hypothesis (\ref{eq_hyp_loj}), entails that $f(a)=0$. Hence, $\lim_{t\to 0} f(\gamma(t))=f(a)=0$, and the conclusion  follows from Theorem \ref{thm_lojasiewicz_inequality}. 
\end{proof}

\begin{cor}
Let $\xi:A\to \R$ be a definable function. If $\xi$ is bounded on every bounded subset of $A$ then there are $C>0$ and  $N\in \N$ such that for all $x\in A$:
$$ |\xi(x)|\le C(1+|x|)^N.$$
\end{cor}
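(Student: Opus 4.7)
The plan is to reduce to a one-variable statement by taking a supremum, and then read off the growth rate from the Puiseux expansion at infinity. Concretely, I would introduce the function
$$\varphi:\R_+\to \R,\qquad \varphi(r):=\sup\{|\xi(x)|:x\in A,\ |x|\le r\}.$$
The hypothesis that $\xi$ is bounded on every bounded subset of $A$ ensures that $\varphi(r)$ is finite for each $r\ge 0$. Definability of $\varphi$ follows from Quantifier Elimination (Theorem \ref{thm_s_formula}): the relation ``$\varphi(r)\le s$'' is expressed by the $\s$-formula
$$\bigl(\forall x\in A,\ |x|\le r\Rightarrow |\xi(x)|\le s\bigr),$$
so the graph of $\varphi$ is definable, and $\varphi$ is manifestly nondecreasing in $r$.

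Next, I would study $\varphi$ at infinity via the change of variable $s=1/r$. The function $\psi(s):=\varphi(1/s)$ is a definable function of one variable on $(0,+\infty)$; applying Puiseux Lemma (Proposition \ref{pro_puiseux}) on a right-hand-side neighborhood of $0$, there exist $m\in\Z$, $p\in\ns$ and real coefficients $a_i$ with $a_m\ne 0$ such that
$$\psi(s)=\sum_{i\ge m}a_i s^{i/p}$$
on some interval $(0,\ep)$. Translating back, this means there is $R_0>0$ and a constant $C_0>0$ such that $\varphi(r)\le C_0\, r^{-m/p}$ for all $r\ge R_0$. Pick any integer $N\ge \max(0,-m/p)$; then $\varphi(r)\le C_0(1+r)^N$ for $r\ge R_0$.

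For the remaining range $|x|\le R_0$, the hypothesis directly gives $|\xi(x)|\le \varphi(R_0)=:M<+\infty$. Combining both ranges,
$$|\xi(x)|\le \max\bigl(M,\ C_0(1+|x|)^N\bigr)\le (M+C_0)(1+|x|)^N\qquad\text{for all }x\in A,$$
which proves the inequality with $C:=M+C_0$ and this choice of $N$.

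The only real subtlety is the first step, namely justifying that $\varphi$ is a well-defined definable function, for which the boundedness on bounded subsets and Theorem \ref{thm_s_formula} are both indispensable; the rest is a routine application of Puiseux's Lemma at infinity. As an alternative, one could invoke \L ojasiewicz's inequality (Theorem \ref{thm_lojasiewicz_inequality}) applied to a bounded truncation of $\xi$ against $1/(1+|x|)$, but the direct Puiseux argument above seems both shorter and more transparent.
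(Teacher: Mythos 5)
Your proof is correct, but it takes a different route from the paper. The paper's proof is a one-liner: apply Theorem~\ref{thm_lojasiewicz_inequality} directly to the two \emph{bounded} functions $f(x):=\frac{1}{1+|x|}$ and $g(x):=\frac{1}{1+|\xi(x)|}$. The hypothesis on $\xi$ ensures exactly the implication needed in that theorem (if $g(\gamma(t))\to 0$ along a definable arc, i.e.\ $|\xi(\gamma(t))|\to\infty$, then $\gamma$ must be unbounded, so $f(\gamma(t))\to 0$), and the resulting estimate $f^N\le Cg$ rearranges to $1+|\xi(x)|\le C(1+|x|)^N$. What you do instead is essentially re-run the proof of Theorem~\ref{thm_lojasiewicz_inequality} inline: introduce a definable supremum function $\varphi$ of the radius, reduce to one variable, and invoke Puiseux at infinity. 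That is sound and self-contained, but longer than necessary given that \L ojasiewicz's inequality is already available. You do note the \L ojasiewicz route as an alternative at the end, though your phrasing (``a bounded truncation of $\xi$ against $1/(1+|x|)$'') misses the cleaner trick of taking $g=1/(1+|\xi|)$, which avoids any truncation and makes both auxiliary functions bounded at once. One small point in favor of your writeup: you correctly isolate the only nontrivial step, namely that the definable supremum $\varphi(r)$ is finite and definable, which is indeed where the ``bounded on bounded sets'' hypothesis and Theorem~\ref{thm_s_formula} enter.
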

\begin{proof}
Apply Theorem \ref{thm_lojasiewicz_inequality} to $f(x):=\frac{1}{1+|x|}$ and $g(x):=\frac{1}{1+|\xi(x)|}$.
\end{proof}

We are going to derive from  Theorem \ref{thm_lojasiewicz_inequality} another famous estimate which is also called \L ojasiewicz's inequality (Corollary \ref{cor_lojasiewicz_gradient}). This one will compare a $\ccc^1$ definable function with its gradient. 

 \begin{lem}\label{lem_lojasiewicz_inequality_f_x_pa_x}
 Let $f:M\to \R$ be a $\ccc^1$ definable function,  with $M$ definable $\ccc^1$ submanifold of $
 \R^n$, and let $a\in cl(M)$. If $f$ extends continuously at $a$ then there is a positive constant $C$ such that for all $x\in M$ sufficiently close to $a$ $$|f(x)-f(a)|\le C |x-a|\cdot |\pa_x f|.$$ 
 \end{lem}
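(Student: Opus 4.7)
The plan is to reduce to $f(a)=0$ by subtracting $f(a)$, and then argue by contradiction. If no constant $C$ worked, the definable function
$$\Psi(x):=\frac{|f(x)|}{|x-a|\,|\pa_x f|}\qquad\bigl(\text{with }0/0=0\text{ and }c/0=+\infty\text{ for }c>0\bigr)$$
would be unbounded on $M\cap\bou(a,r)$ for every $r>0$. Applying definable choice (Proposition \ref{pro_globally subanalytic_choice}) to $\{(s,x)\in(0,1)\times M:|x-a|<s,\ \Psi(x)>1/s\}$ and Curve Selection Lemma (Lemma \ref{curve_selection_lemma}) at the point $(0,a)$ of its closure would furnish an analytic definable arc $\gamma:[0,\ep)\to\R^n$ with $\gamma(0)=a$, $\gamma((0,\ep))\subset M$, along which $\Psi(\gamma(t))\to+\infty$ as $t\to 0^+$. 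The plan is then to derive a contradiction by comparing leading Puiseux exponents along $\gamma$ with the one produced by the chain rule.

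Set $\phi(t):=f(\gamma(t))$, $r(t):=|\gamma(t)-a|$, $\nu(t):=|\pa_{\gamma(t)}f|$. By Puiseux Lemma (Proposition \ref{pro_puiseux}) these definable one-variable functions admit expansions
$\phi(t)=c\,t^\alpha+\dots$, $r(t)=c_1\,t^\beta+\dots$, $\nu(t)=c_2\,t^\delta+\dots$
for $t>0$ small, with $\alpha,\beta>0$ because $\phi$ and $r$ vanish continuously at $0$. The blow-up of $\Psi(\gamma(\cdot))$ rules out the degenerate sub-cases: $c_1=0$ would force $\gamma\equiv a$; $c=0$ would give $\phi\equiv 0$ and $\Psi(\gamma(\cdot))\equiv 0$; $c_2=0$ would make $\nu\equiv 0$, so by the chain rule $\phi'(t)=\langle\pa_{\gamma(t)}f,\gamma'(t)\rangle\equiv 0$ and $\phi$ would be constant, hence identically zero by continuity of $f$ at $a$. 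Thus $c,c_1,c_2\ne 0$ and $\Psi(\gamma(t))\sim (|c|/(c_1c_2))\,t^{\alpha-\beta-\delta}$, forcing
$$\alpha<\beta+\delta.$$

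For the contradiction I would apply the chain rule combined with a leading-term analysis of $\gamma$. Writing the $\R^n$-valued analytic arc componentwise as a Puiseux series over a common denominator gives $\gamma(t)-a=v_0\, t^\beta+\text{(higher order)}$ with $|v_0|=c_1\ne 0$, whence $|\gamma'(t)|\sim \beta c_1\,t^{\beta-1}$. Cauchy--Schwarz then yields
$$|\phi'(t)|=|\langle\pa_{\gamma(t)}f,\gamma'(t)\rangle|\le \nu(t)\,|\gamma'(t)|\lesssim t^{\beta+\delta-1},$$
while differentiating the Puiseux expansion of $\phi$ gives $\phi'(t)\sim c\alpha\,t^{\alpha-1}$ with $c\alpha\ne 0$. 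Comparing leading exponents yields $\alpha-1\ge\beta+\delta-1$, i.e.\ $\alpha\ge\beta+\delta$, contradicting the previous strict inequality. The main obstacle I anticipate is precisely this leading-term extraction for $|\gamma'(t)|$: one must justify that no cancellation occurs at order $t^{\beta-1}$ in the componentwise derivative of $\gamma$, i.e.\ identify the leading exponent of $|\gamma(t)-a|$ with the smallest exponent appearing in the components of $\gamma(t)-a$---a small but crucial technical point where the analyticity of the arc produced by Curve Selection plays a role.
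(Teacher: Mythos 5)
Your argument is correct and takes essentially the same route as the paper: both reduce to definable arcs ending at $a$ via Curve Selection (you simply make the underlying contradiction argument explicit via definable choice), expand in Puiseux series, and combine the chain rule with Cauchy--Schwarz to compare leading exponents, the only cosmetic difference being that you differentiate $f\circ\gamma$ where the paper integrates $\pa_{\gamma(s)}f\cdot\gamma'(s)$, i.e.\ the same estimate read in the opposite direction. The ``obstacle'' you flag at the end is in fact harmless: since $|\gamma(t)-a|^2$ is a sum of squares of Puiseux series, the leading exponent of $|\gamma(t)-a|$ coincides with the minimal exponent $\beta$ appearing among the components of $\gamma(t)-a$, and because $\beta>0$ term-by-term differentiation yields a nonzero leading coefficient at exponent $\beta-1$, so no cancellation can occur.
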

 \begin{proof}
  Without loss of generality, we can assume that $a=\orn$, $f(a)=0$. Thanks to Curve Selection Lemma, it suffices to show the desired inequality along a definable (non constant) arc $\gamma:(0,\ep)\to M$, with $\gamma(s)$ tending to the origin as $s$ goes to zero.
By Puiseux Lemma (Proposition \ref{pro_puiseux}), the arcs $\gamma(s)$ and  $\pa_{\gamma(s)}f$ admit  Puiseux expansions, say 
$$\gamma(s)=b s^k+\dots \quad \eto \quad \pa_{\gamma(s)}f=cs^l+\dots, \quad \mbox{ with }k,l\in \Q$$
(with $c=0$ if and only if $\pa_{\gamma(s)}f\equiv 0$, and $b\ne 0$).
It means that $\gamma'(s)=kb s^{k-1}+\dots$, so that:
 $$|f(\gamma(r))|=|\int_0 ^r\pa_{\gamma(s)} f \cdot\gamma'(s) ds| \lesssim  |c| \,r^{k+l} \lesssim |\pa_{\gamma(r)}  f|\cdot |\gamma(r) |, $$ 
 yielding the desired estimate along $\gamma$.
 \end{proof}

\begin{cor}\label{cor_lojasiewicz_gradient}
  Let $f:M\to \R$ be a $\ccc^1$ definable function  with $M$ definable $\ccc^1$ submanifold of $\R^n$ and let
$x_0\in cl(M)$.  If $f$ extends continuously at $\xo$ then there are $\rho\in (0,1)\cap \Q$ and  $C>0$ such that for all $x\in M$ sufficiently close to $x_0$ $$|f(x)-f(x_0)|^{\rho}\le  C|\pa_x f|.$$
\end{cor}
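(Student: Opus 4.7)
Assume, by translating, that $f(x_0)=0$. Fix $\eta>0$ small, set $A:=M\cap\bou(x_0,\eta)$, and consider the definable function
$$\psi(s):=\inf\bigl\{|\pa_xf|:x\in A,\ |f(x)|=s\bigr\}$$
which, by continuity of $f$ at $x_0$, is defined and finite on some interval $(0,\alpha)$. The plan is to analyze the Puiseux expansion of $\psi$ near $0^+$ (Proposition \ref{pro_puiseux}). If $\psi(s)=as^b+o(s^b)$ with $a>0$ and $b<1$, one obtains $|\pa_xf|\ge\psi(|f(x)|)\ge\tfrac{a}{2}|f(x)|^b$ whenever $|f(x)|$ is small, and since $|f|$ is bounded on $A$ this yields the desired estimate $|f(x)|^\rho\le C|\pa_xf|$ for a rational $\rho\in(0,1)$ and a constant $C$. (If $b\le 0$, $|\pa_x f|$ is bounded below, and any rational $\rho\in(0,1)$ works since $|f|$ is bounded on $A$.) The proof thus reduces to excluding $\psi\equiv 0$ and $b\ge 1$.

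The cases $b>1$ and $\psi\equiv 0$ are dispatched directly by Lemma \ref{lem_lojasiewicz_inequality_f_x_pa_x}: that lemma gives $|f(x)|\le C_0|x-x_0|\cdot|\pa_xf|\le C_0\eta\cdot|\pa_xf|$ on $A$, whence $\psi(s)\ge s/(C_0\eta)$ for all $s$ in the domain, which is incompatible with any Puiseux expansion $\psi(s)=O(s^b)$ for $b>1$ as well as with $\psi\equiv 0$.

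The main obstacle is excluding $b=1$. Assume $\psi(s)=as+o(s)$ with $a>0$. By definable choice (Proposition \ref{pro_globally subanalytic_choice}) there is a definable arc $s\mapsto x_s\in A$ with $|f(x_s)|=s$ and $|\pa_{x_s}f|\le 2as$ for all small $s>0$; applying Puiseux componentwise, the arc converges to a limit $y\in cl(A)$ as $s\to 0^+$. When $y\in M$, Lemma \ref{lem_lojasiewicz_inequality_f_x_pa_x} applies at $y$ (observing that $f(y)=0$, since $f$ is continuous at $y\in M$ and $|f(x_s)|=s\to 0$) to give $s=|f(x_s)-f(y)|\le C|x_s-y|\cdot|\pa_{x_s}f|\le 2aC|x_s-y|\,s$, hence $|x_s-y|\ge 1/(2aC)>0$, contradicting $x_s\to y$. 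When $y\in fr(M)\setminus M$, $f$ need not extend continuously to $y$ from all of $M$, so the lemma is instead applied to $f$ restricted to the analytic arc $\Gamma:=\{x_s:s\in(0,\delta)\}$, which by Puiseux is a $1$-dimensional $\ccc^1$ submanifold of $\R^n$ on which $f$ trivially extends continuously to $y$ with value $0$; the intrinsic gradient along $\Gamma$ is majorized by $|\pa_xf|$, so the same chain of inequalities produces the same contradiction, completing the reduction to $b<1$.
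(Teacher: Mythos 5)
Your proof is correct and takes a genuinely different route from the paper's. Both hinge on Lemma \ref{lem_lojasiewicz_inequality_f_x_pa_x}, but the paper feeds it into the general \L ojasiewicz inequality (Theorem \ref{thm_lojasiewicz_inequality}) applied to the quotient $g(x)=|f(x)-f(x_0)|/|\pa_x f|$, with the main work in checking that $g(\gamma(t))\to 0$ along arcs with $f(\gamma(t))\to f(x_0)$: for this, the paper cuts out a full-dimensional submanifold $M'\subset M$ (bounded by a H\"older modulus in $|x-a|$) on which $f$ extends continuously to the arc's endpoint $a$, then applies the lemma to $f_{|M'}$. You instead analyze the Puiseux exponent of the one-variable function $\psi(s)=\inf_{|f|=s}|\pa_x f|$ directly, and your exclusion of the critical exponent $b=1$ --- tracking an arc of near-minimizers $x_s$ to its endpoint $y$ and restricting $f$ to the one-dimensional arc $\Gamma$ when $y\notin M$ --- is a slicker stand-in for the paper's $M'$, since on a one-dimensional arc continuity of $f$ at the endpoint is automatic from $f(x_s)\to 0$. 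Your route is more elementary, avoiding Theorem \ref{thm_lojasiewicz_inequality} in favor of Puiseux applied directly; the paper's isolates the arc-testing criterion as a reusable black box. Two minor polish items: dispose first of the trivial case $f\equiv 0$ near $x_0$ (where $\pa_x f\equiv 0$ as well and the inequality is vacuous), and justify that the domain of $\psi$ contains an interval $(0,\alpha)$ --- this follows from continuity of $f$ at $x_0$ together with the fact that the definable set $|f|(A)\setminus\{0\}$ is a finite union of intervals and points accumulating at $0$.
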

\begin{proof} 		 Lemma \ref{lem_lojasiewicz_inequality_f_x_pa_x} yields that there is $\eta>0$ such that $\pa_x f=0$ entails $f(x)=f(x_0)$, for all $x\in \bou(x_0,\eta)\cap M$. We thus will check the desired inequality on the set
	 $$ V:=\{x\in M:\pa_x f\ne 0 \mbox{ and } |x-x_0|<\eta \}.$$ 
	 Define  a function $g$ on this set by setting $g(x):=\frac{|f(x)-f(x_0)|}{|\pa_x f|}$,  and observe that, due to Lemma \ref{lem_lojasiewicz_inequality_f_x_pa_x}, this function  must be bounded in the vicinity of $x_0$. In order to apply Theorem  \ref{thm_lojasiewicz_inequality}, we first show that $g(\gamma(t))$ tends to zero for every 
	definable arc $\gamma:(0,\ep)\to V$ such that $f(\gamma(t))$ tends to $f(x_0)$. 
	
	 Such an arc $\gamma$ being bounded, it must have an endpoint $a\in cl(V)$ (as $t\to 0$). Moreover, since $\gamma(t)$ and $f(\gamma(t))$ are Puiseux arcs, we have for $t>0$ small  $$|f(\gamma(t))-f(x_0)| \le C |\gamma(t)-a|^\alpha,$$ 
	for some positive rational number $\alpha$ and some constant $C$. The arc $\gamma$ is thus either constant (in which case the needed fact is clear) or included in the manifold
	 $$M':=\{x\in V: |f(x)-f(x_0)|< 2C|x-a|^\alpha\}.$$
	Clearly,  $f(a):=f(x_0)$ extends $f_{|M'}$ continuously at $a$. Applying Lemma \ref{lem_lojasiewicz_inequality_f_x_pa_x} to  $f_{|M'}$  yields $|g(x)|\lesssim |x-a|$ for $x\in M'$ near $a$, which shows that $\lim_{t\to 0}g(\gamma(t))=0$.
	
 Hence, by Theorem \ref{thm_lojasiewicz_inequality}, there are $C>0$ and $N\in \N^*$ such that $|g(x)|^N \le C | f(x)-f(x_0)|$ for $x$ in $V$ close to $x_0$, which implies that  for such $x$: $$|f(x)-f(x_0)|^{1-\frac{1}{N}} \le C' |\pa_x f|, $$
 for some constant $C'$.
\end{proof}
 \begin{rem}
  When $f$ fails to extend continuously at $x_0$, it is possible to give an inequality which involves the so-called asymptotic critical values of $f$ \cite{kurdykaloj, annaloj}.
 \end{rem}

\section{Closure and dimension}\label{sect_closure}
We define the {\bf dimension}\nomenclature[ay]{$\dim A$}{dimension of $A$\nomrefpage} \index{dimension} of $A \in \s_n$ as $$\dim A=\max \{\dim C: C \in \C, C\subset A\},$$
where  $\C$ is a cell decomposition of $\R^n$ compatible with $A$ and $\dim C$ denotes the dimension of $C$ as a manifold (cells are analytic manifolds by definition).  By convention, the dimension of the empty set is $-1$.

\begin{pro}\label{pro_dim}
\begin{enumerate}[(1)]
 \item $\dim A$ is independent of the chosen cell decomposition.
\item If $F:A \to B$ is definable and if $E \subset A$ is also definable then $\dim F(E)\le \dim E$.
\item If $A \in \s_n$ is nowhere dense then $\dim A<n$. 
\end{enumerate}
\end{pro}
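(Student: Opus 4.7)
The plan is to establish a small preliminary lemma and then prove the three items in the order (3), (1), (2), since (3) and (1) are essentially independent of (2), whereas (2) will make real use of (1).

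The preliminary observation I would prove by induction on $n$ is that every $n$-dimensional cell of $\R^n$ is open in $\R^n$. For $n=1$ this is immediate from the definition. For $n>1$, an $n$-cell must be a band, and for it to be $n$-dimensional its basis $D\subset \R^{n-1}$ must itself be $(n-1)$-dimensional, hence open in $\R^{n-1}$ by the inductive hypothesis; since the defining functions $\zeta_{D,i},\zeta_{D,i+1}$ are continuous (they are analytic on $D$), the band is open in $\R^n$. Item (3) is then immediate: if $A$ is nowhere dense, $A$ contains no open set, hence no $n$-dimensional cell of any compatible cell decomposition can lie in $A$, so $\dim A\le n-1$.

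For (1), given two cell decompositions $\C,\C'$ compatible with $A$, Remark \ref{rem_common_refinement} provides a common refinement $\C''$, and it suffices to show $\dim_\C A=\dim_{\C''} A$. Each cell $C\in \C$ contained in $A$ is partitioned by cells of $\C''$, and each subcell $C''\subset C$ is a submanifold of $C$, so $\dim C''\le \dim C$. The crux is the reverse inequality: if $\dim C=d$ and every subcell had dimension strictly less than $d$, then using the $\ccc^\infty$ diffeomorphism $\phi:(0,1)^d\to C$ noted after Remark \ref{rem_common_refinement}, the preimages $\phi^{-1}(C'')$ would partition the open set $(0,1)^d\subset \R^d$ into finitely many $\ccc^\infty$ submanifolds of $\R^d$ of dimension $<d$. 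By the preliminary lemma applied inside $\R^d$, each such submanifold is nowhere dense (locally, in a chart, it becomes a lower-dimensional coordinate subspace which has empty interior), so their finite union is meager by Baire's theorem, contradicting that they cover $(0,1)^d$.

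For (2), I would invoke Theorem \ref{thm_existence_cell_dec} and Proposition \ref{pro_la_and_globally subanalytic} to take a cell decomposition of the ambient $\R^n$ compatible with $E$ on whose cells $F$ coincides with an $\la$-mapping, hence is analytic. Writing $F(E)=\bigcup_{C\subset E}F(C)$ as a finite union and using (1), the problem reduces to proving $\dim F(C)\le \dim C$ for each cell $C\subset E$. Setting $e:=\dim F(C)$, pick a cell $D\subset F(C)$ of dimension $e$ in some cell decomposition of $\R^m$ compatible with $F(C)$, and apply definable choice (Proposition \ref{pro_globally subanalytic_choice}) to obtain a definable section $\sigma:D\to C$ with $F\circ \sigma=\id_D$. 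Refining once more via Proposition \ref{pro_la_and_globally subanalytic}, there is a subcell $D'\subset D$ of dimension $e$ (here (1) guarantees that such a full-dimensional subcell exists) on which $\sigma$ is analytic. Differentiating $F\circ\sigma=\id_{D'}$ by the chain rule yields $d_{\sigma(y)}(F|_C)\cdot d_y\sigma=\id_{T_yD'}$, so $d_y\sigma$ is injective of rank $e$; since its image lies in $T_{\sigma(y)}C$, a space of dimension $\dim C$, we conclude $e\le \dim C$. I expect the main obstacle to be the step inside (1) that rules out a $d$-cell being partitioned into subcells of smaller dimension, which essentially demands the parametrization of cells by $(0,1)^d$ together with the Baire/submanifold argument; once that is settled, (3) is trivial and (2) is an orderly application of definable choice and the chain rule.
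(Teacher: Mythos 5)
Your proof is correct and follows essentially the same route as the paper's own three-line sketch: common refinements together with invariance of the top-dimensional cells for (1), ``$n$-cells are open, so nowhere dense sets contain none'' for (3), and ``$F$ is smooth on the cells of a suitable decomposition'' made precise via definable choice and the chain rule for (2). One small slip: the observation that every cell is $\ccc^\infty$ diffeomorphic to $(0,1)^d$ appears just after Remark~\ref{rem_projection_cell_decomposition}, not after Remark~\ref{rem_common_refinement}.
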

\begin{proof}
 $(1)$  comes from the fact that cell decompositions have common refinements and $(3)$ is obvious from the definitions. Since $F$ is smooth on the cells  of a suitable cell decomposition, $(2)$ is also clear. 
\end{proof}

\begin{lem}\label{lem_closure_dense}
 Given $A \in \s_{m+n}$, there is a dense definable subset  $B$ of $ \R^m$ such that  $cl(A)_t=cl(A_t)$ for any $t \in B$.
\end{lem}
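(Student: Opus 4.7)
The plan is to show that the set
$$T := \{t \in \R^m : cl(A)_t \neq cl(A_t)\}$$
has empty interior, so that $B := \R^m \setminus T$ is a dense definable set. Definability of $T$ follows from Gabrielov's Complement Theorem together with Quantifier Elimination (Theorem \ref{thm_s_formula}); since the inclusion $cl(A_t) \subseteq cl(A)_t$ is automatic, $t$ lies in $T$ if and only if there exists $x$ with $(t,x) \in cl(A)$ and $d(x, A_t) > 0$.

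The argument proceeds by contradiction. Suppose $T$ contains an open ball $U \subset \R^m$. Applying definable choice (Proposition \ref{pro_globally subanalytic_choice} and Remark \ref{rem_definable_choice}) to the definable set $\{(t, x) : t \in U,\ (t, x) \in cl(A),\ d(x, A_t) > 0\}$ produces a definable map $f : U \to \R^n$ with $(t, f(t)) \in cl(A)$ and $d(f(t), A_t) > 0$ for every $t \in U$. The subset of $U$ where $A_t = \emptyset$ is contained in $fr(\pi(A))$, where $\pi : \R^{m+n} \to \R^m$ is the canonical projection; this frontier is nowhere dense, so after shrinking $U$ I may assume $A_t \neq \emptyset$ on $U$. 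Then $\delta(t) := d(f(t), A_t)$ is a positive finite definable function on $U$, its definability following from Proposition \ref{pro_csq_qe} together with Quantifier Elimination applied to the family-distance function $(t,x) \mapsto d(x, A_t)$.

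The decisive step exploits regularity: both $f$ and $\delta$ are definable, so applying Theorem \ref{thm_existence_cell_dec} to their graphs and restricting to the union of the top-dimensional cells of the induced cell decomposition of $\R^m$ yields a dense open definable subset of $U$ on which both functions are analytic, hence continuous. I shrink $U$ to such a subset and pick some $t_0 \in U$. Since $(t_0, f(t_0)) \in cl(A)$, there is a sequence $(t_k, x_k) \in A$ with $(t_k, x_k) \to (t_0, f(t_0))$; for $k$ large, $t_k \in U$, so $x_k \in A_{t_k}$ forces $|x_k - f(t_k)| \geq \delta(t_k)$. Passing to the limit, continuity of $\delta$ at $t_0$ gives $\liminf_k |x_k - f(t_k)| \geq \delta(t_0) > 0$, while the triangle inequality together with $x_k \to f(t_0)$ and continuity of $f$ at $t_0$ gives $|x_k - f(t_k)| \to 0$, the required contradiction.

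The main obstacle, in my view, is not the core argument but the preparatory bookkeeping: one must check that $\delta$ is truly definable in the parameter $t$ and dispose of the degenerate locus $\{t \in U : A_t = \emptyset\}$. Both reduce to the definability of the family-distance function and to the fact that the frontier of a definable set is nowhere dense. Once these points are settled, the continuity of a definable function on a dense open subset of its domain---a standard consequence of cell decomposition---does the real work.
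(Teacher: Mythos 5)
Your proof is correct and follows essentially the same route as the paper's: definable choice produces a witness $f(t)\in cl(A)_t$ at positive distance from $A_t$, cell decomposition makes the relevant functions continuous on a smaller definable open set, and continuity of $f$ at $t_0$ clashes with $(t_0,f(t_0))\in cl(A)$. The paper's version uses a radius $r_t>0$ with $\bou(a_t,r_t)\cap A_t=\emptyset$ instead of the exact distance $\delta(t)=d(f(t),A_t)$ and exhibits an open neighbourhood of $\Gamma_a$ disjoint from $A$ rather than passing through a sequence; these differences are cosmetic.

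One point you should repair: to dispose of the locus $\{t\in U: A_t=\emptyset\}$, you invoke ``$fr(\pi(A))$ is nowhere dense.'' For a definable set this comes from $\dim fr(\pi(A))<\dim\pi(A)$, which is Proposition \ref{pro_delta_A} --- stated \emph{after} the present lemma and proven \emph{using} it, so as written there is a circularity. It is easy to avoid. Indeed, if some open ball $U'\subset U$ satisfied $A_t=\emptyset$ for all $t\in U'$, then the open set $U'\times\R^n$ would be disjoint from $A$, hence from $cl(A)$, so $cl(A)_t=\emptyset=cl(A_t)$ on $U'$, contradicting $U'\subset T$. Thus $U\cap\pi(A)$ is a definable dense subset of the $m$-dimensional ball $U$, and by cell decomposition it contains an $m$-cell, hence an open ball, to which you may shrink $U$. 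The paper sidesteps this entirely by working with a radius $r_t>0$ rather than the distance $\delta(t)$, so that $A_t=\emptyset$ is not a degenerate case at all --- any $r_t$ then works, and the conclusion that the open set $U$ built from $(a_t,r_t)$ is disjoint from $A$ is trivial. You might adopt that small simplification.
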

\begin{proof}
 Since the set
$$E:=\{ t \in \R^m :cl (A_t) \ne  cl(A)_t \}$$ 
  may be described with an $\s$-formula, by Theorem \ref{thm_s_formula}, it is definable. We have to show that $\dim E<m$. Note that, for any $t \in \R^m$ we have  $cl(A_t ) \subset cl(A)_t$, since $cl(A)_t$ is
closed and contains $A_t$.

Suppose
that $E$ is of dimension $m$ and take a cell decomposition of $\R^{m}$ compatible with $E$. Let $C$ be a cell of dimension $m$ included in $E$.
For each $t \in C$, there are  $r_t>0$ and $a_t \in  cl(A)_t$  such that $\bou (a_t,r_t)$ does not meet $A_t$. 
 By Definable Choice (Proposition \ref{pro_globally subanalytic_choice}, see Remark \ref{rem_definable_choice}), we can assume that $r_t$ and $a_t$ are definable functions of $t$ and, by Theorem \ref{thm_preparation}, up to a refinement of the cell decomposition, we can assume that they are continuous  on $C$.
Let $$U:=\{ (t,x)\in C \times \R^n: x \in \bou (a_t,r_t)\}.$$
Since $C$ is open in $\R^m$ and because $a_t$ and $r_t$ are continuous with respect to $t$, the set $U$ is an open subset of $\R^{m+n}$. As $U$ intersects $cl(A)$ (at the points $(t,a_t)$) and is disjoint from  $A$, this is a contradiction, which yields that $\dim E<m$. 
\end{proof}

Given $A\in \s_{m+n}$ and $B\in\s_m$, we  define the {\bf restriction of $A$ to $B$}\index{restriction of $A$ to $B$}\nomenclature[ayn]{$A_B$}{restriction to $ B$ of the set $A$\nomrefpage} as:
\begin{equation}\label{eq_restriction}A_B:=A \cap  (B\times \R^n).\end{equation}
\begin{lem}\label{lem_closure_cells}
Given $A \in \s_{m+n}$, there is a definable partition $\Pa$ of $\R^m$ such that for every $B \in \Pa$ we have for any $t \in B$: $$cl(A_B)_t=cl(A_t).$$ 
\end{lem}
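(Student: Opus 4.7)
The plan is to argue by induction on $m$. The case $m = 0$ is trivial, since $\R^0$ is a single point and the partition $\{\R^0\}$ works.

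For the inductive step, suppose the statement holds for all parameter dimensions strictly less than $m$. Applying Lemma \ref{lem_closure_dense} to $A$ yields a dense definable subset $B_0 \subset \R^m$ with $cl(A)_t = cl(A_t)$ for every $t \in B_0$; by Proposition \ref{pro_dim} the complement $E := \R^m \setminus B_0$ satisfies $\dim E < m$. I then take a cell decomposition $\C$ of $\R^m$ compatible with $B_0$. Any $m$-dimensional cell $C \in \C$ is open in $\R^m$ and must lie in $B_0$ (since $\dim E < m$); for $t \in C$, the openness of $C$ around $t$ forces $cl(A_C)_t = cl(A)_t$, and $t \in B_0$ gives $cl(A)_t = cl(A_t)$, so each such $C$ directly serves as an element of the final partition.

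For each lower-dimensional cell $C \in \C$, of dimension $d < m$, the cell construction provides a definable $\ccc^\infty$ diffeomorphism $\psi_C : (0,1)^d \to C$. I pull $A$ back through $\psi_C \times \id_{\R^n}$ to a definable set $\tilde A_C \subset \R^{d+n}$, then apply the induction hypothesis (legitimate since $d < m$) to obtain a finite definable partition $\tilde \Pa_C$ of $\R^d$ satisfying the closure property for $\tilde A_C$. Pushing $\tilde \Pa_C$ forward through $\psi_C$ yields a finite definable partition $\Pa_C$ of $C$, and the final partition of $\R^m$ is the union of the top-dimensional cells of $\C$ together with all the $\Pa_C$.

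The main technical step is to verify that the closure property transfers under $\psi_C$. Since $\psi_C$ is a homeomorphism of $(0,1)^d$ onto $C$, a sequence $t_k \in C$ converges to $t = \psi_C(s) \in C$ in $\R^m$ if and only if $\psi_C^{-1}(t_k) \to s$ in $(0,1)^d$; moreover, since $(0,1)^d$ is open in $\R^d$, the fibered closure of $\tilde A_C \cap (\tilde B \times \R^n)$ at $s$ computed in $(0,1)^d \times \R^n$ coincides with the one computed in $\R^{d+n}$. These two facts reduce the closure condition for $A_B$ at $t$ to the one given by the induction hypothesis for $\tilde A_C$ at $s$, yielding $cl(A_B)_t = cl(A_t)$ for every $B \in \Pa_C$ and $t \in B$, which completes the argument.
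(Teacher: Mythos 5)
Your proof is correct and follows essentially the same route as the paper: induction on $m$, Lemma \ref{lem_closure_dense} to isolate a dense parameter set $B_0$ where $cl(A)_t = cl(A_t)$, a cell decomposition compatible with that set, and reduction to lower parameter dimension (via a definable reparametrization of the cell) for the remaining pieces. One minor economy you could have used, and which the paper does: for \emph{any} cell $C\subset B_0$, regardless of dimension, one has $cl(A_C)_t = cl(A_t)$ for $t\in C$ directly, since $cl(A_C)_t\subset cl(A)_t=cl(A_t)$ while $cl(A_C)_t$ is a closed set containing $(A_C)_t=A_t$; thus the induction is only needed on cells outside $B_0$ (which automatically have dimension $<m$), and your separate openness argument for the top-dimensional cells is a special case of this observation.
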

\begin{proof}
We prove the lemma by induction on $m$. The result being clear for $m=0$, assume it to be true for $(m-1)$, $m \ge 1$.
Let $B$ be the set provided by Lemma \ref{lem_closure_dense} (applied to $A$) and take a cell decomposition $\C$ of $\R^m$ compatible with $B$.
It is enough  to establish the lemma for the sets $A_C$,  $C \in \C$.  In fact, if $C \subset B$, this comes from Lemma \ref{lem_closure_dense}. Otherwise, as $B$ is definable and  dense in $\R^m$, $\dim C<m$. In this case, up to a definable homeomorphism, we may assume that $C \subset \R^{m'}\times \{0_{\R^{m-m'}}\}$, where $m'=\dim C<m$, and the result follows from the induction hypothesis. 
\end{proof}

%

\begin{pro}\label{pro_delta_A}
For any $A \in \s_n$, we have $\dim fr(A)<\dim A$.  
\end{pro}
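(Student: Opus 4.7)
The plan is to argue by induction on $n$, first reducing to a statement about individual cells and then analyzing cells of $fr(A)$ through the projection $\pi : \R^n \to \R^{n-1}$. To begin, I would invoke Theorem \ref{thm_existence_cell_dec} to take a cell decomposition $\C$ of $\R^n$ compatible both with $A$ and with $cl(A)$ (the latter being definable by Proposition \ref{pro_closure}), so that $fr(A)=cl(A)\setminus A$ is itself a union of cells of $\C$. Refining $\C$ further, I can arrange that the closure of every cell of $\C$ is also a union of cells of $\C$. Then for any cell $F\subset fr(A)$ and any $x_0\in F$, the point $x_0$ lies in the closure of some cell $C_0\subset A$, and since $F$ and $C_0$ are disjoint cells with $F\cap cl(C_0)\ne \emptyset$, the construction forces $F\subset cl(C_0)\setminus C_0=fr(C_0)$. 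Thus the proposition reduces to showing that $\dim fr(C)<\dim C$ for every cell $C$, since $\dim C\le \dim A$.

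Next, I would prove this cell version by induction on the ambient dimension $n$, the case $n=0$ being trivial. For $n\ge 1$, let $C$ be a cell of $\R^n$ over a basis $D\subset\R^{n-1}$. The easy case is when $C=(\zeta_1,\zeta_2)$ is a band: then $\dim C=\dim D+1$, and since the $\zeta_i$ are continuous on $D$, one has $fr(C)\cap \pi^{-1}(D)=\Gamma_{\zeta_1}\cup\Gamma_{\zeta_2}$ (a set of dimension $\dim D$), while the remainder lies in $\pi^{-1}(fr(D))$, whose dimension is at most $\dim fr(D)+1\le \dim D$ by the inductive hypothesis applied to the cell $D$ in $\R^{n-1}$; both parts have dimension strictly less than $\dim C$.

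The graph case $C=\Gamma_\zeta$ (with $\dim C=\dim D$) is the main obstacle. Here continuity of $\zeta$ on $D$ still yields $cl(C)\cap\pi^{-1}(D)=C$, so $fr(C)\subset\pi^{-1}(fr(D))$, but the crude bound only gives $\dim fr(C)\le \dim fr(D)+1\le \dim D$, which is not strict. To get the strict inequality I would take a cell decomposition of $\R^n$ compatible with $cl(C)$ and examine each cell $F'\subset fr(C)$ over its base $D'\subset fr(D)$. If $F'$ is a graph over $D'$ (fiber dimension $0$), then $\dim F'=\dim D'\le \dim fr(D)<\dim D$. If $F'$ is a band over $D'$ (fiber dimension $1$), then for every $t\in D'$ the cluster set $cl(C)_t$ contains an interval, so $\zeta$ oscillates near $t$; the key point to establish is that the locus $E_1:=\{t\in fr(D):\dim cl(C)_t\ge 1\}$ has dimension at most $\dim D-2$, which gives $\dim F'\le \dim D'+1\le \dim E_1+1\le \dim D-1<\dim C$.

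The bound on $\dim E_1$ is where the real work lies, and I expect it to be the hardest step. To establish it I would apply Puiseux with parameters (Proposition \ref{pro_puiseux_non_cont}) to the function $\zeta$, which up to a definable partition presents $\zeta$ as a convergent Puiseux series in a transverse variable with analytic coefficients; the locus of positive-dimensional cluster set then corresponds to a proper subcell structure of $fr(D)$, forcing the codimension drop. Alternatively, one may invoke Lemma \ref{lem_closure_cells} to reduce, on a suitable piece of the partition of $\R^{n-1}$, the analysis of $cl(C)_t$ to that of $cl(C_t)$ fiberwise, and then use the inductive hypothesis on the one-dimensional fibers to control the dimension contribution coming from positive-dimensional cluster sets.
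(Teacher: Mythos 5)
Your graph case has a genuine gap at the step you single out as the hardest: the bound $\dim E_1\le\dim D-2$, with $E_1=\{t\in fr(D):\dim cl(\Gamma_\zeta)_t\ge 1\}$ and $d:=\dim D$, is not established, and neither of your two suggested routes delivers it. Writing $D$ (after a coordinate permutation) as the graph of a map over an open set $\tilde{D}\subset\R^d$, Lemma \ref{lem_closure_dense} gives a definable set of dimension $\le d-1$ in $\R^d$ outside which the fiberwise closure of $\Gamma_\zeta$ over $\R^d$ is a point or empty; this only yields $\dim E_1\le d-1$, one short of what you need and no better than the trivial bound $\dim E_1\le\dim fr(D)$. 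The Puiseux route controls the cluster set of $\zeta$ along the transverse Puiseux variable over the graphs bounding $D$ when $D$ is a band, but is silent about the part of $fr(D)$ lying above the frontier of the basis of $D$ (which can itself have dimension $d-1$), and does not apply at all when $D$ is a graph. And Lemma \ref{lem_closure_cells} controls $cl(C_B)_t$, while the object you need is $cl(C)_t$, which for $t\in B$ can be strictly larger owing to contributions from $C\cap(B'\times\R)$ with $B'\ne B$. In fact the target bound is essentially equivalent to the desired conclusion: $fr(\Gamma_\zeta)$ contains $\bigcup_{t\in E_1}\{t\}\times cl(\Gamma_\zeta)_t$, so $\dim fr(\Gamma_\zeta)\ge\dim E_1+1$, meaning that $\dim E_1\le d-2$ is a consequence of, not a lemma toward, $\dim fr(\Gamma_\zeta)<d$.

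For contrast, the paper avoids all of this by arguing by contradiction and choosing the parameter space large rather than small. Assuming $m:=\dim fr(A)\ge k:=\dim A$, it picks an $m$-cell $D\subset fr(A)$, arranges via a linear change of coordinates that $D$ is open in $\R^m\times\{0_{\R^{n-m}}\}$, and reduces to $A$ a cell; the fibers $A_t$ over $\R^m$ then have dimension $k-m\le 0$, so they are finite, hence closed. Lemma \ref{lem_closure_dense} gives $cl(A_t)=cl(A)_t\ni 0_{\R^{n-m}}$ for generic $(t,0)\in D$, contradicting $(t,0)\notin A$. Using $\R^m$ rather than $\R^{n-1}$ squeezes the fibers down to dimension zero so that closedness does the work, and no direct estimate on the frontier of a graph is ever required. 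One further issue with your write-up: the reduction to a single cell invokes a refinement of $\C$ satisfying the frontier condition (closures of cells are unions of cells). This is not proved in the paper, and its standard construction already uses the dimension drop you are trying to establish. It is also unnecessary: for any cell decomposition compatible with $A$, $fr(A)\subset\bigcup_{C\subset A}fr(C)$, hence $\dim fr(A)\le\max_{C}\dim fr(C)$, and it suffices to treat a single cell.
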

\begin{proof}
Let $k:=\dim A$ and assume that $m:=\dim fr(A)$ is not smaller than  $k$. Take a cell decomposition compatible with $fr(A)$ and let $D$ be a cell  of dimension $m$ included in $fr(A)$.  Up to a homeomorphism, we may assume that  $D$ is open in $\R^m \times \{0_{\R^{n-m}} \}$.
 Taking a suitable cell decomposition if necessary,  we can assume that $A$ is a cell, which means that
 $\dim A_t=k-m\le 0$  for every $(t,0) \in D$. Hence, $A_t$ is finite or empty and consequently must be closed in $\R^{n-m}$. But, by Lemma \ref{lem_closure_dense}, we know that for almost every $(t,0) \in D$, $cl(A_t)$ contains $D_t$ which is disjoint from $A_t$. This is a contradiction.
\end{proof}
\begin{rem}\label{rem_int_du_bord}
As a matter of fact, $\delta A=cl(A)\setminus int(A)$ has  empty interior in $\R^n$ for all $A\in \s_n$.
\end{rem}

\begin{pro}\label{pro_cont_parametres}
	Let $A \in \s_{m+n}$ and let  $f:A \to \R$ be a definable  function. If $f_t$ is continuous for every $t \in \R^m$ then there is a definable partition $\Pa$ of $\R^m$ such that for every  $B\in \Pa$ the function $f$ is continuous on $A_B$. 
\end{pro}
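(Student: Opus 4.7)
The plan is to reduce to the case where $f$ is bounded and then apply Lemma \ref{lem_closure_cells} to the graph of $f$, exploiting the fact that if $f_t$ is continuous then $\Gamma_{f_t}$ is closed in $A_t\times\R$.

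First I would replace $f$ by $g:=f/\sqrt{1+f^2}$. Since $s\mapsto s/\sqrt{1+s^2}$ is a definable homeomorphism from $\R$ onto $(-1,1)$, the function $g$ is definable and bounded, and the continuity of $g$ (resp.\ of $g_t$) on any subset of $A$ is equivalent to that of $f$ (resp.\ of $f_t$). Hence we may suppose from now on that $f$ takes values in a bounded interval.

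Next I would apply Lemma \ref{lem_closure_cells} to the definable set $\Gamma_f\subset \R^{m+(n+1)}$, producing a definable partition $\Pa$ of $\R^m$ such that for every $B\in \Pa$ and every $t\in B$,
$$cl((\Gamma_f)_B)_t=cl((\Gamma_f)_t)=cl(\Gamma_{f_t}).$$
It remains to check that $f$ is continuous on $A_B$ for each $B\in \Pa$. Take $(t_0,x_0)\in A_B$ and a sequence $(t_k,x_k)\in A_B$ converging to $(t_0,x_0)$. Boundedness of $f$ guarantees that every subsequence of $f(t_k,x_k)$ admits a further subsequence converging to some $y\in \R$; along such a sub-subsequence the points $(t_{k_j},x_{k_j},f(t_{k_j},x_{k_j}))\in (\Gamma_f)_B$ tend to $(t_0,x_0,y)$, so $(x_0,y)\in cl((\Gamma_f)_B)_{t_0}=cl(\Gamma_{f_{t_0}})$ by the defining property of $\Pa$. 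Since $f_{t_0}$ is continuous on $A_{t_0}$, its graph $\Gamma_{f_{t_0}}$ is closed in $A_{t_0}\times \R$, and as $(x_0,y)\in A_{t_0}\times \R$ (because $x_0\in A_{t_0}$), the pair $(x_0,y)$ must itself lie in $\Gamma_{f_{t_0}}$, whence $y=f(t_0,x_0)$. Every subsequential limit of $f(t_k,x_k)$ therefore equals $f(t_0,x_0)$, and combined with boundedness this forces $f(t_k,x_k)\to f(t_0,x_0)$.

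The main obstacle is the possible unboundedness of $f$: without the initial substitution a sequence $f(t_k,x_k)$ could drift to $\pm\infty$, and the closed-graph argument extracting a finite subsequential limit $y$ would break down. The reduction to a bounded function is precisely what sidesteps this issue and allows the partition furnished by Lemma \ref{lem_closure_cells} to do all the remaining work.
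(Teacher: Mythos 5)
Your proof is correct and follows essentially the same route as the paper's: reduce to bounded $f$, apply Lemma \ref{lem_closure_cells} to $\Gamma_f$, then use the closed-graph characterization of continuity (which requires the graph to be closed fiberwise in $A_t\times\R$ and $f$ bounded). One small improvement over the paper's text: you use the genuine homeomorphism $s\mapsto s/\sqrt{1+s^2}$ for the bounding step, whereas the paper writes $f/(f^2+1)$, which is not injective on $\R$ and so does not literally allow one to recover the continuity of $f$ from that of the transformed function.
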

\begin{proof}
	Possibly replacing $f$ with $\frac{f}{f^2+1}$, we can assume that $f$ is bounded.
	By Lemma \ref{lem_closure_cells} (applied to $\Gamma_f$), there is a definable partition $\Pa$ of $\R^m$ such that for every $B \in \Pa$ we have for all $t \in B$
	\begin{equation}\label{eq_closure} cl(\Gamma_{f_t})= cl(\Gamma_f \cap (B\times \R^{n+1}))_t.\end{equation}
	Since $f_t$ is continuous for all $t$,  the set $\Gamma_{f_t}$ is closed in $A_t \times \R$ which means (by (\ref{eq_closure})) that  $(\Gamma_f)_B$ is closed in $B \times \R^{n+1}$, if $B\in \Pa$. As $f$ is bounded, this implies that  it induces a continuous function on $A_B$.  
\end{proof}

\section{Orthogonal retraction onto a manifold}\label{sect_retraction}
Given  $Y \in \s_n$,  let us  define a function 
$\rho_Y:\R^n\to \R$ by $$\dis_Y(x):= d(x,Y)^2=\inf_{y\in Y} |x-y|^2 .$$ \nomenclature[az]{$\rho_Y$}{square of the distance to $Y$\nomrefpage}
%
\begin{pro}\label{pro_retraction}
 Let $Y$ be a $\ccc^p$ definable submanifold of $\R^n$, $p\ge 2$ (possibly infinite). There are a definable open  neighborhood $U$ of $Y$ and a definable $\ccc^{p-1}$ retraction $\pr_Y:U\to Y$  such that for all $x\in U$ we have
$$ \dis_Y(x)=|x-\pr_Y(x)|^2.$$
   Moreover, for $x\in U$ we have $\pa_x \dis_Y=2(x-\pr_Y(x))$, which is always orthogonal to $T_{\pr_Y(x)} Y$. The triplet $(U,\pi_Y,\rho_Y)$ will be called {\bf a tubular neighborhood} of $Y$.\index{tubular neighborhood}
\end{pro}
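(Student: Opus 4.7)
My plan is to build the retraction locally by inverting a normal-exponential-type map via the implicit function theorem, and then globalize using definability.

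For the local construction, fix $y_0 \in Y$. Working in local $\ccc^p$ coordinates, we may write $Y$ near $y_0$ as the graph of a $\ccc^p$ mapping $\phi : V \to \R^{n-k}$ with $V$ an open neighborhood of some $u_0$ in $\R^k$, so that $y_0 = (u_0, \phi(u_0))$. Choose a $\ccc^{p-1}$ orthonormal frame $(\nu_1(u),\dots,\nu_{n-k}(u))$ of the normal space to $Y$ at $(u,\phi(u))$ (obtained, for instance, by Gram--Schmidt from the columns of $\bigl(-d_u \phi,\, I_{n-k}\bigr)$, which is $\ccc^{p-1}$ since $d\phi$ is $\ccc^{p-1}$). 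Define the $\ccc^{p-1}$ map
$$E : V \times \R^{n-k} \longrightarrow \R^n,\qquad E(u, w) := (u,\phi(u)) + \sum_{i=1}^{n-k} w_i\, \nu_i(u).$$
The differential $d_{(u_0,0)} E$ is an isomorphism: it sends $\R^k \times \{0\}$ isomorphically onto $T_{y_0} Y$ and $\{0\}\times \R^{n-k}$ isomorphically onto the normal space $(T_{y_0} Y)^\perp$, and these two subspaces span $\R^n$. The $\ccc^{p-1}$ implicit function theorem then yields an open neighborhood $W_{y_0}$ of $y_0$ on which $E^{-1}$ exists and is $\ccc^{p-1}$; shrinking $W_{y_0}$, each $x \in W_{y_0}$ has a unique point $\pr(x) \in Y$ closest to $x$ in $Y$, given by the first component of $E^{-1}(x)$, and $x - \pr(x) \perp T_{\pr(x)} Y$.

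To globalize definably, set
$$U := \Bigl\{ x \in \R^n : \text{there is a unique } y \in Y \text{ with } |x-y|^2 = \dis_Y(x) \text{ and } (x-y) \perp T_y Y\Bigr\}.$$
Since $Y$ and $\dis_Y$ are definable, and since $TY$ is definable by Proposition \ref{pro_csq_qe}(\ref{item_der_bundle}), this set is described by an $\s$-formula and is therefore definable by Theorem \ref{thm_s_formula}. The local construction shows $Y \subset U$ and, combined with the openness of each $W_{y_0}$ and the fact that on $W_{y_0}$ uniqueness of the closest point holds, that $U$ is open. By the unique-choice version of Proposition \ref{pro_globally subanalytic_choice} (see Remark \ref{rem_definable_choice}), the assignment $x \mapsto \pr_Y(x) :=$ (the unique closest point) defines a definable map $\pr_Y : U \to Y$, and $\pr_Y(y)=y$ for $y \in Y$.

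For regularity and the derivative formula, note that on each $W_{y_0}\cap U$ the global retraction $\pr_Y$ coincides with the local $\ccc^{p-1}$ map obtained from $E^{-1}$, by uniqueness of the closest point; thus $\pr_Y \in \ccc^{p-1}(U)$. The identity $\dis_Y(x) = |x - \pr_Y(x)|^2$ holds on $U$ by construction, and differentiating it gives, for any $h \in \R^n$,
$$\pa_x \dis_Y \cdot h = 2\bigl\langle x - \pr_Y(x),\ h - d_x \pr_Y(h)\bigr\rangle.$$
Since $\pr_Y(U)\subset Y$ we have $d_x \pr_Y(h) \in T_{\pr_Y(x)} Y$, and by definition of $U$, $(x-\pr_Y(x)) \perp T_{\pr_Y(x)} Y$; hence $\langle x-\pr_Y(x), d_x \pr_Y(h)\rangle = 0$, yielding $\pa_x \dis_Y = 2(x - \pr_Y(x))$, orthogonal to $T_{\pr_Y(x)} Y$.

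The main obstacle is the $\ccc^{p-1}$ regularity of $\pr_Y$. It is not automatic that the global closest-point assignment inherits smoothness from the local picture; the argument hinges on matching the global minimizer with the local inverse of $E$, which in turn requires $Y$ to be at least $\ccc^p$ so that the normal frame $\nu(u)$ is $\ccc^{p-1}$ and the implicit function theorem returns a $\ccc^{p-1}$ inverse.
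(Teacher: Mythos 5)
Your local construction is sound and, up to repackaging, is the same as the paper's: you invert the normal-frame parametrization $E(u,w)=(u,\phi(u))+\sum w_i\nu_i(u)$ by the inverse function theorem, whereas the paper applies the implicit function theorem directly to the criticality equation $\pa_y\mu_x=0$ (with $\mu_x(y)=|x-y|^2$) after checking that $y\mapsto\pa_y\mu_x$ has full rank $k$ at $y=x$. These are two standard presentations of the same tubular-neighborhood argument, both yielding a $\ccc^{p-1}$ local inverse.

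The gap is in your globalization. The set you take,
$U=\{x:\exists!\,y\in Y,\ |x-y|^2=\rho_Y(x),\ (x-y)\perp T_yY\}$,
is definable, but it is neither necessarily open nor is $\pr_Y$ necessarily $\ccc^{p-1}$ on all of it; so your claim that ``$U$ is open'' and your claim that ``on each $W_{y_0}\cap U$ the global retraction coincides with the local $\ccc^{p-1}$ map'' do not settle the statement, because $U$ may contain points lying in no $W_{y_0}$. Concretely, take $Y=\{(t,t^2):t\in\R\}\subset\R^2$ and $x_0=(0,\tfrac12)$, the cusp of the evolute. The function $f(t)=t^2+(t^2-\tfrac12)^2=t^4+\tfrac14$ has the unique minimizer $t=0$, and $(x_0-(0,0))$ is normal to $Y$ there, so $x_0\in U$. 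But for every $\epsilon>0$ the point $(0,\tfrac12+\epsilon)$ has two nearest points $(\pm\sqrt{\epsilon},\epsilon)$ and so does not lie in $U$; hence $U$ is not a neighborhood of $x_0$. Moreover, for $x=(\delta,\tfrac12)$ the minimizer solves $4t^3=2\delta$, so $\pr_Y(\delta,\tfrac12)=((\delta/2)^{1/3},(\delta/2)^{2/3})$, which is not even $\ccc^1$ at $\delta=0$. So your $U$ strictly contains the ``good'' set and the retraction is not of class $\ccc^{p-1}$ everywhere on it. The argument ``combined with the openness of each $W_{y_0}$ \ldots that $U$ is open'' only establishes openness of $\bigcup_{y_0}W_{y_0}\subset U$, not of $U$ itself.

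The fix is to shrink $U$. For instance, by definable choice (Remark \ref{rem_definable_choice}) the nearest-point assignment is a definable map on the definable set where a unique nearest point exists; by Proposition \ref{pro_csq_qe}(\ref{item_C_k}) the set of its points of class $\ccc^{p-1}$ is definable, and by your local construction it is open and contains $Y$. Taking this (or the union of the $W_{y_0}$'s, after arguing its definability) as the neighborhood $U$ closes the gap; the rest of your proof — the identity $\rho_Y(x)=|x-\pr_Y(x)|^2$, the differentiation, and the orthogonality — then goes through unchanged.
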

\begin{proof}
 For $(x,y)\in \R^n \times Y$ let $\mu(x,y):=|x-y|^2$. If $x\in \R^n$ is a point such that $d(x,Y)$ is smaller than $d(x,fr(Y))$ then there must be a point $z$ in $Y$ such that $\dis_Y(x)=\mu(x,z)$. As $z$ realizes the minimum of $\mu_{x}:Y\to \R$, we must have $\pa_z \mu_{x} =0$. Moreover, in the case where $x\in Y$, a simple computation shows that the differential of the mapping $Y\ni y\mapsto \pa_y \mu_{x}\in T_y Y$ has rank $k:=\dim Y$ at $y=x$. 
 
 By the Implicit Function Theorem,  we deduce that every point $x_0$ in $Y$ has a neighborhood $W_{x_0}$ in $\R^n$ and a $\ccc^{p-1}$ mapping $\pr_{x_0}: W_{x_0} \to Y$ for which  $\pa_y \mu_x=0$ amounts to $y =  \pr_{x_0}(x)$. Clearly, by construction, if $x$ is sufficiently close to $x_0$ then $\pr_{x_0}(x)$ is the unique point for which $\dis_Y(x)=|x-\pr_{x_0}(x)|^2$. The map-germs  $\pr_{x_0}$ thus clearly glue together into a definable retraction on a neighborhood of $Y$.  The last sentence comes down from an easy computation of derivative.
\end{proof}

\begin{pro}\label{pro_caract_reg}
 Let $Y \in \s_n$ be locally closed. The function $\dis_Y$ is $\ccc^\infty$ on a neighborhood of $Y$ if and only if $Y$ is a $\ccc^\infty$ submanifold of $\R^n$.
\end{pro}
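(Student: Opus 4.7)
The ``only if'' direction is immediate from Proposition \ref{pro_retraction}: a $\ccc^\infty$ submanifold $Y$ admits a tubular neighborhood $(U,\pi_Y,\rho_Y)$ with $\pi_Y\in\ccc^\infty$, whence $\rho_Y(x)=|x-\pi_Y(x)|^2$ is itself $\ccc^\infty$ on $U$. For the converse, assume $\rho_Y$ is $\ccc^\infty$ on an open neighborhood $U$ of $Y$. Since smoothness of $Y$ is a local property, fix $y_0 \in Y$ and shrink $U$ around $y_0$, using local closedness of $Y$, so that $Y\cap U$ is closed in $U$ and every $x\in U$ admits at least one nearest point $y(x)\in Y$, i.e.\ $\rho_Y(x)=|x-y(x)|^2$. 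The plan is then to build from $\rho_Y$ a $\ccc^\infty$ idempotent retraction of $U$ onto $Y$ and invoke the classical fact that the image of a smooth idempotent is a smooth embedded submanifold.

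The key computation is the identity
$$ \pa_x \rho_Y \;=\; 2(x-y(x)). $$
I would establish it by using $y(x)$ as a test point for the nearest point problem at $x+tv$, which yields
$$ \rho_Y(x+tv)-\rho_Y(x) \;\le\; 2t\,\langle v,\,x-y(x)\rangle + t^2. $$
Dividing by $t$ and passing to the limit first through $t\to 0^+$ and then through $t\to 0^-$ produces an upper and a lower bound on the (two-sided) directional derivative $\pa_v\rho_Y(x)$; the two bounds coincide because $\rho_Y$ is assumed differentiable, and equality yields both the displayed formula and the automatic uniqueness of $y(x)$ (since $y(x)=x-\frac{1}{2}\pa_x\rho_Y$ is then determined).

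Now define $\Psi\colon U\to\R^n$ by $\Psi(x):=x-\tfrac{1}{2}\pa_x\rho_Y$. Then $\Psi\in\ccc^\infty$, $\Psi(U)\subset Y$ by the formula above, and $\Psi_{|Y}=\id_Y$ because $\rho_Y\ge 0$ attains its minimum on $Y$, forcing $\pa_x\rho_Y=0$ there. In particular $\Psi\circ\Psi=\Psi$, i.e.\ $\Psi$ is a $\ccc^\infty$ idempotent retraction of $U$ onto $Y$. To conclude, differentiating $\Psi\circ\Psi=\Psi$ at $y_0$ shows $P:=d_{y_0}\Psi$ is a linear projection; the same argument at any $y\in Y$ shows $d_y\Psi$ is a projection, and since the rank of a projection equals its trace and the trace is continuous in $y$, the integer $\mathrm{rank}(d_y\Psi)$ is locally constant on $Y$, equal to $k:=\mathrm{rank}(P)$. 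The chain rule identity $d_{\Psi(x)}\Psi\circ d_x\Psi=d_x\Psi$, applied at a general $x$ near $y_0$, forces $\mathrm{Image}(d_x\Psi)\subset\mathrm{Image}(d_{\Psi(x)}\Psi)$, whence $\mathrm{rank}(d_x\Psi)\le k$; combined with the lower semi-continuity of rank, we obtain $\mathrm{rank}(d_x\Psi)\equiv k$ in a neighborhood of $y_0$. The constant rank theorem then exhibits $Y=\Psi(U)$ locally near $y_0$ as a $k$-dimensional $\ccc^\infty$ submanifold.

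I expect the main obstacle to be the step producing $\pa_x\rho_Y=2(x-y(x))$ without having uniqueness of $y(x)$ a priori; once this is in hand, the remainder of the argument is a routine packaging of standard smooth-manifold facts (smooth idempotent $\Longrightarrow$ smooth submanifold) that does not use definability.
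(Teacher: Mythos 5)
Your proof is correct but takes a genuinely different route from the paper's. The paper proves the hard direction by contradiction: it picks a $\ccc^\infty$ manifold $Z$ of \emph{minimal} dimension containing a neighborhood of the putatively singular point $y\in Y$, considers the directional derivative $\lambda(x):=d_x\dis_Y(u)$ along a limiting unit direction $u=\lim (z_m-y_m)/|z_m-y_m|$ (where $z_m\in Z\setminus Y$ tends to $y$ and $y_m\in Y$ is the nearest point to $z_m$), uses the quadratic behavior of $\dis_Y$ along the segment $[y_m,z_m]$ to get $d_y\lambda(u)=2$, so that $\lambda_{|Z}$ is a submersion near $y$, and then observes that $\lambda^{-1}(0)\cap Z$ is a submanifold of strictly smaller dimension still containing $Y$ near $y$ (every point of $Y$ being critical for $\dis_Y$), contradicting minimality. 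You argue constructively instead: the two-sided comparison inequality yields the full gradient identity $\pa_x\rho_Y=2(x-y(x))$, which at once makes the nearest point unique and the nearest-point retraction $\Psi(x)=x-\tfrac12\pa_x\rho_Y$ smooth, and the smooth-idempotent / constant-rank packaging finishes. Both proofs exploit the same elementary geometric fact --- $\rho_Y$ is exactly quadratic on the segment from a point to its nearest point in $Y$ --- but the paper extracts only a single second derivative along one well-chosen direction, whereas you extract the entire gradient, which is cheaper here since the two-sided differentiability hypothesis hands you both inequalities. What each buys: your argument produces the tubular neighborhood retraction of Proposition~\ref{pro_retraction} as a byproduct and is visibly free of o-minimality (it works for any locally closed subset of $\R^n$); the paper's is shorter and avoids the constant rank theorem, at the price of being nonconstructive.
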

\begin{proof}
 The if part follows from the preceding proposition. We prove the only if part by way of contradiction, assuming that $\dis_Y$ is smooth and $Y$ is singular. Observe that all the points of $Y$ are critical points of $\dis_Y$, since this function is nonnegative and vanishes identically on $Y$.  Let $y$ be a singular point of $Y$ and let $Z$ be a $\ccc^\infty$ manifold of minimal dimension containing a neighborhood of $y$ in $Y$.
 
  As $Z$ is nonsingular, it cannot coincide with $Y$ near $y$, which means that there is a sequence $z_m$ in $Z \setminus Y$ converging to $y$. For each $m$ large, let  $y_m \in Y$ be such that $\dis_Y(z_m)=|z_m-y_m|^2$, and
  set $u:=\lim \frac{z_m-y_m}{|z_m-y_m|} \in T_y Z$ (extracting a subsequence if necessary, we may assume that this limit exists), as well as, for $x\in Z$, $\lambda(x):=d_x \dis_Y(u)$.  Clearly,  $\dis_Y(x)=|x-y_m|^2$, for all $x$ on the line segment joining $z_m$ and $y_m$.  A straightforward computation of derivative in the direction $u$ thus shows  that $ d_{y} \lambda(u)=2$, which implies that  $\lambda(x)$ is a smooth submersion on $Z$ near $y$. Therefore, its zero locus is a submanifold of $Z$ which contains $Y$  (since all the points of $Y$ are critical points of $\dis_Y$) and  of lower dimension than $Z$, in contradiction with the minimality assumption on the dimension of $Z$.
\end{proof}

\section{The regular locus of a definable set}
\begin{dfn}\label{dfn_regular_locus}
Given $X\in \s_n$, 
we denote by $X_{reg}$\nomenclature[ba]{$X_{reg}$}{regular locus of $X$\nomrefpage}  the set of points at which $X$ is an analytic manifold (of dimension $\dim X$ or smaller), and by $X_{sing}$\nomenclature[bb]{$X_{sing}$}{singular locus of $X$\nomrefpage} its complement in $X$.  We call $X_{reg}$ the {\bf regular locus of $X$}\index{regular locus} and $X_{sing}$ the {\bf singular locus of $X$}\index{singular locus}.

 Given a definable mapping $f:X \to \R^k$,  let $reg(f)$ \nomenclature[bc]{$reg(f)$}{regular locus of the mapping $f$\nomrefpage}be the set constituted by the points of $X_{reg}$ at which $f$ is analytic and let $sing(f)$\nomenclature[bd]{$sing(f)$}{singular locus of the mapping $f$\nomrefpage} be its complement in $X$.

 \end{dfn}

 We shall establish that  $X_{reg}$ and $reg(f)$ are definable (Theorem \ref{thm_X_reg} and Corollary \ref{cor_reg_locus__definissable}). 
\begin{pro}\label{pro_regular_locus_function}
Let $U\in\s_{n}$ be an  open set and let $f:U\to   \R$ be a definable   function. There exists an integer $k$ such that the following assertion holds true for every $x \in U$:

$(*)$ If $f$ is $\ccc^k$ on a neighborhood of $x$ then $f$ is analytic on a neighborhood of $x$.
\end{pro}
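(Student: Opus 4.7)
The plan is to apply the Preparation Theorem (Theorem \ref{thm_preparation}) and Proposition \ref{pro_la_and_globally subanalytic} to obtain an $\la$-cell decomposition $\C$ of $\R^n$ compatible with $U$ on which $f$ is reduced. In particular $f$ is already analytic on every $n$-dimensional cell (Remark \ref{rem_globally subanalytic_implies_analytic}), so the possible non-analytic points of $f$ all lie in the union of cells of $\C$ of dimension strictly less than $n$. On each $n$-cell $C\subset U$ the reduction reads
\[ f(\xt,x_n) = a_C(\xt)\cdot y^{r_C}\cdot U_C(\xt,y),\qquad y:=|x_n-\theta_C(\xt)|,\ r_C\in\Q,\]
where $a_C,\theta_C$ are analytic $\la$-functions on the basis of $C$ and $U_C$ is an $\la$-unit admitting a convergent expansion in powers $y^{m/s_0}$, $m\in\Z$, for a common positive integer $s_0$ (independent of $C$, by finiteness of $\C$). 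A uniform integer $N_0$ bounds the relevant exponents $r_C$. The integer $k$ will be chosen to exceed $s_0(N_0+1)$.

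I will proceed by induction on $n$; the case $n=0$ is vacuous. Assume the statement for dimensions smaller than $n$, and fix $x_0\in U$ together with a neighborhood $V\subset U$ of $x_0$ on which $f$ is $\ccc^k$. If $x_0$ belongs to an $n$-dimensional cell of $\C$ we are done, so suppose $x_0$ lies in a cell of dimension less than $n$. For each $n$-cell $C$ of $\C$ with $x_0\in\overline C$, the reduction above holds on $C$, and the $\la$-unit $U_C$, being bounded on $C$, admits a Puiseux expansion in $y$ whose terms with negative powers of $y^{1/s_0}$ have coefficients vanishing wherever $y\to 0$ in $\overline C$. Pulling back along curves in $C$ that approach $\Gamma_{\theta_C}\cap\overline V$ (produced by Curve Selection Lemma, Lemma \ref{curve_selection_lemma}) and using that $y^\alpha$ fails to be $\ccc^{\lceil\alpha\rceil}$ at $y=0$ whenever $\alpha\notin\N$, the $\ccc^k$ regularity of $f$ forces every non-integer exponent $r_C+m/s_0\le k$ in the expansion to have vanishing coefficient. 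By the choice of $k$, no non-integer exponent remains, so the expansion of $f|_C$ in $y$ consists solely of integer powers. Together with the analyticity of $y^j=\pm(x_n-\theta_C(\xt))^j$ and of the coefficients, $f|_C$ extends to an analytic function $F_C$ on a neighborhood of $x_0$ in $\R^n$.

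It remains to glue the finitely many analytic germs $F_{C_1},\dots,F_{C_l}$ arising from the $n$-cells $C_1,\dots,C_l$ adjacent to $x_0$ into a single analytic function coinciding with $f$ near $x_0$. Both $F_{C_i}$ and $F_{C_j}$ extend $f$ continuously, so they coincide with $f$ — and hence with each other — on the cells of $\C$ of dimension less than $n$ separating $C_i$ from $C_j$ in a neighborhood of $x_0$. Applying the induction hypothesis to the definable analytic function $F_{C_i}-F_{C_j}$ restricted to a suitable lower-dimensional definable subspace where it vanishes on an open subset, and using the identity principle for analytic functions in $\R^n$, forces $F_{C_i}\equiv F_{C_j}$ on a neighborhood of $x_0$. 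Thus a single analytic germ $F$ is obtained and $F=f$ on the union of the $C_i$'s, which is dense in a neighborhood of $x_0$, so $F=f$ on that neighborhood by continuity.

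The main obstacle I expect is precisely this gluing step. The $\ccc^k$ hypothesis must be exploited not only to force integer exponents in each individual reduction, but also to guarantee that the resulting analytic extensions from different adjacent cells are compatible at $x_0$. This requires propagating agreement from lower-dimensional strata to the ambient space via the induction hypothesis, and likely a careful use of \L ojasiewicz-type estimates (Theorem \ref{thm_lojasiewicz_inequality}) to control the rate of vanishing of the differences $F_{C_i}-F_{C_j}$ near $x_0$.
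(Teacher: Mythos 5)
Your core idea --- that the $\ccc^k$ hypothesis should kill the non-integer exponents in a Puiseux-type expansion --- is in the same spirit as the paper's proof, but the way you set up the expansion does not capture the behavior of $f$ at $x_0$, and the step where you assert that "$f|_C$ extends to an analytic function $F_C$" is simply false.

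The decisive counterexample is $f(x,y)=\sqrt{x^2+y^2}$ near $x_0=(0,0)$. On the cell $C=\{x>0,\, -x<y<x\}$ (with $\xt=x$ and $x_n=y$), the Preparation Theorem gives a reduction $f(x,y)=x\cdot y^0\cdot U(x,y)$ with $\theta\equiv 0$, $r=0$, and $U(x,y)=\sqrt{1+(y/x)^2}$ an $\la$-unit. There are no non-integer powers of $y$ anywhere in this reduction, so your criterion is passed, yet $f$ is continuous but not $\ccc^1$ at the origin, and $f|_C$ has no analytic extension to a neighborhood of $x_0$: expanded in powers of $y$, the unit $U$ has coefficients $1/(2x)$, $-1/(8x^3),\dots$ which blow up on $\Gamma_\theta=\{y=0\}$ as $x\to 0^+$. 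The Preparation Theorem only guarantees that $a_C$, $\theta_C$, and the coefficients of the unit are analytic $\la$-functions on the \emph{basis} of $C$ --- an open subset of $\R^{n-1}$ that typically does \emph{not} contain the projection of $x_0$. Nothing in the reduction forces those functions to extend, and the single variable $y=|x_n-\theta_C(\xt)|$ only measures the approach to the graph $\Gamma_{\theta_C}$, not the approach to $x_0$ from all normal directions.

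The paper avoids this by expanding $f$ not with respect to a cellwise reduction but with respect to the \emph{spherical blow-up}: setting $\tilde f(x,u,r)=f(x,ru)$ for $x$ in a $d$-cell $X$, $u\in\sph^{n-d-1}$, $r\geq 0$, and applying the Puiseux Lemma with parameters (Proposition \ref{pro_puiseux_avec_parametres}) to get $\tilde f=\sum a_i(x,u)\,r^{i/p}$. This expansion sees \emph{all} normal directions at once; the criterion for analyticity is not merely "all exponents are integers" but also that each coefficient $a_i(x,\cdot)$ is the restriction to the sphere of a homogeneous polynomial of the correct degree. In the $\sqrt{x^2+y^2}$ example one finds $a_1(u)\equiv 1$, which is not the restriction of a degree-one homogeneous polynomial, and this is exactly what Case 2 of the paper's proof detects. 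Your reduction-based criterion never sees this obstruction, and the gluing of the $F_{C_i}$ that you flag as a worry never even gets off the ground, because the individual $F_{C_i}$ do not exist. To repair the argument along your lines you would have to replace the reduction in the variable $y=|x_n-\theta(\xt)|$ by an expansion in the radial variable $r=|x-x_0|$ (or its parametrized version over the stratum through $x_0$), which is essentially what the paper does.

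A secondary, structural issue: the quantifier in $(*)$ must be established for points of \emph{every} dimension of stratum, and the correct shape of the induction is a downward induction on the dimension of the exceptional set (the paper constructs $E_d$ with $\dim E_d<d$ for which $(*)$ holds off $cl(E_d)$). Your induction on the ambient dimension $n$ does not by itself organize the argument over strata of varying dimension inside the fixed $U\subset\R^n$.
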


\begin{proof}
By Theorem \ref{thm_s_formula}, the set $U'$ of points at which $f$ is continuous is definable. Possibly replacing $U$ with $U'$, we thus may assume that $f$ is continuous.

 We shall  show by downward  induction  on $d$ that for every $d \le n$, there is a definable subset $E_d\subset U$ with  $\dim E_d<d$ such that the property $(*)$ holds for the restriction of $f$ to $U\setminus cl(E_d)$.
The result clearly follows from the case $d=0$.

 By Theorem \ref{thm_preparation} (see Remark \ref{rem_globally subanalytic_implies_analytic}), there is a cell decomposition of $ \R^n$ compatible with $U$ such that $f$ is  analytic on every cell of dimension $n$ included in $U$. Hence, for $d=n$, the result is clear (for any $k$).

Choose $d<n$, assume the result to be true for $(d+1)$, and   take a cell decomposition $\E$ compatible with $U$ and $E_{d+1}$, where $E_{d+1}\subset  U$ is provided by the induction hypothesis. Let $X$ be a cell of dimension $d$ included in $E_{d+1}$ (if $\dim E_{d+1}<d$, we are done).

  For $x\in U$, let   $k_{f}(x)$ be the greatest integer $k$ such that $f$ is $\ccc^k$ on a neighborhood of $x$, with $k_{f}(x)=\infty$ if $f$ is $\ccc^\infty$ around $x$.
We are going to show that there is a definable subset   $F\subset X$  of dimension  $<d$ such that  $k_f$ takes only finitely many values (in $\N\cup \{\infty\}$) on $X\setminus F$.  As we will also prove that if $k_{f}$ is infinite at $x \in X\setminus F$ then $f$ is analytic on a neighborhood of $x$, this will complete the induction step.

Since $X$ is a cell of dimension $d$,  there is a projection onto $\R^d$ which induces an analytic diffeomorphism from $X$ onto an open subset of $\R^d$, so that we may assume that $X \subset \R^d \times \{0_{\R^{n-d}}\}$ (we will sometimes regard $X$ as a subset of $\R^d$).
 Apply Proposition  \ref{pro_puiseux_avec_parametres} to the definable  function
 $$\tilde{f}:X\times \sph^{n-d-1}\times [0,\ep] \to \R, \quad \tilde{f}(x,u,r):= f(x,ru),$$
 and let  $\C$ be a cell decomposition compatible with the partition  of $X\times \sph^{n-d-1}$  provided by this proposition. For every $W\in \C$ included in $X\times \sph^{n-d-1}$,   $\tilde{f}(x,u,r^p)$ extends to an analytic function on a neighborhood of $W\times \{0\}$ in   $W\times \R$, for suitable $p$.  We may assume that $p$ is the same for all $W \in \C$ (taking the product of all the corresponding values of $p$). Let $\D:=\pi(\C)$, where $\pi:\R^{n} \to \R^{d}$ stands for the projection onto the $d$ first coordinates (see Remark \ref{rem_projection_cell_decomposition}).

 Fix a $d$-dimensional cell  $C \in \D$ which is included in $X$. We are going to prove that $k_f$ takes only finitely many values on $C\times \{0_{\R^{n-d}}\}$ (the desired subset $F$ may thus be defined as the union of the cells of dimension $<d$ contained in $X$).

Let $W_1,\dots,W_l$ be the family of all the cells of $\C$ that are included in $C\times  \sph^{n-d-1}$. For every $j$, there exist some definable functions $(a_{i})_{i\in \N}$, analytic on $W_j$, such that for $(x,u) \in W_j \subset C \times \sph^{n-d-1}$  and $r>0$ small enough we have the convergent expansion:
\begin{equation}\label{eq_filde}\tilde{f}(x,u,r)=\sum_{i\in \N} a_{i}(x,u)\,r^{\frac{i}{p}}.\end{equation}
                                 As the $W_j$'s cover $C \times \sph^{n-d-1}$,  we shall regard each $a_i$ as a function defined on  $C \times \sph^{n-d-1}$, gluing together (discontinuously) the respective $a_{i}:W_j\to \R$ obtained on each $W_j$.
We distinguish three cases:

\medskip

\noindent \underline{{\it First case.}} $a_{i,x_0}$ is not identically zero for some $i$ not divisible by $p$ and some $x_0 \in C$.

There is $u \in \sph^{n-d-1}$ such that $a_{i,x_0}(u)\ne 0$. Let $\nu$ be such that $(x_0,u)\in W_\nu$.   Observe  now that, as $a_i$  is analytic on $W_\nu$,  it is nonzero on an open dense subset of $W_\nu$ (since $W_\nu$ is connected).
As a matter of fact, by (\ref{eq_filde}), there is $j \in \N$ such that   $\frac{\pa^j \tilde{f}}{\pa r^j}(x,u,r)$  is unbounded as $r$ goes to zero for almost all $(x,u) \in W_\nu$. Consequently, $f$ is not $\ccc^j$ at $x$, for any $x\in C=\pi(W_\nu)$, which means that $k_f(x)\le j$ for all $x \in C$.

\medskip

\noindent \underline{{\it Second case}}. There exist $i$ divisible by $p$ and  $x_0\in C$ such that $a_{i,x_0}$ is neither identically zero  nor the restriction of a homogeneous polynomial of degree $\frac{i}{p}$ .

We claim that in this case  $f$ fails to be  $\ccc^{\frac{i}{p}}$ at every point of $C$, i.e., $k_f(x)< \frac{i}{p}$ for all $x \in C$. 
 We proceed by way of contradiction: if $f$ were a $\ccc^\frac{i}{p}$ function on a neighborhood of some point of $C$ then $u\mapsto \frac{d^{j} f(x,ru)}{dr^j}|_{r=0}$ would be either identically zero or a homogeneous polynomial of degree $j$,  for all $j \le \frac{i}{p}$ and $x$ close to this point. By (\ref{eq_filde}), it means that   for such $x$, $a_{i,x}$ would be either $0$ or the restriction of a homogeneous polynomial of degree $\frac{i}{p}$.  As $a_{i}$ is analytic on the $W_j$'s which are connected, this would imply that $a_{i,x_0}$ coincides with this polynomial, leading to a contradiction. Hence, $k_f(x)< \frac{i}{p}$ for  every  $x\in C$.

\medskip

 \noindent \underline{{\it Third case.}} Negation of the two above cases: for all $x\in C$ we assume  that for all $i$ not divisible by $p$, $a_{i,x}\equiv 0$,   and that for each $i$  divisible by $p$,   $a_{i,x}$ is either zero or a homogeneous polynomial of homogeneous degree $\frac{i}{p}$.

 In this case, by (\ref{eq_filde}) $$f(x,y)=\tilde{f}(x,\frac{y}{|y|},|y| )=\sum_{l \in \N} a_{lp}(x,\frac{y}{|y|})|y|^l=\sum_{l \in \N} a_{lp}(x,y),$$
 where each $a_{lp,x}$ is a homogeneous polynomial of degree $l$ for all $x\in C$. Using some estimates for germs of homogeneous polynomials \cite{bochnaksiciak, ds}, it is then not difficult to show that the series $\sum_{l \in \N} a_{lp}(x,y)$ converges locally uniformly and therefore defines an analytic function. It means  that  $f$ is analytic on a neighborhood of $C$ in $\R^n$, and $k_f(x)\equiv \infty$ on $C$.
\end{proof}

\begin{rem}\label{rem_familles_fonction_lieu_reg}
   If $f:U \to \R$ is a definable function with $U \in \s_{m+n}$ open and if, for each $\tim$, $k_t$ is the integer satisfying the property $(*)$ of Proposition \ref{pro_regular_locus_function} (applied to $f_t$) then one can see (examining  the proof of  Proposition \ref{pro_regular_locus_function}) that $k_t$ may be bounded away from infinity independently of $t$.  
\end{rem}

For $k\in \N^*$, let $X^k_{reg}$\nomenclature[be]{$X^k_{reg}$}{$\ccc^k$-regular locus\nomrefpage} denote the set of points of $X$ at which $X$ is a $\ccc^k$ manifold.

\begin{thm}\label{thm_X_reg}
If $X \in \St_n$ then 
 $X_{reg}$ is  definable and dense in $X$.  Indeed, $X^k_{reg}=X_{reg}$ for all $k$ sufficiently large.
\end{thm}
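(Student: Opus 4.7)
The plan is to derive both density and definability from a regularity analysis of the definable squared-distance function $\rho_X:\R^n\to\R$. Density of $X_{reg}$ in $X$ follows from a cell decomposition argument: fix a cell decomposition $\mathcal{C}$ of $\R^n$ compatible with $X$; each cell of $\mathcal{C}$ is an analytic submanifold of $\R^n$ by Theorem \ref{thm_preparation} and Remark \ref{rem_globally subanalytic_implies_analytic}. For every cell $C\subset X$, the open subset of $C$ obtained by removing the closures of all other cells lies in $X_{reg}$, since $X$ coincides locally with $C$ at each of its points. Iteratively applying Proposition \ref{pro_delta_A} shows that the union of these open subsets is dense in $X$.

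For the main assertion, I would first establish the identity
\[
X_{reg}=\{x\in X:\rho_X\text{ is analytic on a neighborhood of }x\text{ in }\R^n\}.
\]
The forward inclusion is an analytic version of Proposition \ref{pro_retraction}: if $X$ coincides near $x_0$ with an analytic submanifold $Y$, the analytic implicit function theorem produces an analytic nearest-point retraction $\pi_Y$, so $\rho_X=|x-\pi_Y(x)|^2$ is analytic near $x_0$. The reverse inclusion is the analytic version of Proposition \ref{pro_caract_reg}; the same implicit function argument works, replacing the $\ccc^\infty$ theorem by its analytic counterpart. Once this identity is in place, apply Proposition \ref{pro_regular_locus_function} to $\rho_X$: there is an integer $k_0$ such that at any point where $\rho_X$ is $\ccc^{k_0}$, $\rho_X$ is in fact analytic. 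Moreover a $\ccc^k$ refinement of Proposition \ref{pro_retraction} shows $\rho_X$ is $\ccc^{k}$ on a neighborhood of $x_0$ whenever $X$ is a $\ccc^k$ submanifold there: the retraction $\pi_X$ is $\ccc^{k-1}$ and $\rho_X=|x-\pi_X(x)|^2$ regains a derivative. Combining these facts, for every $k\geq k_0$ any point in $X^k_{reg}$ has $\rho_X$ analytic in a neighborhood, hence lies in $X_{reg}$. The converse inclusion is trivial, so $X^k_{reg}=X_{reg}$ for such $k$.

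Definability of $X_{reg}$ then follows from the identity $X_{reg}=X^{k_0}_{reg}=\{x\in X:\rho_X\text{ is }\ccc^{k_0}\text{ on a neighborhood of }x\}$ together with Proposition \ref{pro_csq_qe}(\ref{item_C_k}) applied to $\rho_X$, which is definable by Proposition \ref{pro_csq_qe}. The main obstacle is the characterization of analytic regularity of $X$ via that of $\rho_X$: this amounts to transposing Propositions \ref{pro_retraction} and \ref{pro_caract_reg} to the analytic category, which is routine using the analytic implicit function theorem but not explicitly stated in the excerpt. A secondary point is that the analog of Proposition \ref{pro_caract_reg} requires local closedness of $X$ near $x_0$, which is automatic at points of $X_{reg}$ (embedded submanifolds are locally closed) and holds on a dense open definable subset of $X$ by Proposition \ref{pro_delta_A}.
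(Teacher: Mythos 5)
Your proposal takes essentially the same route as the paper: reduce everything to the regularity of the definable function $\rho_X$, apply Proposition \ref{pro_regular_locus_function} to pass from $\ccc^{k_0}$ to analytic, and use Propositions \ref{pro_retraction} and \ref{pro_caract_reg} to carry regularity back and forth between $X$ and $\rho_X$. You are also right that the paper leaves implicit the analytic upgrades of these two propositions; as you say, they are routine (for instance, near a point where $\rho_X$ is analytic and $X$ is a $\ccc^\infty$ manifold, $X$ coincides locally with the zero set of $\nabla\rho_X = 2(\mathrm{id}-\pi_X)$, whose Jacobian $I - d\pi_X$ has constant rank $n - \dim X$ because $\pi_X$ is a retraction, so the analytic constant-rank theorem applies).

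One step deserves tightening, though. The identity $X^{k_0}_{reg}=\{x\in X:\rho_X\text{ is }\ccc^{k_0}\text{ near }x\}$ on which your definability argument rests is not correct as stated: the right-hand side can be strictly larger. The reason is that $\rho_X = \rho_{cl(X)}$, so $\rho_X$ can be analytic at a point of $X$ where $X$ itself is not even locally closed, hence not a manifold. A concrete definable example is $X = \bigl(\{(x,y,0):x^2+y^2<1\}\setminus\{(x,0,0):x>0\}\bigr)\cup\{0_{\R^3}\}\subset\R^3$: here $\rho_X(x,y,z)=z^2$ near the origin is analytic, yet the origin lies in $cl(fr(X))$, so $X$ is not locally closed there and $0\notin X_{reg}$. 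The paper forestalls this by first replacing $X$ with its locally closed locus $X\setminus cl(fr(X))$, which is definable, contains $X_{reg}$ and every $X^k_{reg}$, and does not change these sets; your ``secondary point'' gestures at the issue (local closedness on a dense definable subset) but does not actually repair the identity, since what is needed is an intersection with the locally closed locus, not just density. With that intersection written in --- equivalently, with the paper's preliminary reduction --- your argument closes.
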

\begin{proof}It is easy to derive from existence of cell decompositions that $X_{reg}$ is dense (cells are analytic manifolds by definition). Let us show that it is definable. The set of points at which a set is locally closed being definable, we can assume $X$ to be locally closed.  By Proposition \ref{pro_caract_reg}, $X$ is a $\ccc^\infty$ manifold at $x$ if and only if $\rho_X$ is $\ccc^\infty$ in the vicinity of $x$. Let $k$ be an integer for which $\rho_X$ fulfills the property $(*)$  of Proposition \ref{pro_regular_locus_function}.  The set of points at which $\rho_X$ is $\ccc^k$ is definable (see Proposition \ref{pro_csq_qe} (\ref{item_C_k})) and, in virtue of the property $(*)$, coincides with $X_{reg}$.
\end{proof}

\begin{rem}
 It is worthy of notice that the same argument could be used to prove the following parametrized version of the above theorem: if  $X \in \s_{m+n}$ then the family $((X_t)_{reg})_{t\in \R^m}$ is a definable family of sets (see Remark \ref{rem_familles_fonction_lieu_reg}).
\end{rem}
 
\begin{cor}\label{cor_reg_locus__definissable}
 If $f:X\to \R$ is a definable  function then $reg(f)$ is dense in $X$ and definable. Consequently, $sing(f)$ is definable and has lower dimension than $X$.
\end{cor}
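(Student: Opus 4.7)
My plan is to mirror the strategy of Theorem \ref{thm_X_reg}: obtain density of $reg(f)$ from a cell decomposition argument, and obtain definability by recasting analyticity of $f$ at $x$ as a regularity condition on the graph $\Gamma_f$ at $(x,f(x))$.

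For density, I would first invoke Theorem \ref{thm_X_reg} to reduce the problem to showing $reg(f)$ is dense in $X_{reg}$. Combining Theorem \ref{thm_existence_cell_dec} with the Preparation Theorem (Theorem \ref{thm_preparation}) and Remark \ref{rem_globally subanalytic_implies_analytic}, I produce a cell decomposition $\C$ of $\R^n$ compatible with $X$ and $X_{reg}$ such that $f_{|C}$ is analytic on every cell $C\in\C$ contained in $X$. Let $U$ be the union of those cells $C\subseteq X_{reg}$ whose dimension equals the local dimension of the analytic manifold $X_{reg}$ at (any point of) $C$. Each such $C$ is an analytic submanifold of $X_{reg}$ of the same dimension, hence open in $X_{reg}$. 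The complement $X_{reg}\setminus U$ is a finite union of cells of strictly smaller local dimension, and a lower-dimensional definable subset of an analytic manifold has empty interior in it, so $U$ is open and dense in $X_{reg}$. On $U$, both $X$ is an analytic manifold (coinciding locally with $X_{reg}$) and $f$ is analytic, so $U\subseteq reg(f)$.

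For definability, I would characterize $reg(f)$ by the conjunction of three conditions at the free variable $x$: (i) $x\in X_{reg}$; (ii) $(x,f(x))\in (\Gamma_f)_{reg}$; and (iii) the canonical projection $\pi_1:\R^{n+1}\to\R^n$ onto the first $n$ coordinates restricts to a linear isomorphism from $T_{(x,f(x))}\Gamma_f$ onto $T_xX$ (equivalently, the two tangent spaces have equal dimension and $T_{(x,f(x))}\Gamma_f$ contains no nonzero vertical vector). Under (i)--(iii), the Inverse Function Theorem provides a local analytic inverse to $\pi_1|_{\Gamma_f}$ at $(x,f(x))$; composing with the last-coordinate projection exhibits $f$ as analytic near $x$. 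Conversely, analyticity of $f$ near $x$ makes $\Gamma_f$ locally the analytic graph of $f$ over an analytic manifold, giving (ii) and forcing the transversality in (iii). Since $X_{reg}$ and $(\Gamma_f)_{reg}$ are definable by Theorem \ref{thm_X_reg} and their tangent bundles vary definably by Proposition \ref{pro_csq_qe}(\ref{item_der_bundle}), the conjunction of (i)--(iii) is expressible by an $\s$-formula, and Theorem \ref{thm_s_formula} yields that $reg(f)$ is definable.

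Finally, $sing(f)=X\setminus reg(f)$ is definable by Gabrielov's Complement Theorem (Theorem \ref{thm_gabrielov}). Density of $reg(f)$ in $X$ together with $reg(f)\subseteq X$ gives $\dim reg(f)=\dim X$, and the inclusion $X\subseteq cl(reg(f))$ yields $sing(f)\subseteq cl(reg(f))\setminus reg(f)=fr(reg(f))$, so Proposition \ref{pro_delta_A} gives $\dim sing(f)\leq\dim fr(reg(f))<\dim reg(f)=\dim X$. The main obstacle I anticipate is verifying the graph-regularity reformulation in step (iii): one must cleanly translate an intrinsic property of $f$ as a function on the analytic manifold $X$ into an ambient transversality condition on $\Gamma_f\subseteq\R^{n+1}$, which is what makes the tangent-bundle definability of Proposition \ref{pro_csq_qe}(\ref{item_der_bundle}) essential; once this equivalence is secured, the rest follows mechanically from the machinery already built in this chapter.
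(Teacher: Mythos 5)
Your proof is correct, but it takes a genuinely different route from the paper's, which is remarkably terse. The paper sets $g$ to be the restriction of $f$ to the $\ccc^1$ locus inside $X_{reg}$ and then observes that $reg(f)=\pi\bigl((\Gamma_g)_{reg}\bigr)$, where $\pi:X\times\R\to X$ is the projection; definability and density then follow in one stroke from Theorem \ref{thm_X_reg} applied to $\Gamma_g$. The reason the paper restricts to the $\ccc^1$ locus before taking $(\cdot)_{reg}$ is precisely to rule out the transversality failure that your condition $(iii)$ addresses head-on: without this restriction, an isolated point like $(0,1)$ in the graph of $f(0)=1$, $f(x)=0$ for $x\neq 0$, would lie in $(\Gamma_f)_{reg}$ yet project onto a point of $sing(f)$. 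By forcing $g$ to be $\ccc^1$, the graph $\Gamma_g$ automatically has tangent spaces of the form $\{(v,dg_x(v)):v\in T_xX\}$, which project isomorphically onto $T_xX$, so the transversality is built in and no separate tangent-space condition is needed. Your version keeps the full graph $\Gamma_f$ and instead encodes that transversality explicitly via $(iii)$; the equivalence you prove via the Inverse Function Theorem is the same IFT argument the paper hides inside the word ``clearly.'' The tradeoffs: the paper's route avoids invoking Proposition \ref{pro_csq_qe}$(iv)$ and yields definability and density simultaneously from a single application of Theorem \ref{thm_X_reg}, whereas yours requires a separate cell-decomposition argument for density and relies on the definability of tangent bundles, which should be applied componentwise since $X_{reg}$ need not be equidimensional (a fixable technicality, but worth flagging). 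Your version, on the other hand, makes the geometric content — why the projection of the regular graph recovers exactly $reg(f)$ — fully explicit. Your deduction of the dimension drop via Proposition \ref{pro_delta_A} is also what the paper leaves implicit in ``The result thus follows.''
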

\begin{proof}
Let $g$ be the restriction of $f$ to the set of points of $X_{reg}$ at which $f$ is $\ccc^1$ (this set is dense in $X_{reg}$, see Remark \ref{rem_globally subanalytic_implies_analytic}). Clearly $reg(f)=\pi((\Gamma_g)_{reg})$, where $\pi:X\times \R\to X$ is the canonical projection. The result thus follows from Theorem \ref{thm_X_reg}.
\end{proof}

\section{Stratifications}\label{sect_strat}Stratifications satisfying regularity conditions constitute very useful tools to perform differential calculus on singular sets.  We introduce some of the famous regularity conditions for stratifications, such as Whitney's $(b)$ or Kuo-Verdier's $(w)$ condition, and show how to construct stratifications satisfying regularity conditions.
\begin{dfn}\label{dfn_stratifications}
 A {\bf 
stratification of}\index{stratification} a definable set $X$ is a finite partition $\Sigma$ of it into
definable $\ccc^\infty$ submanifolds of $\R^n$, called {\bf strata}\index{stratum}. We then say that $(X,\Sigma)$ is a {\bf stratified set}\index{stratified set}.
 A stratification is {\bf compatible} with a set  if this set is the union of some strata. \index{compatible!  stratification}  
   A {\bf refinement of a stratification}\index{refinement! of a stratification} $\Sigma$ of $X$ is a stratification of $X$ compatible with every stratum of $\Sigma$.
\end{dfn}

 The definition that we have chosen is very general. Some authors require in addition that the strata are connected or that the frontier condition holds (see Definition \ref{dfn_frontier_condition}). Connectedness of the strata can always be obtained by refining the stratification, and  the issue of the frontier condition will be discussed later on (see Remark \ref{rem_frontier_condition}).

\subsection{Whitney's and Kuo-Verdier's conditions.}   The notion of stratification is however generally too weak to perform differential geometry on singular sets, and it is most of the time needed to consider stratifications satisfying extra regularity conditions that describe the way the different pieces glue together. The most famous ones are Whitney's conditions.

We denote by $\G^{n}_k$ \nomenclature[ben]{$\G^{n}_k$}{Grassmannian of $k$-dimensional vector subspaces of $\R^n$\nomrefpage} the Grassmannian manifold of $k$-dimensional linear vector subspaces of $\R^n$. 
  The {\bf angle}\index{angle} \nomenclature[bep]{$\angle (P,Q)$}{angle  between two vector subspaces $P$ and $Q$ of $\R^n$\nomrefpage} between two given vector subspaces $E$ and $F$ of $\R^n$ will be
estimated as:
$$\angle(E,F):=\underset{u\in E,|u|
\le 1}{\sup} d(u,F).$$
The angle between a vector of $\R^n$ and a vector subspace of $\R^n$ is  defined as the angle between the space generated by this vector and this vector subspace.

\begin{dfn}\label{dfn_conditon_b_et_w}
Let $X$ and $Y$ be a couple of disjoint  submanifolds of $\R^n$  and let $z \in Y\cap cl(X)$. We say that  $(X,Y)$ satisfies {\bf Whitney's $(b)$ condition
at
 $z \in Y\cap cl(X)$}\index{Whitney's $(b)$ condition} if for any sequences $(x_k)_{k \in \N}$ and $(y_k)_{k\in \N}$, of points of
 $X$ and $Y$ respectively, converging to $z$ and such that $$\tau :=\lim T_{x_k} X\in\G^n_p\quad\eto\quad v:=\lim
 \frac{x_k-y_k}{|x_k-y_k|}\in \sph^{n-1}$$ exist (where
 $p=\dim X$), we have $v \in \tau$.

 We will say that  $(X,Y)$ satisfies  {\bf Whitney's $(a)$ condition}\index{Whitney's $(a)$ condition} at $z $ if
  $  \angle (T_{z} Y,T_x X)$ tends to zero as $x \in X$ tends to $z$.

 We will say that $(X,Y)$ satisfies the {\bf $(w)$ condition at $z$}\index{w@$(w)$ condition} (of Kuo-Verdier) if there exists a constant $C$ such that for $x\in X$ and $y
\in Y$ in a neighborhood of $z$:
\begin{equation}\label{eq_w}
 \angle (T_y Y,T_x X) \leq C
|y-x|.\end{equation}

\medskip

Finally, let $\pi$ be a $\ccc^\infty$ local retraction onto $Y$ near $z$ (it can easily be checked that the condition below is independent of $\pi$). We will say that $(X,Y)$ satisfies the {\bf $(r)$ condition}\index{r@$(r)$ condition} (of Kuo) at $z$ if:
\begin{equation}\label{eq_r}
 \lim_{x\to z,\, x\in X}   \frac{\angle (T_{\pi(x)} Y,T_x X)\cdot |x-z|}{|x-\pi(x)|} =0.\end{equation}
\end{dfn}

A stratification $\Sigma$  is {\bf $(w)$ regular}\index{stratification!$(w)$, $(b)$, $(a)$, $(r)$ regular} (resp. {\bf $(b)$, $(a)$, $(r)$ regular}) if  every couple $(X,Y)$ of strata of $\Sigma$ satisfies the $(w)$ condition (resp. $(b)$, $(a)$, $(r)$ condition) at every point of $Y$.

Whitney's $(b)$ condition is famous because it is involved in Thom-Mather's Isotopy Theorems \cite{mather}, which are of foremost importance in singularity theory. Kuo-Verdier's conditions ($(r)$ and $(w)$) are useful in subanalytic geometry because they provide analytic criteria for Whitney's $(b)$ condition, as established by the proposition below. An explicit example, where  $(r)$ is checked and  $(w)$ fails, is provided in Chapter \ref{chap_gmt} (Example \ref{exa_densite}), where these conditions will be of service.

\begin{pro}\label{pro_cond_reg_implications}
	For couples of definable manifolds, 
$\;(w) \Rightarrow (r) \Rightarrow (b) \Rightarrow (a)$.
\end{pro}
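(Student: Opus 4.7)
The plan is to handle the three implications separately, in order of increasing difficulty. For $(w) \Rightarrow (r)$, I would substitute $y = \pi_Y(x)$---where $\pi_Y$ is the retraction from a tubular neighborhood of $Y$ (Proposition \ref{pro_retraction})---into (\ref{eq_w}), obtaining $\angle(T_{\pi_Y(x)}Y, T_xX) \leq C|x-\pi_Y(x)|$, so that the ratio in (\ref{eq_r}) is bounded by $C|x-z|$, which tends to zero as $x \to z$. For $(b) \Rightarrow (a)$ I would argue by contradiction: if a sequence $x_k \to z$ in $X$ and a unit vector $v \in T_zY$ satisfy $T_{x_k}X \to \tau$ with $v \notin \tau$, I pick a smooth arc $\delta$ in $Y$ with $\delta(0) = z$ and $\delta'(0) = v$ and set $y_k := \delta(|x_k - z|^{1/2})$; since $|x_k - z|^{1/2}$ dominates $|x_k - z|$, a direct computation gives $(x_k - y_k)/|x_k - y_k| \to -v$, so $(b)$ forces $-v \in \tau$, contradicting $v \notin \tau$.

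The heart of the proposition is $(r) \Rightarrow (b)$, which I would attack by contradiction. If $(b)$ fails at $z$, then for some $\ep_0 > 0$ the definable set $S := \{(x,y) \in X \times Y : x \neq y,\ d((x-y)/|x-y|, T_xX) \geq \ep_0\}$ (definable by Proposition \ref{pro_csq_qe}) accumulates at $(z,z)$. Curve Selection (Lemma \ref{curve_selection_lemma}) together with Puiseux Lemma (Proposition \ref{pro_puiseux}) then supply analytic arcs $\gamma : [0,\eta) \to \R^n$ with $\gamma((0,\eta)) \subset X$ and $\delta : [0,\eta) \to \R^n$ with $\delta((0,\eta)) \subset Y$, both starting at $z$, such that $T_{\gamma(t)}X \to \tau$ in the Grassmannian and $v := \lim (\gamma(t)-\delta(t))/|\gamma(t)-\delta(t)| \notin \tau$. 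After a reparametrization I would write Puiseux expansions $\gamma(t)-z = \gamma_\mu t^\mu + \cdots$, $\delta(t)-z = \delta_\nu t^\nu + \cdots$, and $w(t) := \gamma(t)-\delta(t) = w_0 t^\alpha + \cdots$, so that both $w(t)/|w(t)|$ and $w'(t)/|w'(t)|$ converge to $v = w_0/|w_0|$. Since $\gamma'(t) \in T_{\gamma(t)}X$, the key estimate is
\[
d(w'(t), T_{\gamma(t)}X) \leq |\delta'(t)| \cdot \angle(T_{\delta(t)}Y, T_{\gamma(t)}X).
\]
The non-cancellation subcases ($\mu < \nu$, $\mu > \nu$, or $\mu = \nu$ with $\gamma_\mu \neq \delta_\nu$) each deliver $v \in \tau$ immediately: $v$ equals either $\gamma_\mu/|\gamma_\mu|$ (which lies in $\tau$ as the limit of $\gamma'(t)/|\gamma'(t)|$) or a unit combination involving $\delta_\nu/|\delta_\nu| \in T_zY \subset \tau$, where I invoke $(a)$---an elementary consequence of $(r)$ since $|\gamma - z| \geq |\gamma - \pi_Y(\gamma)|$.

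The hard part will be the cancellation subcase in which $\gamma$ and $\delta$ share all Puiseux coefficients up to some order $\alpha > \mu$. Here the precise decay rate in $(r)$---as opposed to just $(a)$---is indispensable: because $|\gamma - \pi_Y(\gamma)| \leq |w| = O(t^\alpha)$ and $|\gamma - z| \sim t^\mu$, the $(r)$ condition gives $\angle(T_{\pi_Y(\gamma(t))}Y, T_{\gamma(t)}X) = o(t^{\alpha - \mu})$, while smoothness of $Y$ yields $\angle(T_{\delta(t)}Y, T_{\pi_Y(\gamma(t))}Y) = O(|\delta - \pi_Y(\gamma)|) = O(t^\alpha)$; combined with $|\delta'(t)| = O(t^{\mu - 1})$ and the triangle inequality for the Grassmannian angle, this produces $|\delta'(t)| \cdot \angle(T_{\delta(t)}Y, T_{\gamma(t)}X) = o(t^{\alpha - 1}) = o(|w'(t)|)$. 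Consequently $d(w'(t)/|w'(t)|, T_{\gamma(t)}X) \to 0$, forcing $v \in \tau$ and yielding the contradiction.
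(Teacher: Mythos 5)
Your three implications are all correct. For $(w)\Rightarrow(r)$ you give the expected direct argument: substitute $y=\pi_Y(x)$ into the $(w)$-inequality and cancel $|x-\pi_Y(x)|$, leaving $C|x-z|\to 0$; this is exactly the ``straightforward consequence of the definitions'' the paper asserts without spelling out. For $(b)\Rightarrow(a)$ your construction — choose $y_k\in Y$ approaching $z$ at a rate dominating $|x_k-z|$, so that the normalized secant $(x_k-y_k)/|x_k-y_k|$ aligns with the prescribed tangent vector, and then apply $(b)$ — is the same idea as the paper's; it takes $y_k=z+\frac{1}{k}u$ and extracts a faster subsequence of $x_k$, you take $y_k=\delta(|x_k-z|^{1/2})$; the difference is cosmetic.

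For the central implication $(r)\Rightarrow(b)$ your route is genuinely different. Both proofs reduce, via Curve Selection and Puiseux, to a pair of definable arcs $\gamma\subset X$, $\delta\subset Y$ ending at $z$, and both use the fact that a Puiseux arc and its derivative share a limiting direction. But the paper then straightens $Y$ to an open piece of $\R^l\times\{0\}$, uses the orthogonal projection $\pi$ onto that plane to split $x-y=(x-\pi(x))+(\pi(x)-y)$ into an orthogonal normal part and tangential part, and shows separately that each summand makes a vanishing angle with $T_xX$ — the normal part via the $(r)$ decay rate after Puiseux-differentiation, the tangential part via $(r)\Rightarrow(a)$. You keep $Y$ curved, set $w=\gamma-\delta$, and work from the single pointwise estimate $d(w'(t),T_{\gamma(t)}X)\le|\delta'(t)|\,\angle(T_{\delta(t)}Y,T_{\gamma(t)}X)$, which follows from $\gamma'\in T_\gamma X$. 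The paper's orthogonal decomposition is then replaced by a case split on the Puiseux leading orders $\mu,\nu$ of $\gamma-z,\delta-z$ against the order $\alpha$ of $w$; the non-cancelling cases need only $(r)\Rightarrow(a)$ (which you correctly note is elementary via $|\gamma-\pi_Y(\gamma)|\le|\gamma-z|$), while the full strength of $(r)$ is used only in the cancelling case $\mu=\nu<\alpha$, where $|\gamma-\pi_Y(\gamma)|\le|w|=O(t^\alpha)$ and $|\gamma-z|\sim t^\mu$ turn $(r)$ into $\angle(T_{\pi_Y(\gamma(t))}Y,T_{\gamma(t)}X)=o(t^{\alpha-\mu})$, which together with the smoothness bound $\angle(T_{\delta(t)}Y,T_{\pi_Y(\gamma(t))}Y)=O(t^\alpha)$ and with $|\delta'|=O(t^{\mu-1})$, $|w'|\sim t^{\alpha-1}$ gives exactly the needed $d(w'/|w'|,T_\gamma X)\to 0$. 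The paper's orthogonal splitting makes the leading-order bookkeeping automatic and avoids the case analysis; your version avoids straightening $Y$ and isolates one transparent inequality as the engine of the argument. Both localize cleanly where the quantitative content of $(r)$, beyond $(a)$, actually enters.
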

\begin{proof}
$(w) \Rightarrow (r)$ is a straightforward consequence of the definitions. To show that $(r)\Rightarrow (b)$,
fix a couple of strata $(X,Y)$ satisfying $(r)$.  Up to a choice of coordinate system of $Y$, we may assume\footnote{$(a)$ and $(b)$ are preserved by $\ccc^1$ diffeomorphisms, $(w)$ and $(r)$ by $\ccc^2$ diffeomorphisms.} that $Y$ is an open subset of $ \R^l \times \{0\}$, $l:=\dim Y$, and work near $z=0$.  Denote by $\pi$ the orthogonal projection onto $\R^l \times \{0\}$.

 Thanks to Curve Selection Lemma, we simply have to check Whitney's $(b)$ condition along two definable arcs $x:(0,\ep)\to X$ and  $y:(0,\ep)\to Y$ tending to the origin at $0$.
 Since $x$ is a Puiseux arc, we may suppose it to be parametrized by its distance to the origin, i.e., $|x(t)|=t$.
We have: 
\begin{equation}\label{eq_gamma}|x(t)-\pi(x(t))|\sim t
|x'(t)-\pi(x'(t))|.\end{equation} 
Denote by $P_t$ the orthogonal projection onto the orthogonal complement of $T_{x(t)} X$.  As $P_t(x'(t))\equiv 0$, we may write using (\ref{eq_gamma}):
$$P_t\left(\frac{x'(t)-\pi(x'(t))}{|x'(t)-\pi(x'(t))|}\right)\sim\frac{t|P_t(\pi(x'(t)))|
}{|x(t)-\pi(x(t))|},$$
which tends to zero in virtue of the $(r)$ condition.  This shows that the angle between
$(x'-\pi(x'))(t)$ and $T_{x(t)} X$ tends to zero as $t$ goes to zero.   
Since $(x-\pi(x))$ has a Puiseux parametrization we have:
\begin{equation}\label{eq_gamma_direction}\lim_{t\to 0}
\frac{x(t)-\pi(x(t))}{|x(t)-\pi(x(t))|}=\lim_{t\to 0}
\frac{x'(t)-\pi(x'(t))}{|x'(t)-\pi(x'(t))|},\end{equation}
which means that the angle between $(x(t)-\pi(x(t)))$ and $T_{x(t)} X$ must tend
to zero as well. Moreover, as $(y(t)-\pi(x(t)))$ belongs to $Y=T_{\pi(x(t))}Y$ for every $t$,  $(r)$ entails that the angle between $(y(t)-\pi(x(t)))\in T_{\pi(x(t))}Y$ and $T_{x(t)} X$ tends to zero (by definition of the angle $\angle (T_{\pi(x(t))}Y, T_{x(t)}X )$).  Together with the preceding sentence, this establishes that the angle between $$(x(t)-y(t))=(x(t)-\pi(x(t)))+(\pi(x(t))-y(t))$$ (this sum is orthogonal) and $T_{x(t)} X$ tends to zero, yielding  $(b)$  for  $(X,Y)$.

It remains to show that $(b)$ implies $(a)$. Again, we will assume $Y=\R^l \times \{0\}$.  Suppose that $(b)$  holds and  $(a)$ fails at $z\in Y$, i.e.,  assume that there is a sequence $(x_k)_{k\in \N}$ in $X$ tending to $z$ such that  $\tau:=\lim T_{x_k}X$ (exists and) does not contain $T_z Y=\R^l \times \{0\}$. 
 Let  $u\in T_z Y \setminus \tau$ and set $y_k:=z+\frac{1}{k}u$. Extracting a subsequence if necessary, we may assume that $(x_k)_{k\in \N}$ tends to $z$ as fast as we wish. In particular, we may assume that $\frac{y_k-x_k}{|y_k-x_k|}$ tends to $u$. By Whitney's $(b)$ condition, this implies that $u\in \tau$, a contradiction.
\end{proof}

\begin{rem}The  proof of $(r)\Rightarrow (b)$ relies on Curve Selection Lemma. This implication, unlike the other implications of the above proposition, is no longer true if $X$ and $Y$ are not definable,  as shown by the example $X:=\{(x,y):y=\sin \frac{1}{x}, x>0\}$ and $Y:= \{0\} \times (-1,1)$, which satisfies $(w)$ and not $(b)$.
\end{rem}

\subsection{Existence of regular stratifications of definable sets} To prove existence of Whitney or Kuo-Verdier regular stratifications, we are actually going to establish that we can always construct a stratification satisfying any sufficiently reasonable given regularity condition,   which leads us to the following definitions.

\begin{dfn}
A {\bf regularity condition on stratifications}\index{regularity condition on stratifications} is the data for every stratified set $(A,\Sigma)$ of a mathematical formula $\mathbf{G}(x,A,\Sigma)$, where $x\in A$.  We say that $\Sigma$ {\bf satisfies  $(\mathbf{G})$} if  $\mathbf{G}(x,A,\Sigma)$ holds true for every $x\in A$.

Such a condition is said to be {\bf local}\index{local regularity condition}, if, given any $A$ and $\Sigma$,  the value of  $\mathbf{G}(x,A,\Sigma)$ (true or false) 
just depends on the germ of $(A,\Sigma)$ at $x$ (for each $x\in A$), i.e., if for every definable open neighborhood $U$ of $x$ in $A$,  $\mathbf{G}(x,A,\Sigma)$ is equivalent to   $\mathbf{G}(x,U,\Sigma\cap U)$, where $\Sigma\cap U$ is the stratification induced by $\Sigma$ on $U$. 

A regularity condition is {\bf stratifying}\index{stratifying regularity condition} if, given $A$ and $\Sigma$, as well as $S\in \Sigma$, there is a definable open dense subset $W$ of $S$ such that  $\mathbf{G}(x,A,\Sigma)$ holds true $\forall x\in W$.
\end{dfn}

 Whitney's and Kuo-Verdier's conditions provide examples of local regularity conditions on stratifications. We shall show that they are stratifying. We first show  that we can always construct a stratification satisfying a local stratifying condition.

\begin{pro}\label{pro_existence}
 Given a stratifying local condition and a definable set $A$, there is a stratification of $A$ satisfying this condition. We can require this stratification to be compatible with finitely many given definable subsets of $A$.
\end{pro}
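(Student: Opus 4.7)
Plan: Argue by induction on $d := \dim A$, the base case $d=-1$ being vacuous. For the inductive step, one may first replace the family $\{A_1,\dots,A_k\}$ by the (finite, definable) partition $\{B_1,\dots,B_m\}$ of $A$ into the atoms of the Boolean algebra it generates, since any stratification compatible with the $B_j$'s is automatically compatible with each $A_i$. For each $j$, let $T_j\subset B_j$ denote the set of points at which $B_j$ is a $\ccc^\infty$ submanifold of $\R^n$ of dimension $d$. By Tamm's Theorem (Theorem \ref{thm_X_reg}) the $T_j$'s are definable, and by Proposition \ref{pro_delta_A} the residual set $R := A\setminus\bigsqcup_j T_j$ has dimension strictly less than $d$, hence will be accessible to the induction hypothesis.

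The main device is to relativize $\mathbf{G}$ with respect to the fixed top-dimensional manifolds $T_j$. On stratifications $\Sigma_R$ of $R$, define the formula
$$\tilde{\mathbf{G}}(x,R,\Sigma_R) \;:=\; \mathbf{G}\bigl(x,\,A,\,\Sigma_R\cup\{T_1,\dots,T_m\}\bigr),\qquad x\in R.$$
I will check that $\tilde{\mathbf{G}}$ is again a local stratifying condition: locality follows from the locality of $\mathbf{G}$ together with the observation that the germ of each $T_j$ at any point of $R$ does not depend on $\Sigma_R$; the stratifying property is a direct translation of the stratifying property of $\mathbf{G}$ applied to the stratification $\Sigma_R\cup\{T_j\}$ of $A$ and to a stratum $S\in\Sigma_R$. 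Applying the induction hypothesis to $R$ with $\tilde{\mathbf{G}}$ (and compatibility with the definable sets $R\cap A_i$ and $R\cap cl(T_j)$) yields a stratification $\Sigma_R^0$ of $R$ such that $\Sigma^0 := \{T_1,\dots,T_m\}\cup\Sigma_R^0$ is a stratification of $A$ compatible with the $A_i$'s and satisfying $\mathbf{G}$ at every point of $R$.

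It remains to force $\mathbf{G}$ to hold on each top stratum as well. To this end, apply the stratifying property of $\mathbf{G}$ to $(A,\Sigma^0)$ at each $T_j$ to obtain a definable open dense $W_j\subset T_j$ on which $\mathbf{G}(x,A,\Sigma^0)$ holds; the residual pieces $E_j := T_j\setminus W_j$ have dimension $<d$ by Proposition \ref{pro_delta_A}. Applying the induction hypothesis once more to the definable set $R' := R\cup\bigsqcup_j E_j$, of dimension $<d$, with the analogous relativized condition
$$\tilde{\mathbf{G}}'(x,R',\Sigma_{R'}) \;:=\; \mathbf{G}\bigl(x,\,A,\,\Sigma_{R'}\cup\{W_1,\dots,W_m\}\bigr),$$
and requiring compatibility with all strata of $\Sigma_R^0$, yields a stratification $\Sigma_{R'}$ such that $\Sigma := \{W_1,\dots,W_m\}\cup\Sigma_{R'}$ is a stratification of $A$ compatible with the $A_i$'s on which $\mathbf{G}$ holds at every point of $R'$.

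The hard step, and the principal technical obstacle, is to transfer the truth of $\mathbf{G}$ at points $x\in W_j$ from the intermediate stratification $\Sigma^0$ (where the stratifying property delivered it) to the final stratification $\Sigma$ — a priori these differ, because $\Sigma_{R'}$ refines $\Sigma_R^0$ and further partitions each $E_j$, so the germs of $(A,\Sigma^0)$ and $(A,\Sigma)$ at $x$ need not agree and locality alone is insufficient. My plan is to handle this by imposing in the second inductive pass that $\Sigma_{R'}$ refines $\Sigma_R^0$ on the common subset $R$, and then concluding by one final stratifying pass applied to $\Sigma$ itself: the resulting bad locus has dimension $<d$ and can be absorbed into the lower-dimensional strata by a dimensional descent, ensuring that $\mathbf{G}$ holds at every point of $A$ in the end.
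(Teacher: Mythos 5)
You correctly identify the gap, and your proposed patch does not close it. The difficulty is structural: after the second inductive pass, $\Sigma_{R'}$ is in general a \emph{strict} refinement of $\Sigma_R^0$, and a point $x\in W_j$ may well have pieces of $R$, and of the other $E_{j'}$'s, accumulating on it; any restratification of those pieces alters the germ of the ambient stratification at $x$, so the truth of $\mathbf{G}(x,A,\Sigma^0)$ simply does not propagate to $\Sigma$ via locality. Requiring $\Sigma_{R'}$ to refine $\Sigma_R^0$ on $R$ is not the right constraint --- refinement is precisely the operation that destroys germs. The ``one final stratifying pass'' applied to $\Sigma$ only produces new open dense $W_j'\subsetneq W_j$ on which $\mathbf{G}$ holds for the \emph{current} $\Sigma$, and the pruned residue $W_j\setminus W_j'$ must then be re-absorbed into, and re-stratified with, the $(<d)$-dimensional locus --- which again changes germs at points of the $W_j'$'s, which again forces a shrinking, and so on. No quantity visibly decreases across such passes, so the scheme need not terminate; ``dimensional descent'' is the thing you are trying to prove, not an argument for it.

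The idea you are missing is that once a stratum is created it must \emph{never} be refined, only shrunk from below, with the shavings pushed into a \emph{closed} bad set that decreases in dimension. Concretely, run a single downward pass on $k$ from $d+1$ to $0$, maintaining a closed definable subset $E_k\subset A$ with $\dim E_k<k$ together with a stratification $\Sigma^k$ of $A\setminus E_k$ that already satisfies $\mathbf{G}$, starting from $E_{d+1}:=A$ and the empty stratification. To pass from $k+1$ to $k$: take a cell decomposition compatible with $E_{k+1,reg}$ and the target sets, collect the $k$-dimensional cells $S_1,\dots,S_m\subset E_{k+1}$ (their union is open in $E_{k+1}$ and $Z:=E_{k+1}\setminus\bigcup S_i$ is closed of dimension $<k$), apply the stratifying property to $\Sigma^{k+1}\cup\{S_1,\dots,S_m\}$ on $A\setminus Z$ to get open dense $W_i\subset S_i$, and put $\Sigma^k:=\Sigma^{k+1}\cup\{W_1,\dots,W_m\}$, $E_k:=E_{k+1}\setminus\bigcup W_i$. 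Locality then transfers cleanly: the old strata of $\Sigma^{k+1}$ live in the open set $A\setminus E_{k+1}$, so they never see the new strata (which sit inside $E_{k+1}$) nor the shrinkage of the bad set; and each $x\in W_i$ lies off the closed set $E_k\supset\bigcup_j(S_j\setminus W_j)$, so in a neighborhood of $x$ the $S_j$'s and $W_j$'s coincide and the germ of $(A\setminus E_k,\Sigma^k)$ equals that of $(A\setminus Z,\Sigma^{k+1}\cup\{S_j\})$, for which $\mathbf{G}$ was just secured. Your opening move (fixing the $T_j$'s and relativizing $\mathbf{G}$) is essentially the step $k=d$ of this scheme; the misstep is to stratify all of $R$ before pruning the $T_j$'s, which is what forces a second pass and the ensuing regress.
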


\begin{proof}Let $A$ be a definable subset of $\R^n$, $X_1,\dots,X_l$  definable subsets of $A$,  and  let $(\mathbf{G})$ be a local stratifying condition.
 We construct by decreasing induction on $0\le k\le d+1$, $d:=\dim A$,    a closed definable subset  $E_k$ of $A$ of dimension less than $k$, such that there is a stratification $\Sigma^k$ of $A\setminus E_k$ which is compatible with  $X_1\setminus E_k,\dots, X_l\setminus E_k$  and satisfies the condition $(\mathbf{G})$.  We can start with $E_{d+1}:=A$.

 Take then $k\le d$ for which $\Sigma^{k+1}$ and $E_{k+1}$ have already been constructed, and let $\C$ be a cell decomposition  of $\R^n$ compatible with $E_{k+1,reg}$ and the $X_i$'s. Denote by $S_1,\dots, S_m$ the cells of $\C$ of dimension $k$ that are included in  $E_{k+1}$.
%
 Observe that if we set $Z:=E_{k+1}\setminus \cup_{i=1}^m S_i $ then $\dim Z<k$ and 
 $$\Sigma'^{k+1}:=\Sigma^{k+1} \cup \{S_1,\dots,S_m\}$$
  is a stratification of $A\setminus Z$.  It remains to take off the points of the $S_i$'s at which the condition  $(\mathbf{G})$ fails.
  Since this condition is stratifying, there is for each $i\le m$ an open dense definable subset $W_i\subset S_i$ such that $\mathbf{G}(x,A\setminus Z, \Sigma'^{k+1})$ holds for every $x\in W_i$. Clearly, if we set $E_k:=E_{k+1}\setminus \cup_{i=1}^m W_i$ then
 $\Sigma^k:= \Sigma^{k+1}\cup \{W_1,\dots,W_m \}  $
 is a stratification of $A\setminus E_k$.
 Moreover, since the condition $(\mathbf{G})$ is local, and because the germs at the points of $\cup_{i=1}^m W_i$ of the respective strata of $\Sigma'^{k+1}$ and $\Sigma^k$ coincide, we see that $\Sigma^k$ fulfills the condition $(\mathbf{G})$. 
\end{proof}
 
\begin{rem}\label{rem_reg_ref}$ $\begin{enumerate}[(1)]      \item The conjunction of several local stratifying conditions being local and stratifying, we have proved that we can construct a stratification satisfying several local stratifying conditions simultaneously.  
                               \item The fact that we can assume the constructed stratification to be compatible with finitely many definable subsets ensures that we can refine any given stratification into a stratification satisfying a given local stratifying condition.
                               \item  The algorithm of construction that we gave ensures that, when we wish to refine a stratification $\Sigma$ that already satisfies $(\mathbf{G})$  on a definable open set $U$, we do not need to modify $\Sigma$ on $U$.
                              \end{enumerate}

\end{rem}

\begin{pro}\label{pro_w_stratifying}
The conditions $(a)$, $(b)$, $(r)$, and $(w)$ are stratifying.
\end{pro}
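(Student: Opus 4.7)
The plan is to exploit the chain of implications $(w)\Rightarrow(r)\Rightarrow(b)\Rightarrow(a)$ from Proposition \ref{pro_cond_reg_implications}: once $(w)$ is shown to be stratifying, the three weaker conditions automatically are, since any open dense definable subset of a stratum witnessing $(w)$ witnesses the weaker conditions as well. A stratification has only finitely many strata, so by taking finite intersections of open dense definable subsets it suffices to fix two disjoint definable $\ccc^\infty$ submanifolds $S,\,S'$ of $\R^n$ with $S\subset cl(S')\setminus S'$ and to produce an open dense definable subset $W\subset S$ along which the pair $(S',S)$ satisfies $(w)$.

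The question is local on $S$, so I would fix $y_0\in S$, shrink to a neighborhood, and (after a local $\ccc^\infty$ diffeomorphism) assume that $S$ is an open subset of $\R^l\times\{0_{\R^{n-l}}\}$ with $l=\dim S$. Let $\pi$ denote the orthogonal retraction $(x_1,\ldots,x_n)\mapsto (x_1,\ldots,x_l,0)$ onto $S$ and define on $S'$
$$\phi(x):=\angle(T_{\pi(x)}S,\,T_xS'),\qquad \rho(x):=|x-\pi(x)|,$$
both definable by Proposition \ref{pro_csq_qe}\,$(\ref{item_der_bundle})$ together with Property \ref{pro_basic_properties_from_dfn}. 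The condition $(w)$ at $y\in S$ amounts to $\phi\le C\rho$ on $S'\cap U$ for some $C>0$ and some neighborhood $U$ of $y$, so by Theorem \ref{thm_s_formula} the set
$$B:=\{y\in S:\forall C>0,\ \forall r>0,\ \exists x\in S'\cap\bou(y,r),\ \phi(x)>C\rho(x)\}$$
of bad points is definable, and the goal reduces to $\dim B<\dim S$.

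I would argue by contradiction, assuming $B$ has nonempty interior in $S$. Applying Definable Choice (Remark \ref{rem_definable_choice}) to the witnesses of the quantifiers defining $B$, together with the Curve Selection Lemma \ref{curve_selection_lemma} and the Puiseux Lemma \ref{pro_puiseux}, one extracts a definable analytic arc $x:(0,\varepsilon)\to S'$ converging to a point $y_\ast\in int(B)$ with $\phi(x(t))/\rho(x(t))\to\infty$ as $t\to 0^+$. The Preparation Theorem \ref{thm_preparation}, combined with Lemma \ref{lem_meme_morphisme} (to prepare $\rho$ and $\phi$ with a \emph{common} $\la$-translation on a common cell decomposition), yields Puiseux expansions $\rho(x(t))=at^\alpha+\cdots$ and $\phi(x(t))=bt^\beta+\cdots$, and the desired contradiction amounts to showing $\beta\ge\alpha$. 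This would be obtained by integrating the infinitesimal rotation of $T_{x(t)}S'$ along the segment from $\pi(x(t))$ to $x(t)$, in the same spirit as the argument given for $(r)\Rightarrow(b)$ in Proposition \ref{pro_cond_reg_implications} (compare with $(\ref{eq_gamma})$ and $(\ref{eq_gamma_direction})$): the variation of the tangent plane along that ray is controlled by an integral of $\phi$, which by Puiseux is comparable up to lower order to $\phi(x(t))\cdot\rho(x(t))$, forcing the linear relation between the exponents.

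The main obstacle is precisely this last step: producing the \emph{linear} bound $\phi\lesssim\rho$ instead of the merely polynomial bound $\phi\lesssim\rho^{1/N}$ that Theorem \ref{thm_lojasiewicz_inequality} would give for free. This is the technical heart of the $(w)$-stratification theorem, and it relies crucially on the refined analytic information provided by the Preparation Theorem—namely, the Puiseux-type expansion of $\phi$ with \emph{analytic} coefficients and a $\la$-translation shared with $\rho$—which \L ojasiewicz's inequality alone cannot supply.
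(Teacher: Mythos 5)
Your reduction to $(w)$ via Proposition \ref{pro_cond_reg_implications}, the normalization $S\subset \R^l\times\{0\}$, the definability of the bad set $B$ via Theorem \ref{thm_s_formula}, and the goal $\dim B<\dim S$ all coincide with the paper's proof. Where your argument genuinely breaks down is the final step, which you yourself flag as the ``technical heart.''

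The gap is structural, not merely technical. You reduce the contradiction to a single definable arc $x(t)$ tending to a bad point $y_\ast$ with $\phi(x(t))/\rho(x(t))\to\infty$, and then hope to prove $\phi(x(t))\lesssim\rho(x(t))$ along that same arc. But along an arc converging to a point where $(w)$ fails there is simply no such linear bound — that is the very content of the failure — and the Preparation Theorem applied to the one-variable functions $\phi(x(t))$ and $\rho(x(t))$ cannot conjure one. The ``integrate the infinitesimal rotation of $T_{x(t)}S'$ along the fiber'' idea controls the \emph{total variation} of the tangent plane along a vertical segment, which is the tool behind $(r)\Rightarrow(b)$; it gives no lower bound on the exponent of $\rho$ relative to $\phi$. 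The move you are missing is to exploit the assumption that $B$ has \emph{full dimension} $l$: the paper parametrizes the set of bad points $W\subset S'$ as a definable family $g(y,t)=(y,\xi(y,t))$ over an $l$-dimensional base $C\subset S$ (after Lemma \ref{lem_closure_dense} and Definable Choice), applies the \emph{parametrized} Puiseux Lemma (Proposition \ref{pro_puiseux_non_cont}) to get $\xi(y,t)=a(y)t+\cdots$ with coefficients analytic in $y$, and then differentiates this expansion with respect to the base parameters $y_i$, not with respect to $t$. The tangent vectors $v_i(y,t):=d_{(y,t)}g(e_i)\in T_{g(y,t)}S'$ satisfy $|v_i(y,t)-e_i|\lesssim t=d(g(y,t),S)$ precisely because $\partial\xi/\partial y_i=O(t)$, and this gives $\angle(S,T_{g(y,t)}S')\lesssim d(g(y,t),S)$ on $g(N)\cap W$, contradicting the defining inequality of $W$. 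The linear exponent comes from differentiating an analytic family in the horizontal directions; a one-variable Puiseux argument along a single arc cannot reproduce it.

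So the scaffolding is right, but the core computation needs the parametrized (Paw\l ucki) version of Puiseux and derivatives in the $y$-directions; the single-arc contradiction you propose is not closable.
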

\begin{proof} Thanks to Proposition \ref{pro_cond_reg_implications}, it suffices to show the result for the condition $(w)$. Let $\Sigma$ be a  stratification and let $X$ and $Y$ be two strata.
  Up to a choice of coordinate system of $Y$, we may identify  $Y$ with a neighborhood of $0$ in $\R^{k}\times
\{0_{\R^{n-k}}\}$  (where $k=\dim Y$). Below, we will sometimes abusively consider $Y$ as a subset of $\R^k$. 

Note that, by Theorem \ref{thm_s_formula}, the set of points at which the $(w)$ condition fails is definable. We shall proceed by way of contradiction:  assume that there is a definable open subset
$U$ of $ Y$  such that  $(w)$  fails for $(X,Y)$  at
every point of $U$. It means that for any $y \in U$ and any $r>0$ there is $\omega(y,r)\in X\cap \bou(y,r)$ such that:
\begin{equation}\label{eq_w_fail}d(\omega(y,r),Y) \leq r \cdot \angle(Y,T_{\omega(y,r)} X) .\end{equation}
By Definable Choice (Proposition \ref{pro_globally subanalytic_choice}), $\omega(y,r)$ can be chosen definable.   
 Let
 $$W:=\omega(U \times (0,\ep))\subset X.$$
 The mapping $\omega$ can be seen as a parametrization of the set $W$. We are going to find another parametrization of this set of type $(y,\xi(y,t))$ in order to make our computations easier. By Lemma \ref{lem_closure_dense}, there is a definable open dense subset $U'$ of $U$ such that for any $y \in U'\subset \R^k$, we have $cl(W_y)=cl(W)_y$. Hence, for all $y \in U'$, $0_{\R^{n-k}}$ belongs to $cl(W_y)$. As a matter of fact, for all $y \in U'$  there exists $\xi(y,t)$ in  $\sph(0_{\R^{n-k}},t) \cap W_y$, for each $t>0$ small enough. By Definable Choice again  we may assume that $\xi$ is definable.

  Write $\xi=(\xi_1,\dots,\xi_{n-k})$ and apply Proposition \ref{pro_puiseux_non_cont} to each of the $\xi_i$'s. This provides a definable partition $\Pa$ of $U'$ into $\ccc^\infty$ manifolds such that for every $C\in\Pa$,  $\xi(y,t)$ coincides with a Puiseux series with analytic coefficients   on a neighborhood $N$ of $C\times \{0\}$ in $C \times  \R_+$.
  Fix $C \in \Pa$  of dimension $k$ and define  a  mapping  by setting for $y\in C$ and $t>0$ small $$g(y,t):=(y,\xi(y,t)) .$$

  We are going to show that for each $ y\in C$ and $x\in g(N)\cap W$ close to $y$ we have
\begin{equation}
\label{eq_dist_T_xX} \angle(Y,T_{x} X)\lesssim d(x,Y),\end{equation}
which will contradict (\ref{eq_w_fail}) and establish the proposition.
      To prove this,  
notice that since $g(y,t)\in W$ for all $y\in C$ and  $t>0$ small, the vector
 $$v_i(y,t):= d_{(y,t)} g (e_i)$$
  belongs to $T_{g(y,t)} X$ for any $y\in C$, $t>0$ small, and $i\le k$. Furthermore, if the Puiseux expansion of $\xi(y,t)$
starts like
$ \xi(y,t)=a(y)\cdot t+\dots,$
with $a:C \to \sph^{n-k-1}$ analytic function, then for all $i\le k$:
\begin{equation*}\label{eq_der1_xi_puiseux}
 \frac{\partial \xi}{\pa y_i}(y,t)=\frac{\partial a}{\pa y_i}(y)\cdot t
+\dots.
\end{equation*}
As a matter of fact, in the vicinity of any given point of $C \times \{0\}$ we have for  $i\le k$:
$$|v_i(y,t)-e_i| \lesssim t= |\xi(y,t)|=d(g(y,t),Y), $$
showing (\ref{eq_dist_T_xX}) (since $v_i(y,t)\in T_{g(y,t)} X$). 
\end{proof}

\subsection{Stratifications of mappings}

\begin{dfn}\label{dfn_stratified_mapping}
  Let $F:A\to B$ be a definable mapping. We say that $F:(A,\Sigma)\to (B,\Sigma')$  is a {\bf stratified mapping}\index{stratified mapping} if $\Sigma$ and $\Sigma'$ are stratifications of $A$ and $B$ respectively such that for every stratum  $S\in \Sigma$, $F(S)$ is included in an element  $S'$ of $ \Sigma'$ and the restricted mapping $F_{|S}:S\to S'$ is a $\ccc^\infty$ submersion.
\end{dfn}

\begin{pro}\label{pro_strat_mapping}
 Given a local stratifying regularity condition for stratifications $(\mathbf{G})$ and a definable mapping $F:A\to B$, we can find two stratifications  $\Sigma$ of $A$ and $\Sigma'$ of $B$ satisfying  $(\mathbf{G})$ and making of $F:(A,\Sigma)\to (B,\Sigma')$ a  stratified mapping. Moreover, these stratifications may be required to be compatible with finitely many given definable subsets of $A$ and $B$ respectively.
\end{pro}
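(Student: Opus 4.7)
Setting $d := \dim A$, the strategy is to imitate the inductive scheme of Proposition \ref{pro_existence}: I would build the two stratifications simultaneously by downward induction on a dimension parameter $k$ from $d+1$ down to $0$. At step $k$, the inductive data would be a closed definable subset $E_k \subset A$ of dimension strictly less than $k$, a stratification $\Sigma_k$ of $A \setminus E_k$, and a stratification $\Sigma'_k$ of $B$, both satisfying $(\mathbf{G})$ and compatible with the prescribed subsets, such that $F : (A \setminus E_k, \Sigma_k) \to (B, \Sigma'_k)$ is stratified in the sense of Definition \ref{dfn_stratified_mapping}. The base step $k = d+1$ is trivial: take $E_{d+1} := A$, $\Sigma_{d+1} := \emptyset$, and let $\Sigma'_{d+1}$ be any stratification of $B$ produced by Proposition \ref{pro_existence}.

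The key geometric ingredient that Proposition \ref{pro_existence} does not provide is generic submersiveness. Given a smooth definable manifold $S$ and a definable map $g : S \to T$ into a smooth definable manifold $T$, Corollary \ref{cor_reg_locus__definissable} applied to the components of $g$ and to the minors of its differential shows that the locus $W \subset S$ where $g$ is analytic and of locally constant rank is definable, open, and dense in $S$; passing to connected components I may further assume this rank is a constant $r \le \dim T$ on each component, so that $g(W)$ is locally a definable submanifold of $T$ of dimension $r$. At the inductive step passing from $k+1$ to $k$, I would take a cell decomposition of $\R^n$ compatible with $E_{k+1}$ and with each preimage $F^{-1}(T)$ for $T \in \Sigma'_{k+1}$, and apply the previous observation to each $k$-dimensional cell $S_i \subset E_{k+1}$, whose image automatically lies in a single stratum $T_i$ of $\Sigma'_{k+1}$. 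This yields an open dense subset $W_i \subset S_i$ whose image is a definable submanifold of $T_i$.

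The actual difficulty --- and this is where the main obstacle lies --- is the bookkeeping needed to keep the two stratifications consistent. Refining $\Sigma'_{k+1}$ to a new stratification $\Sigma'_k$ compatible with each $F(W_i)$ (so that $F|_{W_i}$ becomes a submersion onto a full stratum of the refined target) forces a matching refinement of the already constructed $\Sigma_{k+1}$, since existing strata of dimension $>k$ may need to be cut along preimages of the new pieces of $B$. By the local stratifying nature of $(\mathbf{G})$ and by item (3) of Remark \ref{rem_reg_ref}, all these refinements can be performed while preserving $(\mathbf{G})$ and the compatibility with the prescribed sets, at the cost of discarding a closed definable subset of dimension at most $k-1$, which I would absorb into $E_k$. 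I would further shrink each $W_i$ so that it maps into a single stratum of $\Sigma'_k$ and so that $(\mathbf{G})$ holds at each of its points relatively to the new ambient stratification, again losing at most a closed set of dimension $<k$. Adjoining the resulting open pieces to the (refined) $\Sigma_{k+1}$ yields $\Sigma_k$, and at the end of the induction $\Sigma := \Sigma_0$ together with $\Sigma' := \Sigma'_0$ are the sought stratifications.
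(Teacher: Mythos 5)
Your proof runs a downward induction on $\dim A$, whereas the paper's proof runs the downward induction on $\dim B$. This is not a cosmetic difference: it is exactly where your argument develops a genuine gap.

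In the paper's scheme, the stratification of $B$ is only ever \emph{extended} inside the exceptional set $E_p\subset B$: new target strata of dimension $p-1$ are added, no already constructed stratum of $B\setminus E_p$ (nor of $F^{-1}(B\setminus E_p)$) is ever cut, and Sard's theorem guarantees that the bad image $F(D)$ of the locus where the rank of $F$ restricted to the relevant strata drops has dimension $<p-1$, so the newly added target strata $Y_i$ avoid it and the new source strata $F^{-1}(Y_i)\cap S$ map onto them submersively. Because nothing that is already there gets modified, $(\mathbf{G})$ on the existing strata is preserved for free; this is precisely what Remark~\ref{rem_reg_ref}~$(3)$ refers to. In your scheme, at step $k$ you refine $\Sigma'_{k+1}$ to be compatible with the images $F(W_i)$ (which may have any dimension $\le k$), and you correctly observe that this forces you to cut the already constructed source strata of dimension $>k$ along preimages of the new target pieces. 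This is where the dimension bookkeeping fails. Cutting a stratum $S\in\Sigma_{k+1}$ of dimension $m>k$ produces new pieces of dimension up to $m-1$, and for each such piece of dimension $d\le m-1$ the stratifying property of $(\mathbf{G})$ only yields an open dense subset where $(\mathbf{G})$ holds, whose complement can have dimension as large as $d-1\le m-2$. As soon as $m\ge k+2$ (which occurs whenever $k\le\dim A-2$), this failure set can have dimension $\ge k$, and it cannot be absorbed into $E_k$ without destroying the inductive invariant $\dim E_k<k$. Your appeal to Remark~\ref{rem_reg_ref}~$(3)$ to justify that the discarded set has dimension $\le k-1$ is a misuse of that remark: it says a stratification that already satisfies $(\mathbf{G})$ on an open set need not be \emph{modified} there when one refines further, whereas the cuts you are forced to perform are precisely modifications of those strata.

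The cure is to reorganize the induction on the target dimension, as the paper does, so that the target stratification is only extended and never re-cut, and to invoke Sard's theorem to ensure the new source strata map onto the new target strata submersively. No refinement of previously built strata is ever required, and the dimension bookkeeping then goes through.
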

\begin{proof}Let $A_1,\dots, A_k$ (resp. $B_1,\dots, B_\kappa$) be definable subsets of $A$ (resp. $B$).
  We are going to prove by decreasing induction on $p\in \{0,\dots,l+1\}$, $l=\dim B$, that for every such $p$ there is a definable closed subset $E_p\subset B$ of dimension less than $p$ such that we can find stratifications $\Sigma^p$ and $\Sigma'^p$ of $F^{-1}(B\setminus E_p)$ and $B\setminus E_p$ respectively, satisfying  $(\mathbf{G})$  and such that $F$ maps submersively  the strata of $\Sigma_p$ into the strata of $\Sigma'_p$. These stratifications will respectively be compatible with the sets $A_i\setminus F^{-1}(E_p)$ and $B_j\setminus E_p$, $i\le k$, $j\le \kappa$.
  
   In the case $p=l+1$, we set $E_{l+1}:=B$ and we are done.
 Assume thus that $E_p$, $\Sigma^p$, and $\Sigma'^p$ have been constructed for some $p \le l+1$. If $\dim E_p<p-1$, we are done. Otherwise, 
  let $N$ denote the set of points of $E_{p}$ at which this set is a smooth manifold of dimension $(p-1)$, and let $\mathscr{S}$ be a stratification of $F^{-1}(N)$  such that $F$ is smooth on each stratum (see Remark \ref{rem_globally subanalytic_implies_analytic}).
  
    As  $(\mathbf{G})$  is local and stratifying, we can refine $\mathscr{S}$  into a stratification (still denoted $\mathscr{S}$) such that  $\mathscr{S}\cup \Sigma^p$ satisfies this condition (see Remark \ref{rem_reg_ref} $(3)$) and is compatible with  the $F^{-1}(N)\cap A_i$'s.
  Given a point $x\in F^{-1}(N)$, we will denote by $S^x$ the element of $ \mathscr{S}$ that contains $x$.   
  Let $D$ be the closure of the set of points $x\in F^{-1}(N)$ at which the rank of the  derivative of the restricted mapping  $F_{|S^x}:S^x\to N$ is less than $(p-1)$.

   Let now $\mathscr{S}'$ be any stratification of $N$ compatible with  $F(D),B_1\cap N,\dots, B_\kappa\cap N$, as well as with the sets  $N\cap F(S)$, $S\in \mathscr{S}$. Let $S_1,\dots, S_m$ be the strata of $\mathscr{S}'$ that have dimension $(p-1)$, and set $\Sigma''^p:=\Sigma'^p\cup\{S_1,\dots,S_m\}$.
    As  $(\mathbf{G})$ is stratifying, there is for every $i$ an open definable dense subset $Y_i$ of $S_i$ on which $\Sigma''^p$ satisfies this condition. 
    

We set $\Sigma'^{p-1}:=\Sigma'^p\cup\{Y_1,\dots,Y_m\}$ as well as $\Sigma^{p-1}:=\Sigma^p\cup \{ F^{-1}(Y_i)\cap S, S\in \mathscr{S}, i\le m \}.$ Because  $Y_1,\dots,Y_m$ are open in $N$, which is open in $E_p$, the set $E_{p-1}:=E_p\setminus \cup_{i=1}^m Y_i$ is closed in $B$.  Moreover, by construction,  the local condition $(\mathbf{G})$ must hold for the stratifications $\Sigma^{p-1}$ and $\Sigma'^{p-1}$.

To check that $F$ induces a stratified mapping on $A\setminus F^{-1}(E_{p-1})$, take  $Z\in \Sigma^{p-1}\setminus \Sigma^p$ (the desired property already holds for the other strata by induction). As $Z\notin  \Sigma^p$, $Z=  F^{-1}(Y_i)\cap S$, for some $S\in \mathscr{S}$ and  some $i \le m$. Since
  (by Sard's Theorem)  $\dim F(D)<p-1$,  $Y_i$ must be disjoint from $F(D)$ (recall that $Y_i\subset S_i$, $\dim S_i=p-1$, and $S_i\in \mathscr{S}'$ which is compatible with $F(D)$). Hence,
  the rank of the restricted mapping
   $F_{|Z}:Z\to Y_i$ is $(p-1)$,  which  means that  $F$ maps submersively the strata of $\Sigma_p$ into the strata of $\Sigma'_p$.
      \end{proof}

\paragraph{Horizontally $\ccc^1$ maps.}We now are going to construct nicer stratifications for definable continuous  mappings with bounded first derivative (recall that definable mappings are smooth almost everywhere). In this case, we can have a continuity property of the derivative when passing from one stratum to one another. This will be useful to study the
pull-back of a differential form by a definable
bi-Lipschitz mapping (not necessarily smooth) in Chapter \ref{chap_Geometric measure theory}.

\begin{dfn}
A stratified mapping  $h:(X,\Sigma)
\to (Y,\Sigma')$  is said to be {\bf horizontally
$\ccc^1$}  \index{horizontally $\ccc^1$} if for any sequence
$(x_l)_{l\in \N}$ in a stratum $S$ of $\Sigma$ tending to some
point $x$ in a stratum $S'\in \Sigma$ and for any  sequence $u_l \in
T_{x_l} S$ tending to a vector $u$ in $T_x S'$, we have
$$\lim d_{x_l} h_{|S}(u_l)=d_x h_{|S'} (u).$$
\end{dfn}

If $h$ is horizontally $\ccc^1$ then the norm of $d_x h_{|S}$ (as a linear map) is bounded away from infinity on every subset of $S$ that is relatively compact in $X$ for every $S \in \Sigma$. The converse is not true, even if the mapping is continuous, as shown by the following example.
\begin{exa}
 Consider the function $f(x,y):=\frac{x^2y}{x^2+y^2}$,  with $f(0,0):=0$, defined on the set $X:=\{ (x,y)\in \R^2:y\ge 0\}$, that we will stratify by $S:=\{y>0\}$ and $S':=\{y=0\}$ (stratifying the target space by $\{0\}$ and $\R\setminus \{0\}$). Note that $$\pa_{(x,y)}f=(\frac{2xy^{3} }{(x^2+y^2)^{2}},\frac{x^4-x^2y^{2} }{(x^2+y^2)^{2}}),$$ which is bounded. The derivative of $f_{|S'}$ is zero but $\frac{\pa f}{\pa x}(x,x)$ does not tend to zero as $x\to 0$. The function $f$ is therefore not horizontally $\ccc^1$ with respect to this stratification.\end{exa}


 In the above example,  if we refine the stratification by adding a stratum at the origin, the function $f$ is then horizontally $\ccc^1$ with respect to the obtained stratification.
  The proposition below shows that it is always possible to do so.

\begin{pro}\label{pro_h_hor_C1}
Let $h:X\to Y$ be a definable continuous mapping. If $|d_x h|$ is bounded on every subset of $reg(h)$ which is relatively compact in $X$ then there
 exist stratifications $\Sigma$ of $X$ and $\Sigma'$ of $Y$ making of $h:(X,\Sigma)\to (Y,\Sigma')$  a horizontally $\ccc^1$ stratified mapping. 
  Moreover, we may require these stratifications to be compatible with finitely many definable subsets of $X$ and $Y$   respectively and to satisfy a given local stratifying condition.
\end{pro}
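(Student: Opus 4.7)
The plan is to stratify the graph $\Gamma_h \subset \R^n \times \R^m$ with Whitney's $(a)$ condition and then push this stratification down to $X$ via the projection $\pi_X$; the horizontal $\ccc^1$ property will follow from $(a)$ combined with the uniform bound on $|d_x h|$, which prevents the limits of tangent spaces to the graph from containing ``vertical'' directions.

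First I would reduce, via Corollary \ref{cor_reg_locus__definissable} and induction on $\dim X$ applied to $sing(h)$, to the case where $X \subset reg(h)$. Then I would apply Proposition \ref{pro_strat_mapping} to $h$ with the condition $(\mathbf{G})$ to obtain stratifications $\Sigma_1$ of $X$ and $\Sigma'_1$ of $Y$ making $h$ a stratified mapping, compatible with the prescribed subsets and satisfying $(\mathbf{G})$. For every $S \in \Sigma_1$ the restriction $h_{|S}$ is $\ccc^\infty$, hence $\Gamma_{h_{|S}}$ is a $\ccc^\infty$ submanifold of $\R^{n+m}$ and $\pi_X$ restricts to a diffeomorphism $\Gamma_{h_{|S}} \to S$. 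This yields a stratification $\widetilde\Sigma_1 := \{\Gamma_{h_{|S}} : S \in \Sigma_1\}$ of $\Gamma_h$. Using Propositions \ref{pro_existence} and \ref{pro_w_stratifying} together with Remark \ref{rem_reg_ref}, I would refine $\widetilde\Sigma_1$ into a stratification $\widetilde\Sigma$ of $\Gamma_h$ satisfying Whitney's $(a)$, and then set $\Sigma := \{\pi_X(\tilde T) : \tilde T \in \widetilde\Sigma\}$; each stratum is a $\ccc^\infty$ submanifold of $\R^n$ by the diffeomorphism above, so $\Sigma$ is a stratification of $X$ refining $\Sigma_1$. A final application of Proposition \ref{pro_strat_mapping} to $h$ requiring compatibility with $\Sigma$ and preservation of $(\mathbf{G})$ produces the desired $\Sigma'$ making $h : (X, \Sigma) \to (Y, \Sigma')$ a stratified mapping.

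For the horizontal $\ccc^1$ property, fix strata $S, S'' \in \Sigma$ with $S \subset cl(S'')$, sequences $x_l \in S''$ tending to $x \in S$, and $u_l \in T_{x_l} S''$ tending to $u \in T_x S$. Setting $\tilde S := \Gamma_{h_{|S}}$ and $\tilde S'' := \Gamma_{h_{|S''}}$, the tangent space
\begin{equation*}
T_{(x_l, h(x_l))}\tilde S'' = \{(v,\, d_{x_l} h_{|S''}(v)) : v \in T_{x_l} S''\}
\end{equation*}
is the graph of $d_{x_l} h_{|S''}$, whose norm is bounded by some constant $M$ on a neighborhood of $x$ by hypothesis. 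Extracting subsequences, one may assume $d_{x_l} h_{|S''}(u_l) \to w$ and $T_{(x_l, h(x_l))}\tilde S'' \to \tau$. Whitney's $(a)$ condition for $(\tilde S'', \tilde S)$ at $(x, h(x))$ yields $T_{(x, h(x))}\tilde S \subset \tau$, i.e.\ $\{(v, d_x h_{|S}(v)) : v \in T_x S\} \subset \tau$. The uniform bound forces $\tau \cap (\{0\}\times \R^m) = \{0\}$: indeed, a unit vector $(v_l, d_{x_l}h_{|S''}(v_l)) \in T_{(x_l, h(x_l))}\tilde S''$ converging to $(0, w')$ would satisfy $|v_l| \to 0$, hence $|d_{x_l}h_{|S''}(v_l)| \le M|v_l| \to 0$, so $w' = 0$. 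Thus $\tau$ is the graph of a linear map $L$ defined on its projection to $\R^n$, and the inclusion $T_{(x, h(x))}\tilde S \subset \tau$ forces $L_{|T_x S} = d_x h_{|S}$. Applied to $(u, w) \in \tau$ this gives $w = L(u) = d_x h_{|S}(u)$; since the limit is independent of the subsequence, the full sequence converges, establishing the horizontal $\ccc^1$ condition.

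The main obstacle is the interplay of the three requirements (the stratified-mapping structure on $(X, \Sigma) \to (Y, \Sigma')$, Whitney's $(a)$ on $\widetilde\Sigma$, and the local condition $(\mathbf{G})$). These are managed by iterating Propositions \ref{pro_existence} and \ref{pro_strat_mapping}: once $\widetilde\Sigma$ satisfying $(a)$ has been built on $\Gamma_h$ and pushed down to a refinement $\Sigma$ of $\Sigma_1$, the stratified-mapping structure is restored by one more invocation of Proposition \ref{pro_strat_mapping} that adjusts $\Sigma'$ while preserving both $(\mathbf{G})$ and the partition $\Sigma$ of $X$; the horizontal $\ccc^1$ property holds automatically after any further refinement of $\Sigma$, since restricting to smaller strata only narrows the class of sequences against which the condition must be tested.
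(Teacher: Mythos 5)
Your overall strategy --- stratify the graph $\Gamma_h$ with Whitney's $(a)$ condition, push down to $X$ via $\pi_1$, and use boundedness of the derivative to force limits of tangent planes to the graph to project injectively onto $\R^n$ --- is the same as the paper's, and the linear-algebra at the end (that $\tau\cap(\{0\}\times\R^m)=\{0\}$ forces $\tau$ to be a graph extending $d_xh_{|S}$) is correct. However, there is a genuine gap exactly at the step ``whose norm is bounded by some constant $M$ on a neighborhood of $x$ \emph{by hypothesis}.'' The hypothesis only controls $|d_x h|$ on relatively compact subsets of $reg(h)$, whereas the stratum $S''$ carrying the sequence $(x_l)$ need not be contained in $reg(h)$: after refining to be compatible with $reg(h)$, lower-dimensional strata can perfectly well sit inside $sing(h)$, and for those the hypothesis says nothing directly about $|d_{x_l}h_{|S''}|$. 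You cannot repair this by appealing to Proposition~\ref{pro_bounded_der_lipschitz} to conclude $h$ is locally Lipschitz, because that proposition's proof \emph{uses} Proposition~\ref{pro_h_hor_C1}, so the appeal would be circular.

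Your opening ``reduction to $X\subset reg(h)$'' seems intended to sidestep precisely this, but it is incomplete for two reasons: first, to invoke the inductive hypothesis on $h_{|sing(h)}$ you would again need a derivative bound on $reg(h_{|sing(h)})\subset sing(h)$, which is the same unproved claim one dimension down; second, even granting stratifications of $sing(h)$ and $reg(h)$ separately, the horizontal $\ccc^1$ condition must be verified for sequences in strata of $reg(h)$ converging to points in strata of $sing(h)$, and the splitting gives you no control across that interface. The paper avoids both problems by never bounding $d_{x_l}h_{|S''}$ directly: instead it picks points $y_l\in(\Gamma_h)_{reg}$ near $(x_l,h(x_l))$, where $\pi_1(y_l)\in reg(h)$ so the hypothesis applies, lets $\tau'=\lim T_{y_l}Z^l$ (graphs of uniformly bounded linear maps), and uses Whitney's $(a)$ to get $\tau\subset\tau'$; this proves $\pi_1|_{\tau}$ is one-to-one, and only afterwards deduces that the lifts $v_l\in T_{(x_l,h(x_l))}Z$ of the $u_l$ are bounded. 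Inserting this density-plus-$(a)$ argument in place of the ``by hypothesis'' claim is what your proof needs.
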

\begin{proof}Let $\pi_1:\Gamma_h \to X$ (resp. $\pi_2:\Gamma_h\to Y$) be the projection onto
the source (resp. target) space of $h$.  By Propositions \ref{pro_w_stratifying} and \ref{pro_strat_mapping}, there exist two Whitney $(a)$ regular stratifications $\Sigma_h$ and $\Sigma'$ of $\Gamma_h$ and $Y$ respectively such that  $\pi_2:(\Gamma_h,\Sigma_h) \to (Y, \Sigma')$ is a stratified mapping.  Since $\Sigma_h$ can be required to refine any given stratification, we may assume that $h$ is smooth on the images of the strata of $\Sigma_h$ under $\pi_1$ (see Remark \ref{rem_globally subanalytic_implies_analytic}).
Let $\Sigma:=\{\pi_1(S):S\in \Sigma_h\}$.

 Notice that  $h:(X,\Sigma)\to (Y,\Sigma')$ is a stratified mapping. 
 In order to show that it is horizontally $\ccc^1$, fix  $S\in\Sigma$, a
sequence $x_l \in S$ tending to a point $x\in S'\in \Sigma$, as well as
a sequence $u_l \in T_{x_l} S$ tending to some $u \in T_x S'$.
Let $Z$ be the stratum of $\Sigma_h$ that projects onto $S$ via $\pi_1$,   set   $\tau:=\lim T_{(x_l,h(x_l))}  Z$ (extracting a subsequence, we may assume that this sequence is convergent),  and let us show the following

\noindent {\bf Claim.} The restriction of $\pi_1$ to $\tau$ is one-to-one. 

Observe for this purpose that, as $(\Gamma_h)_{reg}$ is dense in $\Gamma_h$, for every $l$ we can find  $y_l\in (\Gamma_h)_{reg}$ arbitrarily close to $(x_l,h(x_l))$.  For each $l$, let $Z^{l}$ be the stratum of $\Sigma_h$ containing $y_l$.  Choosing $y_l$ sufficiently generic, we may assume that $Z^l$ is open in $\Gamma_h$, and, extracting a subsequence if necessary, we can assume that $T_{y_l} Z^{l}$ tends to a limit $\tau'$.  Moreover, by Whitney's $(a)$ condition, if $y_l$ is sufficiently close to $(x_l,h(x_l))$  then  $\tau'\supset \tau$. As $|d_{x_l}h|$  is bounded independently of $l$,   $\pi_{1|\tau'}$ must be one-to-one, which yields the claim.

For every $l$, there 
is a unique vector $v_l \in T_{(x_l,h(x_l))} Z$ which projects
onto $u_l$ via $\pi_1$. As  $|d_{x_l} h|$ is bounded independently of $l$, the norm of $v_l$ must be bounded above, which means that
we may assume that $v_l$ is converging to a vector $v$. We clearly have $\pi_1(v)=u$.
 Let $Z'$  be the stratum of $\Sigma_h$
that projects onto $S'$ via $\pi_1$ and let $v'\in T_{(x,h(x))} Z'$ be such that $\pi_1(v')=u$. By Whitney's  $(a)$ condition $(v-v')\in\tau$, which,  since  $\pi_1(v)=u=\pi_1(v')$, yields $v=v'$ (by the above claim). Hence, $v$ is tangent to  $Z'$, which entails that $\pi_2(v)= d_{x} h_{|S'}(u)$ and therefore:
$$\lim d_{x_l} h_{|S} (u_l)=\lim \pi_2(v_l)=\pi_2(v)= d_{x} h_{|S'}(u),$$
which shows that $h$ is horizontally $\ccc^1$.

Since the stratifications  $\Sigma$ and $\Sigma'$ that we just constructed are merely provided by a Whitney $(a)$ regular stratification of $\pi_2$, we see that we can require  them to be compatible with any given finite families of definable subsets of $X$ and $Y$ respectively.

 By Proposition \ref{pro_strat_mapping},  there exist refinements of $\Sigma$ and $\Sigma'$  satisfying any prescribed stratifying local condition.  Since stratified mappings are smooth on strata,  the property of being horizontally $\ccc^1$ is preserved under refinements.
\end{proof}

\subsection{Some more properties of Whitney and  Kuo-Verdier stratifications} 
\begin{pro}\label{pro_whitney_pi_rho_subm} Let $(X,Y)$ be a couple of strata and let $y\in Y$. If $(X,Y)$ satisfies 
	Whitney's $(b)$ condition at $y$ then there is a neighborhood $U$ of $y$ such that the restricted mapping $(\pr_Y,\dis_Y)_{|U\cap X}:U\cap X \to Y\times \R$ (see section \ref{sect_retraction})   is a submersion.
\end{pro}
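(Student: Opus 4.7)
The plan is to argue by contradiction using a standard sequence-limit technique, after reducing to a convenient local model. Since Whitney's $(b)$ condition is preserved by $\ccc^2$ diffeomorphisms, I would choose a smooth local chart so that $Y$ coincides with an open neighborhood of $y$ in $\R^k \times \{0_{\R^{n-k}}\}$, with $k=\dim Y$. In this coordinate system $\pr_Y$ becomes the orthogonal projection $\pi$ onto $\R^k \times \{0\}$, $\dis_Y(x) = |x-\pi(x)|^2$, and by Proposition \ref{pro_retraction}, $\pa_x \dis_Y = 2 v_x$ where $v_x := x-\pi(x)$ is orthogonal to $T_{\pi(x)} Y$. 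Surjectivity of $d_x(\pr_Y,\dis_Y)_{|X}$ at $x\in X$ is then equivalent to the conjunction of two conditions: (a) $\pi(T_xX)=T_{\pi(x)}Y$, and (b) $v_x$ is not orthogonal to every vector of $\ker(\pi_{|T_xX})$.

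First, I would prove (a) on a neighborhood of $y$. Assume for contradiction a sequence $x_l \in X$, $x_l\to y$, along which $\pi_{|T_{x_l}X}$ fails to be surjective onto $\R^k$. Extracting a subsequence with $T_{x_l}X\to \tau$ in $\G^n_{\dim X}$, Proposition \ref{pro_cond_reg_implications} yields that (b) implies (a) at $y$, so $T_yY \subset \tau$, whence $\pi_{|\tau}$ is surjective onto $\R^k$, i.e.\ has rank $k$. Rank is lower semicontinuous as $V\mapsto \pi_{|V}$ varies in the Grassmannian, so $\rank(\pi_{|T_{x_l}X}) \ge k$ for $l$ large enough, a contradiction. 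Consequently, $\pi_{|T_{x_l}X}$ has constant rank $k$ near $y$, and standard continuity of kernels implies $\ker(\pi_{|T_{x_l}X}) \to \ker(\pi_{|\tau})$ whenever $T_{x_l}X\to \tau$.

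Now I would tackle (b) near $y$ by contradiction as well: assume a sequence $x_l\in X$ with $x_l\to y$ on which $v_{x_l} \perp \ker(\pi_{|T_{x_l}X})$ in $\R^n$. Extract a further subsequence so that $T_{x_l}X \to \tau$ and the unit vectors $v_{x_l}/|v_{x_l}| \to \nu\in\sph^{n-1}$. Passing to the limit in the orthogonality relation, using the convergence of kernels from Step~1, gives $\nu \perp \ker(\pi_{|\tau})$. Setting $y_l := \pi(x_l)\in Y$, continuity of $\pi$ gives $y_l\to y$, and $(x_l-y_l)/|x_l-y_l| = v_{x_l}/|v_{x_l}| \to \nu$, so Whitney's $(b)$ condition at $y$ forces $\nu \in \tau$. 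Moreover, $v_{x_l}$ is normal to $T_{y_l}Y = \R^k\times\{0\}$ for each $l$, hence $\nu$ is normal to $T_yY$, which means $\pi(\nu)=0$, i.e.\ $\nu\in \ker(\pi_{|\tau})$. Combining the two statements about $\nu$ yields $|\nu|^2 = \langle \nu,\nu\rangle = 0$, contradicting $|\nu|=1$.

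The main technical obstacle is ensuring the convergence of the kernels $\ker(\pi_{|T_{x_l}X}) \to \ker(\pi_{|\tau})$, which I would justify via the constant-rank property established in Step~1; once this is in hand, the rest is a clean limit argument in which the two roles of $\nu$ (produced by Whitney's $(b)$ for being in $\tau$, and by normality of $v_{x_l}$ for lying in $\ker \pi_{|\tau}$) collide with the orthogonality inherited from the failure of (b).
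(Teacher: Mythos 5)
Your argument is correct and rests on the same ingredients as the paper's: a sequence argument passing to limiting tangent planes, the implication $(b)\Rightarrow(a)$ (Proposition~\ref{pro_cond_reg_implications}), the orthogonality of $\pa_x\dis_Y=2(x-\pr_Y(x))$ to $T_{\pr_Y(x)}Y$, and an application of Whitney's $(b)$ with $y_l=\pr_Y(x_l)$. Your decomposition is the dual of the paper's, though: you first prove that $d_x\pr_{Y|X}$ is onto $T_{\pr_Y(x)}Y$ (using only $(a)$ together with lower semicontinuity of rank) and then that $d_x\dis_Y$ does not vanish on $\ker d_x\pr_{Y|X}$ (reaching the clean contradiction $\nu\perp\nu$), whereas the paper first proves that $\dis_{Y|X}$ is a submersion (directly from the small-angle consequence of $(b)$) and then that $d_x\pr_Y$ is onto when restricted to $\ker d_x\dis_{Y|X}$, via the commutation $\lim\ker d_{x_k}\dis_{Y|X}=L\cap\tau$. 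You trade the paper's commutation-of-intersections step for a constant-rank argument guaranteeing convergence of the kernels; both are delicate in comparable ways, and your ordering is arguably a bit more transparent.

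One imprecision in the opening reduction: a generic $\ccc^2$ chart that flattens $Y$ does \emph{not} automatically turn $\pr_Y$ into the orthogonal projection onto $\R^k\times\{0_{\R^{n-k}}\}$, nor $\dis_Y$ into the squared Euclidean distance to it, because the closest-point structure is metric-dependent and is destroyed by an arbitrary diffeomorphism. To make the reduction rigorous you should use the specific tubular chart $x\mapsto(\pr_Y(x),\,x-\pr_Y(x))$ followed by an orthonormal local trivialization of the normal bundle of $Y$, which is $\ccc^\infty$ (strata are $\ccc^\infty$ and $\pr_Y$ is $\ccc^\infty$ by Proposition~\ref{pro_retraction}), preserves Whitney's $(b)$ condition, and does produce the standard model. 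Alternatively, and this is what the paper does, you can avoid flattening altogether: the only properties of the retraction actually used in your argument are that $\pa_x\dis_Y=2(x-\pr_Y(x))$ is orthogonal to $T_{\pr_Y(x)}Y$ and that $d_y\pr_Y$ is the orthogonal projection onto $T_yY$, and these hold in the original coordinates with no change of variables.
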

\begin{proof}
	By Whitney's $(b)$ condition, the angle between $\pa_x \rho_Y=2(x-\pr_Y(x))$ and $T_x X$ tends to zero as $x\in X$ tends to $y$, which implies that $\rho_{Y|X}$ is a submersion near $y$. Hence, to complete the proof,
	it suffices to show that for $x\in X$ sufficiently close to $y$ the restriction of $d_x \pr_{Y}$ to $\ker d_x \dis_{Y|X}$ is onto.  Assume that this fails along a sequence $(x_k)_{k\in \N}$ in $X$ tending to $y$. Extracting a sequence if necessary, we can assume that $\ker d_{x_k} \dis_{Y}$ and $T_{x_k} X$ respectively converge to vector spaces $L$ and $\tau$. As Whitney's $(b)$ condition implies $(a)$ (Proposition \ref{pro_cond_reg_implications}), we have $T_y Y \subset \tau$.
	
	By Proposition \ref{pro_retraction}, we know that  $\pa_{x_k} \dis_Y$ is orthogonal to  $T_{\pr_Y(x_k)} Y$, which implies  $T_y Y\subset L$.   As the angle between   $\pa_{x_k} \rho_Y$ and  $T_{x_k} X$ tends to zero, we  see that 
	$$ \lim \ker d_{x_k} \dis_{Y|X} =(\lim \ker d_{x_k} \dis_{Y}) \cap (\lim T_{x_k} X) =L\cap \tau.$$
	 As $d_y \pr_Y$ induces the identity map on $T_y Y\subset L\cap \tau$,  this equality yields that the restriction of $d_{x_k} \pr_Y$ to $\ker d_{x_k} \dis_{Y|X}$ is surjective for every $k$ large,  contradicting our assumption on $(x_k)_{k\in \N}$.  
\end{proof}
\begin{rem}
This proposition yields that if a couple of strata $(X,Y)$ satisfies  $cl(X)\cap Y\neq \emptyset$ and Whitney's $(b)$ condition then $\dim X\ge \dim Y+1.$\end{rem}

\begin{dfn}\label{dfn_frontier_condition}Let $\Sigma$ be a stratification of a set $A$. We say that $\Sigma$ {\bf satisfies the frontier condition}\index{frontier condition} if for every $S\in \Sigma$ the set $fr(S)\cap A$ is the union of some elements of $\Sigma$. 
\end{dfn}

\begin{pro}\label{pro_frontier_condition_whitney}
	Let $\Sigma$ be a Whitney $(b)$ regular stratification  of a locally closed set. If all the strata of $\Sigma$ are connected then $\Sigma$ satisfies the frontier condition.
\end{pro}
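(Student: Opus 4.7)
Plan:

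It suffices to prove that any two distinct strata $S, S' \in \Sigma$ satisfying $S' \cap cl(S) \neq \emptyset$ automatically satisfy $S' \subset cl(S)$. Set $T := S' \cap cl(S)$, a nonempty, definable, and closed (in $S'$) subset of $S'$; by connectedness of $S'$ it is enough to show $T$ is open in $S'$. I argue by induction on $k := \dim S - \dim S'$, which is at least $1$ by the remark following Proposition \ref{pro_whitney_pi_rho_subm}.

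Two dimension observations will be used repeatedly. First, that same remark (a consequence of Whitney's $(b)$ regularity) gives: if two distinct strata $X, Y \in \Sigma$ satisfy $cl(X) \cap Y \neq \emptyset$, then $\dim X \geq \dim Y + 1$. Second, since the strata are pairwise disjoint and $\dim(cl(S) \setminus S) < \dim S$ by Proposition \ref{pro_delta_A}, any stratum $S'' \neq S$ with $S'' \cap cl(S) \neq \emptyset$ satisfies $\dim S'' < \dim S$.

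Assume, for contradiction, that some $z \in T$ is not interior. By Curve Selection Lemma (Lemma \ref{curve_selection_lemma}) applied to the definable set $S' \setminus cl(S)$, I produce an analytic arc $\alpha : [0, \epsilon) \to S'$ with $\alpha(0) = z$ and $\alpha(s) \notin cl(S)$ for $s > 0$. For each such $s$ choose $x_s \in cl(S)$ realizing $d(\alpha(s), cl(S))$ definably via Proposition \ref{pro_globally subanalytic_choice}. Puiseux Lemma (Proposition \ref{pro_puiseux}) together with finiteness of $\Sigma$ ensures that, on some sub-interval, $x_s$ lies in a fixed stratum $S''$ and the limits $v := \lim_{s \to 0^+} (x_s - \alpha(s))/|x_s - \alpha(s)|$ and $\tau := \lim_{s \to 0^+} T_{x_s} S''$ exist. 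Since $x_s \to z$, one has $z \in S' \cap cl(S'')$, so Whitney's $(b)$ condition at $z$ for the pair $(S'', S')$ with the sequences $x_s$ and $\alpha(s)$ delivers $v \in \tau$.

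To get the opposing relation $v \perp \tau$, I first upgrade $S'' \cap cl(S) \neq \emptyset$ to $S'' \subset cl(S)$. By the first dimension observation $\dim S'' \geq \dim S' + 1$. If $S'' = S$ this is trivial; otherwise $\dim S'' < \dim S$ by the second observation, hence $\dim S - \dim S'' < k$, and the inductive hypothesis applied to the connected pair $(S, S'')$ yields $S'' \subset cl(S)$. (In the base case $k = 1$, the two inequalities $\dim S' + 1 \leq \dim S'' \leq \dim S - 1 = \dim S'$ rule out $S'' \neq S$ outright, so the induction starts cleanly.) Now $x_s$ minimizes $y \mapsto |y - \alpha(s)|^2$ on $cl(S) \supset S''$ and lies on the smooth manifold $S''$, so it is a critical point of the restriction of this function to $S''$, forcing $x_s - \alpha(s) \perp T_{x_s} S''$. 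Letting $s \to 0$ gives $v \perp \tau$; combined with $v \in \tau$ and $|v| = 1$ this forces $v = 0$, a contradiction. The principal obstacle is precisely this interplay: the nearest point $x_s$ a priori lives in an unknown stratum $S''$, and one must exploit Whitney's dimension constraint together with the induction to show $S'' \subset cl(S)$ before the variational perpendicularity can be invoked.
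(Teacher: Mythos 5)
Your proof takes a genuinely different route from the paper's. The paper runs a nested induction on the dimensions of the two strata and exploits the submersivity of $(\pi_{S'},\rho_{S'})$ restricted to $S$ (Proposition \ref{pro_whitney_pi_rho_subm}) together with local closedness to show that a small ball in $S'$ is swept by $\pi_{S'}(\{x\in S:\rho_{S'}(x)=\ep\})$. You instead run a closest-point argument: Curve Selection produces an arc $\alpha$ in $S'\setminus cl(S)$; you take the nearest point $x_s$ of $cl(S)$, obtain $v\in\tau$ from Whitney $(b)$, and $v\perp\tau$ from the variational characterization of $x_s$ on a stratum $S''\subset cl(S)$, all within a single induction on $\dim S-\dim S'$. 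Neither approach is obviously shorter; yours has the virtue of making the role of Whitney $(b)$ at the level of sequences very explicit, while the paper's approach sidesteps any case analysis on where the nearest point lands.

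However, there is a genuine gap: you never exclude, nor handle, the possibility $S''=S'$, i.e.\ that the nearest point $x_s$ lies in $S'$ itself, and nothing you write rules this out. The inequality ``$\dim S''\ge\dim S'+1$'' that you invoke is the remark after Proposition \ref{pro_whitney_pi_rho_subm} applied to the \emph{disjoint} pair $(S'',S')$; it presupposes $S''\ne S'$ and is simply false when $S''=S'$. In that case your inductive step is circular (the hypothesis applied to $(S,S'')=(S,S')$ is the very statement to be proved), and, more fundamentally, the variational orthogonality $x_s-\alpha(s)\perp T_{x_s}S''$ evaporates: it requires $S''$ to be contained in $cl(S)$ near $x_s$, which for $S''=S'$ is exactly the unknown conclusion. (If $x_s$ were an interior point of $T:=S'\cap cl(S)$ in $S'$ one could still conclude $v\perp T_z S'$ while also $v\in T_z S'$ and reach a contradiction, but you do not establish this, and $x_s$ can a priori lie on $\partial_{S'}T$, where $x_s$ is only a boundary minimizer and no first-order criticality is available.) You must either show the minimizer cannot land in $S'$, or give a separate argument for this residual case. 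Two lesser points: local closedness of the ambient set is used tacitly, both so that the minimizer of the distance from $\alpha(s)$ to $cl(S)$ exists for small $s$ and so that $x_s\in cl(S)$ near $z$ actually lies in a stratum of $\Sigma$; and your ``second dimension observation'' does not follow from Proposition \ref{pro_delta_A} alone — it is rather a consequence of the first observation applied to the pair $(S,S'')$.
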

\begin{proof} Take a couple of strata 
	$(X, Y)$ that satisfies $fr(X)\cap Y\neq \emptyset$. Arguing by downward induction on the dimension of $Y$, we can assume that the desired property holds for the strata of dimension bigger than $\dim Y$. Observe that if there is a stratum $Z\subset fr(X)$ such that $cl(Z)\supset Y$ then clearly $cl(X) \supset Y$.  As a matter of fact, since we can argue inductively on the dimension of $X$, we can also assume $X\cup Y$ to be locally closed at every point of $Y$.
	
	As the strata of $\Sigma$ are connected, it suffices to show that if $y\in cl(X)\cap Y$ then $\bou(y,\alpha)\cap Y\subset cl(X)\cap Y$, for $\alpha>0$ small.  Up to a choice of coordinate system, we can assume that $Y$ is an open subset of $\R^k \times \{0_{\R^{n-k}}\}$. Let $(U,\pi_Y,\rho_Y)$ be a tubular neighborhood of $Y$ (see Proposition \ref{pro_retraction}).
	
	Take $y \in cl(X)\cap Y$ and  $\alpha>0$ which is sufficiently small  for $\bou(y,2\alpha)\cap (X\cup Y)$ to be closed in $\bou(y,2\alpha)$ and  for $\bou(y,2\alpha)$ to be contained in $U$. Set for $\ep>0$,  $A_\ep:=\{x\in U\cap X:\dis_{Y}(x)=\ep \}$. Since $\pr_Y$ is the identity on $Y$, it suffices to check that $ \bou(y,\alpha)\cap Y\subset \pi_Y(A_\ep)$, for all $\ep>0$ small.
	
	By Proposition \ref{pro_whitney_pi_rho_subm}, every point of $Y$ has  a neighborhood $W$  such that the restriction of $(\pr_Y,\dis_Y)$  to $W \cap X$  is a submersion. It means that for $\ep>0$ small the restriction of $\pr_Y$ to  $A_\ep$ is a submersion near $\bou(y,\alpha)\cap Y$, which implies that $\bou(y,\alpha)\cap\pr_Y(A_\ep)$ is an open subset of $\bou(y,\alpha)\cap Y$.
We claim that $\bou(y,\alpha)\cap \pr_Y(A_\ep)$ is also closed in  $\bou(y,\alpha)\cap Y$ for all $\ep>0$ small. As the latter subset is connected, this will imply that $ \bou(y,\alpha)\cap Y\subset \pr_Y(A_\ep)$, for all $\ep>0$ small, as needed. Indeed, if $z_i\in \bou(y,\alpha)\cap \pr_Y(A_\ep)$ is a sequence  tending to some  $z\in \bou(y,\alpha)\cap Y$, then  $z_i=\pr_Y(x_i)$ for some $x_i\in A_\ep$, and, extracting a sequence if necessary, we can assume that $x_i$ tends to some $x$. For $\ep>0$ small, $x\in \bou(y,2\alpha)\subset U$, which, since   $\bou(y,2\alpha)\cap (X\cup Y)$ is  closed in $\bou(y,2\alpha)$, entails $x\in A_\ep$ and hence $z=\pi_Y(x)\in \bou(y,\alpha)\cap \pr_Y(A_\ep)$.
\end{proof}

\begin{rem}\label{rem_frontier_condition}
	This proposition, together with Propositions \ref{pro_existence} and \ref{pro_w_stratifying} (see Remark \ref{rem_reg_ref} $(1)$), implies that we can always construct a stratification of a locally closed definable set satisfying both a given stratifying local condition and the frontier condition.
\end{rem}

The following proposition will be needed to study the continuity of the density along the strata of a Whitney stratification  in Chapter \ref{chap_Geometric measure theory}.
 \begin{pro}\label{pro_cond_reg_dim_1} Let $(X,Y)$ be a couple of strata with $\dim Y=1$. If $(X,Y)$ satisfies Whitney's $(b)$ condition at $z \in Y\cap cl(X)$  then it satisfies the condition $(r)$ at this point. \end{pro}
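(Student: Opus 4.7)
I would argue by contradiction via Curve Selection. After choosing coordinates so that $z = 0$ and $Y \subset \R e_1$ near the origin, with $\pi$ the orthogonal projection onto $\R e_1$, one has $T_{\pi(x)} Y = \R e_1$ and $|x - \pi(x)| = |\tilde x|$ (where $\tilde x$ denotes the transverse part of $x$), so $(r)$ at $0$ reads $\angle(e_1, T_x X) \cdot |x|/|\tilde x| \to 0$ as $x \to 0$ in $X$. If this failed, the set of $x\in X$ where the ratio is at least some $c > 0$ would accumulate at $0$, and Lemma \ref{curve_selection_lemma} would provide an analytic arc $\gamma:(0,\varepsilon) \to X$, $\gamma(t) \to 0$, along which the ratio stays $\ge c$; by Proposition \ref{pro_puiseux} I may parametrize so that $|\gamma(t)| \sim t$ and expand $\gamma(t) = t v_0 + \ldots$ with $|v_0|=1$. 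If $v_0 \ne \pm e_1$ then $|\tilde\gamma(t)| \sim t$ and Whitney's $(a)$ condition (entailed by $(b)$ via Proposition \ref{pro_cond_reg_implications}) would already force $\angle(e_1, T_{\gamma(t)} X) \to 0$, contradicting the lower bound.

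It remains to treat $v_0 = e_1$ (the case $-e_1$ is identical): Puiseux lets me write $\gamma(t) = t e_1 + t^k(0,\tilde a) + o(t^k)$ with $k \in \Q_{>1}$ and $\tilde a \in \R^{n-1} \setminus \{0\}$, so that $|\tilde\gamma(t)| \sim |\tilde a|\, t^k$ and $\dot\gamma(t) = e_1 + k t^{k-1}(0,\tilde a) + o(t^{k-1})$. Using $\dot\gamma(t) \in T_{\gamma(t)} X$ alone gives only the bound $\angle(e_1, T_{\gamma(t)} X) \lesssim t^{k-1}$, making the ratio bounded but not necessarily vanishing; a second vector in $T_{\gamma(t)} X$ is needed. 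It will come from Whitney's $(b)$ condition applied with $x_n = \gamma(t_n)$, $y_n = \pi(\gamma(t_n))$: the direction $(x_n - y_n)/|x_n - y_n|$ converges to $(0, \tilde a/|\tilde a|)$, so $(0, \tilde a/|\tilde a|) \in \tau := \lim T_{\gamma(t_n)} X$. The definable function $t \mapsto d((0, \tilde a/|\tilde a|), T_{\gamma(t)} X)$ then tends to $0$, and Proposition \ref{pro_puiseux} forces it to be $O(t^\mu)$ for some rational $\mu > 0$; consequently its orthogonal projection $u(t)$ onto $T_{\gamma(t)} X$ lies in $T_{\gamma(t)} X$ and satisfies $u(t) = (0, \tilde a/|\tilde a|) + O(t^\mu)$.

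The crux is then the cancellation inside the $2$-plane $\mathrm{span}(\dot\gamma(t), u(t)) \subset T_{\gamma(t)} X$: the combination
$$\dot\gamma(t) - k t^{k-1}|\tilde a|\, u(t) = e_1 + O(t^{k-1+\mu}) + o(t^{k-1})$$
approximates $e_1$ to order $o(t^{k-1})$, so $d(e_1, \mathrm{span}(\dot\gamma(t), u(t))) = o(t^{k-1})$, whence $\angle(e_1, T_{\gamma(t)} X) = o(t^{k-1})$ and
$$\angle(e_1, T_{\gamma(t)} X)\, \frac{|\gamma(t)|}{|\tilde\gamma(t)|} = o(t^{k-1}) \cdot \frac{t}{|\tilde a|\, t^k} \to 0,$$
contradicting the lower bound $c$. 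The main obstacle is precisely this cancellation step: Whitney's $(b)$ returns only a limit direction with no rate, so the definable-Puiseux upgrade producing $u(t)$ with explicit rate $O(t^\mu)$ is essential, and it must then be combined linearly with $\dot\gamma(t)$ in the right proportion $k t^{k-1}|\tilde a|$ to sharpen the trivial estimate $\angle \lesssim t^{k-1}$ into $\angle = o(t^{k-1})$.
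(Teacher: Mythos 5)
Your proof is correct, but it takes a genuinely different route from the paper's.

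\medskip

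The common core is Curve Selection plus the observation that Whitney's $(b)$ places the limit secant direction in the limit of tangent planes. Where you diverge is both in the case split and in the mechanism for the delicate case. You branch on whether $v_0 = \pm e_1$: the generic branch ($v_0 \neq \pm e_1$) is disposed of by condition $(a)$ alone, since then $|\tilde\gamma(t)| \sim t$, and the residual branch is handled by an explicit Puiseux-level cancellation inside the tangent plane, using the two vectors $\dot\gamma(t)$ and the secant vector $u(t)$ to approximate $e_1$ to order $o(t^{k-1})$. The paper instead branches on whether $\pi(x'(t))\to 0$ (i.e. $v_0\perp e_1$); it reserves $(a)$ for that transversal branch, and for the complementary branch (which subsumes both $v_0 = \pm e_1$ and the intermediate directions) it works entirely with the normal projection $P_t$: using $P_t(x')=0$ it rewrites $P_t(\pi(x'))$ as $-P_t(x'-\pi(x'))$, then invokes the two Puiseux consequences $|x-\pi(x)| \sim t\,|x'-\pi(x')|$ and the matching of limit directions between $x-\pi(x)$ and $x'-\pi(x')$ already established in the proof of the implication $(r)\Rightarrow(b)$. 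Your tangent-plane cancellation is essentially the dual manipulation of the paper's normal-projection identity, but carried out with the Puiseux coefficients made visible; the paper's version is shorter because it inherits the auxiliary estimates from Proposition~\ref{pro_cond_reg_implications}, whereas yours is more self-contained. Both buy the same conclusion.

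\medskip

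One small over-elaboration worth correcting: the explicit Puiseux rate $O(t^\mu)$ for $d\bigl((0,\tilde a/|\tilde a|), T_{\gamma(t)}X\bigr)$ is not actually needed. The error term in $\dot\gamma(t) - k t^{k-1}|\tilde a|\,u(t)$ coming from $u$ is $k t^{k-1}|\tilde a|\bigl(u(t)-(0,\tilde a/|\tilde a|)\bigr)$, which is already $o(t^{k-1})$ as soon as $u(t)\to(0,\tilde a/|\tilde a|)$, i.e. with the raw $o(1)$ information that $(b)$ supplies. So the second invocation of Puiseux can be dropped, and the sentence claiming the rate is ``essential'' is misleading.
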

\begin{proof}
	Fix such a couple of strata $(X,Y)$ and assume that it satisfies   Whitney's $(b)$ condition.  We can suppose that $Y$ is an open neighborhood of the origin in $\R \times \{0\}$.  Thanks to Curve Selection Lemma (Lemma \ref{curve_selection_lemma}), it suffices to show that  (\ref{eq_r}) holds along any
	definable arc $x:(0,\ep)\to X$ tending to the origin at $0$. We may assume that $x$ is parametrized
	by its distance to the origin, which means that $x'(t)$ does not tend to zero as $t$ goes to zero.  Let $\pi$ be the orthogonal projection onto  $\R \times \{0\}$ and $P_t$  the orthogonal projection onto the orthogonal complement of $T_{x(t)} X$. As $(X,Y)$ is  $(b)$ regular, we have
	$
	\lim_{t\to 0} P_t\lepa\frac{x(t)-\pi(x(t))}{|x(t)-\pi(x(t))|} \ripa=0,$
	which entails (see (\ref{eq_gamma_direction}))
	\begin{equation}\label{eq_lim_preuve_w_implique r}\lim_{t\to 0}
		P_t\lepa\frac{x'(t)-\pi(x'(t))}{|x'(t)-\pi(x'(t))|} \ripa=0. \end{equation}
	As $P_t(x'(t))\equiv 0$,  we also have:
	\begin{equation}\label{eq_lim_preuve_w_implique r2}
	\frac{|P_t(\pi(x'(t)))|\cdot |x(t)|}{|x(t)-\pi(x(t))|}=\frac{
		|P_t(x'(t)-\pi(x'(t)))|t}{|x(t)-\pi(x(t))|}\overset{(\ref{eq_gamma})}{\lesssim}\frac{|P_t(x'(t)-\pi(x'(t)))|}{|x'(t)-\pi(x'(t))|},\end{equation}
	which tends to zero by (\ref{eq_lim_preuve_w_implique r}).
Now, if $\pi(x'(t))$ does not tend to zero then, as $\angle(Y,T_{x(t)}X)\sim |P_t(\pi(x'(t)))|$ (since $Y$ is one-dimensional),  we see that (\ref{eq_lim_preuve_w_implique r2}) yields (\ref{eq_r}) holds along the arc $x$. Moreover, in the case where $\pi(x'(t))$ tends to zero, then $|x(t)|\sim |x-\pi(x(t))| $ (since $x'(t)$ does not tend to zero). But,  since $(b)\Rightarrow (a)$ (Proposition \ref{pro_cond_reg_implications}), we already know that $\angle(Y,T_{x(t)}X)$ tends to zero as $t$ goes to zero, which yields
  (\ref{eq_r}) along the arc $x$  (since the ratio $\frac{|x|}{|x-\pi(x)|}$ is bounded).
\end{proof}

\section{Approximations and partitions of unity}
Given  $Y\in\s_n$, we denote by $\s^+(Y)$ the set of definable positive continuous functions on $Y$.\nomenclature[beq]{$\s^+(Y)$}{definable positive continuous functions on $Y$\nomrefpage}

\begin{pro}\label{pro_approx} Let $M$ be a definable $\ccc^k$ submanifold of $\R^n$, $k\in \N^*$.
\begin{enumerate}
 \item (Definable $\ccc^k$ partitions of unity)  Given a finite covering of $M$ by definable open subsets $(U_i)_{i\in I}$ there are finitely many definable $\ccc^k$ functions $\varphi_j:M\to [0,1]$, $j\in J$, such that $\sum_{j\in J} \varphi_j =1$ and such that  every $\varphi_j$ is supported in $U_{i(j)} $ for some $i(j)\in I$.
 \item (Definable $\ccc^k$ approximations) Given a continuous definable function $f:M\to \R$ and $\ep \in \s^+(M)$, there is a $\ccc^k$ definable function $g$ on $M$ such that $|f-g|<\ep$ on $M$.
\end{enumerate}

\end{pro}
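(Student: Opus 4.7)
The approach is to establish both parts together via one common construction: definable $\ccc^k$ \emph{bump functions}. Namely, given any two disjoint closed definable subsets $F, G$ of $M$, a definable $\ccc^k$ function $\theta : M \to [0,1]$ with $\theta \equiv 0$ on $F$ and $\theta \equiv 1$ on $G$. From such bumps, (1) follows by the classical shrinking/normalization argument, and (2) follows by gluing $f$ to a $\ccc^k$ extension obtained from an induction on dimension.

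\textbf{Step 1 (Bump functions).} Set $\rho_F(x) := d(x,F)^2$ and $\rho_G(x) := d(x,G)^2$; these are continuous definable nonnegative functions vanishing exactly on $F$ and $G$ respectively. Since $F \cap G = \emptyset$ the sum $\rho_F + \rho_G$ is strictly positive, and $h := \rho_F/(\rho_F + \rho_G)$ is a well-defined continuous definable function with $h \equiv 0$ on $F$ and $h \equiv 1$ on $G$. Set $\theta := P \circ h$, where $P : [0,1] \to [0,1]$ is a polynomial with $P(0) = 0$, $P(1) = 1$ and $P^{(j)}(0) = P^{(j)}(1) = 0$ for $1 \le j \le N$. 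The main obstacle is to verify that for $N$ large enough $\theta$ is $\ccc^k$, since $h$ itself need not be $\ccc^k$ on $F \cup G$. For this I would use \L ojasiewicz's inequality (Theorem \ref{thm_lojasiewicz_inequality}, Corollary \ref{cor_lojasiewicz_inequality}) to bound the partial derivatives of $h$ of order at most $k$ by a fixed negative power of $h(1-h)$ in a neighborhood of $F \cup G$, and then choose $N$ large enough so that the high-order vanishing of $P$ at the endpoints dominates this polar growth. This is the core technical step.

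\textbf{Step 2 (Partitions of unity).} Given the finite cover $(U_i)_{i \in I}$, I would use Lemma \ref{lem_closure_cells} together with Definable Choice (Proposition \ref{pro_globally subanalytic_choice}) to produce closed definable subsets $F_i \subset U_i$ whose union still covers $M$. Applying Step 1 to each pair $(F_i, M \setminus U_i)$ yields definable $\ccc^k$ functions $\psi_i : M \to [0,1]$ supported in $U_i$ and equal to $1$ on $F_i$. Since the $F_i$'s cover $M$, $\sum_i \psi_i > 0$ everywhere on $M$, and normalizing $\varphi_i := \psi_i / \sum_j \psi_j$ produces the required partition of unity with $J = I$ and $i(j) = j$.

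\textbf{Step 3 (Approximation).} For (2), by Corollary \ref{cor_reg_locus__definissable} the set $S := sing(f)$ is a closed definable subset of $M$ with $\dim S < \dim M$, and on $M \setminus S$ the function $f$ is already $\ccc^k$ (indeed analytic, cf.\ Remark \ref{rem_globally subanalytic_implies_analytic}). I would argue by induction on $\dim M$. By the induction hypothesis applied to the restriction $f_{|S}$ (first stratifying $S$ into $\ccc^k$ manifolds via Proposition \ref{pro_w_stratifying} and Proposition \ref{pro_existence}), one obtains a definable $\ccc^k$ approximation of $f_{|S}$, which I extend to a definable $\ccc^k$ function $c$ on all of $M$ using tubular neighborhoods of the strata (Proposition \ref{pro_retraction}) glued by Step~1. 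Next let $\psi$ be a bump from Step 1 equal to $0$ on a neighborhood $W_0$ of $S$ chosen thin enough that $|f-c| < \ep$ on $W_0$ (possible because $f - c$ is continuous and definable and vanishes on $S$; compare with Lemma~\ref{lem_closure_cells}), and equal to $1$ outside a slightly larger neighborhood. Setting $g := \psi f + (1-\psi)c$ gives a definable $\ccc^k$ function on $M$, $\ccc^k$ near $S$ because $\psi$ and $1-\psi$ vanish there respectively, and $|f-g| = (1-\psi)|f-c| < \ep$. The main obstacle throughout remains the regularity verification in Step 1; once bumps are available, the rest is standard bookkeeping.
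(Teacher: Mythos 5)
Your Step 1 has a genuine gap that the rest of the argument inherits. The function $h=\rho_F/(\rho_F+\rho_G)$ built from squared distance functions is \emph{not} $\ccc^k$ on $M\setminus(F\cup G)$ in general, and composing with a polynomial $P$ vanishing to high order at $0$ and $1$ only repairs regularity at the set $F\cup G$ where $h\in\{0,1\}$ — it does nothing at interior values of $h$. The problem is that $\rho_F=d(\cdot,F)^2$ fails to be $\ccc^k$ along the medial axis of $F$, a set disjoint from $F$ but which can accumulate at $F$ when $F$ is singular. For instance, with $F=\{(x,y)\in\R^2:y=|x|\}$ the medial axis is the positive $y$-axis; it accumulates at the corner $(0,0)\in F$, and $d(\cdot,F)$ is not even $\ccc^1$ there. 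At a point on this axis, $h$ takes a value in $(0,1)$, so $P$ has no vanishing to exploit. \L ojasiewicz's inequality can certainly bound how fast the partial derivatives of $h$ blow up \emph{as one approaches $F\cup G$ from points where $h$ is smooth}, and combined with Faà di Bruno this does give $\ccc^k$-ness of $P\circ h$ \emph{at} $F\cup G$; but it cannot create smoothness of $h$ at points where $h$ simply is not differentiable. This is precisely why the paper never forms $d(\cdot,F)^2$ for arbitrary closed definable $F$: it only uses $\rho_S$ for $S$ a $\ccc^\infty$ \emph{stratum}, and only inside a tubular neighborhood $(U_S,\pi_S,\rho_S)$ where Proposition \ref{pro_retraction} guarantees $\rho_S$ is smooth. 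The ``radius'' $\delta$ of the bump is then required to be $\ccc^k$, and that is obtained from the approximation statement (ii) applied inductively to a lower-dimensional manifold (the basis $S$) — which is why the paper proves (i) and (ii) in a joint induction on $\dim M$ rather than attempting a free-standing bump-function lemma for disjoint closed sets. Since Steps 2 and 3 of your proposal rest on Step 1, they are affected as well. (Step 3 also has a secondary issue: $sing(f)$ is not a manifold, and extending from a stratification of it to $M$ via tubular neighborhoods again needs a gluing device that you only have through the flawed Step 1.)
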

\begin{proof} We prove these two assertions by induction on  $m:=\dim M$. Both statement being obvious for $m=0$, take $m\ge 1$ and denote by $(i)_{<m}$ and $(ii)_{<m}$ the respective induction hypotheses.

We first perform the induction step of $(i)$. Let $\Sigma$ be a stratification of $M$ compatible with all the $U_i$'s and satisfying the frontier condition (see Remark \ref{rem_frontier_condition}).
We denote by $\Sigma_{<m}$ (resp. $\Sigma_{=m}$) the collection of the strata of $\Sigma$ that have dimension less than (resp. equal to) $m$, and by $\Sigma_S$ the set of  strata of $\Sigma$ that are included in $fr(S)$.  We fix a tubular neighborhood $(U_S,\pi_S,\rho_S)$ of every $S\in \Sigma$ (see Proposition \ref{pro_retraction}), and, given  $S\in \Sigma_{<m}$ as well as $\delta \in \s^+(S)$, we let
\begin{equation}\label{eq_udelta}
U_S^\delta:=\{x\in M\cap U_S:\rho_S(x)<\delta (\pi_S(x))\}.
\end{equation}

We first point out a useful  consequence of $(ii)_{<m}$:

 \noindent{\bf Observation.} If $S\in \Sigma_{<m}$ and  $\delta$ as well as $\ep$ belong to $\s^+(S) $ then, by $(ii)_{<m}$, there is a $\ccc^k$ definable function $\delta'$ on $S$ such that $|\delta'-\delta|<\min(\frac{\ep}{3},\frac{\delta}{3})$. We also can find $\ccc^k$ definable approximations of $2|\delta-\delta'|$ (as in $(ii)$ of the proposition with $\ep$ as small as we wish). Subtracting from $\delta'$ such a function if necessary, we see that we actually can find a $\ccc^k$ definable function $\delta'$ on $S$ satisfying both $|\delta-\delta'|<\ep$ and $\delta' <\delta $. Note that then $U_S^{\delta'}\subset U^\delta _S$, which means that every $U^\delta _S$ contains a neighborhood defined by a $\ccc^k$ definable function.

 In order to construct our partition of unity let us take a $\ccc^k$ piecewise polynomial function $\psi$ that satisfies $\psi(x)= 1$ if $x\le \frac{1}{2}$ and $\psi(x) =0$ if $|x|\ge \frac{3}{4}$ (one can take for instance $\psi(x):=\frac{1}{a}\int_\frac{3}{4}^x (t-\frac{1}{2})^k (t-\frac{3}{4})^k dt$ for $x\in [\frac{1}{2},\frac{3}{4}]$, with $a=\int_\frac{3}{4}^{\frac{1}{2}} (t-\frac{1}{2})^k (t-\frac{3}{4})^k dt$). Given  $S\in \Sigma_{<m}$ and a $\ccc^k$ positive definable function $\delta$ on $S$, we then define a $\ccc^k$ function on $U_S$ (recall that strata are $\ccc^\infty$ submanifolds of $\R^n$) by setting
$$\psi_S^\delta (x):=\psi\left(\frac{\rho_S(x)}{\delta(\pi_S(x))}\right).$$
By construction,  $\psi_S^\delta $ is  supported  in $U^\delta_S$ and is equal to $1$ on $U^\frac{\delta}{2}_S$. Hence, for  $\delta< d(\cdot , M\setminus U_S)$, we  can extend $\psi_S^\delta $ (by $0$) to a $\ccc^k$ function on $M\setminus fr(S)$.

Given $S\in \Sigma_{=m}$ and a $\ccc^k$ positive definable function $\delta $ on $S$, we set $U^\delta _S:=S$ 
as well as $\psi_S ^\delta :\equiv 1$. Given a collection of function $\delta=(\delta_Y)_{Y\in \Sigma}$, with $\delta_Y$ positive $\ccc^k$ definable function on $Y$ for every $Y$, and a stratum $S\in \Sigma$, we then can define a nonnegative $\ccc^k$ function on $M\setminus fr(S)$ by setting
$$\psit^\delta_S(x):=\psi_S^{\delta_S} (x)\cdot \prod_{Y\in \Sigma_S} (1-\psi_Y^\frac{\delta_Y}{2}(x)).$$
As this function  vanishes on $ U^\frac{\delta_Y}{4} _Y$ if $Y\in \Sigma_S$, it can be extended (by $0$) to a $\ccc^k$ function on $M$.
Note that  $\psit^\delta_S(x)=1$ on $W_S:=U^{\frac{\delta_S}{2}}_S\setminus \bigcup_{Y\in \Sigma _S} U^{\frac{\delta_Y}{2}}_Y$
 for every $S\in \Sigma$. 
 Since $\cup_{S\in \Sigma} W_S=M$, the function $\sum_{S\in \Sigma} \psit^\delta_S(x) $ is bounded below away from zero on $M$, which makes it possible to set
 \begin{equation}\label{eq_phidelta}
 \varphi_S^\delta:=\frac{\psit_S^{\delta_S}}{\sum_{Y\in \Sigma} \psit_Y^{\delta_Y}}\, ,
 \end{equation}
so that $\sum_{S\in \Sigma}\varphi_S^\delta\equiv 1$. Since the $U_i$'s are open and because $\Sigma$ is compatible with them, every $S\in \Sigma_{<m}$ has a neighborhood $U^\mu_S$, $\mu \in \s^+(S)$,  which fits in $U_{i(S)}$ for some $i(S)\in I$. Hence, since we can choose $\delta_S$ smaller than $\mu$ (and yet smooth, by the above observation), we see that we can assume each $\varphi_S^\delta$, $S\in \Sigma$, to be supported in some $U_{i(S)}$. The family $(\varphi_S^\delta)_{S\in \Sigma}$ thus constitutes the desired partition of unity.

 We now perform the induction step of $(ii)$. Fix $\ep \in \s^+(M)$.  Let $\Sigma$ be a stratification of $M$ such that $f$ is $\ccc^\infty$ on every stratum (see Remark \ref{rem_globally subanalytic_implies_analytic}), and let $\Sigma_{<m}$ as well as $\Sigma_{=m}$ be as in the proof of $(i)$. Let for each $S\in \Sigma$ and each collection of functions $\delta=(\delta_S)_{S\in \Sigma}$ (with $\delta_S$ definable $\ccc^k$ positive function on $S$, as above), $U^{\delta_S}_S$ and $\varphi_S^\delta$ be as in (\ref{eq_udelta}) and  (\ref{eq_phidelta}).  Given $S\in \Sigma_{<m}$ and $x\in U^{\delta_S}_S$, we set $g_S(x):=f( \pi_S(x))$. We also set $g_S:=f$ if $S\in \Sigma_{=m}$.
 Note that, as $f$ is continuous, it follows from definable choice (Proposition \ref{pro_globally subanalytic_choice}) that if  $S\in \Sigma_{<m}$ and $\delta_S$ is sufficiently small (and yet positive definable $\ccc^k$, see the above observation) then $|f-g_S|<\ep$ on $U^{\delta_S}_S$.  As a matter of fact, if we set $g:=\sum_{S\in \Sigma} \varphi_S^{\delta} g_S$ then
 $|f-g|=|\sum_{S\in \Sigma}\varphi_S^{\delta}(f-g_S)|<\ep. $
 \end{proof}

\paragraph{Historical notes.}Section \ref{sect_quantifier_elimination} is constituted by basic model theoretic principles that are applied to our framework. Most of the properties of subanalytic sets were already present  in S. \L ojasiewicz's fundamental work  \cite{lojasiewicz59, lojasiewicz64a, lojasiewicz64b}. Let us however mention that \L ojasiewicz's inequality (Theorem \ref{thm_lojasiewicz_inequality}), which was the essential ingredient of his solution of Schwartz's division problem (see the introduction), was independently established by L. H\"ormander in the semi-algebraic category, who provided a solution of Schwartz's problem in this framework \cite{hormander}.  The material of this chapter was then rewritten and generalized independently by  many people (see in particular \cite{vdd_omin, shiota, costeomin} for a complete expository).  The presentation which is provided here is fairly close to the introductory book \cite{costeomin}, whose content is partially inspired by the book of L. van den Dries' book \cite{vdd_omin}. The subanalycity of the regular locus is due to M. Tamm \cite{tamm}, although the proof we have presented here was  taken from \cite{kurdykapointsreguliers}. It is difficult to quote an original reference for tubular neighborhoods provided by closest point retractions; a clear proof, close to our content, can be found in \cite{polyraby}.
Whitney's $(b)$ condition was introduced in \cite{whitney tangent}. Constructions of Whitney regular stratifications  in the semi-analytic category seem to go back to \L ojasiewicz. Kuo-Verdier's stratifications are generalizations of Whitney's stratifications that appeared later \cite{kuo,verdier}. A proof of existence was given in \cite{lojwacsta}. One of the first results of approximations of continuous definable functions by smooth definable functions with the topology considered in Proposition \ref{pro_approx} (i.e., $|f-g|<\ep$ with $\ep \in \s^+(M)$) is Efroymson's Approximation Theorem \cite{efroymson}, which is devoted to the semialgebraic category. This theorem is however much more difficult to prove than the latter proposition since it provides a $\ccc^\infty$ approximation, which prevents from using partitions of unity. Proposition \ref{pro_approx} is closer  to a theorem proved by M. Shiota \cite{shiota} (see also \cite{escribano}, these works however give in addition approximation of $p$ derivatives if $f$ is $\ccc^p$). M. Shiota \cite{shiotapprox} also proved (in the semialgebraic category) an approximation theorem which provides $\ccc^\infty$ definable approximations of $\ccc^p$ definable functions with approximation of the derivatives. Efroymson-Shiota's theorem was extended to the subanalytic category (and even o-minimal) in \cite{ef} (with the approximation of the first derivative only).
%
%
%


          \chapter{Lipschitz Geometry}\label{chap_lipschitz_geometry}
In this chapter, we undertake the description of  singularities of globally subanalytic sets from the Lipschitz point of view. We first study the interplay between Lipschitz functions, regular vectors (Definition \ref{boule_reguliere}), and the inner metric, which is the metric provided by the length of the shortest arc connecting two given points. We prove in particular that every definable set can be decomposed into Lipschitz cells (Theorem \ref{thm_lipschitz_cells}), which enables to compare the inner and outer metrics of a definable set. We then enter the explicit description of the Lipschitz geometry of singularities, introducing and constructing some triangulations, called {\it  metric triangulations}, that completely capture the Lipschitz geometry of definable sets.  We derive some consequences about the Lipschitz conic structure of definable singularities. These results recently turned out to be useful to study Sobolev spaces and geometric integration theory on definable sets \cite{lebeau, poincwirt, trace, lprime, laplace, depauw, Oud}.

We recall that the word ``{\bf definable}'' is used as a shortcut of ``globally subanalytic'' (see right after Proposition \ref{pro_csq_qe}).

 To give an intuitive idea of our concept of metric triangulation on a simple example, let us consider the cusp of equation $y^2=x^3$
in  $\mathbb{R}^2$. It is impossible  to find  a
triangulation of this set which is a  bi-Lipschitz homeomorphism.  The best that we can do is
to construct a homeomorphism which
``contracts" the vertical distances by  multiplying  them
by a power of the distance to the origin (see Figure \ref{fig1}).
  \begin{figure}[h]
\includegraphics[ scale=0.9]{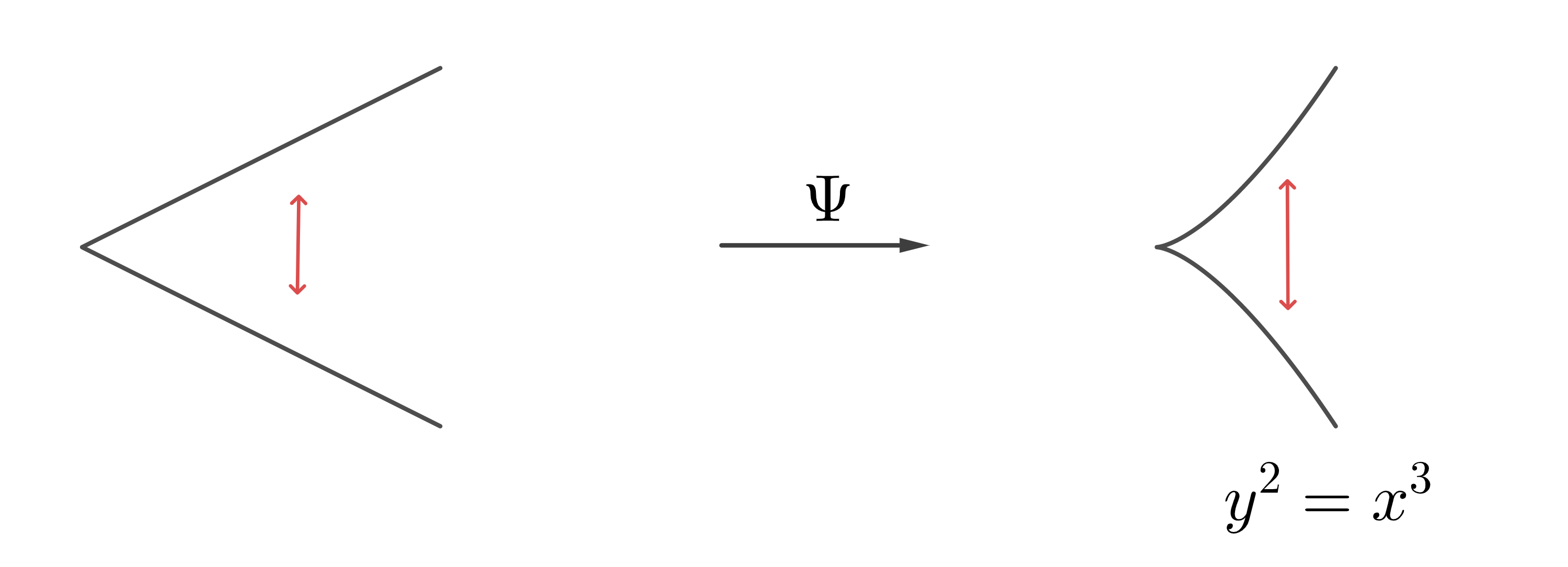}
\centering
  \caption{Triangulation of a germ of cusp. }
\label{fig1}\end{figure}

  \begin{figure}[h]\label{fig2}
\includegraphics[ scale=0.7]{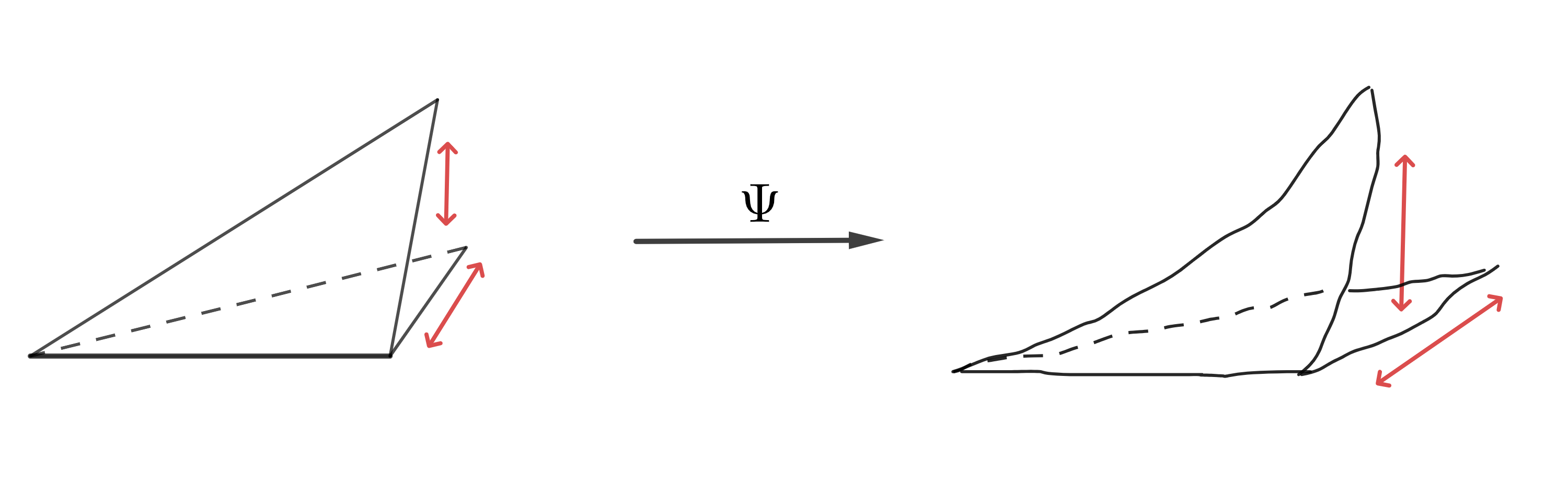}
\centering
  \caption{Triangulation of a germ of surface. }\label{fig2}
\end{figure}

It is getting more complicated as the dimension is increasing. Nevertheless,  in higher dimensions, the idea is to find a homeomorphism $\Psi$,   from a simplicial complex onto a given definable set, that contracts distances in a way that we explicitly describe in terms of distances to the faces of the simplices (see Figure \ref{fig2}).

\bigskip


\noindent {\bf Some notations.} Given $P\in \G_k^n$, we denote by $\pi_P$\nomenclature[bi]{$\pi_P$}{orthogonal projection onto $P\in \G_k^n$\nomrefpage}  orthogonal projection onto $P$. 
   Given  $\lambda \in \sph^{n-1} $, we denote by $N_\lambda$\nomenclature[bj]{$N_\lambda$}{vector space normal to $\lambda\in \sph^{n-1}$\nomrefpage} the hyperplane of $\R^n$ normal to the vector $\lambda$, and by $q_\lambda$ the coordinate
of $q\in \R^n$ along $\lambda$, i.e., the number given by the euclidean inner product of $q$ by $\lambda$\nomenclature[bjb]{$q_\lambda$}{coordinate
of $q$ along $\lambda$ \nomrefpage}.


 Given $B\in \s _{n}$ and  $\lambda \in \sph^{n-1} $, with $B \subset
 N_\lambda$, as well as
 a function $\xi:B \to \R$, we\nomenclature[bjg]{$\Gamma_\xi ^\lambda$}{graph of $\xi$ for $\lambda$\nomrefpage} set 
\begin{equation}\label{eq_graph_for_lambda}
\Gamma_\xi ^\lambda:=\{ q \in \R^n : \pi_{_{N_\lambda}}(q)  \in B
\quad  \mbox{and} \quad q_\lambda = \xi(\pi_{_{N_\lambda}}(q))\},
\end{equation} and call this set {\bf the graph of $\xi$ for $\lambda$}\index{graph!for $\lambda$}.

Define finally the {\bf $m$-support}\index{m-support@$m$-support of a set}\nomenclature[bjh]{$\supp_m(A)$}{$m$-support of the set $A$\nomrefpage} of a set $A\in \s_{m+n}$ by $$\supp_m A:=\{t \in \R^m :A_t \ne \emptyset\}.$$

\section[Regular vectors and Lipschitz functions]{Regular vectors and Lipschitz functions}\label{sect_reg_lips}
A mapping $\xi:A \to B$, $A\subset\R^n$, $B\subset \R^k$, is said to be {\bf Lipschitz}\index{Lipschitz} if there is a constant $L$ such that for all $x$ and $x'$ in $A$: 
$$|\xi(x)-\xi(x')|\le L|x-x'|.$$
 We say that $\xi$ is {\bf $L$-Lipschitz}\index{l-Lipschitz@$L$-Lipschitz} if we wish to specify the constant. 
The smallest nonnegative number $L$ having this property is called the {\bf Lipschitz constant of $\xi$}\index{Lipschitz! constant} and is denoted $L_\xi$\nomenclature[bk]{$L_\xi$}{Lipschitz constant of the function $\xi$\nomrefpage}.
By convention, if $A$ is empty then $\xi$ is Lipschitz and $L_\xi=0$.

A mapping $\xi:A \to B$  is {\bf bi-Lipschitz}\index{bi-Lipschitz} if it is a homeomorphism onto its image such that $\xi$ and $\xi^{-1}$ are Lipschitz.

A definable family $(f_t)_{t \in \R^m}$ is  {\bf uniformly Lipschitz}\index{uniformly Lipschitz} if $f_t$ is $L$-Lipschitz for all $t\in \R^m$, {\it with  $L $  independent of  $t$}.  The {\bf uniformly bi-Lipschitz}\index{uniformly bi-Lipschitz} families  are then defined analogously.

\begin{pro}\label{pro_extension_fonction_lipschitz} Every definable Lipschitz function $\xi:A \to \R$,  $A \in \s_n$, can be extended to an $L_\xi$-Lipschitz definable function  $\overline{\xi}:\R^n \to \R$. 
\end{pro}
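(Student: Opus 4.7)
The plan is to use the classical McShane extension and then invoke the model-theoretic machinery of Chapter \ref{chap_basic} to see that it stays in the definable category. Explicitly, I would set
\[
\overline{\xi}(x) := \inf_{a \in A} \bigl(\xi(a) + L_\xi\,|x-a|\bigr), \qquad x \in \R^n,
\]
with $\overline{\xi} \equiv 0$ on $\R^n$ if $A = \emptyset$ (a legal choice since in that case $L_\xi = 0$ by convention). First I would check the three standard facts about this formula: (a) the infimum is finite as soon as $A \neq \emptyset$ (it is bounded above by $\xi(a_0)+L_\xi|x-a_0|$ for any fixed $a_0\in A$); (b) $\overline{\xi}$ agrees with $\xi$ on $A$, because for $a_0\in A$ the $L_\xi$-Lipschitz inequality for $\xi$ gives $\xi(a) + L_\xi|a-a_0| \ge \xi(a_0)$, so the infimum is attained at $a=a_0$; and (c) $\overline{\xi}$ is $L_\xi$-Lipschitz on $\R^n$, by the usual two-line argument: given $x,y \in \R^n$ and $\eta>0$, pick $a$ with $\xi(a)+L_\xi|y-a| < \overline{\xi}(y)+\eta$, so that $\overline{\xi}(x) \le \xi(a)+L_\xi|x-a| \le \overline{\xi}(y) + \eta + L_\xi|x-y|$, and let $\eta \to 0$; swap $x$ and $y$ for the other inequality.

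The only point specific to this chapter is showing that $\overline{\xi}$ is definable, and this is where I would invoke the Quantifier Elimination Theorem \ref{thm_s_formula}. The graph of $\overline{\xi}$ coincides with the set of $(x,y) \in \R^n \times \R$ satisfying the $\s$-formula
\[
\bigl(\forall a \in A,\ y \le \xi(a) + L_\xi|x-a|\bigr)\ \wedge\ \bigl(\forall \varepsilon > 0,\ \exists a \in A,\ y > \xi(a) + L_\xi|x-a| - \varepsilon\bigr),
\]
which is constructed by the operations of Definition \ref{dfn_s_formula} from the definable ingredients $A$, $\Gamma_\xi$, and the definable function $(x,a)\mapsto |x-a|$ (recall Proposition \ref{pro_csq_qe}~(\ref{item_sup})). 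Hence $\Gamma_{\overline{\xi}} \in \s_{n+1}$, so $\overline{\xi}$ is a definable function, completing the proof.

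I do not anticipate any genuine obstacle here: the McShane formula is classical, and the content of the statement in the present context is precisely that the definable category is rich enough to absorb it. The only minor care point is to phrase the infimum as a genuine $\s$-formula — writing it as ``lower bound plus approximation from above'' rather than trying to invoke Proposition \ref{pro_csq_qe}~(\ref{item_sup}) directly, since the function $a \mapsto \xi(a)+L_\xi|x-a|$ need not be bounded as $a$ ranges in an unbounded $A$.
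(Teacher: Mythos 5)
Your proof is essentially identical to the paper's: both use the McShane formula $\overline{\xi}(x)=\inf_{a\in A}\bigl(\xi(a)+L_\xi|x-a|\bigr)$, check the Lipschitz estimate directly, and invoke Theorem \ref{thm_s_formula} for definability. The only difference is that you spell out the $\s$-formula for the graph (and correctly note why Proposition \ref{pro_csq_qe}~(\ref{item_sup}) cannot be applied verbatim), whereas the paper leaves this routine verification implicit.
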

\begin{proof}
	Set $\overline{\xi}(q):=\inf \{\xi(p)+L_\xi |q-p|:p\in A\}$ (for $A \ne \emptyset$). By the quantifier elimination principle (Theorem \ref{thm_s_formula}), it is a definable function. An easy computation shows that $\overline{\xi}$ is $L_\xi$-Lipschitz.
\end{proof}

\begin{rem}\label{rem_extension_familles_fonctions_lipschitz}
	Let $A \in \St_{m+n}$  and let a definable function $\xi:A\to  \R$ be such that $\xi_t:A_t \to \R$ is a Lipschitz function for every $t\in \R^m$. 
	The respective extensions $\overline{\xi}_t$  of $\xi_t$, $t \in \R^m$ (with for instance $\overline{\xi}_t\equiv 0$ if $t \notin \supp_m A$), provided by the proof of the  above proposition constitute a definable family of functions. We thus can extend definable families of Lipschitz functions to definable  families of Lipschitz functions. 
\end{rem}

 \subsection{Regular vectors}\label{sect_regular_vector}
 Given a
definable set $A \subset \R^n$,  let\nomenclature[bm]{$\tau(A)$}{set of all the tangent spaces at regular points of $A$\nomrefpage} 
$$\tau(A):=cl ( \{ T_x A  \in \cup_{k=1}^{n}\G^{n}_k : x \in A_{reg} \}).$$
For $\lambda$ of $\sph^{n-1}$ and  $Z \subset \cup_{k=1}^{n}\G^{n}_k$\nomenclature[bn]{$d(\lambda,Z)$, $Z\subset \cup_{k=1}^{n}\G^{n}_k$,}{ euclidean distance to a subset of $\cup_{k=1}^{n}\G^{n}_k$\nomrefpage}, we set (caution, here $Z$ is not a subset of $\R^n$):$$d(\lambda,Z) := \inf \{d(\lambda,T):T \in Z\},$$ with by convention $d(\lambda, \emptyset):=+\infty$.
\begin{dfn}\label{boule_reguliere}
Let $A\in \s_n$. An element $\lambda$ of
$\sph^{n-1} $ is said to be {\bf regular  for the set $A$}\index{regular! for a set}  if there is $\alpha >0$ such that:
$$
d(\lambda, \tau(A)) \geq \alpha.
$$
More generally, we say that $\lambda \in \sph^{n-1} $ is {\bf regular for  $A\in \s_{m+n}$}  if there exists $\alpha >0$ such that for all  $t\in \R^m$:
\begin{equation}\label{eq_reg_familles}
  d(\lambda, \tau(A_t)) \geq \alpha.
 \end{equation}
We then also say that $\lambda$ is {\bf regular for the family $(A_t)_{t \in \R^m}$}\index{regular! for a family}.
\end{dfn}

 If $\lambda\in \sph^{n-1}$ is regular for $A\in \smn$, it is regular for $A_t\in \s_n$ for all $t \in \R^m$. It is however much stronger since 
in (\ref{eq_reg_familles}), the  angle between  $\lambda$ and the tangent spaces to the fibers is required to be bounded below away from zero by a positive constant {\it independent of the parameter $t$}.

\paragraph{The regular vector theorems.} The theorems below are two essential ingredients of the construction of metric triangulations  (Theorem \ref{thm_proj_reg_hom_pres_familles} will be needed to prove Theorem \ref{thm existence des triangulations} and Theorem \ref{thm_proj_loc} will be needed to prove the local version presented in Theorem \ref{thm_triangulations_locales}). The proof of these results being too long to be included in these notes, the reader is referred to \cite{vlt, gvhandbook}.

Regular vectors do not always exist, even if the considered sets have empty interior, as it is shown by the simple example of a circle. Nevertheless, when the considered sets have empty interior, we can find such a vector, up to a globally subanalytic family of bi-Lipschitz mappings \cite[Theorem $1.3.2$]{gvhandbook}:
\begin{thm}\label{thm_proj_reg_hom_pres_familles}
	Let $A\in \smn$ be such that $A_t$ has empty interior for every $t \in \R^m$. There exists a uniformly bi-Lipschitz 
	definable family of homeomorphisms  $h_t :  \R^n \rightarrow  \R^n$, $\tim$,  such that  $e_n$   is regular for the family $(h_t(A_t))_{t\in \R^m}$.
\end{thm}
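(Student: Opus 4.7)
The plan is to argue by induction on $n$ and to process cells one at a time, relying essentially on cell decomposition and the Preparation Theorem from Chapter 1. First I would reduce to a single cell: applying a parametrized cell decomposition compatible with $A$ (Theorem \ref{thm_existence_cell_dec} and Remark \ref{rem_fibres_cell_dec}) decomposes $A$ into finitely many cells $C^1,\dots,C^r$ in $\R^{m+n}$, with each $C^i_t$ of dimension less than $n$. Since a uniformly bi-Lipschitz family of homeomorphisms of $\R^n$ only distorts angles by a bounded factor and hence preserves (up to a positive multiplicative constant) the distance from $e_n$ to the tangent spaces of the already-handled cells, it is enough to build, for each cell in turn, a uniformly bi-Lipschitz family that makes $e_n$ regular for that one cell. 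Composing these $r$ families gives the desired single $h_t$.

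Next I would reduce each single cell to a graph statement. After a uniform (over $t$) permutation of coordinates provided by the cell decomposition, a cell $C\subset \R^{m+n}$ has fibers $C_t=\Gamma_{\xi_t}$ where $\xi_t:D_t\to\R$ is a definable function and $D_t\subset\R^{n-1}$. A straightforward computation of the orthogonal projection of $e_n$ onto $T_{(y,\xi_t(y))}\Gamma_{\xi_t}=\mathrm{span}\{(e_i,\partial_i\xi_t(y))\}$ gives
\[
d(e_n,T_{(y,\xi_t(y))}\Gamma_{\xi_t})=(1+|\partial_y\xi_t(y)|^2)^{-1/2},
\]
so $e_n$ is regular for $(\Gamma_{\xi_t})_{t\in \R^m}$ with angle $\alpha$ iff $|\partial_y\xi_t|$ is bounded by $\sqrt{1/\alpha^2-1}$ uniformly in $t$. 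The problem thus becomes: find a uniformly bi-Lipschitz definable family $h_t$ of $\R^n$ such that $h_t(\Gamma_{\xi_t})=\Gamma_{\tilde\xi_t}$ with $|\partial_y\tilde\xi_t|$ uniformly bounded.

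To handle this, I would apply the Preparation Theorem (Theorem \ref{thm_preparation}) to the function $\xi(t,y)$ regarded as a function of $(t,y)\in\R^{m+n-1}$, refining the cell decomposition until on each subcell
\[
\xi_t(\tilde y,y_{n-1})=a(t,\tilde y)\cdot|y_{n-1}-\theta(t,\tilde y)|^r\cdot U(t,\tilde y,y_{n-1}-\theta(t,\tilde y)),
\]
with $a,\theta$ analytic $\la$-functions, $U$ an $\la$-unit, and $r\in\Q$. By the inductive hypothesis in $n$, a preliminary uniformly bi-Lipschitz family acting only on the first $n-1$ coordinates makes $\theta_t$ and all $\la$-coefficients of $a_t,U_t$ uniformly Lipschitz in $\tilde y$. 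The remaining task is then to treat the exponent $r$: when $r\ge 1$ the prepared form is already uniformly Lipschitz in $y_{n-1}$, and when $r<1$ one absorbs the cusp-like singularity by applying in the last coordinate a transformation of the form $h_t(y,x_n)=(y,\,x_n|y_{n-1}-\theta(t,\tilde y)|^{\,1-r}+\phi(t,y))$ for a suitably chosen uniformly Lipschitz $\phi$; the $\la$-unit estimates then give the required uniform bi-Lipschitz bounds.

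The hard part, and the essential reason this theorem is not a formal corollary of Gabrielov's theorem or of cell decomposition, is the \emph{uniformity in $t$}: one must verify that each operation above preserves uniform bi-Lipschitz constants independently of the parameter $t$, and one must glue the transformations obtained on different sub-cells of the prepared decomposition into one globally defined family of homeomorphisms of $\R^n$. The gluing step is where the uniform finiteness results (Corollaries \ref{cor_cc_families} and \ref{cor_uniform_bound}), the frontier-type consequences of Whitney $(b)$-regular stratifications (Proposition \ref{pro_frontier_condition_whitney}), and the existence of definable $\ccc^k$ partitions of unity (Proposition \ref{pro_approx}) would enter, allowing the local bi-Lipschitz pieces to be amalgamated into one uniformly bi-Lipschitz definable family $h_t:\R^n\to\R^n$.
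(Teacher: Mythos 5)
The paper does not prove Theorem \ref{thm_proj_reg_hom_pres_familles}: the preamble to the statement explicitly says the argument is too long to include here and refers to \cite{vlt,gvhandbook}, so there is no in-text proof to compare against. Judged on its own, your sketch has a gap that is fatal to the whole strategy. The reduction to ``one cell at a time'' rests on the assertion that a uniformly bi-Lipschitz family of homeomorphisms ``preserves (up to a positive multiplicative constant) the distance from $e_n$ to the tangent spaces of the already-handled cells.'' This is false. What a bi-Lipschitz $h$ controls is the pair $\big(d_xh(e_n),d_xh(T)\big)$, not the pair $\big(e_n,d_xh(T)\big)$, and the regularity of $e_n$ is a statement about the latter. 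Already the rotation $h(x,y)=(-y,x)$ of $\R^2$, which is an isometry and hence $1$-bi-Lipschitz, sends the horizontal axis, for which $e_2$ is regular with constant $1$, to the vertical axis, for which $e_2$ is not regular at all. So composing cell-by-cell corrections $h^{(r)}_t\circ\cdots\circ h^{(1)}_t$ does not produce a map for which $e_n$ is regular for the whole of $A_t$: each $h^{(i)}_t$ can undo what its predecessors achieved on the other cells. Nor can you escape this by restricting to vertical shears, since flattening a genuine cusp (e.g.\ the graph $y=x^{2/3}$ in $\R^2$, where $e_2$ fails to be regular at the origin) requires a transformation that moves $e_n$ off the span of the vertical tangent — such as the rotation above — which is exactly what a vertical shear cannot do.

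There is also a local flaw: the proposed correction $h_t(y,x_n)=(y,\,x_n|y_{n-1}-\theta(t,\tilde y)|^{1-r}+\phi(t,y))$ for $0<r<1$ is not a bi-Lipschitz map of $\R^n$. Its $x_n$-derivative is $|y_{n-1}-\theta|^{1-r}$, which vanishes as $y_{n-1}\to\theta(t,\tilde y)$, and its $y_{n-1}$-derivative contains the term $x_n(1-r)|y_{n-1}-\theta|^{-r}$, which blows up there; adding a Lipschitz $\phi(t,y)$ independent of $x_n$ cannot repair either estimate. A further, more minor issue: a cell of fiber-dimension $<n$ need not be a graph over $\R^{n-1}$ — it may be a band in the last coordinate over a lower-dimensional base, in which case $e_n$ is everywhere tangent to it and no ``permutation of coordinates built into the cell decomposition'' makes it a graph. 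The proofs in \cite{vlt,gvhandbook} handle all cells simultaneously precisely to sidestep the composition problem above; the cell-by-cell scheme does not reduce to them.
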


As we just said, the construction of metric triangulations of germs will require another version of the regular vector theorem (see the introduction of section \ref{sect_metric_triang_local} for more details).
For $R>0$  and $n\in \N\setminus \{0,1\}$,  we first set
\begin{equation}\label{eq_ccn}
	\ccn :=\{(t,x) \in [0,+\infty)\times \R^{n-1}: |x|\leq Rt\}. \end{equation} 
We also set $\mathcal{C}_1(R):=[0,+\infty)$\nomenclature[bu]{$\ccn$}{\nomrefpage}.

	Let $A, B \subset \R^n$. A definable map $h:A \to B$  is {\bf vertical}\index{vertical} if it
	preserves the first coordinate in the canonical basis of $\R^n$, i.e. if for any $t \in \R$, $\pi(h(t,x))=t$, for all $x \in A_t$,  where $\pi:\R^n \to \R$ is the  projection onto the first coordinate.

In the case of germs of subsets of $\mathcal{C}_{n}(R)$, the homeomorphism  provided by Theorem \ref{thm_proj_reg_hom_pres_familles} may be required to  be a vertical map (note however that in the theorem below, there is no parameter $t$, unlike in the preceding theorem):

\begin{thm}\label{thm_proj_loc} \cite[Theorem $1.5.14$]{gvhandbook}
	Let $X$ be the germ at $0$ of a definable  subset of $\mathcal{C}_{n}(R)$ (for some $R$) of
	empty interior. There exists a germ of
	vertical bi-Lipschitz definable homeomorphism (onto its image) $H:(\C_{n}(R),0)\to
	(\C_n(R),0)$ such that $e_{n}$ is regular for $H(X)$.
\end{thm}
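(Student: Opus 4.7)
The plan is to reduce Theorem \ref{thm_proj_loc} to the parametric regular vector theorem (Theorem \ref{thm_proj_reg_hom_pres_familles}) by viewing $X$ as a definable family of subsets of $\R^{n-1}$ parametrized by the first coordinate $t$. Writing points of $\R^n$ as $(t,x)$ with $t\in\R$ and $x\in\R^{n-1}$, we want to find $h_t:\R^{n-1}\to\R^{n-1}$ such that $H(t,x):=(t,h_t(x))$ does the job. The vertical constraint is then automatic, and the main work lies in showing that the assembled map is bi-Lipschitz on the cone and that regularity transfers correctly.

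First I would verify that the fibers $X_t$ have empty interior in $\R^{n-1}$ for all small $t>0$. Since $X$ has empty interior in $\R^n$, Proposition \ref{pro_dim} yields $\dim X<n$; by a cell decomposition with respect to the first coordinate, combined with Lemma \ref{lem_closure_cells}, the set of $t$ for which $X_t$ has full dimension in $\R^{n-1}$ has itself dimension zero, so after shrinking the germ we may assume $X_t$ has empty interior in $\R^{n-1}$ for every $t\in(0,\eta)$.

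Next, I would apply a rescaling trick to pass from the cone to a cylinder. Set $\tilde{X}_t:=(1/t)X_t\subset\Bb(0,R)$ for $t>0$; this is a definable family, each fiber of empty interior. Applying Theorem \ref{thm_proj_reg_hom_pres_familles} to $(\tilde{X}_t)_t$ provides a uniformly bi-Lipschitz definable family $\tilde{h}_t:\R^{n-1}\to\R^{n-1}$ such that the last coordinate direction $e_{n-1}$ is regular for the family $(\tilde{h}_t(\tilde{X}_t))_t$. After composing with translations and a small multiplicative constant, I would arrange $\tilde{h}_t(0)=0$ and $\tilde{h}_t(\Bb(0,R))\subset\Bb(0,R)$. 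Then define $h_t(x):=t\,\tilde{h}_t(x/t)$ for $t>0$ (with $h_0:=0$) and $H(t,x):=(t,h_t(x))$. A direct computation shows the spatial derivative satisfies $dh_t|_x=d\tilde{h}_t|_{x/t}$, so for each fixed $t$ the map $h_t$ is uniformly bi-Lipschitz with the same constant as $\tilde{h}_t$, and the tangent space of $H(X)_t$ at $h_t(x)$ coincides with that of $\tilde{h}_t(\tilde{X}_t)$ at $\tilde{h}_t(x/t)$.

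The main obstacle is ensuring that $H$ is bi-Lipschitz as a map on the whole cone, not merely fiber-wise. Expanding $|H(t,x)-H(t',x')|$ via the triangle inequality and using the cone constraint $|x|\le Rt$ reduces the upper bi-Lipschitz bound to the scale-invariant estimate $|\tilde{h}_t(y)-\tilde{h}_{t'}(y)|\lesssim |t-t'|/t'$ for $|y|\le R$, which is not automatic from Theorem \ref{thm_proj_reg_hom_pres_familles}. Handling this requires a refinement: one must choose the family $\tilde{h}_t$ so as to also depend Lipschitz-continuously with respect to $\log t$ on the parameter, achieved by further decomposing the parameter interval into finitely many definable pieces on which the family behaves well (using the cell decomposition of the graph together with horizontally $\ccc^1$ stratifications from Proposition \ref{pro_h_hor_C1}), and then gluing the local constructions by a definable smooth partition of unity (Proposition \ref{pro_approx}) subordinate to a covering of $(0,\eta)$ chosen so that the partition functions are Lipschitz with respect to $\log t$. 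Once $H$ is known to be bi-Lipschitz, regularity of $e_n$ for $H(X)$ follows: tangent vectors of the form $(0,w)$ to $H(X)$ lie in the fiber tangent space and are bounded away from $e_{n-1}$ by construction, while any tangent vector $(v_1,w)$ with $v_1$ small must also have $w$ bounded away from $e_{n-1}$ up to a controlled correction provided by the uniform bi-Lipschitz bound and the cone estimate $|x|\le Rt$, giving the required uniform separation between $e_n$ and $\tau(H(X))$.
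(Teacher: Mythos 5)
The paper does not include a proof of this theorem; it explicitly defers to \cite{gvhandbook}. Assessed on its own merits, your proposal has a genuine gap that you partly flag but do not close. Your computation correctly isolates the obstruction: for $H(t,x)=(t,t\tilde{h}_t(x/t))$ to be Lipschitz on the cone, one needs $|\tilde{h}_t(y)-\tilde{h}_{t'}(y)| \lesssim |t-t'|/\min(t,t')$ uniformly for $|y|\le R$, a modulus of continuity in the parameter $t$ that Theorem \ref{thm_proj_reg_hom_pres_familles} simply does not provide (it guarantees each $\tilde{h}_t$ is $L$-bi-Lipschitz with $L$ independent of $t$, but gives no control whatsoever on $t\mapsto\tilde{h}_t$; the definable selection it makes may jump discontinuously in $t$). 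Your proposed repair fails on two separate grounds. First, any finite definable partition of $(0,\eta)$ has one piece accumulating at $0$, so the modulus near $t=0$ is left unresolved on that piece — a covering whose partition functions are Lipschitz in $\log t$ would need infinitely many scales, which is not definable. Second, and more fundamentally, gluing homeomorphisms of $\R^{n-1}$ by a partition of unity means taking pointwise convex combinations, and a convex combination of bi-Lipschitz homeomorphisms need not be injective at all, let alone bi-Lipschitz, so the construction does not produce a homeomorphism. Invoking Proposition \ref{pro_h_hor_C1} does not help here: it stratifies a \emph{given} Lipschitz map into a horizontally $\ccc^1$ one, but it does not upgrade a definable selection $t\mapsto\tilde{h}_t$ to one with the required $\log t$-Lipschitz dependence.

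Your final paragraph on transferring regularity of $e_{n-1}$ for the fibers to regularity of $e_n$ for $H(X)$ is in the right spirit — once $H$ is Lipschitz, the transversal direction $(1,w)$ in the tangent space has $|w|$ controlled by the Lipschitz constant and the argument you sketch can be completed — but it is itself conditional on the Lipschitz property of $H$, which is precisely the unresolved gap. The overall strategy (rescale the cone to a cylinder, apply the parametric regular vector theorem, reassemble) is a reasonable first attempt, but the key analytic step — that the family of homeomorphisms has parameter dependence good enough to reassemble into a vertical Lipschitz map on the cone — is asserted rather than proved, and the tools cited are not adequate to establish it. This is why the paper treats the two regular vector theorems as separate results rather than deducing the local one from the family one.
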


\subsection{The inner metric}\label{sect_inner_metric}
Any definable arc $\gamma:(\eta,\ep)\to \R^n$ (not necessarily continuous), $\eta<\ep$, is piecewise analytic. Its length (possibly infinite) is therefore well-defined as:\nomenclature[bum]{$lg(\gamma)$}{length of the arc $\gamma$\nomrefpage}
$$lg(\gamma):=\int_\eta ^\ep |\gamma'(t)|\, dt.$$
It follows from Puiseux Lemma  that this integral is always finite if $\gamma$ is bounded. We thus may define {\bf the inner metric of a set}\index{inner metric} $X \in \St_n$ by setting for $a$ and $b$ in the same connected component of $X$ (see Proposition \ref{lem_connexe_par_arc})\nomenclature[bup]{$d_{X}(a,b)$}{inner metric of $X$\nomrefpage}:$$d_{X}(a,b):=\inf \{ lg(\gamma):\gamma:[\eta,\ep]\to X, \mbox{ $\ccc^0$ definable arc joining } a \mbox{ and } b\}.$$
We rather have defined a metric on every connected component of $X$. By convention, $d_{X}(a,b)=+\infty$, when $a$ and $b$ do not lie in the same connected  component of $X$. Note that for all $a$ and $b$ in $X$:
\begin{equation}\label{d_inner_plus_grande}
	|a-b|\le d_{X}(a,b).
\end{equation}
A definable mapping $f:X \to Y$  is {\bf Lipschitz with respect to the inner metric}\index{Lipschitz! with respect to the inner metric} if  for some constant $L$ we have for all $a$ and $b$ in $X$:
$$d_Y(f(a),f(b)) \le L \cdot d_{X}(a,b).$$ 
We say {\bf $L$-Lipschitz with respect to the inner metric}\index{l-Lipschitz@$L$-Lipschitz with respect to the inner metric} if we wish to specify the constant. 
 By analogy, the restriction of the euclidean metric to $X$ is called the {\bf outer metric of $X$}\index{outer metric}. 
In general, the outer metric is not equivalent to  the inner metric as it is shown by the simple example of the cusp $y^2=x^3$ in $\R^2$. 

\begin{pro}\label{pro_bounded_der_lipschitz}
	Let $f:X \to Y$ be a definable continuous mapping and let $L \in \R$.  The mapping $f$ is $L$-Lipschitz with respect to the inner metric of $X$ if and only if $|d_x f|\le L$  for all  $x \in reg(f)$.   
\end{pro}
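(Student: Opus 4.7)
For the forward implication, the plan is to test at a regular point with smooth arcs. At $x_0\in reg(f)$, $X$ is an analytic submanifold and $f$ is analytic near $x_0$, so for any $v\in T_{x_0}X$ I can choose a smooth definable arc $\alpha:[0,\epsilon)\to X$ with $\alpha(0)=x_0$ and $\alpha'(0)=v$. Then $d_X(\alpha(t),x_0)\le lg(\alpha|_{[0,t]})=|v|\,t+o(t)$, and the $L$-Lipschitz hypothesis together with $|f(\alpha(t))-f(x_0)|\le d_Y(f(\alpha(t)),f(x_0))$ (from (\ref{d_inner_plus_grande})) gives $|f(\alpha(t))-f(x_0)|\le L|v|\,t+o(t)$. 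Dividing by $t$ and letting $t\to 0$, using $(f(\alpha(t))-f(x_0))/t\to d_{x_0}f(v)$, yields $|d_{x_0}f(v)|\le L|v|$, whence $|d_{x_0}f|\le L$.

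For the reverse implication, fix $a,b\in X$ in the same connected component and a continuous definable arc $\gamma:[\eta,\epsilon]\to X$ joining them. Since $f\circ\gamma$ is a continuous definable arc in $Y$ joining $f(a)$ to $f(b)$, one has $d_Y(f(a),f(b))\le lg(f\circ\gamma)$, so the problem reduces to establishing $lg(f\circ\gamma)\le L\cdot lg(\gamma)$ and then taking the infimum over $\gamma$. The bound $|d_xf|\le L$ on $reg(f)$ implies boundedness on every subset of $reg(f)$ relatively compact in $X$, so Proposition \ref{pro_h_hor_C1} provides stratifications $\Sigma$ of $X$ and $\Sigma'$ of $Y$, refined to be compatible with $reg(f)$ and Whitney $(a)$-regular, making $f:(X,\Sigma)\to(Y,\Sigma')$ a horizontally $\ccc^1$ stratified mapping; in particular $f_{|S}$ is $\ccc^\infty$ on every $S\in\Sigma$.

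The main step, and the principal obstacle, is to extend the bound $|d_xf_{|S}|\le L$ from strata contained in $reg(f)$ to every $S\in\Sigma$. For $S\subset reg(f)$ this is immediate, since $d_xf_{|S}$ is the restriction of $d_xf$ to $T_xS$. For $S\subset sing(f)$, I would exploit that $reg(f)$ is dense in $X$ (Corollary \ref{cor_reg_locus__definissable}): given $x\in S$, there exists a stratum $S'\subset reg(f)$ with $\dim S'>\dim S$ and $x\in cl(S')$, together with a sequence $x_n\in S'$ tending to $x$ along which $T_{x_n}S'$ converges in the Grassmannian to some $\tau$. Whitney's $(a)$ condition forces $T_xS\subset\tau$, so any $v\in T_xS$ lifts to vectors $v_n\in T_{x_n}S'$ with $v_n\to v$ (take for instance the orthogonal projection of $v$ onto $T_{x_n}S'$). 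Then $|d_{x_n}f(v_n)|=|d_{x_n}f_{|S'}(v_n)|\le L|v_n|$, and the horizontally $\ccc^1$ property identifies the limit of $d_{x_n}f(v_n)$ with $d_xf_{|S}(v)$, giving $|d_xf_{|S}(v)|\le L|v|$.

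With the uniform bound in hand, for each $S\in\Sigma$ the set $\gamma^{-1}(S)$ is a definable subset of $\R$, hence a finite union of points and intervals; this partitions $[\eta,\epsilon]$ into finitely many subintervals on each of which $\gamma$ remains in a single stratum $S$ and is piecewise analytic by Puiseux. On such a subinterval, the chain rule gives $(f\circ\gamma)'(t)=d_{\gamma(t)}f_{|S}(\gamma'(t))$ almost everywhere, and hence $|(f\circ\gamma)'(t)|\le L|\gamma'(t)|$. Integrating and summing over the finitely many subintervals yields $lg(f\circ\gamma)\le L\cdot lg(\gamma)$, completing the proof.
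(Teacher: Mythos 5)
Your proof is correct and follows the same route as the paper's: obtain Whitney $(a)$-regular stratifications making $f$ horizontally $\ccc^1$ via Proposition \ref{pro_h_hor_C1}, extend the bound on $|d_xf|$ from strata in $reg(f)$ to all strata, then cut a given definable arc into finitely many pieces each lying in one stratum and estimate lengths. The only difference is one of detail: you expand the ``only if'' direction that the paper declares obvious, and you spell out (density of $reg(f)$, Whitney's $(a)$ condition to lift tangent vectors, the definition of horizontally $\ccc^1$) the transfer of the derivative bound to strata of $sing(f)$ which the paper condenses into the single phrase ``By continuity, we see that $\sup_{x\in S}|d_x f_{|S}|\le L$.''
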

\begin{proof}The only if part is obvious. Assume that $|d_x f|\le L$ on $reg(f)$ and let us show that $f$ is $L$-Lipschitz with respect to the inner metric.
	By Proposition \ref{pro_h_hor_C1}, there are Whitney $(a)$ regular stratifications  of $X$ and $Y$  with respect to which $f$ is horizontally $\ccc^1$. By continuity, we see that $\sup_{x \in S} |d_x f_{|S}| \le L$ for every stratum $S$.   Let $a$ and $b$ be two points in the same connected component of $X$ and let $\gamma:[0,1]\to X$ be a $\ccc^0$ definable curve joining $a$ and $b$.  Let $t_0=0\le t_1<\dots<t_k=1$ be such that  $\gamma$ stays in the same stratum on $(t_i,t_{i+1})$ for all $i<k$.  By the Mean Value Theorem,  for every $i<k$, we have: $$d_Y(f(\gamma(t_i)),f(\gamma(t_{i+1})))\le lg(f(\gamma_{|[t_i,t_{i+1}]}))\le L\cdot  lg(\gamma_{|[t_i,t_{i+1}]}), $$
	which shows that $d_Y(f(a),f(b))$ is not bigger than $L\cdot lg(\gamma)$.
\end{proof}
\begin{rem}\label{rem_bounded_der_lipschitz}
	In particular, if the derivative of a continuous definable mapping $f:\R^n \to Y$ is  bounded  on an open dense set by a real number $L$ then $f$ is $L$-Lipschitz  (with respect to the {\it outer} metric). This fact is of course not always true for functions that are not definable (like the so-called Cantor functions).
\end{rem}

\subsection{Lipschitz cell decompositions}\label{sect_lipschitz_cell_decomposition}
We are going to show that every definable set can be decomposed into Lipschitz cells (Definition \ref{dfn_la_regular_cell} and Theorem \ref{thm_lipschitz_cells}), which will entail that any continuous definable function with bounded derivative is piecewise Lipschitz (Corollary \ref{cor_bounded_der_lips}). This requires some lemmas that we present first.

\begin{lem}\label{lem_reg_inclusion}
	Let $A$ and $B$ in $\s_{n+m}$ with $B\subset A$.  If $\lambda \in \sph^{n-1}$ is regular for $A$, then it is regular for $B$.
\end{lem}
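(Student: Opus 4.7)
The strategy is to show that the same constant $\alpha>0$ witnessing the regularity of $\lambda$ for $A$ also witnesses the regularity of $\lambda$ for $B$, arguing fiber by fiber using a Whitney $(a)$ regular stratification. The geometric backbone is the elementary fact that if $T\subset \tau$ are vector subspaces of $\R^n$, then $d(\lambda,T)\ge d(\lambda,\tau)$, coupled with Whitney's $(a)$ condition, which forces tangent spaces along higher-dimensional strata to contain tangent spaces of lower-dimensional strata in the Grassmannian limit.

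Fix $t\in\R^m$ and $x\in (B_t)_{reg}$ with $k:=\dim_x B_t\ge 1$ (the case $k=0$ is vacuous since $T_xB_t$ is then not recorded in $\tau(B_t)$). Using Propositions \ref{pro_existence} and \ref{pro_w_stratifying}, I would produce a Whitney $(a)$ regular stratification $\Sigma$ of $A_t$ compatible with $B_t$. Since $B_t$ is a $k$-dimensional manifold near $x$ and $\Sigma$ partitions $B_t$, some $k$-dimensional stratum $S^B\in\Sigma$ contained in $B_t$ meets every neighborhood of $x$, so $x\in cl(S^B)$. For $y\in S^B$ the inclusion $S^B\subset B_t$ combined with equality of dimensions gives $T_yS^B=T_yB_t$, and these tangent spaces converge to $T_xB_t$ as $y\to x$ by smoothness of $B_t$ at $x$.

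The heart of the argument is to show $d(\lambda,T_yS^B)\ge \alpha$ for each $y\in S^B$. I would pick a stratum $S'\in\Sigma$ of top dimension $d:=\dim_y A_t$ with $y\in cl(S')$; such $S'$ exists because the union of strata of dimension $<d$ near $y$ cannot cover a neighborhood of $y$ in $A_t$. By the same dimension argument, $S'$ is open in $A_t$, hence $S'\subset (A_t)_{reg}$ with $T_zA_t=T_zS'$ for $z\in S'$. Choose a sequence $z_i\in S'$ tending to $y$ with $T_{z_i}S'$ converging in $\G^n_d$ to some subspace $\tau$. Whitney's $(a)$ condition applied to the pair of strata $(S',S^B)$ at $y$ yields $T_yS^B\subset \tau$; the hypothesis $d(\lambda,T_{z_i}A_t)\ge\alpha$ together with continuity of $d(\lambda,\cdot)$ on the Grassmannian gives $d(\lambda,\tau)\ge\alpha$; and the subspace inequality concludes $d(\lambda,T_yS^B)\ge d(\lambda,\tau)\ge\alpha$.

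Letting $y\to x$ in $S^B$ and using continuity of the distance function on the Grassmannian gives $d(\lambda,T_xB_t)\ge\alpha$. Passing to the closure, $d(\lambda,\tau(B_t))\ge\alpha$ for every $t\in\R^m$, so $\lambda$ is regular for $B$ with the same constant $\alpha$; uniformity in $t$ is automatic. The main obstacle is arranging the crucial inclusion $T_yS^B\subset \tau$: without any regularity hypothesis on the stratification one cannot expect this, and it is precisely Whitney's $(a)$ condition that delivers it, which is why the argument requires Proposition \ref{pro_w_stratifying} and not merely a bare partition of $A_t$ into smooth manifolds.
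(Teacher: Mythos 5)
Your proof is the direct version of the paper's argument by contradiction: both rest on density of $(A_t)_{reg}$, Whitney's $(a)$ condition, and the monotonicity of $d(\lambda,\cdot)$ under inclusion of subspaces. You argue fiber by fiber with the fixed constant $\alpha$; the paper picks a sequence $(t_i,b_i)$ violating regularity for $B$ and produces from it a sequence in $(A_{t_i})_{reg}$ violating it for $A$. The bonus you made explicit, that the same $\alpha$ works for $B$, is exactly the content of the remark that follows the lemma in the paper.

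There is, however, a genuine gap in one step: the inference ``By the same dimension argument, $S'$ is open in $A_t$, hence $S'\subset (A_t)_{reg}$.'' Having $\dim S'=\dim_y A_t$ does not make $S'$ open in $A_t$, and the inclusion $S'\subset(A_t)_{reg}$ can fail. Take $A_t=\{(x,0):x\in\R\}\cup\{(x,x^3):x>0\}\subset\R^2$ with the stratification $\Sigma=\{\{(x,0):x\in\R\},\{(x,x^3):x>0\}\}$; this is Whitney $(a)$ regular (the two tangent lines become tangent at the origin), it is compatible with $B_t:=\{(x,0):x\in\R\}$, and the $x$-axis stratum is of maximal dimension $1=\dim A_t$, yet it contains the branch point $(0,0)\in(A_t)_{sing}$, so it is neither open in $A_t$ at that point nor contained in $(A_t)_{reg}$. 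What your hypothesis actually gives you is $d(\lambda,T_z(A_t)_{reg})\ge\alpha$ for $z\in(A_t)_{reg}$; to use it you must secure $z_i\in(A_t)_{reg}$, not merely $z_i\in S'$. The paper's proof does this directly by choosing its approximating points $a_i$ inside $A_{t_i,reg}$ (which is dense) rather than inside a selected stratum. You can repair your version the same way: choose $z_i\to y$ with $z_i\in(A_t)_{reg}$ lying in a stratum $S_{z_i}$ of dimension $\dim_{z_i}A_t$ (all of these are dense conditions near $y$); then $S_{z_i}$ is open in $A_t$ at $z_i$, so $T_{z_i}S_{z_i}=T_{z_i}(A_t)_{reg}$, the stratum $S_{z_i}$ eventually has $y$ in its closure (finitely many strata), and Whitney's $(a)$ applies to the pair $(S_{z_i},S^B)$ at $y$ exactly as in your argument. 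Alternatively, impose additionally that $\Sigma$ be compatible with $(A_t)_{reg}$ (legitimate by Theorem \ref{thm_X_reg} and Proposition \ref{pro_existence}), under which the top-dimensional strata accumulating at $y$ are forced into $(A_t)_{reg}$ and your original inference holds.
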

\begin{proof}
	Assume that $\lambda \in \sph^{n-1}$ is not regular for $B$. It means that there is a sequence $((t_i,b_i))_{i \in \N}$, with $b_i \in B_{t_i,reg}$ such that $\tau:= \lim T_{b_i}B_{t_i,reg}$ exists and contains $\lambda$.  Choose for every $i$ a Whitney $(a)$ regular stratification of $A_{t_i}$ compatible with $B_{t_i}$ and $B_{t_i,reg}$  and denote by $S_i$ the stratum containing $b_i$.  Moving slightly $b_i$ if necessary, we may assume that $S_i$ is open in $B_{t_i,reg}$ (since $B_{t_i,reg}$ is open and dense in $B_{t_i}$), which entails that  $T_{b_i} S_i=T_{b_i}B_{t_i,reg}$.  As $A_{t_i,reg}$ is dense in $A_{t_i}$, for every $i\in \N$,  we can find $a_i$ in $A_{t_i,reg}$ which is close to $b_i$.    Moreover, possibly extracting a sequence, we may assume that $\tau':= \lim T_{a_i}A_{t_i,reg}$ exists.
	If $a_i$ is sufficiently close to $b_i$, by Whitney's $(a)$ condition, we deduce that $\tau' \supset \tau$, which contains $\lambda$. This yields that $\lambda$ is not regular for $A$.
\end{proof}

\begin{rem} It is worthy of notice that the proof of the above lemma  shows that the corresponding number $\alpha$ (see (\ref{eq_reg_familles})) can remain the same for $B$.
\end{rem}

\begin{lem}\label{lem  kurdyka proj fixees}
	Given $\nu \in \N$, there exist $\lambda_1, \dots, \lambda_N$ in $\sph^{n-1}$
	and $\alpha_\nu >0$ such that for any $P_1 , \dots, P_\nu$ in $\bigcup_{l=1} ^{n-1} \G
	^n_l$ we can find $i \leq N$ such that
	$ d(\lambda_i  , P_j)> \alpha_\nu,$  for all $j \le \nu$.
\end{lem}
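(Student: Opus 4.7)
\medskip

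\noindent\textbf{Plan.} The idea is to combine a measure estimate on tubular neighborhoods of proper subspaces with a compactness argument over the parameter space $(\bigcup_{l=1}^{n-1}\G^n_l)^\nu$. Let $\mu$ denote the uniform probability measure on $\sph^{n-1}$, and for $P\in\bigcup_{l=1}^{n-1}\G^n_l$ and $\alpha>0$, set
$$T_\alpha(P):=\{\lambda\in\sph^{n-1}:d(\lambda,P)\leq\alpha\}.$$
First I would establish the following uniform estimate: for every $\eta>0$ there exists $\alpha>0$ such that $\mu(T_\alpha(P))\leq\eta$ for \emph{every} $P\in\bigcup_{l=1}^{n-1}\G^n_l$. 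Indeed, for each fixed $P$, the family $(T_\alpha(P))_{\alpha>0}$ decreases to $P\cap\sph^{n-1}$ as $\alpha\to 0$, and this intersection has $\mu$-measure zero since $P$ is a proper subspace; so $\mu(T_\alpha(P))\to 0$. The function $(\alpha,P)\mapsto\mu(T_\alpha(P))$ is continuous on $[0,1]\times\bigcup_{l=1}^{n-1}\G^n_l$ (by dominated convergence and the continuity of $d$), and since $\bigcup_{l=1}^{n-1}\G^n_l$ is compact, the convergence $\mu(T_\alpha(P))\to 0$ is uniform in $P$.

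Next, applying this estimate with $\eta:=\frac{1}{2\nu}$, I obtain some $\alpha>0$ such that for every tuple $\mathbf{P}=(P_1,\dots,P_\nu)$ of proper subspaces,
$$\mu\Big(\bigcup_{j=1}^{\nu}T_\alpha(P_j)\Big)\leq\sum_{j=1}^{\nu}\mu(T_\alpha(P_j))\leq\frac{1}{2}.$$
In particular, the complement in $\sph^{n-1}$ is non-empty, so there exists some $\lambda_\mathbf{P}\in\sph^{n-1}$ with $d(\lambda_\mathbf{P},P_j)>\alpha$ for every $j\leq\nu$.

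The remaining step is to extract a \emph{finite} list of $\lambda_i$'s that works uniformly in $\mathbf{P}$. For each $\mathbf{P}=(P_1,\dots,P_\nu)\in(\bigcup_{l=1}^{n-1}\G^n_l)^\nu$, pick $\lambda_\mathbf{P}$ as above. By continuity of the map $(\lambda,P)\mapsto d(\lambda,P)$, there is an open neighborhood $U_\mathbf{P}$ of $\mathbf{P}$ in $(\bigcup_{l=1}^{n-1}\G^n_l)^\nu$ such that $d(\lambda_\mathbf{P},P'_j)>\alpha/2$ for every $(P'_1,\dots,P'_\nu)\in U_\mathbf{P}$ and every $j\leq\nu$. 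Since $(\bigcup_{l=1}^{n-1}\G^n_l)^\nu$ is compact, finitely many such neighborhoods $U_{\mathbf{P}^{(1)}},\dots,U_{\mathbf{P}^{(N)}}$ cover it; setting $\lambda_i:=\lambda_{\mathbf{P}^{(i)}}$ and $\alpha_\nu:=\alpha/2$ yields the conclusion.

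\medskip

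The main obstacle is the uniform measure estimate of the first step; once this is granted, the remainder is a routine compactness-and-continuity argument. Everything else could be replaced by other equivalent genericity statements (e.g.\ a Chernoff-type argument after a random choice of $\lambda_i$'s), but the compactness route above keeps the proof free of probabilistic input.
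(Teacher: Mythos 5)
Your proof is correct, but it takes a genuinely different route from the paper's. The paper defines $\varphi(P_1,\dots,P_\nu):=\sup_{\lambda\in\sph^{n-1}}\min_{j\le\nu}d(\lambda,P_j)$, observes it is positive and continuous on the compact product of Grassmannians and hence bounded below by some $t_\nu>0$, and then takes a finite $\tfrac{t_\nu}{2}$-net of $\sph^{n-1}$ for the $\lambda_i$'s: if all the $\lambda_i$'s were within $\tfrac{t_\nu}{2}$ of $\bigcup_j P_j$, so would every point of the sphere (within $t_\nu$), contradicting the definition of $t_\nu$. You instead establish the existence of a good $\lambda$ via a measure estimate on tubular neighborhoods and then extract a finite subfamily of directions by covering the compact space $(\bigcup_l\G^n_l)^\nu$ rather than the sphere. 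The two proofs thus differ both in the key quantitative ingredient (lower bound on $\varphi$ vs.\ uniform smallness of $\mu(T_\alpha(P))$) and in which compact space is covered ($\sph^{n-1}$ vs.\ the $\nu$-fold product of Grassmannians). Your route buys a genuinely stronger conclusion in passing — the set of admissible $\lambda$'s occupies at least half the sphere in measure, not merely a nonempty open set — at the cost of importing measure theory into a purely metric statement. One small point worth making explicit in the continuity step: the dominated-convergence argument for $(\alpha,P)\mapsto\mu(T_\alpha(P))$ uses that the level sets $\{\lambda\in\sph^{n-1}:d(\lambda,P)=\alpha\}$ are $\mu$-null (true, since they are lower-dimensional); without noting this the pointwise a.e.\ convergence of indicators is not justified. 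With that remark added, your argument is complete.
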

\begin{proof}
	Given $P_1,\dots, P_\nu$ in $\bigcup_{l=1} ^{n-1} \G
	^n_l$, let $\varphi(P_1,\dots,P_\nu):=\sup_{\lambda\in \sph^{n-1}}\min_{j\le \nu} d(\lambda,P_j)$. Since the $P_j$'s have positive codimension, $\varphi$ is a positive function. As the Grassmannian is compact, $\varphi$  must be bounded below  by some positive number $t_\nu$.
	
	Let 
	$\lambda_1, \dots, \lambda_N$ in $\sph^{n-1}$ be such that
	$\bigcup_{i=1}^N \bou(\lambda_i,\frac{t_\nu}{2})\supset \sph^{n-1}$.  Suppose that there are  $P_1,
	\dots, P_\nu$ in $\bigcup_{l=1} ^{n-1} \G
	^n_l$ such that for any $i \in \{1,\dots, N\}$
	we have $d(\lambda_i
	,\bigcup_{j=1} ^\nu P_j)\le \frac{t_\nu}{2}$. Then
	any $\lambda$ in $\sph^{n-1}$ satisfies  $d(\lambda
	,\bigcup_{j=1} ^\nu P_j)<  t_\nu,$ in contradiction with our choice of $t_\nu$. It is thus enough to set $\alpha_\nu:=\frac{t_\nu}{2}$.
\end{proof}

We recall that we estimate the   angle between two vector subspaces $P$ and $Q$ of $\R^n$ in the following
way:
$$\angle (P,Q)=\sup \{ d(\lambda,  Q): \lambda\: \, \mbox{is a unit vector of}\:\, P\}.$$

\begin{dfn}\label{df alpha flat}
	Let $\alpha >0$ and let $Z\in \s_{m+n}$. We say that $Z$ is {\bf $(m,\alpha)$-flat}\index{flat@$(m,\alpha)$-flat} if: $$\sup \{\angle (P,Q):P, Q \in \bigcup_\tim \tau(Z_{t,reg})\} \le \alpha.$$
 	 We then also say that $(Z_t)_{\tim}$ is {\bf $\alpha$-flat}\index{flat@$\alpha$-flat}.	When $m=0$, we say that $Z$ is {\bf $\alpha$-flat}.
\end{dfn}

\begin{rem}\label{rem_alpha_flat}
	It follows from Lemma \ref{lem  kurdyka proj fixees} that if $Z_{1,t},\dots, Z_{\nu,t}$, $\tim$, are $\alpha_{\nu}$-flat definable families (where $\alpha_\nu$ is the constant provided by the latter lemma) of subsets of $\R^n$ of empty interiors then one of the $\lambda_i$'s (that are also provided by the latter lemma) is regular for all these families. 
\end{rem}
\begin{lem}\label{lem_partition_tangents}
	Given $Z\in \s_{m+n}$ and $\alpha>0$, we can find a finite covering of $Z$ by $(m,\alpha)$-flat definable subsets of $Z$. 
\end{lem}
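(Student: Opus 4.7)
The plan is to argue by induction on the maximum fiber dimension $d := \max_{\tim} \dim Z_t$, which is a well-defined integer in $\{0,1,\dots,n\}$ for any $Z\in \s_{m+n}$. The base case $d=0$ is immediate: every fiber $Z_t$ is finite, so every tangent space at a regular (isolated) point is $\{0_{\R^n}\}\in \G^n_0$, whence $Z$ itself is trivially $(m,0)$-flat.

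For the inductive step, I would split $Z$ into a ``fiberwise regular'' part and a lower-dimensional remainder. Let
\[
Z^{\ast} := \{(t,x)\in Z : x\in (Z_t)_{reg}\},
\]
which is a definable family thanks to the parametric version of Tamm's theorem (Theorem \ref{thm_X_reg} together with the remark following it). The fibers of the complement satisfy $(Z\setminus Z^\ast)_t = (Z_t)_{sing}$, so $\dim (Z\setminus Z^\ast)_t<\dim Z_t\le d$, and hence the induction hypothesis applies to $Z\setminus Z^\ast$ to yield a finite cover by $(m,\alpha)$-flat definable sets. It thus remains to cover $Z^\ast$.

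To cover $Z^\ast$, I would exploit compactness of the Grassmannians in the angle metric. Identifying each $\G^n_k$ with the semi-algebraic set of rank-$k$ orthogonal projectors in $\R^{n^2}$, cover it by finitely many open definable balls $U_{k,1},\dots,U_{k,N_k}$ whose closures have angle-diameter at most $\alpha$. Next, consider the definable map
\[
\tau:Z^{\ast}\to \bigsqcup_{k=0}^n \G^n_k,\qquad \tau(t,x) := T_x (Z_t)_{reg};
\]
its definability is a parametric application of Proposition \ref{pro_csq_qe}(\ref{item_der_bundle}), since the graph of $\tau$ is cut out of $Z^\ast\times \R^{n^2}$ by an $\s$-formula quantifying over vectors of the parametric tangent bundle of the definable family $((Z_t)_{reg})_{\tim}$. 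The preimages $Z^\ast_{k,i}:=\tau^{-1}(U_{k,i})$ form a finite definable cover of $Z^\ast$, and for each $(k,i)$ and each $\tim$ every tangent space at a regular point of the fiber $(Z^\ast_{k,i})_t$ lies in $U_{k,i}$; taking the closure in $\G^n_k$ keeps it inside $cl(U_{k,i})$, so $\bigcup_{\tim}\tau((Z^\ast_{k,i})_{t,reg})\subset cl(U_{k,i})$ has angle-diameter at most $\alpha$, establishing $(m,\alpha)$-flatness. Together with the pieces produced by the inductive step applied to $Z\setminus Z^\ast$, this yields the desired finite cover.

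The only real technical point is checking that the parametric tangent map $\tau$ is definable; once this is in place—and it is essentially Proposition \ref{pro_csq_qe}(\ref{item_der_bundle}) read in families—the argument reduces to a purely topological partition of the compact Grassmannian. The induction on $d$ handles the singular loci automatically.
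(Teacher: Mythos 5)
Your proof is correct; it just reaches the conclusion by a different route than the paper. The paper's argument is shorter: it divides $Z$ into cells of $\R^{m+n}$. Over the basis of such a cell $C$, every fiber $C_t$ is itself a cell of $\R^n$ of one fixed dimension $l$, hence a smooth manifold, so the parametric tangent map $(t,x)\mapsto T_x C_t$ takes values in the single Grassmannian $\G^n_l$; covering $\G^n_l$ by finitely many balls of angle-radius $\alpha/2$ and pulling back gives the finite $(m,\alpha)$-flat cover. Thus the cell decomposition does in one stroke what your combination of the parametric Tamm theorem (to make $Z^\ast$ definable) and the induction on the maximum fiber dimension (to peel off the fiberwise singular loci) achieves: both reduce to families whose fibers are smooth manifolds of constant dimension, after which the Grassmannian-covering step is essentially the same in both arguments. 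Your version is slightly longer and leans on Tamm's theorem rather than on cell decomposition, but the two are comparable in depth. One minor remark on your base case: the paper's $\tau(A)$ only records tangent spaces lying in $\bigcup_{k\ge 1}\G^n_k$, so when all fibers are finite the set $\bigcup_{\tim}\tau(Z_{t,reg})$ is in fact empty and $(m,\alpha)$-flatness holds vacuously; your conclusion is right, but attributing it to $T_x Z_t=\{0_{\R^n}\}\in \G^n_0$ does not quite match how $\tau$ is defined.
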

\begin{proof}Dividing $Z$ into cells, we may assume that $Z$ is a cell.
	 If $\dim Z=l+m$, we can cover $\G_l^n$ by finitely many balls of radius $\frac{\alpha}{2}$, which gives rise to a covering $U_1,\dots,U_k$ of $Z$  (via  the family of  mappings $Z_{t}\ni x \mapsto T_x Z_{t}$) by  $(m,\alpha)$-flat sets. 
\end{proof}

\begin{pro}\label{pro_dec_L_regulier_avec_proj_fixees}
	There exist $ \lambda_1, \dots, \lambda_N$ in  $\sph^{n-1}$
	such that for any $A_1,\dots, A_p$   in  $\smn$,
	there is a cell decomposition $\C$ of
	$\R^{m+n}$ compatible with all the  $A_i$'s and such
	that for each cell $C \in \C$ satisfying $\dim C_t=n$ (for all $t\in \supp_m C$), we may find $\lambda _{j(C)}$, $ 1
	\leq j(C) \leq N$,
	regular for the family $ (\delta C_t)_\tim$.
\end{pro}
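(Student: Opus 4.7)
The plan is to combine Lemma \ref{lem  kurdyka proj fixees} with Lemma \ref{lem_partition_tangents} (via Remark \ref{rem_alpha_flat}). The universal vectors $\lambda_1,\dots,\lambda_N$ will come from a single application of Lemma \ref{lem  kurdyka proj fixees} with an integer $\nu$ depending only on $n$. This $\nu$ has to be large enough that, for the associated $\alpha_\nu>0$, every definable subset $E\in \s_{m+n}$ with $\dim E_t\le n-1$ can be partitioned into at most $\nu$ definable $(m,\alpha_\nu)$-flat pieces; this in turn is possible for some $\nu=\nu(n)$ because the Grassmannians $\G^n_l$, $l\le n-1$, can be covered by a finite number of balls of angular radius $\alpha_\nu/2$ depending only on $n$ and $\alpha_\nu$, and one partitions a definable family using these covers (recursing on the singular locus, whose fiber-dimension strictly drops, so the recursion terminates in at most $n-1$ steps).

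First I would fix $\nu$ and the $\lambda_j$'s, $\alpha_\nu$ via Lemma \ref{lem  kurdyka proj fixees}. Then, given $A_1,\dots,A_p\in\s_{m+n}$, apply Theorem \ref{thm_existence_cell_dec} to obtain a cell decomposition $\C_0$ compatible with the $A_i$'s. For every cell $C\in\C_0$ satisfying $\dim C_t=n$ on $\supp_m C$, form the definable set
\[
E_C:=\{(t,x)\in \R^m\times \R^n : t\in\supp_m C,\; x\in\delta C_t\},
\]
and partition it, as above, into at most $\nu$ definable $(m,\alpha_\nu)$-flat pieces. I would then refine $\C_0$ into a cell decomposition $\C$ compatible simultaneously with the $A_i$'s and with all these partitioning pieces.

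For a cell $C\in\C$ with $\dim C_t=n$, the family $(\delta C_t)_t$ is thus covered by at most $\nu$ definable $(m,\alpha_\nu)$-flat subfamilies, all of empty interior in their fibers. By Remark \ref{rem_alpha_flat}, one of $\lambda_1,\dots,\lambda_N$ is regular for all these subfamilies simultaneously, and then Lemma \ref{lem_reg_inclusion} yields that the same $\lambda_{j(C)}$ is regular for $(\delta C_t)_t$ itself, as required.

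The main obstacle is verifying that a single integer $\nu=\nu(n)$ can be chosen to make the above scheme consistent: one needs the uniform bound on the number of $(m,\alpha_\nu)$-flat pieces covering any fiber-codimension-$\ge 1$ definable set to be at most $\nu$, for the very $\alpha_\nu$ that Lemma \ref{lem  kurdyka proj fixees} returns when fed $\nu$. This is a matching between the asymptotics of $\alpha_\nu$ (controlled by the measure of $\epsilon$-neighbourhoods of hyperplanes in $\sph^{n-1}$) and the covering numbers of the Grassmannians $\G^n_l$ by $\alpha$-balls; an inductive formulation on $n$ (partitioning tangent directions dimension by dimension, and recursing into singular loci) gives the cleanest way to achieve the required bound while keeping everything uniform in $m$ and in the input data $A_1,\dots,A_p$.
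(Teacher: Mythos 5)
The gap is the one you candidly name at the end, and it is fatal to your route: you need the number of $(m,\alpha_\nu)$-flat pieces required to cover a fiber-codimension-$\ge 1$ definable family to be at most $\nu$, for the very $\nu$ fed into Lemma \ref{lem  kurdyka proj fixees}. Your proposed cover (balls in the Grassmannian pulled back by the tangent map, plus a recursion on singular loci) produces a bound that scales like a power of $1/\alpha_\nu$ — roughly $\alpha_\nu^{-l(n-l)}$ balls are needed in $\G^n_l$ — while $\alpha_\nu$ decays at best like $1/\nu$ (with $\nu$ hyperplanes one can leave no gap of radius much larger than $c/\nu$ in $\sph^{n-1}$). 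For $n\ge 3$ the exponent $\max_{l<n} l(n-l)$ exceeds $1$, so the covering number grows superlinearly in $\nu$ and the matching $\nu(n,\alpha_\nu)\le\nu$ has no solution. The ``inductive formulation on $n$'' you gesture at does not change this, because any argument that covers the final boundary $\delta C_t$ from scratch by tangent-direction balls must confront the same $\alpha$-dependent covering number.

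The paper sidesteps this entirely by proving, for \emph{arbitrary fixed} $\alpha>0$ and by induction on $n$, that a cell decomposition can be chosen so that each $\delta C_t$ is covered by at most $2n$ $(m,\alpha)$-flat families of empty-interior sets — a bound independent of $\alpha$. The trick is the order of operations: apply Lemma \ref{lem_partition_tangents} \emph{first} to the cells of a decomposition $\E$ compatible with the $A_i$'s, take a cell decomposition $\D$ of $\R^{m+n}$ compatible with those $(m,\alpha)$-flat pieces, and refine $\pi(\D)$ using the induction hypothesis in $\R^{m+n-1}$. The final cells $C$ are bands between graphs $\Gamma_{\zeta_{i,D}}$ that are automatically $(m,\alpha)$-flat, since $\D$ was built to be compatible with the $\alpha$-flat cover; and $\delta C_t \subset (\Gamma_{\zeta_{i,D}})_t\cup (\Gamma_{\zeta_{i+1,D}})_t\cup \pi^{-1}(\delta D_t)$, so the cover of $\delta C_t$ consists of these two graphs plus the $\le 2(n-1)$ pieces supplied by the inductive step on the base cell $D$. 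One then applies Lemma \ref{lem  kurdyka proj fixees} once with $\nu = 2n$ and works with $\alpha:=\alpha_{2n}$; there is no circularity. The idea you miss is that one should not cover the boundary of an arbitrary cell after the fact; instead, incorporate the $\alpha$-flat cover into the construction of the cell decomposition, so that each bounding graph comes out $(m,\alpha)$-flat by construction, no matter how small $\alpha$ is.
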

\begin{proof}
	According to Lemma \ref{lem  kurdyka proj fixees} (see Remark \ref{rem_alpha_flat} and Lemma \ref{lem_reg_inclusion}) it is sufficient
	to prove by induction on $n$ the following assertions: given
	$\alpha >0$ and $A_1, \dots, A_p$ in $\smn$,
	there exists a cell decomposition of $\R^{m+n}$
	compatible with $A_1, \dots, A_p$ and
	such that for every  cell $C\subset \R^{m+n}$ of this cell decomposition satisfying $\dim C_t=n$ (for all $t\in \supp_m C$), there are  $\alpha$-flat definable families of sets of empty interior $V_{1,t},\dots , V_{l,t}$, with $l\le 2n$, such that $\delta C_t\subset \cup_{i=1}^l V_{i,t}$ for all $\tim$.

	For $n=0$ this is clear.
	Fix $n \ge 1$, $\alpha >0$, as well as $A_1 ,\dots, A_p$
	in $ \s _{m+n}$. Apply Lemma \ref{lem_partition_tangents}  to all the cells of a cell decomposition $\E$ compatible with the $A_i$'s, and take a cell decomposition $\D$ of $\R^{m+n}$ compatible with all the elements of the obtained coverings.
	Applying then the
	induction hypothesis to the elements of  $\pi(\D)$ ($\pi$ being the canonical projection onto $\R^{m+n-1}$), we get a refinement $\D'$ of $\pi(\D)$.
	
	Given a cell $D$ of $\D'$, each cell of $\E$ is above $D$, either the graph of a definable function, say $\zeta_{i,D}$, or a band, say $(\zeta_{i,D},\zeta_{i+1,D})$, with $\zeta_{i,D}<\zeta_{i+1,D}$ definable functions on $D$ (or $\pm \infty$, see Definition \ref{dfn_cell_decomposition}). Let $\C$ be the cell decomposition given by all the graphs $\Gamma_{\zeta_{i,D}}$, $D\in \D'$. To check that it has the required property, fix a cell  $C=(\zeta_{i,D},\zeta_{i+1,D})$, with $\zeta_{i,D}<\zeta_{i+1,D}$ definable functions (or $\pm \infty$) on a cell $D$ of $\D'$ satisfying $\dim D_t=n-1$ for all $t\in \supp_m D$.   Since $\D'$ is compatible with the images under  $\pi$ of the $(m,\alpha)$-flat sets that cover the cells of $\E$, the sets  $\Gamma_{\zeta_{i,D}}$ and  $\Gamma_{\zeta_{i+1,D}}$ must be
	$(m,\alpha)$-flat, and since
	$$\delta C_t \subset \left(\Gamma_{\zeta_{i,D}}\right)_t \cup \left(\Gamma_{\zeta_{i+1,D}}\right)_t \cup \pi^{-1}(\delta D_t), $$
	we see that the needed fact follows from the induction hypothesis.
\end{proof}

\begin{rem}\label{rem_borne_ligne_reguliere_L_reg_dec}
	We have proved a stronger statement: the distance between the regular vector $\lambda_{j(C)}$ and the tangent spaces to $\delta C_t$ can be bounded below away from zero by a positive number depending only on $n$, and not on the $A_i$'s. This is due to the fact that in the above proof we apply Lemma \ref{lem  kurdyka proj fixees} with $\nu=l \le 2n$.
\end{rem}

This proposition will be useful to compare the  inner and outer metrics of globally subanalytic sets.

\begin{dfn}\label{dfn_la_regular_cell}
	We define the {\bf Lipschitz cells of $\R^n$}\index{Lipschitz! cell} by induction on $n$. Let $E$ be a cell of $\R^n$ and denote by  $D$ its basis. 
	If $n=0$ then $E$ is always a Lipschitz cell.  If $n\ge 1$, we say that $E$ is a Lipschitz cell if so is $D$  and if in addition one of the following properties holds:
	\begin{enumerate}[(i)]
		\item  $E$ is the graph of some Lipschitz  definable function $\xi :D \to \R$. 
		\item  $E$ is a band $(\xi_1,\xi_2)$, $\xi_1<\xi_2$, where  $\xi_1$ is either   $ -\infty$  or a Lipschitz definable function on $D$, and $\xi_2$ is either $ +\infty$ or  a Lipschitz definable function on $D$.
	\end{enumerate}
	
\end{dfn}

\begin{lem}\label{lem_Lreg}
	If $E$ is a Lipschitz cell of $\R^n$ then the outer and inner metrics of $E$ are equivalent,  i.e., there is a constant $C$ such that for all $x$ and $y$ in $E$:
	\begin{equation*}\label{eq_equivalence_metrics}
		|x-y|\le d_E(x,y)\le C|x-y|.
	\end{equation*}
	Moreover, the constant $C$ just depends on $n$ and on the Lipschitz constants of the functions defining $E$. The same holds true for $cl(E)$.
\end{lem}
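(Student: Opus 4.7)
The inequality $|x-y|\le d_E(x,y)$ is built into the definition of the inner metric (see \eqref{d_inner_plus_grande}), so I concentrate on the upper bound $d_E(x,y)\le C|x-y|$. The proof proceeds by induction on $n$, the case $n=0$ being vacuous. At the inductive step let $E\subset \R^n$ be a Lipschitz cell with basis $D\subset \R^{n-1}$; the inductive hypothesis supplies a constant $C'$, depending on $n-1$ and on the Lipschitz data of $D$, such that $d_D(\tilde u,\tilde v)\le C'|\tilde u-\tilde v|$ for all $\tilde u,\tilde v\in D$. If $E=\Gamma_\xi$ is the graph of an $L$-Lipschitz function $\xi:D\to\R$, then the projection $E\to D$ is definable bi-Lipschitz, so the lift $(\tilde\gamma,\xi\circ\tilde\gamma)$ of a short inner-metric path in $D$ joining $\tilde x$ to $\tilde y$ is a path in $E$ of length at most $\sqrt{1+L^2}\,C'|\tilde x-\tilde y|\le \sqrt{1+L^2}\,C'|x-y|$, which settles the graph case.

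The substantial case is that of a band $E=(\xi_1,\xi_2)$. When one of the bounding functions is $\pm\infty$, a polyline $x\to(\tilde x,s)\to(\tilde y,s)\to y$ at a well-chosen constant height $s$ suffices: it is enough to shift $s$ away from $\min(x_n,y_n)$ (or $\max$) by $LC'|\tilde x-\tilde y|$ to absorb the Lipschitz variation of the finite bounding graph along the middle leg, yielding a total length $\le (1+C'+2LC')|x-y|$. The delicate point, and the main obstacle, is the subcase in which both $\xi_1,\xi_2$ are finite and $L$-Lipschitz, because the band width $W(\tilde z):=\xi_2(\tilde z)-\xi_1(\tilde z)$ may degenerate along the path in $D$ and no single interpolation scheme works uniformly.

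I propose to handle this by thresholding with respect to $K:=4LC'$ and splitting the analysis into two regimes. In the fat regime $|x-y|\ge \min(W(\tilde x),W(\tilde y))/K$, the band widths at both endpoints are of order $|x-y|$, and I use the polyline that first travels vertically from $x$ to $(\tilde x,\eta(\tilde x))$ with $\eta:=(\xi_1+\xi_2)/2$, then along the $L$-Lipschitz graph $\Gamma_\eta$ (which lies safely inside $E$) to $(\tilde y,\eta(\tilde y))$, and finally vertically to $y$; the three pieces have length $O(|x-y|)$ (the vertical legs because $W(\tilde x),W(\tilde y)\lesssim |x-y|$, the middle one by the graph case applied to $\eta$). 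In the thin regime $|x-y|<W(\tilde x)/K$ and $|x-y|<W(\tilde y)/K$, the choice of $K$ ensures $W(\tilde\gamma(t))\sim W(\tilde x)\sim W(\tilde y)$ uniformly along any short $\tilde\gamma$ furnished by the induction, and I take the proportional interpolation $\rho(t):=\xi_1(\tilde\gamma(t))+s(t)W(\tilde\gamma(t))$ where $s(t)$ linearly interpolates between $s_x:=(x_n-\xi_1(\tilde x))/W(\tilde x)$ and $s_y:=(y_n-\xi_1(\tilde y))/W(\tilde y)$. This choice keeps $\rho(t)\in(\xi_1,\xi_2)$ automatically, and a direct computation based on the estimate $|s_y-s_x|\lesssim |x-y|/W(\tilde x)$ (obtained by a straightforward algebraic manipulation combined with the Lipschitz controls on $\xi_1,\xi_2$) yields $\int|\rho'|\lesssim |x-y|$, and hence a path of length $\lesssim |x-y|$ with constants depending only on $n$, $L$, and $C'$.

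For the final assertion about $cl(E)$, I argue by approximation: given $x,y\in cl(E)$, pick sequences $x_k,y_k\in E$ with $x_k\to x$ and $y_k\to y$, and join them by paths $\gamma_k\subset E$ of length $\le C|x_k-y_k|$ parametrized by constant speed on $[0,1]$. These curves are uniformly Lipschitz and uniformly bounded, so Arzel\`a--Ascoli produces a uniformly convergent subsequence whose limit is a continuous path $\gamma:[0,1]\to cl(E)$ with $\gamma(0)=x$, $\gamma(1)=y$, and the lower semicontinuity of length gives $lg(\gamma)\le C|x-y|$, whence $d_{cl(E)}(x,y)\le C|x-y|$.
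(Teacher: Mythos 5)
Your treatment of $E$ itself is correct and follows the same idea the paper has in mind (the paper declares the lemma ``an easy exercise'' and offers no details, so the burden is entirely on constructing the arcs). The graph case, the single-sided band via a horizontal polyline at a shifted height, and — the genuinely non-trivial point — the double-sided band via the fat/thin dichotomy are all sound: in the fat regime the $2L$-Lipschitzness of $W=\xi_2-\xi_1$ forces $W(\tilde x)\sim W(\tilde y)\lesssim |x-y|$, so routing through the midgraph $\Gamma_\eta$ costs $O(|x-y|)$; in the thin regime $W$ is comparable to $W(\tilde x)$ along the whole short basis arc, and the algebraic estimate $|s_y-s_x|\lesssim |x-y|/W(\tilde x)$ (using $|a_x|\le W_x$, $|a_y-a_x|\le (1+L)|x-y|$, $|W_x-W_y|\le 2L|x-y|$, and $W_y\gtrsim W_x$) makes the proportional interpolation cost $O(|x-y|)$. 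The constants you track depend only on $n$ (through the depth of the induction) and on the Lipschitz constants, as required.

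There is, however, a genuine gap in your argument for $cl(E)$. The paper's inner metric is
\[
d_{X}(a,b)=\inf \{lg(\gamma):\gamma \ \hbox{a $\ccc^0$ \emph{definable} arc in $X$ joining $a$ and $b$}\},
\]
with the infimum taken only over \emph{definable} arcs. The path you extract by Arzel\`a--Ascoli is a uniform limit of definable arcs, but a uniform limit of definable arcs is in general not definable, so producing such a $\gamma\subset cl(E)$ with $lg(\gamma)\le C|x-y|$ does not by itself bound $d_{cl(E)}(x,y)$. (The fact that the infimum over definable arcs agrees with the infimum over all rectifiable paths is true but is itself a theorem, not something you may invoke without proof here.) The intended argument, and the simple fix, is to run the same construction directly in $cl(E)$: the Lipschitz functions $\xi_i$ extend uniquely to Lipschitz functions on $cl(D)$, $cl(E)$ is the corresponding closed graph or closed band over $cl(D)$, and exactly the same polylines, graph lifts, and proportional interpolations (now permitting the closed endpoints $s\in[0,1]$, so that $\rho(t)\in[\xi_1,\xi_2]$) produce \emph{definable} $\ccc^0$ arcs in $cl(E)$ with the same length bounds, using the inductive hypothesis for $cl(D)$ in place of $D$.
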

\begin{proof}
The definition of a Lipschitz cell being inductive, this lemma can be proved by induction on $n$. It is an easy exercise to construct an arc joining two given points using the functions defining the cell, and to estimate its length in terms of the Lipschitz constants of these functions. 
\end{proof}

This leads us to the following ``Lipschitz cell decomposition Theorem''.

\begin{thm}\label{thm_lipschitz_cells}
	Given $A_1,\dots ,A_l $ in $\smn$, there is a definable partition $\Pa$  of $\R^{m+n}$ compatible with $ A_1,\dots ,A_l$ and such that for each $V\in \Pa$ there is a linear isometry $\Lambda$ of $\R^n$ for which $\bigcup_{\tim}\{t\}\times \Lambda(V_t)$ is a cell and $\Lambda(V_t)$ is a Lipschitz cell for every $\tim$. Moreover:
	\begin{enumerate}[(i)]
		\item\label{item_csts}  The Lipschitz constants of the functions defining the cells $\Lambda(V_t)$ can be bounded by a function of $n$ (thus independent of $t$ and of the $A_i$'s).
		\item\label{item_system} The corresponding linear isometry $\Lambda$ can be chosen among a finite family that only depends on $n$ (and not on the $A_i$'s).
	\end{enumerate}

\end{thm}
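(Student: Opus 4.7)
The plan is to argue by induction on $n$, the case $n=0$ being trivial since every fiber is then a point or empty. For the inductive step, I would first apply Proposition \ref{pro_dec_L_regulier_avec_proj_fixees} to $A_1,\dots,A_l$. This yields a cell decomposition $\C$ of $\R^{m+n}$ compatible with the $A_i$'s, together with a fixed finite family $\lambda_1,\dots,\lambda_N \in \sph^{n-1}$ and a constant $\alpha>0$ depending only on $n$ (by Remark \ref{rem_borne_ligne_reguliere_L_reg_dec}), such that for every cell $C\in\C$ with $\dim C_t=n$ one may choose $\lambda_{j(C)}$ satisfying $d(\lambda_{j(C)},\tau((\delta C_t)_{reg}))\ge\alpha$ uniformly in $t\in\supp_m C$. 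For each $j$ I fix once and for all a linear isometry $\Lambda_j$ of $\R^n$ sending $\lambda_j$ to $e_n$; these $N$ isometries supply the finite pool required by (ii).

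Next, I would process each top-dimensional cell $C$ by applying $\Lambda_{j(C)}$ fiber-wise and setting $\tilde C := \{(t,\Lambda_{j(C)}(x)) : (t,x)\in C\}$. The boundary $\delta \tilde C_t = \Lambda_{j(C)}(\delta C_t)$ has its tangent spaces at regular points at distance at least $\alpha$ from $e_n$, so every line parallel to $e_n$ meets $\delta \tilde C_t$ transversally with controlled slope. Applying Theorem \ref{thm_existence_cell_dec} to $\tilde C$ inside $\R^{m+n}$ therefore produces a cell decomposition whose top-dimensional fibered cells are bands $(\xi_{1,t},\xi_{2,t})$ over cells in $\R^{m+n-1}$, and the angle bound forces $|\pa \xi_{i,t}|$ to be bounded on the smooth locus of $\xi_{i,t}$ by a constant depending only on $n$. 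By Proposition \ref{pro_bounded_der_lipschitz} (and Remark \ref{rem_bounded_der_lipschitz}), each $\xi_{i,t}$ is then Lipschitz with constant depending only on $n$. The lower-dimensional cells of $\C$ are graphs lying in the boundary of an adjacent top-dimensional cell; I would assign each of them to such a neighbor and use the same rotation, noting that by Lemma \ref{lem_reg_inclusion} the vector $\lambda_{j(C)}$ remains regular for them, so they become graphs of Lipschitz functions over cells in $\R^{m+n-1}$.

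Finally, I would invoke the induction hypothesis at level $n-1$ on the bases of the pieces just obtained (which lie in $\R^{m+n-1}$), yielding a further refinement together with a finite family of linear isometries $\Lambda'$ of $\R^{n-1}$ depending only on $n-1$. Extending each such $\Lambda'$ to $\R^n$ by the identity on the $e_n$-direction and composing with $\Lambda_{j(C)}$ produces the desired finite family of isometries of $\R^n$, still depending only on $n$, and under it each fiber becomes a Lipschitz cell inductively. The main obstacle is the middle step: one must check carefully that the cell decomposition of $\tilde C$ produced by Theorem \ref{thm_existence_cell_dec} really has its band-defining functions arising as (pieces of) the boundary graph functions of $\tilde C$, so that the uniform regularity angle $\alpha$ translates directly into a uniform bound on partial derivatives and hence a uniform Lipschitz constant via Remark \ref{rem_bounded_der_lipschitz}; the content of Remark \ref{rem_borne_ligne_reguliere_L_reg_dec}, ensuring that $\alpha$ depends only on $n$, is precisely what makes the Lipschitz constants independent of the $A_i$'s.
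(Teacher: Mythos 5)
Your proposal identifies the right ingredients — Proposition \ref{pro_dec_L_regulier_avec_proj_fixees} together with Remark \ref{rem_borne_ligne_reguliere_L_reg_dec} for the constants, Proposition \ref{pro_bounded_der_lipschitz} to go from bounded derivative to Lipschitz, induction on $n$ — but the organisation differs from the paper's and this creates two real gaps.

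The first concerns your treatment of lower-dimensional cells. You assert that every lower-dimensional cell of $\C$ lies entirely in the topological boundary $\delta D_t$ of a single adjacent top-dimensional cell $D$, and then transport the regular vector to it via Lemma \ref{lem_reg_inclusion}. Cell decompositions do not satisfy the frontier condition, so a lower-dimensional cell $E$ need not be contained in $fr(D)=\delta D$ for any one open cell $D$; for instance, a fiber $E_t$ may abut several different top-dimensional cells along different portions, so that no single $D$ works for all of $E$. The paper avoids this entirely by first reducing (via a cell decomposition and Remark \ref{rem_common_refinement}) to the case of a single cell $A$ and then splitting by the dimension of its fibers. When $\dim A_t<n$ for all $t$, it covers $A$ by $(m,\alpha)$-flat pieces using Lemma \ref{lem_partition_tangents} with $\alpha$ small enough for Lemma \ref{lem  kurdyka proj fixees} to apply with $\nu=1$; this produces a regular vector directly for each flat piece, rather than borrowing one from the boundary of a larger set. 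Proposition \ref{pro_dec_L_regulier_avec_proj_fixees} is then invoked only in the case $\dim A_t=n$, precisely to control $\delta A_t$.

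The second gap is the one you flag yourself: after rotating $\tilde C$ and applying Theorem \ref{thm_existence_cell_dec}, you would need the band-defining functions of the new decomposition to have graphs contained in $\delta\tilde C_t$ for the angle bound to control their gradient, and a fresh cell decomposition of $\tilde C$ could well introduce internal graphs that escape that bound. The paper circumvents this by fixing a cell decomposition compatible with $\bigcup_{\tim}\{t\}\times\delta A_t$, refining only the bases of the cells whose fibers lie in $\delta A_t$ (these are graphs of bounded-gradient functions, hence Lipschitz by Proposition \ref{pro_bounded_der_lipschitz} and Lemma \ref{lem_Lreg} once the bases are Lipschitz cells), and then noting that because cells are connected, any cell decomposition compatible with $\delta A_t$ is automatically compatible with $A_t$; the bands inside $A_t$ are then bounded by restrictions of these boundary graphs (or by $\pm\infty$). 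You are right that this is the crux of the matter, but as written your argument is incomplete at precisely this point.
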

\begin{proof}
	We prove this proposition by induction on $n$ (for $n=0$ the result is trivial). Taking a cell decomposition of $\R^{m+n}$ if necessary, we may assume that the family  $A_1,\dots ,A_l $ is reduced to one single set $A$ which is a cell. 
	
	We start with the case where $A_t$ has empty interior for all $t\in \supp_m A$.  Fix $\alpha>0$ sufficiently small for Lemma \ref{lem  kurdyka proj fixees} to  hold with $\nu=1$. By Lemma \ref{lem_partition_tangents},  $A$ can be covered by finitely many $(m,\alpha)$-flat sets.  Take a definable partition of $A$ compatible with the elements of this covering, and let $E$ be an element of this partition.  By Lemmas \ref{lem  kurdyka proj fixees} (with $\nu=1$) and \ref{lem_reg_inclusion},  the family $(E_t)_\tim$ has a regular vector (which, up to a linear isometry, can be assumed to be $e_n$). Take a cell decomposition compatible with $E$ and let $C$ be a cell included in $E$.
	The set  $C$ is thus the   graph (for $e_n$)  of some  function, say  $\xi:B \to \R$, and the derivative of $\xi_{t}$ must be bounded independently of $\tim$.  By induction, we know that we can cover $B$ with some sets $W_1,\dots,W_l$ such that for all $\tim$ and each $i$, $W_{i,t}$ is a Lipschitz cell after a possible linear isometry. By Proposition \ref{pro_bounded_der_lipschitz} and  Lemma \ref{lem_Lreg},  $\xi_t$ induces a Lipschitz function on each $W_{i,t}$.

	We now carry out the induction step in the case where $\dim A_t=n$ for all $t\in \supp_m A$. By Proposition \ref{pro_dec_L_regulier_avec_proj_fixees}, splitting our set $A$ into several sets if necessary, we may assume that  $ (\delta A_t)_\tim, $ has a regular vector (which again, up to a linear isometry, can be supposed to be $e_n$).  Take a cell decomposition $\C$ of $\R^{m+n}$ compatible with $\bigcup_\tim \{t\}\times \delta A_t$. If $C\in\C$ is  such that $C_t\subset \delta A_t $  for all $\tim$ then $C_t$ is for each $t\in \supp_m C$ the graph of some  function $\xi_t$, $t\in \supp_m C$, that has bounded gradient (independently of $t$).
	By the same argument as in the case $\dim A_t<n$, the family $\xi_t$ induces a  family of Lipschitz functions on the elements of a partition of the basis of $C$ (into sets which are cells up to a linear isometry) obtained from the  induction hypothesis. As cells are connected, a cell decomposition of $\R^n$ which is compatible with $\delta A_t$  is compatible with $A_t$.
	
	That the  Lipschitz constants of the functions defining these Lipschitz cells can be bounded by a function of $n$ comes down from Remark \ref{rem_borne_ligne_reguliere_L_reg_dec} (and Lemma \ref{lem  kurdyka proj fixees} in the case $\dim A_t<n$) together with Lemma \ref{lem_Lreg}. Assertion $(ii)$ follows from the fact that in Proposition \ref{pro_dec_L_regulier_avec_proj_fixees}, the family $\lambda_1,\dots, \lambda_N$ is independent of the $A_i$'s.
\end{proof}

Thanks to Lemma \ref{lem_Lreg}, the above theorem has the following immediate consequence.

\begin{cor}\label{cor_inner_outer}
Every set $A\in \s_{m+n}$ admits a definable partition $\Pa$ such that for every $V\in \Pa$ and $\tim$ the inner and outer metrics of $V_t$ are equivalent. The constants of these equivalences can be bounded by a function of $n$.
\end{cor}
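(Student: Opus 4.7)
The plan is to deduce this directly from Theorem \ref{thm_lipschitz_cells} together with Lemma \ref{lem_Lreg}, using the fact that linear isometries of $\R^n$ preserve lengths of arcs (and hence both the inner and outer metrics of any subset).

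First I would apply Theorem \ref{thm_lipschitz_cells} to the set $A$ itself, which yields a definable partition $\Pa$ of $\R^{m+n}$ compatible with $A$ such that for each $V \in \Pa$ there exists a linear isometry $\Lambda_V$ of $\R^n$ with $\Lambda_V(V_t)$ a Lipschitz cell for every $t \in \R^m$. Since $\Pa$ is compatible with $A$, the subfamily $\Pa' := \{V \in \Pa : V \subset A\}$ is a definable partition of $A$, which will be the partition promised in the statement.

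Next, I would fix $V \in \Pa'$ and $t \in \R^m$. Apply Lemma \ref{lem_Lreg} to the Lipschitz cell $\Lambda_V(V_t)$: this gives a constant $C$ such that
\[
|x-y| \le d_{\Lambda_V(V_t)}(x,y) \le C|x-y|
\]
for all $x,y \in \Lambda_V(V_t)$. The key point, and the only one worth checking carefully, is that $C$ depends only on $n$ and on the Lipschitz constants of the functions defining the cell $\Lambda_V(V_t)$. By assertion (\ref{item_csts}) of Theorem \ref{thm_lipschitz_cells}, those Lipschitz constants are themselves bounded by a function of $n$ alone, so $C$ may be chosen to depend only on $n$, uniformly in $V$ and $t$.

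Finally, I would transport the equivalence back to $V_t$ via $\Lambda_V^{-1}$. Because $\Lambda_V$ is a linear isometry, it preserves the Euclidean norm, hence the outer metric, and it preserves the length of every (piecewise analytic) arc, hence $d_{V_t}(a,b) = d_{\Lambda_V(V_t)}(\Lambda_V(a),\Lambda_V(b))$ for all $a,b \in V_t$. The inequality above transfers verbatim to $V_t$ with the same constant $C$, giving the desired equivalence. There is no real obstacle here; the only thing to be slightly careful about is invoking assertion (\ref{item_csts}) of Theorem \ref{thm_lipschitz_cells} to ensure that the constant in Lemma \ref{lem_Lreg} does not depend on $t$ or on the particular element $V \in \Pa'$.
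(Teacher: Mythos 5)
Your proof is correct and follows exactly the route the paper indicates: apply Theorem \ref{thm_lipschitz_cells} to get the partition and the isometries, then Lemma \ref{lem_Lreg} (with assertion (\ref{item_csts}) for the uniform bound on Lipschitz constants), and transport back via the isometry, which preserves both inner and outer metrics. The paper states the corollary as an "immediate consequence" without spelling out these steps, and your write-up supplies them faithfully.
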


Since every definable function is piecewise continuous, thanks to Proposition \ref{pro_bounded_der_lipschitz}, we then derive: 
\begin{cor}\label{cor_bounded_der_lips}
	Let $\xi_t:A_t\to \R$, $\tim$, $A\in \s_{m+n}$, be a  definable family of  functions, and set 
	$K_{\xi_t}:=\sup_{x\in reg(\xi_t)} |\pa \xi_t(x)|,  $ for each $\tim$.
	 There are a constant $C$ (depending only on $n$) and a definable partition $\Pa$ of $A$ such that $ \xi_t$ is $C \cdot K_{\xi_t}$-Lipschitz  on  $V_t$,  if $V$ is an element of $\Pa$ and $\tim$ is such that	$K_{\xi_t}<+\infty$.    
\end{cor}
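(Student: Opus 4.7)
The plan is to assemble Theorem \ref{thm_lipschitz_cells}, Lemma \ref{lem_Lreg}, and Proposition \ref{pro_bounded_der_lipschitz}: Theorem \ref{thm_lipschitz_cells} produces a partition of $A$ into pieces that are, up to linear isometries, Lipschitz cells whose defining functions have Lipschitz constants controlled in terms of $n$ alone; Lemma \ref{lem_Lreg} converts the resulting inner-metric estimates on each $V_t$ into outer-metric ones with the required universal constant $C=C(n)$; and Proposition \ref{pro_bounded_der_lipschitz} turns the gradient bound $K_{\xi_t}$ into an inner-metric Lipschitz estimate.

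Concretely, I would first apply Theorem \ref{thm_preparation} (in the guise of Remark \ref{rem_globally subanalytic_implies_analytic}) to the function $\xi:A\to\R$, producing a cell decomposition of $\R^{m+n}$ compatible with $A$ on whose cells $\xi$ is analytic; in particular each $\xi_t$ is then analytic, and therefore continuous, on every $t$-slice of such a cell. I would then refine this family of cells via Theorem \ref{thm_lipschitz_cells}, obtaining a definable partition $\Pa$ of $A$ such that for every $V\in\Pa$ there is a linear isometry $\Lambda$ of $\R^n$ making $\Lambda(V_t)$ a Lipschitz cell of $\R^n$ for every $\tim$, with the Lipschitz constants of the defining functions bounded by a quantity depending only on $n$.

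Lemma \ref{lem_Lreg} then yields a constant $C$, depending only on $n$, such that $d_{V_t}(x,y)\le C\,|x-y|$ for every $V\in\Pa$, every $\tim$, and every $x,y\in V_t$ (the isometry $\Lambda$ preserves both the Euclidean and inner metrics of $V_t$). Now fix $V$ and $\tim$ with $K_{\xi_t}<+\infty$. The restriction $\xi_t|_{V_t}$ is analytic on the smooth manifold $V_t$, so $reg(\xi_t|_{V_t})=V_t$, and its intrinsic gradient at any point is, via a local analytic extension of $\xi$, the restriction of the ambient derivative of $\xi_t$ to $TV_t$; in particular $|d_x(\xi_t|_{V_t})|\le K_{\xi_t}$ on $V_t$. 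Proposition \ref{pro_bounded_der_lipschitz} then gives that $\xi_t|_{V_t}$ is $K_{\xi_t}$-Lipschitz with respect to the inner metric of $V_t$, whence
\[
|\xi_t(x)-\xi_t(y)|\le K_{\xi_t}\,d_{V_t}(x,y)\le C\,K_{\xi_t}\,|x-y|
\]
for all $x,y\in V_t$, which is the claim.

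There is no serious obstacle; the corollary is an assembly of the preceding results, the universal dependence of $C$ on $n$ coming from item $(i)$ of Theorem \ref{thm_lipschitz_cells}. The only point requiring a moment's care is the transfer of the derivative bound from the ambient $\xi_t$ on $A_t$ to its restriction on the (possibly lower-dimensional) fiber $V_t$. This is handled by the fact that restricting a linear functional to a subspace only decreases its operator norm, after refining $\Pa$ if necessary to be compatible with $reg(\xi)$ (using Corollary \ref{cor_reg_locus__definissable}) so as to ensure that every point of $V$ lies in the regular locus of $\xi$, and hence every point of $V_t$ lies in a locus where the ambient derivative of $\xi_t$ is controlled by $K_{\xi_t}$.
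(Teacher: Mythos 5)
Your overall plan --- Lipschitz cell decomposition (Theorem \ref{thm_lipschitz_cells}), the inner/outer metric comparison (Lemma \ref{lem_Lreg} via Corollary \ref{cor_inner_outer}), and Proposition \ref{pro_bounded_der_lipschitz} --- is exactly the route the paper intends: the corollary is stated in the text as an immediate consequence of these results. The first three paragraphs of your argument are fine.

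The closing paragraph, however, does not close the gap it raises. Refining $\Pa$ to be \emph{compatible} with $reg(\xi)$ does not ensure that every $V\in\Pa$ lies \emph{inside} $reg(\xi)$: compatibility only means that each $V$ is either contained in $reg(\xi)$ or contained in $sing(\xi)$. The pieces $V$ sitting inside $sing(\xi)$ survive in the partition and can have fibers $V_t$ of positive dimension; on such a $V_t$, the intrinsic gradient of $\xi_t|_{V_t}$ is not the restriction of any ``ambient derivative of $\xi_t$'' controlled by $K_{\xi_t}$, because $V_t$ lies outside the set over which that supremum is taken. (A further, more minor, imprecision: $reg(\xi)$ is a subset of $A\subset\R^{m+n}$, whereas what enters $K_{\xi_t}$ is $reg(\xi_t)\subset A_t$; making $\Pa$ compatible with the former does not by itself control the latter.)

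The way to avoid the problem is to not re-compute the gradient of $\xi_t|_{V_t}$ at all. After arranging, via the cell decomposition you already invoked, that $\xi_t$ is continuous on its domain, apply Proposition \ref{pro_bounded_der_lipschitz} \emph{once} to $\xi_t$ on the full fiber $A_t$: it gives $|\xi_t(x)-\xi_t(y)|\le K_{\xi_t}\, d_{A_t}(x,y)$ for all $x,y\in A_t$. Since $V_t\subset A_t$, every arc in $V_t$ is an arc in $A_t$, so $d_{A_t}\le d_{V_t}$ on $V_t\times V_t$. Combining this with Lemma \ref{lem_Lreg} gives $|\xi_t(x)-\xi_t(y)|\le K_{\xi_t}\, d_{V_t}(x,y)\le C\,K_{\xi_t}\,|x-y|$ directly, with no need to discuss where the restriction $\xi_t|_{V_t}$ is analytic or what its gradient is. This is the step that makes the transfer to lower-dimensional fibers harmless, and it is the one missing from your write-up.
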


The following corollary unravels the close interplay between Lipschitz functions and regular vectors.

\begin{cor}\label{cor_proj_reg_decomposition_en_graphes}
	The vector $\lambda \in \sph^{n-1}$ is regular for the set $A \in \smn$  if and only if there are finitely many uniformly Lipschitz definable families of functions $\xi_{i,t}: B_{i,t} \to  \R$, $\tim$, $i=1,\dots,p$, with $B_i \subset \R^m \times N_\lambda$,  such that for all $\tim$:
	$$A_t = \cup_{i=1}^p\;\Gamma_{\xi_{i,t}}^\lambda.$$
\end{cor}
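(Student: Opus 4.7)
The plan rests on a single tangent-space computation: for a $\ccc^1$ function $\xi$ on an open subset of $N_\lambda$, direct minimization of $|(v,d_p\xi(v))-\lambda|^2$ over $v\in N_\lambda$ gives
\begin{equation*}
d(\lambda,\,T_{(p,\xi(p))}\Gamma_\xi^\lambda)=\frac{1}{\sqrt{1+|\nabla\xi(p)|^2}}.
\end{equation*}
Thus ``uniformly Lipschitz graph for $\lambda$'' and ``tangent spaces uniformly bounded away from $\lambda$'' translate quantitatively into each other. Since both sides of the equivalence are invariant under a linear isometry of $\R^n$ sending $\lambda$ to $e_n$, I would reduce to the case $\lambda=e_n$ (hence $N_\lambda=\R^{n-1}\times\{0\}$) and pull back by the isometry at the very end; isometries preserve both Lipschitz constants and the regularity condition.

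For the implication $(\Leftarrow)$, assume $A_t=\bigcup_{i=1}^p\Gamma_{\xi_{i,t}}^{e_n}$ with the $\xi_{i,t}$ uniformly $L$-Lipschitz. At a regular point $q$ of $A_t$ not lying on any of the nowhere-dense singular loci of the individual graphs, $T_qA_t$ coincides with the tangent space of whichever graph locally contains $q$, and the formula above gives $d(e_n,T_qA_t)\ge 1/\sqrt{1+L^2}$. Passing to the closure in the definition of $\tau(A_t)$ preserves this estimate uniformly in $t$, so $e_n$ is regular for $A$.

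For $(\Rightarrow)$, assume $e_n$ is regular for $A$ with constant $\alpha>0$. An $n$-dimensional tangent space would contain $e_n$, so each $A_t$ must have empty interior in $\R^n$. I would take a cell decomposition $\C$ of $\R^{m+n}$ compatible with $A$ via Theorem~\ref{thm_existence_cell_dec}; by Remark~\ref{rem_fibres_cell_dec}, each $\C_t$ is a cell decomposition of $\R^n$. A band cell of $\R^n$ contains $e_n$ in every tangent space (vertical translations stay inside the band), so regularity forbids band cells in $A$; hence every cell $C\subset A$ has $C_t=\Gamma_{\zeta_{C,t}}^{e_n}$ over its basis $D_{C,t}\subset N_{e_n}$, with $\zeta_{C,t}$ analytic.

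Combining $d(e_n,T_xC_t)\ge\alpha$ with the tangent-space formula yields the uniform gradient bound $|\nabla\zeta_{C,t}|\le K:=\sqrt{\alpha^{-2}-1}$. I would then invoke Corollary~\ref{cor_bounded_der_lips} on the definable family $\zeta_{C,t}$, which produces a finite definable partition of $D_C$ such that on each piece $V_{C,j}$ the restriction $\zeta_{C,t}|_{V_{C,j,t}}$ is $C_nK$-Lipschitz with $C_n$ depending only on $n$; running over all $C$ and $j$ gives the required finite uniformly Lipschitz graph decomposition of $A_t$ (and, after pulling back through the initial isometry, graphs for $\lambda$). The only real technical hinge is the tangent-space computation, which converts the qualitative regularity hypothesis into a quantitative, uniform-in-$t$ gradient bound suitable for Corollary~\ref{cor_bounded_der_lips}.
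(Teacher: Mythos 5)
Your proposal is correct and follows essentially the same route as the paper: reduce to $\lambda=e_n$ by a linear isometry, take a cell decomposition compatible with $A$, rule out band cells using the regularity of $e_n$ (the paper invokes Lemma~\ref{lem_reg_inclusion} to justify that $e_n$ is regular for each cell $C\subset A$, which is the precise justification for your informal ``regularity forbids band cells''), conclude each cell is a graph with a gradient bound uniform in $t$, and invoke Corollary~\ref{cor_bounded_der_lips}. The explicit tangent-space formula $d(e_n,T_{(p,\xi(p))}\Gamma^{e_n}_\xi)=(1+|\nabla\xi(p)|^2)^{-1/2}$ is a useful quantitative clarification and supplies a clean proof of the $(\Leftarrow)$ direction, which the paper simply labels as clear.
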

\begin{proof}
	As the ``if'' part is clear, we will focus on the converse. Up to a linear isometry we can assume that $\lambda=e_n$. Take a cell decomposition compatible with $A$ and let $C$ be a cell included in $A$. This cell cannot be a band since $e_n$ is regular for $A$ (see Lemma \ref{lem_reg_inclusion}). It is thus the graph of a function $\xi:D \to \R$, with $D \in \St_{m+n-1}$. For every $t \in \R^m$, the function $\xi_t:D_t \to \R$  has bounded first  derivative (with a bound independent of $t$).   By Corollary \ref{cor_bounded_der_lips}, there is a definable partition $\Pa$ of $D$, such that the family $\xi_{t|V_t}$ is uniformly Lipschitz for every $V\in \Pa$.
\end{proof}

\begin{rem} \label{rem_famille_lipschitz_ordonnees}
	By Proposition \ref{pro_extension_fonction_lipschitz} (see Remark \ref{rem_extension_familles_fonctions_lipschitz}), we may extend the $\xi_{i,t}$'s to  $N_\lambda$. Using the $\min$  operator, it is then 
	not difficult to show (see \cite[Proposition $1.4.7$]{gvhandbook}) that we can assume that these extensions satisfy $\xi_{1,t}\le \dots \le \xi_{p,t}$ on $N_\lambda$ (and $A_t \subset \cup_{i=1}^p\;\Gamma_{\xi_{i,t}}^\lambda$).
\end{rem}

\subsection{Comparing the inner and outer metrics}\label{sect_comparing} 
 We define  the {\bf diameter}\index{diameter} of a set $X\subset \R^n$ by 
\nomenclature[bupar]{$\mbox{diam}(X)$}{diameter of $X$ \nomrefpage}
\begin{equation}\label{eq_diameter_def}\mbox{diam}(X):=\sup \{ |x-y|:x\in X,y\in X\}.\end{equation}

\begin{pro}\label{pro_diametre_inner}
	Given $A\in \smn$, there is a constant $C$ such that for all $\tim$, we have for each $x$ and $y$ in the same connected component of $A_t$:
	\begin{equation}\label{eq_diameter}d_{A_t}(x,y)\le C \mbox{diam}(A_t).\end{equation}
\end{pro}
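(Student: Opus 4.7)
The strategy is to reduce the global bound on the inner metric of $A_t$ to the cell-wise bounds provided by Lemma \ref{lem_Lreg} and then to glue them by a combinatorial chain argument across a uniformly bounded Lipschitz cell decomposition. Specifically, I would first apply Theorem \ref{thm_lipschitz_cells} to $A$ to produce a finite partition $\Pa = \{V_1, \dots, V_N\}$ together with linear isometries $\Lambda_i$ of $\R^n$ (independent of $t$) such that each $\Lambda_i((V_i)_t)$ is a Lipschitz cell whose defining functions have Lipschitz constants bounded by a function of $n$ alone. Lemma \ref{lem_Lreg} then provides a constant $C_0 = C_0(n)$ for which
$$d_{cl((V_i)_t)}(p,q) \le C_0\,|p-q|, \qquad \forall i,\ \forall \tim,\ \forall p,q \in cl((V_i)_t).$$

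Next I would refine $\Pa$ so that it satisfies a uniform frontier-compatibility condition along the fibers, meaning that for each $i$ and each $\tim$ the set $cl((V_i)_t) \cap A_t$ is a union of pieces $(V_j)_t$ of $\Pa$. This combines Theorem \ref{thm_lipschitz_cells}, which can be required to be compatible with a prescribed finite family of definable sets, with Lemma \ref{lem_closure_cells}, which produces a definable partition of $\R^m$ over which the fiberwise closure operation behaves uniformly; after a controlled iteration the partition still has bounded cardinality $N$ and the constant $C_0$ is unchanged. Then, fixing $\tim$ and $x, y$ in a single connected component $K$ of $A_t$, I build the finite graph $G_t$ whose vertices are the indices $i$ with $(V_i)_t \cap K \ne \emptyset$ and with an edge between $i$ and $j$ whenever $cl((V_i)_t) \cap (V_j)_t \ne \emptyset$. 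Frontier-compatibility and connectedness of $K$ make $G_t$ connected, so one can pick a chain $i_0, \dots, i_r$ with $r \le N$ joining the cell containing $x$ to the one containing $y$, together with intermediate points $z_l \in cl((V_{i_l})_t) \cap cl((V_{i_{l+1}})_t) \cap A_t$. The cellwise short arcs from Lemma \ref{lem_Lreg} can be taken inside $(V_{i_l})_t$ except at their endpoints (since $cl((V_{i_l})_t) \setminus (V_{i_l})_t$ has lower dimension and Lipschitz cells admit analytic parametrizations that allow small perturbations), and concatenating yields a path in $A_t$ of length
$$d_{A_t}(x,y) \le \sum_{l=0}^{r} C_0 \,|z_l - z_{l+1}| \le (N+1)\,C_0\, \mbox{diam}(A_t),$$
giving the statement with $C := (N+1)\,C_0$.

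The main obstacle is the uniform frontier step: closure and restriction to a fiber do not commute naively, and Lemma \ref{lem_closure_cells} yields the desired compatibility only over a definable partition of $\R^m$; the refinement must therefore be iterated carefully so as not to lose the universality of the Lipschitz constants coming from Theorem \ref{thm_lipschitz_cells}, while keeping the overall cardinality $N$ bounded independently of $t$. A secondary subtlety is verifying that the short arcs provided by Lemma \ref{lem_Lreg} in $cl((V_{i_l})_t)$ can be realised inside $A_t$; this reduces to a dimension/density argument exploiting the explicit graph-and-band structure of Lipschitz cells, so that lower-dimensional parts of the closure that may lie outside $A_t$ can be avoided.
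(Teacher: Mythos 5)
Your overall strategy matches the paper's: decompose $A$ into finitely many pieces whose fibers have uniformly equivalent inner and outer metrics (you go through Theorem \ref{thm_lipschitz_cells} and Lemma \ref{lem_Lreg}; the paper packages this as Corollary \ref{cor_inner_outer}), then chain through adjacent pieces and use that each crossing point is at distance at most $\mathrm{diam}(A_t)$ from the next. The differences are in the gluing step and in how the closure issue is handled, and I would flag two things.

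First, the ``uniform frontier-compatibility'' refinement you describe as the main obstacle is in fact unnecessary. Your graph $G_t$ is already connected without it: if $I_1\sqcup I_2$ were a nontrivial edge-free partition of the vertex set, then $\bigcup_{i\in I_1}(V_i)_t\cap K$ would be both open and closed in the connected component $K$, since a point of its closure in $K$ lies in some $(V_j)_t$, and the absence of an edge forces $j\in I_1$. Likewise the existence of an intermediate point $z_l\in cl((V_{i_l})_t)\cap cl((V_{i_{l+1}})_t)\cap A_t$ follows directly from the edge condition, since any point of $cl((V_{i_l})_t)\cap (V_{i_{l+1}})_t$ already lies in $A_t$. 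So the delicate iteration you worry about (and which would indeed threaten the uniformity of the constants from Theorem \ref{thm_lipschitz_cells}) can simply be dropped.

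Second, the step where you push the arcs from Lemma \ref{lem_Lreg} from $cl((V_{i_l})_t)$ into $(V_{i_l})_t\cup\{z_{l-1},z_l\}$ is a genuine extra lemma, not something Lemma \ref{lem_Lreg} gives as stated, and your ``dimension/density argument'' is not spelled out. The paper sidesteps this entirely: instead of perturbing paths, it sets $V_{i,t}:=cl(U_{i,t})\cap A_t$ (which lives inside $A_t$ by definition) and proves that the inner and outer metrics of these relatively-closed pieces are uniformly equivalent by approaching each boundary point along a definable arc inside $U_{i,t}$ furnished by Curve Selection Lemma, then passing to the limit $s\to 0$. This is cleaner than an ad hoc perturbation of Lipschitz cells, and you could adopt it wholesale in your graph/chain framework. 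Finally, for the actual gluing the paper proceeds by induction on the number of pieces using a ``first exit point'' of a near-optimal arc, whereas you build a combinatorial chain in $G_t$; both work, and your version has the mild advantage of producing an explicit constant of the form $(N+1)C_0$, but they are variations on the same idea.
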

\begin{proof}
	Let $U_1,\dots, U_k$ be the elements of the partition  of $A$ provided by
	Corollary \ref{cor_inner_outer}, and set $V_{i,t}:=cl(U_{i,t})\cap A_t$ for $i\le k$ and $\tim$. We claim that the inner and outer metrics of the $V_{i,t}$'s are equivalent, and that the constants of these equivalences are bounded independently of $t$. Indeed, given $x_1$ and $x_2$ in $V_{i,t}$, with $\tim$ and $i\le k$, by Curve Selection Lemma (Lemma \ref{curve_selection_lemma}) we can find two $\ccc^0$ definable arcs $\gamma_1:(0,\ep)\to U_{i,t}$ and $\gamma_2:(0,\ep)\to U_{i,t}$ tending at $0$ to $x_1$ and $x_2$ respectively. Observe that Corollary \ref{cor_inner_outer} entails that for $s>0$ small
	$$d_{V_{i,t}}(\gamma_1(s),\gamma_2(s))\le d_{U_{i,t}}(\gamma_1(s),\gamma_2(s))\lesssim  |\gamma_1(s)-\gamma_2(s)| ,$$ with a constant  independent of $s$ and $t$ (since this constant is given by the latter corollary). Hence, for such $s$
	$$d_{V_{i,t}}(x_1,x_2) \lesssim d_{V_{i,t}}(x_1,\gamma_1(s))+ |\gamma_1(s)-\gamma_2(s)|+d_{V_{i,t}}(\gamma_2(s),x_2),$$
 Making $s\to 0$ then yields $d_{V_{i,t}}(x_1,x_2) \lesssim |x_1-x_2|$, establishing the claimed fact.

	We proceed by induction on $k$.
	 If $k=1$ then the result immediately follows from  the fact that the inner and outer metrics of $V_{1,t}$ are equivalent. Set $B^j_t:=\cup_{i=1}^j V_{i,t}$, for $j\le k$, and assume that the desired fact holds for  $B^{k-1}_t$.
	
	 For $\tim$, let $E_t$ denote the set of all the couples $(x,y)\in B_t^{k-1}\times V_{k,t}$ of elements that are in the same connected component of $B^k_t$.
	Thanks to our induction hypothesis, it suffices to show that 
	$$ \sup \{d_{B^k_t}(x,y):(x,y)\in E_t,\tim\}<+\infty.$$
	Let $t\in \supp_m A$ and $(x,y)\in E_t$, with $x\ne y$, as well as $\ep>0$. Take a continuous definable arc $\gamma:[0,1]\to B^k_t$ joining $x$ and $y$ such that $lg(\gamma)\le d_{B^k_t}(x,y)+\ep$,  and set $s_0:=\sup \{s:\gamma([0,s])\subset B_t^{k-1}\}$ as well as $z:=\gamma(s_0)$.  The arc $\gamma_1:=\gamma_{|[0,s_0]}$ joins $x$ and $z$, while the arc  $\gamma_2:=\gamma_{|[s_0,1]}$ joins $z$ and $y$. But since we obviously have $$lg(\gamma_1)\le  d_{B_t^{k-1}}(x,z)+\ep\;\; \eto\;\; lg(\gamma_2)\le  d_{V_{k,t}}(z,y)+\ep ,$$
	the required fact follows from the induction hypothesis together with the fact that the inner and outer metrics of $V_{k,t}$ are equivalent (since $\ep$ can be taken arbitrarily small).
\end{proof}

The inner metric $A\times A\ni (x,y) \mapsto d_A(x,y)$   is not necessarily a  definable function. We however have:
\begin{lem}\label{lem_equiv_inner_defin}
	Given a definable connected set $A$, there is a continuous definable function $\rho:A\times A\to[0,+\infty)$ such that $d_A(x,y)\sim \rho(x,y)$ on $A\times A$.
\end{lem}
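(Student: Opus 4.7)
The plan is to build $\rho$ by patching together outer (Euclidean) distances through transit points lying in the intersections of suitable closed definable subsets of $A$. I start from the definable partition $V_1,\ldots,V_N$ of $A$ given by Corollary~\ref{cor_inner_outer}, set $U_i:=\mathrm{cl}(V_i)\cap A$, and replace each $U_i$ by its connected components (definable and finite by Corollary~\ref{cor_cc}). Following the proof of Proposition~\ref{pro_diametre_inner}, this yields a finite collection of closed, connected, definable sets $U_1,\ldots,U_N$ (keeping the notation) covering $A$, each with inner metric equivalent to outer metric by a common constant $C>0$. Since $A$ is connected, the ``intersection graph'' on the $U_i$'s (with $i\sim j$ iff $U_i\cap U_j\neq\emptyset$) is connected. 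I then define
\[
\rho(x,y):=\inf\Bigl\{\sum_{j=0}^{l}|p_j-p_{j+1}|:\; 0\le l\le N,\ p_0=x,\ p_{l+1}=y,\ \exists (i_0,\ldots,i_l)\ \forall j\ \{p_j,p_{j+1}\}\subset U_{i_j}\Bigr\}.
\]
Finiteness of $\rho$ follows from graph-connectedness; definability follows from Theorem~\ref{thm_s_formula}, since $\rho(x,y)\le\alpha$ is a finite disjunction (over admissible index sequences $(i_0,\ldots,i_l)$ with $l\le N$) of conditions involving only existential quantifiers on real vectors.

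For the equivalence $\rho\sim d_A$, the upper bound $\rho\le d_A$ is obtained by subdividing any continuous definable arc $\gamma:[0,1]\to A$ from $x$ to $y$ into finitely many pieces each lying in a single $V_i$ (possible by Corollary~\ref{cor_cc}), and taking the transition points as intermediates $p_j$; by continuity these transition points lie in $U_{i_j}\cap U_{i_{j+1}}$, and the triangle inequality in $\R^n$ gives $\sum|p_j-p_{j+1}|\le lg(\gamma)$. The length restriction $l\le N$ is without loss of generality: if the index sequence contains a repeat $i_j=i_{j'}$, one contracts the intermediate segment to a direct jump within $U_{i_j}$, which does not increase the cost. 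For the lower bound $d_A\le C\rho$, I observe that any valid sequence satisfies $d_A(p_j,p_{j+1})\le d_{U_{i_j}}(p_j,p_{j+1})\le C|p_j-p_{j+1}|$, and sum over $j$.

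The main obstacle will be continuity of $\rho$, which is non-obvious because of the discrete minimization over index sequences: a priori, perturbing $x$ might force a switch to a different index sequence, causing a jump. The natural triangle inequality $\rho(x,y)\le\rho(x,z)+\rho(z,y)$ (proved by concatenating optimal sequences and reshortening as above) yields $|\rho(x,y)-\rho(x',y)|\le\rho(x,x')$, so continuity reduces to showing that $\rho(x_n,x_0)\to 0$ whenever $x_n\to x_0$ in $A$. The key observation: if $x_n\in V_{j(n)}$, then $x_n\to x_0$ forces $x_0\in\mathrm{cl}(V_{j(n)})\cap A=U_{j(n)}$, so the length-zero index sequence $(j(n))$ is admissible for the pair $(x_n,x_0)$ and gives $\rho(x_n,x_0)\le|x_n-x_0|\to 0$. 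This closure-based ``bridging'' through the $U_i$'s is the crucial mechanism that turns the discrete choice of index sequence into a genuinely continuous function.
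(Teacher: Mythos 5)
Your proposal follows the paper's proof essentially verbatim: cover $A$ via Corollary~\ref{cor_inner_outer}, pass to closures in $A$ (and, implicitly, to connected components), and define $\rho$ as the infimum of Euclidean chain lengths over chains of bounded length transiting through the pieces. The paper states the equivalence $d_A\sim\rho$ as ``easy to check'' without elaboration, so your detailed verification of the chain-contraction bound on $l$, the two-sided estimate, definability via quantifier elimination, and continuity via closure-bridging usefully fills in exactly the steps the paper leaves implicit.
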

\begin{proof} By Corollary \ref{cor_inner_outer}, 
	we can cover $A$ with finitely many definable connected sets, say $V_1,\dots, V_k$, such that the inner and outer metrics of each $V_j$ are equivalent. Possibly replacing these sets with their closure in $A$, we can assume that they are closed in $A$.  It is then easy to check that 
	for $x$ and $y$ in $A$  we have
		$d_A(x,y)\sim \rho(x,y):= \inf \;\sum_{i=1}^{l-1} |x_i-x_{i+1}|$,
	where the infimum is taken over all the families $x_1,\dots,x_l$, $l\le k$, of points of $A$ such that $x_1=x$, $x_l=y$, and such that for each $i<l$, the points $x_i$ and $x_{i+1}$ both lie in $V_{j_i}$, for some $j_i\le k$.
	%
\end{proof}

  \begin{dfn}\label{dfn_normal}
	We say that $A\in \s_n$ is {\bf connected at $x\in cl( A)$}\index{connected at $x$} if  $\bou(x,\ep)\cap A$ is connected for all  $\ep>0$ small enough. 	We will say that it is {\bf connected along $Z\subset cl(A)$} if it is connected at each point of $Z$.
	We say that $A$ is {\bf normal}\index{normal} if it is connected at each $x\in cl( A)$.
\end{dfn}

\begin{pro}\label{pro_holder_normal}
	If  $A\in \s_n$ is normal, connected, and bounded then  there are a constant $C$ and a positive rational number $\theta$ such that
	\begin{equation*}
		|x-y| \le d_A (x,y)\le C|x-y|^\theta.
	\end{equation*}
\end{pro}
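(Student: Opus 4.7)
The plan is to prove the upper bound $d_A(x,y)\le C|x-y|^\theta$ by applying \L ojasiewicz's inequality (Theorem \ref{thm_lojasiewicz_inequality}) on the product set $A\times A$, with $g(x,y):=|x-y|$ and $f$ a definable function equivalent to $d_A$. Concretely, I would use Lemma \ref{lem_equiv_inner_defin} to obtain a continuous definable $\rho:A\times A\to[0,+\infty)$ with $\rho\sim d_A$; this is bounded on $A\times A$ because $A$ is bounded and hence, by Proposition \ref{pro_diametre_inner} (applied with $m=0$), $d_A$ is bounded. Once I establish that $\rho(x(t),y(t))\to 0$ along every definable arc $t\mapsto (x(t),y(t))$ in $A\times A$ with $|x(t)-y(t)|\to 0$, Theorem \ref{thm_lojasiewicz_inequality} will yield $\rho(x,y)^N\le C|x-y|$ on $A\times A$, and the desired Hölder bound with $\theta:=1/N$ follows since $d_A\lesssim\rho$.

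The key verification is the arc-limit hypothesis, and this is where normality and boundedness enter. Given a definable arc $\gamma(t)=(x(t),y(t))$ in $A\times A$ with $|x(t)-y(t)|\to 0$, Puiseux Lemma together with the boundedness of $A$ forces $x(t)$ and $y(t)$ to converge to a common limit $z\in cl(A)$. Apply Proposition \ref{pro_diametre_inner} to the definable family
\begin{equation*}
V_{(z',r)}:=\bou(z',r)\cap A,\qquad (z',r)\in\R^n\times\R,
\end{equation*}
obtaining a uniform constant $C$ such that $d_{V_{(z',r)}}(p,q)\le 2Cr$ whenever $p,q$ lie in the same connected component of $V_{(z',r)}$. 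By normality of $A$ at $z$, there is $\eta_0>0$ with $V_{(z,r)}$ connected for every $r\in(0,\eta_0)$. Setting $r(t):=2\max(|x(t)-z|,|y(t)-z|)$, we have $r(t)\to 0$, and for $t$ small enough $r(t)<\eta_0$, so $V_{(z,r(t))}$ is connected and contains both $x(t)$ and $y(t)$. The uniform estimate then gives
\begin{equation*}
d_A(x(t),y(t))\le d_{V_{(z,r(t))}}(x(t),y(t))\le 2Cr(t)\longrightarrow 0,
\end{equation*}
hence $\rho(x(t),y(t))\to 0$, as required.

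Finally, the left-hand inequality $|x-y|\le d_A(x,y)$ is immediate from (\ref{d_inner_plus_grande}). The main obstacle I anticipate is making sure the definable family $V_{(z',r)}$ is used correctly so that the constant $C$ in Proposition \ref{pro_diametre_inner} is genuinely uniform in both $z'$ and $r$ — but that is exactly the content of that proposition, since $V_{(z',r)}$ is a definable family with parameter $(z',r)\in\R^{n+1}$. The role of normality is precisely to guarantee, via the same $z$-independent constant, that small balls around the accumulation point $z$ meet $A$ in a connected set, thereby allowing Proposition \ref{pro_diametre_inner} to convert the shrinking Euclidean radius $r(t)$ into a shrinking inner-distance bound.
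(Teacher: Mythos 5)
Your proof is correct, and its skeleton is the same as the paper's: obtain a definable $\rho\sim d_A$ from Lemma~\ref{lem_equiv_inner_defin}, observe that both are bounded via Proposition~\ref{pro_diametre_inner} (applied to $A$ with $m=0$), and apply \L ojasiewicz's inequality (Theorem~\ref{thm_lojasiewicz_inequality}) after verifying the arc hypothesis. The genuine divergence is in how that arc hypothesis is verified. The paper reparameterizes the two arcs by distance to the common endpoint $z$ and bounds $d_A(x(t),y(t))$ by working on the spheres $\sph(z,t)\cap A$; since these need not be connected even when $A$ is normal, the paper must split into cases and invoke the local conic structure theorem (a forward reference to section~\ref{sect_lcs}) to handle the situation where the spherical sections disconnect, concluding that then $z\in A$. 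You instead apply Proposition~\ref{pro_diametre_inner} to the definable family of balls $V_{(z',r)}=\bou(z',r)\cap A$. This is a strictly better fit here: normality is \emph{by definition} the connectedness of $\bou(z,r)\cap A$ for small $r$, so no case distinction arises, no forward reference is needed, and the uniform constant from the proposition immediately yields $d_A(x(t),y(t))\le d_{V_{(z,r(t))}}(x(t),y(t))\le 2Cr(t)\to 0$. Your version is thus more self-contained; the only added bookkeeping is that the parameter of the family lives in $\R^{n+1}$ rather than being a single radius, which Proposition~\ref{pro_diametre_inner} accommodates without change.
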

\begin{proof} 	Let $\rho$ be provided by applying Lemma \ref{lem_equiv_inner_defin} to $A$.  By (\ref{eq_diameter}), $ d_A(x,y)$ is bounded on $A\times A$, and hence,  so is $\rho$.
	The desired fact follows by applying \L ojasiewicz's inequality (Theorem \ref{thm_lojasiewicz_inequality}) to $\rho(x,y)$ and $|x-y|$, which nevertheless requires to  check that $\rho(x(t),y(t))$ tends to zero as $t\to 0^+$, for each couple of definable arcs $x:(0,\ep)\to A$ and $y:(0,\ep)\to A$ tending to the same point $z\in cl(A)$ (as $A$ is bounded, definable arcs in $A$ must have  endpoints in $cl(A)$).
		We can assume   $x(t)$ and $y(t)$ to be parameterized by their distance to $z$.   If $B_t:=\sph(z,t)\cap A$ is not connected (for $t>0$ small) then,  as  $A$ is connected at $z$ (since it is normal), $z\in A$ (this fact  can be deduced from the local conic structure theorem which is proved in section \ref{sect_lcs} independently), in which case the needed fact is clear. Otherwise, if $B_t$ is connected, we can write  $$\rho(x(t),y(t))\lesssim d_A(x(t),y(t)) \le d_{B_t}(x(t),y(t)) \overset{(\ref{eq_diameter})}\le Ct,$$
	 which yields that $\rho(x(t),y(t))$ tends to zero as $t$ goes to zero.   
\end{proof}

	\begin{rem} The assumption ``$A$ normal'' is clearly necessary and
	it is easy to see that the proposition is no longer true if one drops the boundedness assumption.
\end{rem}

In \cite{birmos}, a related theorem ensures that every semialgebraic set can be {\it normally embedded} (their proof
goes over to the globally subanalytic category), which means  that, given a semialgebraic set $A$, we can construct a definable homeomorphism (onto its image) $h:A \to \R^k$,  bi-Lipschitz with respect to the inner metric and such that the inner and outer metrics of $h(A)$ are equivalent.

 \section{Metric triangulations}\label{sect_metric triangulations}
\paragraph{Some preliminary definitions.} A {\bf simplex}\index{simplex} $\sigma$ of $\R^n$ of dimension $k$ is the convex hull of  $(k+1)$ affine independent (i.e., not contained in an affine space of dimension $(k-1)$) elements $u_0,\dots,u_k$ of $\R^n$.
We say that $\{u_0,\dots,u_k\}$ {\bf generates} $\sigma$.

A {\bf face}\index{face} of $\sigma$ is then a simplex $\sigma'$ generated by a subset of $\{u_0,\dots,u_k\}$. A face $\tau$ of $\sigma$  satisfying $\dim \tau<\dim \sigma$ is called a {\bf proper face}\index{face!proper}.
An {\bf open simplex}\index{simplex!open} is a simplex from which the proper faces have been deleted.

A {\bf simplicial complex}\index{simplicial complex} $K$ of $\R^n$ is a {\it finite} set of open simplices of $\R^n$ such that for any $\sigma_1$ and $\sigma_2$ in $K$, the set $cl(\sigma_1) \cap cl(\sigma_2)$ is a common face of $cl(\sigma_1)$ and $cl(\sigma_2)$\footnote{It should be emphasized that our notion of simplicial complex is slightly different from the usual one since we take {\it open} simplices.}. We denote by $|K|$ the {\bf polyhedron}\index{polyhedron}
\nomenclature[bupaz]{$\vert K \vert$}{polyhedron of the simplicial complex $K$\nomrefpage} of $K$ which is the set constituted by the union of all the elements of $K$.

A {\bf triangulation of a set $X\in \s_n$}\index{triangulation} is a globally subanalytic homeomorphism $\Psi:|K|\to X$, with $K$ simplicial complex of $\R^n$. 

\paragraph{Basic idea.}We are going to
define a notion of triangulation adapted to the study of Lipschitz
geometry. A {\it metric triangulation} will be a homeomorphism from a simplicial complex onto the given set, which means that
it will be a triangulation in the usual meaning. Of course, the distances are modified by such a  homeomorphism.  We shall require that over
each simplex the distances are preserved up to ``some contractions"
which will be  characterized by finitely many iterations of sums, products, and powers of  distance functions  to the faces. Such 
functions will be called  {\it standard simplicial functions}.
Indeed, Definition \ref{triangulation} will involve standard
simplicial functions on $\sigma \times \sigma$ depending on two variables $q$ and $q'$ in $\sigma$. These functions are such combinations
of distance functions  $d(q,\tau)$ and  $d(q',\tau)$, $\tau$ proper face of $\sigma$.

What will matter is that two sets having the same metric triangulation (with the same coordinate systems and
equivalent contraction functions) will be definably
bi-Lip\-schitz homeomorphic (Proposition \ref{pro_meme_triangulation}).

\paragraph{Definition of metric triangulations and the main theorem.}
 It will not be possible to express the contractions along the directions of the canonical basis. The reason is that this kind of mappings alter not only distances, but also angles.  Hence, we first introduce the concept of tame system of coordinates along which the contractions will apply.


\begin{dfn}\label{pl coordinates} Let $\sigma $ be any open simplex of $\R^n$ and denote by $\pi_i : \R^n \rightarrow \R^i$  the
	projection onto the $i$ first coordinates. 
		A {\bf tame system of coordinates of $\sigma$}\index{tame system of coordinates} is a mapping
	$\mu:
	\sigma \to [0,1]^n$, $\mu=(\mu_{1}, \dots, \mu_{n})$,  which is a homeomorphism onto its image  of the following form for $q=(q_1,\dots,q_n)\in \sigma$ and $i\in \{1,\dots,n\}$:
	\begin{equation}\label{eq def pl coord}\mu_{i}(q)=\begin{cases}
			\frac{q_{i}-
				\zeta_i(\pi_{i-1}(q))}{\zeta_i'(\pi_{i-1}(q))-\zeta_i(\pi_{i-1}(q))} &\mbox{if $\zeta_{i} < \zeta_i'$}\\
			0 & \mbox{whenever } \zeta_{i}=\zeta'_i \mbox{ on } \pi_{i-1}(\sigma),
		\end{cases}
	\end{equation}
	where $\zeta_i$ and $\zeta'_i$ are
	linear functions on $\pi_{i-1}(\sigma)$ satisfying for all $q\in \sigma$ $$\zeta_{i}( \pi_{i-1}(q))\le q_i\le \zeta_i'(\pi_{i-1}(q)).$$
	
	A  \textbf{standard simplicial function on $\sigma$}\index{standard simplicial function}  is a function $\varphi(q)$ given by the quotient of  finite sums of monomials of type
	\begin{equation}\label{eq_monomials}d(q,\tau_1)^{\alpha_1} \cdots d(q,\tau_k)^{\alpha_k} ,\end{equation}
	with $\alpha_1,\dots, \alpha_k$ positive rational numbers and $\tau_1,\dots, \tau_k$ faces of $\sigma$. A {\bf standard simplicial  function on $\sigma\times \sigma$} is a function $\varphi(q,q')$ given by the quotient  of  finite sums   of monomials as in (\ref{eq_monomials}) but involving both functions of type $\sigma \ni q \mapsto d(q,\tau)$ and of type $\sigma \ni q' \mapsto  d(q',\tau)$, $\tau$ proper face of $\sigma$ (standard simplicial  functions on $\sigma\times \sigma$ are actually just needed if we triangulate unbounded sets, see Remark \ref{rem sur les triangulations}).
	\end{dfn}

\begin{dfn}\label{triangulation}
	Let $X\in \s_n$. A \textbf{metric triangulation of $X$}\index{metric triangulation! of a set} is  a
	triangulation $\Psi
	:|K| \rightarrow X$ of $X$ such that
	for every $\sigma \in K$ there exist
	$\varphi_{\sigma,1} \: ,  \dots , \varphi_{\sigma,n}$, standard
	simplicial functions on $\sigma \times \sigma$ satisfying for
	 $q$ and $q'$ in $\sigma$:
	\begin{equation}\label{eq h dist ds triang}
		|\Psi(q)-\Psi(q')| \sim \sum_{i=1}^{n}\varphi_{\sigma,i} (q,q')
		\cdot |\mu_{\sigma,i}(q) - \mu_{\sigma,i}(q')|,
	\end{equation}
	with $\mu_\sigma:=(\mu_{\sigma,1}, \dots, \mu_{\sigma,n})$  tame
	system of coordinates of $\sigma$. 
		The functions  $\varphi_{i,\sigma}$ are then called {\bf the contraction functions}\index{contraction function} of the metric triangulation $\Psi$.
	
	Given some subsets $A_1, \dots ,A_\kappa$ of $X$, we say that a metric triangulation $\Psi:|K|\to X$ is {\bf compatible with} $A_1, \dots ,A_\kappa$\index{metric triangulation! compatible with}\index{compatible!metric triangulation}  if each $\Psi^{-1} (A_i)$ is the  union of some elements of $K$.
\end{dfn}

Let us make more precise  the extent to which the metric structure of the set is captured by the triangulation by stating the following immediate proposition which, roughly speaking, yields that two subsets having the same metric triangulation are definably bi-Lipschitz homeomorphic:

\begin{pro}\label{pro_meme_triangulation}
	Let $X_1$ and $X_2$ be elements of $\s_n$ and let $\Psi_1:|K| \to X_1$ and $\Psi_2:|K|\to X_2$ respectively be metric triangulations of $X_1$ and $X_2$. Assume that these metric triangulations involve the same contraction functions   $\varphi_{i,\sigma}$ and the same tame systems of coordinates $(\mu_{\sigma,1},\dots,\mu_{\sigma,n})$ on every simplex $\sigma \in K$. Then $\Psi_2\Psi^{-1}_1$ is a definable bi-Lipschitz homeomorphism from $X_1$ onto  $X_2$.
\end{pro}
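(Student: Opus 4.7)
The map $h := \Psi_2 \circ \Psi_1^{-1}\colon X_1 \to X_2$ is visibly a definable homeomorphism, being the composite of two such maps, so my task reduces to establishing the bi-Lipschitz estimate. First I would fix a simplex $\sigma \in K$ and apply the defining relation (\ref{eq h dist ds triang}) of a metric triangulation simultaneously to $\Psi_1$ and $\Psi_2$. Because by hypothesis both triangulations use the same tame system $\mu_\sigma$ and the same contraction functions $\varphi_{\sigma,i}$, setting
\[\omega_\sigma(q,q') := \sum_{i=1}^{n} \varphi_{\sigma,i}(q,q') \, |\mu_{\sigma,i}(q) - \mu_{\sigma,i}(q')|\]
yields
\[|\Psi_1(q) - \Psi_1(q')| \sim \omega_\sigma(q,q') \sim |\Psi_2(q) - \Psi_2(q')|\]
for $(q,q') \in \sigma \times \sigma$. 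Translating via $p := \Psi_1(q)$ and $p' := \Psi_1(q')$, this is exactly the bi-Lipschitz relation $|p-p'| \sim |h(p) - h(p')|$ on each image $\Psi_1(\sigma)$.

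Second, since $K$ is finite, the finitely many constants appearing in these equivalences can be absorbed into a single uniform $C \geq 1$ such that $C^{-1}|p-p'| \leq |h(p) - h(p')| \leq C |p-p'|$ whenever $p, p'$ belong to the image $\Psi_1(\sigma)$ of a common simplex; by continuity of $h$ and of the $\Psi_j$, the same estimate extends to the closures $\Psi_1(\mathrm{cl}(\sigma) \cap |K|)$.

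To obtain the global bi-Lipschitz property on $X_1 \times X_1$, I would handle pairs $(p, p')$ whose preimages $q := \Psi_1^{-1}(p)$ and $q' := \Psi_1^{-1}(p')$ lie in distinct simplices by interpolating along the straight segment $[q, q'] \subset |K|$, which crosses only finitely many closed simplices and hence decomposes into sub-segments $[q_k, q_{k+1}]$ each contained in a single $\mathrm{cl}(\sigma_k)$; applying the simplex-wise estimate on each link and summing is the natural strategy. The main obstacle I foresee is precisely this last step: the triangle inequality yields the upper bound on $|h(p) - h(p')|$ immediately, but the matching lower bound requires the chosen chain from $q$ to $q'$ to remain quasi-geodesic in both $X_1$ and $X_2$ simultaneously, which is exactly the rigidity that the tame coordinates $\mu_\sigma$ together with the compatibility of standard simplicial functions along common faces are meant to enforce.
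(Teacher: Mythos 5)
Your first two paragraphs reproduce the paper's proof exactly; the paper's argument is the single sentence \emph{This follows from~(\ref{eq h dist ds triang}) for both $\Psi_1$ and $\Psi_2$}, i.e.\ the per-simplex comparison you spell out. You are right to notice that pairs $(p,p')$ whose $\Psi_1$-preimages lie in distinct open simplices are not covered by this, but the chain argument you sketch does not close the gap, and the upper bound is \emph{not} immediate either. Two problems: the straight segment $[q,q']$ generally leaves $|K|$ (a polyhedron is not convex), so one must use a piecewise-linear path through the complex; and summing the per-simplex equivalences along such a path bounds $|h(p)-h(p')|$ only by $\sum_k|\Psi_1(q_k)-\Psi_1(q_{k+1})|$, a polygonal length that dominates the inner distance $d_{X_1}(p,p')\ge|p-p'|$ and is in general much larger. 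Definition~\ref{triangulation} places no constraint on how the images of distinct simplices sit relative to one another in the ambient space, so nothing forces such paths to be quasi-geodesic for the outer metric.

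Indeed, read literally, Definition~\ref{triangulation} is not strong enough for the proposition to hold. Take $K$ to be two edges sharing a vertex, $\Psi_1$ parametrizing $\{y=0,\,0\le x\le1\}\cup\{y=x^{2},\,0\le x\le1\}$ (two arcs tangent at the origin) and $\Psi_2$ parametrizing $\{y=0\}\cup\{y=x\}$ over the same range (two transverse arcs); with $\varphi_{\sigma,i}\equiv1$ and the obvious affine tame coordinates, both are metric triangulations with identical contraction data, yet $\Psi_2\Psi_1^{-1}$ maps the pair $(x,0),(x,x^{2})$ at distance $x^{2}$ to the pair $(x,0),(x,x)$ at distance $x$, hence is not Lipschitz near the vertex. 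In the paper's applications the triangulations come from Theorem~\ref{thm existence des triangulations} and carry extra face compatibility (Remarks~\ref{rem sur les triangulations} and~\ref{rem_triang_tame}), which is exactly the ``rigidity along common faces'' you guess must be at work; but it has to be added as a hypothesis, since it is a property of the construction and not of Definition~\ref{triangulation} alone.
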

\begin{proof}
	This follows from (\ref{eq h dist ds triang}) for both $\Psi_1$ and $\Psi_2$.
\end{proof}

We are going to prove that every globally subanalytic set  admits a  metric triangulation.  We shall establish it not only  for a definable set, but for a definable family, up to a partition of the parameter space. This will yield  that definable bi-Lipschitz triviality holds almost everywhere (Corollary \ref{cor_hardt}).

\begin{thm}\label{thm existence des triangulations}
Given $A_1,\dots,A_\kappa$ in $\s_{m+n}$, there is a definable partition $\Pa$ of $\R^m$ such that for each $B\in \Pa$ there are a simplicial complex $K$ of $\R^n$ and a definable family of metric triangulations $\Psi_t: |K| \to \R^n$ (of $\R^n$) compatible with $A_{1,t},\dots, A_{\kappa,t}$ for each $t\in B$. The contraction functions  and the tame systems of coordinates of $\Psi_t$  are independent of $t\in B$.
\end{thm}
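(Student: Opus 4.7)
The proof will proceed by induction on $n$. The case $n=0$ is vacuous, so assume the result holds in dimension $n-1$ and consider a family $(A_{1,t},\dots,A_{\kappa,t})_{\tim}$ in $\R^n$. The plan is to reduce the construction of $\Psi_t$ to the triangulation of $\R^{n-1}$ supplied by the induction hypothesis, together with one extra coordinate built from Lipschitz graph functions in the direction of a common regular vector. First, I would take a cell decomposition of $\R^{m+n}$ compatible with all the $A_i$'s and isolate the cells whose fibers $C_t$ have empty interior. Applying Theorem \ref{thm_proj_reg_hom_pres_familles} (the uniform regular vector theorem) simultaneously to these cells (possibly after a preliminary partition of $\R^m$), I obtain a uniformly bi-Lipschitz family $h_t:\R^n\to\R^n$ for which $e_n$ is regular for the fiberwise frontiers of all cells. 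Proposition \ref{pro_meme_triangulation} shows that a metric triangulation is preserved under composition with a uniformly bi-Lipschitz family, so we may assume from the start that $e_n$ is regular for every $A_{i,t}$ and for the boundary of each top-dimensional cell.

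By Corollary \ref{cor_proj_reg_decomposition_en_graphes} (and Remark \ref{rem_famille_lipschitz_ordonnees}), there is then a definable family of totally ordered uniformly Lipschitz functions $\xi_{1,t}\le\dots\le\xi_{p,t}:\R^{n-1}\to\R$ (possibly infinite at $\pm\infty$) whose graphs cover $\bigcup_i A_{i,t}$ and whose consecutive pairs determine the bands between them. Let $B_t$ denote the common projection to $\R^{n-1}$ of the structure we have produced. Applying the induction hypothesis to the family consisting of all domains of definition and loci where the $\xi_{j,t}$'s coincide or cross, I obtain, after refining the partition of $\R^m$, a simplicial complex $L$ of $\R^{n-1}$ and a definable family $\Phi_t:|L|\to\R^{n-1}$ of metric triangulations compatible with all these sets, whose contraction functions $\psi_{\tau,i}$ and tame coordinates $\nu_\tau$ are independent of $t$.

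Now I would build the triangulation $\Psi_t$ of $\R^n$ over $\Phi_t$. For each simplex $\tau\in L$, only finitely many graphs $\Gamma_{\xi_{j,t}}^{e_n}$ appear above $\tau$, so the preimage $\Phi_t^{-1}(\tau)\times\R$ decomposes into a finite vertical prism which I triangulate by a standard subdivision $\sigma_1,\dots,\sigma_r$ of $\tau\times\R$ into open simplices (this combinatorial step is $t$-independent once $L$ is fixed). On each $\sigma_j$ define $\Psi_t$ by $\Psi_t(q,s)=(\Phi_t(q),\eta_{j,t}(\Phi_t(q),s))$, where $\eta_{j,t}$ linearly interpolates between the appropriate $\xi_{k,t}\circ\Phi_t$ in the last variable. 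The tame coordinates on $\sigma_j$ are $\nu_\tau$ precomposed with the projection to $\tau$, together with a last coordinate $\mu_n$ of the affine form (\ref{eq def pl coord}) on the vertical slice, which indeed yields the required homeomorphism onto $[0,1]^n$.

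The key obstacle, and the place where the real work sits, is verifying the metric estimate (\ref{eq h dist ds triang}) with contraction functions that are standard simplicial on $\sigma_j\times\sigma_j$ and independent of $t$. By uniform Lipschitzness of the $\xi_{j,t}$'s together with the induction hypothesis applied to $\Phi_t$, the horizontal part of $|\Psi_t(q,s)-\Psi_t(q',s')|$ is already controlled by the $\psi_{\tau,i}(q,q')|\nu_{\tau,i}(q)-\nu_{\tau,i}(q')|$. For the vertical part, I would apply the Preparation Theorem (Theorem \ref{thm_preparation}) to the differences $\xi_{k,t}-\xi_{k',t}$ as functions on the base cells, which, combined with Puiseux Lemma with parameters (Proposition \ref{pro_puiseux_non_cont}), yields that after one further definable refinement of the partition of $\R^m$ and of $L$, each such difference factors as a product of powers of $|\nu_{\tau,i}|$-type functions times a bounded definable unit of constant sign. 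These factors are exactly standard simplicial in the $\sigma_j$ variables, which furnishes the vertical contraction function $\varphi_{\sigma_j,n}$. The fact that unbounded simplices may appear (since we triangulate $\R^n$) forces the contraction functions to depend on both $q$ and $q'$, which is why Definition \ref{triangulation} allows standard simplicial functions on $\sigma\times\sigma$; these capture the asymptotic behavior of the $\xi_{j,t}$'s at infinity. Once these estimates are established uniformly in $t$ on each element of the final partition of $\R^m$, the equivalence (\ref{eq h dist ds triang}) follows, completing the induction step.
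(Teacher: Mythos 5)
Your high-level strategy—induction on $n$, straightening the fiberwise frontiers via Theorem~\ref{thm_proj_reg_hom_pres_familles}, expressing the cells as Lipschitz graphs and bands by Corollary~\ref{cor_proj_reg_decomposition_en_graphes}, and lifting a metric triangulation of $\R^{n-1}$ to one of $\R^n$ by a prism-and-subdivide construction—is the same strategy the paper uses, and you correctly flag the role of standard simplicial functions on $\sigma\times\sigma$ for unbounded simplices. However, there is a genuine gap at the point you yourself call ``the place where the real work sits.''

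The problem is the order of operations, and it cannot be fixed by ``one further refinement.'' In your final step you want the vertical contraction $\varphi_{\sigma_j,n}$, which amounts to proving that $(\xi_{i+1,t}-\xi_{i,t})\circ\Phi_t$ is equivalent to a standard simplicial function on each simplex $\tau\in L$. You propose to obtain this \emph{after} $\Phi_t$ has already been built, by applying the Preparation Theorem to $\xi_{i+1,t}-\xi_{i,t}$ and then refining $L$ and the partition of $\R^m$. Two things go wrong. First, once $\Phi_t$ is constructed, a subdivision of $L$ does not automatically produce a metric triangulation (the equivalence (\ref{eq h dist ds triang}) would have to be re-verified on new simplices with new face distances, so you would be back at the start of the induction step rather than done with it). Second, and more fundamentally, the Preparation Theorem by itself gives a factorization $a(\xt)|x_n-\theta(\xt)|^r U$ with \emph{arbitrary} definable functions $a,\theta$; it does not say these factors are equivalent to products of powers of distances to faces of $L$. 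What is needed is an intermediate reduction (this is Lemma~\ref{lem_eq_dist_cor_th_prep} in the paper) that writes each such function as $\sim$ to $\prod d(x,W_k)^{\alpha_k}$ for \emph{definable sets} $W_k$, and one must then arrange for the triangulation of $\R^{n-1}$ to be compatible with the $W_k$'s \emph{so that} these distances become distances to faces. That forces the information ``these auxiliary functions become standard simplicial'' to be carried through the induction itself, not bolted on afterwards.

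Concretely, the paper strengthens the inductive statement to $(\mathbf{H_n})$: in addition to triangulating the $A_i$'s, one fixes a finite list of nonnegative definable functions $\eta_1,\dots,\eta_l$ and requires $\eta_{j,t}\circ\Psi_t$ to be $\sim$ to a standard simplicial function on every simplex. In the inductive step, the functions fed into $(\mathbf{H_{n-1}})$ include the differences $(\xi_{i+1,t}-\xi_{i,t})$, the horizontal distance functions $\xt\mapsto d(\xt,\pi(\delta W_{j,t}\cap\Gamma_{\theta_{\iota,t}}))$, and so on; and the sets fed in include all the $W_k$'s. The total-ordering Lemma~\ref{lem_graphes_en_plus} is also essential to reduce $\min$-expressions like $d(x,\delta W_{j,t})$ to a single comparable term before the induction is invoked. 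Your proposal omits the strengthened hypothesis, omits Lemma~\ref{lem_eq_dist_cor_th_prep}, and omits the total-ordering step, and the post-hoc refinement it substitutes does not supply them; so the verification of (\ref{eq h dist ds triang}) in the vertical direction is not actually established.
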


The proof of this theorem requires two preliminary lemmas that will also be used in the proof of the local version displayed in Theorem \ref{thm_triangulations_locales}.



\begin{lem}\label{lem_eq_dist_cor_th_prep}
	Let $\xi :  A \rightarrow \R$ be a definable nonnegative function, $A\in \s_{m+n}$.
	There exist some definable subsets $W_1,\dots,W_k$ of $cl(A)$  and a  definable  partition $\Pa$ of $A$ such that for any $V \in \Pa$ there are some rational numbers  $\alpha_1,\dots,\alpha_k$ such that for each $\tim$ we have on $V_t \subset \R^n$:  \begin{equation}\label{eq_equiv_dist}\xi_t(x) \sim d(x,W_{1,t})^{\alpha_1} \cdots d(x,W_{k,t})^{\alpha_k}   .
	\end{equation}
	\end{lem}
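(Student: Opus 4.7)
Plan. I would prove this by induction on $n$, treating $m$ as a fixed parameter dimension. The base case $n=0$ is handled directly by partitioning $A\subset \R^m$ into pieces on which $\xi$ either vanishes identically (take $W=cl(A)$ with a positive exponent, forcing $0^\alpha=0$) or is bounded away from $0$ and $\infty$ (use the empty product).

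For the inductive step, the main tool is the Preparation Theorem (Theorem \ref{thm_preparation}), which I would apply to $\xi$ on $A\subset \R^{m+n}$ with a variable ordering placing an $x$-variable (say $x_n$) last. This produces a cell decomposition $\mathcal C$ of $\R^{m+n}$ compatible with $A$ such that on every cell $C\subset A$ one has the reduction
\[
\xi(s,\tilde x, x_n)\;\sim\; |a(s,\tilde x)|\cdot |x_n-\theta(s,\tilde x)|^{r},\qquad r\in\Q,
\]
with $a,\theta$ definable $\la$-functions on the base $\pi(C)\subset \R^{m+n-1}$ (the $\la$-unit factor being absorbed into $\sim$). Using Lemmas \ref{lem_xn_et_theta} and \ref{lem_meme_morphisme} together with the cell structure of $\mathcal C$, I would arrange that, whenever $\theta\not\equiv 0$, it coincides with one of the defining functions of $C$, so that $\Gamma_\theta\cap\pi(C)\subset cl(C)\subset cl(A)$. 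Refining further via the Lipschitz cell decomposition of Theorem \ref{thm_lipschitz_cells}, I would also make $\theta(t,\cdot)$ Lipschitz with constant independent of $t$; the Lipschitz bound then gives $|x_n-\theta(t,\tilde x)|\sim d_{\R^n}\bigl((\tilde x,x_n),\Gamma_{\theta,t}\bigr)$, realising this factor as a distance to a set $W\subset cl(A)$. (When $\theta\equiv 0$, one uses $cl(A)\cap\{x_n=0\}$ instead.)

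For the coefficient $|a|$, which is a nonnegative definable function on $\pi(C)\subset \R^{m+n-1}$ (parameter space still $\R^m$, spatial dimension $n-1$), the induction hypothesis produces sets $W'_1,\dots,W'_p\subset cl(\pi(C))$ and exponents $\beta_j$ with $|a(t,\tilde x)|\sim\prod_j d_{\R^{n-1}}(\tilde x, W'_{j,t})^{\beta_j}$ on pieces of a further partition.

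The principal obstacle is the last step: \emph{lifting} each $W'_j\subset \R^{m+n-1}$ to a set $W_j\subset cl(A)\subset \R^{m+n}$ in a way that preserves the equivalence of distances, i.e.\ so that $d_{\R^n}(x, W_{j,t})\sim d_{\R^{n-1}}(\tilde x, W'_{j,t})$ for $x=(\tilde x,x_n)$ on the corresponding piece. My plan is to take $W_j$ as the image of $W'_j$ under a definable Lipschitz section of $\pi:cl(A)\to \R^{m+n-1}$, provided by one of the Lipschitz defining functions $\phi$ of $C$ (available after the refinement by Theorem \ref{thm_lipschitz_cells}, since its graph lies in $cl(C)\subset cl(A)$). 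The Lipschitz bound then yields
\[
d_{\R^{n-1}}(\tilde x, W'_{j,t})\;\le\; d_{\R^n}(x, W_{j,t})\;\lesssim\; d_{\R^{n-1}}(\tilde x, W'_{j,t})+|x_n-\phi(t,\tilde x)|.
\]
To turn this into an equivalence, I would apply Lemma \ref{lem_refinement_fonctions_dominantes} to split $V$ into sub-pieces according to the relative sizes of $|x_n-\phi|$ and $d_{\R^{n-1}}(\tilde x, W'_{j,t})$: on pieces where $|x_n-\phi|\lesssim d_{\R^{n-1}}(\tilde x, W'_{j,t})$ the equivalence is immediate, while on pieces where $|x_n-\phi|$ dominates, one may absorb $|x_n-\phi|$ (itself a distance to a graph in $cl(A)$) into the product by redistributing exponents. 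Collecting the sets arising from all cells of $\mathcal C$ into a single global list $W_1,\dots,W_k$ (with $\alpha_i=0$ for the sets not appearing on a given piece) completes the proof.
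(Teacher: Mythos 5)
Your overall strategy is essentially the paper's: induction on $n$, apply the Preparation Theorem together with the Lipschitz cell decomposition (Theorem \ref{thm_lipschitz_cells}) to write $\xi\sim a(\tilde x)\,|x_n-\theta(\tilde x)|^r$ on cells with uniformly Lipschitz bounding functions, handle the coefficient $a$ by induction, and realise $|x_n-\theta|$ as a distance to a graph lying in $cl(A)$. You correctly identify the lift of the sets $W'_j\subset\R^{m+n-1}$ to subsets of $cl(A)\subset\R^{m+n}$ as the delicate step, but the specific lift you propose does not work, and the fix you suggest cannot close the gap. Lifting along the graph of a Lipschitz bounding function $\phi$ gives $d(x,W_t)\sim\max\bigl(d(\tilde x,W'_t),\,|x_n-\phi_t(\tilde x)|\bigr)$: on a sub-piece where $|x_n-\phi|$ dominates, $d(x,W_t)\sim|x_n-\phi_t(\tilde x)|\sim d(x,\Gamma_{\phi_t})$ and all information about $d(\tilde x,W'_t)$ is gone, so there is nothing left to ``redistribute''. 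Concretely, take $m=0$, $n=2$, $A=C=(0,1)^2$ (so $\eta_1\equiv 0$, $\eta_2\equiv 1$), $\xi(\tilde x,x_2)=\tilde x$, $\theta\equiv 0$, $r=0$; the induction gives $W'=\{0\}\subset\R$, and your lift yields $W=\{(0,0)\}$. On the region $\{x_2>\tilde x\}$ both $d(x,W)$ and $d(x,\Gamma_{\eta_1})$ are $\sim x_2$ (and $d(x,\Gamma_{\eta_2})\sim 1$), so no product of powers of these distances is equivalent to $\tilde x$.

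The lift that actually works is the \emph{cylinder} lift $W_j:=(W'_j\times\R)\cap cl(C)\subset cl(A)$, which exploits the Lipschitzness of \emph{both} bounding functions $\eta_1,\eta_2$ of the band. For $x=(\tilde x,x_n)\in C_t$, the projection gives $d(\tilde x,W'_{j,t})\le d(x,W_{j,t})$, and for the upper bound one picks $\tilde w\in W'_{j,t}$ near $\tilde x$ and chooses $w_n$ to be the point of $[\eta_{1,t}(\tilde w),\eta_{2,t}(\tilde w)]$ nearest $x_n$; since $|\eta_{i,t}(\tilde x)-\eta_{i,t}(\tilde w)|\lesssim|\tilde x-\tilde w|$ for $i=1,2$ and $x_n\in(\eta_{1,t}(\tilde x),\eta_{2,t}(\tilde x))$, this gives $|x_n-w_n|\lesssim|\tilde x-\tilde w|$, hence $d(x,W_{j,t})\sim d(\tilde x,W'_{j,t})$ with no extra term. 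With this lift your case-splitting is unnecessary. A secondary remark: you cannot in general arrange $\theta$ to be one of the defining functions of $C$ (Lemma \ref{lem_xn_et_theta} only gives $x_n\sim\theta$ or $\theta\equiv 0$). The paper's device is instead to write $x_n-\theta=(x_n-\eta_1)+(\eta_1-\theta)$ and, after a further partition making the two summands comparable, reduce $|x_n-\theta|$ either to $|x_n-\eta_1|\sim d(x,\Gamma_{\eta_1})$ with $\Gamma_{\eta_1}\subset cl(A)$, or to the $(n-1)$-variable function $\eta_1-\theta$, treated again by induction.
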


\begin{proof}
	We prove it by induction on $n$. For $n=0$, the result is clear since $\xi$ is a function of $t$ (the constants of the needed equivalence may depend of $t$).
		Take $n\geq 1 $ and assume that the proposition is true for $(n-1)$.  Denote by $\Lambda_1,\dots, \Lambda_\kappa$ the linear isometries involved in Theorem \ref{thm_lipschitz_cells} (see (\ref{item_system}) of this theorem). We will sometimes regard them as changes of coordinates of $\R^{n+m}$, preserving the $m$ first variables.
	
	For each $i\le \kappa$, applying the Preparation Theorem  to $\xi \circ \Lambda_i^{-1}:\Lambda_i(A) \to \mathbb{R}$,   we obtain a definable partition of   $\Lambda_i(A)$. The images of all the elements of this partition under the map $\Lambda_i^{-1}$ constitute (for each $i\le \kappa$) a partition of $A$, denoted by $\Pa_i$.
	Apply  Theorem \ref{thm_lipschitz_cells} to  the elements of a common refinement  of all the  $\Pa_i$'s and denote by $\Pa$ the resulting partition of $A$.
	
	Since it  suffices to show the result for the restriction of $\xi$ to each $C \in \Pa$, let us	fix   such $C$.  By  Theorem \ref{thm_lipschitz_cells}, there is $i\le \kappa$ such that  $\Lambda_i(C)$ is a cell and $\Lambda_i(C)_t$ is a Lipschitz cell for each $\tim$.
	 If $\Lambda_i(C)_t$ is the graph of some uniformly Lipschitz family of functions,  the needed estimate easily follows by induction on $n$. Otherwise, $\Lambda_i(C)=(\eta_1,\eta_2)$ with
	$\eta_1< \eta_2$   definable functions on the basis  of $\Lambda_i(C)$ such that $(\eta_{1,t})_{\tim}$ and  $(\eta_{2,t})_{\tim}$ are uniformly Lipschitz (or $\pm \infty$).

  As $\Pa$ is compatible with the $\Pa_j$'s, the function  $\xi\circ \Lambda_i^{-1}$ is reduced on $C$. Since we can argue up to a  linear isometry, we will from now identify $\Lambda_i$  
  with the identity.  
   Hence,
	there are $r \in \Q$ as well as some functions $a$ and $\theta$ on the basis $B$ of $C$ such that for $x=(\xt,x_n)\in C_t$, $\tim$, we have:$$\xi_t (x)\sim a_t(\xt)|x_n-\theta_t(\xt)|^r.$$    
	Thanks to the induction hypothesis, we thus only have to check the result for the function $|x_n-\theta_t(\xt)|$.  
	As $\Gamma_\theta\cap C=\emptyset$,
	we  can assume for every $(t,\xt)\in B$,   either
	$\theta_t(\xt) \leq \eta_{1,t}(\xt)$ or  $ \theta_t(\xt) \geq \eta_{2,t}(\xt)$.
	Suppose for instance that $\theta_t(\xt) \leq \eta_{1,t}(\xt)$, and 
	write for $t \in\supp_mC$ and $x=(\xt,x_n) \in C_{t}$: 
	$$x_n -
	\theta_t(\xt)=(x_n-\eta_{1,t}(\xt))+(\eta_{1,t}(\xt)-\theta_t(\xt)).$$
    There is a definable partition of $C$ such that the two terms of the sum appearing in the right-hand-side of this equality are comparable on every element $E$ of this partition.  As they are both nonnegative, we then see that
either	$|x_n - \theta_t(\xt)|\sim (x_n-\eta_{1,t}(\xt))$ or
	$|x_n - \theta_t(\xt)|\sim (\eta_{1,t}(\xt)-\theta_t(\xt))$ on $E_t$ for each $\tim$.
		In the second case, the desired result comes down from the induction
	hypothesis (since $(\eta_{1,t}(\xt)-\theta_t(\xt))$ is an $(n-1)$-variable function). Moreover, since
	$\eta_{1,t}$ is Lipschitz,  $|x_n -
	\eta_{1,t}(\xt)|\sim d(x,\Gamma_{\eta_{1,t}})$ on $E_t$. 
\end{proof}

\begin{rem}\label{rem_constant_t} The constants of the equivalence in the above lemma may depend on $t$. The  exponents  $\alpha_1,\dots,\alpha_k$ however just depend on $V\in \Pa$.
	\end{rem}

The second result that we shall need in the proof of Theorem \ref{thm existence des triangulations} is an elementary fact about families of functions that will help us to refine partitions. 

\begin{lem}\label{lem_graphes_en_plus}
	Given definable families of Lipschitz functions $c_{1,t},\dots,c_{k,t}$ on $\R^{n-1}$, $\tim$,
	we can find definable families of  Lipschitz functions
	$\xi_{1,t}\le \dots \le \xi_{l,t}$ on $\R^{n-1}$ and a cell decomposition $\D$ of
	$\R^{m+n-1}$ such that for every $D \in \D$,
	the collection of functions $$c_{1,t}(\xt),\dots, c_{k,t}(\xt),|x_{n}-c_{1,t}(\xt)|,\dots ,|x_n-c_{k,t}(\xt)|$$  is totally ordered
	on  $(\xi_{i,t|D_t},\xi_{i+1,t|D_t})$ for each $\tim$ and $i\in \{0,\dots,l\}$   (with $\xi_{0,t}\equiv -\infty$ and $\xi_{l+1,t}\equiv \infty$).
	%
\end{lem}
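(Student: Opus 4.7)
The plan is to identify finitely many Lipschitz families $\eta_{1,t},\dots,\eta_{p,t}$ on $\R^{n-1}$ whose graphs mark precisely the transitions between different orderings of the $2k$ given functions in the variable $x_n$, and then to take the $\xi_{i,t}$'s to be the order statistics of these ``critical'' families.

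First, I would analyze for fixed $(t,\xt)$ the $2k$ functions of $x_n$ consisting of the $k$ constants $c_{j,t}(\xt)$ and the $k$ ``tents'' $x_n\mapsto |x_n-c_{j,t}(\xt)|$. A direct computation shows that their pointwise ordering can change only at the values $x_n=\frac{1}{2}(c_{i,t}(\xt)+c_{j,t}(\xt))$ (where two tents cross) and at $x_n=c_{j,t}(\xt)\pm c_{i,t}(\xt)$ (where a tent crosses a constant, relevant only when $c_{i,t}(\xt)\ge 0$). Let $\eta_{1,t},\dots,\eta_{p,t}$ denote the full collection of these functions for $i,j\in\{1,\dots,k\}$; each is a definable Lipschitz family on $\R^{n-1}$, with Lipschitz constant bounded by sums of Lipschitz constants of the $c_{i,t}$'s.

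Second, I would set $l:=p$ and define $\xi_{i,t}(\xt)$ as the $i$-th smallest element of $\{\eta_{1,t}(\xt),\dots,\eta_{p,t}(\xt)\}$. Since $\min$ and $\max$ of Lipschitz functions are Lipschitz with the same constant, and any order statistic is obtained by finitely many iterations of $\min$ and $\max$, the family $(\xi_{i,t})_\tim$ is uniformly Lipschitz on $\R^{n-1}$; the inequalities $\xi_{1,t}\le\dots\le\xi_{l,t}$ hold by construction. Then I would apply Theorem \ref{thm_existence_cell_dec} to produce a cell decomposition $\D$ of $\R^{m+n-1}$ compatible with the definable sets $\{(t,\xt):\eta_{s,t}(\xt)\le\eta_{s',t}(\xt)\}$ for all $s,s'$, with $\{(t,\xt):c_{i,t}(\xt)\le c_{j,t}(\xt)\}$ for all $i,j$, and with $\{(t,\xt):c_{i,t}(\xt)\ge 0\}$ for all $i$. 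On each $D\in\D$ the ordering of the $\eta_{s,t}$'s is constant, so $\xi_{i,t}$ coincides on $D_t$ with some specific $\eta_{\sigma(i),t}$, and consecutive $\xi_{i,t},\xi_{i+1,t}$ are consecutive elements of the family on $D_t$.

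Finally, I would verify that on the band $(\xi_{i,t|D_t},\xi_{i+1,t|D_t})$ the ordering of the $2k$ functions of $x_n$ is constant: indeed, by construction no critical value $\eta_{s,t}(\xt)$ lies strictly between $\xi_{i,t}(\xt)$ and $\xi_{i+1,t}(\xt)$ for $\xt\in D_t$, so no transition of the ordering can occur inside the band; compatibility with the sign conditions handles the pairs $(c_{i,t},|x_n-c_{j,t}|)$ when $c_{i,t}<0$ (where the comparison is automatic and requires no critical function). The main subtle point is exactly this exhaustiveness check: one must make sure that the enumerated critical functions together with the sign-stratification of the $c_{i,t}$'s capture every possible crossing among the $2k$ functions, and this is the role of compatibility condition on $\D$ with the signs of the $c_{i,t}$'s.
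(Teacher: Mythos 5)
Your proof is correct and takes essentially the same approach as the paper: you enumerate the same critical functions the paper introduces (note $\frac{c_i+c_i}{2}=c_i$, so the constants themselves are included, together with $c_j\pm c_i$ and $\frac{c_i+c_j}{2}$), sort them by order statistics obtained via iterated $\min/\max$, and observe that the ordering of the $2k$ functions cannot change inside a band. The paper phrases the verification algebraically, via identities such as $(x_n-c_{i,t})-(c_{j,t}-x_n)=2\bigl(x_n-\frac{c_{i,t}+c_{j,t}}{2}\bigr)$ after first noting that $(x_n-c_{i,t})$ has constant sign on each band, whereas you argue geometrically that no crossing value of the $2k$ functions lies strictly inside a band; these amount to the same thing, and your extra compatibility requirements on $\D$ (the $\eta$-order sets and the signs of the $c_{i,t}$) are harmless though not strictly needed.
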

\begin{proof}
	Take a cell decomposition $\D$ of $\R^{m+n-1}$ compatible with the sets $Z_{ij}:=\{(t,z) \in \R^{m+n-1}: c_{i,t}(z)\le c_{j,t}(z)\}$. Let us complete the finite collection constituted by the $c_{i,t}$'s with the functions $(c_{i,t}+c_{j,t})$, $(c_{i,t}-c_{j,t})$,  and  $\frac{c_{i,t}+c_{j,t}}{2}$, $i,j \in \{1,\dots,k\}$ (which are also Lipschitz), and denote by $\xi_{1,t}, \dots , \xi_{l,t}$  the completed family. Using the $\min$  operator if necessary, it is then not difficult to see that we can assume  $\xi_{1,t}\le \dots \le \xi_{l,t}$ (see \cite[Proposition 1.4.7]{gvhandbook}).
	
	To check that it has the required property fix a cell $D$ of $\D$ and observe that the compatibility of $\D$ with the  $Z_{ij}$'s entails that the $c_{i,t}$'s  are comparable with each other on $E_{p,t}:=(\xi_{p,t|D_t},\xi_{p+1,t|D_t})$, for all $p$. Moreover, since the graphs of the $c_{i,t}$'s are included in the union of the graphs of the $\xi_{i,t}$'s, the function $(x_n-c_{i,t}(\xt))$ has constant sign on  $E_{p,t}$, for each $i$ and each $p$. If, for instance,
	$(x_n-c_{i,t}(\xt))>0$ and $(x_n-c_{j,t}(\xt))<0$ on $E_{p,t}$, then since
	$$ (x_n-c_{i,t}(\xt)) -(c_{j,t}(\xt)-x_n)=2\left(x_n-\frac{c_{i,t}+c_{j,t}}{2}\right),  $$
	we see that the inclusion $\Gamma_{\frac{c_{i,t}+c_{j,t}}{2}}\subset \bigcup_{\iota=1}^l \Gamma_{\xi_{\iota,t}}$ entails that $|x_{n}-c_{i,t}(\xt)|$ and $|x_{n}-c_{j,t}(\xt)|$  are comparable with each other on $E_{p,t}$, for all $p$. The  inclusion of the graphs of the functions $(c_{i,t}+c_{j,t})$ and $(c_{i,t}-c_{j,t})$ in $\bigcup_{\iota=1}^l \Gamma_{\xi_{\iota,t}}$ can  be used analogously to prove that $c_{i,t}$ is comparable with  $(x_n-c_{j,t}(\xt))$ on  $E_{p,t}$ for all $p,i,$ and $j$.
\end{proof}
\begin{rem}
	The proof has established that if the $c_{i,t}$'s are $L$-Lipschitz for some $\tim$ then $\xi_{p,t}$ is $2L$-Lipschitz (for this value of $t$) for all $p\le l$ (since we only complete the collection with the families $(c_{i,t}+c_{j,t})$, $(c_{i,t}-c_{j,t})$,  and  $\frac{c_{i,t}+c_{j,t}}{2}$).
\end{rem}


\begin{proof}[proof of Theorem \ref{thm existence des triangulations}]
	We shall actually prove by induction on $n$ the following stronger statements:

	\noindent{$\mathbf{(H_n).}\;$} Let $A_1,\dots, A_\kappa$ in $\s_{m+n}$ and let $\eta_1,\dots,\eta_l$ be some  definable nonnegative functions on  $ \R^{m+n}$. There is a definable partition $\Pa$ of $\R^m$ such that for each $B\in \Pa$, there is a definable family  of
 metric triangulations $\Psi_t: |K| \to \R^n$, $t\in B$, of $\R^n$ compatible with   $A_{1,t},\dots, A_{\kappa,t}$ and
	such  that for each $\sigma\in K$, $t\in B$,  and   $i\le l$,
	the function  $\eta_{i,t} \circ \Psi_{t|\sigma}$ is $\sim$ to a  standard simplicial  function $v_{i,\sigma}:\sigma \to \R$. Moreover, the contraction functions and the tame systems of coordinates  of $\Psi_t$, as well as the functions $v_{i,\sigma}$,  are independent of $t\in B$.
	
	For $n=0$ the result is
	clear.  Take $n\in \N^*$, assume $\mathbf{(H_{n-1})}$, and  fix $A_1,\dots,A_\kappa$ in $\s_{m+n}$ as well as some  definable nonnegative functions $\eta_1,\dots,\eta_l$ on $\R^{m+n}$.
	
	 We denote by $\pi:  \R^{n}
	\rightarrow  \R^{n-1}$ the projection omitting the last coordinate.  Below, we sometimes regard $\pi$ as a projection $\pi:\R^{m+n} \to \R^{m+n-1}$ (still omitting the last coordinate). We also sometimes take for granted that a  family of functions $\xi_t:\R^{n-1} \to \R$, $\tim$, gives rise to a map $\xi:\R^{m+n-1} \to \R^m \times \R$, $(t,x)\mapsto (t,\xi_t(x))$.

	By Lemma \ref{lem_eq_dist_cor_th_prep},  there is a definable partition $\Pa$ of $ \R^{m+n}$ and some definable subsets  $W_1,\dots, W_k$ of $ \R^{m+n}$ such that for every $C\in \Pa$ and each $i\le l$, we can find   some rational numbers $r_1,\dots,r_k$ such that for  $x\in C_t$ (for each $\tim$):
	\begin{equation}\label{eq_proof_trian_xi}
		\eta_{i,t}(x)\sim  d(x, W_{1,t})^{r_1}\cdots d(x,W_{k,t})^{r_k} .
	\end{equation}
		By Theorem \ref{thm_proj_reg_hom_pres_familles},  Corollary \ref{cor_proj_reg_decomposition_en_graphes}, and Remark \ref{rem_extension_familles_fonctions_lipschitz}, there is a uniformly  bi-Lipschitz definable family of  homeomorphisms and  definable families of Lipschitz functions $\theta_{1,t} , \dots , \theta_{\lambda,t}$  defined on $\R^{n-1}$ such that:  the respective images  of the $\delta A_{i,t}$'s, the
	$\delta W_{i,t}$'s,   and the  $\delta C_t$ for $C \in \Pa$, are contained in the  union  of the graphs of   $\theta_{1,t} , \dots , \theta_{\lambda,t}$. For simplicity,   we will identify this family of homeomorphisms with the identity.

	By Lemma \ref{lem_graphes_en_plus} (applied to the $\theta_{j,t}$'s and to
	the families of $(n-1)$-variable functions $\R^{n-1} \ni\xt \mapsto d(\xt,\pi(\delta W_{i,t}\cap \Gamma_{\theta_{j,t}}))$),  there exist a cell decomposition $\D$ of $\R^{m+n-1}$
	and finitely many Lipschitz functions $\xi_{1,t}\leq \dots \leq \xi_{N,t}$ such that for every $D \in
	\D$ and  $\tim$,   all the functions $|x_{n}-\theta_{j,t}(\xt)|$, $x=(\xt,x_n)\in \R^{n-1}\times \R$,
	are comparable with each other and comparable with the functions
	$\R^{n-1} \ni \xt \mapsto d(\xt,\pi(\delta W_{i,t}\cap \Gamma_{\theta_{j,t}}))$, $i\le k$, $j\le \lambda$, on the set $(\xi_{\iota,t|D_t},\xi_{\iota+1,t|D_t})$ for all $\iota\le N$. Adding some graphs if necessary, we can assume that $\bigcup_{i=1}^N \Gamma_{\xi_i} \supset \bigcup_{i=1}^\lambda \Gamma_{\theta_i} $.

	Consider a cell decomposition $\C$ of $\R^{m+n}$ compatible with the $\Gamma_{\xi_i}$'s, the $A_{i,t}$'s, and the $W_{i,t}$'s. 
	Take then a common refinement $\E$ of $\pi(\C)$ and $\D$ 
	such that all the functions     $(\xi_{i}-\theta_{j})$ have constant sign (in $\{-1,0,1\}$, see Definition \ref{dfn_comparable}) on every cell and 
	apply $\mathbf{(H_{n-1})}$ to the cells of $\E\subset \s_{m+(n-1)}$ to get a partition $\Pa$. Fix $B$ in $\Pa$ and $t\in B$ (we argue for one single fixed $t$, the proof of the definability of the  family of triangulations   is easy and will be left to the reader); there is (for this fixed value of $t$) a  metric
	triangulation $(K,\Psi_t)$ of $\R^{n-1}$ compatible  with all the sets $ E_t$, $E\in \E$.

	Let $\zeta_1 \leq \dots
	\leq \zeta_N$ be piecewise linear functions over $|K|$ such that for every $i$,
	$\zeta_i \equiv \zeta_{i+1}$ on the set  $\{\xi_{i,t}\circ \Psi_t=\xi_{i+1,t} \circ
	\Psi_t\}$ (this set is a subcomplex of $K$ since $\C$ is compatible with the $\Gamma_{\xi_i}$'s). Let also $\zeta_0:= \zeta_1-1$ and $\zeta_{N+1}:=\zeta_N+1$, as well as
	$$ Z:= \{ (p,y) \in |K| \times \R \; : \;
	\zeta_{0}(p) < y < \zeta_{N+1}(p) \}.$$
		We obtain a polyhedral decomposition of $Z$ by taking the respective inverse
	images by $\pi_{|Z}$ of the simplices of $K$  as well as  all the images of the simplices of $|K|$ by the
	mappings $p \mapsto (p,\zeta_i(p))$, $1\le i \le N$. After a
	barycentric subdivision of this polyhedron, we get a simplicial
	complex $\widehat{K}$ satisfying $|\widehat{K}|=Z$.

	Define now  the desired  homeomorphism
	$\widehat{\Psi}_t$ in the following way:
	$$\widehat{\Psi}_t(p,\nu \, \zeta_{i}(p)+(1-\nu)\zeta_{i+1}(p)):=
	(\Psi_t(p),\nu\, \xi_{i,t}\circ \Psi_t(p)+(1-\nu)\xi_{i+1,t}\circ \Psi_t(p))$$
	for $ 1 \leq i \leq N-1$, $p \in |K|$ and $\nu \in [0,1]$.
	For $p\in |K|$ and $\nu \in [0,1)$, set:
	$$\widehat{\Psi}_t(p,\nu \, \zeta_{0}(p)+(1-\nu)\, \zeta_{1}(p)):=
	(\Psi_t(p),\xi_{1,t}(\Psi_t(p))-\frac{\nu}{1-\nu})$$ as well as
	$$\widehat{\Psi}_t(p,\nu \, \zeta_{N+1}(p)+(1-\nu)\, \zeta_{N}(p)):=
	(\Psi_t(p),\xi_{N,t}(\Psi_t(p))+\frac{\nu}{1-\nu}).$$
	This clearly defines a  homeomorphism $\widehat{\Psi}_t:
	|\widehat{K}|\rightarrow  \R^{n}$.
		By construction, the $A_{j,t}$'s and the $W_{i,t}$'s are images of open simplices.

		We shall check that, over each simplex $\sigma \in \widehat{K}$, the mapping
	$\widehat{\Psi}_t$ fulfills (\ref{eq h dist ds triang}) (for some functions $\varphi_{\sigma,i}$ and some tame systems of coordinates that we shall introduce). 
		Let $\sigma\in \widehat{K}$ and let $\tau$ be the simplex of $K$ containing $\pi(\sigma)$.

	Thanks to the induction hypothesis, we can find some functions
	$\varphi_{\tau,1},\dots,\varphi_{\tau,n-1} $ and a tame system
	of coordinates $(\mu_{\tau,1},\dots,\mu_{\tau,n-1})$ such that for
	any $p$ and $p'$ in $\tau$:
	\begin{equation}\label{eq pr sigma prime}
		|\Psi_t(p)-\Psi_t(p')| \eqr \sum_{j=1} ^{n-1} \varphi_{\tau,j}(p,p')
		|\mu_{\tau,j}(p)-\mu_{\tau,j}(p')|.
	\end{equation}
	There is $0 \leq i \leq N$ such that $\sigma \subset [\zeta_i,\zeta_{i+1}]$.		If $\sigma \subset \Gamma_{\zeta_i}$ or $\sigma \subset \Gamma_{\zeta_{i+1}}$ then the result follows from (\ref{eq pr sigma prime}) and the Lipschitzness of the $\xi_{i,t}$'s. Otherwise,	let $q$ and
	$q'$ be two points of $\sigma$.  These two points  may be expressed  
	$$q=(p,\nu\zeta_i (p)
	+(1-\nu)\zeta_{i+1}(p))\quad\eto\quad q'=(p',\nu'\zeta_i(p')
	+(1-\nu')\zeta_{i+1}(p')),$$
	 for some  $(p,p')$ in $\tau\times \tau$ and $(\nu,\nu')$ in
	$(0,1)^2$. Define then (see Figure \ref{fig3}) $$q'':=
	(p,\nu'\zeta_i(p)+(1-\nu')\zeta_{i+1}(p)).$$
	  \begin{figure}[h]
\includegraphics[ scale=4]{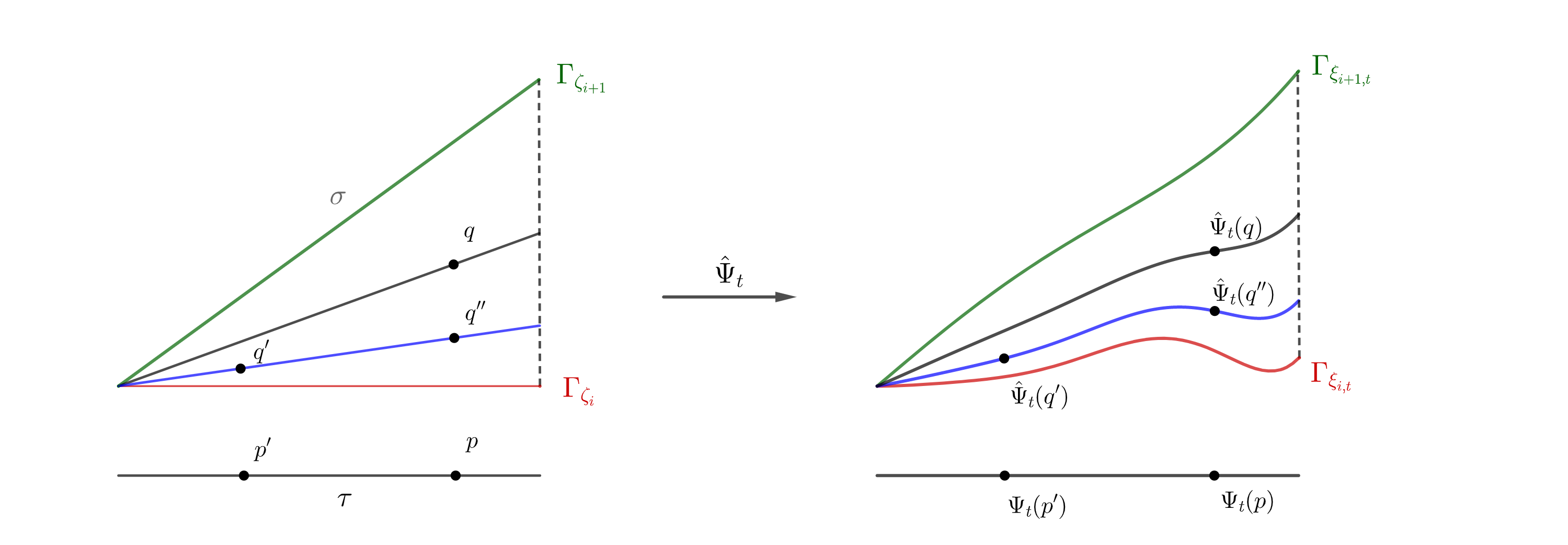}
\centering
  \caption{Main notations  of the proof (here $n=2$). }\label{fig3}
\end{figure}

	We begin with the case where $1 \leq i \leq N-1$.
		We may consider $\nu,\nu',p$, $p'$, and
	$q''$ as functions of $q$ and $q'$.  Due to the definition of $\widehat{\Psi}_t$, since $\xi_{i,t}$ and $\xi_{i+1,t}$
	are Lipschitz functions, we must have over $\sigma \times \sigma$:
	\begin{equation}\label{htilde dist ds preuve}
		|\hat{\Psi}_t(q)-\hat{\Psi}_t(q')| \eqr |\hat{\Psi}_t(q)-\hat{\Psi}_t(q'')|+|\Psi_t(p)-\Psi_t(p')|.
	\end{equation}
 As $\pi(q)=\pi(q'')$, by definition of $\widehat{\Psi}_t$, we have:
	$$|\hat{\Psi}_t(q)-\hat{\Psi}_t(q'')|
	\eqr (\xi_{i+1,t}(\Psi_t(p))-\xi_{i,t}(\Psi_t(p)))
	\cdot |\nu-\nu'|.$$
		Thanks to the induction hypothesis,  we can assume that  the triangulation $(K,\Psi_t)$ is such that  $(\xi_{i+1,t}-\xi_{i,t}) \circ \Psi_t$  is  $\eqr$  to a
	standard simplicial function on $\tau$.  The composite
	$(\xi_{i+1,t}-\xi_{i,t}) \circ \Psi_t \circ \pi$ is thus $\sim$ to a standard simplicial function on
	$\sigma$ that we will denote by $\varphi_{n,\sigma}$. The functions $\zeta_i$ and $\zeta_{i+1}$ define a
	tame coordinate of $\R^{n}$ (as in  (\ref{eq def pl coord})) that we will denote by $\mu_{\sigma,n}$. Observe that $\nu=\mu_{\sigma,n}(q)$ and $\nu'=\mu_{\sigma,n}(q')$.  The preceding estimate may thus be rewritten as:
	\begin{equation}\label{preuve triang n+1 eme coord}
		|\hat{\Psi}_t(q)-\hat{\Psi}_t(q'')| \eqr \varphi_{\sigma,n}(q) \, \cdot
		\,| \mu_{\sigma,n}(q)-\mu_{\sigma,n}(q') \: |.
	\end{equation}
	Define for $j<n$:
	$$\varphi_{\sigma,j}(q,q')=\varphi_{\tau,j}(\pi(q),\pi(q'))\;\; \mbox{ and }\;\; \mu_{\sigma,j}(q):=\mu_{\tau,j}(\pi(q)).$$
	Then by  (\ref{eq pr sigma
		prime}), (\ref{htilde dist ds preuve}), and (\ref{preuve triang n+1 eme coord}), we get the desired
	equivalence (in the case $1\leq i \leq N-1$). Observe that the introduced function $\varphi_{\sigma,n}$ just depends on $q$. In the case $i=0$ or $N$, it will depend on both $q$ and $q'$.

	Let us now focus on the case $i=0$ (the case $i=N$ is completely analogous and will be left to the reader). Note that  we have  by construction
	over $\sigma \times \sigma$:
	\begin{equation}\label{eq p - p'' a l infini}
		|\hat{\Psi}_t(q) - \hat{\Psi}_t(q'')|\eqr \frac{1}
		{(q_{n}-\zeta_0(p))(q'_{n}-\zeta_0(p'))} \cdot |\nu -
		\nu'|,
	\end{equation}
	where $q_n$ and $q'_n$ respectively denote the last coordinate of $q$ and $q'$. 	
	Remark also that  $| \nu- \nu'| $ is $\eqr$ to the difference of the
	tame coordinates of $q$ and $q''$ defined by $\zeta_0$ and $\zeta_1$ (as in (\ref{eq def pl coord})).
	As $|q_{n}-\zeta_0(p)|\eqr d(q,\Gamma_{\zeta_0})$, which is $\sim$ to a standard simplicial function on $\sigma$, we may apply the same argument
	as in the case $1\le i\le N-1$, which completes the case $i=0$. Since we can  do the same job  in the case  $i=N$,  this
	yields (\ref{eq h dist ds triang}) for  $\widehat{\Psi}_t$.
	
	It remains to check that we can assume that the  functions $\eta_{j,t}\circ \widehat{\Psi}_t$ are $\eqr$
	to standard  simplicial  functions over any  $\sigma\in \widehat{K}$. For this purpose, let us fix
	$\sigma \in \widehat{K}$. If  $\widehat{\Psi}_t(\sigma)\subset \Gamma_{\xi_{i,t}}$ for some $i$, the result follows by induction. So, assume that  $\widehat{\Psi}_t(\sigma)\subset(\xi_{i,t},\xi_{i+1,t})$, for some $0 \leq
	i \leq N$ (setting $\xi_{0,t}\equiv-\infty$ and $\xi_{N+1,t}\equiv+\infty$).

	The set  $\widehat{\Psi}_t(\sigma)$ thus must fit in some $C_t$, with $C\in \Pa$ (we recall that  for each $C\in \Pa$ we have $\delta C_t\subset \cup_{\iota=1} ^\lambda  \Gamma_{\theta_{\iota,t}}\subset \cup_{\iota=1} ^N \Gamma_{\xi_{\iota,t}}$,  which is therefore disjoint from $\widehat{\Psi}_t(\sigma)\subset(\xi_{i,t},\xi_{i+1,t})$), and hence the  $\eta_{j,t}$'s are $\sim$ on $\widehat{\Psi}_t(\sigma)$ to products of powers of
	distances
	to the  $W_{j,t}$'s (see (\ref{eq_proof_trian_xi})).
	It thus suffices to show the result for the functions  $q
	\mapsto d(\widehat{\Psi}_t(q),W_{j,t})$, $j=1,\dots,k$. Fix $j\le k$.

	As $(\widehat{\Psi}_t,\widehat{K})$ is compatible with  $W_{j,t}$,   either $\widehat{\Psi}_t(\sigma)\subset W_{j,t}$ or  $d(x,W_{j,t})=d(x,\delta W_{j,t})$ for all $x\in \widehat{\Psi}_t(\sigma)$. In the former case, the result is obvious since   $q
	\mapsto d(\widehat{\Psi}_t(q),W_{j,t})$ is zero over $\sigma$.  We thus can suppose that $d(x,W_{j,t})=d(x,\delta W_{j,t})$ on $\widehat{\Psi}_t(\sigma)$.

	By definition of the $\theta_{\iota,t}$'s,
	$\delta W_{j,t}\subset \cup_{\iota=1}^\lambda\Gamma_{\theta_{\iota,t}}$.
	Moreover,  for each  $\iota \in \{1,\dots,\lambda\}$, since $\theta_{\iota,t}$ is Lipschitz, we have for $x=(\xt,x_{n})\in \widehat{\Psi}_t(\sigma) \subset   \R^{n-1} \times \R$:
	\begin{equation}\label{eq_triang_W_j_et_graphes}
		d(x,\delta W_{j,t} \cap \Gamma_{\theta_{\iota,t}} ) \eqr |x_{n} -\theta_{\iota,t}(\xt)|+d(\xt,\pi(\delta W_{j,t} \cap
		\Gamma_{\theta_{\iota,t}} )).
	\end{equation}
		As both terms of the  right-hand-side are positive, the sum
	is $\eqr$  to the $\max$ of these two terms, that is to say, is $\eqr$ to one
	of them since they are comparable over $\widehat{\Psi}_t(\sigma)$ (thanks to the definition of the $\xi_i$'s). Note that
	clearly  $ d(x,\delta W_{j,t})
	=\underset{1 \leq\iota \leq \lambda}{\min}
	d(x,\delta W_{j,t} \cap \Gamma_{\theta_{\iota,t}} ).$ 
	 But as by construction the functions 
	 $$g_{\iota,t}(\xt):=
	d(\xt,\pi(\delta W_{j,t} \cap \Gamma_{\theta_{\iota,t}} )) $$  are
	comparable with each other  and  comparable with all the functions  $x=(\xt,x_n)\mapsto
	|x_{n} -\theta_{\iota,t}(\xt)|$ on $\widehat{\Psi}_t(\sigma)$, by (\ref{eq_triang_W_j_et_graphes}), the function $d(x,\delta W_{j,t})$ is
	equivalent over $\widehat{\Psi}_t(\sigma)$ to one of the functions $g_{\iota,t}(\xt)$ or to
	some  function $ |x_{n} -\theta_{\iota,t}(\xt)|$.

	Thanks to the induction hypothesis, we can assume that the triangulation $(\Psi_t,K)$ is such that for each $\iota\le \lambda$ the function
	$g_{\iota,t}\circ \Psi_t$ is $\eqr$ to a
	standard simplicial function on $\tau$.  
	Hence, by the preceding paragraph, it only remains to prove that the functions $q=(p,q_n) \mapsto |\widehat{\Psi}_{t,n}(q)-\theta_{\iota,t}(\Psi_t(p))| $ (setting $\widehat{\Psi}_t:=(\Psi_t,\widehat{\Psi}_{t,n})$) are
	$\eqr$ over $\sigma$ to a standard simplicial function (independent of $t$).

	For this purpose, fix a positive integer $\iota \le \lambda$. As $\Gamma_{\theta_{\iota,t}} \subset \bigcup_{j=1}^N  \Gamma_{\xi_{j,t}}$, we have on
	$\pi(\widehat{\Psi}_t(\sigma))$  either  $\theta_{\iota,t} \geq \xi_{i+1,t}$ or $\theta_{\iota,t} \leq \xi_{i,t}$. For simplicity, we will assume that the latter inequality holds.
	On $ \sigma $ we have: \begin{equation}\label{eq_fin_preuve_triangulation}|\widehat{\Psi}_{t,n}-\theta_{\iota,t} \circ \Psi_t|=(\widehat{\Psi}_{t,n}-\xi_{i,t} \circ
		\Psi_t)+(\xi_{i,t} \circ \Psi_t-\theta_{\iota,t} \circ \Psi_t).\end{equation}
		In the case $0<i<N$, by (\ref{preuve triang n+1 eme coord}), we have over $\sigma$ for $q=(p,q_{n})$:
	\begin{equation}\label{eq_psi_n_moins_xi}
		\hat{\Psi}_{t,n}(q)-\xi_{i,t}(\Psi_t(p)) \eqr \mu_{\sigma,n}(q) \, . \, \varphi_{\sigma,n}(q).
	\end{equation}
	Moreover, by (\ref{eq p - p'' a l infini}) (with $\nu'=0$), a similar estimate holds in the case $i=0$ (or $N$).
	Observe also that  $\mu_{\sigma,n}$ is obviously $\eqr$ to a
	standard  simplicial  function. Equivalence (\ref{eq_psi_n_moins_xi}) therefore yields that the first summand of the right-hand-side of (\ref{eq_fin_preuve_triangulation}) is $\sim$ to a standard  simplicial  function.	
	Since by  induction we can assume that so is $|\xi_{i,t}\circ \Psi_t-\theta_{\iota,t}
	\circ \Psi_t|$, the conclusion thus comes down from (\ref{eq_fin_preuve_triangulation}).
\end{proof}

\begin{rem}\label{rem sur les triangulations}
	It is worthy of notice that the above proof has explicitly shown that if $\sigma$ is an open simplex such that $\Psi(\sigma)$ is
	bounded then it is not necessary to involve in the definition of metric
	triangulations standard simplicial
	functions depending on both $q$ and $q'$. More precisely, instead of
	(\ref{eq h dist ds triang}),  it is enough
	to require on such $\sigma$:
	\begin{equation}\label{eq_h_contrations_local}
		|\Psi(q)-\Psi(q')| \sim \sum_{i=1}^{n}\varphi_{\sigma,i} (q)
		\cdot |\mu_{\sigma,i}(q) - \mu_{\sigma,i}(q')|,
	\end{equation}
	with $\varphi_{\sigma,i}
	(q)$ standard simplicial functions on $\sigma$ for each $i$. Moreover, for such a simplex $\sigma$,
 the simplices involved in the expression of  $\varphi_{\sigma,i}$ are	by construction of dimension $(i-2)$ for all $i \ge 2$, and  $\varphi_{\sigma,1}
	\equiv 1$. 
	In addition to this, for such a simplex $\sigma$,  the contraction functions $\varphi_{\sigma,i}$ can be required to be bounded away from infinity. 
\end{rem}

\begin{rem}\label{rem_triang_tame}
	The tame systems of coordinates of the Lipschitz triangulation constructed in the proof of the above theorem have the following property. For every  $\sigma\in K$ (where $K$ is as in the theorem) and every  $i\le n$, if $\zeta_{i,\sigma}$ and  $\zeta_{i,\sigma}'$ denote the two linear functions defining  $\mu_{\sigma,i}$ (see (\ref{eq def pl coord}) and (\ref{eq h dist ds triang})), their graphs $\Gamma_{\zeta_{i,\sigma}}$ and $\Gamma_{\zeta_{i,\sigma}'}$ can respectively be written    $\pi(\tau)$ and $\pi(\tau')$, where $\pi:\R^n \to \R^i$ is the canonical projection and $\tau$ as well as $\tau'$  are faces of  elements
	 of $K$ (that depend on $i$ and $\sigma$).
\end{rem}

A consequence of Theorem \ref{thm existence des triangulations} is the following corollary that tells us how many classes subanalytic bi-Lipschitz equivalence admits.

\begin{cor}\label{cor_sullivan}
	Up to globally subanalytic bi-Lipschitz mappings, globally subanalytic sets are countable. 
\end{cor}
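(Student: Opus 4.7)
The plan is to derive the corollary by combining Theorem~\ref{thm existence des triangulations} (applied with $m=0$) with Proposition~\ref{pro_meme_triangulation}, and to show that the combinatorial data determining a metric triangulation up to $\sim$-equivalence of contraction functions takes values in a countable set. Concretely, given $X\in\s_n$, Theorem~\ref{thm existence des triangulations} (with $m=0$ and family $\{X\}$) produces a metric triangulation $\Psi:|K|\to\R^n$ compatible with $X$; the preimage $\Psi^{-1}(X)$ is the polyhedron of a subcomplex $K_X\subset K$, so the restriction $\Psi_{||K_X|}:|K_X|\to X$ is itself a metric triangulation of $X$ in the sense of Definition~\ref{triangulation}.

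Next I would identify the combinatorial invariants of such a triangulation. These are: (a) the ambient dimension $n\in\N$; (b) the abstract isomorphism class of the finite simplicial complex $K_X$; (c) for each open simplex $\sigma\in K_X$ and each index $i\le n$, the choice of the pair of linear functions $\zeta_{i,\sigma},\zeta'_{i,\sigma}$ defining the tame coordinate $\mu_{\sigma,i}$, which by Remark~\ref{rem_triang_tame} can be encoded as a pair of faces of elements of $K$ together with a projection index; and (d) for each $\sigma$ and $i$, the formal expression of the contraction function $\varphi_{\sigma,i}$ as a quotient of finite sums of monomials of the form $d(q,\tau_1)^{\alpha_1}\cdots d(q,\tau_k)^{\alpha_k}\cdot d(q',\tau'_1)^{\beta_1}\cdots d(q',\tau'_l)^{\beta_l}$, where the $\tau_j,\tau'_j$ are faces of $\sigma$ and the exponents are positive rationals. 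For a fixed combinatorial type of $K_X$ there are only finitely many choices for (c) and countably many choices for (d); hence the set of all such combinatorial data across all $n$ is countable.

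Finally I would check that two definable sets $X_1,X_2$ whose metric triangulations carry the same combinatorial data are definably bi-Lipschitz equivalent. Let $\Psi_j:|K_j|\to X_j$ be the corresponding triangulations; a combinatorial isomorphism $K_1\simeq K_2$ is realised by a PL homeomorphism $h:|K_1|\to|K_2|$ that is affine on each simplex. Since the simplicial complexes are finite, $h$ is bi-Lipschitz on each closed simplex, and composition with an affine change of coordinates sends tame coordinate systems to tame coordinate systems and pulls back standard simplicial functions to $\sim$-equivalent standard simplicial functions. Applying the equivalence (\ref{eq h dist ds triang}) to both $\Psi_1$ and $\Psi_2$ as in the proof of Proposition~\ref{pro_meme_triangulation}, one concludes that $\Psi_2\circ h\circ\Psi_1^{-1}:X_1\to X_2$ is a definable bi-Lipschitz homeomorphism. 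Thus the map $X\mapsto(\text{combinatorial data of a metric triangulation of }X)$ induces an injection from the set of definable bi-Lipschitz equivalence classes into a countable set.

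The main obstacle is the unbounded case, where the polyhedron $|K_X|$ is bounded but non-closed and $\Psi$ sends the "missing" boundary to infinity via the $\tfrac{\nu}{1-\nu}$ trick of the proof of Theorem~\ref{thm existence des triangulations}; one must verify that the PL homeomorphism $h$ between two such combinatorially equivalent polyhedra remains globally bi-Lipschitz and that standard simplicial functions depending jointly on $q$ and $q'$ are sent to $\sim$-equivalent ones. This is routine since $h$ is affine on each (compact) simplex and $|K|$ has only finitely many simplices, so the distances to faces appearing in the monomials (\ref{eq_monomials}) are multiplied by bi-Lipschitz factors that are bounded on $|K_1|$.
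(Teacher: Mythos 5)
Your proposal takes a genuinely different route from the paper, and the difference is where the gap lies. The paper's proof is shorter and more direct: it observes that the vertices of the simplicial complex $K$ provided by Theorem~\ref{thm existence des triangulations} can be chosen in $\Q^n$, so the geometric complexes $K$ realized in $\R^n$ already form a countable set; combined with the finite choice of tame systems (Remark~\ref{rem_triang_tame}) and the countability of standard simplicial functions with rational exponents, Proposition~\ref{pro_meme_triangulation} applies verbatim — same $K$, same $\mu_{\sigma,i}$, same $\varphi_{\sigma,i}$ — and gives the result. No PL homeomorphism is needed. You instead classify by the \emph{abstract} combinatorial type of $K$ and then try to bridge two geometrically different realizations by a simplexwise-affine PL map $h$.

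That bridge is where the argument breaks. A tame coordinate system (Definition~\ref{pl coordinates}) is rigidly tied to the fixed flag of projections $\pi_1,\dots,\pi_n$ onto the first $i$ coordinates: $\mu_i(q)$ is a normalized affine function of $q_i$ whose ``shift'' $\zeta_i,\zeta_i'$ depends only on $\pi_{i-1}(q)$. An arbitrary affine isomorphism $h_{|\sigma}:\sigma\to\sigma'$ does not commute with these projections — in $\R^2$ a $90^\circ$ rotation already sends $\mu_1(q)=q_1$ to a function of $q_2$, which is not a valid first tame coordinate — so $\mu_{\sigma',i}\circ h$ is in general not a tame coordinate system, let alone the same one that Proposition~\ref{pro_meme_triangulation} requires (note the proposition demands \emph{the same} $\mu_{\sigma,i}$, not a $\sim$-equivalent one). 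You could try to salvage this by restricting $h$ to affine maps preserving the coordinate flag, but then you would no longer be classifying by abstract combinatorial type, and the extra bookkeeping would essentially reduce to the paper's ``rational vertices'' device. The part of your argument about standard simplicial functions being sent to $\sim$-equivalent ones under a bi-Lipschitz affine map is fine; it is specifically the tame coordinates, and the matching requirement in Proposition~\ref{pro_meme_triangulation}, that your outline does not handle.
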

\begin{proof}
	Up to globally subanalytic bi-Lipschitz mappings, globally subanalytic sets are clearly at least countable. Let us show that they are at most countable.
	
 By  Proposition \ref{pro_meme_triangulation}, two sets that can be triangulated by the same simplicial complex $K$, the same contraction functions $\varphi_{\sigma,i}$, and the same tame systems of coordinates are definably bi-Lipschitz homeomorphic. 
	
	The vertices of the simplicial complex provided by Theorem \ref{thm existence des triangulations} can be chosen in
	$\Q^n$.   The class of such finite simplicial complexes is countable. Moreover, given such a  simplicial complex,
	the tame systems of coordinates of the triangulation constructed  in the proof of the latter theorem can be chosen among a finite family (see Remark \ref{rem_triang_tame}).  The contractions are given by finitely many combinations of sums, products, and powers of  distances to the faces. As these powers belong to $\Q$, the standard simplicial functions of such simplicial complexes are countable.
\end{proof}

We are going to derive other consequences of Theorem \ref{thm existence des triangulations}.
	We will say
	that $A\in \s_{m+n}$ is \textbf{definably bi-Lipschitz trivial along}\index{definably bi-Lipschitz trivial family} $U
	\subset \R^m$ if there exist $t_0 \in U$ and a definable family of bi-Lipschitz
	homeomorphisms $h_t :  A_{t_0}  \rightarrow A_t$, $t\in U$.  As a byproduct of Theorem \ref{thm existence des triangulations} and Proposition \ref{pro_meme_triangulation}, we have:
\begin{cor}\label{cor_hardt}
	Given $A\in \s_{m+n}$, there
	exists a definable  partition of $\R^m$ such that  $A$
	is definably bi-Lipschitz trivial along each  element of this
	partition.  
\end{cor}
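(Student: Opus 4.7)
The plan is to deduce the statement as a direct consequence of the metric triangulation theorem (Theorem \ref{thm existence des triangulations}) combined with the rigidity property of metric triangulations expressed in Proposition \ref{pro_meme_triangulation}. First, I would apply Theorem \ref{thm existence des triangulations} to the family $A \in \s_{m+n}$, obtaining a definable partition $\Pa$ of $\R^m$ such that for every $B \in \Pa$ there is a single simplicial complex $K$ of $\R^n$ and a definable family of metric triangulations $\Psi_t : |K| \to \R^n$ compatible with $A_t$, for $t\in B$. The crucial feature, supplied by the theorem, is that the contraction functions $\varphi_{\sigma,i}$ and the tame systems of coordinates $\mu_\sigma$ appearing in the estimate (\ref{eq h dist ds triang}) do not depend on $t\in B$.

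Next, I would fix $B\in \Pa$ and choose once and for all a basepoint $t_0\in B$. For each $t\in B$, I set
\[
h_t := \Psi_t \circ \Psi_{t_0}^{-1} : \R^n \longrightarrow \R^n .
\]
Since both $\Psi_t$ and $\Psi_{t_0}$ are metric triangulations of $\R^n$ on the same complex $K$ with the same contraction functions and the same tame coordinates, Proposition \ref{pro_meme_triangulation} applies and yields that each $h_t$ is a definable bi-Lipschitz homeomorphism of $\R^n$ onto itself. The family $\{h_t\}_{t\in B}$ is moreover definable because $\Psi$ is a definable family of homeomorphisms and composition and inversion preserve definability (Property \ref{pro_basic_properties_from_dfn}).

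It remains to argue that $h_t$ sends $A_{t_0}$ onto $A_t$. For each $t\in B$, the compatibility of $\Psi_t$ with $A_t$ means that $\Psi_t^{-1}(A_t)$ is a union of open simplices of $K$. I would then verify that the subcomplex $K_A := \Psi_t^{-1}(A_t)$ can be taken to be independent of $t\in B$. Indeed, the set
\[
\{(t,\sigma)\in B \times K : \Psi_t(\sigma)\subset A_t\}
\]
is definable (via Theorem \ref{thm_s_formula}) and $K$ is finite, so by refining $\Pa$ if necessary (using definable choice, Proposition \ref{pro_globally subanalytic_choice}), we may assume on each piece of the refinement that the collection of simplices $\sigma\in K$ with $\Psi_t(\sigma)\subset A_t$ is the same for every $t$ in that piece. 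Once this uniform compatibility is secured, the restriction $h_t : A_{t_0}\to A_t$ is a definable bi-Lipschitz homeomorphism for every $t$ in the piece, which is exactly what is needed. I expect the only slightly delicate point to be this last bookkeeping argument that the combinatorial pattern of compatibility is constant on each piece of the partition; everything else is a direct application of the two quoted results.
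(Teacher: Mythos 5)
Your proof is correct and matches the approach the paper indicates: Corollary \ref{cor_hardt} is presented there simply as a byproduct of Theorem \ref{thm existence des triangulations} and Proposition \ref{pro_meme_triangulation}, and your argument supplies precisely the details of that deduction. The extra refinement step you include---ensuring that the subcomplex $\Psi_t^{-1}(A_t)$ is the same for all $t$ in a given piece of the partition, using the finiteness of $K$ and quantifier elimination---is a sensible precaution, since the statement of Theorem \ref{thm existence des triangulations} guarantees compatibility for each $t$ but does not explicitly assert that the distinguished subcomplex is constant in $t$.
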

\begin{rem}\label{rem_triv_subsets}
Since Theorem \ref{thm existence des triangulations} ensures that we can triangulate several families simultaneously, we can trivialize several definable sets simultaneously. Namely,	given some definable subsets $B_1,\dots,B_k$ of $A$, possibly refining the partition of the above corollary, we can require  the trivialization of $A$ that we have along every element of this partition to induce on the $B_i$'s a trivialization of these sets.
\end{rem}

\begin{exa}\label{exa_xt}
 If for $t>0$ we set $A_t:=\{(x,y)\in (0,1) \times (-1,1):|y|=x^t\}$ then $A_t$ is not bi-Lipschitz homeomorphic to $A_{t'}$ for $t\ne t'$. This shows that Corollary \ref{cor_hardt} is not true on non polynomially bounded o-minimal structures \cite{vdd_omin, costeomin}.
\end{exa}

	We are going to see that, refining the partition given by this corollary if necessary, we can require the trivialization to be bi-Lipschitz with respect to parameters on compact sets. This will yield that definable local bi-Lipschitz triviality is a stratifying condition for stratifications (Corollary \ref{cor_stra_bili_triv}).  This requires the following proposition which can be regarded as a Lipschitz version of Proposition \ref{pro_cont_parametres}.

\begin{pro}\label{pro_lipschitz_parametres}
	Let $A \in \s_{m+n}$ and let $f_t:A_t \to \R$ be a definable family of functions. If $f_t$ is Lipschitz for all $\tim$ then there exists a definable partition $\Pa$ of $\R^m$ such that for every $B \in \Pa$,   $f:A\to  \R$, $(t,x)\mapsto f_t(x)$ induces a Lipschitz function on  $A\cap K$, for every compact subset $K$ of  $ B\times \R^n$.
\end{pro}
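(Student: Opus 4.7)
The plan is to reduce the question to the boundedness of a certain definable upper semicontinuous function associated to $f$, and then to close the argument by induction on the dimension of the parameter space.

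First I would apply Proposition \ref{pro_cont_parametres} to the family $f$ (each $f_t$ being continuous, as a Lipschitz map) to get a definable partition of $\R^m$ on whose pieces $B$ the restriction $f_{|A_B}$ is continuous. Next I would introduce the definable function $L:\supp_m A\to[0,+\infty)$ defined by $L(t):=L_{f_t}$; this is definable by Theorem \ref{thm_s_formula}, as it is described by an $\s$-formula as the infimum of the constants satisfying the Lipschitz inequality over $A_t\times A_t$. Applying Proposition \ref{pro_cont_parametres} to $L$, I would refine the partition so that $L$ is continuous on each piece $B$, which already yields uniform Lipschitz control of the fibres $f_t$ in the $x$-direction on every compact subset of $B\times\R^n$.

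The core of the argument is then the control of the $t$-variation of $f$. Consider the definable function $\Phi$ on the smooth locus of $f$ inside $A_B$ given by $\Phi(t,x):=|d_{(t,x)}f|$, together with its upper envelope
$$\Lambda(t,x):=\limsup_{(s,y)\to(t,x),\,(s,y)\in A}\Phi(s,y)\in[0,+\infty],$$
which is definable by quantifier elimination (Theorem \ref{thm_s_formula}). Introduce the definable ``bad'' set $T:=\{t\in B:\Lambda(t,x)=+\infty\text{ for some }x\text{ with }(t,x)\in cl(A_B)\}$. On the complement $B\setminus T$, $\Lambda$ is finite throughout $cl(A_B)\cap((B\setminus T)\times\R^n)$; since $\Lambda$ is upper semicontinuous, it is then bounded on every compact subset thereof, and Proposition \ref{pro_bounded_der_lipschitz} combined with Corollary \ref{cor_inner_outer} (after one final refinement ensuring that the inner and outer metrics of the relevant pieces of $A$ are equivalent) yields that $f$ is Lipschitz on $A\cap K$ for every compact $K\subset(B\setminus T)\times\R^n$.

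The main obstacle is to arrange the partition so that $\dim T<\dim B$, allowing the construction to be closed by induction on $\dim B$ (the case $\dim B=0$ being trivial, since then $B$ is a finite set of points and each $f_t$ is Lipschitz by hypothesis). To achieve this I would apply the Preparation Theorem (Theorem \ref{thm_preparation}) to $f$ on a cell decomposition of $A$ lying over $B$, so that $f$ takes an explicit reduced form $a(t,\xt)\,|x_n-\theta(t,\xt)|^r\,U$ on each cell. The hypothesis that each $f_t$ is Lipschitz in $x$ forces either $r\geq 1$ or $r=0$ on the cells where $f$ is nonzero, which in turn confines the possible blow-up of $\partial_t f$ to a definable proper subset of each cell (arising from the zero and pole loci of $\partial_t a$, $\partial_t\theta$, and $\partial_t U$). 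Tracking these subsets via the cell structure and projecting to $\R^m$ shows that $T$ has empty interior in $B$, hence $\dim T<\dim B$ by Proposition \ref{pro_dim}(3); iterating the entire argument on the strata of a stratification of $T$ then completes the proof in finitely many steps.
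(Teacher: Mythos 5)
Your overall strategy mirrors the paper's: reduce, via Propositions \ref{pro_cont_parametres}, \ref{pro_bounded_der_lipschitz}, and Corollary \ref{cor_inner_outer}, to showing that a derivative is bounded on compact sets away from a definable ``bad'' subset $T$ of the parameter space, then argue that $\dim T$ is smaller so the construction closes by induction. The auxiliary lemmas you cite are the right ones. The problem is the step on which everything hinges: showing $\dim T<\dim B$.

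Your argument for this step does not work as written. You claim that Lipschitzness of each $f_t$ forces the Preparation-Theorem exponent $r$ to satisfy $r\geq 1$ or $r=0$, and that this ``confines the possible blow-up of $\partial_t f$ to a definable proper subset of each cell (arising from the zero and pole loci of $\partial_t a$, $\partial_t\theta$, and $\partial_t U$)'', after which ``projecting to $\R^m$ shows that $T$ has empty interior''. The last sentence is the gap: a proper definable subset of a cell in $\R^{m+n}$ can very well project \emph{onto} all of $\R^m$ (a graph over $\R^m$, for instance), so tracking blow-up loci and projecting does not automatically drop dimension. Nor does the exponent constraint control $\partial_t a$, $\partial_t\theta$, or $\partial_t U$: for $f(t,x)=x\sqrt{t}$ on $[0,1]^2$ one has $r=1$, yet $\partial_t a=1/(2\sqrt{t})$ blows up along all of $\{t=0\}\times\R$ for no reason connected to $r$. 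What is true — and what the paper actually proves — is that the locus where the full gradient blows up must have lower-dimensional $t$-projection, but the reason is a transversality argument, not a Preparation-Theorem bookkeeping. The paper takes a Whitney $(a)$ regular stratification of $\Gamma_f$ and shows that if the set $Z$ of points over which $(0_{\R^m},e_{n+1})$ is a limit of tangent directions to $\Gamma_f$ projected onto a full-dimensional subset of $\R^m$, then at a generic point of $Z$ the limit tangent space $\tau$ would be transverse to $\{0_{\R^m}\}\times\R^{n+1}$; transversality would then force the limit of the fiberwise tangents $T_{x_k}\Gamma_{f_{t_k}}$ to equal $\tau\cap(\{0\}\times\R^{n+1})$, which contains $e_{n+1}$ — contradicting the hypothesis that $L_{f_t}$ is locally bounded, which says precisely that the fiberwise tangents stay away from $e_{n+1}$. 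This is where the Lipschitz hypothesis genuinely enters, and your proposal identifies that it must enter somewhere but routes it through an exponent constraint that does not deliver the conclusion. You should also spell out how the argument is run on lower-dimensional pieces of the partition: the paper handles $\dim B<m$ separately by passing to a regular vector for $B$ and substituting, then applying the induction hypothesis in $m-1$ parameters.
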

\begin{proof} We prove the result by induction on $m$. The case $m=0$ being vacuous, assume the result to be true for $(m-1)$, $m\ge 1$. By Proposition \ref{pro_extension_fonction_lipschitz} (see Remark \ref{rem_extension_familles_fonctions_lipschitz}), we may assume that $A= \R^{m+n}$.  By Proposition \ref{pro_cont_parametres}, there is  a definable partition $\Pa$ of the parameter space such that  $f$ is continuous on every $B\times \R^n$, $B\in \Pa$. Fix an element $B\in \Pa$ (we shall refine several times the partition $\Pa$). 
	
	We start with the (easier) case where $\dim B <m$. In this case, by Lemmas  \ref{lem_partition_tangents} and \ref{lem  kurdyka proj fixees}, there is a partition of $B$ such that every element of this partition has a regular vector that, without loss of generality, we can assume to be $e_m\in \sph^{m-1}$.   Thanks to Corollary \ref{cor_proj_reg_decomposition_en_graphes}, it is therefore enough to deal with the case where $B$ is the graph of a Lipschitz function, say $\xi:D \to \R$, $D \in \s_{m-1}$. The result in this case now follows from the induction hypothesis applied to the function $D \times \R^n \ni (t,x) \mapsto f(t,\xi(t),x)$.

	We now address the case  $\dim B=m$. The function $B\ni t \mapsto L_{f_t}$ being definable, partitioning $B$ if necessary, we  can assume this function to be continuous on this set. In particular, it is bounded on compact subsets of $B$.
	Let $Z$ be the set of points  $q \in \Gamma_f$ for which there exists a sequence $q_k\in (\Gamma_f)_{reg}$ tending to $q$ such that   $$(0_{\R^m},e_{n+1}) \in\lim T_{q_k} (\Gamma_f)_{reg},$$ where $e_{n+1}$ is the last vector of the canonical basis of $\R^{n+1}$. Let $\pi :\R^m \times \R^{n+1} \to \R^m$ denote the projection omitting the last $(n+1)$ coordinates. We claim that $\pi(Z)$ has dimension less than $m$. 
	
	Assume otherwise. Take a Whitney $(a)$ regular definable stratification of $\Gamma_f$ compatible with $Z$   and  let $S\subset Z$  be a stratum such that $\pi(S)$ has dimension $m$. Let $S'$ be the set of points of $S$ at which $\pi_{|S}$ is a submersion. Since $\pi(S)$ is of dimension $m$, by Sard's Theorem,  the set $S'$ cannot be empty. Moreover, by definition of $S'$, $T_q S'$ is transverse to $\{0_{\R^m}\}\times \R^{n+1}$   at any point $q$ of $S'$.
		Let $q \in S'\subset Z$. By definition of $Z$,  there is a sequence $q_k$ tending to $q$ such that $(0_{\R^m},e_{n+1}) \in \tau_q:=\lim T_{q_k} (\Gamma_f)_{reg}$, and  Whitney $(a)$ condition ensures that $\tau_q\supset T_q S'$. Consequently, $\tau_q$ is transverse to $\{0_{\R^m}\}\times \R^{n+1}$ as well.
	But since $L_{f_t}$ is locally bounded (it was assumed to be continuous),   $e_{n+1}$ does not belong to $\lim T_{x_k}\Gamma_{f_{t_k}}$, if $q_k=(t_k,x_k)$ in $\R^m \times \R^{n+1}$ (extracting a sequence if necessary, we can assume that this limit exits), which means that
	$$(\lim T_{q_k}\Gamma_f) \cap\{0_{\R^m}\}\times \R^{n+1}\neq \lim  \big{(}T_{q_k}\Gamma_f \cap\{0_{\R^m}\}\times \R^{n+1}\big{)} $$
	(since the latter  does not contain the vector  $(0_{\R^m},e_{n+1})$ while the former does), and hence, that $\tau_q$ cannot be transverse to $\{0_{\R^m}\}\times \R^{n+1}$ (since otherwise the intersection with the limit would be the limit of the intersection). A contradiction.
	
	This establishes that $\dim \pi(Z)<m$. Since we can refine $\Pa$ into a partition which is compatible with $\pi(Z)$, we thus see that we can suppose $B \subset\R^m \setminus \pi(Z)$  (we are dealing with the case $\dim B=m$).
		For $(t,R)\in (B \setminus \pi(Z)) \times [0,+\infty)$ set:   $$\varphi(t,R):=\sup\{|\frac{\pa f}{\pa t}(t,x)|:x\in \bou(0_{\R^n},R), \mbox{ $f$ is $\ccc^1$ at $(t,x)$}   \}$$  (which is finite, by definition of $Z$).
	As $\varphi$ is definable,  up to a partition of $B$, this function  may be assumed to be continuous (and thus bounded on compact sets)  for $R \ge \zeta(t)$, with $\zeta:B\to \R$ definable function.   The function $f$ therefore induces a function which is Lipschitz with respect to the inner metric on every compact set of $B\times \R^n$.
	By Corollary \ref{cor_inner_outer}, up to an extra refinement of the partition,  we can suppose the inner   and  outer metrics of $B$ to be equivalent, which means that so are the inner and outer metrics of $B\times \R^n$, establishing that $f$ is Lipschitz on every compact set of $B\times \R^n$.
\end{proof}
\begin{dfn}\label{dfn_stra_bil_triv} A  stratification $\Sigma$ of a set $X$ is {\bf locally definably bi-Lipschitz trivial at $x_0\in S\in \Sigma$}\index{locally definably bi-Lipschitz trivial stratification} if there are  a tubular neighborhood  $(V_S, \pi_S)$ of $S$ (see Proposition \ref{pro_retraction}) and an open neighborhood $W$ of $x_0$ in $S$  for which there is a definable bi-Lipschitz homeomorphism  $\Lambda:\pi_S^{-1}(W)\to \pi_S^{-1}(x_0) \times W $  satisfying:
	\begin{enumerate}[(i)]
		\item  $\pi_S(\Lambda^{-1}(x,y))= y$, for all $(x,y)\in \pi_S^{-1}(x_0)\times  W$.
		\item\label{item_sigma_x0}   $\Sigma_{x_0}:=\{  \pi_S^{-1}(x_0)\cap Y:Y\in \Sigma\} $ is a stratification of $ \pi_S^{-1}(x_0)$,  and $\Lambda(\pi_S^{-1}(W)\cap Y)=(\pi_S^{-1}(x_0)\cap Y)\times W$, for all $Y\in \Sigma$.
\end{enumerate}
\end{dfn}
We now can derive the following consequence of Theorem \ref{thm existence des triangulations}:

\begin{cor}\label{cor_stra_bili_triv}
	Being locally definably bi-Lipschitz trivial is a local stratifying condition. Consequently, every stratification can be refined into a stratification having this property.
\end{cor}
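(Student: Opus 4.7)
The plan is to verify that the property $\mathbf{G}(x_0,A,\Sigma)$ asserting that $\Sigma$ is locally definably bi-Lipschitz trivial at $x_0$ is both local and stratifying, so that the last sentence of the corollary follows from Proposition \ref{pro_existence} (together with Remark \ref{rem_reg_ref}$(2)$). Locality is immediate from Definition \ref{dfn_stra_bil_triv}: the data $(V_S,\pi_S,W,\Lambda)$ only concerns the germ of $(A,\Sigma)$ at $x_0$, so replacing $A$ by any definable open neighborhood of $x_0$ does not affect the truth value of $\mathbf{G}(x_0,A,\Sigma)$.

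For the stratifying property, fix $S\in\Sigma$ of dimension $d$. By Proposition \ref{pro_retraction}, choose a tubular neighborhood $(V_S,\pi_S,\rho_S)$ of $S$ and regard $V_S$ as a definable family parameterized by $y\in S$ with fibers $\pi_S^{-1}(y)$; after passing to suitable local coordinates (or a cell decomposition of $S$), $S$ may be identified with an open subset of $\R^d$, so that Corollary \ref{cor_hardt} applies. For each $Y\in\Sigma$, $Y\cap V_S$ is a subfamily of this family. Applying Corollary \ref{cor_hardt} together with Remark \ref{rem_triv_subsets} to $V_S$ and all these subfamilies produces a definable partition $\Pa$ of $S$ along each element of which $V_S$ is definably bi-Lipschitz trivial in a way that simultaneously trivializes every subfamily $Y\cap V_S$, $Y\in\Sigma$. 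Let $W$ be the union of the elements of $\Pa$ of dimension $d$; this is an open dense definable subset of $S$. Over each connected component $B$ of $W$, choosing a base point $y_0\in B$, one obtains a definable family of bi-Lipschitz homeomorphisms $h_y:\pi_S^{-1}(y_0)\to \pi_S^{-1}(y)$ satisfying $h_y(\pi_S^{-1}(y_0)\cap Y)=\pi_S^{-1}(y)\cap Y$ for all $Y\in\Sigma$, which is condition (\ref{item_sigma_x0}) of Definition \ref{dfn_stra_bil_triv}. Define the candidate trivialization
\[
\Lambda:\pi_S^{-1}(B)\longrightarrow \pi_S^{-1}(y_0)\times B,\qquad \Lambda(q):=\bigl(h_{\pi_S(q)}^{-1}(q),\,\pi_S(q)\bigr).
\]

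The main obstacle is to prove that $\Lambda$ and $\Lambda^{-1}$ are Lipschitz: the pointwise bi-Lipschitzness of $(h_y)_{y\in B}$ does not by itself force the joint mappings $(y,x)\mapsto h_y(x)$ and $(y,z)\mapsto h_y^{-1}(z)$ to be Lipschitz in both variables. To overcome this, apply Proposition \ref{pro_lipschitz_parametres} componentwise to these two definable families, which by construction are pointwise Lipschitz in $x$ and $z$ respectively; a further definable partition of $B$ then makes each component Lipschitz on every compact subset of its domain, hence makes the joint mappings Lipschitz on compacta. Passing to a top-dimensional element of this refined partition, choosing a point $x_0'$ in it, and shrinking $V_S$ to a relatively compact open sub-tubular neighborhood of an open neighborhood $W'$ of $x_0'$, the restriction of $\Lambda$ becomes bi-Lipschitz and satisfies both conditions of Definition \ref{dfn_stra_bil_triv} at $x_0'$. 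As $x_0'$ ranges, $\mathbf{G}$ holds on an open dense definable subset of $S$, so $\mathbf{G}$ is stratifying, and the corollary follows by Proposition \ref{pro_existence}.
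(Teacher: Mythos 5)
Your proof follows essentially the same route as the paper: localize via a tubular neighborhood, apply Corollary \ref{cor_hardt} together with Remark \ref{rem_triv_subsets} to obtain a definable trivialization of $V_S$ and of all the sets $Y\cap V_S$, upgrade it to a trivialization that is bi-Lipschitz jointly in the base and fiber variables via Proposition \ref{pro_lipschitz_parametres}, and conclude with Proposition \ref{pro_existence}.

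There is, however, one genuine gap. Condition (\ref{item_sigma_x0}) of Definition \ref{dfn_stra_bil_triv} has two parts: not only must $\Lambda$ carry $\pi_S^{-1}(W)\cap Y$ onto $(\pi_S^{-1}(x_0)\cap Y)\times W$, but the collection $\Sigma_{x_0}=\{\pi_S^{-1}(x_0)\cap Y : Y\in\Sigma\}$ must itself be a \emph{stratification} of $\pi_S^{-1}(x_0)$, i.e.\ each $\pi_S^{-1}(x_0)\cap Y$ must be a definable $\ccc^\infty$ submanifold. You verify only the first part and then assert ``which is condition (\ref{item_sigma_x0}).'' The trivialization $\Lambda$ does not by itself guarantee that the fiber intersections are manifolds (a bi-Lipschitz homeomorphism need not preserve smoothness, and the trivializing base point was chosen before any transversality was arranged). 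The paper closes this by invoking Sard's Theorem: there is a nowhere dense definable $Z\subset S$ such that for every $x_0\in S\setminus Z$, $\pi_{S|Y}$ is submersive at all points of $Y\cap\pi_S^{-1}(x_0)$ for each stratum $Y$, so each $\pi_S^{-1}(x_0)\cap Y$ is a $\ccc^\infty$ submanifold and $\Sigma_{x_0}$ is indeed a stratification. One then restricts to the open dense set $U\setminus cl(Z)$ (where $U$ is the union of top-dimensional elements of the partition). You need to insert this transversality argument before concluding that the stratifying property holds.
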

\begin{proof}This condition is obviously local.  Note that if $S$ is a stratum of a stratification $\Sigma$ of a set $X$ and $(V_S,\pi_S,\rho_S)$ is a tubular neighborhood of $S$ then the family $\pi_S^{-1}(t)\cap X$, $t\in S$, is definable.    By Corollary \ref{cor_hardt}, it must be definably bi-Lipschitz trivial along each element of a definable partition $\Pa$ of $S$. The trivialization can be assumed to be a trivialization of every stratum (see Remark \ref{rem_triv_subsets}).  By Proposition \ref{pro_lipschitz_parametres}, the trivialization is locally bi-Lipschitz with respect to $t\in S$, after a possible refinement of the partition $\Pa$.  By Sard's Theorem, there is a nowhere dense definable subset $Z$ of $S$ such that $S\setminus Z$ contains no singular value of the restriction of $\pi_S$ to the strata (i.e. no point $x_0$ of type $x_0=\pi_S(z)$ with $z\in Y\in \Sigma$ such that $\pi_{S|Y}:Y\to S$ is not submersive at $z$), which means that $\Sigma_{x_0}$ (see Definition \ref{dfn_stra_bil_triv} (\ref{item_sigma_x0})) is a stratification of $\pi^{-1}(x_0)$ for every $x_0\in S\setminus Z$.   The elements of $\Pa$ that are open in $S$ constitute together an open dense subset $U$ of $S$, and the stratification $\Sigma$ is  locally definably bi-Lipschitz trivial at every point of $U\setminus cl(Z)$, which yields the first sentence. The last sentence of the corollary is due to Proposition \ref{pro_existence}.
\end{proof}

\begin{rem}\label{rem_lipstrat}
 In Definition \ref{dfn_stra_bil_triv}, as well as in Corollary \ref{cor_hardt}, we say {\it definably} bi-Lipschitz trivial in order to emphasize that the trivialization is definable. The results of \cite{m,parusinskiprep, lipsomin, halupczok}  provide  Lipschitz stratifications in the sense of Mostowski, which are bi-Lipschitz trivial. Although the trivializations of Mostowski's stratifications are not necessarily definable, they are satisfactory for many applications.
\end{rem}

\section{Metric triangulations: a local version}\label{sect_metric_triang_local}
We are going to prove a stronger version of our metric triangulation theorem for germs of definable sets (Theorem \ref{thm_triangulations_locales}). In section \ref{sect_lcs}, we will rely on this so as to describe the ``Lipschitz conic structure'' of definable germs (Theorem \ref{thm_local_conic_structure}), which will yield their  ``Lipschitz contractibility'' (Corollary \ref{cor_Lipschitz_retract}) as well as the definable bi-Lipschitz  invariance of their link (Corollary \ref{cor_unicite_du_link}).  

The idea is that, since $X\in \s_n$ is definably bi-Lipschitz homeomorphic to 
 \nomenclature[bv]{$\check{X}$}{  \nomrefpage}
\begin{equation}\label{eq_Xcheck}
	\check{X}:=\{(t,x) \in \R \times X: t=|x|\},
\end{equation}  which is a subset of $\C_{n+1}(1)$ (see (\ref{eq_ccn}) for  $\C_n(R)$), the study of germs of subsets of $\R^n$ reduces to that of germs of subsets of $\C_{n+1}(1)$ (see the proof of Theorems \ref{thm_local_conic_structure} and \ref{thm_g}). We thus will focus in this section on germs at $0$ of subsets of  $\C_n(R)$, $R>0$. This amounts to  replace the function $x\mapsto |x|$ with the function $\rho(x_1,\dots,x_n)=x_1$, which fits better to  triangulation problems for it is a linear function.

In the case of germs of definable subsets of $\C_n(R)$, $R>0$, we are going to construct some definable metric triangulations having specific properties.
It was already pointed out in Remark \ref{rem sur les triangulations} that when the triangulated set is bounded, it is not necessary  to involve contraction functions $\varphi_{\sigma,i}$ that depend on both $q$ and $q'$. It is therefore not surprising that in the case of triangulations of  germs of definable subsets of $\C_n(R)$, we will just need contractions that are standard simplicial functions on $\sigma$, for every simplex $\sigma$. But
the main improvement provided by this local version is that these contractions will be decreasing at least as fast as the distance to the origin  as we are drawing near this point (up to some constant, see Definition \ref{dfn_radially} and Theorem \ref{thm_triangulations_locales}).  These facts will be essential  in section \ref{sect_lcs}.

\begin{dfn}\label{dfn_radially}Let $\sigma$ be an open simplex of $\R^n$ with $0\in cl(\sigma)$.
	A  nonnegative function $\varphi$ on $\sigma$ is {\bf subhomogeneous}\index{subhomogeneous} if there is a constant $C$ such that  for all    $s\in (0,1]$ and  $q\in \sigma$ we have
	$\varphi(sq)\le C  s\varphi(q)$.
	
	If  $\mu=(\mu_1,\dots,\mu_n)$ is a tame system of coordinates on $\sigma$ and $i\in \{ 1,\dots,n\}$,  we say that $\mu_i$ is {\bf radially constant}\index{radially constant} if 
	$\mu_i(sq)=\mu_i(q)$, for all $s\in (0,1]$ and $q\in \sigma$.
\end{dfn}

In the theorem below, $X_{[0,\ep]}$ stands for the restriction of $X$ to $[0,\ep]$ (see (\ref{eq_restriction})).

\begin{thm}\label{thm_triangulations_locales}
	Let $X$ be  a definable subset of $\C_n(R)$, $R>0$. For $\ep >0$ small enough, there is a metric triangulation $\Psi:(|K|,0) \to (X_{[0,\ep]} ,0)$ satisfying  (\ref{eq_h_contrations_local}) and
	such that:
	\begin{enumerate}[(i)]
		\item $\Psi$ is a vertical Lipschitz mapping.
		\item\label{item_homogeneous} For each $\sigma \in K$ and each $\,2\le i\le n$, the contraction function $\varphi_{\sigma,i}$ (see (\ref{eq_h_contrations_local})) is a bounded subhomogeneous standard simplicial function on $\sigma$ and the  tame coordinate
		$\mu_{\sigma,i}$ is radially constant. Moreover, $\varphi_{\sigma,1}\equiv 1$.
	\end{enumerate}
	Furthermore, we may require this triangulation to be compatible with finitely many given germs of definable subsets of $X$.
\end{thm}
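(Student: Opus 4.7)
The plan is to argue by induction on $n$, closely following the structure of the proof of Theorem \ref{thm existence des triangulations}, but replacing Theorem \ref{thm_proj_reg_hom_pres_familles} by the local version Theorem \ref{thm_proj_loc} so that every bi-Lipschitz normalization we use is vertical, and exploiting the conical shape of $\C_n(R)$ at $0$ to secure the radially constant and subhomogeneous properties. The base case $n=1$ is immediate: the identity map of a cell decomposition $0=a_0<a_1<\cdots<a_k=\ep$ of $[0,\ep]$ compatible with the germs of the prescribed subsets is a metric triangulation with $\varphi_{\sigma,1}\equiv 1$, and verticality is vacuous.

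For the inductive step $n\geq 2$, I would first apply Theorem \ref{thm_proj_loc} (after taking a cell decomposition to isolate the strata of $X$ of empty interior, as in the proof of Theorem \ref{thm_lipschitz_cells}) to reduce to the case that $e_n$ is regular for $X$ and for the sets to be made compatible. Corollary \ref{cor_proj_reg_decomposition_en_graphes} together with Remark \ref{rem_famille_lipschitz_ordonnees} then gives an ordered family of uniformly Lipschitz vertical functions $\theta_1\leq\cdots\leq\theta_\lambda$ on a germ in $\C_{n-1}(R')$ whose graphs contain all the relevant topological boundaries; crucially, because $X\subset \C_n(R)$, each $\theta_i$ satisfies $|\theta_i(\xt)|\leq R|\xt|$, so $\theta_i(0)=0$. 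Next, apply Lemma \ref{lem_eq_dist_cor_th_prep} to the distance functions to the compatible subsets, producing subsets $W_j$ whose boundaries again lie in a union of graphs of such vertical Lipschitz functions (vanishing at $0$), and then Lemma \ref{lem_graphes_en_plus} to get vertical Lipschitz functions $\xi_1\leq\cdots\leq\xi_N$, vanishing at $0$, over a cell decomposition of $\C_{n-1}(R')$ in which all the functions $|x_n-\theta_i(\xt)|$ and the relevant projected distances are totally ordered. Applying the induction hypothesis to the projection of $X$ (along with images of compatible subsets and of the functions appearing in the preparation) yields a metric triangulation $\Psi:(|K|,0)\to(\R^{n-1},0)$ that is vertical, satisfies (\ref{eq_h_contrations_local}) with radially constant $\mu_{\tau,i}$ and bounded subhomogeneous $\varphi_{\tau,i}$ for $2\leq i\leq n-1$, $\varphi_{\tau,1}\equiv 1$, and is such that each $\xi_j\circ\Psi$ and each auxiliary $\eta_l\circ \Psi$ is equivalent to a subhomogeneous standard simplicial function. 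Because $\xi_j(0)=0$ and every simplex of $K$ has $0$ as a vertex, we may interpolate $\xi_j\circ \Psi$ at vertices by piecewise linear functions $\zeta_j:|K|\to\R$ that are \emph{linear} on each closed simplex and vanish at $0$, so that $\zeta_j(sp)=s\zeta_j(p)$ for $p\in|K|$ and $s\in[0,1]$. We then form the barycentric subdivision $\hat K$ of the polyhedral decomposition of $\{(p,y):\zeta_1(p)\leq y\leq\zeta_N(p)\}$ given by the graphs $\Gamma_{\zeta_j}$, and define $\hat\Psi$ vertically on each slab by
\[
\hat\Psi(p,\nu\zeta_j(p)+(1-\nu)\zeta_{j+1}(p)):=\bigl(\Psi(p),\nu\xi_j(\Psi(p))+(1-\nu)\xi_{j+1}(\Psi(p))\bigr).
\]

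It remains to verify the properties. Verticality of $\hat\Psi$ is immediate from the construction. For a simplex $\sigma\in\hat K$ with $\pi(\sigma)=\tau\in K$ and $\sigma\subset[\zeta_j,\zeta_{j+1}]$, the same computation as in the proof of Theorem \ref{thm existence des triangulations} (now in the simpler bounded-germ setting, so that (\ref{eq_h_contrations_local}) suffices, as noted in Remark \ref{rem sur les triangulations}) gives
\[
|\hat\Psi(q)-\hat\Psi(q')|\sim |\Psi(p)-\Psi(p')|+\bigl((\xi_{j+1}-\xi_j)\circ\Psi(p)\bigr)\cdot|\mu_{\sigma,n}(q)-\mu_{\sigma,n}(q')|,
\]
with $\mu_{\sigma,n}$ built from $\zeta_j,\zeta_{j+1}$ via (\ref{eq def pl coord}). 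Since $\zeta_j$ and $\zeta_{j+1}$ are linear and vanish at $0$, both numerator and denominator in (\ref{eq def pl coord}) scale by $s$ under $q\mapsto sq$, so $\mu_{\sigma,n}$ is radially constant. The new contraction function $\varphi_{\sigma,n}(q):=(\xi_{j+1}-\xi_j)\circ\Psi(\pi(q))$ is a standard simplicial function on $\tau$ by the induction hypothesis, bounded because the $\xi_j$'s are Lipschitz and the germ is bounded, and subhomogeneous because it is the composition with $\pi$ of a subhomogeneous function on $\tau$ (by induction) and $\pi$ commutes with the scaling $q\mapsto sq$. Compatibility with the prescribed subsets and the fact that distance functions $d(\cdot,W_{j})$ pull back to subhomogeneous standard simplicial functions is checked exactly as in the proof of Theorem \ref{thm existence des triangulations}, using that all distance bases and the $\theta_i,\xi_j$ vanish at $0$. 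The main obstacle is bookkeeping the verticality and the vanishing-at-$0$ conditions simultaneously through each application of Theorem \ref{thm_proj_loc}, Lemma \ref{lem_eq_dist_cor_th_prep}, and Lemma \ref{lem_graphes_en_plus}: each auxiliary Lipschitz function produced at every level of the induction must be vertical and vanish at $0$, so that the associated piecewise linear approximations $\zeta_j$ are linear from the apex $0$ of the simplices, which is precisely what delivers the radial constancy of the tame coordinates and the subhomogeneity of the contractions in the final output.
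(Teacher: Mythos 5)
Your overall construction matches the paper's: argue by induction on $n$ with a strengthened inductive statement; use Theorem~\ref{thm_proj_loc} (via the paper's Lemma~\ref{prop proj reg}, which packages Theorem~\ref{thm_proj_loc}, Corollary~\ref{cor_proj_reg_decomposition_en_graphes}, Lemma~\ref{lem_eq_dist_cor_th_prep}, and Lemma~\ref{lem_graphes_en_plus}) to get a vertical normalization and an ordered family of Lipschitz functions $\xi_1\le\cdots\le\xi_N$ vanishing at $0$; triangulate the projection; then lift via piecewise linear functions $\zeta_j$ that are linear from the apex. The verticality, radial constancy of $\mu_{\sigma,n}$, and the estimate~(\ref{eq_h_contrations_local}) are handled the same way as in the paper.

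The genuine gap is in the subhomogeneity argument. You assert that pullbacks of the auxiliary functions are subhomogeneous standard simplicial functions and that this is ``checked exactly as in the proof of Theorem~\ref{thm existence des triangulations}, using that all distance bases and the $\theta_i,\xi_j$ vanish at $0$.'' But Theorem~\ref{thm existence des triangulations} establishes only \emph{equivalence to a standard simplicial function}, not subhomogeneity, which is not even a notion appearing in that proof. Vanishing at $0$ alone is not enough: for $\eta_j\sim |x_n-\theta(\xt)|^\alpha a(\xt)$ with $\theta\leq\xi_i$ on the cell, the function $|x_n-\xi_i(\xt)|^\alpha a(\xt)$ is in general \emph{unbounded} when $\alpha<0$ even though $\xi_i$ vanishes at $0$, and one must cap it by $x_1$ before the scaling estimate has any hope of holding. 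The paper's Step~4 is the substantive new ingredient of this local theorem: one strengthens the inductive statement $\mathbf{(H_n)}$ to assert that any $\eta$ with $\eta(x)\lesssim x_1$ has $\eta\circ\Psi$ subhomogeneous, writes $F(x)=\nu(x)^{\alpha}G(\xt)$ with $\nu$ radially constant under $\widehat\Psi$, splits into the cases $\alpha<0$ and $\alpha\geq 0$ (using the $\min$ respectively $\max$ decomposition), and in the negative case uses the identities $\min(\max(u,v),w)=\max(\min(u,w),\min(v,w))$ and $\min(F,\nu^{\alpha}x_1,x_1)=\min(F,x_1)$ (because $\nu^\alpha\ge 1$ when $\alpha<0$) to reduce the claim to the subhomogeneity of $\min(G,\xt_1)$, which the inductive hypothesis on $\Psi$ provides. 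Without stating this strengthened hypothesis precisely and carrying out the $\min/\max$ reduction, the induction does not close: the subhomogeneity of the new $\varphi_{\sigma,n}=(\xi_{i+1}-\xi_i)\circ\Psi\circ\pi$ and of the auxiliary pullbacks is not a formal consequence of anything in the proof of Theorem~\ref{thm existence des triangulations}.
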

The proof of this theorem occupies the remaining part of this section.

\paragraph{A preliminary lemma.}We denote by $\s_{n,0}$\nomenclature[by]{$\s_{n,0}$}{set of germs at $0$ of subanalytic subsets of $\R^n$\nomrefpage} the set constituted by all the germs at the origin of definable subsets of $\R^n$.
In the lemma below, all the considered germs are germs  at the origin. 

\medskip

\begin{lem}\label{prop proj reg}
	Let  $A_1,\dots,A_\kappa $ be germs of definable subsets of $\C_{n}(R)$,  $R>0$, and
	$\eta_1,\dots,\eta_l$  be   germs of nonnegative definable functions on $\C_{n}(R)$. There exist a germ of definable
	vertical bi-Lipschitz
	homeomorphism (onto its image) $H:(\C_n(R),0) \to (\C_n(R),0)$ and a cell decomposition $\mathcal{D}$ of $\R^n$ such
	that:
	\begin{enumerate}[(i)]
		\item\label{item_compatible}  $\D$ is compatible with (some representatives of the germs of) $H(A_1),\dots, H(A_\kappa)$.
		\item\label{item_graphe} Every cell of $\D$ is
		either a band or the graph of a Lipschitz function.
		\item\label{item_eq} On each
		cell $D$ of $\mathcal{D}$, every germ  $\eta_i\circ H^{-1}$ is $\sim$
		to a function of the form:
		\begin{equation}\label{eq prep}
							|x_n-\theta(\xt)|^r a(\xt),\qquad (\xt,x_n) \in D\subset \R^{n-1}
		\times \R,\end{equation}
		where
		$a$ is a  definable  function of constant sign, $\theta$ is a Lipschitz definable function  (on the basis  of $D$), and $r \in \Q$.
	\end{enumerate}
\end{lem}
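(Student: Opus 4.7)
The plan is to combine the Preparation Theorem (Theorem \ref{thm_preparation}) with the local regular vector result (Theorem \ref{thm_proj_loc}): first reduce each $\eta_i$ via the Preparation Theorem, then apply a single vertical bi-Lipschitz germ $H$ chosen to make $e_n$ simultaneously regular for every $(n-1)$-dimensional piece arising from that reduction (in particular, the graphs of the $\la$-translations themselves). After such a change of coordinates every singular hypersurface in sight becomes the graph of a Lipschitz function, so the resulting cell decomposition automatically consists of graphs and bands of Lipschitz functions.

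Concretely, first I would apply the Preparation Theorem (together with Proposition \ref{pro_la_and_globally subanalytic}) to each $\eta_i$, producing a cell decomposition $\mathcal{E}_0$ of $\mathcal{C}_n(R)$ compatible with $A_1,\dots,A_\kappa$ such that on each $n$-dimensional cell $C\in\mathcal{E}_0$ every $\eta_i$ admits a reduction
\[
\eta_i|_C(\xt,x_n)\;=\;a_{C,i}(\xt)\,|x_n-\theta_{C,i}(\xt)|^{r_{C,i}}\,U_{C,i}(\xt,x_n),
\]
with $U_{C,i}$ a bounded $\la$-unit. Then I would collect into a single set $E$ all the $(n-1)$-dimensional data: the frontiers $fr(A_j)$, the cells of $\mathcal{E}_0$ of dimension strictly less than $n$, the graphs $\Gamma_{\theta_{C,i}}^{e_n}$, and the zero loci of the $a_{C,i}$ (suitably cylindrified in $\R^n$). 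Its germ at the origin has empty interior, so Theorem \ref{thm_proj_loc} furnishes a vertical bi-Lipschitz germ $H:(\mathcal{C}_n(R),0)\to(\mathcal{C}_n(R),0)$ for which $e_n$ is regular for $H(E)$; by Corollary \ref{cor_proj_reg_decomposition_en_graphes} together with Remark \ref{rem_famille_lipschitz_ordonnees}, $H(E)$ is then contained in a finite ordered union of graphs $\Gamma_{\xi_1}^{e_n}\le\cdots\le\Gamma_{\xi_N}^{e_n}$ of uniformly Lipschitz definable functions on $N_{e_n}$. I then take $\mathcal{D}$ to be a cell decomposition of $\R^n$ whose $(n-1)$-cells over each basis cell are the restrictions of these Lipschitz graphs and whose $n$-cells are the bands $(\xi_i,\xi_{i+1})$; property (ii) is then built in, and compatibility (i) follows at once from $\delta H(A_j)\subset H(fr(A_j))\subset H(E)\subset\bigcup_i\Gamma_{\xi_i}^{e_n}$.

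The hard part will be establishing property (iii). Since each $n$-dimensional cell $D\in\mathcal{D}$ is a band disjoint from $H(E)\supset H\bigl(\bigcup_{\dim C'<n}C'\bigr)$, the preimage $H^{-1}(D)$ is connected and disjoint from every lower-dimensional cell of $\mathcal{E}_0$; hence it lies inside a single $n$-dim cell $C\in\mathcal{E}_0$, and $\eta_i\circ H^{-1}|_D$ inherits the reduction of $\eta_i|_C$ transported via $H^{-1}$. Because $\Gamma_{\theta_{C,i}}^{e_n}\subset E$, its image $H(\Gamma_{\theta_{C,i}}^{e_n})$ lies in $\bigcup_i\Gamma_{\xi_i}^{e_n}$; after a preliminary partition of the basis of $D$ (which introduces no new non-Lipschitz boundary), this image agrees above the basis of $D$ with a single Lipschitz graph $\xi_{i_0}$ bounding $D$. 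Invoking Remark \ref{remark_unit} in the spirit of Lemma \ref{lem_meme_morphisme}, one then substitutes $\theta_{C,i}$ by $\xi_{i_0}$ in the reduction, absorbs the bounded unit $U_{C,i}\circ H^{-1}$ into the equivalence $\sim$, and (thanks to $\{a_{C,i}=0\}$ having been placed in $E$) records that $a$ has constant sign on $D$. This delivers the form (\ref{eq prep}) with a Lipschitz $\la$-translation, completing the verification of (iii).
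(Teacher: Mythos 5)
Your general scheme (first prepare the $\eta_i$, then invoke Theorem~\ref{thm_proj_loc} to make $e_n$ regular for the lower-dimensional strata, then build $\D$ out of the resulting Lipschitz graphs) is in the right spirit, but the verification of~(iii) has a genuine gap: you cannot transport the reduced form across $H^{-1}$. Two things go wrong. First, a vertical map only fixes the \emph{first} coordinate, so $\pi_{n-1}(H^{-1}(y))$ depends on $y_n$ as well as on $\tilde y$; hence $a_{C,i}\circ\pi_{n-1}\circ H^{-1}$ is not an $(n-1)$-variable function, and the prescribed shape~(\ref{eq prep}) is lost. Second, and more fundamentally, the factor $|x_n-\theta_{C,i}(\xt)|$ in the reduction is a \emph{vertical} distance to $\Gamma_{\theta_{C,i}}$, while what a bi-Lipschitz map controls is the \emph{Euclidean} distance: $d(H^{-1}(y),\Gamma_{\theta_{C,i}})\sim d(y,\Gamma_{\xi_{i_0}})$, but $\theta_{C,i}$ is not Lipschitz in general, so $|x_n-\theta_{C,i}(\xt)|$ at $H^{-1}(y)$ can be much larger than the Euclidean distance and is \emph{not} equivalent to $|y_n-\xi_{i_0}(\tilde y)|$. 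Knowing that $H(\Gamma_{\theta_{C,i}})$ lies over $\Gamma_{\xi_{i_0}}$ above the basis of $D$ therefore does not license the substitution of $\theta_{C,i}$ by $\xi_{i_0}$, and Remark~\ref{remark_unit} / Lemma~\ref{lem_meme_morphisme} cannot rescue this since they operate before, not after, a bi-Lipschitz change of coordinates.

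The paper's route circumvents both problems by not transporting the reduction at all. It first applies Lemma~\ref{lem_eq_dist_cor_th_prep}, which recasts each $\eta_i$ on a partition $V_1,\dots,V_b$ as a product of powers of Euclidean distance functions $d(x,W_k)$; these are bi-Lipschitz invariant up to $\sim$, so they commute cleanly with $H$. Theorem~\ref{thm_proj_loc} is then applied to the $\delta A_i$'s, $\delta V_i$'s, $\delta W_i$'s, putting all this data over Lipschitz graphs $\theta_1\le\dots\le\theta_d$, and only \emph{then} is the vertical distance $|x_n-\theta_\nu(\xt)|$ recovered, using that $\theta_\nu$ is Lipschitz so $d(y,\Gamma_{\theta_\nu})\sim|y_n-\theta_\nu(\tilde y)|$, together with Lemma~\ref{lem_graphes_en_plus} to totally order the competing terms $|y_n-\theta_\nu(\tilde y)|$ and $d(\tilde y,\pi(H(\delta W_k)\cap\Gamma_{\theta_\nu}))$ on each band. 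If you want to keep your architecture, you must insert Lemma~\ref{lem_eq_dist_cor_th_prep} as a preliminary step rather than the raw Preparation Theorem, and argue with Euclidean distances throughout.
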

\begin{proof}
	Apply Lemma \ref{lem_eq_dist_cor_th_prep} (with $m=0$ and $A=\C_{n}(R)$) to each of the
	functions $\eta_j$, $j=1,\dots,l$ and take a common refinement of the obtained partitions. This provides a finite partition
	$V_1,\dots,V_b$   of $\C_{n}(R)$ together with some definable subsets $W_1,\dots,W_c$ of
	$\C_{n}(R)$,  such that on each $V_i$ each  function $\eta_j$ is equivalent to a product
	of powers of functions of type  $x \mapsto d(x,W_k)$, $k \leq c$.
	
	Possibly  refining
	the partition $V_1,\dots,V_b$, we may assume that the $W_{k}$'s are unions of elements of this partition. The function $d(x,W_k)$ is then on  $V_i$ for each $k$ and $i$ either identically $0$ or equal to  $d(x,\delta W_k)$. We therefore can suppose that  $d(x,\delta W_k)\equiv d(x, W_k)$ on the $V_i$'s (if $\eta_j\equiv 0$ on some
	$V_i$ then (\ref{item_eq}) is trivial on this set for this $j$).
	
	Apply now Theorem \ref{thm_proj_loc} to the union of the  $\delta A_i$'s,
	 the $\delta V_i$'s, and  the $\delta W_i$'s. This provides a germ of vertical bi-Lipschitz homeomorphism onto its image $H:(\C_{n}(R),0) \to (\C_n(R),0)$ such that $e_n$ is regular for the respective images of these sets under $H$.  It means that
	these sets are sent by $H$
	into the union of the graphs of some definable  Lipschitz functions $\theta_1\leq \dots \leq \theta_d $ defined on $\R^{n-1}$ (see Remark \ref{rem_famille_lipschitz_ordonnees}).

	Let $\pi:\R^{n}\to \R^{n-1}$ denote the canonical projection. Thanks to Lemma \ref{lem_graphes_en_plus} (applied to the $\theta_i$'s and to all
	the $(n-1)$-variable functions $\xt \mapsto d(\xt,\pi(H( \delta W_k)\cap
	\Gamma_{\theta_i}))$ with $m=0$), we know that there exist some definable
	functions $\xi_1\leq\dots\leq \xi_p$ and a cell decomposition of $\R^{n-1}$, say $\E$,  such that for every $E \in \E$ and over each
	$[\xi_{i,|E},\xi_{i+1,|E}]$, $i<p$,  the family of functions
	 $$ |x_n-\theta_\nu(\xt)|, \; d(\xt,\pi(
	H(\delta W_k)\cap \Gamma_{\theta_\nu})),\;( \theta_\nu -\theta_{\nu'})(\xt), \quad  \nu ' < \nu \leq d,\; k\le c,$$ where $(\xt,x_n)\in [\xi_{i,|E},\xi_{i+1,|E}]\subset \R^{n-1}\times \R$,  is totally ordered. Adding some graphs if necessary, we can assume that $\bigcup_{i=1}^p \Gamma_{\xi_i} \supset \bigcup_{i=1}^d \Gamma_{\theta_i} $. Moreover, refining $\E$ if necessary, we can also assume  it to be compatible with the cells of $\pi(\F)$, where $\F$ is some  cell decomposition of $\R^n$ compatible with the $H(A_i)$'s and the $\Gamma_{\xi_i}$'s.
	
	The respective graphs of the
	restrictions of the $\xi_i$'s to the cells of
	$\E$ now induce a cell decomposition of $\R^n$
	compatible with the $H(A_i)$'s that we will denote by $\D$.
	Since the $\xi_i$'s are
	Lipschitz functions, we already see that (\ref{item_compatible}) and (\ref{item_graphe})  hold.

	To prove (\ref{item_eq}), fix a cell $D$ of $\D$. If this cell is a graph, (\ref{eq prep}) is obvious. Otherwise, since $H^{-1}(D)$ is included in $V_j$ for some $j \le b$ (for the $H(\delta V_j)$'s are included in the  $\Gamma_{\theta_i}$'s, which are included in the $\Gamma_{\xi_i}$'s, which define the cells of $\D$), we know that for each $k$ the function $\eta_k $ is $\sim$ on $H^{-1}(D)$ to a product of powers of functions of the finite family   $x \mapsto d(x,\delta W_k)$, $k \leq c$. As $H$ is bi-Lipschitz, this entails
	that for each $k$ the function $\eta_k\circ H^{-1}$ is $\sim$ on $D$ to a product of powers of functions of the  family   $x \mapsto d(x,H(\delta W_k))$, $k \leq c$. As a matter of fact, it is enough to check that each function  $x \mapsto d(x,H(\delta W_k))$, $k\in \{1,\dots,c\}$, admits an estimate like displayed in (\ref{eq_prep_eta_j}), for some function $\theta$ independent of $k$.
	
	Fix $k \le c$. As the  $\theta_\nu$'s
	are Lipschitz functions, we have for any $\nu \in \{1,\dots,d\}$:
	\begin{equation}\label{eq fin triang}
		d(x, H(\delta W_k) \cap \Gamma_{\theta_\nu} ) \eqr |x_n
		-\theta_\nu(\xt)|+d(\xt,\pi(H( \delta W_k) \cap \Gamma_{\theta_\nu} )),
	\end{equation}
	for $x=(\xt,x_n) \in   \R^{n-1} \times \R$.
	The terms of the  right-hand-side are nonnegative and comparable with
	each other (for partial order relation $\leq$) over the cell $D$
	(by choice of the
	$\xi_i$'s).
	The left-hand-side is therefore $\sim$   to one
	of them on $D$.
	
	Note that, as the $H(\delta W_k)$'s are included in the graphs of the $\theta_\nu$'s we have:
	$ d(x, H(\delta W_k))
	=\underset{1 \leq \nu\leq d}{\min}
	d(x,H( \delta W_k )\cap \Gamma_{\theta_\nu} ).$
	Hence, by $(\ref{eq fin triang})$,  $d(x,H(\delta W_k))$
	is for each $k$
	equivalent over $D$ either to one of the functions $\xt \mapsto  d(\xt,\pi(H( \delta W_k) \cap \Gamma_{\theta_\nu} )) $ (which is an $(n-1)$-variable function)  or to
	 $x=(\xt,x_n)\mapsto  |x_n -\theta_\nu(\xt)|$, for some $\nu\in\{1,\dots,d\}$. It thus only remains to check that on each cell, the same $\nu$ can be chosen for all $k$.
	But, since the finite family constituted by the functions  $ |x_n-\theta_\nu(\xt)| , \,  \nu \leq d$, together with  the  functions $(\theta_\nu-\theta_{\nu'})$,  $\nu' < \nu \leq d$, is totally ordered on each cell, there is $\nu_0\le d$ (for each cell) such that each of the functions $|x_n-\theta_\nu(\xt)|$, $\nu\in \{ 1,\dots,d\}$, is either equivalent to $|x_n-\theta_{\nu_0}(\xt)|$ or to an $(n-1)$-variable function  (see (\ref{eq_reduced_theta12}) and (\ref{eq_reduced_theta21})). 
	
	 That $a$ has constant sign (in (\ref{eq prep})) may always be obtained up to a refinement of the cell decomposition.
\end{proof}

\begin{proof}[Proof of Theorem \ref{thm_triangulations_locales}]
	We shall use an argument which is similar to the one we used in the proof of Theorem \ref{thm existence des triangulations}, proving
	inductively  the following statements:

	\noindent $\mathbf{(H_n)}$  Let $X$ be a definable subset   of $\mathcal{C}_n(R)$, $R>0$, and  $A_1,\dots , A_\kappa$ some definable subsets  of $X$.
	Given finitely many bounded nonnegative definable function-germs (at the origin) $\eta_1,\dots,\eta_l$ on $\mathcal{C}_n(R)$,
	there exist $\ep>0$ and a  metric triangulation $$\Psi:(|K|,0) \to (X_{[0,\ep]},0)$$ of $X_{[0,\ep]}$  compatible with   $A_{1_{[0,\ep]}},\dots, A_{\kappa_{[0,\ep]}}$, satisfying properties $(i)$ and $(ii)$ of the theorem, and such that for each $j\le l$ the function  $\eta_j\circ \Psi$ is $\sim$ to the germ of a standard simplicial function on each simplex.    Moreover, if $\eta_j(x)\lesssim x_1$ for $x=(x_1,\dots,x_n)\in X_{[0,\ep]}$, then we can require  $\eta_j\circ \Psi$  to be subhomogeneous.

	The assertion  $\mathbf{(H_1)}$ being trivial ($\Psi$ is then the identity map),  let us prove  $\mathbf{(H_n)}$, assuming $\mathbf{(H_{n-1})}$, $n>1$. Fix $X, A_1\dots,A_\kappa$ and $\eta_1,\dots,\eta_l$ as in   $\mathbf{(H_n)}$. We denote by $\pi: \R^{n} \rightarrow \R^{n-1}$ the
	projection onto the $(n-1)$ first coordinates.

	\noindent {\bf Step 1.} We define a $\ccc^0$ triangulation $\pc :|\widehat{K}|\to X$. 
	
	Apply Lemma \ref{prop proj reg} to the family constituted by  $X$, the
	$A_i$'s, and the set $\ccn$ itself, together with the functions $\eta_1,\dots,\eta_l$.  We get a (germ of)
	vertical bi-Lipschitz map  $H:\ccn\to \ccn$ and a
	cell decomposition $\D$ such that $(i)$, $(ii)$, and $(iii)$ of the
	latter lemma hold. As we may work up to a vertical
	bi-Lipschitz map, we will identify $H$ with the identity map.

	By (\ref{item_graphe}) of Lemma
	\ref{prop proj reg}, every  cell of $\D$ included in $\C_n (R)$ which is not a band is the graph of  a Lipschitz function. We thus can include all such cells  in the union of the respective graphs of some definable Lipschitz functions $\xi_1\leq \dots
	\leq \xi_N$ defined on  $\C_{n-1}(R)$ (see Remark \ref{rem_famille_lipschitz_ordonnees}).

	Refine the cell decomposition $\pi(\D)$ into a cell decomposition $\F$ compatible with the  zero loci of the functions $(\xi_j-\xi_{j+1})$, $j<N$, and apply the induction hypothesis to the family constituted by the
	cells of $\F$ to get
 a  homeomorphism  $\Psi:(|K|,0)\to (\C_{n-1}(R)_{[0,\ep]},0)$, with $\ep>0$ and $K$ simplicial complex of $\R^{n-1}$.

	We are going to lift $\Psi$ to a homeomorphism
	$\widehat{\Psi}:|\widehat{K}| \to \C_{n}(R) $. We first define 
	the simplicial complex $\widehat{K}$ in a similar way as in the proof of Theorem \ref{thm existence des triangulations}.
	
	Let $\zeta_1 \leq \dots
	\leq \zeta_N$ be piecewise linear functions over $|K|$ such that for each $i$
	$\zeta_i \equiv \zeta_{i+1}$ on the set  $\{\xi_{i}\circ \Psi=\xi_{i+1} \circ
	\Psi\}$ (this set is a subcomplex of $K$).  Since the graphs of the $\xi_i$'s are subsets of $\C_n(R)$, all these functions vanish at the origin, and we can assume (up to a translation) that so do the $\zeta_i$'s. Let also 
	$$ Z:= \{ (p,y) \in  |K| \times \R \; : \;
	\zeta_{1}(p) \le y \le \zeta_{N}(p) \}.$$
		We obtain a polyhedral decomposition of $Z$ by taking the respective inverse
	images by $\pi_{|Z}$ of the simplices of $K$  as well as all the images of the simplices of $K$ by the
	mappings $p \rightarrow (p,\zeta_i(p))$, $1\le i \le N$. After a
	barycentric subdivision of this polyhedron, we get a simplicial
	complex $\widehat{K}$.

	Define now over $\widehat{K}$ a mapping 
	$\widehat{\Psi}:
	|\widehat{K}|\rightarrow \R^n$ in the following way:
	$$\widehat{\Psi}(p,s \, \zeta_i(p)+(1-s)\zeta_{i+1}(p)):=
	(\Psi(p),s\, \xi_i\circ \Psi(p)+(1-s)\,\xi_{i+1}\circ\Psi(p)),$$
	for $ 1 \leq i < N$, $p \in |K|$, and $s \in [0,1]$.
	By construction, this mapping is a homeomorphism onto its image.  The cells of $\D$ that lie in $\ccn$ are unions of images under $\widehat{\Psi}$ of open simplices. Since $\D$ is compatible with $X$ and the $A_i$'s, the restriction of $\pc$ to $\pc^{-1}(X_{[0,\ep]})$ is a triangulation of $X_{[0,\ep]}$ compatible with  $A_{1_{[0,\ep]}},\dots, A_{\kappa_{[0,\ep]}}$.


Let us now fix an open simplex $\sigma \in \widehat{K}$ and denote by $\tau$ the simplex of $K$ that contains $\pi(\sigma)$. Let $i<N$ be such that $\sigma\subset [\zeta_i, \zeta_{i+1}]$.
	
	\noindent {\bf Step 2.} We  check that over  $\sigma$  the mapping
	$\widehat{\Psi}$ satisfies an inequality of type  (\ref{eq h dist ds triang}), for some subhomogeneous standard simplicial functions $\varphi_{\sigma,1},\dots,\varphi_{\sigma,n}$ and some tame system of coordinates $\mu_\sigma$ that we shall introduce.

	Thanks to the induction hypothesis, we may find standard simplicial functions
	$\varphi_{\tau,1},\dots,\varphi_{\tau,n-1} $ and a tame system
	of coordinates $(\mu_{\tau,1},\dots,\mu_{\tau,n-1})$ such that for
	 $p$ and $p'$ in $\tau$:
	\begin{equation}\label{eq pr sigma prime_loc}
		|\Psi(p)-\Psi(p')| \eqr \sum_{j=1} ^{n-1} \varphi_{\tau,j}(p)
		|\mu_{\tau,j}(p)-\mu_{\tau,j}(p')|.
	\end{equation}
			If $\sigma \subset \Gamma_{\zeta_i}$ or $\sigma \subset \Gamma_{\zeta_{i+1}}$ then the result follows from (\ref{eq pr sigma prime_loc}) and the Lipschitzness of the $\xi_i$'s. Otherwise,
	let $q$ and
	$q'$ be two points of $\sigma$.  Such points may be expressed  
	$$q=(p,s\zeta_i (p)
	+(1-s)\zeta_{i+1}(p)) \quad \eto\quad  q'=(p',s'\zeta_i(p')
	+(1-s')\zeta_{i+1}(p'))$$
	 for some  $(p,p')$ in $\tau \times \tau$ and  $(s,s')$ in
	$(0,1)^2$, and we then can define (see Figure \ref{fig3}, in the proof of Theorem \ref{thm existence des triangulations}) $$q'':=
	(p,s'\zeta_i(p)+(1-s')\zeta_{i+1}(p)).$$
	We will consider $s,s',p$, $p'$, and
	$p''$ as functions of $q$ and $q'$.  By definition of $\widehat{\Psi}$, since $\xi_{i}$ and $\xi_{i+1}$
	are Lipschitz functions, we have over $\sigma \times \sigma$:
	\begin{equation}\label{htilde dist ds preuve_loc}
		|\pc(q)-\pc(q')| \eqr |\pc(q)-\pc(q'')|+|\Psi(p)-\Psi(p')|.
	\end{equation}
		As $\pi(q)=\pi(q'')$, by definition of $\widehat{\Psi}$, we have:
	$$|\pc(q)-\pc(q'')|
	\eqr (\xi_{i+1}(\Psi(p))-\xi_{i}(\Psi(p)))
	\cdot |s-s'|.$$
		Thanks to the induction hypothesis, we can assume that the triangulation $(K,\Psi)$ is such  that  $(\xi_{i+1}-\xi_i) \circ \Psi$  is  $\eqr$ to a
	subhomogeneous standard simplicial function on $\tau$.  The composite
	$(\xi_{i+1}-\xi_i) \circ \Psi \circ \pi$ is thus $\sim$ to a subhomogeneous standard simplicial function on
	$\sigma$ that we will denote by $\varphi_{\sigma,n}$. The functions $\zeta_i$ and $\zeta_{i+1}$ define a
	tame coordinate of $\R^{n}$ (as in  (\ref{eq def pl coord})) that we will denote by $\mu_{\sigma,n}$.  Observe that $s=\mu_{\sigma,n}(q)$ and $s'=\mu_{\sigma,n}(q')$. The preceding estimate can therefore be rewritten as:
	\begin{equation}\label{preuve triang n+1 eme coord_loc}
		|\pc(q)-\pc(q'')| \eqr \varphi_{\sigma,n}(q) \, \cdot
		\,| \mu_{\sigma,n}(q)-\mu_{\sigma,n}(q') \: |.
	\end{equation}
	Define then for $j<n$ and $q\in \sigma$:
	$$\varphi_{\sigma,j}(q):=\varphi_{\tau,j}(\pi(q))\;\; \mbox{ and }\;\; \mu_{\sigma,j}(q):=\mu_{\tau,j}(\pi(q)).$$
	By  (\ref{eq pr sigma
		prime_loc}), (\ref{htilde dist ds preuve_loc}), and (\ref{preuve triang n+1 eme coord_loc}), we get the desired
	equivalence. 	Since $\zeta_i(0)=\zeta_{i+1}(0)=0$, it is clear that $\mu_{\sigma,n}$ is radially constant (see (\ref{eq def pl coord})).


	\noindent {\bf Step 3.} We check that for each $j$ the function $\eta_j\circ \widehat{\Psi}$ is $\sim$ to a standard  simplicial function on $\sigma$ (we recall that $\sigma$ is a fixed element of $ \widehat{K}$).

	For this purpose, fix a positive integer $j \le l$.    If $\pc(\sigma) \subset  \Gamma_{\xi_\iota}$, with $\iota=i $ or $i+1$ (we recall that $\sigma\subset [\zeta_i ,\zeta_{i+1}]$, which entails $\pc(\sigma) \subset [\xi_i ,\xi_{i+1}]$), then we are done since, thanks to the induction hypothesis, we can assume that  $\tau \ni \qt \mapsto \eta_j\big{(}\Psi(\qt),\xi_\iota\circ \Psi(\qt)\big{)}$ is $\sim$ to a standard simplicial function. Otherwise, by definition of $\pc$, we must have $\pc(\sigma) \subset (\xi_i ,\xi_{i+1})$.

	By construction, $\pc(\sigma)$ is included in a cell $D\in \D$. By (\ref{eq prep}) we know that there are functions $a$ and $\theta$ on the basis $E$ of $D$, as well as $\alpha \in \Q$, such that for  $x=(\xt,x_n) \in \widehat{\Psi}(\sigma)$
	\begin{equation}\label{eq_prep_eta_j}\eta_j(x)\sim |x_n-\theta(\xt)|^{\alpha} a(\xt). \end{equation}
	Thanks to $\mathbf{(H_{n-1})}$, we can assume that $a\circ \Psi$ is $\sim$ to a standard simplicial function. We thus merely have to check that the function $(\qt,q_n)\mapsto |\widehat{\Psi}_n(q)-\theta(\Psi(\qt))|$  is equivalent to a standard simplicial function (here we set $\widehat{\Psi}=(\Psi,\widehat{\Psi}_n)$).

	As $\Gamma_{\theta} \subset \bigcup_{k=1}^N  \Gamma_{\xi_k}$, we have on
	$\pi(\widehat{\Psi}(\sigma))$  either  $\theta \geq \xi_{i+1}$ or $\theta \leq \xi_{i}$. For simplicity, we will assume that the latter inequality holds.
	On $ \sigma $ we have: \begin{equation}\label{eq_fin_preuve_triangulation_loc}|\widehat{\Psi}_{n}-\theta \circ \Psi|=(\widehat{\Psi}_{n}-\xi_i \circ
		\Psi)+(\xi_i \circ \Psi-\theta \circ \Psi).\end{equation}
		By (\ref{preuve triang n+1 eme coord_loc}), we have over $\sigma$ for $q=(\qt,q_{n})$:
	\begin{equation}\label{eq_psi_n_moins_xi_loc}
		\pc_{n}(q)-\xi_{i}(\Psi(\qt)) \eqr \mu_{\sigma,n}(q) \, . \, \varphi_{\sigma,n}(q).
	\end{equation}
	The function $\mu_{\sigma,n}(q)$ is obviously $\eqr$ to a
	standard  simplicial  function.
	Thus, as by induction   $|\xi_{i}\circ \Psi-\theta
	\circ \Psi|$ can be  assumed to be also equivalent to a standard
	simplicial  function,  the required fact follows from (\ref{eq_fin_preuve_triangulation_loc}) and (\ref{eq_psi_n_moins_xi_loc}).
	
	\medskip

	\noindent{\bf Step 4.} We check that  if $\eta_j(x)\lesssim x_1$ for  $x=(x_1,\dots,x_n)\in \pc(\sigma)$, $j\le l$, then we can require  in addition the function $\eta_j\circ \pc$ to be subhomogeneous on $\sigma$.

	For simplicity, we will say that a function $f$ on $\pc(\sigma)$ (resp. $\Psi(\tau)$)  is {\bf $\pc$-subhomo\-geneous}\index{subhomogeneous!$\Psi$-subhomogeneous} (resp. {\bf $\Psi$-subhomogeneous}) if $f\circ \pc$ (resp. $f\circ \Psi$) is subhomo\-geneous.
		The induction hypothesis thus allows us to assume that  finitely many given definable $(n-1)$-variable functions that are $\lesssim x_1$ near the origin are $\Psi$-subhomo\-geneous.

 If  $\pc(\sigma) \subset \Gamma_{\xi_i}$ or $\pc(\sigma) \subset \Gamma_{\xi_{i+1}}$ then the result can easily be deduced from the induction hypothesis. Otherwise,
	 recall that we have assumed  that $\theta \le \xi_i $ (right before (\ref{eq_fin_preuve_triangulation_loc}), see  (\ref{eq_prep_eta_j}) for the definition of $\theta$) on the basis of $D$, which entails that for $(\xt,x_n)\in D \subset \R^{n-1}\times \R$:
	\begin{equation}\label{eq min loc}\eta_j(\xt,x_n) \sim \min \bil|x_n-\xi_i(\xt)| ^{\alpha} a(\xt)
		,|\xi_i(\xt)-\theta(\xt)| ^{\alpha}  a(\xt)\bir,\end{equation}  if
	$\alpha$ is negative, and
	\begin{equation}\label{eq max loc}\eta_j(\xt,x_n) \sim \max \bil|x_n-\xi_i(\xt)| ^{\alpha} a(\xt)
		,|\xi_i(\xt)-\theta(\xt)| ^{\alpha}  a(\xt)\bir,\end{equation} in the
	case where $\alpha$ is nonnegative.
	
	Note that  $\eta_j(x)\lesssim x_1$ entails $\eta_j(x)\sim \min(\eta_j(x),x_1)$, which means that
	it is enough to check that   $\min(\eta_j(x),x_1)$ is $\pc$-subho\-mogeneous. Thanks to the induction hypothesis, we can assume that $\tau \ni \xt \mapsto \min(|\xi_i-\theta|(\xt) ^{\alpha} a(\xt),\xt_1)$, $\xt=(\xt_1,\dots,\xt_{n-1})$,  is $\Psi$-subhomogeneous.
	Hence, in virtue of (\ref{eq min loc}) and (\ref{eq max loc}),
	it is enough to show that the function $\min(|x_n-\xi_i(\xt)|^{\alpha} a(\xt),x_1)$ is $\pc$-subhomogeneous (the $\min$ and $\max$ of $\pc$-subhomogeneous functions are $\pc$-subhomogeneous - note also that $\min(\max(u,v),w)=\max(\min(u,w),\min(v,w))$). 
	
	For simplicity, we define a function on $D$ by setting for $x=(\xt,x_n) \in D$:  $$F(x):=|x_n-\xi_i(\xt)|^{\alpha}\cdot a(\xt),$$
	and a function on the basis $E$ of $D$ by setting for $\xt \in E$ $$G(\xt):=|\xi_{i+1}(\xt)-\xi_i(\xt)|^{\alpha} \cdot a(\xt).$$
	Observe that if we set for $x=(\xt,x_n)\in D$ $$\nu(x) := \frac{x_n-\xi_i(\xt)}{\xi_{i+1}(\xt)-\xi_i(\xt)}$$ then we have:
	$$F(x)=\nu(x)^{\alpha} \cdot G(\xt).$$
	Notice also that  $\nu(\pc(sq))$ is
	constant with respect to $s$, which
	entails that for $s\in [0,1]$ and $q=(\tilde{q},q_n) \in \sigma$:
	\begin{equation}\label{eq2 F G loc}F(\pc(sq))=\nu(\pc(q))^{\alpha} \cdot G(\Psi(s\tilde{q})).\end{equation}

	We first suppose that $\alpha$ is negative. Thanks to the
	induction hypothesis, we can assume that $\xt \mapsto \min (G(\xt),\xt_1)$, $\xt=(\xt_1,\dots,\xt_{n-1})$   is $\Psi$-subho\-mogeneous. This implies (multiplying by $\nu^{\alpha}(x)$ and applying
	(\ref{eq2 F G loc})) that
	$x=(x_1,\dots,x_n)\mapsto \min(F(x),\nu^{\alpha}(x)x_1) $ is $\pc$-subhomogeneous, which entails that so is the function $ \min(F(x),\nu^{\alpha}(x)x_1,x_1) $. But, as $\alpha$ is negative, 
	$$\min(F(x),\nu^{\alpha}(x)x_1,x_1)=\min(F(x),x_1),$$ so that we can conclude that $\min(F(x),x_1)$ is $\pc$-subhomogeneous, as required.

	We now suppose that $\alpha$ is nonnegative. As $\eta_j (x) \lesssim x_1$,  (\ref{eq max loc}) then implies that $F(x) \lesssim x_1$  on $\sigma$, which entails that $G(\xt) \lesssim \xt_1$, and therefore $G(\xt)\sim \min (G(\xt),\xt_1)$ on $\pi(\sigma)$, which, thanks to the induction hypothesis, can be assumed to be $\Psi$-subhomogeneous.  By (\ref{eq2 F G loc}), this entails that $F$ is $\pc$-subhomogeneous.
\end{proof}
\begin{rem}\label{rem_eta_j_subh}
 It is worthy of notice that the proof has established (see the induction assumption) that, given some nonnegative definable functions $\eta_1,\dots,\eta_l$ on $\ccn$ satisfying $\eta_j(x)\lesssim x_1$ for all $j$ (if $x=(x_1,\dots,x_n)\in\ccn$), we can construct our metric triangulation $\Psi$  in such a way that $\eta_j\circ \Psi$ is subhomogeneous for all $j$.
\end{rem}


\section{Local conic  structure}\label{sect_lcs}
Since  definable sets can be triangulated \cite{lojasiewicz64a}, their germs are homeomorphic to cones over their links. Although this homeomorphism cannot be chosen bi-Lipschitz, we are going to see that metric triangulations contain information on the way this homeomorphism affects the Lipschitz geometry (Theorem \ref{thm_local_conic_structure} just below). This result recently turned out to be helpful to compute the cohomology of $L^p$ forms of subanalytic varieties \cite{linfty, gvpoincare} as  well as to investigate the theory of Sobolev spaces of these varieties \cite{trace, lprime, laplace}.

Given a point $x_0\in \R^n$ and  $A\in \s_n$, we denote by $x_0*A$\nomenclature[bym]{$x_0*A$}{cone over $A$ at $x_0$\nomrefpage} the cone over $A$ \index{cone over a set} with vertex at $x_0$, i.e., we set:
$$x_0*A:=\{tx+(1-t)x_0:x\in A \mbox{ and } t\in [0,1]\}.$$
A {\bf  retraction by deformation of $X\subset \R^n$ onto $x_0\in X$}\index{retraction by deformation} is a continuous  map $r:[0,1]\times X \to X, (s,x)\mapsto r_s(x),$ such that $r_1:X \to X$ is the identity map, $r_s(x_0)= x_0$ for all $s$, and $r_0\equiv x_0$.

\begin{thm}\label{thm_local_conic_structure}
	Let  $X\in \s_n$ and $x_0\in X $. 
	For $\ep>0$ small enough, there exists a definable homeomorphism
	$$H: x_0* (\sph (x_0,\ep)\cap X)\to  \Bb(x_0,\ep) \cap X,$$  
	satisfying $H_{| \sph (x_0,\ep)\cap X}=Id$, preserving the distance to $x_0$, and having the following Lipschitzness properties:
	\begin{enumerate}[(i)] 
		\item\label{item_H_bi}     $H$ is Lipschitz and the natural retraction by deformation onto $x_0$ $$r:[0,1]\times  \Bb(x_0,\ep)\cap X \to \Bb(x_0,\ep)\cap X,$$ defined by $$r(s,x):=H(sH^{-1}(x)+(1-s)x_0),$$ is also Lipschitz.  
		Moreover, there is a constant $C$ such that  for every fixed $s\in [0,1]$, the mapping $r_s$ defined by $x\mapsto r_s(x):=r(s,x)$, is $Cs$-Lipschitz.
		\item  For each $\eta>0$,
		the restriction of $H^{-1}$ to $\{x\in X:\eta \le |x-x_0|\le \ep\}$ is Lipschitz and, for each $s\in (0,1]$, the map  $r_s^{-1}:\Bb(x_0,s\ep) \cap X\to \Bb(x_0,\ep) \cap X$ is Lipschitz. 
	\end{enumerate}
\end{thm}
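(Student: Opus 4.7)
The plan is to reduce the theorem to a linear scaling on the domain side of the metric triangulation produced by Theorem \ref{thm_triangulations_locales}, and then transport the resulting deformation retraction back to $X$. After translating so that $x_0 = 0$, the first step is to lift $X$ to $\check X := \{(t,y) \in \R \times X : t = |y|\}$, which sits inside $\C_{n+1}(1)$ and is definably bi-Lipschitz equivalent to $X$ via $y \mapsto (|y|, y)$ (with inverse the projection $(t,y) \mapsto y$). Under this identification, the ball $\Bb(0,\ep)\cap X$ corresponds, up to a factor $\sqrt 2$, to $\check X_{[0,\ep]}$, and the sphere $\sph(0,\ep) \cap X$ to $\check X \cap \{t=\ep\}$. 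I would then apply Theorem \ref{thm_triangulations_locales} to $\check X$ to get, for $\ep > 0$ small enough, a vertical Lipschitz metric triangulation $\Psi : (|K|,0) \to (\check X_{[0,\ep]},0)$ satisfying (\ref{eq_h_contrations_local}) together with the subhomogeneity/radial constancy properties (ii) of that theorem. By tracking through the inductive construction in the proof of that theorem and using a star subdivision at $0$ instead of a barycentric one (so that scalar multiplication commutes with the subdivision), I would arrange that $|K|$ is a linear cone from $0$: $|K| = 0*L$ with $L := |K| \cap \{t=\ep\}$.

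With $|K|$ a cone, the natural retraction $\tilde r_s(p) := sp$ sends $[0,1] \times |K|$ to $|K|$ and preserves each closed simplex of $K$. Pushing forward by $\Psi$ and then by the bi-Lipschitz map $\Xi^{-1}$, I obtain the retraction
\[
r_s := \Xi^{-1} \circ \Psi \circ \tilde r_s \circ \Psi^{-1} \circ \Xi
\]
on $\Bb(0,\ep)\cap X$. Since $\Psi$ is vertical, its first coordinate equals the first coordinate of the domain, hence $|r_s(x)| = s|x|$; in particular, every point of $\Bb(0,\ep)\cap X$ is uniquely $r_s(x)$ for some $x \in \sph(0,\ep)\cap X$ and $s \in [0,1]$, allowing me to set $H(sx) := r_s(x)$. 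By construction $H$ is a homeomorphism with $H\bigl|_{\sph(0,\ep)\cap X} = \mathrm{id}$, $H(0)=0$, and $|H(sx)|=s\ep=|sx|$, so distance to $0$ is preserved.

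The Lipschitz estimates rest on the key observation that, for a simplex $\sigma \in K$ containing $0$, the first tame coordinate $\mu_{\sigma,1}$ is linear in $q_1$ and vanishes at $0$ (its defining linear functions $\zeta_1,\zeta_1'$ on $\pi_0(\sigma) = \{0\}$ are constants, and the lower one must be $0$ since $\sigma \subset \C_{n+1}(1)$ and $0 \in \sigma$), so $\mu_{\sigma,1}(sp) = s\,\mu_{\sigma,1}(p)$. Combined with the radial constancy $\mu_{\sigma,i}(sp) = \mu_{\sigma,i}(p)$ and the subhomogeneity $\varphi_{\sigma,i}(sp) \le Cs\,\varphi_{\sigma,i}(p)$ for $i \ge 2$ from (ii) of Theorem \ref{thm_triangulations_locales}, together with $\varphi_{\sigma,1}\equiv 1$, substituting $(sq,sq')$ for $(q,q')$ in (\ref{eq_h_contrations_local}) yields
\[
|\Psi(sq) - \Psi(sq')| \lesssim s\,|\Psi(q) - \Psi(q')|,
\]
which transfers to the $Cs$-Lipschitz bound for $r_s$. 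Setting $q=sp$ and $q'=s'p$ in the same estimate gives $|\Psi(sp) - \Psi(s'p)| \lesssim \ep\,|s-s'|$, and joint Lipschitzness of $r$ then follows by the triangle inequality. Property (ii) is obtained by reading the same chain of inequalities in reverse: on $\{|x| \ge \eta\}$ the factor $s$ is bounded below by $\eta/\ep$, so $\Psi^{-1}$, and hence $H^{-1}$, is Lipschitz there; likewise, $r_s^{-1}$ is $C/s$-Lipschitz on $\Bb(0,s\ep) \cap X$.

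The main obstacle will be the arrangement that $|K|$ be a genuine linear cone at $0$ without destroying the precise metric features (\ref{eq_h_contrations_local}), radial constancy, and subhomogeneity from Theorem \ref{thm_triangulations_locales}. This requires revisiting the inductive construction of that theorem, ensuring at every stage that the defining piecewise linear functions $\zeta_i$ vanish at $0$ (which is already built into the normalization used in the proof of Theorem \ref{thm_triangulations_locales}) and that the polyhedral-to-simplicial subdivision is performed as a star subdivision at $0$, so that every simplex of $K$ has $0$ as a vertex and scalar multiplication preserves each simplex. Once this compatibility is established, all the Lipschitz estimates above follow by straightforward substitution into (\ref{eq_h_contrations_local}).
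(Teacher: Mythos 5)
Your proposal is correct and follows essentially the same strategy as the paper's proof: pass to $\check X\subset\C_{n+1}(1)$, apply the local metric triangulation theorem, define the retraction by scaling in the simplicial domain, and read off the Lipschitz estimates from (\ref{eq_h_contrations_local}) together with the subhomogeneity of the contraction functions and radial constancy of the tame coordinates. One small remark: the ``main obstacle'' you identify — arranging that $|K|$ behaves like a linear cone at $0$ — is already resolved implicitly by the statement and proof of Theorem~\ref{thm_triangulations_locales}: the very definitions of ``radially constant'' and ``subhomogeneous'' only make sense if every simplex $\sigma$ with $0\in cl(\sigma)$ has $0$ as a vertex (so that $sq\in\sigma$ for $q\in\sigma$, $s\in(0,1]$), and barycentric subdivision of a polyhedron having $0$ as a vertex preserves $0$ as a vertex (if $0$ is an extreme point of $P$ and $0$ lies in $cl(\tau)$ for a simplex $\tau$ of the subdivision spanned by barycenters $\hat F_0,\dots,\hat F_j$ of a chain, then necessarily $F_0=\{0\}$, so $0$ is a vertex of $\tau$). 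Thus no star subdivision is needed, and the formula $H(tq)=\pi\circ\Psi(t\Psi^{-1}(\ep,q))$ is well-defined for $\ep$ small enough because the relevant portion of $|K|$ already lies in the star of $0$. Beyond that, your derivation of $|\Psi(sq)-\Psi(sq')|\lesssim s\,|\Psi(q)-\Psi(q')|$, the identification $r_s\circ r_t=r_{st}$ (which the paper also uses, to match the two formulas for $r$), and the treatment of (ii) all line up with what the paper does.
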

\begin{proof}We may assume $x_0=0$.
	We recall that $\check{X}=\{(t,x) \in \R \times X: t=|x|\}.$
	Applying Theorem \ref{thm_triangulations_locales} to this set  (which is a subset of $\C_{n+1}(1)$) provides a vertical Lipschitz definable homeomorphism
	$\Psi:|K|\to \xxc_{[0,\ep]}$, with $\ep$ positive real number and $K$ simplicial complex of $\R^{n+1}$.
	
	Every point of $ 0* (\sph (0,\ep)\cap X)$ can be written $tq$ with $t\in [0,1]$ and $q\in \sph (0,\ep)\cap X$.  For such $t$ and $q$ we set: 
	$$H(tq):=\pi\circ \Psi(t\Psi ^{-1}(\ep,q)),$$
	where $\pi:\check{X}\to X$ is induced by the projection omitting the first coordinate. 
	This defines a homeomorphism on $0*\sph (0,\ep)\cap X$ which, since $\Psi$ is vertical, satisfies $|H(tq)|=t |q|$, showing that $H$ preserves the distance to the origin.  The statements about the Lipschitzness properties of $H$ and $H^{-1}$ directly follow from (\ref{eq_h_contrations_local}) together with  $(i)$ and $(ii)$ of Theorem \ref{thm_triangulations_locales}.

	Moreover, if for $x\in  \Bb(0,\ep) \cap X$ we set  $r(s,x)=H(sH^{-1}(x)) $ then by definition of $H$ we have $$r(s,x)=\pi \circ \Psi(s\,\Psi ^{-1}\circ \pi^{-1}(x))$$
	(to see this, observe that if we denote by $\tilde{r}_s$ the mapping that sits on the right-hand-side of this equality then $\tilde{r}_s=r_s$ on $\sph (0,\ep)\cap X$, for all $s$, and since $r_s\circ r_t=r_{st}$ as well as  $\tilde{r}_s\circ \tilde{r}_t=\tilde{r}_{st}$, this equality must continue to hold on $\bou (0,\ep)\cap X$).
As $\pi$ is a bi-Lipschitz homeomorphism, the  Lipschitzness properties of $r$ thus follow from (\ref{eq_h_contrations_local}) together with  $(i)$ and $(ii)$ of Theorem \ref{thm_triangulations_locales}.
\end{proof}
\begin{rem}\label{rem_conical_same} It follows from the last sentence of Theorem \ref{thm_triangulations_locales} that, given finitely many definable set-germs $X_1,\dots,X_k$ at $x_0\in\cap_{i=1}^k X_i$, the respective  homeomorphisms of the Lipschitz conic structure of the $X_i$'s provided by Theorem \ref{thm_local_conic_structure} can be required to be induced by the same  homeomorphism $H:x_0*\sph (x_0,\ep)\to \Bb(x_0,\ep)$.
\end{rem}
\begin{rem}\label{rem_lcs2}
	The Lipschitz constant of $r_s^{-1}$ (see $(ii)$) is bounded away from infinity if $s$ stays bounded away from $0$. Indeed, if $s\ge \delta >0$ then $r_\delta =r_{\frac{\delta}{s}}\circ r_s$ entails $r_s^{-1}=r_{\frac{\delta}{s}}\circ r_\delta^{-1}$,  and the  Lipschitz constant of  $r_{\frac{\delta}{s}}$  is bounded independently of $s\ge \delta$. This also can be deduced from the proof of the above theorem and  (\ref{eq_h_contrations_local}).
\end{rem}
\begin{rem}
 By Remark \ref{rem_eta_j_subh}, if $\eta_1,\dots,\eta_l$ are definable nonnegative function-germs on $X$ satisfying $\eta_j(x)\lesssim |x|$ then we can require that $\eta_j(r_s(x)) \le Cs \eta_j(x)$, for all $s\in [0,1]$, $x\in X$, $j\le l$, for some constant $C$ independent of $x$ and $s$.
\end{rem}
\begin{exa}\label{exa_lcs_parab}  We wish to illustrate  Theorem \ref{thm_local_conic_structure} by giving an explicit formula for the mapping $H$ that this theorem provides on the example of a cusp $$X:=\{(x,y)\in [0,1]\times \R:|y|\le x^2\}$$ with $x_0=(0,0)$. For each $(x,y)\in x_0*(S(0,1)\cap X)$, let  $$H(x,y):=(t(x,y) x,\,t^2(x,y)xy),$$ where $t$ is the only positive real solution of $t^2x^2+t^4x^2y^2=x^2+y^2$, i.e.
$$t(x,y)=\left(\frac{2x^2+2y^2}{x^2+\sqrt{x^4+4x^2y^2(x^2+y^2)}}\right)^{1/2}.$$

The choice that we made for $t(x,y)$ ensures that this mapping preserves the distance to the origin. A straightforward computation yields that on $x_0*(S(0,1)\cap X)$ we have $|\pa t(x,y)|\le \frac{C}{x}$ for some positive constant $C$, from which it follows that $H$ has bounded first order partial derivatives (the function $t$ is bounded away from zero and infinity on $x_0*(S(0,1)\cap X)$). Clearly, the mapping $H$ of the above theorem cannot be required to be bi-Lipschitz.

Let us here emphasize that if we drop the condition that $H$ preserves the distance to the origin but simply require $\frac{|(x,y)|}{C}\le  |H(x,y)| \le C|(x,y)| $ for some constant $C$ (which is satisfactory for many purposes) then it suffices to set $H(x,y)=(x,xy)$.
\end{exa}

It is also worthy of notice that Theorem \ref{thm_local_conic_structure} is not true on non polynomially bounded o-minimal structures \cite{vdd_omin,costeomin}, as shown by the following example (see also Example \ref{exa_xt}).

\begin{exa}\label{exa_lcs_exp}
Let $$Y:=\{(x,y)\in (0,1]\times \R: |y|\le e^\frac{-1}{x} \}\cup \{(0,0)\}. $$
If there were such a mapping $H$, it is not difficult to see that the points $z:=(x,e^\frac{-1}{x})$ and $z':=(x,-e^\frac{-1}{x})$, for $x\in (0,1]$, would satisfy for $s\in (0,1]$:
$$|r_s(z)-r_s(z')|\le C e^\frac{-1}{sx}, $$
for some constant $C$ independent of $x$ and $s$.
As $|z-z'|=2e^\frac{-1}{x}$, we see that  $r_s^{-1}$ would fail to be Lipschitz for each $s\in (0,1)$.  This example enlightens the key role played by \L ojasiewicz's inequality in the theory.
 \end{exa}

In particular, we have established the following
\begin{cor}\label{cor_Lipschitz_retract}
	Let $X \in \s_n$ and $x_0\in X$. For every  $\ep>0$
	small enough there exists a definable Lipschitz retraction by deformation $r: [0,1] 
	\times X\cap \bou (x_0,\ep)\to
	X\cap \bou (x_0,\ep), (s,x)\mapsto r_s(x),$ onto $x_0$. 
		Moreover,  $r_s$ is  $Cs$-Lipschitz for some $C>0$ independent of $s$,  and can be required to preserve finitely many given definable  subset-germs of $X$ for all $s\in(0,1]$.
\end{cor}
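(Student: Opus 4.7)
The plan is to deduce this corollary directly from Theorem \ref{thm_local_conic_structure}, which has already done the main work. First I would choose $\ep>0$ small enough that the conclusion of Theorem \ref{thm_local_conic_structure} holds at $x_0$, applied to the set $X$ together with the finitely many prescribed definable subset-germs $A_1,\dots,A_k\subset X$ simultaneously. By Remark \ref{rem_conical_same}, a single definable homeomorphism $H:x_0*(\sph(x_0,\ep)\cap X)\to \Bb(x_0,\ep)\cap X$ can be required to satisfy $H_{|\sph(x_0,\ep)\cap X}=\mathrm{Id}$, to preserve the distance to $x_0$, and to send $x_0*(\sph(x_0,\ep)\cap A_i)$ onto $\Bb(x_0,\ep)\cap A_i$ for every $i\le k$.

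Second, I would define the retraction by setting, for $(s,x)\in [0,1]\times (\Bb(x_0,\ep)\cap X)$,
\begin{equation*}
r(s,x):=H\bigl(sH^{-1}(x)+(1-s)x_0\bigr),
\end{equation*}
so that $r_1=\mathrm{Id}$, $r_0\equiv x_0$, and $r_s(x_0)=x_0$ for all $s$. Since $H$ preserves the distance to $x_0$, we have $|r_s(x)-x_0|=s|x-x_0|\le s\ep$, so $r_s$ sends $\Bb(x_0,\ep)\cap X$ into itself. Replacing $\ep$ by any $\ep'<\ep$ and restricting to $X\cap \bou(x_0,\ep')$ then yields a retraction by deformation with the required codomain. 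The Lipschitzness of $r$ and the $Cs$-Lipschitzness of each $r_s$ are precisely the content of Theorem \ref{thm_local_conic_structure}~$(i)$; definability is automatic since $H$ is definable and $r$ is obtained from $H$ by elementary definable operations.

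Third, I would verify the compatibility with the subset-germs. Since $H$ maps the straight cone $x_0*(\sph(x_0,\ep)\cap A_i)$ onto $\Bb(x_0,\ep)\cap A_i$, for any $x\in A_i\cap \Bb(x_0,\ep)$ the point $H^{-1}(x)$ lies in $x_0*(\sph(x_0,\ep)\cap A_i)$, and this straight cone is stable under the Euclidean homotopy $y\mapsto sy+(1-s)x_0$ for every $s\in[0,1]$. Applying $H$ again, we obtain $r_s(x)\in \Bb(x_0,\ep)\cap A_i$, so $r_s$ preserves each $A_i$ as required.

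There is no serious obstacle beyond correctly invoking Theorem \ref{thm_local_conic_structure} and Remark \ref{rem_conical_same}; the only slightly delicate point is the compatibility with the $A_i$'s, and it reduces to the elementary observation that a straight cone at $x_0$ is invariant under radial scaling from $x_0$, which is all that the Euclidean homotopy inside $H^{-1}(X\cap \Bb(x_0,\ep))$ performs.
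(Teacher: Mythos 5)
Your proof is correct and follows the paper's own (implicit) argument: the paper presents this corollary as a direct consequence of Theorem~\ref{thm_local_conic_structure}~$(i)$ together with Remark~\ref{rem_conical_same}, exactly as you do, with the compatibility statement reducing to the elementary invariance of the straight cone $x_0*(\sph(x_0,\ep)\cap A_i)$ under radial scaling toward $x_0$. The only minor omission is that you do not explicitly restrict the preservation claim to $s\in(0,1]$ (as the statement does), which matters because $r_0\equiv x_0$ need not land in $A_i$ when $x_0\notin A_i$; but your argument is manifestly only for positive $s$, so this is purely a matter of phrasing.
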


The second consequence that we would like to point out will lead us to the notion of link that will be studied in  section \ref{subsection_uniqueness_of_the_link} (this actually could also be derived from Corollary \ref{cor_hardt}).
\begin{cor}\label{cor_link_bien_defini}
	Let $X\in \s_n$ and let  $x_0 \in \R^n$. Up to a definable bi-Lipschitz homeomorphism,  the set $\sph (x_0,\ep)\cap X$, $\ep>0$,  is independent of $\ep>0$ small enough. 
\end{cor}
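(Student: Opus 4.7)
The plan is to derive this directly from Theorem \ref{thm_local_conic_structure}. Apply that theorem to $(X,x_0)$ to obtain $\ep_0>0$ and the mapping $H$ together with the definable retraction by deformation
\[
r:[0,1]\times \Bb(x_0,\ep_0)\cap X \to \Bb(x_0,\ep_0)\cap X, \qquad (s,x)\mapsto r_s(x).
\]
The crucial feature of $r$ is that, because $H$ preserves the distance to $x_0$, one has $|r_s(x)-x_0|=s|x-x_0|$ for every $s\in[0,1]$ and every $x\in \Bb(x_0,\ep_0)\cap X$. Consequently, for any $0<\ep\le \ep_0$ and $s\in (0,1]$, the map $r_s$ sends $\sph(x_0,\ep)\cap X$ bijectively onto $\sph(x_0,s\ep)\cap X$.

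It remains to check that this restriction is definably bi-Lipschitz. The map $r_s$ is definable and, by part \textit{(i)} of Theorem \ref{thm_local_conic_structure}, it is $Cs$-Lipschitz (in particular Lipschitz), so its restriction to $\sph(x_0,\ep)\cap X$ is Lipschitz. For the inverse direction, part \textit{(ii)} of the same theorem states that $r_s^{-1}:\Bb(x_0,s\ep_0)\cap X\to \Bb(x_0,\ep_0)\cap X$ is Lipschitz; restricting to $\sph(x_0,s\ep)\cap X\subset \Bb(x_0,s\ep_0)\cap X$ we obtain that the inverse of $r_{s|\sph(x_0,\ep)\cap X}$ is Lipschitz as well. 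Hence $r_s$ induces a definable bi-Lipschitz homeomorphism
\[
\sph(x_0,\ep)\cap X \longrightarrow \sph(x_0,s\ep)\cap X.
\]

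Now, given any two radii $\ep_1,\ep_2\in(0,\ep_0]$, assume without loss of generality that $\ep_2\le \ep_1$ and apply the above with $\ep=\ep_1$ and $s=\ep_2/\ep_1\in(0,1]$; this yields the desired definable bi-Lipschitz homeomorphism between $\sph(x_0,\ep_1)\cap X$ and $\sph(x_0,\ep_2)\cap X$. There is no real obstacle here: the entire content of the corollary is already packaged in Theorem \ref{thm_local_conic_structure}, and the only point requiring any care is to notice that $r_s$ maps spheres to spheres because $H$ is radius-preserving, so that the Lipschitz estimates on $r_s$ and $r_s^{-1}$ given in parts \textit{(i)} and \textit{(ii)} transfer to their restrictions between spheres.
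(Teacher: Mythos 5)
Your proof is correct and follows exactly the route the paper intends: the corollary is stated as a direct consequence of Theorem~\ref{thm_local_conic_structure}, and you have supplied the details — in particular the key observation that $H$ being radius-preserving forces $|r_s(x)-x_0|=s|x-x_0|$, so $r_s$ carries spheres about $x_0$ to spheres, and the Lipschitz bounds in parts (i) and (ii) restrict to give bi-Lipschitzness between spheres. One small point worth flagging: the corollary allows $x_0\in\R^n$ while Theorem~\ref{thm_local_conic_structure} assumes $x_0\in X$; this is harmless since for $x_0\notin cl(X)$ the sets $\sph(x_0,\ep)\cap X$ are eventually empty, and for $x_0\in cl(X)\setminus X$ one applies the theorem to $X\cup\{x_0\}$, which has the same sphere sections.
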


	The {\bf link}\index{link} of $X\in \s_n$ at $x_0 \in \R^n$, denoted $lk(X,x_0)$\nomenclature[byp]{$lk(X,x_0)$}{link of $X$ at $x_0$\nomrefpage}, will  be the subset $\sph (x_0,\ep)\cap X$, $\ep>0$ small (by the just above corollary, it is well defined {\it up to a definable bi-Lipschitz homeomorphism}).

\subsection{Some lemmas about  definable set and map germs}
Theorem \ref{thm_triangulations_locales} makes it possible to establish that the link of a globally subanalytic set is invariant under globally subanalytic bi-Lipschitz mappings (Corollary \ref{cor_unicite_du_link}), which requires some preliminaries that we carry out in this section.

Given a definable Lipschitz map-germ $F$ at $0_{\R^n}$ the limit  $\lim_{t \to 0} \frac{1}{t}(F(tx)-F(0))$ exists for all $x$ and clearly defines an $L_F$-Lipschitz mapping, which means that definable Lipschitz maps are Gateau differentiable.  Indeed:
\begin{lem}\label{lem_gateau}
	Let $A$ be a definable subset of $\,\sph(0_{\R^n},\ep)$, $\ep>0$, and let $F:0_{\R^n}*A\to B$ be a Lipschitz definable map. If we set $G(x)=\lim_{t \to 0} \frac{1}{t}(F(tx)-F(0))$ then for all $(t,x)\in [0,1]\times A$
	$$F(tx)= F(0)+tG(x)+t\mu(tx), $$
	for some  definable continuous mapping $\mu$ tending to zero at the origin.
\end{lem}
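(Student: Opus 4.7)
The plan is to verify three things: existence of $G$ together with its Lipschitzness, continuity of $\mu$ away from the origin, and the main point, that $\mu(y)\to 0$ as $y\to 0$.

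First I would establish that $G(x)$ is well defined for every $x\in A$: for each fixed $x$, the one-variable definable function $t\mapsto F(tx)-F(0)$ admits a Puiseux expansion at $0$ by Proposition \ref{pro_puiseux}, and the Lipschitz bound $|F(tx)-F(0)|\le L_F\,\ep\, t$ forces the leading exponent to be $\ge 1$, so the limit $G(x)$ exists and satisfies $|G(x)|\le L_F\ep$. The inequality
\[
|G(x)-G(y)|\;=\;\lim_{t\to 0^+}\frac{|F(tx)-F(ty)|}{t}\;\le\; L_F|x-y|
\]
shows that $G$ is $L_F$-Lipschitz and therefore extends continuously to $cl(A)$; likewise $F$ itself extends, by Lipschitzness, to the closure $0*cl(A)$. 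Since $x\in A\subset \sph(0,\ep)$, the decomposition $y=tx$ with $t\in[0,1]$, $x\in A$ is unique when $y\neq 0$ (one recovers $t=|y|/\ep$, $x=\ep y/|y|$), so the formula
\[
\mu(y)\,:=\,\frac{\ep\,(F(y)-F(0))}{|y|}\,-\,G\!\left(\tfrac{\ep y}{|y|}\right)\quad(y\neq 0),\qquad \mu(0):=0,
\]
defines a definable map $\mu:0*A\to B$, automatically continuous away from the origin.

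The hard part, and the main obstacle, will be the continuity at $0$. My plan is to argue by contradiction: if $\mu(y_k)\not\to 0$ along some sequence $y_k\to 0$ in $0*A$, then the definable set $\{(y,\mu(y)): y\in 0*A,\,y\neq 0\}$ has a limit point $(0,b)$ with $b\neq 0$ in its closure. By Curve Selection Lemma (Lemma \ref{curve_selection_lemma}), there is a definable analytic arc $\gamma:(0,\delta)\to (0*A)\setminus\{0\}$ with $\gamma(s)\to 0$ and $\mu(\gamma(s))\to b$. Writing $\gamma(s)=\tau(s)\,\xi(s)$ with $\tau(s)=|\gamma(s)|/\ep\to 0$ and $\xi(s)\in A$, the compactness of $cl(A)$ (a closed subset of a sphere) combined with Puiseux's Lemma lets me extract a subarc along which $\xi(s)\to \bar x\in cl(A)$.

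I would then split
\[
\frac{F(\gamma(s))-F(0)}{\tau(s)}\,=\,\frac{F(\tau(s)\bar x)-F(0)}{\tau(s)}\,+\,\frac{F(\tau(s)\xi(s))-F(\tau(s)\bar x)}{\tau(s)}.
\]
The first summand converges to $G(\bar x)$ by the one-variable argument from step one applied to the fixed direction $\bar x$. The second is bounded in norm by $L_F|\xi(s)-\bar x|\to 0$ by Lipschitzness of $F$. Since $G$ is continuous on $cl(A)$, $G(\xi(s))\to G(\bar x)$. Combining gives $\mu(\gamma(s))\to G(\bar x)-G(\bar x)=0$, contradicting $\mu(\gamma(s))\to b\neq 0$. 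Thus $\mu$ extends continuously at $0$ with value $0$, which is exactly the asserted expansion. The key technical point, and where some care is needed, is that the ``direction'' $\xi(s)$ need not converge inside $A$; the Lipschitz extension of $F$ and $G$ to the closure $cl(A)$ is what makes the limit argument go through uniformly.
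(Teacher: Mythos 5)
Your proof is correct, but it follows a genuinely different route from the paper's. The paper argues via Arzel\`a--Ascoli: after replacing $A$ by $cl(A)$, the family $\tilde F_t(x):=\frac{1}{t}(F(tx)-F(0))$, $t\in(0,1]$, is uniformly bounded and uniformly $L_F$-Lipschitz in $x$, so any sequence $\tilde F_{t_i}$ with $t_i\to 0$ has a uniformly convergent subsequence; since the pointwise limit is $G$, the whole family converges uniformly to $G$, and then $\tilde F_{t_i}(x_i)=\bigl(\tilde F_{t_i}(x_i)-G(x_i)\bigr)+G(x_i)\to G(a)$ whenever $(t_i,x_i)\to(0,a)$. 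You instead exploit definability directly: you assume $\mu$ fails to vanish at $0$, invoke Curve Selection Lemma to produce a definable arc along which $\mu$ has a nonzero limit, factor the arc into a radius $\tau(s)$ and a direction $\xi(s)\in A$, and then split $\frac{F(\gamma(s))-F(0)}{\tau(s)}$ as the contribution from a fixed limiting direction $\bar x$ (which converges to $G(\bar x)$ by the one-variable Puiseux argument) plus an error controlled by the Lipschitz constant and $|\xi(s)-\bar x|$. Both arguments are sound and about the same length. The paper's Ascoli argument is slightly more elegant and, past the existence of $G$, uses no o-minimal machinery at all; it upgrades pointwise to uniform convergence in one stroke. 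Your curve-selection argument leans more on the definable toolkit (Puiseux Lemma, Curve Selection), which keeps the reasoning one-dimensional throughout and makes the role of definability explicit; the small price is the extra care you correctly flagged about extending $F$ and $G$ to $0*cl(A)$ so that $G(\bar x)$ is meaningful when $\bar x\in cl(A)\setminus A$.
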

\begin{proof} Possibly replacing $A$ with  $cl(A)$, we may assume  that this set is closed.
	Notice that  $\tilde{F}_t(x):= \frac{1}{t}(F(tx)-F(0))$, $x\in  A$,  is $L_F$-Lipschitz for each $t\in (0,1]$, and hence, so is $G$. It  suffices to show that $(t,x)\mapsto \tilde{F}_t(x) $ extends continuously on $[0,1]\times A $. Let $(t_i,x_i)$ be a sequence in $(0,1]\times A$ tending to some point $(0,a)$ in this set. By Ascoli's Theorem, extracting a sequence if necessary, we can assume that  $\tilde F_{t_i}$ uniformly converges to $G$. Hence,  as $(\tilde F_{t_i}(x_i)-G(x_i))$  and $(G(x_i)-G(a))$ both tend to zero,  $\tilde F_{t_i}(x_i)$ must tend to $G(a)$.
\end{proof}

  Given $a$ and $b$ in $\R^n$, we denote by $ab$ the line segment joining $a$ and $b$.\nomenclature[can]{$ab$}{line segment joining $a$ and $b$\nomrefpage}
\begin{lem}\label{lem_triang_secantes}Let  $K$ be a simplicial complex of $\R^n$ and 
	let  a germ of Lipschitz definable map $H:(|K|,0)\to (\R^k,0)$ satisfy $|H(q)|\eqr |q| $. The angle at $H(q)$ between the ray $0H(q)$ and the tangent half-line  to the arc $H(0q)$ (the image of the ray $0q$ under $H$) at $H(q)$  tends to zero as $q\ne 0$ goes to the origin. 
\end{lem}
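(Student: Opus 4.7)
I argue by contradiction. If the angle $\theta(q)$ between the ray $0H(q)$ and the tangent half-line to $H(0q)$ at $H(q)$ does not tend to $0$ as $q\to 0$, then for some $c>0$ the definable set $\{q\in|K|:\theta(q)\ge c\}$ has the origin as an accumulation point. By Curve Selection (Lemma \ref{curve_selection_lemma}) there is a definable arc $\gamma:[0,\varepsilon)\to|K|$ with $\gamma(0)=0$, $\gamma(s)\ne 0$ for $s>0$, along which $\theta(\gamma(s))\ge c$; Puiseux's Lemma (Proposition \ref{pro_puiseux}) gives $\gamma(s)=a\,s^{\alpha}+o(s^{\alpha})$ with $a\ne 0$ and $\alpha$ positive rational. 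Let $G:|K|\to\R^{k}$ be the Gateaux derivative of $H$ at $0$ produced by Lemma \ref{lem_gateau}; $G$ is $L_{H}$-Lipschitz and positively $1$-homogeneous, and rewriting the lemma for $q=|q|\cdot(q/|q|)$ gives $H(q)=G(q)+|q|\mu(q)$ with $\mu$ definable, continuous, and tending to $0$ at the origin. The hypothesis $|H(q)|\eqr|q|$ forces $G(a)\ne 0$.

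The radial direction is immediate: substituting $q=\gamma(s)$ into the Gateaux identity and using the $1$-homogeneity of $G$ together with $\gamma(s)/|\gamma(s)|\to a/|a|$ yields
\[H(\gamma(s))=s^{\alpha}G(a)+o(s^{\alpha}),\qquad \frac{H(\gamma(s))}{|H(\gamma(s))|}\longrightarrow\frac{G(a)}{|G(a)|}.\]
For the tangent half-line I analyse the definable map $\Phi(t,s):=H(t\gamma(s))$ on $[0,1]\times[0,\varepsilon)$. Applying Puiseux's Lemma with parameters (Proposition \ref{pro_puiseux_avec_parametres}), with $[0,1]$ as the parameter space and $s$ as the distinguished variable, and replacing $s$ by $s^{p}$ with $p\alpha\in\N$, produces a partition of $[0,1]$ on each piece $C$ of which $\Phi(t,s^{p})$ is given by a convergent expansion $\sum_{j\ge 0}h_{j}(t)s^{j}$ with each $h_{j}$ analytic in $t$ on $C$. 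The Gateaux identity applied with $q=\gamma(s)$, together with the uniform convergence $\mu(t\gamma(s))\to 0$ on $t\in[0,1]$ (immediate from the Ascoli argument in the proof of Lemma \ref{lem_gateau}, since $t\gamma(s)\to 0$ uniformly), identifies the leading behaviour
\[\Phi(t,s^{p})=t\,G(a)\,s^{p\alpha}+o(s^{p\alpha})\quad\text{uniformly in }t\in[0,1].\]
On the piece $C$ whose closure meets $t=1$, this forces $h_{j}\equiv 0$ for $j<p\alpha$ and, crucially, $h_{p\alpha}(t)=t\,G(a)$, a \emph{linear} function of $t$. Differentiating in $t$ at $t=1$ therefore gives $h_{p\alpha}'(1)=G(a)=h_{p\alpha}(1)$, so both $\Phi(1,s^{p})$ and $\partial_{t}\Phi(1,s^{p})$ have leading $s$-term equal to $G(a)\,s^{p\alpha}$; after normalising, both unit vectors tend to $G(a)/|G(a)|$, contradicting $\theta(\gamma(s))\ge c$.

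The delicate step is the passage from the two-variable expansion to the one-sided tangent $\tau(s):=\partial_{t}^{-}\Phi(t,s)|_{t=1}$ when $t=1$ lies on the boundary of the piece $C$: the subleading coefficients $h_{j}(t)$ with $j>p\alpha$ need not extend continuously to $t=1$. I expect to handle this by combining the identification $h_{p\alpha}(t)=tG(a)$ obtained above with the Lipschitz bound $|\tau(s)|\le L_{H}|\gamma(s)|=O(s^{\alpha})$: the quotient $\tau(s)/s^{\alpha}$ is then a bounded definable function of $s$, so its Puiseux expansion in $s$ has a limit at $s=0$, and that limit is forced to equal $G(a)/|a|$ by matching against the leading $s$-coefficient of $\partial_{t}\Phi(t,s^{p})$ computed from the interior of $C$.
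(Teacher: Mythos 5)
Your strategy is genuinely different from the paper's (which lifts to the graph $\Gamma_H$ and invokes a Whitney $(b)$ regular stratification), and the first half of your argument is correct: the reduction by Curve Selection to a Puiseux arc $\gamma$, the Gateaux expansion $H(q)=G(q)+|q|\mu(q)$ via Lemma~\ref{lem_gateau} with $G$ Lipschitz and $1$-homogeneous, the uniform estimate $\Phi(t,s^p)=tG(a)s^{p\alpha}+o(s^{p\alpha})$, the convergence of the radial direction to $G(a)/|G(a)|$, and the identification $h_{p\alpha}(t)=tG(a)$ for $t$ in the interior of the piece $C$ all hold. (Small slip: the limit you expect for $\tau(s)/s^\alpha$ should be $G(a)$, not $G(a)/|a|$.)

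The gap you flag at the end is, however, genuine, and the patch you sketch does not close it. You need $\lim_{s\to 0}\lim_{t\to 1^-}\partial_t\Phi(t,s)/s^\alpha=G(a)$, but what you have is $\lim_{s\to 0}\partial_t\Phi(t,s)/s^\alpha=G(a)$ for each \emph{fixed} $t$ in the interior of $C$, together with boundedness of $\partial_t\Phi(t,s)/s^\alpha$. These two facts do not determine the other iterated limit: the bounded semialgebraic function $g(t,s)=\max(0,t-1+s)/s$ on $(0,1)^2$ satisfies $\lim_{s\to 0}g(t,s)=0$ for every $t<1$ while $\lim_{t\to 1^-}g(t,s)=1$ for every $s$. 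Boundedness of $\tau(s)/s^\alpha$ gives (via Puiseux) the \emph{existence} of a limit $b$, but nothing in your argument identifies $b$ with $G(a)$; Proposition~\ref{pro_puiseux_avec_parametres} only guarantees analyticity of $\Phi(t,s^p)$ on a neighborhood of $C\times\{0\}$ in $C\times\R$, and this neighborhood can shrink as $t\to 1$, so the coefficients $h_j'(t)$ for $j>p\alpha$ (and hence $\partial_t\Phi$) are uncontrolled near the endpoint. What is required is precisely a compatibility between limit secant directions and limits of tangent planes, and this is exactly what the paper extracts from a Whitney $(b)$ regular stratification of $\Gamma_H$ compatible with $\{0_{\R^{n+k}}\}$: the $(b)$ condition puts the limit secant $u$ into $\tau=\lim_{s\to 0}\lim_{t\to 1^-}T_{\gamma(s,t)}S$, the same plane that contains $u'$ by definition, and injectivity of $\pi_{1|\tau}$ (which follows from Lipschitzness of $H$) then forces $u=u'$. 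Neither the Gateaux identity (whose remainder $\mu$ is only continuous, so cannot be differentiated in $t$) nor Puiseux-with-parameters supplies that regularity, so I do not see how to complete your route without in effect re-deriving the Whitney-based argument.
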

\begin{proof}
	Let  $\pi_1 :\R^n \times \R^k \to \R^n$ (resp. $\pi_2 :\R^n \times \R^k \to \R^k$) denote the projection  onto the  $n$ first (resp. $k$ last)  coordinates.
	Take  a definable arc $\alpha:(0,\ep) \to |K|$ tending to the origin at $0$, let
	$$l:=\lim_{s\to 0} \frac{H(\alpha(s))}{|H(\alpha(s))|}\, ,$$
	 and let $l'$ denote the limit of the unit tangent vector  at $H(\alpha(s))$ to the image of the  ray that stems from $\alpha(s)$, i.e.: 
	 $$l':=\lim_{s \to 0}\lim_{t \to 1_-} \frac{\frac{d}{dt}\beta(s,t)}{|\frac{d}{dt}\beta(s,t)|}\, , \quad \mbox{ where } \;\;\beta(s,t)=H(t \cdot \alpha(s)).$$   
	 Thanks to Curve Selection Lemma (Lemma \ref{curve_selection_lemma}), it suffices to show $l=l'$.
	
	Let then $\alpha(s,t):=t \cdot \alpha(s)$ and define  $\gamma(s,t):=(\alpha(s,t),\beta(s,t)) \in \Gamma_{H}$.   Set $u:=\lim_{s \to 0} \frac{\gamma(s,1)}{|\gamma(s,1)|}$ and $u':= \lim_{s\to 0}\lim_{t \to 1_-} \frac{\frac{d}{dt}\gamma(s,t)}{|\frac{d}{dt}\gamma(s,t)|}$.  We  claim that  $u=u'$.
	
	Since for every $s$, $\{\alpha(s,t): t\in \R\}$ is a line, we clearly have $\frac{\pi_1(u)}{|\pi_1(u)|}=\frac{\pi_1(u')}{|\pi_1(u')|}$. Take a Whitney $(b)$ regular stratification of $\Gamma_H$ (see Propositions  \ref{pro_existence} and \ref{pro_w_stratifying}) compatible with $\{0_{\R^{n+k}}\}$.  Let $S$ be the stratum that contains $\gamma(s,t)$ for $t<1$ close to $1$ (for $s>0$ small enough it is independent of $s$). As $H$ is Lipschitz, $\pi_1$ must induce a one-to-one map on $$\tau:=\lim_{s\to 0}\lim_{t \to 1_-} T_{\gamma(s,t)} S,$$ which contains $u$ (by Whitney's $(b)$ condition) and $u'$ (by definition).  Hence, $\frac{\pi_1(u)}{|\pi_1(u)|}=\frac{\pi_1(u')}{|\pi_1(u')|}$ entails $u=u'$, yielding our claim.
	
	As $|H(q)|\sim |q|$, the vector $u$ cannot be included in  $\ker \pi_2$, so that  by definition of $u$ and $l$ we must have  $l=\frac{\pi_2(u)}{|\pi_2(u)|}$ (since the curve $\gamma(s,1)$ lies on $\Gamma_H$). But  $u'=u$ is not included in the kernel of $\pi_2$ either, so that by definition of $u'$ and $l'$, we also have    $l'=\frac{\pi_2(u')}{|\pi_2(u')|}$.
	We thus get $l=\frac{\pi_2(u)}{|\pi_2(u)|}=\frac{\pi_2(u')}{|\pi_2(u')|}=l'$, as required.  
\end{proof}

\begin{lem}\label{lem_lips_spheres}
	Let $A\in \sno$ and let $f:(A,0) \to (\R,0)$ be a germ of definable function   satisfying
	 \begin{equation}\label{eq_f_lips_spheres}|f(x)|\lesssim |x|, \end{equation}
	  for  $x\in A$  close to the origin. If  $f_{|\sph (0,r)\cap A}$ is $L$-Lipschitz for every $r>0$ small, with $L \in \R$ independent of $r$, then $f$ is the germ of a  Lipschitz function.
\end{lem}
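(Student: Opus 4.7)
My plan is to decompose the difference $|f(x)-f(y)|$ into a ``spherical'' contribution, controlled directly by the hypothesis, and a ``radial'' contribution, handled via Puiseux with parameters (Proposition \ref{pro_puiseux_non_cont}) combined with the Lipschitz conic structure of $A$ at $0$ (Theorem \ref{thm_local_conic_structure}). Let $H:0*(\sph(0,\ep)\cap A)\to \Bb(0,\ep)\cap A$ be the homeomorphism provided by the latter theorem, $L_H$ its Lipschitz constant, and $r_s(y)=H(sH^{-1}(y))$ the associated retraction; recall that $|H^{-1}(y)|=|y|$ since $H$ preserves distance to the origin.

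For $x,y\in A$ near $0$ with $|x|\le|y|$, I distinguish two cases. If $|x|\le|y|/2$ then $|x-y|\ge|y|/2$, and the hypothesis $|f|\lesssim|\cdot|$ yields $|f(x)-f(y)|\le|f(x)|+|f(y)|\lesssim |y|\lesssim |x-y|$. Otherwise $|y|/2<|x|\le|y|$, and I set $s:=|x|/|y|$ and $y':=r_s(y)\in\sph(0,|x|)\cap A$, so that $|f(x)-f(y)|\le |f(x)-f(y')|+|f(y')-f(y)|$. The spherical term is controlled by combining the hypothesis that $f_{|\sph(0,|x|)\cap A}$ is $L$-Lipschitz with the estimate $|y-y'|=|H(H^{-1}(y))-H(sH^{-1}(y))|\le L_H(1-s)|H^{-1}(y)|=L_H(|y|-|x|)\le L_H|x-y|$, which gives $|f(x)-f(y')|\le L(1+L_H)|x-y|$.

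The heart of the argument is the radial term $|f(y')-f(y)|$. Writing $H^{-1}(y)=(|y|/\ep)q$ for a unique $q\in S:=\sph(0,\ep)\cap A$, and setting $\psi_q(\tau):=f(H(\tau q))$ for $\tau\in(0,1]$, this term equals $|\psi_q(|y|/\ep)-\psi_q(|x|/\ep)|$. The key observation is the uniform bound $|\psi_q(\tau)|=|f(H(\tau q))|\le C_0|H(\tau q)|=C_0\tau\ep$, coming from $|f|\lesssim|\cdot|$ together with the isometry property $|H(\tau q)|=\tau|q|=\tau\ep$. I would then apply Proposition \ref{pro_puiseux_non_cont} to the two-variable definable function $(q,\tau)\mapsto\psi_q(\tau)$ on $S\times(0,1]$: on each piece $V$ of the resulting finite definable partition of $S$, one obtains an expansion $\psi_q(\tau)=\sum_{i\ge k}a_i(q)\tau^{i/p}$ valid on $\{0<\tau<\xi_V(q)\}$, and the bound on $\psi_q$ forces $k/p\ge 1$, yielding $\psi_q(\tau)=\tau\cdot b(q,\tau)$ with $|b(q,\tau)|\le C_0\ep$ uniformly. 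Cauchy-type estimates on the Puiseux coefficients $a_i(q)$ then produce a uniform bound $|\psi_q'(\tau)|\le M$ for $\tau\in[0,\delta]$ and $q\in S$, so that $|\psi_q(|y|/\ep)-\psi_q(|x|/\ep)|\le M|y-x|$, and $f$ turns out to be Lipschitz with constant $L(1+L_H)+M$.

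The main obstacle is securing the uniformity of $|\psi_q'(\tau)|\le M$ in $q\in S$. The convergence width $\xi_V(q)$ provided by Proposition \ref{pro_puiseux_non_cont} may a priori shrink as $q$ approaches the boundary of a cell $V$, and the coefficient functions $a_i(q)$ are only guaranteed analytic on $V$, so Cauchy's estimate cannot be directly applied uniformly. I would overcome this by taking a germ representative so that the link $S$ is closed (hence compact), exploiting the finiteness of the partition, and working cell-by-cell on compact subcells; the uniform bound across all cells then follows by combining the bounds on each, since there are only finitely many of them.
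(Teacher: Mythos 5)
Your overall decomposition of $|f(x)-f(y)|$ into a spherical contribution and a radial one is the same in spirit as the paper's, but the paper achieves it much more economically. The paper opens with an application of Curve Selection Lemma, reducing the Lipschitz estimate to a comparison along two definable arcs $x(r), y(r)$ ending at $0$, parametrizes $x$ by distance to the origin, and then uses the intermediate point $x(t_r)$ with $t_r=|y(r)|$: both $x(t_r)$ and $y(r)$ lie on $\sph(0,t_r)$, giving the spherical term, while the comparison of $f(x(r))$ with $f(x(t_r))$ is a one-variable Puiseux estimate along a fixed arc. You instead invoke the Lipschitz conic structure theorem (Theorem~\ref{thm_local_conic_structure}) to get the intermediate point $y'=r_s(y)$, which is valid but much heavier machinery, since that theorem itself rests on the metric triangulation results. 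Your treatment of the spherical term and of the case $|x|\le|y|/2$ is correct.

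However, the radial estimate has a genuine gap. You need a constant $M$, uniform over $q\in S=\sph(0,\ep)\cap A$, bounding $|\psi_q'(\tau)|$ for $\tau\in(0,\delta]$, where $\psi_q(\tau)=f(H(\tau q))$. You correctly identify the obstacle: Proposition~\ref{pro_puiseux_non_cont} gives an expansion $\psi_q(\tau)=\sum_{i\ge k}a_i(q)\tau^{i/p}$ only on $\{0<\tau<\xi_V(q)\}$ for $q$ in a cell $V$, with $a_i$ analytic on $V$ and $\xi_V$ positive on $V$; as $q$ approaches $\partial V$, the width $\xi_V(q)$ can tend to $0$ and Cauchy estimates on the coefficients $a_i(q)$ degenerate accordingly. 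Your proposed fix --- to work with compact subcells of each cell $V$ and invoke the finiteness of the partition --- does not resolve this: compact subsets of the open cells do not cover $S$, so the uniformity is precisely lost in a neighborhood of the cell frontiers, which is where it is needed. No amount of ``combining finitely many bounds'' helps if the union of the sets on which you have bounds omits a subset of $S$ accumulating on the cell boundaries. In fact the only way I see to rescue this step is to run a Curve Selection Lemma argument on the definable function $q\mapsto \sup_{\tau}|\psi_q'(\tau)|$ to rule out a blow-up along an arc $q(s)\to q_0\in S$ --- but at that point you are using CSL anyway and have merely reproduced, in a less direct form, the paper's opening move. The missing idea is precisely this reduction to arcs, which trivializes the radial part: once $x$ and $y$ are fixed definable arcs, $f(x(r))$ is a single one-variable Puiseux arc with $|f(x(r))|\lesssim r$, hence Lipschitz, and no uniformity-in-$q$ issue ever arises.
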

\begin{proof}
	By Curve Selection Lemma, it suffices to check the Lipschitz condition along two definable arcs $x:(0,\ep)\to A$ and $y:(0,\ep)\to A$ tending to the origin at $0$. We may assume that $x$ is parametrized by its distance to the origin. 	Let for simplicity $t_r:=|y(r)|$.
  As $x$ is a  Puiseux arc  satisfying $|x(r)|=r$, its expansion starts like $x(r)=ar+\dots$, with $a \in \sph^{n-1}$, and therefore:
	\begin{equation}\label{eq_p}
		|x(r)-x(t_r)|\lesssim |r-t_r|\leq |x(r)-y(r)|.
	\end{equation}
 By Proposition \ref{pro_puiseux}, $f(x(r))$ is a Puiseux arc. Since $|f(x(r)) |\lesssim r$ for $r$ positive small, this entails that $f$ is Lipschitz along 
	$x(r)$.  Hence, \begin{equation}\label{eq_lips_sphere2}|f(x(r))-f(x(t_r))|\lesssim |x(r)-x(t_r)| \overset{(\ref{eq_p})}{\lesssim} |x(r)-y(r)|.
	\end{equation}
	As $f$ is $L$-Lipschitz on the spheres  $$      |f(x(t_r))-f(y(r))|\le L|x(t_r)-y(r)|\le L|x(t_r)-x(r)|+L |x(r)-y(r) |, $$
	which, together with (\ref{eq_p}) and (\ref{eq_lips_sphere2}), implies the desired inequality.
\end{proof}

\begin{dfn}
	Let $A\in \sno$. A nonnegative Lipschitz definable function-germ $\rho:(A,0) \to (\R,0)$   satisfying
	\begin{equation}\label{eq equiv rho a r}
		\rho(x) \sim |x|
	\end{equation}
	is called {\bf a radius function}\index{radius function}.
\end{dfn}

\begin{lem}\label{lem angle des secantes}
	Let $A\in \sno$ and let $\rho:(A,0) \to (\R,0)$ be a radius function.
	Let in addition $x:[0,\ep)\to A$ and $y:[0,\ep)\to A$ be two $\ccc^0$ definable arcs such that $$x(0)=y(0)=0_{\R^n}\quad \eto \quad \rho(x(r))=\rho(y(r)),\quad \mbox{for all $r\in (0,\ep)$. }$$
	If $x$ and $y$ have   the same tangent  half-line at the origin then  $\frac{x-y}{|x-y|}$  and  $\frac{y}{|y|}$ have different limits  as $r$ goes to zero.
\end{lem}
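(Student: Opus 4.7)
Plan. I argue by contradiction, supposing $\ell := \lim_{r\to 0} (x(r) - y(r))/|x(r) - y(r)| = v = \lim_{r\to 0} y(r)/|y(r)|$. After a common definable reparametrization in $r$ (licit because $r \mapsto \rho(x(r))$ is a Puiseux function equivalent to $r$, by Proposition \ref{pro_puiseux} and $\rho \sim |\cdot|$, hence strictly monotone near $0$), I may assume $\rho(x(r)) = \rho(y(r)) = r$. Extending $\rho$ to a Lipschitz function on $\R^n$ (Proposition \ref{pro_extension_fonction_lipschitz}) and invoking Lemma \ref{lem_gateau}, I obtain a positively $1$-homogeneous Lipschitz Gateau derivative $G:\R^n\to\R$ with $\rho(z) = G(z) + |z|\mu(z)$, where $\mu$ is definable continuous with $\mu(0) = 0$. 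The assumption $\rho \sim |\cdot|$ pinches $\rho(x(r))/|x(r)|$ away from $0$ and $\infty$, which forces $G(v) > 0$, and the leading Puiseux expansions read $x(r) = (r/G(v))v + p(r)$, $y(r) = (r/G(v))v + q(r)$ with $p, q = o(r)$.

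The contradiction hypothesis combined with the Puiseux structure of $L := x - y = p - q$ forces its leading Puiseux term to be $c r^\delta v$ for some $c > 0$ and $\delta > 1$, with the component of $L$ orthogonal to $v$, $L_\perp$, of Puiseux order strictly greater than $\delta$. Writing $y = sv + y_\perp$ with $s \sim r/G(v)$ and $|y_\perp| = o(r)$, and $L_v := \langle L, v\rangle$, the positive $1$-homogeneity of $G$ gives
\[
G(sv + y_\perp + L_v v) - G(sv + y_\perp) = (s + L_v)G\bigl(v + y_\perp/(s+L_v)\bigr) - sG\bigl(v + y_\perp/s\bigr) = L_v G(v) + o(L_v)
\]
as $r\to 0$, via the Lipschitz continuity of $G$ on the unit sphere and $|y_\perp|/s = o(1)$. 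Combined with the Lipschitz bound $|G(y + L) - G(y + L_v v)| \le L_G|L_\perp| = o(r^\delta)$, this yields $G(x) - G(y) = c r^\delta G(v) + o(r^\delta)$.

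It remains to show that the residual $|x|\mu(x) - |y|\mu(y)$ is $o(r^\delta)$, at which point $\rho(x) - \rho(y) = 0$ forces $c r^\delta G(v) + o(r^\delta) = 0$, contradicting $c, G(v) > 0$. I split
\[
|x|\mu(x) - |y|\mu(y) = (|x| - |y|)\mu(x) + |y|\bigl(\mu(x) - \mu(y)\bigr).
\]
The first summand is $O(|L|)\cdot o(1) = o(r^\delta)$. For the second, \L ojasiewicz's inequality (Theorem \ref{thm_lojasiewicz_inequality}) applied to $\mu$ yields $|\mu(z)| \lesssim |z|^\sigma$ for some $\sigma > 0$, and Puiseux Lemma with parameters (Proposition \ref{pro_puiseux_avec_parametres}) applied to $\mu$ along the one-parameter family of segments $y(r) + tL(r)$, $t \in [0,1]$, provides a Puiseux comparison showing $|\mu(x(r)) - \mu(y(r))|$ is of Puiseux order strictly greater than $r^{\delta - 1}$, whence $|y|\cdot|\mu(x) - \mu(y)| = o(r^\delta)$. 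The main obstacle is precisely this last order refinement: the \L ojasiewicz exponent of $\mu$ and the order $\delta$ are a priori independent, and extracting the strict gain requires using the near-alignment of $L$ with $v$ (forced by the contradiction hypothesis) together with the explicit Puiseux expansion of $\mu$ along the two-parameter family $y(r) + tL(r)$ to obtain the needed cancellation.
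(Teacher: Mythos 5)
Your proposal has a genuine gap, which you yourself flag at the end, and it is the heart of the matter, not a technicality that can be deferred. Having written $\rho(z) = G(z) + |z|\mu(z)$ via Lemma \ref{lem_gateau} and computed $G(x) - G(y) = c\,r^\delta G(v) + o(r^\delta)$, you must show that $|x|\mu(x) - |y|\mu(y) = o(r^\delta)$. But there is no a priori reason for this: $\mu$ encodes precisely the ``non-radial'' variation of $\rho$, and the difference $\mu(x(r)) - \mu(y(r))$ has no built-in order gain beyond $\delta - 1$. \L ojasiewicz's inequality gives you a bound $|\mu(z)| \lesssim |z|^\sigma$ but says nothing about the difference along two arcs that approach $0$ tangentially; and the Puiseux expansion of $\mu$ along the family $y(r) + tL(r)$ produces, after subtraction, a term whose order is not controlled by $\delta$ without further information. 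If the error term could in fact be of order exactly $r^\delta$ and cancel the Gateau contribution, the whole strategy collapses. You indicate that the ``near-alignment of $L$ with $v$'' should produce the cancellation, but do not show how, and I do not see an elementary route to it along these lines.

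The paper avoids the problem entirely by not attempting to isolate a leading Puiseux order of $\rho(x) - \rho(y)$. Instead, after extending $\rho$ to a Lipschitz definable function on $\R^n$, it establishes the pointwise (almost-everywhere) lower bound
\[
\partial_q\rho \cdot \frac{q}{|q|} \ge \ep
\]
near the origin, using Curve Selection Lemma, Puiseux Lemma, and the fact $\rho \sim |\cdot|$. Assuming for contradiction that $\tfrac{x-y}{|x-y|}$ and $\tfrac{y}{|y|}$ have the same limit $v$, it perturbs $y$ to a nearby definable arc $z$ so that $\rho$ is differentiable a.e.\ on the segment $x(r)z(r)$. Since every point $q$ of that segment has $\tfrac{q}{|q|} \to v$, and the direction $\tfrac{x-z}{|x-z|}$ also converges to $\pm v$, the directional derivative of $\rho$ along the segment is bounded away from zero. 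Hence $\rho$ is strictly monotone along the segment, contradicting $\rho(x(r)) = \rho(y(r))$. This integration-of-the-gradient argument replaces the delicate Puiseux order bookkeeping with a robust monotonicity statement and does not need to separate $\rho$ into a homogeneous piece plus remainder. Your reparametrization $\rho(x(r)) = r$, the extension of $\rho$ to $\R^n$, and the idea of using the Gateau differentiability of Lipschitz definable maps are all reasonable preliminaries, but the core estimate you need is missing.
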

\begin{proof}
	We claim that we can assume $A=\R^n$. Indeed, for every $r$, the restriction $f_r:=\rho_{|\sph (0,r)\cap A}$ may then be extended to a definable $L_\rho$-Lipschitz function  by setting 
	\begin{equation}\label{eq_ext_radius}
		\tilde{f}_r(q):=\inf \{\rho(p)+L_\rho |p-q|: p \in \sph (0,r)\cap A\}.
	\end{equation}
	 Clearly, $\tilde{\rho}(q):=\tilde{f}_{|q|}(q)$  extends $\rho$ and satisfies (\ref{eq equiv rho a r}).  Moreover, as it is $L_\rho$-Lipschitz on every sphere $\sph (0,r)$, by Lemma \ref{lem_lips_spheres}, it is Lipschitz on a neighborhood of the origin in $\R^n$, yielding that we can assume $A=\R^n$.

	We now shall show that there is $\ep>0$ such that for almost every $q$  in a neighborhood of the origin (as $\rho$ is definable, its derivative exists almost everywhere)
	\begin{equation}\label{eq_inner_prod_gradient}
		\partial_q \rho\cdot \frac{q}{|q|}\ge \ep .
	\end{equation}
 Thanks to Curve Selection Lemma, it suffices to show that for every definable arc $\gamma(r)$  the limit $l:=\lim_{r\to 0}\partial_{\gamma(r)} \rho\cdot \frac{\gamma(r)}{|\gamma(r)|}$ is positive. As $\gamma$ is a Puiseux arc (see Proposition \ref{pro_puiseux}), we may assume that it is parametrized by its distance to the origin. If $\gamma(r)=ar+\dots$, with $a\in \sph^{n-1}$, then $\gamma'(r)=a+\dots$, so that
	\begin{equation}\label{eq_der_rho}\lim_{r\to 0}\frac{d \rho (\gamma(r))}{dr}=\lim_{r\to 0}\partial_{\gamma(r)} \rho\cdot \gamma'(r)\end{equation}
		 has the same sign as $l$ (positive, negative, or zero).  Since $\rho$ is positive and vanishes at the origin, $\frac{d \rho (\gamma(r))}{dr}$ must be positive  for $r$ positive, which means that $l$ must be nonnegative.  If $l=0$ then, by (\ref{eq_der_rho}), we get $\rho(\gamma(r))\ll r$, in contradiction with (\ref{eq equiv rho a r}), yielding (\ref{eq_inner_prod_gradient}).

	Now, we assume that the conclusion of the lemma  fails, i.e.,  that the limit of the segment $xy$ (in the projective space) is the same as the limit of the segment $0y$.
	Let us move slightly $y$
	to some close point $z$ such that $\rho$ is almost
	everywhere differentiable on the segment $xz$ (almost every $z\in \R^n$ has this property). By Definable Choice (Proposition \ref{pro_globally subanalytic_choice}), we can assume that $z$ is a definable arc.
	
	By assumption, $x$ and $y$ share the same half-tangent at the origin. If $z$ is sufficiently close to $y$ then the arcs $\frac{x}{|x|}$ and $\frac{z}{|z|}$  have the same limit in $\sph^{n-1}$, say $v$. Notice that $v=\lim_{r\to 0}\frac{q(r)}{|q(r)|}$, for every arc $q:(0,\ep)\to \R^n$ such that $q(r)$ belongs to the segment $x(r)z(r)$ for all $r$.
	By (\ref{eq_inner_prod_gradient}), we thus have $\pa_q \rho \cdot v\ge \frac{\ep}{2} $ for any point $q$ in the segment $x(r)z(r)$, $r>0$ small enough (since $v$ is close to $\frac{q}{|q|}$).
	
	As we have assumed that $\frac{x-y}{|x-y|}$ tends in the projective space to the same limit as $y$,   if $z$ is sufficiently close to $y$ then $\frac{x-z}{|x-z|}$ must converge to $\pm v$, say $v$ for simplicity.  By the above paragraph, this implies   that $\pa_q \rho .\frac{x-z}{|x-z|}\ge \frac{\ep}{4}$, for any $q$ in the segment $x(r)z(r)$, $r>0$ small enough.
	Hence,  $\rho$ is strictly monotonic on the segment $xz$, with a derivative bounded away from zero by $\frac{\ep}{4}$.
	This contradicts the fact that the points $x(r)$ and $y(r)$ belong to the
	same level surface of $\rho$ (and that $z$ is very close to $y$).
\end{proof}

We now can extend Lemma \ref{lem_lips_spheres} to all the radius functions:

\begin{lem}\label{lem_lips_spheres2}
	Let $A\in \sno$ and let $\rho:A\to \R$ be a radius function. Let $f:(A,0) \to (\R,0)$ be a definable function-germ  satisfying (\ref{eq_f_lips_spheres})
 near the origin. If  $f_{|\{\rho=r\}}$ is $L$-Lipschitz for every $r>0$ small, with $L \in \R$ independent of $r$, then $f$ is the germ of a  Lipschitz function.\end{lem}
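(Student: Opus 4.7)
The plan is to mimic the proof of Lemma \ref{lem_lips_spheres}, replacing the parametrization by Euclidean distance with a parametrization by $\rho$. By Curve Selection Lemma (Lemma \ref{curve_selection_lemma}), applied as in the previous lemma, it suffices to check that $|f(x(s))-f(y(s))|\lesssim |x(s)-y(s)|$ for any two definable arcs $x,y:(0,\ep)\to A$ tending to $0$; indeed, if $f$ were not Lipschitz near $0$, one could extract such arcs along which the ratio $|f(x)-f(y)|/|x-y|$ is unbounded. We may assume $x$ is not constant.

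First I would reparametrize $x$ by $\rho$. Since $\rho\circ x$ is a nonzero definable function tending to $0$ with $\rho(x(s))\sim |x(s)|$, Puiseux Lemma (Proposition \ref{pro_puiseux}) shows that $\rho\circ x$ is eventually strictly monotonic with leading term of the same order as the leading term $s^k$ of $|x(s)|$. Hence we can reparametrize so that $\rho(x(r))=r$. As computed directly from the Puiseux expansion, the leading term of the reparametrized arc $x(r)$ is linear in $r$ and all subsequent exponents are $>1$, which by the mean value theorem yields
\begin{equation}\label{eq_plan_lips_puiseux}
|x(r)-x(r')|\lesssim |r-r'|\quad\text{for $r,r'$ small.}
\end{equation}
Similarly, the hypothesis $|f(z)|\lesssim |z|$ gives $|f(x(r))|\lesssim r$, so $f\circ x$ is a Puiseux series in $r$ with exponents $\ge 1$, and hence
\begin{equation}\label{eq_plan_lips_fx}
|f(x(r))-f(x(r'))|\lesssim |r-r'|.
\end{equation}

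Now, for a second arc $y(r)$ tending to $0$, set $t_r:=\rho(y(r))$, so that $x(t_r)$ and $y(r)$ lie on the same level set $\{\rho=t_r\}$. The Lipschitzness of $\rho$ yields
\begin{equation*}
|r-t_r|=|\rho(x(r))-\rho(y(r))|\le L_\rho\,|x(r)-y(r)|,
\end{equation*}
and combining this with (\ref{eq_plan_lips_puiseux}) gives $|x(t_r)-y(r)|\lesssim |x(r)-y(r)|$. The uniform Lipschitzness on the level sets then provides $|f(x(t_r))-f(y(r))|\le L\,|x(t_r)-y(r)|\lesssim |x(r)-y(r)|$, and (\ref{eq_plan_lips_fx}) together with the estimate on $|r-t_r|$ yields $|f(x(r))-f(x(t_r))|\lesssim |x(r)-y(r)|$. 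The triangle inequality concludes.

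The main delicate point is the reparametrization step: one must verify that after parametrizing $x$ by the level value $r=\rho(x(r))$, the arc is Lipschitz in $r$ and $f\circ x$ is Lipschitz in $r$. This is the only place where the two hypotheses $\rho(x)\sim |x|$ and $|f(z)|\lesssim |z|$ are used together with the Puiseux structure. The radius condition $\rho\sim|\cdot|$ is precisely what guarantees that the leading exponent of the $\rho$-reparametrized Puiseux expansion is $1$ (not smaller), so that \eqref{eq_plan_lips_puiseux} holds; without it, the reparametrization could introduce derivatives blowing up and the whole argument would collapse, exactly as in Example \ref{exa_lcs_exp}.
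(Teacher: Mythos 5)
Your proof is correct, but it takes a genuinely different route from the paper's. The paper splits into two cases according to whether the arcs $x$ and $y$ are tangent at the origin: in the non-tangent case it concludes directly from (\ref{eq_f_lips_spheres}) and $|x(r)|,|y(r)|\lesssim|x(r)-y(r)|$, and in the tangent case it invokes Lemma \ref{lem angle des secantes} (the geometric lemma asserting that two arcs on the same $\rho$-level sets cannot have their chord direction aligned with their common half-tangent) to derive the key estimate $|x(r)-x(t_r)|\lesssim|x(r)-y(r)|$. You instead derive the stronger chain
$|x(r)-x(t_r)|\lesssim |r-t_r|=|\rho(x(r))-\rho(y(r))|\le L_\rho|x(r)-y(r)|$
uniformly, without any case distinction and without Lemma \ref{lem angle des secantes}, by first observing that the $\rho$-reparametrized arc $x$ is itself Lipschitz. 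This observation is precisely where $\rho(x)\sim|x|$ is used: it guarantees that $\rho\circ x$ has the same leading Puiseux exponent as $|x(s)|$, so that after reparametrizing by $r=\rho(x(r))$, every component of $x$ is a Puiseux series in $r$ with all exponents $\ge 1$, hence with bounded derivative. The Lipschitzness of $\rho$ (which is part of the definition of a radius function) supplies the second inequality. Both proofs finish with the same two ingredients (the Lipschitzness of $f\circ x$ because $|f(x(r))|\lesssim r$, and the triangle inequality via $x(t_r)$). Your approach is arguably more transparent, as it isolates exactly which properties of the radius function enter, whereas the paper's version reuses the geometric lemma that is also needed for Theorem \ref{thm_g}.
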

\begin{proof}
	One more time, it is enough to check the Lipschitz condition along two definable curves $x$ and $y$ ending at the origin, and we may assume that $x$ is parametrized in such a way that $\rho(x(r))=r$.  
	
	Set for simplicity $t_r:=\rho(y(r))$. If $x$ and $y$ are not tangent to each other at the origin then $|x(r)|\lesssim |x(r)-y(r)|$ and   $|y(r)|\lesssim |x(r)-y(r)|$, so that the result follows from assumption (\ref{eq_f_lips_spheres}).  If they have the same half tangent at the origin, then, by Lemma \ref{lem angle des secantes}, the angle between the vectors $x(t_r)$ and $\pm(x(t_r)-y(r))$ does not go to zero. This implies (since $x$ is definable arc) that the angle between $(x(t_r)-x(r))$ and $(x(t_r)-y(r))$ is bounded below away from zero (for $r\ne t_r$; if $r\equiv t_r$ then the needed fact follows from the assumption ``$f_{|\{\rho=r\}}$ is $L$-Lipschitz'').
	We thus have:
	\begin{equation}\label{eq_p2}|x(r)-x(t_r)|\lesssim|x(r)-y(r)|\end{equation} (since $|x(r)-y(r)|\ll |x(r)-x(t_r)|$ would entail that this angle goes to zero).
Since $f(x(r))$ is a Puiseux arc satisfying $|f(x(r)) |\lesssim r$, the function $f$ is Lipschitz along 
	the arc $x(r)$. We thus can finish the proof with exactly the same computation as in the proof of Lemma \ref{lem_lips_spheres}, writing  (\ref{eq_lips_sphere2}) and replacing (\ref{eq_p}) with (\ref{eq_p2}).
\end{proof}

We also can derive a bi-Lipschitz version of this lemma.

\begin{lem}\label{lem_lips_spheres_bi}
	Let $A \in \sno$ and let $f:(A,0) \to (B,0)$ be a germ of definable map. 
	Let $\alpha:(B,0)\to \R$ be a radius function such that $\rho(x):=\alpha(f(x))$ defines a radius function on $A$. If the function $f_{|\{\rho=r\}}$ is $L$-bi-Lipschitz for every $r>0$ small, with $L \in \R$ independent of $r$, then $f$ is the germ of a bi-Lipschitz mapping.
\end{lem}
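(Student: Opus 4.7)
My plan is to deduce this lemma as a relatively short consequence of Lemma \ref{lem_lips_spheres2}, applied separately to $f$ and to $f^{-1}$, using $\rho$ as the radius function for the first and $\alpha$ for the second.

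First I would verify that $f$ is injective on a neighborhood of $0$ in $A$. Since $\rho = \alpha\circ f$, if $f(x)=f(x')$ then $\rho(x)=\rho(x')$, so $x$ and $x'$ lie in a common level set $\{\rho=r\}$, on which $f$ is assumed to be $L$-bi-Lipschitz and therefore one-to-one. Hence $x=x'$, and moreover $f$ maps $\{\rho=r\}$ bijectively onto $\{\alpha=r\}\cap f(A)$.

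Next I would show that $f$ is Lipschitz. Writing $f=(f_1,\ldots,f_k)$, each component satisfies
\[
|f_i(x)|\le |f(x)|\sim \alpha(f(x))=\rho(x)\sim |x|,
\]
so hypothesis \eqref{eq_f_lips_spheres} of Lemma \ref{lem_lips_spheres2} holds for $f_i$. Furthermore, since $f$ is $L$-Lipschitz on each level set $\{\rho=r\}$, so is $f_i$. Lemma \ref{lem_lips_spheres2}, applied with the radius function $\rho$, then yields that each $f_i$ is the germ of a Lipschitz function, and hence so is $f$.

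Finally I would apply the same argument to $g:=f^{-1}:(f(A),0)\to(A,0)$, using $\alpha$ as the radius function on $f(A)$ (it is a radius function there by assumption). For each component $g_j$ of $g$,
\[
|g_j(y)|\le |g(y)|=|x|\sim \rho(x)=\alpha(f(x))=\alpha(y)\sim |y|,
\]
so \eqref{eq_f_lips_spheres} holds for $g_j$. Moreover, since $f$ maps $\{\rho=r\}$ bijectively onto $\{\alpha=r\}\cap f(A)$ and $f_{|\{\rho=r\}}$ is $L$-bi-Lipschitz, the restriction $g_{|\{\alpha=r\}\cap f(A)}$ is $L$-Lipschitz uniformly in $r$, and so is each $g_j$. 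Lemma \ref{lem_lips_spheres2} applies and shows $g$ is Lipschitz, completing the proof. There is no significant obstacle here; the only subtlety is to keep straight which radius function is being used on which side, and to remember that Lemma \ref{lem_lips_spheres2} has already done the hard work of upgrading a spherical Lipschitz bound to a genuine one using the \L ojasiewicz-type estimates behind Lemma \ref{lem angle des secantes}.
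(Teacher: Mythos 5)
Your proposal is correct and follows essentially the same route as the paper: apply Lemma \ref{lem_lips_spheres2} to $f$ with radius function $\rho$, deduce injectivity from the fact that fibers of $f$ lie in level sets of $\rho$, and apply Lemma \ref{lem_lips_spheres2} again to $f^{-1}$ with radius function $\alpha$ on $f(A)$. The only (minor, correct) refinement is that you make the component-wise application of Lemma \ref{lem_lips_spheres2} explicit, since that lemma is stated for scalar-valued functions.
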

\begin{proof} 
	As $\alpha$ and $\rho$ are both radius functions, we  have near the origin $|x|\sim\rho(x)= \alpha(f(x))\sim |f(x)|$.
	Hence, by Lemma \ref{lem_lips_spheres2}, as $f_{|\{\rho=r\}}$ is $L$-Lipschitz for every $r>0$ small, with $L \in \R$ independent of $r$,  $f$ is the germ of a  Lipschitz mapping.
	
	We now check that $f$ is one-to-one. If $p$ and $q$ are two points of $A$ such that $f(p)=f(q)$ then $\rho(p)=\alpha(f(p))=\alpha(f(q))=\rho(q)$. But, as $f$ is one-to-one on the level surfaces of $\rho$, this implies $p=q$. 

It remains to show that $f^{-1}$ is Lipschitz.	The restriction of $f^{-1}$ to the set $$\{x \in f(A): \alpha(x)=r\}=f(\{x\in A:\rho(x)=r\})$$ is $L$-Lipschitz by assumption, for all $r>0$ small enough. Therefore, again due to Lemma \ref{lem_lips_spheres2}, $f^{-1}$ must be a Lipschitz mapping.   
\end{proof}


\subsection{Uniqueness of the link}\label{subsection_uniqueness_of_the_link}

\begin{thm}\label{thm_g}
	Let $X \in \s_{n,0}$  and let $\rho:(X,0) \to (\R,0)$ be a radius function. There exists a germ of definable bi-Lipschitz homeomorphism $g:(\R^n,0)\to (\R^n,0)$ preserving  $X$ and such that $|g(x)|=\rho(x)$ for all $x\in X$.
\end{thm}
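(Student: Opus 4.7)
The plan is to build $g$ by radially pushing through the Lipschitz conic structure of the pair $(\R^n,X)$, and then verify bi-Lipschitzness via Lemma \ref{lem_lips_spheres_bi}. First I extend $\rho$ to a radius function on all of $\R^n$ following the opening of the proof of Lemma \ref{lem angle des secantes}: set $\tilde f_r(q):=\inf\{\rho(p)+L_\rho|p-q|:p\in\sph(0,r)\cap X\}$ and $\tilde\rho(q):=\tilde f_{|q|}(q)$. This extension is $L_\rho$-Lipschitz on every sphere and satisfies $\tilde\rho\sim|\cdot|$, hence by Lemma \ref{lem_lips_spheres} it is Lipschitz on a neighborhood of $0$; after rescaling by a constant I may assume $\tilde\rho\le|\cdot|$.

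Next I apply Theorem \ref{thm_local_conic_structure} to the pair $(\R^n,X)$, using Remark \ref{rem_conical_same} so that the Lipschitz conic structure respects both sets simultaneously. For $\ep>0$ small this gives a definable homeomorphism $H:\Bb(0,\ep)\to\Bb(0,\ep)$ which preserves $|\cdot|$ and maps $0*(\sph(0,\ep)\cap X)$ onto $X\cap\Bb(0,\ep)$, together with a retraction $r_s(q):=H(sH^{-1}(q))$ that is $Cs$-Lipschitz and whose inverse $r_s^{-1}$ is uniformly Lipschitz on any $[\delta,1]$ by Remark \ref{rem_lcs2}. I then define
\[
g(q):=r_{\tilde\rho(q)/|q|}(q)\quad\text{for }q\neq 0,\qquad g(0):=0.
\]
Since each $r_s$ preserves $X$ and $H$ preserves the distance to the origin, one checks immediately that $g(X\cap\Bb(0,\ep))\subseteq X$ and $|g(q)|=(\tilde\rho(q)/|q|)\cdot|q|=\tilde\rho(q)$; in particular $|g(x)|=\rho(x)$ on $X$.

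To prove that $g$ is bi-Lipschitz I invoke Lemma \ref{lem_lips_spheres_bi} with $\alpha=|\cdot|$ (so that $\alpha\circ g=\tilde\rho$): it suffices to check that $g$ restricted to each level set $\{\tilde\rho=r\}$ is $L$-bi-Lipschitz with $L$ uniform in $r>0$. On such a level set, the parameter $s(q)=r/|q|$ stays in a fixed compact subinterval $[c,1]$ of $(0,1]$, since $\tilde\rho\sim|\cdot|$. The splitting
\[
|g(q_1)-g(q_2)|\le|r_{s(q_1)}(q_1)-r_{s(q_1)}(q_2)|+|r_{s(q_1)}(q_2)-r_{s(q_2)}(q_2)|
\]
then yields a Lipschitz bound independent of $r$: the first term is controlled by $Cs(q_1)|q_1-q_2|$ using Lipschitzness of $r_s$, while the second equals $L_H|s(q_1)-s(q_2)||q_2|$, which is $\lesssim|q_1-q_2|$ by $|s(q_1)-s(q_2)|\le r|q_1-q_2|/(|q_1||q_2|)$ together with $r\sim|q_1|$. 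The analogous estimate for the inverse, with $s$-parameter still confined to $[c,1]$, uses instead the uniform Lipschitzness of $r_s^{-1}$ on $[c,1]$.

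The main obstacle is exactly this final uniform bi-Lipschitz control on the level sets. The naive decomposition $g=H\circ(\text{radial rescaling in }H\text{-coordinates})\circ H^{-1}$ is not a viable route, because $H^{-1}$ is not globally Lipschitz at the origin; Lemma \ref{lem_lips_spheres_bi} bypasses this by reducing the global bi-Lipschitz question to a uniform one on the level sets of $\tilde\rho$, and the radius-function hypothesis $\tilde\rho\sim|\cdot|$ is precisely what keeps $s(q)$ in a compact subinterval of $(0,1]$, making every constant independent of $r$.
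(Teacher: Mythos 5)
Your approach is a genuinely different route from the paper: you work through the Lipschitz conic structure (Theorem~\ref{thm_local_conic_structure}, itself a consequence of the metric triangulation) rather than through the metric triangulation $\Psi$ and the auxiliary set $\check{U}$ directly, and the reduction to level sets via Lemma~\ref{lem_lips_spheres_bi} mirrors the paper's own reduction. The definition $g(q)=r_{\tilde\rho(q)/|q|}(q)$ and the observation that $|g(q)|=\tilde\rho(q)$ are clean, and your upper Lipschitz bound on level sets is correct. However, there is a genuine gap precisely where you wave it away with ``the analogous estimate for the inverse.''

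Writing $y_i=g(q_i)$, $s_i=s(q_i)\in[c,1]$ for $q_1,q_2\in\{\tilde\rho=r\}$, the analogous splitting gives
\[
|q_1-q_2|\le |r_{s_1}^{-1}(y_1)-r_{s_1}^{-1}(y_2)|+|r_{s_1}^{-1}(y_2)-r_{s_2}^{-1}(y_2)|.
\]
The first term is $\le L'|y_1-y_2|$ with $L'$ the uniform Lipschitz constant of $r_s^{-1}$ on $[c,1]$. For the second, since $r_s^{-1}(y)=H(s^{-1}H^{-1}(y))$ and $|H^{-1}(y)|=|y|=r$, one gets $\le L_H\,|1/s_1-1/s_2|\,r\le (L_H/c^2)|s_1-s_2|\,r$, and with $|s_1-s_2|\le r|q_1-q_2|/(|q_1||q_2|)=s_1s_2|q_1-q_2|/r$, this is $\le (L_H/c^2)|q_1-q_2|$. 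So you arrive at $|q_1-q_2|\le L'|y_1-y_2|+(L_H/c^2)|q_1-q_2|$, which only absorbs into a lower Lipschitz bound when $L_H/c^2<1$ --- and there is no reason for that to hold, since $L_H$ and $c$ are unrelated fixed constants. The paper handles exactly this obstruction through the claim (\ref{eq_claim_h_unicite_link}), whose proof uses the radial tangency of $\Psi$-images of rays (Lemma~\ref{lem_triang_secantes}) combined with the angular separation of secants on a radius-function level set (Lemma~\ref{lem angle des secantes}) to show that the ``radial'' perturbation $|\Psi(q)-\Psi(\nu q)|$ cannot cancel against the ``angular'' part $|\Psi(\nu q)-\Psi(p)|$. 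You invoke Lemma~\ref{lem angle des secantes} only to extend $\rho$, not in the bi-Lipschitz estimate, which is where it is actually indispensable; your argument would need a similar angle-versus-radius comparison. (Two smaller issues: you do not address injectivity/surjectivity of $g$, though injectivity would follow from the bi-Lipschitz bound once established and surjectivity from invariance of domain since here $g$ maps an open subset of $\R^n$ into $\R^n$; and ``rescaling so that $\tilde\rho\le|\cdot|$'' would alter the equality $|g(x)|=\rho(x)$ --- it is cleaner to set $g=H\circ\sigma\circ H^{-1}$ with $\sigma(z):=\tilde\rho(H(z))\,z/|z|$, which makes sense without any rescaling on $\{\tilde\rho<\ep\}$.)
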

\begin{proof} We may extend $\rho$ to a radius function on a definable open neighborhood of the origin $U$ in $\R^n$ (see (\ref{eq_ext_radius})).
We recall that $\check{U}:=\{(t,x) \in \R \times U: t=|x|\}$ (see (\ref{eq_Xcheck})), which is a subset of $\C_{n+1}(1)$.
	As the mapping $P:\check{U} \to U $, induced by the restriction of the projection omitting the first coordinate, is a bi-Lipschitz homeomorphism, it is enough to construct a germ of definable bi-Lipschitz homeomorphism $g:(\check{U},0)\to (\check{U},0)$ preserving $\xxc$  and satisfying $g_1(x)=\check{\rho}(x)$, for all $x\in \check{U}$, where $g_1(x)$ is the first coordinate of $g(x)$ in the canonical basis and  $\check{\rho}(x):=\rho( P(x))$.
	
	Let $\Psi:(|K|,0) \rightarrow (\check{U}_{[0,\ep]},0)$ be the vertical map given by Theorem
	\ref{thm_triangulations_locales} applied to $\xxc$ and  $\check{U}$.
	Define a mapping $\Lambda :|K|\to |K|$ 
	by setting $$\Lambda(q):=\frac{\check{\rho}(\Psi(q))}{q_1 }\cdot q,$$ if $q=(q_1,\dots,q_{n+1})\in |K|$ is nonzero, and $\Lambda(0):=0$.
	Note that $\Lambda$
	preserves the germs of the open simplices that have the origin in their closure.
	Define then a mapping $g$ from $\check{U}$ to itself by:
	$$g (x):=\Psi \circ\Lambda \circ \Psi^{-1}(x), \quad x\in \check{U}.$$  We first check $g$ gives rise to an onto map-germ, i.e., that $g(\check{U})=\check{U}$ as germs, which amounts  to check that  $\Lambda(\Psi^{-1}(\check{U}))=\Psi^{-1}(\check{U})$, as germs. Take $y=(y_1,\dots,y_{n+1})\in \Psi^{-1}(\check{U})$. As our problem is local at $0$, we will assume  that $y_1<\ep$, where $\ep>0$ is sufficiently small for $\{q\in \Psi^{-1}(\check{U}):\check{\rho}\circ \Psi(q)\le \ep\}$ to be compact (note that, since $\Psi$ is vertical and $\rho$ is a radius function, $\check{\rho}\circ \Psi(q)\sim q_1\sim |q|$ on $\Psi^{-1}(\check{U})$). A preimage of $y$ under $\Lambda$ is given by a point $q\in \mathscr{L}_y:=\{t y : t\in(0,\infty)\}$, satisfying $\check{\rho}\circ \Psi(q)=y_1$. If $y_1 \in [0, \check{\rho}\circ \Psi(y)]$ then, as $\rho$ is continuous and vanishes at $0$, it is clear that there is $q$ on $\mathscr{L}_y$ between $0$ and $y$ satisfying $\check{\rho}\circ \Psi(q)=y_1$. If $y_1\in [\check{\rho}\circ \Psi(y),\ep)$ then, as $\{\check{\rho}\circ \Psi\le \ep\}$ is compact, the half-line $\mathscr{L}_y$ cannot meet the frontier of $\Psi^{-1}(\check{U})$ before $\check{\rho}\circ \Psi$ reaches $\ep$, which means that there is $q\in \Psi^{-1}(\check{U})$ on $\mathscr{L}_y$ such that $\check{\rho}\circ \Psi(q)=y_1$, as required.

	Observe also that, as $\Psi$ is vertical, we have for $x\in \check{U}$: $$g_1(x_1,\dots,x_{n+1})=\frac{\check{\rho}(x)}{x_1}\cdot  x_1=\check{\rho}(x),$$
	as required. 
	To finish the proof, we have to check the bi-Lipschitzness of $g$.
	
	By Lemma \ref{lem_lips_spheres_bi}, it is enough to prove that for every $t$ the restriction of $g$ to $\{\check{\rho}=t\}$ is $L$-bi-Lipschitz with $L\in \R$ independent of $t$. 
	To show this, take two definable curves $x:(0,\ep)\to \check{U}$ and $y:(0,\ep)\to \check{U}$ such that $\check{\rho}(x(t))=\check{\rho}(y(t))=t$  for any $t>0$ small enough, and let $p(t)=(p_1(t),\dots,p_{n+1}(t))$ and $q(t)=(q_1(t),\dots,q_{n+1}(t))$ be the respective preimages of these two arcs under $\Psi$.  
	Set $\nu(t) :=\frac{p_1(t)}{q_1(t)}$, and note that, since $\rho$ is a radius function and $\Psi$ is vertical, we have $\nu \sim 1$ (for simplicity, and because we can interchange $x$ and $y$, we will assume that $\nu(t)\le 1$).  


	 \noindent {\bf Claim.} We have:
	\begin{equation}\label{eq_claim_h_unicite_link}|\Psi(p)-\Psi(q)| \sim |\Psi(\nu q)-\Psi(p)|.\end{equation}
			Let $\alpha(s,t)=\Psi (sq(t))$, $s\in [\nu(t),1]$ (if $\nu \equiv 1$ then the above claim is trivial, we will thus suppose $\nu<1$).
By Lemma \ref{lem_gateau}, as  $\Psi$ is Lipschitz and satisfies $|\Psi(z)|\sim |z|$ for $z$ close to zero,	the angle  between $\alpha(s,t)$ and $\Psi(q(t))$
	tends to zero as $t\to 0$ (uniformly in $s\in [\nu(t),1]$).
	Furthermore,
	 by Lemma \ref{lem_triang_secantes}, we know that the angle between $\frac{\pa \alpha}{\pa s}(s,t)$  and  $\alpha(s,t)$ tends to zero as $t\to 0$  (uniformly in $s\in [\nu(t),1]$).  Hence, since $\Psi( q(t))-\Psi(\nu(t)\cdot q(t))=\int_{\nu(t)}^1 \frac{\pa \alpha}{\pa s}(s,t)ds$, we conclude that
	\begin{equation}\label{eq_nuq_moins_q}
		\lim_{t\to 0} \frac{\Psi( q(t))-\Psi(\nu(t)\cdot q(t))}{|\Psi(q(t))-\Psi(\nu(t) \cdot q(t))|}=\lim_{t\to 0} \frac{\Psi(q(t))}{|\Psi(q(t))|}.
	\end{equation}	
	Observe also that, in virtue of (\ref{item_homogeneous}) of Theorem \ref{thm_triangulations_locales} and (\ref{eq_h_contrations_local}), we must have:
	$$ |\Psi(\nu q)-\Psi(p)|\lesssim |\Psi(p)-\Psi(q)|.$$
	Therefore, if (\ref{eq_claim_h_unicite_link}) fails then the angle between
	$ (\Psi( q)-\Psi(\nu q))$ and
	$(\Psi(q)-\Psi(p))$ tends to $0$,
	 which, by (\ref{eq_nuq_moins_q}), means that so does the angle between  $\Psi(q)-\Psi(p)=y-x$ and $\Psi(q)=y$, in contradiction with Lemma \ref{lem angle des secantes},      yielding (\ref{eq_claim_h_unicite_link}).
	
	Observe now that since $\check{\rho}(x(t))=\check{\rho}(y(t))=t$, by definition of $\Lambda$ we must have $$\Lambda(q(t))=\frac{t}{p_1(t)}\nu (t)q(t)\quad \mbox{ and }\quad \Lambda(p(t))=\frac{t}{p_1(t)}p(t),$$ 
	so that, since the homothetic transformation $|K|\ni z \mapsto \frac{t}{p_1(t)}z$ preserves the tame coordinates $\mu_{\sigma,i}$ for $i \geq 2$  (see (\ref{item_homogeneous}) of Theorem \ref{thm_triangulations_locales}) and $\frac{t}{p_1(t)}\sim 1$, we have:  $$|\Psi(\Lambda(q))-\Psi(\Lambda(p))|\overset{\mbox{(\ref{eq_h_contrations_local})}}{\sim} |\Psi(\nu q)-\Psi(p) |\overset{\mbox{(\ref{eq_claim_h_unicite_link})}}{\sim}  |\Psi(q)-\Psi(p)|,$$
	establishing  the bi-Lipschitz character of $g$. 
\end{proof}

\begin{rem}The just constructed mapping $g$ only depends on the triangulation $\Psi$. As a matter of fact,
 if we have several definable set-germs $X_1,\dots, X_l$ at $\orn$, we may demand $g$ to be the same for all the $X_i$'s (taking a triangulation $\Psi$ compatible with all the $\check{X_i}$).
\end{rem}

Given two definable sets $X$ and $Y$ (resp. germs of definable sets), we write $X\approx Y$\nomenclature[cb]{$X\approx Y$}{definable bi-Lipschitz equivalence of sets or germs\nomrefpage} if there is a definable  bi-Lipschitz homeomorphism (resp. a germ of definable bi-Lipschitz homeomorphism) sending $X$ onto $Y$. 
An immediate consequence of the preceding theorem is:

\begin{cor}
	Let $X \in \sno$  and let $\rho:(X,0) \to (\R,0)$ be a radius function.  For any $r>0$ small, $\{x\in X: \rho(x)=r\} \approx lk(X,0)$.
\end{cor}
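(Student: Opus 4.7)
The plan is to deduce this immediately from Theorem \ref{thm_g} together with the definition of the link. The only work is to check that the bi-Lipschitz mapping produced by Theorem \ref{thm_g} interchanges the level set $\{\rho = r\}$ with a Euclidean sphere section of $X$.

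First, I would apply Theorem \ref{thm_g} to $X$ with the given radius function $\rho$. This provides a germ of definable bi-Lipschitz homeomorphism $g:(\R^n,0)\to(\R^n,0)$ such that $g(X) = X$ as germs and $|g(x)| = \rho(x)$ for all $x \in X$ sufficiently close to $0$. Fix a representative of $g$ defined on some neighborhood of the origin where all the above properties hold.

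Second, I would observe that for every $r > 0$ small enough, the restriction
\[
g_{|\{x \in X : \rho(x) = r\}} : \{x \in X : \rho(x) = r\} \longrightarrow \{y \in X : |y| = r\} = \sph(0,r) \cap X
\]
is well-defined, and surjective: if $y \in X$ satisfies $|y| = r$ with $r$ small enough, then $g^{-1}(y)$ lies in $X$ (since $g$ preserves $X$) and $\rho(g^{-1}(y)) = |g(g^{-1}(y))| = |y| = r$. Since $g$ is a bi-Lipschitz homeomorphism, its restriction to $\{\rho = r\}$ is a definable bi-Lipschitz homeomorphism onto $\sph(0,r)\cap X$, hence
\[
\{x \in X : \rho(x) = r\} \approx \sph(0,r) \cap X.
\]

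Finally, by Corollary \ref{cor_link_bien_defini}, the set $\sph(0,r) \cap X$ is, up to definable bi-Lipschitz homeomorphism, independent of the choice of $r > 0$ small, and by definition it coincides with $lk(X,0)$. Chaining the two equivalences yields $\{x \in X : \rho(x) = r\} \approx lk(X,0)$ for every $r > 0$ small. There is no real obstacle here: all the heavy lifting has already been done in Theorem \ref{thm_g}, whose proof uses the local metric triangulation to realize the change of radius function as a bi-Lipschitz ambient self-map.
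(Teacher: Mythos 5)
Your proof is correct and is exactly the argument the paper intends: it states this corollary as an immediate consequence of Theorem \ref{thm_g}, and what you wrote is just the routine verification that the ambient bi-Lipschitz map $g$ restricts to a definable bi-Lipschitz homeomorphism from $\{\rho=r\}\cap X$ onto $\sph(0,r)\cap X$, after which Corollary \ref{cor_link_bien_defini} finishes the job.
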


Another consequence of Theorem \ref{thm_g} is the following:

\begin{thm}\label{thm_unicite_du_link}
	Let $X$ and $Y$ be two  germs  of definable sets at the origin. If $X \approx Y$  then there exists a germ of definable  bi-Lipschitz homeomorphism $\Phi$ sending $X$ onto $Y$ and preserving the distance to the origin.
\end{thm}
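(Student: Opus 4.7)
The plan is to reduce the statement to Theorem \ref{thm_g} by choosing a clever radius function. Let $h : (X, 0) \to (Y, 0)$ denote the given germ of definable bi-Lipschitz homeomorphism. Note that $h$ automatically sends $0$ to $0$, since as a germ of homeomorphism it must map the unique non-isolated-versus-isolated distinguished point accordingly; more concretely, being bi-Lipschitz forces $|h(x)| \sim |x|$ near the origin. The idea is to pre-compose $h$ with a self-map of the germ $(X,0)$ that rearranges points along the level sets of $x \mapsto |h(x)|$ so that, after composition, the norm in the target equals the norm in the source.

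First I would set $\rho(x) := |h(x)|$ for $x \in X$ and verify that $\rho$ is a radius function on the germ $(X, 0)$ in the sense introduced just before Lemma \ref{lem angle des secantes}. Nonnegativity and vanishing at $0$ are immediate. Lipschitzness follows from
\[
|\rho(x) - \rho(x')| \;=\; \bigl||h(x)| - |h(x')|\bigr| \;\le\; |h(x) - h(x')| \;\le\; L_h\, |x - x'|,
\]
while the bi-Lipschitz character of $h$ gives $L_h^{-1}|x| \le |h(x)| \le L_h|x|$, i.e.\ $\rho(x) \sim |x|$. Thus $\rho$ satisfies all the requirements of a radius function on $X$.

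Next I would apply Theorem \ref{thm_g} to the set $X \in \s_{n,0}$ and the radius function $\rho$, obtaining a germ of definable bi-Lipschitz homeomorphism $g : (\R^n, 0) \to (\R^n, 0)$ that preserves $X$ (as a germ) and satisfies $|g(x)| = \rho(x) = |h(x)|$ for every $x \in X$. Define
\[
\Phi \;:=\; h \circ (g^{-1})_{|X} \;:\; (X, 0) \to (Y, 0).
\]
Since $g$ preserves the germ $X$, the restriction $g^{-1}_{|X}$ is a germ of definable bi-Lipschitz self-homeo\-mor\-phism of $X$, and therefore $\Phi$ is a composition of two germs of definable bi-Lipschitz homeomorphisms, hence itself a germ of definable bi-Lipschitz homeomorphism from $X$ onto $Y$. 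For any $x \in X$, writing $y := g^{-1}(x) \in X$, the defining property of $g$ yields
\[
|\Phi(x)| \;=\; |h(y)| \;=\; \rho(y) \;=\; |g(y)| \;=\; |x|,
\]
so $\Phi$ preserves the distance to the origin, as required.

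There is essentially no new technical obstacle here: all the substantive work, namely the construction of a definable bi-Lipschitz self-homeomorphism of $X$ adjusting the radial coordinate to any prescribed radius function, has been packaged into Theorem \ref{thm_g}, which in turn rests on the local metric triangulation theorem (Theorem \ref{thm_triangulations_locales}). The only point requiring care is the verification that $\rho(x) = |h(x)|$ is genuinely a radius function, which, as noted above, is immediate from the bi-Lipschitz character of $h$.
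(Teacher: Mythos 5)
Your proof is correct and follows essentially the same route as the paper: define $\rho(x) := |h(x)|$, apply Theorem \ref{thm_g} to get $g$ with $|g(x)| = \rho(x)$, and set $\Phi := h \circ g^{-1}$. The only addition is your explicit verification that $\rho$ is a radius function, which the paper leaves implicit.
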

\begin{proof}
	Let $h:(X,0) \to (Y,0)$ be a germ of definable bi-Lipschitz homeomorphism and define  a radius function on $X$ by setting for $x\in X$, $\rho(x):=|h(x)|$.  By Theorem \ref{thm_g}, there is a  germ of definable homeomorphism $g:(X,0)\to (X,0)$ such that $|g(x)|=\rho(x)$ for all $x\in X$, which implies that the mapping $\Phi:=h\circ g^{-1}$ is bi-Lipschitz and preserves the distance to the origin.
\end{proof}

Observe that   Lemma \ref{lem_lips_spheres_bi}  and  Theorem \ref{thm_unicite_du_link} establish that the metric type of a definable  set-germ $X$ is characterized by the metric types of all the sections $\sph (0,r)\cap X$, $r>0$ small, and vice-versa. In particular, the above theorem yields the definable bi-Lipschitz invariance of the link:
\begin{cor}\label{cor_unicite_du_link}
	Let $X$ and $Y$ be two definable set-germs at the origin. If $X \approx Y$ then $lk(X,0)\approx lk(Y,0)$.
\end{cor}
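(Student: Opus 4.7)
The plan is to reduce this corollary directly to Theorem \ref{thm_unicite_du_link}, which is the substantive new ingredient; the corollary itself is then essentially a one-line restriction argument. First I would apply Theorem \ref{thm_unicite_du_link} to the hypothesis $X \approx Y$ to obtain a germ of definable bi-Lipschitz homeomorphism $\Phi : (X,0) \to (Y,0)$ satisfying $|\Phi(x)| = |x|$ for every $x$ in a representative neighborhood of $0$ in $X$.

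Next, I would fix $\ep > 0$ small enough so that: a representative of $\Phi$ is defined on $X \cap \Bb(0,2\ep)$; the set $\sph(0,\ep) \cap X$ serves as a representative of $lk(X,0)$; and $\sph(0,\ep) \cap Y$ serves as a representative of $lk(Y,0)$ (both choices being legitimate by Corollary \ref{cor_link_bien_defini}). Because $\Phi$ preserves the distance to the origin, we have $\Phi(\sph(0,\ep) \cap X) = \sph(0,\ep) \cap \Phi(X) = \sph(0,\ep) \cap Y$; the same equality for $\Phi^{-1}$ shows that $\Phi$ restricts to a definable bijection between these two spheres intersected with the sets.

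Finally, the restriction of a definable bi-Lipschitz homeomorphism to any definable subset of its domain is itself a definable bi-Lipschitz homeomorphism onto its image (with the same constants), so $\Phi_{|\sph(0,\ep) \cap X} : \sph(0,\ep) \cap X \to \sph(0,\ep) \cap Y$ witnesses $lk(X,0) \approx lk(Y,0)$.

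There is no real obstacle here; all the work has been done in Theorems \ref{thm_g} and \ref{thm_unicite_du_link}, where the radius function $\rho(x) := |h(x)|$ was used in conjunction with the metric triangulation theorem to upgrade an arbitrary bi-Lipschitz identification to a distance-preserving one. Once that distance preservation is in hand, the link invariance follows immediately by slicing at a common radius.
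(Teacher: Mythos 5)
Your proof is correct and follows the same route as the paper: apply Theorem~\ref{thm_unicite_du_link} to obtain a definable bi-Lipschitz homeomorphism $\Phi:(X,0)\to(Y,0)$ preserving the distance to the origin, then restrict $\Phi$ to $\sph(0,\ep)\cap X$ for $\ep$ small. The paper states exactly this consequence in the sentence preceding the corollary.
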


\begin{rem}In the just above corollary and in Theorem \ref{thm_unicite_du_link}, $X$ and $Y$ do not necessarily lie in the same euclidean space. Moreover,
	if we define the link as the generic fiber of the distance to the origin (in the field of convergent Puiseux series), a converse is possible \cite{pams}. This fact can indeed be derived from Lemma \ref{lem_lips_spheres_bi} and standard arguments of algebraic geometry. We also stress the fact that, in  Theorem \ref{thm_unicite_du_link} and Corollary \ref{cor_unicite_du_link}, if  the  homeomorphisms between $X$ and $Y$ are given by homeomorphisms of the ambient space, then the provided mappings can also be required to be  induced by such homeomorphisms (since Theorem \ref{thm_g} itself provides such a homeomorphism).
\end{rem}

\paragraph{Historical notes.}  
 The Lipschitz cell decomposition theorem (Theorem \ref{thm_lipschitz_cells}) is due to K. Kurdyka and A. Parusi\'nski \cite{kurdykawhitney, kp}. It is called in \cite{birmos} the ``Pancake Decomposition Theorem''. Statement (\ref{item_system}) of this theorem is however  an addendum to the latter works that was  obtained in \cite{vlt} in order to show Lemma \ref{lem_eq_dist_cor_th_prep}. Let us mention that this fact was upgraded by W. Paw\l ucki  \cite{pawluckilcell} who showed that the linear changes of coordinates may always be expressed as a permutation of the vectors of the canonical basis.   Proposition \ref{pro_holder_normal} seems to be new and has been added because it is useful to the study of Sobolev spaces of subanalytic manifolds (via Morrey's embedding) recently investigated by the author of these notes \cite{poincwirt,trace, lprime,laplace}.
 
The study of bi-Lipschitz equivalence of analytic singularities  was initiated by T. Mostowski \cite{m} who focused on complex analytic sets and continued by A. Parusi\'nski \cite{parusinskiprep} who explored the real case (see also \cite{lipsomin, halupczok}).
The material of sections \ref{sect_metric triangulations}, \ref{sect_metric_triang_local}, and \ref{sect_lcs} is however due to the author of  these notes. 
 Metric triangulations were introduced in \cite{vlt} to establish
Corollary \ref{cor_hardt} (and where they are called Lipschitz triangulations), which is the Lipschitz counterpart of a result about topological stability sometimes referred as Hardt's Theorem  \cite{hardt}. Theorem \ref{thm_triangulations_locales} and uniqueness of the link (Corollary \ref{cor_unicite_du_link}) were established in \cite{pams} ((\ref{item_homogeneous}) of Theorem \ref{thm_triangulations_locales} was added in \cite{gvpoincare}). In the case of curves, a first insight had been made earlier by L. Birbrair and his  H\"older complex decomposition \cite{bir1}. Corollary \ref{cor_sullivan} was established in \cite{jsl} while Corollary \ref{cor_hardt} was already present in \cite{vlt}. Corollary \ref{cor_Lipschitz_retract} was proved in \cite{linfty} and the description of the Lipschitz conic structure of globally subanalytic sets (Theorem \ref{thm_local_conic_structure}) may be found in \cite{gvpoincare}. Existence of $\ccc^0$ triangulations and local $\ccc^0$ retracts however goes back to as far as the original work of S. \L ojasiewicz \cite{lojasiewicz64a,lojasiewicz64b}.

             \markboth{G. Valette}{Geometric measure theory}

\chapter{Geometric measure theory}\label{chap_Geometric measure theory}\label{chap_gmt}
We study the Hausdorff measure of globally subanalytic sets as well as integrals of globally subanalytic functions. 
It is easy to see that the antiderivatives of a globally subanalytic function are not necessarily globally subanalytic.
After recalling some basic formulas, we will show that functions defined by integrals of globally subanalytic families of functions may be described by polynomials in some globally subanalytic functions and their logarithms (Corollary \ref{cor_integration_fonctions}), that we call {\it $\fln$-functions} (Definition \ref{dfn_log_functions}). This will yield that if $A$ is subanalytic then $\hn^l(A\cap \bou(0,r))$ ($\hn^l$ being the Hausdorff measure) has an analytic expansion in $r$ and $\ln r$, and will lead us to the notion of density that we will study on stratified sets (section \ref{sect_densite}). On the way, we give several results of measure theory of globally subanalytic sets and families that are of their own interest (sections \ref{sect_measure} and \ref{sect_volume_alpha_approx}).
We end this chapter (section \ref{sect_stokes}) by establishing Stokes' formula on globally subanalytic sets (possibly singular). 

We recall that the word ``{\bf definable}'' is used as a shortcut of ``globally subanalytic'' (see right after Proposition \ref{pro_csq_qe}).

\section{Cauchy-Crofton's formula}
As this will be the central tool of our study, we enclose a proof of this formula.
   For simplicity, we will just focus on proving a formula that relates the Hausdorff measure of a definable set to the cardinal of its sections by affine spaces of complementary dimension (Theorem \ref{thm_cc_formula}). There exist much more general versions  \cite{federer} but this will be enough for our purpose. We first recall some  basic techniques of integration theory.

   Given $A \in \s_n$, we denote by $\hn^k(A)$\nomenclature[cc]{$\hn^k$}{$k$-dimensional Hausdorff measure\nomrefpage} the $k$-dimensional Hausdorff measure of $A$. We recall that   $\pi_P:\R^n \to \R^n$ stands for the  orthogonal projection onto $P\in \G_k^n$.

\paragraph{The generalized Jacobian.} We will often  apply the so-called {\it  co-area formula} (see below).  This requires the following notion.

\begin{dfn}\label{dfn_gen_jacobian}
Let $f:A \to \R^m$ be a definable mapping  with $\dim A \ge m$. Take a point $x \in reg(f)$ at which $A$ is a manifold of dimension $\dim A$
and  denote by $M_x(f)$ the Jacobian matrix of $f$ with respect to  some orthonormal bases of $T_x A$ and $ \R^m$.
The {\bf generalized Jacobian of $f$ at $x$}\index{generalized Jacobian} is defined as  $$J_x(f):= \sqrt{\det \big{(}M_x(f)^t\cdot  M_x(f)\big{)}  }.$$ \nomenclature[cca]{$J_x(f)$}{generalized Jacobian of $f$\nomrefpage}
Here  $M_x (f)^t$ stands for the transpose of the matrix $M_x (f)$.
The generalized Jacobian coincides with the square root of  the sum of the squares of the minors of order $m$ of  $M_x(f)$. It is of course independent of the choice of the orthonormal bases. 
\end{dfn}

It is worthy of notice that if $\dim A=m$ then the generalized Jacobian equals the usual Jacobian, the absolute value of the determinant of the Jacobian matrix  of $f$.  Note also that if $m=1$, i.e., if $f$ is a function, then $J_x(f)$ is merely the norm of the gradient of $f$ at $x$.


\paragraph{The co-area formula.} This formula will be useful in section \ref{sect_densite} to estimate the variation of the measure of globally subanalytic sets. A proof  can be found  in \cite{federer, krantz}. These two books actually provide it in a much more general context than globally subanalytic sets.

 Let $f:A \to \R^m$ be a definable mapping, $E \subset A$ be a definable subset, and  set $l:=\dim A$.
  If $l \ge m$ then we have: 
\begin{equation}\label{eq_coarea_formula}
\int_{y \in \R^m}  \hn^{l-m}(f^{-1}(y)\cap E) \, d \hn^m(y)  =\int_{x\in E } J_x(f)\, d \hn^l(x) . 
\end{equation} \index{coarea formula}

\begin{proof}Generally, a smoothness assumption is put on $f$. As $f$ is here just assumed to be definable, let us explain how to derive this formula from the classical one. There are  stratifications $\Sigma$ of $E$ and $\Sigma'$ of $\R^m$ such that  $f$ induces a smooth submersion on every $S\in \Sigma$ to a stratum $S'\in\Sigma'$ (that depends on $S$). Fix such $S$ and $S'$, and let us emphasize that for $x\in S$, $J_x(f)$ is not the Jacobian of the mapping induced by $f$ from $S$ to $S'$ that, to avoid any confusion, we will denote by $g:S\to S'$. As strata are disjoint, it suffices to show:
\begin{equation*}\label{eq_coarea_formula_p}
\int_{y \in S'}  \hn^{l-m}(f^{-1}(y)\cap S) \, d \hn^m(y)  =\int_{x\in S } J_x(f)\, d \hn^l(x) .
\end{equation*}
Note first that if $\dim S<l$ and $\dim S'<m$ then $\hn^l(S)=\hn^m(S')=0$, which  means that both sides vanish. If $\dim S<l$ and $\dim S'=m$ then $\hn^l(S)=0$ and (since $g$ is a submersion) $\dim (f^{-1}(y)\cap S)<l-m$ which means that $ \hn^{l-m}(f^{-1}(y)\cap S)=0$, and hence that both sides are also zero. If $\dim S=l$ and $\dim S'<m$ then $J_x(f)=0$ on $S$ (since the derivative of $f$ has rank less than $m$) and $\hn^m(S')=0$, in which case both sides are also zero. Finally, in the case $\dim S=l$ and $\dim S'=m$, as $J_x(f)=J_x(g)$ on $S$, this follows from the classical coarea formula \cite{krantz}.
\end{proof}

\paragraph{The measure $\gamma_{l,n}$.} Cauchy-Crofton's formula involves integration on the Grassmannian manifold $\G_l^n$, which requires to introduce a measure on this set. We sketch the classical construction of what is usually called the {\it Haar measure}.

Denote by $\mathcal{O}_n$ the group of linear isometries of $\R^n$.
This set may be identified with a definable compact subset of $\R^{n^2}$. Denote by $d_n$ its dimension. We first define a measure $\theta_n$ on $\mathcal{O}_n$ satisfying $\theta_n(\mathcal{O}_n)=1$ by setting, for $W \subset \mathcal{O}_n$,  $\theta_n(W):= \frac{\hn^{d_n}(W)}{\hn^{d_n}(\mathcal{O}_n)}$.

 Fix now  any $V \in \G_l^n$ and set for $U \subset \G_l^n$\nomenclature[cd]{$\gamma_{l,n}$}{measure on the Grassmannian\nomrefpage}:
$$\gamma_{l,n}(U):=\theta_n (\{L \in \mathcal{O}_n:  LV \in U\}).$$
It is not difficult to derive from the definitions that the number $\gamma_{l,n}(U)$ is independent of  $V \in \G_l^n$, and that $\gamma_{l,n}$ is a measure on $\G_l^n$. As integration over $\G_l^n$ will always be considered with respect to this measure, we will not specify the considered measure when integrating on $\G_l^n$ (simply writing $dP$ if $P\in \G_l^n$ is the variable of integration).

\paragraph{Cauchy-Crofton's formula.}
Given a set $E$, let  $\card\, E $ denote its {\bf cardinal}, i.e., the number of elements of  $E$, with the convention that $\card \, E :=\infty$ \nomenclature[ce]{$\card \, E$}{cardinal of $E$\nomrefpage}\index{cardinal}  if $E$ is infinite.

Given $l \in \{1,\dots, n\}$,   $P \in \G_l^n$, and $y \in P$, we denote by $N_P^y$\nomenclature[cf]{$N_P^y$}{affine space passing through $y$ and directed by the orthogonal complement of $P \in \G_l^n$\nomrefpage} the $(n-l)$-dimensional affine space passing through $y$ and directed by the orthogonal complement of $P$ in $\R^n$.

\begin{thm}\label{thm_cc_formula}(Cauchy-Crofton's formula)\index{Cauchy-Crofton's formula} There exists a positive constant $\beta_{l,n}$ such that for any $A \in \s_n$  we have:
\begin{equation}\label{cauchy_crofton_formula}
\hn^l (A)=\beta_{l,n}\int_{ P\in \G_l^n} \int_{y \in P} \; \card \,  (N_P ^y \cap A)\; d \hn^l(y) \;d P .
\end{equation}
\end{thm}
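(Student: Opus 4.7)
The plan is to prove the formula by reducing to the case of an $l$-dimensional definable $\ccc^\infty$ manifold via cell decomposition, then applying the co-area formula and Fubini's theorem, and finally invoking the invariance of $\gamma_{l,n}$ under the orthogonal group to identify a dimensional constant.

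First I would dispose of the easy dimension cases. If $\dim A<l$, then $\hn^l(A)=0$. Moreover, by taking a cell decomposition compatible with $A$ (Theorem \ref{thm_existence_cell_dec}) and using that each cell included in $A$ has dimension $<l$, one checks via the implicit function theorem that for $\gamma_{l,n}$-almost every $P\in\G_l^n$ and $\hn^l$-almost every $y\in P$, the affine space $N_P^y$ misses $A$, so $\card(N_P^y\cap A)=0$ almost everywhere and the right-hand side vanishes. If $\dim A>l$, then generically $N_P^y\cap A$ contains a positive-dimensional definable set, so $\card(N_P^y\cap A)=+\infty$ on a set of positive measure, in agreement with $\hn^l(A)=+\infty$. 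Thus it suffices to treat the case $\dim A=l$, and by additivity of both sides along a finite cell decomposition, it suffices to treat a single cell $C$, which (by definition) is a $\ccc^\infty$ manifold of dimension $l$.

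For such $C$, I would apply the co-area formula \eqref{eq_coarea_formula} to the restriction of $\pi_P:\R^n\to P$ to $C$: since $\pi_P^{-1}(y)=N_P^y$, one has for each $P\in\G_l^n$
\begin{equation*}
\int_{y\in P}\card(N_P^y\cap C)\,d\hn^l(y)=\int_{x\in C}J_x(\pi_{P|C})\,d\hn^l(x).
\end{equation*}
Integrating over $\G_l^n$ and swapping the order of integration by Fubini (the integrand is definable, hence Borel measurable, and nonnegative), one obtains
\begin{equation*}
\int_{P\in\G_l^n}\int_{y\in P}\card(N_P^y\cap C)\,d\hn^l(y)\,dP=\int_{x\in C}\Phi(T_xC)\,d\hn^l(x),
\end{equation*}
where $\Phi:\G_l^n\to[0,+\infty)$ is defined by $\Phi(V):=\int_{\G_l^n}J(\pi_P|_V)\,dP$ and $J(\pi_P|_V)$ denotes the Jacobian of the orthogonal projection $V\to P$.

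The key step is then to show that $\Phi$ is constant on $\G_l^n$. Given $L\in\mathcal{O}_n$ and $V\in\G_l^n$, an isometry computation gives $J(\pi_P|_{LV})=J(\pi_{L^{-1}P}|_V)$; combining this with the $\mathcal{O}_n$-invariance of $\gamma_{l,n}$ (built into its construction via $\theta_n$) yields $\Phi(LV)=\Phi(V)$. Since $\mathcal{O}_n$ acts transitively on $\G_l^n$, $\Phi$ equals a constant $c_{l,n}>0$ depending only on $l$ and $n$ (positivity follows because $J(\pi_P|_V)>0$ on the open set where $\pi_P|_V$ is an isomorphism). Setting $\beta_{l,n}:=1/c_{l,n}$ then gives
\begin{equation*}
\beta_{l,n}\int_{P\in\G_l^n}\int_{y\in P}\card(N_P^y\cap C)\,d\hn^l(y)\,dP=\hn^l(C),
\end{equation*}
which is the desired formula on cells, hence on $A$.

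The main obstacle is the invariance argument identifying $\Phi$ as a constant; the reduction via cell decomposition and the application of co-area are routine once the framework of Chapter \ref{chap_basic} is in hand, but one must be careful to check that $\Phi$ is well-defined (finite) and to justify the change of variables on $\G_l^n$ from the definition of $\gamma_{l,n}$ rather than from any ad hoc Haar-measure considerations.
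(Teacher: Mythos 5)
Your proof is correct, and it takes a genuinely different route from the one in the text. You reduce to an $l$-dimensional cell $C$ via a cell decomposition, apply the co-area formula to $\pi_P|_C$ for each fixed $P$, pass the Grassmannian integral inside by Tonelli to get $\int_C \Phi(T_x C)\,d\hn^l(x)$, and identify $\Phi$ as a constant by the $\mathcal{O}_n$-invariance of $\gamma_{l,n}$ together with the transitivity of $\mathcal{O}_n$ on $\G_l^n$. By contrast, the paper deliberately avoids applying co-area directly to a curved cell: it first proves the formula for subsets of a single $l$-plane $Q$ (where the constant $\beta_{l,n}$ is extracted by an explicit change of variables and the invariance of $\gamma_{l,n}$ appears only in the guise of $\alpha_{l,n}(P)$ being independent of $Q$), then shows in a quantitative Step 2 that for $\alpha$-flat sets the formula holds up to a multiplicative error $1\pm C\alpha$ (using the co-area formula on graphs of small-slope functions), and finally lets $\alpha\to 0$ after decomposing a general set into $\alpha$-flat pieces. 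Your version is shorter and conceptually transparent — the constant $c_{l,n}=\Phi(V)$ falls out at once — and the two constants agree, since $\Phi(Q)=\int_{\G_l^n}\alpha_{l,n}(P)\,dP=\beta_{l,n}^{-1}$. The paper's approximation route buys something too: the intermediate Step 2 is exactly what is reused later (Theorem \ref{thm_alpha_approx_invariance_des_vol} and Proposition \ref{pro_multiplicites_b}), where one needs quantitative control on how close the measure is to the slice-count integral for nearly-flat or perturbed families, rather than the equality alone. Two small points worth making explicit in your write-up: Tonelli (not Fubini proper) is what you need, since the integrand is nonnegative but a priori not known to be integrable on unbounded cells; and the identity $J(\pi_P|_{LV})=J(\pi_{L^{-1}P}|_V)$ follows from $\pi_{L^{-1}P}=L^{-1}\pi_P L$ and the fact that pre- and post-composition by isometries leaves the generalized Jacobian unchanged.
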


\begin{proof}
We may assume that  $\dim A=l$ since otherwise both sides of the equality are $0$ or $\infty$. Thanks to Corollary \ref{cor_cc_families}, we know that for each   $A \in \s_n$ there is  $m_A\in \N$ such that $\card (N^y_P\cap A)\ne j$ for all integers $j>m_A$,  all $P \in \G_l^n$, and all $y\in P$.

\noindent {\bf Step 1.} We  establish the desired formula in the case where  $A$ is a definable subset of a vector space $Q \in \G_l^n$.

Let $\lambda_P:Q\to P$ be the restriction of $\pi_P$.   Take its matrix with respect to  orthonormal bases of $Q$ and $P$, denote by $\alpha_{l,n}(P)$  the absolute value of its determinant, and define the desired constant $\beta_{l,n}$ by setting:  $$\beta_{l,n}:= \big{(}\int_{P\in  \G_l^n} \alpha_{l,n}(P)dP \big{)} ^{-1}.$$
 For almost every $P$, $\lambda_P$ is a linear isomorphism, which entails that we have $ \card\,  (\pi_P ^{-1}(y) \cap A) =1$ for all $y \in \lambda_P(A)$, and since $\hn^l(\lambda_P(A))=\alpha_{l,n}(P)\hn^l(A),$
 we can  evaluate the  right-hand-side of (\ref{cauchy_crofton_formula}) as follows:
$$\beta_{l,n}\int_{ \G_l^n} \int_{\lambda_P(A)}  d \hn^l (y) dP=  \beta_{l,n}\left(\int_{ \G_l^n} \alpha_{l,n}(P) \hn^l(A)dP \right)  =\hn^{l}(A).$$

\noindent {\bf Step 2.} We prove that there exists a positive constant $C$ such that  for any   $\alpha\in(0, \frac{1}{2}]$ and any
$\alpha$-flat set $A \in \s_n$,  we have:   $$ |\hn^l(A)-\mu(A)| \leq C \alpha \hn^l(A),$$
  where $\mu(A)$ stands for the right-hand-side of (\ref{cauchy_crofton_formula}).

 Let $\alpha\in (0,\frac{1}{2}]$ and let  $A \in \s_n$ be $\alpha$-flat, which implies that  we can find $Q \in\G_l^n$ such that $\angle (P,Q)<\alpha$, for all $P\in\tau (A)$ (see the beginning of section \ref{sect_regular_vector} for $\tau(A)$).
We will assume that $Q=\R^l\times \{0_{\R^{n-l}}\}$ for simplicity (this is possible thanks to the definition of the measure on the Grassmannian). This means that the set $A$ coincides with  the disjoint union of the graphs of some definable mappings,  $\xi_i:A_i \to \R^{n-l}$, $i=1,\dots,s$, $A_i\subset \R^l$, with $|d_x \xi_i| \leq \alpha$ almost everywhere. Fix $i \le s$ and observe that, by the coarea formula (applied to  $\pi_{Q|\Gamma_{\xi_i}}$), we have
\begin{equation}\label{eq_diff_mesure_cc}
	|\hn^l(A_i)-\hn^l (\Gamma_{\xi_i})| \leq  \alpha \hn^l(A_i). \end{equation}
For simplicity, set for $E \in \s_n$ and $P \in \G_l^n$:
$$\nu_P (E):=  \int_{y \in P} \; \card \,  (N_P ^y \cap E)\; d \hn^l(y).$$
Since  $\angle (T_x \Gamma_{\xi_i},Q)<\alpha$ at every $x \in \Gamma_{\xi_{i,reg}}$, it is an easy exercise of linear algebra to establish that for $x \in Q$ we have $|J_x(\pi_{P|Q})-J_{(x,\xi_i(x))} (\pi_{P|\Gamma_{\xi_i}})|\le \alpha$.
By the coarea formula (\ref{eq_coarea_formula}), this implies that we have (using again $|d_x \xi_i| \le \alpha$):
\begin{equation}\label{eq_diff_K_P}
|\nu_P(\Gamma_{\xi_i})-\nu_P (A_i)|\leq 2 \alpha \hn^l (\Gamma_{\xi_i}).
\end{equation}
Integrating with respect to $P$ we get:
\begin{equation}\label{eq_diff_theta}
|\mu(\Gamma_{\xi_i})-\mu (A_i)|\leq C \alpha \hn^l (\Gamma_{\xi_i}),
\end{equation}
for some constant $C$. 
 As the $A_i$'s are  subsets of an $l$-dimensional vector subspace of $\R^n$, thanks to step 1, we know that Cauchy-Crofton's formula must hold for each of them.  Making use of  this formula, we immediately derive from (\ref{eq_diff_theta}):
\begin{equation}\label{eq_mu_xia}
 |\mu(\Gamma_{\xi_i}) -\hn^l (A_i)|\leq  C \alpha \hn^l (\Gamma_{\xi_i}).
\end{equation}
Note that as $\alpha\le \frac{1}{2}$, (\ref{eq_diff_mesure_cc}) entails that  we have $\hn^l (A_i)\le 2\hn^l (\Gamma_{\xi_i})$, and consequently
$$ |\mu(\Gamma_{\xi_i}) -\hn^l (\Gamma_{\xi_i})|\le |\mu(\Gamma_{\xi_i}) -\hn^l (A_i)|+|\hn^l (A_i)-\hn^l (\Gamma_{\xi_i})|  \le    (C+2) \alpha \hn^l (\Gamma_{\xi_i}), $$
by (\ref{eq_diff_mesure_cc}) and (\ref{eq_mu_xia}).
  Adding-up these inequalities for all $i$  gives the needed estimate.

\noindent {\bf Step 3.} We prove Cauchy-Crofton's formula.

 Given $A$ in  $\s_n$ and $\alpha>0$, we can find an $\hn^l$-negligible definable set $E\subset A$ such that $A\setminus E$ can be covered by $\alpha$-flat disjoint definable sets   $B_1,\dots, B_k$  (see Lemma \ref{lem_partition_tangents} and Proposition \ref{pro_delta_A}). Applying step 2 to all the  $B_i$'s and adding the resulting  inequalities provides for any  $\alpha\in (0,\frac{1}{2}]$:
$$|\mu(A)-\hn^l(A)|\leq C \alpha \hn^l(A)$$
 ($C$ being given by step 2), and therefore $\mu(A)=\hn^l(A)$.
\end{proof}

Set for $j\in\N \cup \{\infty\}$, $P\in \G_l^n$, and $A \in \s_n$:
$$K_j ^P(A):=\{x \in P : \card\,  (N_P^x \cap A )=j  \}.$$\nomenclature[ch]{$K_j^P(A)$}{points $x\in P \in \G_l^n$ at which $\pi_P^{-1}(x)\cap A$ has cardinal $j$\nomrefpage}
As emphasized at the beginning of the above proof, Corollary \ref{cor_cc} yields that for every set  $A \in \s_n$, there is an integer $k$\nomenclature[ci]{$m_A$}{multiplicity of $A$\nomrefpage} such that $K_j ^P(A)$ is
 empty for all integers $j>k$ and all $P \in \G_l^n$. We will call the smallest integer having this property  the {\bf multiplicity}\index{multiplicity} of
$A$ and will denote it by $m_A$.

Since $K_j^P ( A)=\emptyset$ for any integer $j>m_A$, and because $\hn^l(K_\infty^P(A))=0$ if $l\ge \dim A$,  Cauchy-Crofton's formula may be rewritten for $l\ge \dim A$ as:
\begin{equation}\label{cauchy_crofton_formula_rewritten}
\hn^l (A)=\beta_{l,n}\,\sum_{j=1} ^{m_A}\,j\int_{\G_l^n}  \,   \hn^l( K_j ^P (A))\, dP.
\end{equation}
This formula, together with the uniform finiteness properties of definable sets, provides many bounds for the $\hn^l$-measure of these sets. We illustrate this fact with a result about definable families that will be useful later on.
\begin{pro}\label{borne unif pour les volumes}
Let  $A  \in \s_{m+n}$ and let  $l:=\max_{\tim}\dim A_t$. There exists a
constant $C$ such that for all  $\tim$ and all 
$  r \ge 0 $:
\begin{equation}\label{eq_borne_volume}
 \hn^l(A _t \cap \bou (0,r)) \leq Cr^{l}.
\end{equation}
\end{pro}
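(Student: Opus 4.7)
The plan is to apply the rewritten Cauchy--Crofton formula (\ref{cauchy_crofton_formula_rewritten}) to each fiber $A_t \cap \bou(0,r)$ and extract the bound $C r^l$ from two uniform facts: a uniform bound on multiplicity provided by Corollaries \ref{cor_cc_families} and \ref{cor_uniform_bound}, and the elementary bound $\hn^l(P \cap \bou(0,r)) = \omega_l r^l$ for every $P \in \G_l^n$, where $\omega_l$ denotes the volume of the unit ball of $\R^l$. Since $l \ge \dim A_t$ for every $t$, the hypothesis $l \ge \dim(A_t \cap \bou(0,r))$ needed for (\ref{cauchy_crofton_formula_rewritten}) is always satisfied; in the case $\dim A_t < l$ the desired inequality is automatic because $\hn^l(A_t \cap \bou(0,r))=0$.

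First I would show that the multiplicity $m_{A_t \cap \bou(0,r)}$ is bounded by a constant $M$ independent of $(t,r)$. For this, form the definable family
$$B := \bigl\{ (t,r,P,y,x) \in \R^m \times \R_+ \times \G_l^n \times \R^n \times \R^n : y \in P,\; x \in A_t \cap \bou(0,r) \cap N_P^y \bigr\},$$
and view it as a definable family of finite-or-infinite sets parametrized by $(t,r,P,y)$ (after identifying $\G_l^n$ with a definable subset of some Euclidean space). By Corollary \ref{cor_uniform_bound}, the cardinal of the fiber $B_{(t,r,P,y)}$, when finite, is bounded by an integer $M$ depending only on $B$, hence independent of $(t,r,P,y)$. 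Consequently $m_{A_t \cap \bou(0,r)} \le M$ for all $(t,r)$.

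Next, for every $P \in \G_l^n$ and every integer $1\le j \le M$, one has the trivial inclusion
$$K_j^P(A_t \cap \bou(0,r)) \subset \pi_P(\bou(0,r)) = P \cap \bou(0,r),$$
so $\hn^l(K_j^P(A_t \cap \bou(0,r))) \le \omega_l r^l$. Plugging these estimates into (\ref{cauchy_crofton_formula_rewritten}) yields
$$\hn^l(A_t \cap \bou(0,r)) \le \beta_{l,n} \sum_{j=1}^{M} j \int_{\G_l^n} \omega_l r^l \, dP = \Bigl( \beta_{l,n}\, \omega_l\, \gamma_{l,n}(\G_l^n) \cdot \tfrac{M(M+1)}{2} \Bigr) r^l,$$
which is the claimed inequality with $C$ equal to the bracketed constant. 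I do not anticipate a real obstacle: the only subtle point is ensuring the uniform multiplicity bound holds over the enlarged parameter space $(t,r,P,y)$, which is handled by a single application of Corollary \ref{cor_uniform_bound} to the definable family $B$ above.
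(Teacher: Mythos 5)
Your proof is correct and follows essentially the same route as the paper: apply the rewritten Cauchy--Crofton formula, bound $\hn^l(K_j^P(A_t\cap\bou(0,r)))$ by $\omega_l r^l$ via the inclusion $K_j^P(A_t\cap\bou(0,r))\subset P\cap\bou(0,r)$, and control the range of $j$ with a uniform multiplicity bound coming from uniform finiteness. The one place where you are more careful than the text: the paper cites $\sup_t m_{A_t}<\infty$ and lets the reader infer the needed bound on $m_{A_t\cap\bou(0,r)}$, but $m_{B}\le m_{A}$ can fail for $B\subset A$ when a fiber $N_P^y\cap A_t$ is infinite while $N_P^y\cap A_t\cap\bou(0,r)$ is a large finite set; your version, which puts $(t,r,P,y)$ into a single definable family $B$ and invokes Corollary~\ref{cor_uniform_bound} once, avoids this subtlety entirely. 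That is the right way to make the step airtight.
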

\begin{proof}
In the case  $l=n$, since $\hn^l( \bou (0,r)) = \hn^l(\bou (0,1))r^{l}$, the result is  clear. For the same reason, (\ref{eq_borne_volume}) also holds when $A_t$ is for each $t$ a subset of some $l$-dimensional vector subspace of $\R^n$, which establishes this estimate for  the definable family $(K_j^P(A_t))_{P\in \G_l ^n, \tim}$. Since Corollary \ref{cor_cc_families} yields $\sup \{ m_{A_t} : \tim\}<\infty$,
the case $l<n$ thus easily comes down from  (\ref{cauchy_crofton_formula_rewritten}).
\end{proof}

As a matter of fact, for any   $A \in \s_{m+n}$ such that $\dim A_t \le l$ for all $\tim$ and $\sup_{\tim} diam (A_t)<\infty$ (see (\ref{eq_diameter_def}) for $diam$), we have:\begin{equation}\label{eq_borne_uniforme}
\sup\{ \hn^l(A_t):\tim\} <\infty.  
                                                 \end{equation}

\section{On integration of definable functions}
In this section, we introduce the class of $\fln$-functions and show that the integrals of definable functions give rise to $\fln$-functions (Corollary \ref{cor_integration_fonctions}).

 Let  $X \in \s_{n}$ and  $k \le n$.  For any $f:X \to \R$,  we set $|f|_{1,\hn^k}:=\int_{X} |f| d\hn^k$ \nomenclature[ck]{$\vert f\vert_{1,\hn^k}$}{$\;L^1$ norm of $f$ with respect to  $\hn^k$\nomrefpage} (possibly infinite).
 We denote by $L^1_{\hn^k}(X)$ the set of functions $f:X \to \R$ that are $L^1$ for the measure $\hn^k$\nomenclature[cl]{$L^1_{\hn^k}(X)$}{$L^1$ space with respect to $\hn^k$\nomrefpage} (and  then say that  $f$ is $L^1_{\hn^k}$).  
 

\begin{pro}\label{pro_familles_L_1}
Let $f:A \to \R$ be a definable function, $A\in \s_{m+n}$. For each $l \le n$, the set 
\begin{equation}\label{eq_int_locus}
 \{\tim : f_t \in L^1_{\hn^l} (A_t)\} 
\end{equation}
is definable.
\end{pro}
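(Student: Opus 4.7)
The plan is to proceed by induction on the ambient dimension $n$, reducing the integrability question on $A_t$ to lower-dimensional questions via cell decomposition. The key tools are Theorem \ref{thm_existence_cell_dec} (cell decomposition), Theorem \ref{thm_preparation} (the Preparation Theorem), and Theorem \ref{thm_s_formula} (quantifier elimination). The base case $n=0$ is immediate: $A_t\subset\{0\}$ is finite, so the integral equals $|f_t(0)|\mathbf{1}_{0\in A_t}$ when $l=0$ and $0$ when $l\ge 1$; in either event (\ref{eq_int_locus}) equals $\R^m$.

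For the inductive step, assume the result in ambient dimension $n-1$. Take a cell decomposition of $\R^{m+n}$ compatible with $A$, refined so that $f$ is reduced on each cell. Since $\int_{A_t}|f_t|\,d\hn^l=\sum_{C\subset A}\int_{C_t}|f_t|\,d\hn^l$ with finitely many cells $C$, and a finite intersection of definable sets is definable, it suffices to fix a cell $C\subset A$ and prove $\{t:f_t\in L^1_{\hn^l}(C_t)\}$ is definable. By further splitting the parameter space (a definable operation, by uniform finiteness in the sense of Corollary \ref{cor_cc_families}) we may assume $\dim C_t$ is constant on the locus of interest: if $\dim C_t<l$ the integral vanishes; if $\dim C_t>l$ the integral is $+\infty$ unless $f_t$ vanishes $\hn^l$-a.e.\ on $C_t$, which is a definable condition on $t$.

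The core case is $\dim C_t=l$. Write $\pi:\R^{m+n}\to\R^{m+n-1}$ for the projection dropping the last coordinate. If $C=\Gamma_\zeta$ is a graph, the area formula gives
\begin{equation*}
\int_{C_t}|f_t|\,d\hn^l=\int_{\pi(C)_t}|f(t,\tilde x,\zeta(t,\tilde x))|\sqrt{1+|\partial_{\tilde x}\zeta(t,\tilde x)|^2}\,d\hn^l(\tilde x),
\end{equation*}
whose integrand is a definable function on $\pi(C)\in\s_{m+(n-1)}$, so the induction hypothesis closes this subcase. If $C=(\zeta_1,\zeta_2)$ is a band, then $\dim\pi(C)_t=l-1$ and Fubini yields
\begin{equation*}
\int_{C_t}|f_t|\,d\hn^l=\int_{\pi(C)_t} h(t,\tilde x)\,d\hn^{l-1}(\tilde x), \quad h(t,\tilde x):=\int_{\zeta_{1,t}(\tilde x)}^{\zeta_{2,t}(\tilde x)}|f_t(\tilde x,x_n)|\,dx_n.
\end{equation*}
The Preparation Theorem lets me write $|f|\sim|a(\tilde x)|\cdot|x_n-\theta(\tilde x)|^r$ on $C$ with $\Gamma_\theta\cap C=\emptyset$ and, after a further refinement, with $\theta\le\zeta_1$. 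Then $h$ equals $|a|$ times the one-variable integral $\int_{\zeta_1-\theta}^{\zeta_2-\theta}u^r\,du$; separating the definable subcells on which ``$\zeta_1-\theta>0$'' versus ``$\zeta_1=\theta$'' and ``$r>-1$'' versus ``$r\le-1$'', one obtains either $h\equiv+\infty$ (reducing to ``$|a|=0$ a.e.''), a polynomial expression in $\zeta_1-\theta$ and $\zeta_2-\theta$ (so $h$ is definable and the induction hypothesis on $\pi(C)$ concludes), or the logarithmic expression $h=|a|\log\bigl((\zeta_2-\theta)/(\zeta_1-\theta)\bigr)$.

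The main obstacle is this logarithmic case, where $h$ is genuinely not definable. To overcome it, I will iterate the Preparation Theorem inside $\pi(C)$ on the ratio $\phi:=(\zeta_2-\theta)/(\zeta_1-\theta)$, which after further cell decomposition admits a normal form as a product $\phi\sim c\cdot \prod_iu_i^{\alpha_i}$ where the $u_i$ are distances to definable graphs (in the style of Definition \ref{dfn_reduced}); consequently $\log\phi=\log|c|+\sum_i\alpha_i\log u_i$ up to a bounded definable term. The $L^1$-question for $|a|\log\phi$ over $\pi(C)_t$ then decomposes into finitely many $L^1$-questions for functions of the form $|a|\cdot|\log u_i|$; and each of these, after one further cell decomposition and change of variables adapted to $u_i$ (using \L ojasiewicz's inequality, Theorem \ref{thm_lojasiewicz_inequality}), is equivalent to the $L^1$-question for finitely many genuinely definable functions on a set of $\s_{m+(n-1)}$, to which the induction hypothesis in ambient dimension $n-1$ applies. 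The expected difficulty is the bookkeeping of these iterated logarithmic contributions, but the Preparation Theorem guarantees finite depth at each step, so the reduction terminates after finitely many applications of the induction hypothesis.
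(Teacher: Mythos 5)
Your approach is genuinely different from the paper's, and the difference is worth noting. The paper's proof is a one-liner: set $B:=\{(t,x,y)\in A\times\R:0\le y\le|f_t(x)|\}$ (the hypograph of $|f_t|$), observe that $f_t\in L^1_{\hn^l}(A_t)$ iff $\hn^{l+1}(B_t)<\infty$, then apply Corollary \ref{cor_hardt} to partition $\R^m$ into finitely many definable pieces on which $B$ is definably bi-Lipschitz trivial. Since a bi-Lipschitz map distorts $\hn^{l+1}$ by a bounded factor, finiteness of $\hn^{l+1}(B_t)$ is constant on each piece, so the $L^1$-locus is a union of pieces. Your proposal avoids the Lipschitz machinery of Chapter \ref{chap_lipschitz_geometry} entirely and works by fiberwise reduction via cell decomposition and the Preparation Theorem, trying to re-prove (part of) Theorem \ref{thm_integration_des_log} simultaneously with Proposition \ref{pro_familles_L_1}.

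However, your argument has a genuine gap, and it sits exactly where you sense it. After integrating out $x_n$ on a band, the function $h(t,\tilde x)$ you obtain is a $\fln$-function, not a definable one, so the induction hypothesis (stated for definable functions) does not apply to it. You try to close this by writing $\log\phi\approx\log|c|+\sum_i\alpha_i\log u_i$ and claiming the $L^1$-question for $|a|\log\phi$ ``decomposes into finitely many $L^1$-questions for functions of the form $|a|\cdot|\log u_i|$.'' This decomposition is not valid as stated: the $\alpha_i$'s have mixed signs in general and several $u_i$ can vanish simultaneously, so the sum $\sum\alpha_i\log u_i$ can stay bounded on a locus where individual summands blow up — in which case the pieces fail to be $L^1$ while the sum is. To make this rigorous you would need to first regroup comparable logarithms (so that $\log(u_i/u_j)$ is bounded whenever $u_i\sim u_j$) and argue that on a sufficiently fine cell decomposition only one genuine logarithmic rate survives along each approach to the boundary; this is a real argument, not bookkeeping. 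The subsequent step, ``after a change of variables adapted to $u_i$ using \L ojasiewicz, the $L^1$-question for $|a|\cdot|\log u_i|$ is equivalent to that for a definable function,'' also needs to be made precise: what is hiding there is the claim that, since $a$ is Puiseux, the locus $\{t:\int|a|u_i^{\beta}\,d\hn^{l-1}<\infty\}$ is insensitive to multiplying the integrand by $|\log u_i|$ (no logarithmic borderline occurs at rational exponents). That is plausible, but you state it without proof, and the rigorous version would itself require an induction in the spirit of the Preparation Theorem. In short: the outline of a viable alternative proof is present, but the core reduction is asserted rather than established, and making it airtight would be considerably more work than invoking Corollary \ref{cor_hardt} as the paper does.
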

\begin{proof}
Let $B:= \{(t,x,y) \in   A \times \R : 0 \leq y \leq |f_t(x)|\} .$  The function  $f_t$ is $L^1_{\hn^l}$ if and only if $B_t$ has finite
$\hn^{l+1}$-measure.  The family $(B_t)_{t \in \R^m}$ being definable, 
 it follows from Corollary \ref{cor_hardt} that there exists a definable partition $\Pa$ of  $\R^m$ such
that for every  $C\in \Pa$ and any $(t,t')\in C\times C$, the sets $B_t$ and $B_{t'}$ are bi-Lipschitz homeomorphic. The set appearing in (\ref{eq_int_locus}) being the union of some elements of $\Pa$, it must be definable.
\end{proof}

 In the situation of the above proposition, the function $g(t):= \int_{A_t} f_t\, d\hn^l$,  defined on the set appearing in (\ref{eq_int_locus}),
 is of course not always definable. For instance, if $f_t(x)=1/x$ and $A_t=[1,t]$, for every $t >1$,  then $g(t)=\ln t$,  which is not a definable function. We are going to explain (Theorem \ref{thm_integration_des_log}) that the function $g$ is always a polynomial combination of definable functions and of their logarithms, which leads us to the following definition.

We denote by $x \mapsto \ln x$ the natural logarithm function, that we will consider as defined on $\R$, with $\ln (-x)=\ln x$ \nomenclature[clm]{$\ln$}{ logarithm (extended for $x$ negative or $0$) \nomrefpage} and $\ln 0=0$.

\begin{dfn}\label{dfn_log_functions}
 A {\bf $\fln$-function on a definable set $X$}\index{ln-function@$\fln$-function} is a function $f$ of type
 \begin{equation}\label{eq_log_function_dfn}
f=P(a_1,\dots,a_k,\ln a_1,\dots,\ln a_k),                 
                 \end{equation}where $P$ is a polynomial and the $a_i$'s are definable functions on $X$.
\end{dfn}

\begin{rem}\label{rem_dfn_log_functions}
If $C_1,\dots,C_ l$ is a definable partition of $X \in \s_n$ and if $f:X \to \R$  is a function such that $g_i:=f_{|C_i}$ is a $\fln$-function for each $i$ then   $f$ is itself a $\fln$-function. This follows from the definition.
\end{rem}

The following proposition gives a description of  $\fln$-functions which is derived from the Preparation Theorem and can be regarded as a Preparation Theorem for $\fln$-functions.

\begin{pro}\label{pro_fln_function_2nd_form}
Given a $\fln$-function $f:X \to \R$, $X\in \s_n$, there exists a cell decomposition of $\R^n$ compatible with $X$ such that on every cell $C\subset X$ we have for  $x=(\xt,x_n)\in X\subset \R^{n-1}\times \R$:
\begin{equation}\label{eq_fln_fn_2nd_form}
 f(x)=\sum_{i,j,k=0}^N \mu_i(x)\,c_j(\xt)\,\ln^k (x_n-\theta(\xt)),
\end{equation}
 where $N\in \N$, $\theta$ is a definable function on the basis $D$ of $C$ (independent of $i,j,$ and $k$), the $\mu_i$'s are reduced functions on $C$ with translation $\theta$, and  the $c_j$'s  are $\fln$-functions on $D$.
\end{pro}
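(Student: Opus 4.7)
The approach is to reduce to the structure provided by the Preparation Theorem and then expand the polynomial. Write $f = P(a_1, \ldots, a_k, \ln a_1, \ldots, \ln a_k)$ where $P$ is a polynomial and each $a_i$ is a definable function on $X$. By Proposition \ref{pro_la_and_globally subanalytic}, each $a_i$ is piecewise an $\la$-function, so after a first cell decomposition I may treat the $a_i$ as $\la$-functions. Applying Theorem \ref{thm_preparation} simultaneously to all the $a_i$ and Lemma \ref{lem_meme_morphisme} to force a common $\la$-translation on each cell, I obtain a refinement such that on every cell $C$ with basis $D$,
\[
a_i(\xt, x_n) = b_i(\xt)\, y^{r_i}\, U_i(\xt, y), \qquad y = |x_n - \theta(\xt)|,
\]
with $b_i$ definable on $D$, $r_i \in \Q$, $U_i$ an $\la$-unit, and $\theta$ independent of $i$. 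Refining further using Lemma \ref{lem_la_cell_decomposition}, I may assume each $a_i$, $b_i$, and $U_i$ has constant sign on each cell (on cells where $a_i \equiv 0$ the corresponding terms are trivial).

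On cells where $a_i$ is of constant nonzero sign, the convention $\ln(-x) = \ln x$ yields
\[
\ln a_i = \ln|b_i(\xt)| + r_i \ln y + \ln U_i.
\]
The first summand is a $\fln$-function on $D$ directly by Definition \ref{dfn_log_functions}. The second is the desired $\ln(x_n - \theta(\xt))$ factor. For the third, write $U_i = \psi_i(V_i(x))$ with $V_i$ an $\la$-mapping on $C$ and $\psi_i$ analytic and nowhere vanishing on a neighborhood of $cl(V_i(C))$; refining so $\psi_i$ has constant sign, $\ln|\psi_i|$ is analytic on a compact neighborhood of $cl(V_i(C))$ and is therefore a restricted analytic function. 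Hence $\ln U_i = \ln|\psi_i| \circ V_i$ is itself an $\la$-function by rule (iii) of the definition of $\la$-functions in Section 1.3. Applying Theorem \ref{thm_preparation} and Lemma \ref{lem_meme_morphisme} once more, jointly to the $a_i$'s and the $\ln U_i$'s, I may refine so that every $\ln U_i$ is also a reduced function with the \emph{same} $\la$-translation $\theta$.

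With these decompositions in place, substitute them into $P$ and expand. Each resulting monomial is a product of factors of three types: (i) powers of $a_i$ or $\ln U_j$, which are products of reduced functions with common translation $\theta$ and hence reduced by Remark \ref{rem_produit}; (ii) powers of $\ln|b_i(\xt)|$ (or bare $b_i(\xt)$), which are products of $\fln$-functions on $D$ and hence $\fln$-functions on $D$; (iii) powers of $r_i \ln y$, which combine into a single $\ln^k(x_n - \theta(\xt))$. Collecting, each monomial takes exactly the form $\mu(x)\, c(\xt)\, \ln^k(x_n - \theta(\xt))$ required by (\ref{eq_fln_fn_2nd_form}), and $f$ is the finite sum of such monomials over the monomials of $P$, yielding the claimed representation.

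The main obstacle is the treatment of the factors $\ln U_i$: one must recognize that they lie in the $\la$-function class (via the analyticity of $\ln|\psi_i|$ on the compact set $cl(V_i(C))$ where $\psi_i$ is bounded away from zero) and then arrange, through a second application of Lemma \ref{lem_meme_morphisme}, that they reduce with the \emph{same} translation $\theta$ as the $a_i$'s. Once this is achieved, the closure of reduced functions under products (Remark \ref{rem_produit}) and the closure of $\fln$-functions under sums and products make the polynomial expansion routine.
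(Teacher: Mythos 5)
Your approach follows the same structural outline as the paper's proof: write $f = P(a_1,\dots,a_k,\ln a_1,\dots,\ln a_k)$, prepare the $a_i$ with a common translation $\theta$, take logarithms to split $\ln a_i = \ln|b_i| + r_i\ln y + \ln U_i$, note that $\ln U_i$ is a nice function (you observe it is an $\la$-function, the paper is content with it being definable), and then expand $P$ into monomials. So far this is correct.

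The gap is in the sentence ``Applying Theorem \ref{thm_preparation} and Lemma \ref{lem_meme_morphisme} once more, jointly to the $a_i$'s and the $\ln U_i$'s, I may refine so that every $\ln U_i$ is also a reduced function with the \emph{same} $\la$-translation $\theta$.'' Lemma \ref{lem_meme_morphisme} guarantees a common translation, but that common translation is chosen, in its proof, to be the one closest to $x_n$ on each sub-cell; there is no reason for it to remain the original $\theta$ after refining. Indeed $\ln U_i$ can vanish on subsets of $C$ where $y=|x_n-\theta|$ is bounded away from zero (take $U_i(\xt,y)=1+y-c(\xt)y^2$ near $y=1/c(\xt)$), so preparing $\ln U_i$ naturally produces a translation $\theta'$ different from $\theta$. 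If the common translation furnished by Lemma \ref{lem_meme_morphisme} is such a $\theta'\ne\theta$, your monomials then have the form $\mu(x)\,c(\xt)\,\ln^k(x_n-\theta)$ where $\mu$ is reduced with translation $\theta'$, which is \emph{not} the form the proposition requires (it demands the $\mu_i$'s and the logarithm to share the same translation).

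This mismatch between the original $\theta$ (sitting in $\ln(x_n-\theta)$) and the new common translation $\theta'$ is precisely the content of the paper's concluding case analysis. The paper first observes that when $|x_n-\theta|\le|x_n-\theta'|$ the function $|x_n-\theta'|$ is reduced with translation $\theta$ and one is done; otherwise it rewrites $\ln(x_n-\theta)$ as $\ln(\theta'-\theta)+\ln\bigl(1+\tfrac{x_n-\theta'}{\theta'-\theta}\bigr)$ (Case 1) or as $\ln(x_n-\theta')+\ln\bigl(1+\tfrac{\theta'-\theta}{x_n-\theta'}\bigr)$ (Case 2), recognizing the correction term as reduced with translation $\theta'$, so that after substituting and re-expanding everything matches up with translation $\theta'$. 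Your proof is salvageable by inserting exactly this step, but as written it does not close.
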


\begin{proof}
 Consider a function $f$  as in (\ref{eq_log_function_dfn}) and apply the Preparation Theorem to the $a_i$'s. This provides
a cell decomposition compatible with $X$
such that on every cell $C\subset X$, every $a_i$ can be written  (see Lemma \ref{lem_meme_morphisme}) \begin{equation}\label{eq_prep_proof_integration}
                                                        a_i(\xt,x_n)=|x_n -\theta(\xt)|^{r_i}b_i
(\xt)U_i(\xt,x_n-\theta(\xt)),               \end{equation}
where   $b_i$ and $\theta$ are analytic definable functions, $r_i\in \Q$, and $U_i$ is an $\la$-unit of the cell $C$.
  Let us fix such a cell $C$ and observe that the latter equality implies:
  $$\ln a_i(x)=r_i \ln |x_n-\theta(\xt)| +\ln b_i(\xt) +\ln U_i(\xt,x_n-\theta(\xt)). $$
 Since   $U_i$ is a unit, $\ln U_i$ is a
 definable function. By (\ref{eq_log_function_dfn}) and (\ref{eq_prep_proof_integration}), we thus can see that $f$ can be expressed as a  sum of type:
\begin{equation}\label{eq_fln_fonction_terme}
\sum_{i,j,k=0}^N \mu_i(x) c_j(\xt) \ln^k |x_n -\theta(\xt)|,
 \end{equation}
where each $\mu_i$ is a definable function,   each $c_j$ is a $\fln$-function on the basis of $C$, and $N \in \N$.

The expression appearing in (\ref{eq_fln_fonction_terme}) is not completely satisfying because we are not sure that each $\mu_i$ can be reduced with translation $\theta$.    To overcome this problem, we are going to make use of an argument which is similar to the one that we used in the proof of Lemma \ref{lem_meme_morphisme} (see (\ref{eq_reduced_theta12}) and (\ref{eq_reduced_theta21})).
 Since each $\mu_i$ is definable, refining the cell decomposition if necessary, we can assume  it to be reduced on $C$. Denote by $\theta'$ the translation of the reduction on $C$ (for all $i$, see Lemma \ref{lem_meme_morphisme}).

Up to a  refinement, we can assume that $(\theta'-\theta)$, $(x_n-\theta')$, and $(x_n-\theta)$ are of constant sign on $C$ and that their respective absolute values are comparable with each other. If $|x_n-\theta| \le |x_n-\theta'|$  then (see (\ref{eq_reduced_theta12}) and (\ref{eq_reduced_theta21}))   $|x_n-\theta'|$ is reduced with translation $\theta$. This implies that  $\mu_i$ is itself reduced with translation $\theta$ and we are done.
So, we can assume that $|x_n-\theta'| \le |x_n-\theta|$. We now distinguish two cases:

\underline{Case $1$}: $|x_n - \theta'| \leq |\theta' - \theta|$ on $C$.

One can easily see that in this case  $\frac{x_n - \theta'}{\theta'- \theta}$ takes values in $(-1,+\infty)$  and is  bounded away from $-1$ and $+\infty$ on $C$ (using that $|x_n-\theta'| \le |x_n-\theta|$). Since $u(h):=\frac{\ln (1+h)}{h}$ extends to a nowhere vanishing analytic function on $(-1,+\infty)$, we thus see that
$$ \ln (1+\frac{x_n - \theta'}{\theta'- \theta})=\frac{x_n - \theta'}{\theta'- \theta}\,u\left(\frac{x_n - \theta'}{\theta'- \theta}\right)$$ 
is reduced  with translation $\theta'$. As a matter of fact, if we write 
\begin{equation*}\label{eq_reduction_expansion}
  \ln (x_n-\theta) = \ln (\theta'-\theta)+ \ln (1+\dfrac{x_n - \theta'}{\theta'- \theta}),     
        \end{equation*}
 and plug this equality into (\ref{eq_fln_fonction_terme}), we see that $f$ has the desired form (with translation $\theta'$ in this case).

\underline{Case $2$}: $|x_n - \theta'| \geq |\theta' - \theta|$ on $C$. This case is addressed analogously (see (\ref{eq_reduced_theta21})).
\end{proof}

\begin{pro}\label{pro_limite}
  Let $f:X \times \R \to \R$ be a  $\fln$-function, $X\in \s_n$. If $f$ is bounded then $\lim_{\ep \to 0^+} f(x,\ep)$ exists for all $x\in X$ and defines a $\fln$-function on $X$.
\end{pro}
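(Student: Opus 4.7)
The plan is to extract the $\ep\to 0^+$ asymptotic expansion of $f(x,\ep)$ at each $x\in X$ and to read off the constant term of that expansion.

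First I would apply Proposition \ref{pro_fln_function_2nd_form} to $f$ viewed as a $\fln$-function on $X\times\R\subset\R^{n+1}$, with $\ep$ playing the role of the last coordinate. This yields a cell decomposition of $\R^{n+1}$ compatible with $X\times\R$ such that on each cell $C\subset X\times\R$
$$f(x,\ep)=\sum_{i,j,k}\mu_i(x,\ep)\,c_j(x)\,\ln^k(\ep-\theta(x)),$$
where $\mu_i$ is reduced on $C$ with $\la$-translation $\theta$ (so $\Gamma_\theta\cap C=\emptyset$) and $c_j$ is a $\fln$-function on the basis of $C$. Refining the decomposition to be compatible with $X\times\{0\}$ and $X\times(0,+\infty)$ and so that the sign of $\theta$ is constant on each cell basis, the pieces $X':=\{x\in X:C(x)=C\}$ (where $C(x)$ is the unique cell containing $(x,\ep)$ for all $\ep>0$ small enough) form a finite definable partition of $X$; by Remark \ref{rem_dfn_log_functions} it suffices to prove the statement on each such $X'$.

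On a fixed $X'$, the condition $\Gamma_\theta\cap C=\emptyset$ together with the compatibility with $X\times\{0\}$ forces $\theta\le 0$ on $X'$. If $\theta(x)<0$ throughout $X'$, the quantity $\ep-\theta(x)$ stays bounded away from $0$ as $\ep\to 0^+$, the $\la$-unit in the reduction of each $\mu_i$ extends continuously to $\ep=0^+$, and the limit is obtained by simple substitution, producing a manifest $\fln$-function of $x$ (boundedness of $f$ is not needed in this case). The substantial case is $\theta\equiv 0$ on $X'$: then $\mu_i(x,\ep)=a_i(x)\,\ep^{r_i}\,U_i(x,\ep)$, where $U_i$ is an $\la$-unit. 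Applying Proposition \ref{pro_puiseux_non_cont} to each $U_i$ and refining $X'$ into finitely many definable subpieces, I obtain a common $p\in\ns$ and convergent Puiseux expansions
$$U_i(x,\ep)=\sum_{l\ge 0}b_{i,l}(x)\,\ep^{l/p},$$
with analytic definable (hence $\fln$) coefficients $b_{i,l}$. Substituting and grouping by the pair $(\alpha,k):=(r_i+l/p,\,k)$ produces
$$f(x,\ep)=\sum_{(\alpha,k)}A_{\alpha,k}(x)\,\ep^{\alpha}\ln^k\ep,\qquad A_{\alpha,k}(x)=\!\!\sum_{\substack{(i,j,l)\\ r_i+l/p=\alpha}}\!\!a_i(x)\,c_j(x)\,b_{i,l}(x),$$
in which each $A_{\alpha,k}$ is a finite sum of products of $\fln$-functions, and hence itself a $\fln$-function on $X'$.

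The proof concludes by an asymptotic-independence argument. Truncating each Puiseux series at a sufficiently high order yields, for any large $N$, a representation
$$f(x,\ep)=\sum_{\alpha\le N,\,k}A_{\alpha,k}(x)\,\ep^{\alpha}\ln^k\ep\,+\,o(1)\qquad(\ep\to 0^+),$$
a finite combination of the pairwise asymptotically non-comparable germs $\ep^{\alpha}\ln^k\ep$, ordered lexicographically on $(\alpha,-k)$. For each fixed $x\in X'$, since $f(x,\cdot)$ is bounded near $0^+$ and any nonzero linear combination of such germs whose leading term is itself unbounded is unbounded, the coefficient $A_{\alpha,k}(x)$ must vanish whenever $\alpha<0$, or $\alpha=0$ and $k\ge 1$. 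What remains is $A_{0,0}(x)+\sum_{0<\alpha\le N,\,k}A_{\alpha,k}(x)\,\ep^{\alpha}\ln^k\ep+o(1)$, whose limit as $\ep\to 0^+$ is $A_{0,0}(x)$; so the desired limit function is $A_{0,0}$, a $\fln$-function on $X'$. I expect the main obstacle to be purely bookkeeping: guaranteeing that Proposition \ref{pro_puiseux_non_cont} can be applied uniformly in $x$ on finitely many definable subpieces with a common Puiseux denominator $p$, and arguing that only finitely many pairs $(\alpha,k)$ with $\alpha\le 0$ occur, so that the cancellation argument is finite and stays inside the $\fln$ formalism.
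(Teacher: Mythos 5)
Your argument follows the paper's own proof step by step: apply Proposition~\ref{pro_fln_function_2nd_form}, refine the cell decomposition to be compatible with $X\times\{0\}$, split off the case where $\theta$ does not vanish (so $\ep-\theta$ is bounded away from $0$ and the limit is just obtained by substitution), then in the case $\theta\equiv 0$ invoke Proposition~\ref{pro_puiseux_non_cont} to obtain a convergent expansion $\sum \alpha_{jk}(x)\,\ep^{j/p}\ln^k\ep$ with $\fln$ coefficients, and finally use boundedness of $f$ together with the fact that the germs $\ep^{\alpha}\ln^k\ep$ are pairwise asymptotically non-comparable to kill every coefficient with $\alpha<0$ or $\alpha=0,\,k\ge1$. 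The only cosmetic difference is that you apply Proposition~\ref{pro_puiseux_non_cont} to the $\la$-units $U_i$ rather than directly to the reduced factors $\mu_i$, which amounts to the same thing after multiplying by $a_i(x)\ep^{r_i}$.

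One small inaccuracy worth flagging: you claim that $\Gamma_\theta\cap C=\emptyset$ together with compatibility with $X\times\{0\}$ forces $\theta\le 0$ on $X'$. Since $C$ is the band $(0,\zeta)$, the correct dichotomy is $\theta\le 0$ or $\theta\ge\zeta>0$. This does not affect the argument, because in the second case $\ep-\theta(x)$ is still bounded away from $0$ as $\ep\to 0^+$ and the same substitution step applies; but the assertion as written is wrong, and the paper avoids it by only distinguishing $\theta\equiv 0$ from $\theta$ nowhere vanishing on the basis, without any sign claim.
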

\begin{proof}
Let $\D$ be the cell decomposition of $\R^{n+1}$ compatible with $X\times \R$ provided by Proposition \ref{pro_fln_function_2nd_form} (applied to $f$).
We may assume $\D$ to be compatible with $X \times \{0\}$. Fix a cell $D \in \pi(\D)$, where $\pi:\R^{n+1}\to \R^n$ is the canonical projection. There is a unique cell $C$ of $ \D$  which is a band  $(0,\zeta)$, with $\zeta:D \to (0,+\infty]$   definable function.

By  Proposition \ref{pro_fln_function_2nd_form}, there are $\fln$-functions $c_0,\dots,c_N$  on $D$,  a definable function $\theta$  on $D$, as well as some reduced functions $\mu_0,\dots,\mu_N$ on $C$ (with translation $\theta$) such that we have on $C$ \begin{equation}\label{eq_log_ep}
                     f(x,\ep)=\sum_{i,j,k=0} ^N \mu_i(x,\ep)c_j(x)\ln^k(\ep-\theta(x)) .                                                                                                                                                                                                                                \end{equation}
 Up to a
refinement of our cell decomposition,
we may assume that either $\theta\equiv 0$ or $\theta$ does not vanish on $D$.
 If  $\theta(x)$
is
nonzero then $ \ln (\ep-\theta)$ tends to $\ln \theta$ as $\ep \to 0^+$ and $\lim_{\ep \to 0^+} \mu_i(x,\ep)$ gives rise to a definable function (this limit is finite for $\mu_i$ is reduced with translation $\theta$). The result is thus clear in this case and we will suppose $\theta\equiv 0$.

 By Proposition \ref{pro_puiseux_non_cont}, there is a definable partition $\Pa$ of $D$ into definable manifolds  such that for every $B \in \Pa$  and each $i$, $\mu_i(x,\ep)$ coincides with a Puiseux series in $\ep$ with analytic coefficients on  $(0,\xi)$, where $\xi<\zeta_{|B}$ is a positive continuous definable function on $B$.  Fix $B\in \Pa$. By (\ref{eq_log_ep}), $f$ itself then may be expressed on $(0,\xi)$ as a convergent series  $\sum_{j=\nu}^\infty \sum_{k=0} ^N \alpha_{jk}(x) \ep ^{\frac{j}{p}} \ln^k \ep$, where the $\alpha_{jk}$'s are $\fln$-functions on $B$, $p \in \N^*$, $N \in \N$, and $\nu \in \Z$. Every monomial $\alpha_{jk}(x) \ep ^j \ln ^k \ep$ for which $(j, k)$ is nonzero tends to $0$ or $\pm\infty$ as $\ep$ tends to $0$ (for each $x$). As $f$ is bounded, we see that $\alpha_{jk}(x)\equiv 0$ for all $(j,k) \in (\Z\setminus \N)\times \N$ and all $(j,k) \in \{0\} \times \N^*$ (since any two distinct such monomials cannot go to $\pm \infty$ at the same speed).
We deduce that $$\lim_{\ep \to 0^+} f(x,\ep)=\alpha_{00} (x),$$ which is a $\fln$-function on $B\in \Pa$, and consequently $\lim_{\ep \to 0^+} f(x,\ep)$ is a $\fln$-function (see Remark \ref{rem_dfn_log_functions}).
\end{proof}

\begin{thm}\label{thm_integration_des_log}
 If
$f:A\to \R$, $A \in \s_{m+n}$,   
is a  $\fln$-function and if $l \le n$  is such that $f_t(x):=f(t,x) \in L^1_{\hn^l}(A_t)$, 
 for all $t \in \R^m$, then  
 the
function $$g(t):=\int_{x\in A_t} f_t(x)\; d \hn^{l}(x),\quad  t\in \R^{m},$$
is a  $\fln$-function as well.  
\end{thm}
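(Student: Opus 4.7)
The plan is to argue by strong induction on $n$. The base case $n = 0$ is trivial: $A_t$ is either empty or the singleton $\{0\}$, so $g(t)$ equals $0$ or $f(t,0)$, each a $\fln$-function. For the inductive step, with $n \geq 1$ and the result assumed for all smaller values of $n$, I would first reduce to the top-dimensional case $l = n$. Using Theorem \ref{thm_lipschitz_cells}, after refining a definable partition of $\R^m$, one may suppose $A_t$ is a union of Lipschitz cells (up to definable linear isometries). Only the $l$-dimensional pieces contribute to the $\hn^l$-integral, and each of them can be parametrized as a Lipschitz graph $(b, \phi_t(b))$ over an open subset $B_t$ of an $l$-dimensional subspace. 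The area formula rewrites this contribution as $\int_{B_t} f(t, b, \phi(t,b)) J(t,b)\, db$, with a definable Jacobian $J$; since $\fln$-functions are closed under products and composition with definable maps, this is the integral of a $\fln$-function over a set in $\R^{m+l}$ of full dimension $l$. If $l < n$, the strong inductive hypothesis applies directly; if $l = n$, one still must treat this top-dimensional integral.

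In the case $l = n$, Fubini gives $g(t) = \int_{\R^{n-1}} h(t,\xt)\, d\xt$ where $h(t,\xt) := \int_{A_{t,\xt}} f(t,\xt,x_n)\, dx_n$. The strong inductive hypothesis applied to $h$ (with $n$ replaced by $n-1$) then yields that $g$ is $\fln$, \emph{provided} $h$ itself is shown to be a $\fln$-function. The core step is therefore to prove that the one-variable integral $h$ is $\fln$. For this, I would apply Proposition \ref{pro_fln_function_2nd_form} to $f$ viewed as a $\fln$-function of $(t,\xt,x_n)$: on each cell $C \subset A$ of a compatible cell decomposition, $f$ takes the normal form
\[
f(t,\xt,x_n) = \sum_{i,j,k} \mu_i(t,\xt,x_n)\, c_j(t,\xt)\, \ln^k(x_n - \theta(t,\xt)),
\]
with reduced $\mu_i$ sharing the common translation $\theta$ (lying outside $C$) and $\fln$-function coefficients $c_j$ on the basis of $C$.

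Cells of dimension less than $n$ contribute nothing to $h$, while a band cell $(\xi_1, \xi_2)$ contributes a sum of products of $c_j$'s with one-variable integrals $\int_{\xi_1}^{\xi_2} \mu_i(t,\xt,x_n) \ln^k(x_n - \theta)\, dx_n$. After the change of variable $u = x_n - \theta$ (noting $\theta \notin (\xi_1,\xi_2)$, so $u$ has constant sign) and the substitution $\mu_i = a_i |u|^{r_i} U_i(t,\xt,u)$ with $U_i$ an $\la$-unit, the whole task reduces to showing that integrals of the form $\int_\alpha^\beta u^r U(t,\xt,u) \ln^k u\, du$ are $\fln$-functions of $(t,\xt)$.

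The main obstacle is this one-variable integration. My plan is to exhibit an antiderivative $F(t,\xt,u)$ of $u^r U(t,\xt,u) \ln^k u$ with respect to $u$ which is itself a $\fln$-function; the definite integral is then $F(t,\xt,\beta) - F(t,\xt,\alpha)$, and any limit taken at a boundary of the open cell remains $\fln$ by Proposition \ref{pro_limite} (with integrability of $f_t$ supplying the required boundedness). For $U \equiv 1$ one has the explicit formulas $\int u^r \ln^k u\, du = u^{r+1} P_{r,k}(\ln u)$ when $r \ne -1$ (with $P_{r,k}$ a polynomial of degree $k$ obtained by iterated integration by parts) and $\int u^{-1} \ln^k u\, du = \ln^{k+1}(u)/(k+1)$. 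For a general $\la$-unit $U = \psi(b, u' u^{1/s}, v u^{-1/s})$, I would combine integration by parts (to peel off the $\ln^k$ factor) with the Puiseux expansion of $\psi$, identifying the antiderivative as a finite combination of terms of the form (a power of $u$) $\times$ (an $\la$-unit) $\times$ (a polynomial in $\ln u$), plus at most one $\ln^{k+1}(u)$ contribution from a simple pole; the hardest point is verifying that the \emph{a priori} infinite Puiseux expansion of $\psi$ reassembles into a finite $\fln$-expression for the antiderivative.
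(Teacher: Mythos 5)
Your overall plan coincides with the paper's: reduce to the top-dimensional case $l=n$, apply Proposition~\ref{pro_fln_function_2nd_form} to bring the inner one-variable integration to the form $\int u^r U(t,\xt,u)\ln^k u\,du$ with $U$ an $\la$-unit and $r\in\Q$, attack that integral by integration by parts together with the series expansion of the unit, and dispose of the outer $(n-1)$-fold integral by induction. The detour through the Lipschitz cell decomposition and the area formula in place of the paper's linear projection is fine (the Jacobian is definable, hence $\fln$), and the observation that cells of dimension $>l$ force $f$ to vanish (because $f_t$ is continuous and $L^1$) deals with the pieces you wave at.

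The genuine gap is precisely the step you flag as ``the hardest point''. An $\la$-unit $U=\psi\bigl(b,\,v\,u^{1/s},\,w\,u^{-1/s}\bigr)$ is analytic in both $u^{1/s}$ and $u^{-1/s}$, so its Taylor expansion is doubly indexed: the coefficient of a given power $u^{m/s}$ is a full antidiagonal sum $\sum_{j-k=m}a_{ijk}\,b^i v^j w^k$, not a single term. Term-wise antidifferentiation of $u^r\psi(W)$ multiplies each Taylor coefficient by $\bigl(r+(j-k)/s+1\bigr)^{-1}$, and to conclude that the antiderivative is an $\la$-function one must re-sum those antidiagonals and check that they still assemble into an analytic function of $\bigl(b,\,vu^{1/s},\,wu^{-1/s},\,vw\bigr)$, away from the single pole line $j-k=-s(r+1)$ that produces the extra $\ln$-term. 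This re-summation is exactly the content of Lemma~\ref{lem_splitting}, which writes $\psi(u,z,c/z)=z\,\psi_1(u,c,z)+\psi_2(u,c,c/z)$ with $\psi_1$, $\psi_2$ analytic on compact neighborhoods. The paper applies that lemma \emph{before} integrating (via the finite covering and the three cases in the Claim of the proof), so that each piece becomes a one-sided power series in $u^{\pm 1/q}$ and the term-by-term integration is then harmless; your un-split approach needs the same device, merely applied one step later. You should also be aware that, rather than substituting band endpoints into an antiderivative and passing to a limit directly, the paper first restricts to the compact sub-interval $[\xi_{t,\ep},\xi'_{t,\ep}]$ of the band, carries out the Fubini/induction argument there, and only then lets $\ep\to 0$ via Proposition~\ref{pro_limite} and dominated convergence; this cleanly sidesteps boundary convergence issues that your formulation still has to track.
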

\begin{proof} 
The strategy of the proof will go as follows.  The description of $\fln$-functions given in (\ref{eq_fln_fn_2nd_form}) will provide a convergent expansion of $f(t,x)$ in $x_n$ and $\ln x_n$ (it will suffice to integrate with respect to the last variable $x_n$ for we will argue by induction). Integrating every term of this convergent series will provide an expansion of the same type for $g$,  yielding that $g$ is a $\fln$-function.

  Up to a definable diffeomorphism, we can assume that $A$ is included in  $[0,1]^{m+n}$. Take  a cell decomposition of $\R^{m+n}$ compatible with $A$  such that $f$ is continuous on every cell (see Remark \ref{rem_globally subanalytic_implies_analytic}).  It is enough to show the result for a cell $C\subset A$ (see Remark \ref{rem_dfn_log_functions}).

 If $ \dim C_t<l$ (for some  and hence for all $t$)  then  the result is obvious. If $ \dim C_{t}>l$  (for all $t$ in the basis of $C$)  then $f \equiv 0$ on $C$ (for if $f$ were nonzero at some $(t_0,x_0) \in C$ then $f_{t_0}$, which is continuous, would be bounded below away from zero near $x_0$ and thus could not be $L^1_{\hn^l}$, since every open neighborhood of $x_0$ in $C_{t_0}$ has dimension bigger than $l$). We thus can assume $\dim C_t=l$ for all $t$ in the basis of $C$. 
 Moreover, there is a linear map $\pi:\R^{m+n}\to \R^{m+l}$ preserving the $m$ first coordinates and inducing a diffeomorphism on  $C$ onto a cell of $\R^{m+l}$.  It means that this is no loss of generality to simply address the case $l=n$.

 Thanks to Proposition \ref{pro_fln_function_2nd_form}, we can assume that   $f(t,x)$ can be decomposed for $x=(\xt,x_n) \in C_t \subset \R^{n-1} \times \R$ as:
\begin{equation}\label{eq_aln} f_t(x)=\sum_{i,j,k=0}^N \mu_i(t,x)\cdot c_j(t,\xt)\cdot \ln^k  (x_n-\theta(t,\xt)) ,\end{equation}
where $\theta$ is a definable function on the basis of $C$,  the $\mu_i$'s are reduced functions on $C$ (with translation $\theta$), and the $c_j$'s are $\fln$-functions.

Fix $i \le N$. Up to a change of variables  (without changing notations) of type $(t,x)
\mapsto (t,\xt, x_n +\theta(t,\xt))$, we can  assume that $\theta\equiv 0$. As $\mu_i$ is a reduced function on $C$ with translation $\theta\equiv 0$, it can be written for $(t,\xt,x_n)$ in $C$
\begin{equation}\label{eq_mui}\mu_i(t,x)= b(t,\xt)\, x_n^{r}\, U (t,x),\quad r \in \Q,\end{equation}
where $b$ is a definable function, $r\in \Q$, and $U$ is a unit ($b,r$, and $U$ of course depend on $i$), i.e., a function that can be written $\psi \circ W$ with $W$ bounded mapping of type
$$W(t,x)=(u_1(t,\xt),\dots,u_p(t,\xt), v(t,\xt)x_n^\frac{1}{q},w(t,\xt)x_n^{-\frac{1}{q}}), $$
 with $u_1,\dots,u_p,v,w$  definable functions, $q \in \ns$, and $\psi$ analytic on a neighborhood of $cl(W(C))$. 

Before integrating $f_t$, we need to break the unit $U$ into two  series, one with negative powers in $x_n$ and one with nonnegative powers. This is the purpose of the claim below.
 Set first for simplicity for $(t,x)=(t,\xt,x_n)$ in $C$:$$W_1(t,x):=(u_1(t,\xt),\dots,u_p(t,\xt),v(t,\xt)\,x_n^\frac{1}{q},v(t,\xt)w(t,\xt))$$
 as well as $$W_2(t,x):= (u_1(t,\xt),\dots,u_p(t,\xt),w(t,\xt) \, x_n^{-\frac{1}{q}},v(t,\xt)w(t,\xt)).$$

\noindent{\bf Claim.} There is a partition of $C$ into cells such that on each element of this partition, $U$ can be written
 \begin{equation}\label{eq_splitting_pr_integration}
 U(t,x)=\psi(W(t,x))=\Psi_1(W_1(t,x))+\Psi_2(W_2(t,x)) 
\end{equation}
where $\Psi_1$ and $\Psi_2$ are two analytic functions on a neighborhood of $cl(W_1(C))$ and $cl(W_2(C))$ respectively. 

To prove this, we shall distinguish three cases. Given $\eta>0$, splitting $C$ into several cells, we can assume that one of the following situations occurs on $C$:

\noindent \underline{{\bf Case 1:}} $|v(t,\xt) x_n^{\frac{1}{q}}|\ge \eta$. In this case,  we can write $$w(t ,\xt)x^{-\frac{1}{q}}=\Lambda(v(t,\xt)x_n^\frac{1}{q},v(t,\xt)w(t ,\xt)),$$ with $\Lambda(y,z)=\frac{z}{y}$. Hence, since $U=\psi\circ W$, it is enough to set $\Psi_2=0 $ and $\Psi_1(u,y,z):=\Psi(u,y,\Lambda(y,z))$, which is analytic  on $cl(W_1(C))$ (since  $s\mapsto \frac{1}{s}$ is analytic on the complement of the origin).

\noindent\underline{{\bf Case 2:}} $|w(t ,\xt)x_n^{-\frac{1}{q}}| \ge \eta$. This case is addressed completely analogously. We set $\Psi_1=0$ and $\Psi_2$ is defined in a similar way as $\Psi_1$ in case $1$.

\noindent \underline{{\bf Case $3$:}} We suppose that $|v(t,\xt) x_n^{\frac{1}{q}}|$ and $|w(t ,\xt)x_n^{-\frac{1}{q}}| $ are both smaller than $\eta$. If $\eta$ is chosen small enough  then the result directly follows from
 Lemma
 \ref{lem_splitting} (applied to  the function $\psi$).  This completes the proof of the claim.

  Write now $C$ as $(\zeta,\zeta')$, where
 $\zeta$ and $\zeta'$ are two functions on the basis $D$ of $C$ satisfying $\zeta<\zeta'$. Set for simplicity $\xi_{t,\ep}:=\zeta_{t}+\ep(\zeta_{t}'-\zeta_{t})$ as well as $\xi_{t,\ep}':=\zeta_t'-\ep(\zeta_t'-\zeta_t)$, for $\ep>0$, and observe that  $[\xi_{t,\ep},\xi'_{t,\ep}]\subset C_t$.  By Proposition \ref{pro_limite} and Lebesgue's Dominated Convergence Theorem, it is enough to show that
$$\lambda(t,\ep):=\int_{[\xi_{t,\ep},\xi_{t,\ep}']}
   f_t \;d \hn^n$$
 is a $\fln$-function.
    Notice that $\lambda(t,\ep)=\int_{\xt \in D_t} h(t,\ep,\xt) d\hn^{n-1}(\xt) $ where we have set for $\xt \in D_t$
  $$h(t,\ep,\xt):= \int_{\xi_{t,\ep}(\xt)} ^{\xi_{t,\ep}'(\xt)} f_t(\xt,x_n) dx_n. $$
  Since we can argue by induction on $n$, it is enough to establish that $h$ is a $\fln$-function.
  The above claim (see (\ref{eq_splitting_pr_integration})) implies that $f_t$  can be decomposed on $C$ as the sum of two convergent series (via (\ref{eq_aln}) and (\ref{eq_mui})). As we can integrate  these two series by integrating every term,   it is enough to
deal with each monomial $ x_n^{j/p} \cdot\ln^k  x_n
 $,  $j\in \Z,k\le N$, $p \in \ns$,  appearing in the convergent expansion.

 These monomials may easily be integrated by finitely many integrations by parts. 
   Namely, for $k=0$ or $\frac{j}{p}=-1$, a straightforward computation of antiderivative yields that $\int_{\xi_{t,\ep}(\xt)} ^{\xi_{t,\ep}'(\xt)}  x_n^{j/p} \cdot\ln^k  x_n
 \, dx_n$ is  a
$\fln$-function. For $k$ positive integer or $\frac{j}{p}\ne -1$, after a suitable integration by 
 parts, one gets a new integral of the same type with a lower
exponent in $\ln x_n$.
\end{proof}


 Together with Proposition \ref{pro_familles_L_1}, this theorem implies:
 
 \begin{cor}\label{cor_integration_fonctions}
 If $f:A \to \R$ is definable, with $A\in \s_{m+n}$,  then for each  $l \le n$
 $$g(t):=\int_{x\in A_t} f_t(x)\; d \hn^{l}(x),$$
 defined on $E:=\{\tim: f_t \in L^1_{\hn^l}(A_t)\},$
 is a  $\fln$-function. 
 \end{cor}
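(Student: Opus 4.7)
The proof is essentially a direct packaging of Proposition \ref{pro_familles_L_1} and Theorem \ref{thm_integration_des_log}; there is no genuine obstacle, and no step requires computation.

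First I would observe that every definable function is a $\fln$-function: taking the polynomial $P(X_1)=X_1$ together with the single definable function $a_1 := f$ realizes $f$ in the form (\ref{eq_log_function_dfn}) of Definition \ref{dfn_log_functions}. In particular the given $f : A \to \R$ qualifies as an input to Theorem \ref{thm_integration_des_log}.

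Second, Proposition \ref{pro_familles_L_1} ensures that $E = \{t \in \R^m : f_t \in L^1_{\hn^l}(A_t)\}$ is a definable subset of $\R^m$. Thus $A' := A \cap (E \times \R^n)$ belongs to $\s_{m+n}$ and the restriction $f' := f_{|A'}$ is still a $\fln$-function (it is definable, hence $\fln$ by the first observation). For $t \in E$ we have $A'_t = A_t$ and $f'_t = f_t$, which is $L^1_{\hn^l}$ by the very definition of $E$; for $t \notin E$ we have $A'_t = \emptyset$, so the integrability condition is trivially satisfied. Therefore the hypothesis of Theorem \ref{thm_integration_des_log} is met on all of $\R^m$ for $f'$ and $l$.

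Applying Theorem \ref{thm_integration_des_log} to $f'$, the function
\[
\tilde g(t) := \int_{x \in A'_t} f'_t(x)\, d\hn^l(x), \qquad t \in \R^m,
\]
is a $\fln$-function on $\R^m$. Since $\tilde g(t) = g(t)$ for every $t \in E$ and $E$ is definable, the restriction $g = \tilde g_{|E}$ is a $\fln$-function on its domain (the underlying definable functions in the $\fln$-representation of $\tilde g$ simply get restricted to $E$, which is definable, so we remain in the class of $\fln$-functions as described in Definition \ref{dfn_log_functions} and Remark \ref{rem_dfn_log_functions}). This concludes the proof; the substantive work is entirely carried by Theorem \ref{thm_integration_des_log}.
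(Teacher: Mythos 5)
Your proof is correct and follows exactly the route the paper intends: Proposition \ref{pro_familles_L_1} supplies the definability of $E$, and after restricting $A$ to $E\times\R^n$ (so that the integrability hypothesis of Theorem \ref{thm_integration_des_log} holds for every $t$), one applies that theorem. The paper's proof is a one-line reference to the same two ingredients; you have merely spelled out the details.
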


In particular, in the case of the constant function $f:A\to \R$, $f\equiv 1$, we get: 

\begin{cor}\label{cor_volume_lionrolin}
	Let $A \in \s_{m+n}$ and $l \le n$. The function $g(t):=  \hn^l(A_t)$, defined on the definable set
	$\{\tim: \hn^l(A_t)<\infty \} ,$
	is a $\fln$-function.
\end{cor}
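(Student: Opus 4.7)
The plan is to derive this corollary as an immediate specialization of Corollary \ref{cor_integration_fonctions} applied to the constant function $f \equiv 1$ on $A$.

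First I would observe that $f : A \to \R$, $f \equiv 1$, is trivially a definable function (indeed a polynomial, hence a $\fln$-function), so Corollary \ref{cor_integration_fonctions} applies to it for the chosen index $l \le n$. Next, for each $t \in \R^m$, the fiber function $f_t \equiv 1$ on $A_t$ satisfies $|f_t|_{1,\hn^l} = \hn^l(A_t)$, so $f_t \in L^1_{\hn^l}(A_t)$ holds exactly when $\hn^l(A_t) < \infty$. Thus the definable set $E := \{t \in \R^m : f_t \in L^1_{\hn^l}(A_t)\}$ appearing in Corollary \ref{cor_integration_fonctions} (its definability follows from Proposition \ref{pro_familles_L_1}) coincides with the set $\{t \in \R^m : \hn^l(A_t) < \infty\}$ on which $g$ is defined in the statement.

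Finally, on this set, Corollary \ref{cor_integration_fonctions} yields that
\[
g(t) \;=\; \int_{x \in A_t} f_t(x)\, d\hn^l(x) \;=\; \int_{x \in A_t} 1\, d\hn^l(x) \;=\; \hn^l(A_t)
\]
is a $\fln$-function, which is exactly what is to be proved. There is no genuine obstacle here: all the substantive work has already been carried out in Theorem \ref{thm_integration_des_log} and its corollary, and the present statement is simply the $f\equiv 1$ specialization, packaged for easy later reference (notably for the density/volume estimates in the subsequent sections).
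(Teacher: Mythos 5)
Your proof is correct and matches the paper's approach exactly: the paper derives this corollary precisely as the $f\equiv 1$ specialization of Corollary \ref{cor_integration_fonctions}, introducing it with the line ``In particular, in the case of the constant function $f:A\to \R$, $f\equiv 1$, we get:''.
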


\section{On the measure of definable sets and families}\label{sect_measure}\subsection{The function $\psi(X,r)$ and the density $\theta_X$}
 Given a set  $X \in \s_n$, we set for $x\in \R^n$ and $r\ge 0$: $$\psi(X,x,r):=\hn^l(X\cap \bou (x,r)),$$
where $l=\dim X$. When $x=0$, we will shorten\nomenclature[cm]{$\psi(X,x,r)$}{germ of the volume of $X$ near $x$\nomrefpage} \nomenclature[cn]{$\psi(X,r)$}{germ of the volume of $X$ near $0$\nomrefpage}
$ \psi(X,x,r)$ into $\psi(X,r)$. 

In this section, we describe some properties of  $\psi$ and introduce the notion of density, sometimes called  the Lelong number.  It is easy to see that if $A \in \St_{m+n}$ then $E^l:=\{\tim :\dim A_t=l\}$ also belongs to $\s_{m+n}$ for every $l$.
Hence, applying Proposition \ref{borne unif pour les volumes} and Corollary \ref{cor_volume_lionrolin} to $E^l$ for each $l$, we see:

\begin{pro}\label{pro_volume_lionrolin}
For any  $A \in \St_{m+n}$, the function $(t,x,r) \mapsto \psi(A_t,x,r)$ is a $\fln$-function satisfying $\psi(A_t,x,r)\lesssim r^{l_t}$, where $l_t:=\dim A_t$, for $(t,x,r)\in \R^{m}\times \R^n\times [0,+\infty)$.
\end{pro}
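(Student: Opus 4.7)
The plan is to reduce to the earlier results by enlarging the parameter space and stratifying $\R^m$ according to the fiber dimension. The key observation is that the constant $l_t$ is not really constant over all of $\R^m$, so one must first partition the parameters so that the fiber dimension is fixed, then recognize $(t,x,r)\mapsto \psi(A_t,x,r)$ as the volume function of a definable family with parameter space $\R^m\times\R^n\times[0,\infty)$.

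First I would note that for each $l\in\{0,\dots,n\}$ the set $E^l:=\{t\in\R^m:\dim A_t=l\}$ is definable: this follows either from Theorem \ref{thm_s_formula} applied to a formula expressing the existence of a cell of dimension $l$ in $A_t$, or directly from existence of cell decompositions with parameters (Remark \ref{rem_fibres_cell_dec}) together with Proposition \ref{pro_dim}. Since the conclusion of the proposition depends only on the restriction of the family to each $E^l$ and since a function which is a $\fln$-function on each piece of a finite definable partition is itself a $\fln$-function (Remark \ref{rem_dfn_log_functions}), it suffices to treat each $E^l$ separately, i.e.\ to assume $\dim A_t=l$ is constant on the parameter space.

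Under this assumption, consider the definable family $F\in \s_{(m+n+1)+n}$ indexed by parameters $s:=(t,x,r)\in\R^m\times\R^n\times[0,\infty)$ defined by
\begin{equation*}
F_{(t,x,r)}:=A_t\cap \mathbf{B}(x,r).
\end{equation*}
Each fiber $F_s$ has dimension at most $l$, and $\psi(A_t,x,r)=\hn^l(F_{(t,x,r)})$. By the uniform bound of Proposition \ref{borne unif pour les volumes} (applied to the definable family $G_{(t,x)}:=A_t-x$, whose fibers still have dimension $l$, using $\hn^l(G_{(t,x)}\cap \mathbf{B}(0,r))=\hn^l(A_t\cap \mathbf{B}(x,r))$), we obtain a constant $C_l$ such that $\psi(A_t,x,r)\le C_l r^l$ for all $(t,x,r)$ in $E^l\times\R^n\times[0,\infty)$. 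In particular $\hn^l(F_s)$ is finite for every parameter $s$, so Corollary \ref{cor_volume_lionrolin} (with parameter space $\R^m\times\R^n\times[0,\infty)$ instead of $\R^m$) directly yields that $(t,x,r)\mapsto \psi(A_t,x,r)$ is a $\fln$-function on $E^l\times\R^n\times[0,\infty)$.

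Gluing these pieces together (on a finite definable partition of $\R^m$ given by the $E^l$'s) and taking $C:=\max_l C_l$ gives both the $\fln$ property globally and the uniform estimate $\psi(A_t,x,r)\lesssim r^{l_t}$. There is no real obstacle here beyond careful bookkeeping: the only nontrivial inputs are the two results already proved (Proposition \ref{borne unif pour les volumes} and Corollary \ref{cor_volume_lionrolin}), whose hypotheses are exactly verified once we stratify by $l_t$ and treat $(x,r)$ as additional parameters.
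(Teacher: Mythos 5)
Your argument is correct and matches the paper's (very terse) reasoning: the paper likewise stratifies the parameter space into the sets $E^l:=\{t:\dim A_t=l\}$ and then invokes Proposition \ref{borne unif pour les volumes} and Corollary \ref{cor_volume_lionrolin} with $(t,x,r)$ as the enlarged parameter. Your write-up simply makes explicit the translation trick $G_{(t,x)}:=A_t-x$ and the gluing via Remark \ref{rem_dfn_log_functions}, which the paper leaves implicit.
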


In particular,  in the case $m=0$, thanks to Proposition
\ref{pro_puiseux_avec_parametres}, this entails that for every $X \in\s_n$ there exist positive
 integers $p$ and $N$ as well as real numbers $a_{i,j}$, $i \in \N$, $j \leq N$, such that for $r>0$
 small enough:
 \begin{equation}\label{eq_volume}\psi(X,r) =\sum_{i=0}^\infty \sum_{j=0} ^N
 a_{i,j}\, r^{\frac{i}{p}}\ln^{j} r.\end{equation}
It is worthy of notice that the above proposition entails that the first term of this expansion is of order at least $l:=\dim X$.

\begin{pro}\label{pro_density}
Let $X \in \St_n$ be of dimension $l$. Given $x\in \R^n$, the limit
$$\theta_X (x):=\lim_{r \to 0} \frac{\psi(X,x,r)}{ \hn^l(\bou (0_{\R^l},1))\cdot r^l}$$
 exists and is finite. 
It is called the {\bf density of $X$ at $x$}.\index{density}\nomenclature[cp]{$\theta_X$}{density of $X$ (Lelong number)\nomrefpage}
\end{pro}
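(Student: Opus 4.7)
The plan is to derive the existence of the limit from the Puiseux--logarithmic expansion (\ref{eq_volume}) of $\psi(X,r)$, combined with the upper bound $\psi(X,x,r) \lesssim r^l$ furnished by Proposition \ref{pro_volume_lionrolin}. After translation, I would reduce to the case $x = 0$, since $\psi(X,x,r) = \psi(X-x, r)$ and $X-x$ remains a definable set of dimension $l$. If $0 \notin cl(X)$, then $\psi(X,r) \equiv 0$ for $r$ small enough and the density is trivially $0$, so I may assume $0 \in cl(X)$.

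By (\ref{eq_volume}) there exist $p \in \ns$, $N \in \N$, and real numbers $a_{i,j}$ for which
$$\psi(X,r) \;=\; \sum_{i=0}^{\infty} \sum_{j=0}^{N} a_{i,j}\, r^{i/p} \ln^j r$$
for $r > 0$ small enough. The heart of the argument consists in showing that $a_{i,j} = 0$ for every pair with $i/p < l$, as well as for every pair $(lp, j)$ with $j \ge 1$. To do so, order the nonzero coefficients lexicographically by $(i/p, -j)$ and let $(i_0, j_0)$ be the minimum. Since, as $r \to 0^+$, the monomial $r^\alpha \ln^\beta r$ dominates $r^{\alpha'} \ln^{\beta'} r$ exactly when $\alpha < \alpha'$ (or $\alpha = \alpha'$ and $\beta > \beta'$), the dominant term in the series is precisely $a_{i_0, j_0}\, r^{i_0/p} \ln^{j_0} r$, so that $\psi(X,r) \sim a_{i_0, j_0}\, r^{i_0/p} \ln^{j_0} r$ as $r \to 0^+$.

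If $i_0/p < l$, then $r^{i_0/p - l}\,|\ln r|^{j_0} \to +\infty$, so $\psi(X,r)/r^l$ is unbounded in modulus, contradicting Proposition \ref{pro_volume_lionrolin}. If $i_0/p = l$ and $j_0 \ge 1$, then $\psi(X,r)/r^l$ behaves like $a_{lp, j_0}\, \ln^{j_0} r$; its modulus diverges as $r \to 0^+$, so either the upper bound $\psi(X,r) \le C r^l$ or the non\-negativity $\psi(X,r) \ge 0$ is violated (depending on the sign of $a_{lp, j_0}$ and the parity of $j_0$). Consequently the expansion reduces to
$$\psi(X,r) \;=\; a_{lp,0}\, r^l \;+\; o(r^l) \quad \text{as } r \to 0^+,$$
so $\psi(X,r)/r^l$ converges to the finite number $a_{lp,0}$; dividing by $\hn^l(\bou(0_{\R^l},1))$ yields the density. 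The main (though modest) obstacle is the careful identification of the dominant monomial in the double series, which amounts to observing that $|\ln r| \to \infty$ as $r \to 0^+$ forbids any positive power of a logarithm from surviving at the leading order $r^l$.
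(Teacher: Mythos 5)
Your proof is correct and follows essentially the same route as the paper's, which simply cites the log-Puiseux expansion (\ref{eq_volume}) for existence of the limit and Proposition \ref{pro_volume_lionrolin} for finiteness (with the paper's remark after Proposition \ref{pro_volume_lionrolin} already noting that the bound $\psi(X,r)\lesssim r^l$ forces the leading order to be at least $l$). You have simply written out the elimination of the sub-$l$ exponents and of the logarithmic factors at order $l$ that the paper leaves implicit.
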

\begin{proof}
 By (\ref{eq_volume}), the  limit exists, and, by Proposition \ref{pro_volume_lionrolin}, it is finite. 
\end{proof}
 The density is sometimes called the {\bf Lelong number}\index{Lelong number}. It is easily checked that if $X$ is a
smooth manifold then $\theta_X\equiv 1$ on $X$.  If $X$ is a complex analytic subset of $\mathbb{C}^n$ (that we can regard as a
definable subset of $\R^{2n}$) and $x\in X$ then $\theta_X(x)$ is equal to the multiplicity of
$X$ at $x$ \cite{dra69}. The notion of density may thus be considered as a real counterpart of the complex notion
of multiplicity. 

\begin{thm}
 The function $\theta_X$ is a $\fln$-function on $X$, for all $X\in \s_n$. 
\end{thm}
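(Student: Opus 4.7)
The plan is to combine Proposition \ref{pro_volume_lionrolin} with the limit-preservation property of $\fln$-functions given by Proposition \ref{pro_limite}. Set $l:=\dim X$ and $\omega_l:=\hn^l(\bou(0_{\R^l},1))$. First I would apply Proposition \ref{pro_volume_lionrolin} with $m=0$ (no parameter) and the family reduced to the single set $X$; this gives that the two-variable function
\[
F(x,r):=\psi(X,x,r)=\hn^l(X\cap\bou(x,r))
\]
is a $\fln$-function on $\R^n\times[0,+\infty)$ and, more importantly, satisfies a uniform bound $F(x,r)\le C\,r^l$ for all $(x,r)\in\R^n\times[0,+\infty)$ and some constant $C$ independent of $(x,r)$.

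Next I would form the rescaled quotient
\[
G(x,r):=\frac{F(x,r)}{\omega_l\, r^l}\qquad (x\in X,\ r>0).
\]
Since $r\mapsto r^{-l}$ is a definable function on $(0,+\infty)$ and products of $\fln$-functions are $\fln$-functions, $G$ is a $\fln$-function on $X\times(0,+\infty)$; the uniform bound $F(x,r)\le C r^l$ immediately gives $|G(x,r)|\le C/\omega_l$ on $X\times(0,+\infty)$. Extending by zero to $X\times\R$ (setting $\tilde G(x,r):=G(x,r)$ for $r>0$ and $\tilde G(x,r):=0$ for $r\le 0$) produces a bounded $\fln$-function on $X\times\R$, using that $\fln$-functions are stable under piecewise definition on definable partitions (Remark \ref{rem_dfn_log_functions}).

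Finally I would invoke Proposition \ref{pro_limite}: since $\tilde G$ is a bounded $\fln$-function on $X\times\R$, the limit $\lim_{r\to 0^+}\tilde G(x,r)$ exists for every $x\in X$ and defines a $\fln$-function on $X$. The existence of this limit is already known from Proposition \ref{pro_density}, and by construction that limit equals $\theta_X(x)$. This yields that $\theta_X$ is a $\fln$-function on $X$.

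There is no serious obstacle here: the statement is essentially a repackaging of Proposition \ref{pro_volume_lionrolin} (which provides both the $\fln$-character of $\psi$ and the crucial uniform bound $\psi\lesssim r^l$ needed to make the rescaled quotient bounded) together with Proposition \ref{pro_limite} (which says that $\fln$-functions are closed under taking one-sided limits, provided boundedness holds). The only point requiring a touch of care is to ensure the boundedness hypothesis of Proposition \ref{pro_limite}, which is precisely what the uniform estimate $F(x,r)\lesssim r^l$ is tailored for.
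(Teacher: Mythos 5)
Your proof is correct and follows essentially the same route as the paper: apply Proposition \ref{pro_volume_lionrolin} to get both the $\fln$-character of $\psi(X,x,r)$ and the uniform bound $\psi(X,x,r)\lesssim r^l$, which together make the rescaled quotient a bounded $\fln$-function, and then conclude via the limit stability in Proposition \ref{pro_limite}. The only difference is that you spell out the small bookkeeping step of extending the quotient by zero to all of $X\times\R$ so that Proposition \ref{pro_limite} applies literally, a point the paper leaves implicit.
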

\begin{proof}
 Let $X \in \St_n$. 
 Since  Proposition \ref{pro_volume_lionrolin}  yields that $f(x,r):=\frac{\psi(X,x,r)}{\hn^l(\bou(0_{\R^l},1))\cdot r^l}$ is a bounded
$\fln$-function (where $l:=\dim X$),  Proposition \ref{pro_limite}  gives the result.
\end{proof}

 \subsection{Uniform bounds}\label{sect_uniform_bounds} We give some estimates of the $\hn^l$-measure of germs of globally subanalytic sets that will be needed in the proof of Theorem \ref{thm_alpha_approx_invariance_des_vol}.   It will be very important  that
the constants given by  the following  propositions do not depend
on the parameters.

Given $X \in \s_n$ and  $\ep\ge 0$, define the \index{ep neighborhood@ $\ep$-neighborhood} {\bf $\ep$-neighborhood of $X$} as:
$$X_{\le \ep}:=\{x\in \R^n: d(x,X)\le \ep  \}.$$\nomenclature[cq]{$X_{\le \ep}$}{$\ep$-neighborhood of $X$\nomrefpage}
We also set:
$$X_{= \ep}:=\{x\in \R^n: d(x,X)= \ep  \}.$$\nomenclature[cr]{$X_{= \ep}$}{level surface $d(x,X)=\ep$\nomrefpage}
The  proposition below enables  us to bound uniformly with
respect to $\varepsilon$ the measure of the $\ep$-neighborhoods of the
fibers of a globally subanalytic family.

\begin{pro}\label{pro_vol_A_t_B_t} Let $A \in \s_{m+n}$ be such that $\sup_\tim diam (A_t)<\infty$ (see (\ref{eq_diameter_def}) for $diam$), and let $k<n$ be an integer.  There is  $C>0$ such that  for all $\tim$ for which $\dim A_t\le k$   and all $\ep>  0$ we have
\begin{equation}\label{eq_ligne_de_niveau}
\hn^{n-1}(A_{t,=\ep}) \le C\ep^{n-k-1}. 
\end{equation}
Moreover, given $B$ in $\s_{m+n}$ and an integer $l>k$,  there is $C>0$ such that  for all $\tim$ for which $\dim B_t\le l$ as well as $\dim A_t\le k$,   and all $\ep>  0$, we have
\begin{equation}\label{eq_vol_A_t_B_t}
\hn^l(A _{t,\leq \varepsilon}\cap B_t) \leq C\,\varepsilon^{l-k}.
\end{equation}
\end{pro}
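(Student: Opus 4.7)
The plan is to establish both inequalities by a joint induction on the ambient dimension $n$. The base case $n=1$ forces $k=0$, so $A_t$ is a finite set of uniformly bounded cardinality by Corollary \ref{cor_cc_families}, and both estimates follow immediately (with $A_{t,=\ep}$ containing at most $2m_A$ points and $A_{t,\le\ep}\cap B_t$ being a union of at most $m_A$ subarcs of length $\lesssim \ep$).

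For the inductive step I would first derive (\ref{eq_ligne_de_niveau}) in dimension $n$, then deduce (\ref{eq_vol_A_t_B_t}) from it. To prove (\ref{eq_ligne_de_niveau}), apply Cauchy-Crofton (\ref{cauchy_crofton_formula_rewritten}) to the definable family $(A_{t,=\ep})_{t,\ep}$, observing that the multiplicities are uniformly bounded by Corollary \ref{cor_cc_families} because $N_P^y$ is one-dimensional. This gives
$$\hn^{n-1}(A_{t,=\ep}) \lesssim \int_{P\in\G_{n-1}^n}\hn^{n-1}(\pi_P(A_{t,=\ep}))\,dP.$$
Since $\pi_P$ is $1$-Lipschitz, $\pi_P(A_{t,=\ep})\subset (\pi_P(A_t))_{\le\ep}$ inside $P\cong\R^{n-1}$, and $\dim \pi_P(A_t)\le k$. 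The induction hypothesis for (\ref{eq_vol_A_t_B_t}) in ambient dimension $n-1$, applied with $B=P$ to the definable family $(\pi_P(A_t))_{(P,t)}$, bounds $\hn^{n-1}((\pi_P(A_t))_{\le\ep})$ by $C\ep^{n-1-k}$ uniformly in $(P,t)$; integrating over the compact Grassmannian concludes.

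For (\ref{eq_vol_A_t_B_t}) in dimension $n$, I would split on $l$. When $l=n$, the coarea formula (\ref{eq_coarea_formula}) applied to the $1$-Lipschitz function $d(\cdot,A_t)$ — whose generalized Jacobian equals $1$ almost everywhere on $A_{t,\le\ep}$ since $\hn^n(A_t)=0$ — produces
$$\hn^n(A_{t,\le\ep}\cap B_t) \le \hn^n(A_{t,\le\ep}) = \int_0^\ep \hn^{n-1}(A_{t,=r})\,dr \le C\ep^{n-k}$$
via the inequality just established. When $l<n$, another application of Cauchy-Crofton in dimension $l$, combined with the $1$-Lipschitz character of $\pi_P$, bounds $\hn^l(A_{t,\le\ep}\cap B_t)$ by a constant multiple of $\int_{\G_l^n}\hn^l((\pi_P(A_t))_{\le\ep})\,dP$, and the induction hypothesis in ambient dimension $l<n$ bounds the integrand by $C\ep^{l-k}$.

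The main obstacle is securing the uniformity of all constants simultaneously in $t$, $\ep$ and $P$. The argument hinges on applying Corollary \ref{cor_cc_families} and Proposition \ref{borne unif pour les volumes} not to the original families but to their extensions parametrised by $(t,\ep)$ or $(t,\ep,P)\in \R^m\times(0,\infty)\times\G^n_l$, viewing the Grassmannian as a definable manifold and treating $(P,t)$ as an enlarged parameter. Boundary cases such as $k=n-1$ or $l=k+1$, where the chain of inductive applications degenerates (the inductive hypothesis for (\ref{eq_vol_A_t_B_t}) requires $l>k$), must be handled directly from the uniform volume bound (\ref{eq_borne_uniforme}) for the uniformly bounded definable families involved.
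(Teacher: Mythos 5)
Your proposal follows the paper's proof step by step: joint induction on the ambient dimension $n$; for (\ref{eq_ligne_de_niveau}), the inclusion $K_j^P(A_{t,=\ep})\subset\pi_P(A_t)_{\le\ep}$ together with (\ref{cauchy_crofton_formula_rewritten}) and the induction hypothesis for (\ref{eq_vol_A_t_B_t}) in dimension $n-1$; for (\ref{eq_vol_A_t_B_t}), coarea applied to $d(\cdot,A_t)$ when $l=n$, and Cauchy-Crofton together with the induction hypothesis in ambient dimension $l$ when $l<n$. Your reformulation of the Cauchy-Crofton step as $\hn^{n-1}(A_{t,=\ep})\lesssim\int_{\G_{n-1}^n}\hn^{n-1}(\pi_P(A_{t,=\ep}))\,dP$ is equivalent up to the uniformly bounded multiplicity coming from Corollary \ref{cor_cc_families}, and your remark about parametrising the auxiliary families by $(t,\ep)$ or $(t,\ep,P)$ to keep the constants uniform is precisely how the paper secures uniformity.

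You are, however, more careful than the printed proof about the range of $k$. The paper invokes (\ref{eq_vol_A_t_B_t}) in ambient dimension $n-1$ with $l=n-1$ and $B=P$, which requires $l>k$, hence $k\le n-2$; the case $k=n-1$, allowed by the hypothesis $k<n$, is not covered by that application, and (as you note) the case $l=k+1$ of (\ref{eq_vol_A_t_B_t}) cascades back to the same boundary situation in ambient dimension $l$. Your repair via (\ref{eq_borne_uniforme}) does close this gap, but only when the family $(A_{t,=\ep})_{(t,\ep)}$ has uniformly bounded diameter, i.e.\ when $\ep$ ranges over a bounded interval. That restriction is in fact unavoidable: for $k\ge 1$ and a fixed bounded $k$-dimensional set $A_t$ (say a unit segment in $\R^n$), $\hn^{n-1}(A_{t,=\ep})$ grows like $\ep^{n-1}$ as $\ep\to\infty$, which eventually dominates $C\ep^{n-k-1}$, so (\ref{eq_ligne_de_niveau}) cannot hold uniformly in all $\ep>0$ when $k\ge 1$. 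The proposition should be read with $\ep$ restricted to a bounded range, say $\ep\in(0,1]$ (as in Proposition \ref{volume des voisinges}); under that reading your boundary-case treatment is correct and your proof is complete.
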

\begin{proof}
We establish these two estimates simultaneously by induction on $n$, both statements being obvious in the case $n=1$ thanks to Corollary \ref{cor_cc_families} ((\ref{eq_ligne_de_niveau}) follows from the fact that $(n-k-1)\le 0$, and (\ref{eq_vol_A_t_B_t}) from the fact that the assumptions then force either $ A_t$ to be a finite set or $l$ to be bigger than $n$).  We start with the induction step of (\ref{eq_ligne_de_niveau}).  Given $P\in \G_{n-1}^n$ and a positive integer $j$, we have  $K_j^P(A_{t,=\ep})\subset \pi_P(A_t)_{\le \ep}$. Moreover,  the family of sets
$\pi_P(A_{t})$, $\tim, P\in \G^n_{n-1}$, is  definable.  Hence, thanks to the induction hypothesis (identifying $P$ with $\R^{n-1}$ and applying (\ref{eq_vol_A_t_B_t})), we can conclude that for $\tim$ and $\ep\ge 0$
$$\hn^{n-1}(K_j^P(A_{t,=\ep}))\le \hn^{n-1}(\pi_P(A_t)_{\le \ep})\lesssim \ep ^{n-k-1} ,$$
which means that the result is a direct consequence of (\ref{cauchy_crofton_formula_rewritten}).

 We now turn to perform the induction step of (\ref{eq_vol_A_t_B_t}), 
 starting with the case $l=n$, for which we can assume $B_t=\R^n$, for all $t$. Let, for $x\in \R^n$, $\rho_t(x):=d(x,A_t)$.  Observe that  $|\pa_x\rho_t|=1$
 at each $x$ where
$\rho_t$ is  differentiable (as $\rho_t$ is $1$-Lipschitz, we have $|\pa_x\rho_t|\le 1$; on the other hand, if $y$ is a point of $cl(A_t)$ that realizes the distance $\rho_t(x)$ then $\rho_t(z)$ is decreasing as fast as $|z-y|$ when $z$ goes to  $y$ from $x$, which shows  $|\pa_x\rho_t|\ge 1$), so that $J_x (\rho_t)\equiv 1$ on a definable dense subset of $\R^n$, which, by (\ref{eq_coarea_formula}), yields:
$$ \hn^n (A_{t,\le \ep})\le \int_0 ^\ep \hn^{n-1}(A_{t,=\alpha})d\alpha \overset{(\ref{eq_ligne_de_niveau})}{\lesssim}   \ep^{n-k}, $$
as required. 

It remains to show the result in the case $l<n$. Observe that in this case, $K_j^P(A_{t,\le \ep}\cap B_t)$ is included in $\pi_P(A_{t})_{\le \ep}\cap B_t$, for every  $j\in \N^*$ and $P\in \G^n _l$.  Hence, thanks to (\ref{cauchy_crofton_formula_rewritten}) and the induction hypothesis  (identifying $P$ with $\R^l$), we see that the desired estimate holds.
\end{proof}

\begin{pro}\label{volume des voisinges}
 Given $A\in \s_{m+n}$,  there exists a constant $C$ such that for all $\tim$,   all  $\varepsilon \in (0,1]$, and  all $ r\ge 0$, we have:$$\psi(A _{t,\leq \varepsilon},r) \leq C\, r^{n-1}\varepsilon + \hn^n(A _t\cap \bou (0_{\R^n},r)).$$
In particular, if $\dim A_t<n$, we then have for all such $r$, $t$, and $\ep$:
\begin{equation}\label{inegalit volume des voisinges}
\psi(A _{t,\leq \varepsilon} ,r) \leq Cr^{n-1}\varepsilon.
\end{equation}
\end{pro}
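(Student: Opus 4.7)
My plan is to apply the coarea formula to the distance function $\rho_t := d(\cdot, A_t)$ and then bound the $(n-1)$-dimensional measure of its level sets uniformly via Cauchy-Crofton.

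First I would decompose $A_{t,\le \ep} \cap \bou(0_{\R^n}, r) = (cl(A_t) \cap \bou(0, r)) \cup (\{0 < \rho_t \le \ep\} \cap \bou(0, r))$. Since $fr(A_t)$ has dimension strictly less than $\dim A_t$ by Proposition \ref{pro_delta_A}, we have $\hn^n(cl(A_t)\cap \bou(0,r)) = \hn^n(A_t \cap \bou(0,r))$, which accounts exactly for the second summand in the desired estimate. Since $\rho_t$ is $1$-Lipschitz with $|\pa_x \rho_t| = 1$ almost everywhere on $\{\rho_t > 0\}$ (as recalled in the proof of Proposition \ref{pro_vol_A_t_B_t}), the coarea formula (\ref{eq_coarea_formula}) gives
$$\hn^n(\{0 < \rho_t \le \ep\} \cap \bou(0, r)) = \int_0^\ep \hn^{n-1}(A_{t,=s} \cap \bou(0, r))\, ds.$$

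The heart of the proof is then to establish a uniform bound $\hn^{n-1}(A_{t,=s} \cap \bou(0, r)) \le C r^{n-1}$ with $C$ independent of $(t, s, r)$. First, $\dim A_{t,=s} \le n - 1$ for $s > 0$: otherwise $\rho_t$ would be constant on an open subset of $\{\rho_t > 0\}$, contradicting $|\pa \rho_t| = 1$ a.e. I would then apply Cauchy-Crofton in the form (\ref{cauchy_crofton_formula_rewritten}):
$$\hn^{n-1}(A_{t,=s}\cap \bou(0,r)) = \beta_{n-1,n} \sum_{j=1}^{m} j \int_{\G^n_{n-1}} \hn^{n-1}(K_j^P(A_{t,=s}\cap \bou(0,r)))\, dP.$$
Each $K_j^P$ is contained in $\pi_P(\bou(0, r)) = \bou(0, r) \cap P$, whose $(n-1)$-measure is $c_{n-1} r^{n-1}$. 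Integrating this uniform estimate over $s \in [0, \ep]$ yields the desired $C r^{n-1} \ep$ bound on the thickening, and the ``in particular'' statement is then immediate since $\hn^n(A_t \cap \bou(0, r)) = 0$ when $\dim A_t < n$.

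The main obstacle is ensuring that the multiplicity $m$ appearing above can be bounded by a single integer independent of $(t, s, r)$. To handle this, I would package the line sections $N_P^y \cap A_{t,=s}$ as fibers of a single definable family over the extended parameter space $(t, s, P, y) \in \R^m \times \R \times \G^n_{n-1} \times \R^n$, and then apply Corollary \ref{cor_cc_families} to obtain a uniform bound on the number of connected components of these fibers. Since each such fiber is a definable subset of a line, its components are points or intervals; the $K_\infty^P$ corresponding to non-isolated components carries no $\hn^{n-1}$-mass because $\dim A_{t,=s}\le n-1$ (this is the same argument justifying (\ref{cauchy_crofton_formula_rewritten})), so this uniform bound on components translates to the required uniform bound on the multiplicity appearing in the Cauchy-Crofton sum.
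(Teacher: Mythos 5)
Your proof is correct and follows essentially the same strategy as the paper: apply the coarea formula to $\rho_t:=d(\cdot,A_t)$, using $J_x(\rho_t)\equiv 1$ a.e.\ on $\{\rho_t>0\}$, and then bound $\hn^{n-1}(A_{t,=s}\cap\bou(0,r))\le Cr^{n-1}$ uniformly via Cauchy--Crofton and uniform finiteness of fibers. The only organisational differences are that you split $A_{t,\le\ep}\cap\bou(0,r)$ directly into $cl(A_t)\cap\bou(0,r)$ (which has the same $\hn^n$-measure as $A_t\cap\bou(0,r)$ since $fr(A_t)$ is $\hn^n$-negligible) and the strict thickening $\{0<\rho_t\le\ep\}\cap\bou(0,r)$, whereas the paper first proves the bound assuming $\dim A_t<n$ and then reduces the general case via the inclusion $A_{t,\le\ep}\subset(\delta A_t)_{\le\ep}\cup A_t$; and your uniform level-set estimate re-derives Proposition~\ref{borne unif pour les volumes} rather than simply invoking it.
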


\begin{proof} We will use a method which is similar to the one we used in the proof of the preceding proposition.  We first establish the desired estimate assuming $\dim A_t<n$, for all $\tim$.
  As explained in the proof of the preceding proposition,  $J_x (d(x,A_t))\equiv 1$ on a definable dense subset of $\R^n$.
Moreover, since $\dim A_{t,=\alpha} \le n-1$ for  every $\alpha>0$ and $\tim$, by Proposition \ref{borne unif pour les volumes}, there exists $C>0$ such that
for all such $t$ and $\alpha$ we have
$\hn^{n-1}(A_{t,=\alpha}\cap \bou(0,r)) \leq Cr^{n-1}$. We thus can write:
\begin{eqnarray*}
\psi(A _{t,\leq \varepsilon},r) &=& \int_{A _{t,\leq
\varepsilon} \cap \bou (0,r)}d \hn^n\\
&\overset{(\ref{eq_coarea_formula})}\leq& \int_0^\varepsilon
\hn^{n-1}(A_{t,=\alpha}\cap \bou(0,r))\;d \hn ^1(\alpha) \quad\; \mbox{(since $J_x(d(x,A_t))\equiv 1$)}\\ &\leq& Cr^{n-1} \varepsilon,
\end{eqnarray*}
 establishing (\ref{inegalit volume des voisinges}).  To prove the result in general, let us set 
$E_t:=\delta(A _t)$.  Since  $A _{t,\leq \varepsilon}
\subset E_{t,\leq\varepsilon} \cup A _t$ and $\dim E_t<n$ for all $t$, the
result follows from  (\ref{inegalit volume des voisinges}) for $E_t$.
\end{proof}

\begin{pro}\label{psit est lips}
Let $l\in \N$ and  $A \in \s_{m+n}$ be such that $\dim A_t=l$ for all $\tim$.
There is $C>0$  such that for each $\tim$, we have for all $r$ and  $r'$ small enough positive real numbers satisfying $r'\le r$:
$$|\psi(A _t,r)-\psi(A _t,r')| \leq Cr^{l-1}|r-r'|.$$
\end{pro}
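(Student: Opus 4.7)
The plan is to compute $|\psi(A_t,r) - \psi(A_t,r')| = \hn^l(E_{t,r,r'})$, where $E_{t,r,r'} := A_t \cap (\bou(0,r) \setminus \bou(0,r'))$, by transferring this annular set to a ``cone shell'' via a parametric version of the Lipschitz conic structure. To this end I would first apply Theorem \ref{thm existence des triangulations} to the definable family $\check{A}_t := \{(s,x) \in \R \times A_t : s = |x|\}$, obtaining on a definable partition of $\R^m$ a family of metric triangulations with common tame systems of coordinates and contraction functions. Replaying the construction in the proof of Theorem \ref{thm_local_conic_structure} in this parametric setting yields, on each piece of the partition, a real number $\ep > 0$ independent of $t$ and a definable family of homeomorphisms $H_t : 0 * L_t \to A_t \cap \bou(0,\ep)$, where $L_t := \sph(0,\ep) \cap A_t$, with each $H_t$ preserving the distance to the origin and uniformly $L$-Lipschitz in $t$.

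It suffices to establish the estimate on one such piece. The case $l = 0$ is trivial, since $\psi(A_t, \cdot)$ is then locally constant near $0$; so assume $l \ge 1$ and parameterize the cone by $\phi : [0,1] \times L_t \to \R^n$, $(s,z) \mapsto sz$. Since $L_t \subset \sph(0,\ep)$, we have $|z| = \ep$ and $z \perp T_z L_t$, so the Jacobian of $\phi$ at $(s,z)$ equals $\ep\, s^{l-1}$. As $H_t$ preserves $|\cdot|$, for $0 < r' \le r \le \ep$ we have
\[
H_t^{-1}(E_{t,r,r'}) = \phi\bigl([r'/\ep,\,r/\ep] \times L_t\bigr),
\]
whose $\hn^l$-measure is $\displaystyle \frac{\hn^{l-1}(L_t)}{l\,\ep^{l-1}}\bigl(r^l - (r')^l\bigr)$. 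Proposition \ref{borne unif pour les volumes} applied to the definable family $(L_t)_t$, which has dimension at most $l-1$ and is contained in $\bou(0,\ep)$, provides a constant $C_1$ independent of $t$ with $\hn^{l-1}(L_t) \le C_1\, \ep^{l-1}$. Combining this with the elementary inequality $r^l - (r')^l \le l\, r^{l-1}(r-r')$ valid for $0 \le r' \le r$, and with the $L$-Lipschitzness of $H_t$, yields $|\psi(A_t,r) - \psi(A_t,r')| \le L^l C_1\, r^{l-1}(r-r')$; taking the maximum of the resulting constants over the finitely many pieces gives a uniform~$C$.

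The main obstacle is Step~1, the parametric uniformity of the Lipschitz conic structure: no such statement is formally recorded in the text, but it may be obtained by replaying the proof of Theorem \ref{thm_triangulations_locales} in a parametric setting, relying on the family version of the regular vector theorem (Theorem \ref{thm_proj_reg_hom_pres_familles}) and the parametric metric triangulation (Theorem \ref{thm existence des triangulations}), thereby ensuring that the contraction functions and tame coordinates, and hence the Lipschitz constants of the resulting maps $H_t$, depend only on the piece of the partition of $\R^m$ and not on $t$ itself.
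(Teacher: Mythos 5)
Your proof has a genuine gap at the step you yourself flag as the main obstacle: the parametric uniformity of the Lipschitz conic structure. You assert that one "may" replay the proof of Theorem \ref{thm_triangulations_locales} in a parametric setting, but this is not actually established, and it is not a small matter. Theorem \ref{thm_triangulations_locales} is built on Theorem \ref{thm_proj_loc}, which, unlike Theorem \ref{thm_proj_reg_hom_pres_familles}, is \emph{not} stated in a parametric form: it provides a single vertical bi-Lipschitz homeomorphism for a single germ, with no claim of uniformity in a family. Promoting it to a family statement (with $\ep$ and the Lipschitz constant of $H_t$ independent of $t$ on each piece of a definable partition) would require reworking the whole chain Lemma \ref{prop proj reg} $\Rightarrow$ Theorem \ref{thm_triangulations_locales} $\Rightarrow$ Theorem \ref{thm_local_conic_structure} with parameters, which is a substantial project that your proposal does not carry out. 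Without it, you obtain for each $t$ a conic homeomorphism $H_t$ with a Lipschitz constant $L_t$ and a radius $\ep_t$ that a priori depend on $t$, and the uniform constant $C$ in the statement does not follow.

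The paper's own argument is far more elementary and avoids this entirely. It restricts the function $\rho(x)=|x|$ to $A_{t,reg}$, calling it $\lambda_t$, and shows via Curve Selection Lemma that $J_x(\lambda_t) \to 1$ as $x \to 0$ (for each fixed $t$; the ``small enough'' $r_0$ is allowed to depend on $t$ by Remark \ref{rem_small_enough}). The coarea formula then gives, for $r' \le r \le r_0(t)$,
\begin{equation*}
|\psi(A_t,r)-\psi(A_t,r')| \le 2\int_{A_t\cap\bou(0,r)\setminus\bou(0,r')} J_x(\lambda_t)\,d\hn^l = 2\int_{r'}^{r} \hn^{l-1}\bigl(A_t\cap\sph(0,s)\bigr)\,ds,
\end{equation*}
and the integrand is bounded by $Cs^{l-1}$ with $C$ independent of $t$ and $s$ by Proposition \ref{borne unif pour les volumes} (applied to the definable family $A_t\cap\sph(0,s)\subset\bou(0,2s)$, parameterized by $(t,s)$), giving the claim directly. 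Note that your computation, where it \emph{is} carried out, is correct (the Jacobian of $(s,z)\mapsto sz$ is indeed $\ep s^{l-1}$, and the $\ep$-dependence in the bound on $\hn^{l-1}(L_t)$ can be removed by applying Proposition \ref{borne unif pour les volumes} to the two-parameter family $A_t\cap\sph(0,s)$ as above), but the hard part is precisely what you defer.
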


\begin{proof}
For $\tim$ let $ \lambda_t$ denote  the restriction to $A_{t,reg}$ of the function $\rho:\R^n \to \R$, defined by $\rho(x):= |x|$.  We wish to estimate $\psi(A_t,r)$
by means of (\ref{eq_coarea_formula}), which requires to 
  establish that 
$J_{\lambda_t}(x)$ goes to $1$ as $x \in A_{t,reg}$ tends to $0$ for every $\tim$.

Since
  $J_x(\lambda_t)=|\pa_x \lambda_t|$,
and since, for almost each  $x\in A_{t,reg}$, the vector $\pa_x \lambda_t$ is the projection of $\pa_x \rho=\frac{x}{|x|}$ onto $T_x A_{t,reg}$, it is enough to check that the angle between $x$ and $T_x A_{t,reg}$ tends to zero as $x \in A_{t,reg}$ tends to zero (for each $t$). Indeed, if otherwise, by Curve Selection Lemma (Lemma \ref{curve_selection_lemma}), we could find an analytic arc $\gamma(s)$ in $A_{t,reg}$ tending to zero (as $s\to 0^+$) such that the angle between $\gamma(s)$ and $T_{\gamma(s)} A_{t,reg}$ is bounded below away from zero. Since  $\lim_{s\to 0^+}  \frac{\gamma(s)}{|\gamma(s)|}=\lim_{s\to 0^+} \frac{\gamma'(s)}{|\gamma'(s)|}$ and $\gamma'(s)\in T_{\gamma(s)} A_{t,reg}$, this is a contradiction which yields that $J_{\lambda_t}(x)$ goes to $1$ when $x \in A_{t,reg}$ tends to $0$.

  Fix now $\tim$ and choose a positive real number $r_0$ (depending on $t$) sufficiently small for the sets  $\sph (0_{\R^n},r) \cap A_t$ to be of dimension smaller than $l$ for all $r\le r_0$.
 As $A_t\cap \sph (0_{\R^n},r)\subset A_t\cap \bou (0_{\R^n},2r),$ by Proposition \ref{borne unif pour les volumes}, we know that there is a constant $C$ independent of $t$ and $r$ such that for all $r\le r_0$:
 \begin{equation}\label{eq_norm_link}
  \hn^{l-1}(A_t\cap \sph (0_{\R^n},r) )\le C r^{l-1}.
 \end{equation}
We thus can write for $r' \leq r\le r_0 $:
\begin{eqnarray*}
|\psi(A _t,r)-\psi(A _t,r')| &=& |\int_{A _t\cap
\bou (0,r)\setminus \bou (0,r')}d\hn ^l|\\ &\leq&2\int_{A_t \cap
\bou (0,r)\setminus \bou (0,r')}J_x(\lambda_t)\, d\hn ^l(x)\;
(\mbox{since }J_x(\lambda_t) \; \mbox{tends to} \, 1)\\
&\overset{(\ref{eq_coarea_formula})}=&
2\int^r_{r'}\hn^{l-1}(A _t \cap \sph (0_{\R^n},s)) \,d\hn ^1(s),
\end{eqnarray*}
which, by (\ref{eq_norm_link}), yields the claimed estimate.
\end{proof}
\begin{rem}\label{rem_small_enough}
 In the above proposition, by ``for each $\tim$, we have for all $r$ and  $r'$ small enough positive real numbers satisfying $r'\le r$'', we mean that for each $\tim$, there is $r_0(t)>0$ such that the claimed estimate holds for all $r$ and  $r'$  satisfying $0<r'\le r\le r_0(t)$, i.e. it should be understood that this ``small enough'' depends on $t$. This will be the same in Definition \ref{def alpha approx}, Theorem \ref{thm_alpha_approx_invariance_des_vol}, and Proposition \ref{pro_multiplicites_b}.
\end{rem}

\section{Measure and $\alpha$-approximations of the identity}\label{sect_volume_alpha_approx}
 We are going to show  that families  of homeomorphisms that are close to the identity can induce some stability of the measure even if they are not Lipschitz.  This fact is possible because we work with globally subanalytic families of sets.  The considered homeomorphisms will however not be assumed to be globally subanalytic.

\begin{dfn}\label{def alpha approx}
Let  $A$ and  $B$ in $\s_{m+n}$ and let $\alpha:\; (0,\eta)\times \R^m\rightarrow \R$ be a positive definable function. 
We call \textbf{$\alpha$-appro\-ximation of the identity}   a 
family of germs of homeomorphisms  (not necessarily definable) $ h_t  : (A_t , 0) \rightarrow (B_t,0)   $, $\tim$, 
 such that for each $\tim$ we have for all $r>0$ small enough
\begin{equation}\label{eq ineg def alpha approx}
 |h_t(x)-x| \leq \alpha(r,t),
 \end{equation}
for all $ x \in \bou (0,r)\cap A_t$, and
 \begin{equation}\label{eq ineg def alpha approx h^-1}
 |h_t^{-1}(x)-x| \leq \alpha(r,t),
 \end{equation}
for all $ x \in \bou (0,r)\cap B_t$.
\end{dfn}

Again, we stress the fact that in the above definition, as well as in Theorem \ref{thm_alpha_approx_invariance_des_vol} and Proposition \ref{pro_multiplicites_b} below, the requirement $r>0$ small enough ``depends on $t$'' as explained in Remark \ref{rem_small_enough}.
The main purpose for introducing the notion of $\alpha$-approximation of the identity is the study of the variation of the density on a definable set (section \ref{sect_densite}). The theorem below that compares the measure of the fibers of two families that are related by an $\alpha$-approximation of the identity is however of its own interest. It is  easy to produce examples of non subanalytic families for which this theorem fails.

\begin{thm}\label{thm_alpha_approx_invariance_des_vol}
Let  $(A_t)_{\tim} $ and  $(B_t)_{\tim}$ be two definable families of $l$-dimen\-sional subsets  of $\R^n$ and
let $h_t: (A_t,0) \rightarrow (B_t,0)$, $\tim$, be an $\alpha$-approximation of the identity,  with $\alpha:\; (0,\eta) \times \R^m\rightarrow \R$ definable positive function.
There is  $C>0$ (independent of $\alpha$) such that for every $t \in \R^m$ we have for all $r>0$ small enough:
$$|\psi(A_t,r)-\psi(B_t,r)| \leq C \alpha(r,t)\cdot r^{l-1}.$$
\end{thm}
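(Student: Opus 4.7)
Plan: Fix $t \in \R^m$ and $r>0$ small; set $\delta := \alpha(r,t)$ and write $A_t^0 := A_t \cap \bou(0,r)$, $B_t^0 := B_t \cap \bou(0,r)$. The $\alpha$-approximation hypothesis immediately gives $A_t^0 \subset B_{t,\leq\delta}$ and $B_t^0 \subset A_{t,\leq\delta}$, but because $h_t$ is only continuous (not Lipschitz, not definable, not measure-preserving) this containment alone is too coarse to give the desired $\delta r^{l-1}$ bound. The plan is to push the comparison through Cauchy-Crofton's formula, where the defect between $\psi(A_t,r)$ and $\psi(B_t,r)$ becomes an integral of fiber cardinalities.

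Applying Theorem \ref{thm_cc_formula} to both sets one gets
\begin{equation*}
\psi(A_t,r) - \psi(B_t,r) \;=\; \beta_{l,n} \int_{P\in\G_l^n} \int_{y\in P} \bigl(k_A(y,P) - k_B(y,P)\bigr)\, d\hn^l(y)\, dP,
\end{equation*}
with $k_A(y,P) := \card(N_P^y \cap A_t^0)$ and $k_B$ analogous. Corollary \ref{cor_cc_families} provides a constant $M$ with $k_A, k_B \leq M$ uniformly in $(t,y,P)$. For each $P$, I would isolate the $(l-1)$-dimensional uniformly definable subset $\Sigma_{t,P,r} \subset P$ consisting of the $\pi_P$-discriminant loci of $A_{t,reg}$ and $B_{t,reg}$ together with $\pi_P(\sph(0,r)\cap A_t)$ and $\pi_P(\sph(0,r)\cap B_t)$, and prove that $k_A(y,P) = k_B(y,P)$ whenever $d_P(y,\Sigma_{t,P,r}) > C_0 \delta$ for some absolute constant $C_0$.

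The argument for this equality, which is the main obstacle, goes as follows. Take $x \in N_P^y \cap A_t^0$; the $\alpha$-approximation gives $h_t(x) \in B_t$ with $|h_t(x) - x| \leq \delta$, so $\pi_P(h_t(x))$ lies within $\delta$ of $y$. Because $y$ is kept at distance $>C_0\delta$ from the $\pi_P$-discriminant of $B_t$, the restricted projection $\pi_P|_{B_t}$ is a submersion near $h_t(x)$ with uniformly bounded local inverse — this uniformity in $(t,P)$ is what requires a Whitney $(b)$ regular stratification of the joint family $(A,B)$, and ultimately is what the theory developed in Chapter \ref{chap_basic} delivers. The implicit function theorem then provides a unique $\tilde x \in N_P^y \cap B_t$ within $O(\delta)$ of $h_t(x)$, and the condition $d_P(y,\pi_P(\sph(0,r)\cap B_t)) > C_0\delta$ forces $\tilde x \in B_t^0$. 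This produces an injection $N_P^y \cap A_t^0 \hookrightarrow N_P^y \cap B_t^0$; applying the same argument to $h_t^{-1}$ yields the reverse inclusion and hence equality.

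To finish, I would estimate the size of the exceptional region. Treating $(t,P,r)$ as joint parameters, $\Sigma_{t,P,r}$ is a definable family of subsets of $P \cong \R^l$ of dimension at most $l-1$; Proposition \ref{volume des voisinges} applied inside $P$ (equation \eqref{inegalit volume des voisinges} with ambient dimension $l$) yields
\begin{equation*}
\hn^l \bigl( \Sigma_{t,P,r,\leq C_0\delta} \cap \pi_P(\bou(0,r)) \bigr) \;\leq\; C' \delta\, r^{l-1}
\end{equation*}
with $C'$ independent of $(t,P)$. Combining this with $|k_A - k_B| \leq 2M$ on the exceptional region and $k_A = k_B$ elsewhere, then integrating over the compact Grassmannian $\G_l^n$ and multiplying by $\beta_{l,n}$, produces the desired constant $C$ (depending only on the families $A,B$ and the dimensions, and in particular independent of $\alpha$). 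The principal difficulty throughout is verifying the \emph{uniformity} of the implicit function constants and of the lifting $h_t(x) \mapsto \tilde x$ as $(t,P)$ varies — this is the place where the definable (globally subanalytic) nature of $A$ and $B$ is essential, since no comparable control can be extracted from $h_t$ itself.
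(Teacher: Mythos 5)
Your outline follows the same backbone as the paper's proof: reduce via Cauchy--Crofton to comparing fiber cardinalities, isolate an exceptional subset of $P$ (discriminant loci plus a boundary term) away from which the cardinalities must agree, and bound the measure of that exceptional set by $C\alpha(r,t)r^{l-1}$ using Proposition \ref{volume des voisinges} (and Proposition \ref{psit est lips}). That much matches Proposition \ref{pro_multiplicites_b} and the way the paper deduces the theorem.

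The gap is in the cardinality-matching step. You construct a point-to-point map $x\mapsto\tilde x$ from $N_P^y\cap A_t^0$ to $N_P^y\cap B_t^0$ and declare it an injection, but injectivity is not justified and in fact cannot be extracted from the hypotheses you invoke. Distinct $x_1,x_2\in N_P^y\cap A_t^0$ lie on different sheets of the covering $\pi_P|_{A_t}$ over $y$, but the exclusion of the discriminant tube gives no lower bound on the vertical separation between these sheets: they may be at distance $o(\delta)$ from each other. Since $h_t$ is merely a homeomorphism with $|h_t(x)-x|\le\delta$ --- not bi-Lipschitz, not definable --- the images $h_t(x_1)$, $h_t(x_2)$ can perfectly well land on the \emph{same} sheet of $B_t$, and then your implicit-function-theorem lift produces $\tilde x_1=\tilde x_2$. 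The same lack of sheet separation also undercuts the asserted ``uniqueness of $\tilde x$ within $O(\delta)$ of $h_t(x)$''. The paper avoids both issues by arguing \emph{qualitatively}: after excluding the discriminant tube, it applies Ehresmann's theorem to trivialize the coverings over the simply connected ball $\bou(x,2\alpha)$, obtains finitely many disjoint connected sheets $C_1,\dots,C_j$ and $D_1,\dots,D_{j'}$, defines a map $\sigma\colon\{1,\dots,j\}\to\{1,\dots,j'\}$ on sheets (well-defined by connectedness of the $C_i$ and disjointness of the $D_k$), and proves $\sigma$ \emph{surjective} by pulling a point of $D_i$ back under $h_t^{-1}$ --- giving $j\ge j'$, with $j\le j'$ by symmetry. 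No injectivity claim is ever made, and no quantitative implicit-function bound is needed, so your recourse to Whitney $(b)$ regularity for uniform constants is superfluous. To repair your argument, replace the point-to-point injection with the sheet-level map and prove surjectivity (not injectivity) in both directions, which is the paper's route.

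A secondary, smaller point: you use $\pi_P(\sph(0,r)\cap A_t)$ in the exceptional set, whereas the paper uses $\pi_P$ of the annulus $\bou(0,r)\setminus\bou(0,r-2\alpha)$. The sphere slice can have full dimension $l$ for special $r$, breaking the dimension count in Proposition \ref{volume des voisinges}; the annulus sidesteps this and its measure is bounded directly by Proposition \ref{psit est lips}, which is also why that proposition is stated.
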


The idea  of the proof of this theorem is that since Cauchy-Crofton's formula reduces the computation of the Hausdorff measure of a set to the computation of the cardinal of generic fibers of generic projections (restricted to the considered set), comparing the $\hn^l$-measures of  $A_t \cap \bou (0,r)$ and   $B_t \cap \bou (0,r)$ reduces to compare the respective cardinals of  $\pi_P^{-1}(x)\cap A_t \cap \bou (0,r)$ and $\pi_P^{-1}(x) \cap B_t \cap \bou (0,r)$, for $P \in \G_l^n$ and $x\in P$ generic.
We therefore start with a proposition which, assuming that we are in the situation of Theorem \ref{thm_alpha_approx_invariance_des_vol}, yields that the respective cardinals of generic fibers of projections are the same on the complement of a set whose measure is bounded in terms of $\alpha$.

\begin{pro}\label{pro_multiplicites_b}
Let  $(A_t)_{\tim} $ and  $(B_t)_{\tim}$ be two definable families of $l$-dimen\-sional subsets  of $\R^n$  and
let  $h_t: (A_t,0) \rightarrow (B_t,0)$ be an $\alpha$-approximation of the identity,  with $\alpha:\; (0,\eta) \times \R^m\rightarrow \R$ definable positive function.
There are a constant  $C$ (independent of $\alpha$) and a definable family of sets $Z_t(P,r)$, $P \in
\G_{l}^n$, $r>0$, $  \tim$,  satisfying for each such $P $ and $t$, for all $r>0$ small enough: \begin{enumerate}
                                    \item   $\;\hn^l(Z_t(P,r)) \leq C \alpha(r,t)
\cdot r^{l-1}.$
\item For any $ x \in
  P\cap \bou (0,r) \setminus Z_t(P,r)$:
\begin{equation}\label{claim cardinal des fibres}
  \card\,  (\pi_P^{-1}(x) \cap A _t \cap \bou (0,r))= \card\,  (\pi_P^{-1}(x)
 \cap B_t \cap \bou (0,r)).
\end{equation}
                                   \end{enumerate}
\end{pro}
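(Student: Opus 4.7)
The plan is to define $Z_t(P,r)$ as a carefully chosen $\alpha(r,t)$-neighborhood (in $P$) of a lower-dimensional discriminant $W_t(P,r)$, and to argue that outside this neighborhood the homeomorphism $h_t$ sets up a bijection between the fibers of $\pi_P$ over $A_t \cap \bou(0,r)$ and over $B_t \cap \bou(0,r)$.

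First, I would introduce $W_t(P,r) \subset P$ as the union of the following definable subsets, all of dimension at most $l-1$: the projections under $\pi_P$ of the singular loci $(A_t)_{sing}$ and $(B_t)_{sing}$ (definable by Tamm's Theorem \ref{thm_X_reg}); the projections of the critical loci of $\pi_P$ restricted to $(A_t)_{reg}$ and $(B_t)_{reg}$; the projections $\pi_P(A_t \cap \sph(0,r))$ and $\pi_P(B_t \cap \sph(0,r))$; and the sphere $P \cap \sph(0,r)$. The family $W_t(P,r)$ is definable in $(t,P,r)$ and, by construction, over any connected open subset $U$ of $(P \cap \bou(0,r)) \setminus W_t(P,r)$, both maps $\pi_P \colon A_t \cap \bou(0,r) \to P$ and $\pi_P \colon B_t \cap \bou(0,r) \to P$ are proper étale coverings, decomposing as disjoint unions of continuous sections (``sheets'') $\sigma_i^A, \sigma_j^B \colon U \to \R^n$. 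I would then set $Z_t(P,r) := W_t(P,r)_{\le 2\alpha(r,t)} \cap P \cap \bou(0,r)$. The measure estimate follows from a scaling argument: the rescaled family $\tilde W_{t,P,r} := W_t(P,r)/r$ has bounded diameter and dimension $\le l-1$ inside $P \cap \bou(0,1)$, so equation (\ref{eq_vol_A_t_B_t}) of Proposition \ref{pro_vol_A_t_B_t} yields $\hn^l(\tilde W_{\le 2\alpha/r} \cap P \cap \bou(0,1)) \lesssim \alpha/r$ uniformly in $(t,P,r)$; scaling back by the factor $r^l$ gives $\hn^l(Z_t(P,r)) \le C\,\alpha(r,t)\,r^{l-1}$.

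For the cardinal equality at $x \notin Z_t(P,r)$, the open ball $B_P(x,2\alpha)$ is contained in $(P \cap \bou(0,r)) \setminus W_t(P,r)$ and is connected, so over it both $A_t \cap \bou(0,r)$ and $B_t \cap \bou(0,r)$ split into finitely many sheets of cardinalities $k_A = N_A(x)$ and $k_B = N_B(x)$. Composing a sheet $\sigma_i^A$ with $h_t$ yields a continuous map $h_t \circ \sigma_i^A \colon B_P(x,2\alpha) \to B_t$ whose points are displaced from $\sigma_i^A$ by at most $\alpha$, so $\pi_P(h_t(\sigma_i^A(y)))$ remains in $B_P(x,2\alpha)$; the image stays in $\bou(0,r)$ thanks to the inclusion of $\pi_P(A_t \cap \sph(0,r))$ and $\pi_P(B_t \cap \sph(0,r))$ in $W_t(P,r)$. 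It follows that $h_t(\sigma_i^A(y)) = \sigma_{j(i,y)}^B(\pi_P(h_t(\sigma_i^A(y))))$ for some index $j(i,y) \in \{1,\dots,k_B\}$, and continuity on the connected ball $B_P(x,2\alpha)$ forces $j(i,y)$ to depend only on $i$. Injectivity of $h_t$ then gives $k_A \le k_B$; the symmetric argument applied to $h_t^{-1}$ yields $k_B \le k_A$, hence $N_A(x) = N_B(x)$.

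The main obstacle will be the subtle boundary issue: a sheet $\sigma_i^A(y)$, though lying in the open ball $\bou(0,r)$, may graze $\sph(0,r)$ without crossing, in which case $h_t$ could push a point slightly outside the ball, so that it no longer lies on an honest sheet of $B_t \cap \bou(0,r)$. Verifying that the above choice of $W_t(P,r)$ (possibly supplemented by a further definable $\alpha$-thickening whose $\hn^l$-measure still satisfies $\lesssim \alpha r^{l-1}$ via the same scaling argument) suffices to exclude this is the key technical point; an alternative is to work instead with sheets of $A_t$ and $B_t$ in the enlarged ball $\bou(0, r+\alpha)$ and correct the count by bounding the contribution of $A_t \cap (\bou(0, r+\alpha) \setminus \bou(0,r))$ via Proposition \ref{pro_vol_A_t_B_t} once more.
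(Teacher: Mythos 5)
Your overall strategy matches the paper's: identify a definable ``bad'' locus in $P$ of small measure, take an $\alpha$-neighborhood, and show that outside it the map $h_t$ sets up a bijection between the sheets of $\pi_P$ over $A_t\cap \bou(0,r)$ and $B_t\cap \bou(0,r)$. The sheet-counting argument is essentially identical. However, there is a genuine gap, which you yourself flag as ``the main obstacle'' but do not resolve: including $\pi_P(A_t\cap\sph(0,r))$ (and its $B_t$ analogue) in $W_t(P,r)$ does \emph{not} prevent a sheet point $y\in A_t\cap\bou(0,r)$ with $\pi_P(y)\in\bou_P(x,\alpha)$ from lying arbitrarily close to $\sph(0,r)$ without ever meeting it — the set $A_t\cap\sph(0,r)$ simply does not see such a point, and its projection can sit far from $x$. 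For such a $y$, $h_t(y)$ may land outside $\bou(0,r)$, and the injection $\sigma$ of sheets is no longer well defined. The paper closes this precisely by replacing your sphere-section with the projection of a \emph{shell of width $2\alpha(r,t)$}, namely $\M_t(P,r):=\pi_P\bigl(A_t\cap\bou(0,r)\setminus\bou(0,r-2\alpha(r,t))\bigr)$, and excluding $\M_t(P,r)_{\le 3\alpha(r,t)}$ from the allowed $x$; then $x\notin Z_t(P,r)$ and $\pi_P(y)\in\bou_P(x,\alpha)$ force $y\in\bou(0,r-2\alpha(r,t))$, so $h_t(y)\in\bou(0,r)$.

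This replacement also invalidates your scaling argument for the measure bound on the shell contribution. The shell $A_t\cap\bou(0,r)\setminus\bou(0,r-2\alpha(r,t))$ has dimension $l$, not $l-1$, so neither Proposition \ref{pro_vol_A_t_B_t} nor a rescaling to $\bou(0,1)$ applies to $\M_t(P,r)$. The paper instead controls $\hn^l(\M_t(P,r))$ directly via Proposition \ref{psit est lips} (the Lipschitz-type estimate $|\psi(A_t,r)-\psi(A_t,r')|\le Cr^{l-1}|r-r'|$, which you never invoke), combined with the fact that $\pi_P$ does not increase $\hn^l$, and then thickens via Proposition \ref{volume des voisinges}. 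Your second suggested fix (enlarge to $\bou(0,r+\alpha)$ and correct the count) does not obviously terminate either, since applying $h_t$ pushes points of the enlarged ball out by another $\alpha$, and Proposition \ref{pro_vol_A_t_B_t} does not bound the measure of a full-dimensional shell. In short: your scheme is right for the singular/critical loci, but the boundary term needs the shell-of-width-$2\alpha$ construction and the $\psi$-Lipschitz estimate, neither of which is supplied in your sketch.
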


\begin{proof}
We first define $Z_t(P,r)$ (see (\ref{eq_KPRT})). In this proof, we will sometimes identify an element $P$ of $\G_{l}^n$ with $\R^l$ (mentioning it) so that, given a subset  $X\subset P$ and $\ep >0$, we write $X_{\le \ep}$ for the set of points  $x\in P$ satisfying $d(x,X)\le \ep$.

  Let $A'_t$ be the set of points of $A_t$ at which $A_t$ fails to be an analytic manifold of dimension $l$.  For every $P \in \G_l^n$ and $\tim$ let $S_t(P)$ be the union of $A_{t}'$ together with the set of  points of $A_{t}\setminus A'_t$ at which $\pi_P$ fails to be a submersion.
Let then
$$H_t(P,r):= \pi_P (S_t(P))\cap \bou (0,r). $$
 For every $t$ and $P$, $\dim H_t(P,r)\le \dim \pi_P\lepa S_t(P)\ripa<l$, by Sard's Theorem. As the family   $H_t(P,r)$, $P \in \G_l^n,\, r\in \Qp,\,\tim$, is definable, by  Proposition \ref{volume des voisinges}
(identify $P$ with $\R^l$) there is a constant $C$ such that for all $\tim$:
\begin{equation}\label{mesure de H(p,j,A)}
\hn^l(H_t(P,r)_{\leq 2\alpha(r,t) }) \leq C\alpha(r,t) r^{l-1}.
\end{equation}
Let now  $$\M_t(P,r):=\pi_P(A _t\cap \bou (0,r)\setminus \bou (0,r-2\alpha(r,t)
)),$$
and notice that  by  Proposition \ref{psit est lips},  there is  $C>0$  such that for all $\tim$:
$$\hn^l(\M_t(P,r)) \leq C \alpha(r,t)\cdot  r^{l-1}.$$
We then easily derive from
Proposition~\ref{volume des voisinges} (again identifying $P$ with
$\R^l$) that:
$$\hn^l(\mathcal{M}_t(P,r)_{\leq 3\alpha(r,t) }) \leq C \alpha(r,t)\cdot  r^{l-1},$$
for some  $C$ independent of $r$ and $t$. 
 Let us  now
define  $Z_t(P,r)\subset P$ as the union 
\begin{equation}\label{eq_KPRT}
 H_t(P,r)_{\le 2\alpha(r,t)} \cup H'_t(P,r)_{\le 2\alpha(r,t)} \cup
\mathcal{M}_t(P,r)_{\leq 3\alpha(r,t) }\cup \mathcal{M}'_t(P,r)_{\leq
	3\alpha(r,t) },
\end{equation}
where $H_t'(P,r)$ and $\mathcal{M}'_t(P,r)$ are defined in the same way as $H_t(P,r)$ and $\mathcal{M}_t(P,r)$ but replacing $A_t$ with $B_t$.
By the above estimates (the estimates obtained for $H_t(P,r)$ and $\mathcal{M}_t(P,r)$ clearly hold for  $H'_t(P,r)$ and $\mathcal{M}'_t(P,r)$ as well) we see that $(i)$ holds.

Let us now prove (\ref{claim cardinal des
fibres}). Fix $P \in \G_l^n$, $r>0$, $\tim$, as well as an element $x \in P \cap \bou (0,r) \setminus
Z_t(P,r) $, and set for simplicity $$j:=\card \; \pi_P^{-1}(x) \cap A_{t}\cap \bou(0,r)
 \quad \mbox{ and } \quad j':=\card \;
\pi_P^{-1}(x) \cap B_t\cap \bou(0,r).$$
We have to check that $j=j'$.
Remark  that by   definition of
$Z_t(P,r)$ we have
$$d(x,\pi_P(S_t(P))) > 2 \alpha(r,t),$$
which means that $\pi_P^{-1}(\bou (x,2\alpha(r,t)))\cap A_t\cap \bou (0,r)$ is exclusively constituted by nonsingular points at which $\pi_{P|A_t}$ is submersive. By the symmetry of the roles of $A_t$ and $B_t$, the analogous fact holds for $\pi_P^{-1}(\bou (x,2\alpha(r,t)))\cap B_t\cap \bou (0,r)$.

By Ehresmann's Theorem\footnote{A covering map above a simply connected set is always globally trivial.}, the intersection of   $\pi^{-1}_P( \bou (x,2\alpha(r,t)))$ with the set $A_{t,reg}\cap \bou (0,r)$ (resp. $B_{t,reg}\cap \bou (0,r)$)
 is thus the  union  of $j$ (resp. $j'$) connected components
 $C_1,\dots,C_j$ (resp. $D_1,\dots, D_{j'}$)
  and the restriction of   $\pi_P$ to every  $C_i$ (resp. $D_i$)
 is  a  homeomorphism  onto $\bou (x,2\alpha(r,t))\cap P$.

  Since  $x$ does not belong to
 $\M_{t}(P,r)_{\leq 3\alpha(r,t)}$, the ball 
of radius   $\alpha(r,t)$ centered at  $x$ does not intersect 
$\M_t(P,r)_{\leq2\alpha(r,t)}$. Hence, due to the definition of
$\M_t(P,r)$,
  every point  $y$  of $C_{j_0} \cap \pi_P^{-1}( \bou (x,\alpha(r,t)))$, with $j_0 \in \{1,\dots,j\}$, must belong to
$\bou (0,r-2\alpha(r,t))$,
 so that, by (\ref{eq ineg def alpha approx}), 
   the point $h_t(y)$ must belong to $
\bou (0,r)$.
 Moreover, again due to (\ref{eq ineg def alpha approx}),  we have $\pi_P(h_t(y)) \in \bou (x,2\alpha(r,t))$
 so that  $h_t(y)$
actually belongs  to 
 one of the $D_i$'s.  As  $C_{j_0}$ is  connected  and  the  $D_i$'s are  disjoint,
the integer $k\le j'$ for which  $h_t(y)\in D_k$ just depends on $j_0$ and not on the  point $y$ in $ C_{j_0}
\cap \pi_P^{-1}( \bou (x,\alpha(r,t)))$. Let us thus denote by $\sigma(j_0)$ this
integer.

In this way, we have defined a mapping $\sigma$  from $\{1,\dots
,j\}$ to $\{1,\dots,j'\}$. In order to show   $j'\leq j$,  it
suffices to establish that  $\sigma$ is  surjective  (by the symmetry  of the  roles
of $j$ and $j'$, it is enough to check $j' \leq
j$. Here, $j$ and $j'$ might be zero but the argument just above has shown that if $j\ne 0$ then $j' \ne 0$ and the argument below will show that if $j'\ne 0$ then $j\ne 0$).

  Let  $i$ be an integer between   $1$ and  $j'$, take a point
  $z\in \pi_P^{-1}(x) \cap D_i$, and  set
$y:=h_t^{-1}(z)$. Since  $x \notin \M'_t(P,r)$, the point $z$ belongs to
$\bou (0,r-2\alpha(r,t))$;
 this implies, via (\ref{eq ineg def alpha approx h^-1}),
that the point  $y$ belongs to  $\bou (0,r)$.
By (\ref{eq ineg def alpha approx h^-1}), it is clear that  $\pi_P(y)\in \bou (x,2\alpha(r,t))$. Thus,
 $y \in C_{i_0}$ for some  $i_0$, which
implies that $\sigma(i_0)=i$. 
\end{proof}

\begin{proof}[Proof of Theorem \ref{thm_alpha_approx_invariance_des_vol}]
Since $(A_t)_{\tim} $ and  $(B_t)_{\tim}$ are two definable families, $m_{A_t}$ and $m_{B_t}$ are bounded independently of $t$ (see Corollary \ref{cor_cc_families}).  Moreover, by
$(i)$ and $(ii)$ of Proposition   \ref{pro_multiplicites_b},  there is a positive constant $C$ such that  for each $\tim$,  $j\in \N$, and  $P\in \G^n_l$, we have for all $r>0$ small enough:
$$|\hn^l(K^P_j(A_t \cap \bou (0,r)))- \hn^l(K^P_j(B _t
\cap \bou (0,r)))|\leq C \alpha(r,t)\cdot  r^{l-1}. $$
 The result thus directly follows from  (\ref{cauchy_crofton_formula_rewritten}).
\end{proof}

\section{Variation of the density}\label{sect_densite}
As we noticed,  $\theta_X\equiv 1$ on the regular locus  of $X\in \s_n$. This raises a natural
question: how is  the density of a
globally subanalytic set affected by the geometry of the singularities?
The theorem below provides information on this issue.

\begin{thm}\label{thm_continuite_densite}
Let a locally closed set $X\in \s_n$ be stratified by a
stratification $\Sigma$.
\begin{enumerate}\item If  $\Sigma$ is $(w)$ regular
then  $\theta_X $ is
locally Lipschitz on the strata of $\Sigma$.
\item If  $\Sigma$ is Whitney $(b)$ regular then  $\theta_X$ is continuous on
the strata of $\Sigma$. 
\end{enumerate}
\end{thm}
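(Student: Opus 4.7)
The overall strategy is to compare $\theta_X(x)$ with $\theta_X(x')$ for $x,x'\in S$ close to a fixed point $x_0\in S$ by constructing an $\alpha$-approximation of the identity between the germs $(X,x)$ and $(X,x')$ and applying Theorem \ref{thm_alpha_approx_invariance_des_vol}. To produce such an approximation I would use a Thom--Mather type construction: build a stratified vector field $v_X$ on $X$ near $x_0$ extending the constant vector field $v=x'-x$ on $S$, tangent to every stratum, and tangent to the level sets of $\rho_S$. The last requirement is possible thanks to Proposition~\ref{pro_whitney_pi_rho_subm}: under Whitney $(b)$, the restriction of $(\pi_S,\rho_S)$ to $X\setminus S$ is submersive near $S$, so one can lift $v$ to a vector in $T_yY\cap\ker d_y\rho_S$ at every regular point $y$ of each stratum $Y\ne S$. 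The time-one flow $\phi$ of $v_X$ then satisfies $\phi(x)=x'$, preserves $X$ stratum by stratum, and preserves the distance to $S$. Setting $h(z):=\phi(z+x)-x'$ yields a germ of homeomorphism $(X-x,0)\to (X-x',0)$ with
\[
|h(z)-z|\le \int_0^1 |v_X(h_s(z+x))-v|\,ds,
\]
and since $\phi$ preserves $\rho_S$, the intermediate points satisfy $d(h_s(z+x),S)=d(z+x,S)\le |z|\le r$ for $z\in \bou(0,r)\cap(X-x)$.

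Under $(w)$, the Kuo--Verdier estimate $\angle(T_{\pi_S(y)}S,T_yY)\le C|y-\pi_S(y)|$ gives $|v_X(y)-v|\le C|v|\,d(y,S)$, so the integrand above is bounded by $C|v|r$. Therefore $h$ is an $\alpha$-approximation of the identity with $\alpha(r,(x,x'))=C|x-x'|\,r$, and Theorem~\ref{thm_alpha_approx_invariance_des_vol} yields $|\psi(X,x,r)-\psi(X,x',r)|\le C'|x-x'|\,r^{l}$. Dividing by $\hn^l(\bou(0,1))\,r^l$ and letting $r\to 0$ gives the Lipschitz bound $|\theta_X(x)-\theta_X(x')|\le C''|x-x'|$, proving assertion (1).

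For assertion (2), the same construction under only $(b)$ gives merely $|v_X(y)-v|=o(1)$ as $y\to S$, with no controlled rate, so $\alpha(r,(x,x'))/r$ need not remain bounded as $r\to 0$. I would circumvent this by reducing to the one-dimensional case via Curve Selection Lemma (Lemma~\ref{curve_selection_lemma}): if $\theta_X$ failed to be continuous at $x_0\in S$, there would exist an $\varepsilon>0$ and a definable arc $\gamma:(0,\eta)\to S$ with $\gamma(t)\to x_0$ and $|\theta_X(\gamma(t))-\theta_X(x_0)|\ge \varepsilon$. Refining the stratification so that $\gamma((0,\eta))$ is a one-dimensional stratum preserves Whitney's $(b)$ condition between $\gamma$ and the higher-dimensional strata (since the condition on $(Y,\gamma)$ is a restriction of the condition on $(Y,S)$); by Proposition~\ref{pro_cond_reg_dim_1}, $(b)$ then upgrades to $(r)$ along $\gamma$. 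The $(r)$-estimate $\angle(T_{\pi_\gamma(y)}\gamma,T_yY)\cdot |y-x_0|/|y-\pi_\gamma(y)|\to 0$, combined with a scale choice $r$ comparable to $|x-x_0|$ in the flow construction, then produces $\alpha(r,(x,x'))/r\le \omega(|x-x_0|)$ with $\omega(t)\to 0$ as $t\to 0$, hence $|\theta_X(\gamma(t))-\theta_X(x_0)|\le C\,\omega(|\gamma(t)-x_0|)\to 0$, contradicting the choice of $\gamma$.

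The main obstacle is the $(b)$ case: the direct vector-field argument supplies only a qualitative $o(1)$ angle estimate, not a rate comparable to $d(y,S)$. The technical heart of the proof is therefore the passage via Curve Selection to a one-dimensional stratum and the verification that the $(r)$-estimate, coupled with the level-preserving flow and an appropriate choice $r\sim |x-x_0|$, produces an $\alpha$-approximation with $\alpha/r$ tending to $0$ as $x,x'\to x_0$.
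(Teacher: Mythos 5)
Your proposal follows essentially the same strategy as the paper's: construct a stratified vector field near $x_0$ whose flow yields an $\alpha$-approximation of the identity between the germs $(X,x)$ and $(X,x')$, feed this into Theorem~\ref{thm_alpha_approx_invariance_des_vol}, and for $(b)$ reduce to a one-dimensional lower stratum via Curve Selection so that Proposition~\ref{pro_cond_reg_dim_1} upgrades $(b)$ to $(r)$. The vector-field constructions differ in flavor: you lift through the submersion $(\pi_S,\rho_S)$ so the flow preserves $\rho_S$ (a Thom--Mather controlled vector field), whereas the paper uses the explicit field $v(x)=P_x(e_1)/\langle P_x(e_1),e_1\rangle$, which only projects equivariantly onto $S$ and is controlled via Gr\"onwall's inequality. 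Your version gives a cleaner pointwise bound since $d(\phi_s,S)$ is literally constant, while the paper's avoids having to produce (and check smoothness of) a $\rho_S$-preserving lift. Both produce $\alpha(r,\cdot)=C|x-x'|r$ under $(w)$ and are acceptable.

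Two points in your $(b)$ argument deserve scrutiny. First, you invoke Curve Selection on the set $\{x\in S:|\theta_X(x)-\theta_X(x_0)|\ge\varepsilon\}$, but $\theta_X$ is only a $\fln$-function, not a definable one, so that set is not a priori definable and the Lemma does not apply directly. The gap is repairable -- since $\theta_X$ is a $\fln$-function, one can take a cell decomposition of $S$ on which it is continuous on each cell and then test continuity at $x_0$ along one definable arc per adherent cell -- but as written the step is not justified. Second, the remark about ``a scale choice $r\sim|x-x_0|$'' is a red herring (and suggests a misconception): once the $(r)$-condition is combined with \L ojasiewicz's inequality, the estimate $|v(y)-v(\pi(y))|\lesssim d(y,S)/|y|^\mu$ with $\mu<1$ yields $\alpha(r,t)=Ct^{1-\mu}r$, so $\alpha(r,t)/r=Ct^{1-\mu}$ is independent of $r$, and the density bound $|\theta_X(t)-\theta_X(0)|\le Ct^{1-\mu}$ follows directly by letting $r\to 0$ at fixed $t$; no coupling of $r$ to $|x-x_0|$ is needed or meaningful (the density is defined by a limit in $r$ at fixed base point). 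The rest -- preservation of $(b)$ under refining $S$ to a one-dimensional substratum, and the use of Proposition~\ref{pro_cond_reg_dim_1} -- is correct and matches the paper's sketch, which also defers the full detail of the one-dimensional reduction to~\cite{vdensity}.
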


\begin{proof}
[Proof of $(i)$]  For simplicity we will do the proof in the case where the stratification $\Sigma$ is reduced to two strata $S$ and $S'$ with $S \subset cl(S')\setminus S'$. The proof in the general case relies on the same idea and the reader is referred to \cite{vdensity}.  Up to a choice of coordinate system, we may assume $S= \R^k \times \{0\}$ and work nearby the origin.  We will carry out the proof in the case $k=1$. In the case where $S$ has higher dimension, one may apply the same argument to establish that the density is locally Lipschitz with respect to each coordinate of $S$ (which yields the local Lipschitzness of the density on $S$).
Given $t\in S$, define \begin{equation}\label{eq_A_t}
                           A_t:=\{x \in \R^n: (x+t) \in X \},
                          \end{equation}
so that the germ of $A_t$ at the origin is the translation of the germ of $X$ at $t$.

 We claim that, for $t$ and $t'$ in a neighborhood of $0$, there is an $\alpha$-approximation of the identity $h_{t,t'}:A_t \to A_{t'}$ , with $\alpha(r,t,t')= C r|t-t'|$ for some constant $C>0$ (here $A_t$ and $A_{t'}$ are regarded as families parameterized by two parameters $t'$ and $t$, constant with respect to $t'$ and $t$ respectively).

We start by defining a vector field. The desired family of homeomorphisms will then be given by the flow of this vector field. 
 Set   for $x$ in $S'$ \begin{equation}\label{eq_proj}
 v (x):=\frac{P_x(e_1)}{<P_x(e_1),e_1>},                                                                                                                                                                                                                                                                                                                                                                                                                                     \end{equation}
where $P_x$ stands for the orthogonal projection onto $T_x S'$, $e_1$ for the first vector of the canonical basis of $\R^n$, and $<,>$ for the euclidean inner product. Extend $v $ to $S$ by setting $v(x):=e_1$ for $x\in S$. It easily follows from the $(w)$ condition that there is $C>0$ such that for any $x \in S'$ and $t \in S$ sufficiently close to $0$:
\begin{equation}\label{eq_rugueux} |v (x)-v (t)|\leq C |x-t|.\end{equation}
Denote by $\phi $ the local flow of this vector field (defined on each stratum).  By (\ref{eq_proj}), we see that if $\pi:\R^n\to \R\times \{0_{\R^{n-1}}\}$ denotes the orthogonal projection then
\begin{equation}\label{eq_phi_pi}\phi(\pi(x),s)=\pi(\phi(x,s)),\end{equation}
for all $x\in S'$ and $s\in \R$ close to zero. Furthermore, by Gr\"onwall's Lemma,  (\ref{eq_rugueux}) implies  that for $x\in S'$ and $t\in S$ close to $0$, and $s$ positive small:
\begin{equation}\label{eq_gronwall}
  |x-t| \exp (-Cs )\le |\phi (x,s)-\phi (t,s)|\leq  \exp (Cs )|x-t|.
\end{equation}
The first inequality and (\ref{eq_phi_pi}) establish that an integral curve starting at $x\in S'$ may not fall into $S$. The second inequality implies that it stays in a little neighborhood of $S$ if $x$ is chosen sufficiently close to $S$. It also yields that $\phi $ is continuous ($\phi$ being smooth on strata, its continuity just needs to be checked at points of $S$).

Fix now $t$ and $t'$ in $S$ close to $0$,
and let for $x \in A_t$ (i.e., $(x+t) \in X$) close to the origin:
$$
                      h_{t,t'}(x):=\phi (x+t,t'-t)-t'=\phi (x+t,t'-t)-\phi(t,t'-t),
      $$
which belongs to $A_{t'}$. Integrating (\ref{eq_rugueux}), we  see (using (\ref{eq_gronwall}) and the Mean Value Theorem) that there is a constant $C'$ such that:
\begin{equation}\label{eq_h_r_approx}
 |h_{t,t'}(x)-x|\le C' |t-t'|\cdot |x|.
\end{equation}
This establishes that $h_{t,t'}$ is an $\alpha$-approximation of the identity with $\alpha(r,t,t'):=C'r\cdot |t-t'|$.
 By Theorem \ref{thm_alpha_approx_invariance_des_vol}, we get that there is a constant $C''$  independent of $t$ and $t'$ in $S$ such that for $r$ positive small enough:$$|\psi(A_t,r)-\psi(A_{t'},r)|\le C'' r^{l} \cdot |t-t'|,$$
 where $l=\dim X$. This implies:
$$|\theta_X(t)-\theta_X(t')|= |\theta_{A_t}(0)-\theta_{A_{t'}}(0)| \leq C'' |t-t'|,$$
which yields the Lipschitzness of the density in the vicinity of $0$.

\noindent {\it Proof of $(ii)$.}   As the argument is very similar, we will just provide a sketch of proof. We also restrict ourselves to the case of a stratification constituted by only two strata $S$ and $S'$, with $S \subset cl(S')\setminus S'$, assuming $\dim S=1$. Using curve selection lemma, one may actually reduce the proof of the key point (see \cite{vdensity} for more details) to the case where the stratum $S$ is one dimensional. 

 As our problem is local, we may identify $S$ with a neighborhood of the origin in $\R\times \{0_{\R^{n-1}}\}$.  Since $S$ is a one-dimensional stratum, by Proposition \ref{pro_cond_reg_dim_1},  $(S',S)$ satisfies Kuo's $(r)$ condition. By \L ojasiewicz's  inequality, it means that there is a rational number $\mu<1$ such that on $S'$ we have on a neighborhood of the origin:
\begin{equation}\label{eq_proof_density_r_e} \angle (T_{\pi(x)} S,T_x S')\lesssim \frac{|x-\pi(x)|}{|x|^\mu},\end{equation}
 where $\pi$ is the orthogonal projection onto $\R\times \{0_{\R^{n-1}}\} $.
 Let $v$ be the vector field on $X$ defined  in the proof of $(i)$  (see (\ref{eq_proj})).
  Of course, because we no longer assume the $(w)$ condition,  (\ref{eq_rugueux}) might fail. Nevertheless,  (\ref{eq_proof_density_r_e}) ensures that for $x\in X$ close to the origin:
$$|v(x)-v(\pi(x))| \lesssim  \frac{|x-\pi(x)|}{|x|^\mu}.$$
Denote by $\phi$ the flow of this vector field. One may show existence and uniqueness of the integral curves by a similar argument as in the proof of $(i)$ (see (\ref{eq_gronwall})).  We then can define a family of mappings $h_t: A_t \to A_{0}$ (where $A_t$ is as in (\ref{eq_A_t})) by $h_t(x):= \phi(x,-t)$. This mapping is an $\alpha$-approximation of the identity, with $\alpha(r,t):= Ct^{1-\mu} \cdot r$ for some positive constant $C$  (by the same argument as to show (\ref{eq_h_r_approx})). Again using Theorem \ref{thm_alpha_approx_invariance_des_vol}, we derive that for $t$ close to $0$:
$$|\psi(A_t,r)-\psi(A_{0},r)|\le C t^{1-\mu}\cdot r^l $$
 (where again $l=\dim X$), which implies
$$|\theta_X(t)-\theta_X(0)|= |\theta_{A_t}(0)-\theta_{A_{0}}(0)| \leq C t^{1-\mu},$$
which yields the continuity of the density at the origin.
\end{proof}

\begin{exa}\label{exa_densite}
 Let $f(x,y,z):=y^4-(x^2+y^2+z^2)x^4$ and let $X:=\{f\le 0\}$. For $q=(x,y,z)\in \R^3$, we have $\pa_q f= (-6x^5-4(y^2+z^2)x^3,4y^3-2yx^4, -2zx^4)$. The set $X$  can be stratified by  three strata: the  $z$-axis $O_z$, the manifold $S:=\{(x,y,z)\in \R^3:f(x,y,z)=0, (x,y)\ne (0,0)\}$, and the open set $U:=\{f<0\}$. Let us check that this stratification is Whitney $(b)$ regular near the origin. As this is trivial when one stratum is an open set, we just have to check it for the couple  $(S,O_z)$. To prove that it holds at the origin (by Proposition \ref{pro_w_stratifying}, we already know that Whitney's $(b)$ condition holds at $(0,0,z)$ for each $z\ne 0$ small), thanks to Proposition \ref{pro_cond_reg_implications}, it suffices to check the $(r)$ condition at $0$, which amounts to show that
 \begin{equation}\label{eq_r_exa}
 \lim_{q=(x,y,z)\to 0,q\in S} \frac{|\pa f/\pa z(q)| }{|\pa_q f|}\cdot \frac{|q|}{|(x,y)|}=0.
 \end{equation}
 Note that for $q=(x,y, z)\in S$, since $|y|= |q|^{1/2} |x|$, we must have
$$\frac{\pa f}{\pa y}(q) = 4y^3-2yx^4=2y(2y^2-x^4)=2yx^2(2|q|-x^2) ,$$
so that  for $q=(x,y, z)\in S$ close to the origin
$$|\frac{\pa f}{\pa y}(q)  |\sim |y|\,x^2\,|q|= |q|^{3/2} |x|^3, $$
which entails
 $$ \frac{|\pa f/\pa z|}{|\pa f/\pa y|}(q)\sim \frac{|zx^4|}{ |q|^{3/2} |x|^3}\le  \frac{|(x,y)|}{|q|^{1/2}}, $$
 which yields (\ref{eq_r_exa}).
 By the above theorem, we conclude that the density must be continuous along the $z$-axis at the origin. The proof of the above theorem has actually established that it is H\"older continuous with exponent $1-\frac{1}{2}=\frac{1}{2}$.

Let us now regard the set $X$  as a family parameterized by $z$. For every $z$ the fiber $X_z$ is the set $\{ (x,y): |y|\le |(x,y,z)|^{1/2}|x|\}$. For $z\ne 0$, this portion of plane contains the area around the $x$-axis delimited by two lines passing through the origin and making an angle at the origin with the $x$-axis not smaller than $c |z|^{1/2}$ (with $c>0$  independent of $z$ small), which shows that for $z$ small $\theta_X(0,0,z)\ge \eta |z|^{1/2}$, for some $\eta>0$. On the other hand, since the tangent cone of $X$ at the origin has positive codimension, $\theta_X(0,0,0)=0$, which means that $\theta_X(0,0,z)$  is not a Lipschitz function at $0$. Kuo-Verdier's $(w)$ condition does not hold; inequality (\ref{eq_w}) fails along the arcs $\alpha(t)\equiv (0,0, 0)\in O_z$ and $\beta(t)=( t,b(t),t)\in S$, $t>0$ small, where $b(t)$ is a solution of $b^4-t^4b^2-2t^6=0$.

We gave a three strata example, although we only did the proof in the two strata case. A revolution of $S$ about the $(x,z)$-plane however provides (together with the $z$-axis) a two strata example (in $\R^4$) that has the same properties. It is also worthy of notice that $\Sigma:=\{O_z\times \R, S\times \R, U\times \R\}$ is a Whitney $(b)$ regular stratification of $X\times \R $ (near the origin) which is not $(r)$ regular. The above theorem nonetheless still ensures the continuity (not necessarily H\"older's) of the density  along the strata of $\Sigma$ near $0$.
\end{exa}

\section{Stokes' formula}\label{sect_stokes}
   We end this chapter by proving Stokes' formula on globally subanalytic (possibly singular) sets (Theorem \ref{thm_stokes_leaves}). The formula that we will give applies to a large class of differential forms, called {\it stratified forms}.  These differential forms are not necessarily continuous but are locally bounded (Proposition \ref{pro_bounded}). What makes them attractive is that the pull-back of a stratified form via a definable Lipschitz (not necessarily differentiable) map is a stratified form (see Definition \ref{dfn_refinement}).


\noindent {\bf Stratified forms.} If $\omega$ is a differential $k$-form on a submanifold $S\subset \R^n$, we denote by $|\omega(x)|$ the norm of the linear form $\omega(x):\otimes^k T_x S \to \R$, where $S$ is equipped with the Riemannian metric inherited from the ambient space. We denote by $d\omega$ the exterior differential of $\omega$. \nomenclature[crm]{$d\omega$}{exterior differential of a differential form $\omega$\nomrefpage}

\begin{dfn}\label{dfn_stratified_form}
Let $X\in \s_n$ and let $\Sigma$ be a  stratification of $X$. 

A {\bf stratified differential $0$-form on $(X,\Sigma)$}\index{stratified form} is a collection of functions $\omega_S:S\to \R$, $S \in \Sigma$, that glue together into a continuous function on $X$.  

A {\bf stratified differential $k$-form on $(X,\Sigma)$}, $k>0$, is a collection $(\omega_S)_{S \in \Sigma}$  where, for every $S$, $\omega_S$ is a continuous differential $k$-form on $S$ such that for any $(x_i ,\xi_i)\in \otimes^k  TS$, with $x_i$ tending to $x\in S'\in \Sigma$ and $\xi_i$ tending to $\xi \in \otimes^k  T_xS'$, we have $$\lim \omega_S(x_i,\xi_i)=\omega_{S'}(x,\xi).$$ 

The {\bf support of a stratified form}\index{support of a stratified form} $\omega$ on $(X,\Sigma)$ is the closure in $X$ of the  set
$$\bigcup_{S\in \Sigma}\{x\in S: \omega_S(x)\ne 0 \}.$$
When this set is compact, $\omega$ is said to be {\bf compactly supported}\index{compactly supported}.

 We say that a stratified form $\omega=(\omega_S)_{S \in \Sigma}$ is {\bf differentiable}  if $\omega_S$ is $\ccc^1$ for every $S\in \Sigma$ and if $d\omega:=(d\omega_S)_{S\in \Sigma}$ is a stratified form.
\end{dfn}

\begin{pro}\label{pro_bounded} Let $(X,\Sigma)$ be a stratified subset of $\R^n$ and let
$\omega=(\omega_S)_{S \in \Sigma}$ be a stratified form.  If the support of $\omega$ is closed (in $\R^n$) then, for every $S\in \Sigma$, $|\omega_S(x)|$ is bounded on every bounded subset of $S$.
\end{pro}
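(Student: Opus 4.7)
The plan is to argue by contradiction, using the ``closed support'' hypothesis to force a limit point into some stratum $S'\in\Sigma$, and then exploit the gluing condition of Definition~\ref{dfn_stratified_form} via a rescaling trick.

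First I would suppose $K\subset S$ is bounded and that $|\omega_S|$ is not bounded on $K$. Then there is a sequence $(x_i)\subset K$ with $|\omega_S(x_i)|\to\infty$; in particular $\omega_S(x_i)\ne 0$ for $i$ large, so each $x_i$ lies in the support of $\omega$. Since $K$ is bounded, after passing to a subsequence $x_i\to x\in \R^n$; because the support is closed in $\R^n$, the point $x$ lies in the support, hence in $X$, hence in some stratum $S'\in\Sigma$ (necessarily distinct from $S$, since continuity of $\omega_S$ on $S$ would forbid blow-up at an interior limit point of $S$).

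The case $k=0$ is immediate: the stratified $0$-form is a continuous function on $X$, so $\omega_S(x_i)\to\omega_{S'}(x)$, contradicting $|\omega_S(x_i)|\to\infty$. For $k\ge 1$, I would argue as follows. By definition of the operator norm, for each $i$ there exist unit vectors $\xi_i^{(1)},\dots,\xi_i^{(k)}\in T_{x_i}S$ with
\[
\bigl|\omega_S(x_i)\bigl(\xi_i^{(1)},\dots,\xi_i^{(k)}\bigr)\bigr|\;\ge\;\tfrac{1}{2}|\omega_S(x_i)|.
\]
Set $c_i:=|\omega_S(x_i)|^{1/(2k)}$ and rescale by defining $\eta_i^{(j)}:=\xi_i^{(j)}/c_i\in T_{x_i}S$. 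Since $c_i\to\infty$ and $|\xi_i^{(j)}|=1$, we have $\eta_i^{(j)}\to 0$ in $\R^n$; the zero vector is of course an element of $T_xS'$, so the hypothesis of the gluing condition is satisfied (with $\xi=0\in\otimes^k T_xS'$). By the multilinearity of $\omega_{S'}(x)$, $\omega_{S'}(x)(0,\dots,0)=0$, so the gluing condition forces
\[
\omega_S(x_i)\bigl(\eta_i^{(1)},\dots,\eta_i^{(k)}\bigr)\;\longrightarrow\;0.
\]
On the other hand, by multilinearity,
\[
\bigl|\omega_S(x_i)\bigl(\eta_i^{(1)},\dots,\eta_i^{(k)}\bigr)\bigr|
=\frac{\bigl|\omega_S(x_i)\bigl(\xi_i^{(1)},\dots,\xi_i^{(k)}\bigr)\bigr|}{c_i^{k}}
\;\ge\;\frac{|\omega_S(x_i)|}{2\,|\omega_S(x_i)|^{1/2}}
=\tfrac{1}{2}|\omega_S(x_i)|^{1/2}\;\longrightarrow\;\infty,
\]
a contradiction.

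There is no real obstacle here beyond recognizing the rescaling trick: the only subtle point is ensuring that the limit point $x$ of $(x_i)$ actually belongs to some stratum, which is precisely what the closed-support hypothesis delivers, and then using $0\in T_xS'$ together with multilinearity as the ``test vector'' in the gluing condition.
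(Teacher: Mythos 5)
Your proof is correct and uses essentially the same rescaling idea as the paper: the paper's own argument also extracts a bounded blow-up sequence, uses the closed-support hypothesis to locate the limit point in some stratum $S'$, and then normalizes the arguments so that they tend to $0\in\otimes^k T_x S'$, after which the gluing condition of Definition~\ref{dfn_stratified_form} forces a contradiction. The only cosmetic difference is the normalization: the paper divides the whole $k$-tensor $\xi_i$ by the scalar $\omega_S(x_i,\xi_i)$, making the value identically $1$ while the tensor tends to $0$, whereas you divide each factor by $|\omega_S(x_i)|^{1/(2k)}$, making the tensor tend to $0$ while the value tends to $\infty$; both clash with the gluing condition in the same way.
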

\begin{proof}
 If $\omega$ is a  $0$-form, this is clear since $\omega_S$ is the restriction of a continuous function on $X$. Take a stratified $k$-form  $\omega$ that has closed support (in $\R^n$), with $k>0$, and let us assume that the result fails for $\omega$. It means that there is a bounded  sequence  $(x_i ,\xi_i)\in \otimes^k  TS$, $S \in \Sigma$, such that $\omega_S(x_i,\xi_i)$  goes to infinity. Since $x_i $ is a bounded sequence, extracting a subsequence if necessary, we may assume that it is convergent to some element $x\in S'$, $S' \in \Sigma$ (the point $x$ must belong to $X$ for $\omega_S$ is zero near $fr(X)$). Let
 $$\xi'_i:=\frac{\xi_i}{\omega_S(x_i,\xi_i)},$$ so that $\omega_S(x_i,\xi_i')=1$, for all $i$. 
As $\omega_S(x_i,\xi_i)$ is going to infinity and $\xi_i$ is bounded,  $\xi'_i$  goes to zero.    As $\omega$ is a stratified form, this implies that we must have $$\lim \omega_S(x_i,\xi_i')=\omega_{S'}(x,0)=0,$$ in contradiction with $\omega_S(x_i,\xi_i')\equiv1$.
\end{proof}

Given a stratification $\Sigma$, we denote by $\Sigma^{(k)}$\nomenclature[cs]{$\Sigma^{(k)}$}{set of  $k$-dimensional strata of  $\Sigma$\nomrefpage} the collection of all the strata of $\Sigma$ of dimension $k$, and   by $\cup \Sigma^{(k)}$\nomenclature[csa]{$\cup\Sigma^{(k)}$}{union of all the $k$-dimensional strata of $\Sigma$\nomrefpage} the union of all the elements of $\Sigma^{(k)}$.

\begin{dfn}\label{dfn_refinement}
 Let $\omega=(\omega_S)_{S \in \Sigma}$ be a stratified form, $\Sigma'$ be a refinement of $\Sigma$, and take $T \in \Sigma'$. By definition of refinements, there is a unique  $S \in \Sigma$ which contains $T$. Let $\omega_T$ denote the differential form induced by $\omega_S$ on $T$. It is a routine to check that $\omega':=(\omega_T)_{T\in \Sigma'}$ is also a stratified form. We then say that $\omega'$ is a {\bf refinement of $\omega$}\index{refinement! of a stratified form}.

Given  a horizontally $\ccc^1$ stratified mapping $F:(X,\Sigma_1)\to (Y,\Sigma_2)$ and a stratified form  $\omega=(\omega_S)_{S\in \Sigma_2}$  on $(Y,\Sigma_2)$, let us define the {\bf pull-back of the stratified form $\omega$} \index{pull-back of a stratified form} under $(F,\Sigma_1,\Sigma_2)$ as $$F^*\omega:=  (F_{|S}^* \omega_{S'})_{S\in \Sigma_1},$$
where $F^*_{|S} \omega_{S'}$ stands for the pull-back of the differential form $\omega_{S'}$ under the smooth mapping $F_{|S}:S\to S'$, $S\in \Sigma_1, S'\in \Sigma_2$, induced by $F$ on $S$ (see Definition \ref{dfn_stratified_mapping}).
Since 
 $F$ is horizontally $\ccc^1$,
it is easily checked from the definitions that $F^*\omega$ is then also a stratified form. 

  If a  definable mapping (not necessarily smooth)  $h:X\to Y$ is locally Lipschitz  with respect to the inner metric then, by  Proposition \ref{pro_h_hor_C1}, there are stratifications $\Sigma_1$ and $\Sigma_2$ of $X$ and $Y$ respectively that make of $h$ a horizontally $\ccc^1$ stratified mapping. Moreover, if $(\omega, \Sigma)$ is a stratified form, for some stratification $\Sigma$ of $Y$, then we can require $\Sigma_2$ to be compatible with $\Sigma$, so that $\omega$ induces a stratified form on $\Sigma_2$, that we can pull-back by means of $h$ as explained just above. In other words,
  every stratified form on $Y$ can be pulled-back to a stratified form on $X$.
In particular, if $Y$ is a manifold (that we can endow with the one-stratum stratification), then every smooth form on $Y$ can be pulled-back to a stratified form by such a mapping $h$.

\noindent {\bf Integration of stratified forms.} Let $(X,\Sigma)$ be a stratified set, $X \in \St_n$.
Take a compactly supported stratified $k$-form  $\omega=(\omega_S)_{S \in \Sigma}$ on $(X,\Sigma)$, $k\in \N$, and let $Y\subset X$ be a definable subset  of dimension $k$  such that $Y_{reg}$ is oriented.
We are going to define the integral of $\omega$ on $Y$, denoted $\int_Y \omega$. 

 Let $\Sigma'$ be a refinement of $\Sigma$ compatible with $Y_{reg}$.  This refinement inducing a refinement $\omega'$ of $\omega$ (as explained  just above),
 we may naturally set $$\int_Y \omega:=\sum_{S\in \Sigma'^{(k)}, S\subset Y  }\int_S \omega_S, $$
where every stratum is endowed with the orientation induced by $Y_{reg}$. That $\omega_S$ is integrable on $S$ follows from the fact that $\omega$ is compactly supported (and hence bounded by Proposition \ref{pro_bounded}).

 If  $\Sigma''$ is another  refinement of $\Sigma$ compatible with $Y_{reg}$  then  $Y_{reg}\cap  (\cup \Sigma'^{(k)})$ and $Y_{reg}\cap( \cup \Sigma''^{(k)})$ coincide outside a set of dimension less than $k$ (which is thus $\hn^k$-negligible)  so that
$$\sum_{S\in \Sigma'^{(k)}, S\subset Y  }\int_S \omega_S=\sum_{S\in \Sigma''^{(k)}, S\subset Y }\int_S \omega_S\;,$$
 which shows that the integral is independent of the chosen refinement. 
In the case where $\dim Y<k$, we set this integral to be zero.

\medskip

Given $X \subset \R^n$,  a {\bf definable singular $k$-simplex of $X$}\index{singular simplex} will be a continuous  definable mapping $\sigma: \Delta_k \to X$, $\Delta_k$\nomenclature[ct]{$\Delta_k$}{oriented standard simplex of $\R^k$\nomrefpage} being the $k$-simplex spanned by
$0,e_1,\dots,e_k$, where $e_1,\dots, e_k$ is the canonical basis of
$\R^k$.    We denote by $C_k(X)$\nomenclature[cu]{$C_k(X)$}{singular definable $k$-chains of $X$ with coefficients in $\R$\nomrefpage}
the vector space of definable singular $k$-chains of $X$, i.e., finite linear combinations (with real coefficients) of  singular definable simplices of $X$, and by
 $\partial c$  the boundary of a chain $c$.


 \medskip
 
\noindent {\bf Integration on definable singular simplices.} 
Let again $(X,\Sigma)$ be a stratified set, $X \in \St_n$, and let 
$\omega=(\omega_S)_{S \in \Sigma}$ be a stratified $k$-form on $X$.

 We are going to define the integral of  $\omega$ over an oriented definable singular simplex $\sigma:\Delta_k \to X$. As $\sigma$ is  definable, by Proposition \ref{pro_strat_mapping}, there exist stratifications $\hat{\Sigma}$ of $\Delta_k$ and $\Sigma'$ of $X$ such that for any $S$ in $\hat{\Sigma}$ there is $S' \in \Sigma'$ such that the mapping $\sigma_{|S}:S\to S'$, induced by the  restriction of $\sigma$, is a $\ccc^\infty$  submersion. Moreover, we may assume that $\Sigma'$ is a refinement of $\Sigma$, and hence  that $\omega$ is  a stratified form on $(X,\Sigma')$.

Let us make a point that $(\sigma_{|S}^*\omega_{S'})_{S \in \hat{\Sigma}}$ is not necessarily a stratified form on $\Delta_k$. In particular, $\sigma_{|S}^*\omega_{S'}$ is not necessarily bounded for all $S'$ ($\sigma$ is not assumed to have bounded first derivative). We  however can prove that it is  $L^1_{\hn^k}$, as follows. If $\dim S'<k$ then $\omega_{S'}=0$ and this is clear. Otherwise, $\sigma_{|S}:S\to S'$ is a local diffeomorphism (it is a submersion), and it
follows from existence of cell decompositions (applied  to the graph of $\sigma$) that there is a negligible definable subset $Z$ of $S $ such that $S\setminus Z$ can be covered by finitely many definable open subsets $U_1,\dots,U_l$ on which $\sigma$ induces a diffeomorphism onto its image. Then  $|\sigma_{|S}^*\omega_{S'}| =|\omega_{S'}\circ \sigma| \cdot \jac \sigma$ on each $U_i$, which is $L^1$, since $\omega_{S'}$ is bounded. We now set:
$$\int_\sigma \omega=\sum_{S\in \hat{\Sigma}^{(k)}}\; \int_{S}\; \sigma^*_{|S}\omega_{S'}.$$
Again, since the manifold $\cup \hat{\Sigma}^{(k)}$ is independent of the stratification $\hat{\Sigma}$ up to a negligible set, this definition is clearly independent of the chosen stratifications. 
 The integral over a definable chain $c \in C_k(X)$ is then defined naturally.\end{dfn}

\noindent {\bf Stokes' formula for stratified forms.} Our Stokes' formulas, stated in Theorems \ref{thm_stokes_leaves} (on definable weakly normal manifolds) and  \ref{thm_stokes} (on definable singular simplices), require 
  some preliminaries. The main difficulty of extending this formula to manifolds that admit singularities within their closure is   that there is no nice notion of boundary in this framework. This motivates the following definitions.

Given a  $\ccc^0$ submanifold $M$ of $\R^n$, we define {\bf the $\ccc^i$ boundary} as\index{ci boundary@$\ccc^i$ boundary}\nomenclature[cv]{$\pai M$}{$\ccc^i$ boundary of $M$ \nomrefpage}:
$$\pai M:=\{x\in fr(M): (cl(M),fr(M)) \mbox{ is a $\ccc^i$ manifold with boundary at $x$}\}.$$
The manifold $M$ will be said to be {\bf weakly normal} if there is a definable set $E\subset fr(M)$ along which $M$ is connected  (see Definition \ref{dfn_normal}) and such that $\dim (fr(M)\setminus E)\le \dim M-2$.

Of course, every normal manifold is  weakly normal.  An interesting feature of weakly normal definable manifolds is the following property.

\begin{lem}\label{lem_C_1_manifold_with_boundary}
Let $M$ be a  definable $\ccc^i$ manifold, $i=0$ or $1$. If $M$ is weakly normal then $\dim (fr(M)\setminus \pai M)\le \dim M-2$. 
\end{lem}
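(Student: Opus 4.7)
The plan is to deduce the statement from a Whitney regular stratification of $cl(M)$, reducing the problem to analyzing codimension-one strata contained in $E$.

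First I would apply Propositions \ref{pro_existence}, \ref{pro_w_stratifying}, and \ref{pro_frontier_condition_whitney} to take a Whitney $(b)$-regular stratification $\Sigma$ of the locally closed set $cl(M)$ with connected strata, compatible with the definable partition $\{M, E, fr(M)\setminus E\}$ and satisfying the frontier condition. Write $d=\dim M$. The strata of $\Sigma$ lying in $fr(M)$ have dimension at most $d-1$: those of dimension at most $d-2$ contribute only to a set of dimension $\le d-2$, and any $(d-1)$-dimensional stratum of $\Sigma$ contained in $fr(M)\setminus E$ would contradict the bound $\dim(fr(M)\setminus E)\le d-2$ provided by weak normality. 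It thus suffices to prove that every $(d-1)$-dimensional stratum $S\subset E$ is entirely contained in $\pai M$.

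Fix such an $S$ and a point $x\in S$. By the frontier condition $S$ lies in the closure of some $d$-dimensional stratum $T\subset M$; since $M$ is connected at $x$ (because $x\in E$) and the $d$-dimensional strata of $\Sigma$ contained in $M$ are open and pairwise disjoint, only one such $T$ is locally adjacent to $x$, so that $M\cap U=T\cap U$ for a sufficiently small neighborhood $U$ of $x$. The couple $(T,S)$ therefore satisfies Whitney's $(b)$ condition at every point of $S$.

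For $i=0$ I would invoke Corollary \ref{cor_stra_bili_triv} to refine $\Sigma$ into a locally definably bi-Lipschitz trivial stratification along $S$ at $x$. The resulting trivialization identifies a neighborhood of $x$ in $cl(M)$ with $(cl(M)\cap \pi_S^{-1}(x))\times W$ for some neighborhood $W$ of $x$ in $S$. As $\dim T=\dim S+1$, the slice $cl(M)\cap\pi_S^{-1}(x)$ has dimension $1$ with $x$ in its closure, and the connectedness of $M$ at $x$ forces it to be a single definable arc ending at $x$, hence homeomorphic to $[0,\ep)$. Composing the trivialization with this homeomorphism yields a $\ccc^0$ chart of $(cl(M),fr(M))$ at $x$, so that $x\in\pa^0 M$.

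For $i=1$ the plan is to use Proposition \ref{pro_whitney_pi_rho_subm}: on a small neighborhood $U$ of $x$, the map $(\pi_S,\rho_S)_{|T\cap U}:T\cap U\to S\times\R$ is a submersion, hence a local $\ccc^1$ diffeomorphism by dimension equality, and the connectedness of $T$ at $x$ promotes it to a $\ccc^1$ diffeomorphism of $T\cap U$ onto $W'\times(0,\ep)$ for some neighborhood $W'$ of $x$ in $S$. The main obstacle is to upgrade this to a $\ccc^1$ chart of the closed pair $(cl(M),S)$ at $x$, namely to extend the inverse $\ccc^1$-continuously across $W'\times\{0\}$. I would do this by parametrizing each fiber $\pi_S^{-1}(s')\cap T$ by its distance to $S$ starting from $s'$; applying the Preparation Theorem gives a Puiseux expansion of the parametrization along $S$ whose leading exponent must equal $1$ (otherwise the submersivity of $(\pi_S,\rho_S)$ near $S$, together with Whitney's $(a)$ condition, would be violated) and whose leading coefficient $v(s')$ is the unique unit vector spanning the $1$-dimensional complement of $T_{s'}S$ in $\lim_{y\to s'}T_yM$. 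The analyticity of this leading coefficient on suitable cells of $S$ then produces the required $\ccc^1$ extension on a dense open subset of $S$, which is enough since the exceptional set has dimension $\le d-2$.
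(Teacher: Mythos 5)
Your proof is correct and takes a genuinely different route from the paper's. The paper dispatches the $i=0$ case in one line by taking a $\ccc^0$ definable triangulation of $\overline M$ (from Theorem \ref{thm existence des triangulations}): for a $(d-1)$-dimensional open simplex image $\tau\subset E$ (where $d=\dim M$), connectedness of $M$ along $E$ forces exactly one $d$-dimensional open simplex of $M$ to abut $\tau$, giving a $\ccc^0$ collar; since $\dim(fr(M)\setminus E)\le d-2$, no $(d-1)$-simplex of $fr(M)$ lies outside $E$. It then upgrades to $i=1$ via Proposition \ref{pro_puiseux_avec_parametres}, exactly as you do. Your replacement of the triangulation by a Whitney $(b)$-regular stratification with the frontier condition is a legitimate alternative, and for $i=1$ your use of Proposition \ref{pro_whitney_pi_rho_subm} together with Puiseux-with-parameters is essentially the same mechanism as the paper's (you just make explicit the normalization forcing leading exponent $1$).

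Two remarks worth making. First, your $i=0$ step via Corollary \ref{cor_stra_bili_triv} is heavier than necessary: the $(\pi_S,\rho_S)$-submersion argument you set up for $i=1$ already yields the $\ccc^0$ chart (the inverse of $(\pi_S,\rho_S)_{|T\cap U}$ extends by continuity to $W'\times\{0\}$, since $|\Phi(s',r)-s'|=\sqrt r\to 0$), so the two cases could be treated uniformly without invoking locally bi-Lipschitz trivial stratifications (which themselves rest on the metric triangulation theorem). Second, your claim that "only one $T$ is locally adjacent to $x$" needs slightly more care: you rule out multiple $d$-strata, but you must also rule out strata of $\Sigma$ inside $M$ from being adjacent to $x$, since a $(d-1)$-dimensional such stratum could a priori glue two $d$-strata into a connected $M$. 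This is closed by the frontier condition together with Proposition \ref{pro_delta_A}: if a stratum $S''\subset M$ had $x\in cl(S'')$, then $S\subset fr(S'')$ would force $\dim S<\dim S''$, impossible as $\dim S=d-1\ge\dim S''$; and strata of dimension $\le d-2$ do not disconnect the $\ccc^0$ $d$-manifold $M$. With these points supplied, your argument is complete.
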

\begin{proof} If $M$ is weakly normal,  there is a  definable subset  $E\subset fr( M)$ satisfying $\dim (fr(M)\setminus E)\le\dim M-2$, and such that $(cl(M),E)$ is a $\ccc^0$ manifold with boundary near every point of $E$ (this can be shown using a $\ccc^0$  definable triangulation of $\mba$,  applying Theorem \ref{thm existence des triangulations}).  This already shows the result in the case $i=0$. If $i=1$, the result  then easily follows from Proposition \ref{pro_puiseux_avec_parametres}.
\end{proof}

\begin{lem}\label{lem_stokes_manifold}
Let  $(M,\pa M)$ be a  definable $\ccc^1$ manifold with boundary that we stratify by $\Sigma:=\{M\setminus \pa M,\pa M\}$, and let us take a continuous definable function $\rho:M \to [0,+\infty)$, $\ccc^1$ on $M\setminus \pa M$,  satisfying $\rho^{-1}(0)=\pa M$.  For any compactly supported differentiable stratified $(k-1)$-form $\omega$  on $(M,\Sigma)$, $k=\dim M$, we have \begin{equation}\label{eq_lem_stokes_manifold}\lim_{\ep \to 0^+}\; \int_{\rho =\ep} \omega=\int _{\pa  M} \omega .\end{equation}   \end{lem}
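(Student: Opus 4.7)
The plan is to reduce to a local half-space model and exploit the Preparation Theorem to parameterize the level sets of $\rho$. First I would choose a finite cover of $\supp(\omega)$ by charts which straighten $(cl(M), \pa M)$ to the half-space $(\R^{k-1}\times [0,\delta), \R^{k-1}\times \{0\})$, and use a $\ccc^1$ definable partition of unity (Proposition \ref{pro_approx}) to split $\omega$ into pieces supported in these charts plus a piece supported in $M \setminus \pa M$. The latter piece contributes nothing because $\{\rho = \ep\}$ is disjoint from any compact subset of $M \setminus \pa M$ for sufficiently small $\ep$, and $\pa M$ does not meet the support of such a piece. In each local chart, writing coordinates as $(y,t)$ with $\pa M = \{t=0\}$, I decompose $\omega = \alpha(y,t)\wedge dt + \beta(y,t)$, where $\alpha$ and $\beta$ are forms in the $y$-variables of degrees $k-2$ and $k-1$ with coefficients depending on $(y,t)$.

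The stratified form condition forces $\beta(y,t)$ to extend continuously up to $t=0$ with $\beta(y,0) = \omega|_{\pa M}(y)$, while Proposition \ref{pro_bounded} guarantees that the coefficients of both $\alpha$ and $\beta$ are bounded on bounded subsets. I would then apply the Preparation Theorem (Theorem \ref{thm_preparation}) to $\rho(y,t)$, with $t$ as the last variable. Since $\rho \geq 0$ vanishes exactly on $\{t=0\}$, on each maximal-dimensional cell $C$ of the resulting cell decomposition of $\R^{k-1}$ one obtains a reduced representation
\begin{equation*}
\rho(y,t) = a(y)\,t^{r}\,U(y,t), \qquad y\in C,\; t \in (0, \delta'(y)),
\end{equation*}
with $a > 0$ definable and analytic on $C$, $r \in \Q$ positive, and $U$ a unit (the $\la$-translation must be zero, otherwise $\rho$ would fail to vanish at $t=0$). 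Because $\pa_t \rho \sim a\,r\,t^{r-1}\,U > 0$ for $t$ small, $\rho$ is strictly increasing in $t$ near $0$, so for $\ep$ small the set $\{\rho = \ep\}\cap (C\times (0,\delta'))$ is the graph of a $\ccc^1$ function $t_\ep : C \to (0,\delta')$ satisfying $t_\ep(y) \sim (\ep/a(y))^{1/r}$.

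Pulling back $\omega$ to this graph produces
\begin{equation*}
\int_{\{\rho=\ep\}\cap (C\times(0,\delta'))} \omega = \int_{C} \beta(y, t_\ep(y)) + \int_{C} \alpha(y, t_\ep(y)) \wedge d_y t_\ep(y).
\end{equation*}
Implicit differentiation of $\rho(y, t_\ep(y)) = \ep$ combined with the reduced form gives $d_y t_\ep(y) = -(\pa_y \rho / \pa_t \rho)|_{t = t_\ep(y)}$, which is of order $\ep^{1/r}$ on compact subsets of $C$ and therefore tends to $0$ uniformly there. The first integral converges to $\int_{C} \omega|_{\pa M}$ by continuity of $\beta$ at $t=0$ together with dominated convergence, while the second vanishes in the limit because $\alpha$ is bounded and $d_y t_\ep \to 0$ uniformly. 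Summing over the finitely many maximal-dimensional cells of each chart and over the (finitely many) charts then yields $\int_{\pa M}\omega$ in the limit $\ep \to 0^+$.

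The main obstacle will be controlling what happens near the boundaries $\pa C$ of the cells: there, the coefficient $a$ may vanish or blow up and the unit $U$ may lose its uniform bounds, so the graph parameterization and the estimate $d_y t_\ep = O(\ep^{1/r})$ hold only on compact subsets of each $C$. I would handle this by refining the cell decomposition and invoking the uniform measure estimates for definable families (Propositions \ref{borne unif pour les volumes} and \ref{pro_vol_A_t_B_t}) to bound the residual contribution from a thin neighborhood of $\pa C$ uniformly in $\ep$. A secondary point is to verify that the non-graph portion of $\{\rho = \ep\}$ (where $\pa_t \rho$ vanishes or $\rho$ is non-monotonic in $t$) is bounded away from $\pa M$, and hence lies outside $\supp(\omega)$, for $\ep$ sufficiently small.
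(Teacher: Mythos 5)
Your localization via partition of unity and straightened half-space coordinates coincides with the paper's opening move, and your observation that the tangential part of $\omega$ extends continuously to $\{t=0\}$ (with boundedness from Proposition~\ref{pro_bounded}) is exactly the paper's step for the special function $\rho(y,t)=t$. From there, however, you diverge onto a substantially longer route: you try to analyze the level sets of a \emph{general} $\rho$ directly via the Preparation Theorem, whereas the paper observes that the whole dependence on $\rho$ can be eliminated in one stroke by the classical Stokes formula. Namely, for $0<\ep<\ep'$ one has
\begin{equation*}
\int_{\rho=\ep}\omega-\int_{\rho=\ep'}\omega=\int_{\rho^{-1}((\ep,\ep'))}d\omega,
\end{equation*}
and since $d\omega$ is bounded (Proposition~\ref{pro_bounded}) and compactly supported, dominated convergence shows the right-hand side tends to $0$ as $\ep,\ep'\to 0$. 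This proves the limit exists and, by the same computation comparing two different $\rho$'s, that it is independent of $\rho$; one then reduces to the trivial case $\rho=t$. This is the key idea your proposal does not find.

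Beyond that, the gaps you flag in your own argument are real and not as easy to close as the last paragraph suggests. The estimate $d_y t_\ep = O(\ep^{1/r})$ genuinely degrades near $\pa C$: with the reduced form $\rho=a(y)\,t^r\,U$ one gets $d_y t_\ep\sim -\frac{t_\ep}{r}\bigl(\frac{\pa_y a}{a}+\frac{\pa_y U}{U}\bigr)$, and since $a$ may tend to $0$ at $\pa C$, the factor $\pa_y a/a$ is unbounded. (Even on simple examples like $\rho(y,t)=t(y+t)$ on a half-strip, $d_y t_\ep$ does \emph{not} tend to $0$ uniformly, it only stays bounded.) So the contribution of the term $\alpha\wedge d_y t_\ep$ over the thin collar of $\pa C$ cannot be dismissed by ``$d_y t_\ep$ is small times bounded volume''; you would instead need a uniform-in-$\ep$ bound on the $\hn^{k-1}$-measure of the relevant piece of $\{\rho=\ep\}$ near $\pa C$, in the spirit of Lemma~\ref{lem_stokes_mesure}, combined with a two-parameter limit in $\ep$ and the collar width $\delta$. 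Similarly, your ``secondary point'' that the non-graph part of $\{\rho=\ep\}$ avoids $\supp(\omega)$ for small $\ep$ is not automatic near $\pa C$, where the threshold beyond which $\pa_t\rho$ may change sign can itself shrink to $0$. All of this is workable in the definable setting, but it amounts to reproving, in a special case, estimates that the paper's Stokes reduction renders unnecessary; the tradeoff is strongly in favor of the paper's shorter argument.
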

\begin{proof}
Up to a partition of unity we may assume that the support of $\omega$ fits in one chart of $M$ and, up to a coordinate system, we may identify $M$ with $\bou (0_{\R^k},\alpha)\cap \{(x_1,\dots,x_k)\in \R^k:x_k\ge 0\}$, $\alpha >0$. As $\omega$ is a stratified form, the coefficients of the restriction of the multi-linear form $\xi\mapsto \omega(x_1,\dots, x_k)\xi$ to $\otimes^{k-1}\R^{k-1}\times \{0_{\R}\}$ are continuous with respect to $x_k\ge 0$ (and bounded, see Proposition \ref{pro_bounded}). Hence, in the case where  $\rho(x_1,\dots,x_k)= x_k$ for all $(x_1,\dots, x_k)$,  it suffices to pass to the limit inside the integral.  As a matter of fact, it is enough to check that the limit always exists and is independent of the function $\rho$.

 For any function $\rho$  satisfying the assumptions of the lemma, by (the classical) Stokes' formula, we have for relevant orientations and $0<\ep<\ep'$:
$$ \int_{\rho =\ep} \omega- \int_{\rho=\ep'}\omega=\int_{\rho^{-1}((\ep,\ep'))}d\omega.$$
Since $\omega$ is compactly supported and $d\omega$ is bounded (see again Proposition \ref{pro_bounded}), by  Lebesgue's Dominated Convergence Theorem, we see that the right-hand-side goes to zero as $\ep,\ep'\to 0$ (bounded definable sets have finite measure, see Proposition \ref{borne unif pour les volumes}). Consequently, the limit exists for all such function $\rho$.   That the limit is independent of $\rho$ follows from an analogous argument. 
\end{proof}

\begin{lem}\label{lem_stokes_mesure}
	Let $B$ be a definable compact  subset of $ \R^m\times \R^n$.  If  $\dim B_{t}\le l$, for all $\tim$, and $\dim B_{0_{\R^m}} <l$ then there are a constant $C$ and a positive rational number $\theta$ such that for all $\tim$:
	$$\hn^l(B_t)\le C\,|t|^\theta .$$ 
\end{lem}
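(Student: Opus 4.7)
The plan is to combine a \L ojasiewicz-type proximity estimate between $B_t$ and $B_0$ with the uniform tube-volume bound of Proposition \ref{pro_vol_A_t_B_t}. First, I would consider the two continuous definable functions $f(t,y) := d(y,B_0)$ and $g(t,y) := |t|$ on the compact definable set $B$. Since $g^{-1}(0)\cap B = \{0_{\R^m}\}\times B_0$ is contained in $f^{-1}(0)\cap B = B\cap (\R^m \times B_0)$, Corollary \ref{cor_lojasiewicz_inequality} furnishes an integer $N\in\N^*$ and a constant $C_0>0$ with $d(y,B_0)^N \le C_0 |t|$ for every $(t,y)\in B$. Setting $C_1:=C_0^{1/N}$, this is the inclusion
\begin{equation*}
B_t \;\subset\; (B_0)_{\le\, C_1 |t|^{1/N}} \quad \text{for all } t\in \R^m.
\end{equation*}

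Next, I would apply Proposition \ref{pro_vol_A_t_B_t} to the constant definable family $A := \R^m\times B_0\in\s_{m+n}$ (so $A_t=B_0$ for every $t$) together with the given family $(B_t)_{t\in\R^m}$. The hypotheses are met: $\sup_t \mathrm{diam}(A_t) = \mathrm{diam}(B_0) < \infty$ since $B_0$ is bounded, $\dim A_t = \dim B_0 \le l-1 =: k$ by assumption, $\dim B_t \le l$, and $l>k$. The second estimate of the proposition thus yields a constant $C_2>0$ with $\hn^l((B_0)_{\le \ep}\cap B_t) \le C_2\,\ep^{\,l-k} = C_2\,\ep$ uniformly in $t\in\R^m$ and $\ep>0$.

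Specializing $\ep := C_1|t|^{1/N}$ and using the inclusion from the first step closes the argument:
\begin{equation*}
\hn^l(B_t)\;=\;\hn^l\bigl((B_0)_{\le C_1|t|^{1/N}}\cap B_t\bigr)\;\le\; C_1 C_2\,|t|^{1/N},
\end{equation*}
so the lemma holds with $C:=C_1 C_2$ and $\theta:=1/N\in\Q_{>0}$. I do not foresee any genuinely hard step here once the two tools are identified; the only point that deserves care is checking the containment $g^{-1}(0)\cap B\subset f^{-1}(0)\cap B$, which uses precisely that $B$ is closed and that $B\cap (\{0\}\times \R^n)=\{0\}\times B_0$. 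Philosophically, \L ojasiewicz's inequality upgrades the qualitative compactness fact ``$B_t$ accumulates on $B_0$ as $t\to 0$'' into a polynomial rate $|t|^{1/N}$, while Proposition \ref{pro_vol_A_t_B_t} supplies the uniform linear decay of the $\ep$-tube volume, made possible by the strict dimensional drop $\dim B_0<l$ which forces $l-k=1>0$.
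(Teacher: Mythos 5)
Your proof is correct and follows essentially the same route as the paper's: bound the distance from points of $B_t$ to $B_0$ via a \L ojasiewicz estimate, then turn the resulting tube inclusion $B_t\subset (B_0)_{\le C_1|t|^{1/N}}$ into a volume bound using the uniform tube-volume control. The only difference is packaging: where the paper applies Theorem \ref{thm_lojasiewicz_inequality} directly to $f(t):=\sup_{x\in B_t}d(x,B_{0_{\R^m}})$ (checking the arc condition by hand) and then combines Cauchy--Crofton with (\ref{inegalit volume des voisinges}), you route through Corollary \ref{cor_lojasiewicz_inequality} and Proposition \ref{pro_vol_A_t_B_t}, which already encapsulate those steps.
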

\begin{proof}
	Let for $\tim$, $f(t):=\sup_{x\in B_t} d(x,B_{0_{\R^m}})$, with $f(t)=0$ if $B_t$ is empty.
	We wish to apply \L ojasiewicz's inequality (Theorem \ref{thm_lojasiewicz_inequality}) to $f$. We  need for this purpose to check that $f(\gamma(s))$ tends to zero for every definable arc $\gamma:(0,\ep)\to \R^m$ satisfying $\lim_{s\to 0^+} \gamma(s)=0$. If $\gamma$ is such an arc, let $x(s)$ be a definable arc  satisfying $x(s)\in B_{\gamma(s)}$ and $ d(x(s),B_{0_{\R^m}})=f(\gamma(s))$, for every $s$ positive small (if $B_{\gamma(s)}=\emptyset$ then the needed fact is clear).   Because $B$ is compact, the arc $x(s)$ must end at a point of $B_{0_{\R^m}}$, which shows that $d(x(s),B_{0_{\R^m}})=f(\gamma(s))$ tends to zero, as required.
	
	We thus can derive from Theorem \ref{thm_lojasiewicz_inequality} that there are $C>0$ and a positive rational number  $\theta$ such that $f(t)\le C|t|^\theta$.
	Since $B_{t}\subset (B_\orm)_{\le f(t)}$,  we have for  $\tim$ and  $P \in \G_l^n$: $$\hn^l(\pi_P(B_t))\le \hn^l \left(\pi_P(B_\orm)_{\le f(t)}\right) \overset{(\ref{inegalit volume des voisinges})}{\lesssim} f(t) \lesssim |t|^\theta.$$In virtue of Cauchy-Crofton's formula, this yields the desired estimate.
\end{proof}

We first establish Stokes' formula for stratified forms on weakly normal manifolds.
\begin{thm}\label{thm_stokes_leaves}
 Let $M$ be an oriented $k$-dimensional weakly normal definable $\ccc^0$ manifold (without boundary) and let $\Sigma$ be a stratification of $cl(M)$. For any compactly supported differentiable stratified $(k-1)$-form  $\omega$ on $(cl(M),\Sigma)$,   we have:
\begin{equation}\label{eq_stokes_stratified_leaf}
 \int_M d\omega =\int_{fr( M)} \omega,
\end{equation}
where $fr(M)_{reg}$ is endowed with the induced orientation.
\end{thm}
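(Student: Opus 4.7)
The plan is to exhaust $M$ from the interior by definable subsets with $\ccc^1$ boundary, apply the classical Stokes' formula on each approximation, and pass to the limit, using the definable tools developed earlier to control the contribution coming from the low-dimensional part of $fr(M)$. First, refine $\Sigma$ into a Whitney $(b)$ regular stratification compatible with $M$ and satisfying the frontier condition (Propositions \ref{pro_existence}, \ref{pro_w_stratifying}, \ref{pro_frontier_condition_whitney}). Using weak normality together with Lemma \ref{lem_C_1_manifold_with_boundary}, this isolates a closed definable subset $B\subset fr(M)$ of dimension at most $k-2$ such that, away from $B$, the pair formed by $cl(M)$ and $fr(M)$ is a $\ccc^1$ manifold with boundary. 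Let $\rho(x):=d(x,fr(M))$, which is $1$-Lipschitz and definable, and set $M_\ep:=\{x\in M:\rho(x)>\ep\}$. Applying Sard's theorem stratum-by-stratum (the set of critical values of a definable function is finite), for almost every sufficiently small $\ep>0$ the level $\{\rho=\ep\}$ is transverse to each top-dimensional stratum, so $(cl(M_\ep),\{\rho=\ep\})$ is a piecewise $\ccc^1$ manifold with boundary to which classical Stokes applies, giving $\int_{M_\ep}d\omega=\int_{\{\rho=\ep\}}\omega$.

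The left-hand side tends to $\int_M d\omega$ as $\ep\to 0$: $d\omega$ is a stratified $k$-form with closed support, hence bounded on each stratum by Proposition \ref{pro_bounded}, and $\hn^k(M)<\infty$ by Proposition \ref{borne unif pour les volumes}, so dominated convergence applies. For the right-hand side, fix $\delta>0$, set $N_\delta:=\{d(\cdot,B)\le\delta\}$, and split the integral as $I_\delta(\ep)+J_\delta(\ep)$ with $I_\delta(\ep):=\int_{\{\rho=\ep\}\setminus N_\delta}\omega$ and $J_\delta(\ep):=\int_{\{\rho=\ep\}\cap N_\delta}\omega$. Away from $N_\delta$, the pair $(cl(M),fr(M))$ restricted to $\R^n\setminus N_\delta$ is genuinely a $\ccc^1$ manifold with boundary, so a definable $\ccc^1$ partition of unity subordinate to a cover by boundary charts (Proposition \ref{pro_approx}), combined with Lemma \ref{lem_stokes_manifold} applied chartwise, yields $\lim_{\ep\to 0}I_\delta(\ep)=\int_{fr(M)\setminus N_\delta}\omega$.

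The main obstacle is to control $J_\delta(\ep)$ uniformly in $\ep$ as $\delta\to 0$, and this is where definability is decisive. Consider the definable family of sets
$$A_{(\ep,\delta)}:=\{x\in cl(M):\rho(x)=\ep,\; d(x,B)\le\delta\},\qquad (\ep,\delta)\in [0,1]^2,$$
which can be arranged to lie inside a fixed compact ball thanks to the compact support of $\omega$. Every fiber has dimension at most $k-1$, while the fiber at $(\ep,\delta)=(0,0)$ reduces to $B$, of dimension at most $k-2$. Lemma \ref{lem_stokes_mesure} then furnishes constants $C>0$ and a positive rational $\theta$ such that $\hn^{k-1}(A_{(\ep,\delta)})\le C|(\ep,\delta)|^\theta$; since the coefficients of $\omega$ are uniformly bounded on strata (Proposition \ref{pro_bounded}), this implies $\limsup_{\ep\to 0}|J_\delta(\ep)|\le C'\delta^\theta$. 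The same lemma applied at $\ep=0$ also gives $\hn^{k-1}(fr(M)\cap N_\delta)\le C\delta^\theta$, so $\int_{fr(M)\setminus N_\delta}\omega\to\int_{fr(M)}\omega$ as $\delta\to 0$. Taking $\ep\to 0$ in the Stokes identity for $M_\ep$ and then letting $\delta\to 0$ yields (\ref{eq_stokes_stratified_leaf}).
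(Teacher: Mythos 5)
Your overall scheme is the one the paper follows — exhaust $M$ from inside by sublevel sets of a distance-like function, invoke the classical Stokes formula on the approximations, and use weak normality plus Lemma \ref{lem_stokes_mesure} to kill the contribution near a bad set of dimension $\le k-2$. The partition $I_\delta(\ep)+J_\delta(\ep)$ is the same idea as the paper's $\varphi_\delta\omega+\psi_\delta\omega$ decomposition, and the application of Lemma \ref{lem_stokes_mesure} to a two-parameter definable family is exactly the right tool.

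There is however a genuine gap at the very first application of ``classical Stokes'', which the paper takes real care to avoid. You apply it to the pair $(cl(M_\ep),\{\rho=\ep\})$, but nothing in the hypotheses allows this yet: $M$ is only a $\ccc^0$ manifold, so it need not carry a $\ccc^1$ structure; $\omega$ is only a stratified form, hence not a single $\ccc^1$ form on $M$ (it may be made of several pieces $\omega_S$ glued across internal walls of $\Sigma$); and $\rho(x)=d(x,fr(M))$ is merely Lipschitz, so for a given $\ep$ the level set $\{\rho=\ep\}$ is a stratified set rather than a $\ccc^1$ hypersurface. Calling $(cl(M_\ep),\{\rho=\ep\})$ a ``piecewise $\ccc^1$ manifold with boundary to which classical Stokes applies'' is precisely the kind of assertion the theorem is meant to justify, not one you can assume. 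For the same reason your invocation of Lemma \ref{lem_C_1_manifold_with_boundary} with $i=1$ is not available as stated, since $M$ is only known to be $\ccc^0$.

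The paper removes both obstructions before anything else: it takes a $\ccc^0$ definable triangulation of $cl(M)$ compatible with $\Sigma$, $fr(M)$ and a neighborhood of $\supp\omega$, which reduces the statement to the case where $M$ is a single top-dimensional open simplex $h(\sigma)$ (hence a smooth manifold, contained in one stratum, and still weakly normal) and $\omega|_M$ is a genuine $\ccc^1$ form; and it replaces $d(\cdot,fr(M))$ by a $\ccc^1$ definable approximation furnished by Proposition \ref{pro_approx}, so that Sard gives honest $\ccc^1$ level sets. You should insert both reductions before the exhaustion argument (or, for the internal walls, at least state and prove the cancellation across codimension-one walls coming from the compatibility condition in Definition \ref{dfn_stratified_form}, together with an argument that handles the failure of smoothness of $\{\rho=\ep\}$). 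Once these are in place, the rest of your argument — dominated convergence on the left-hand side, the hard cutoff $N_\delta$ replaced if necessary by a smooth one to keep the integrand a stratified form, and the two uses of Lemma \ref{lem_stokes_mesure} — goes through as in the paper.
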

\begin{proof}Take such a stratified form $\omega$ and let $V$ be a definable neighborhood  in $cl(M)$ of the support on $\omega$.
	Let $h:|K|\to cl(M)$ be a $\ccc^0$  definable triangulation such that  $V$, $fr(M)$, as well as the elements of $\Sigma$ are unions of images of open simplices. Note that  if $\sigma\in K$ is such that $\dim \sigma=\dim M$ then $h(\sigma)$ is an open subset of a stratum, and hence a smooth manifold. As $h(cl(\sigma))$ is a $\ccc^0$ manifold with boundary,  $h(\sigma)$ is clearly  weakly normal. Since it is enough to prove the result for the sets $h(\sigma)$, $\sigma\in K$ such that $\sigma\subset h^{-1}(V)$, it means that we can assume that $\omega$ refines a form which is $\ccc^1$ on $M$. Moreover, taking a refinement if necessary we can assume $fr(M)$ to be a union of strata.
	
	By  Proposition \ref{pro_approx}, there is a $\ccc^1$ definable function  $\rho$ on $M$ satisfying $|\rho(x) -d(x,fr(M))|<\frac{d(x,fr(M))}{2}$, which means that $\rho$ is positive and extends continuously (by $0$) on $fr(M)$.
Let us then set
$$T_\ep:=\{x\in M: \rho(x)\geq\ep \} .$$
Note that for $\ep>0$ small enough, by Sard's theorem, $T_\ep$ is a $\ccc^1$ manifold with boundary and $\omega_M$ is a smooth form on it. Thus, 
by the classical Stokes' formula:
$$\int_{T_\ep}d\omega_M = \int_{\pa T_\ep}\omega_M. $$  
As $d\omega_M$ is bounded (by Proposition \ref{pro_bounded}) and compactly supported, it easily follows from Lebesgue's Dominated Convergence Theorem (bounded definable sets have finite measure, see Proposition \ref{borne unif pour les volumes}) that:
$$\lim_{\ep\to 0} \int_{T_\ep}d\omega_M = \int_{M}d\omega_M. $$
We thus only have to show that:
\begin{equation}\label{eq_2}
\limsup_{\ep\to 0} \int_{\pa T_\ep}\omega_M=\int_{fr( M)}\omega .
\end{equation}

As $M$ is  weakly normal, by Lemma \ref{lem_C_1_manifold_with_boundary}, there is a  definable set $E\subset fr( M)$ satisfying $\dim (fr( M)\setminus E)\le \dim M-2$ and such that $cl(M)$ is a $\ccc^1$ manifold with boundary at every point of $fr (M)\setminus E$.
Let $(\varphi_\delta,\psi_{\delta})$ denote a $\ccc^\infty$ partition of unity subordinated to the covering of $\R^n$ constituted by the two open sets $int(E_{\le \delta})$ and $\R^n \setminus E_{\le\frac{\delta}{2}}$ (see section \ref{sect_uniform_bounds} for $E_{\le\delta}$).
As $\omega_M$ is bounded (see Proposition \ref{pro_bounded}) and because $\varphi_\delta$ has support in $E_{\le\delta}$,  we can  write for some $C>0$:
\begin{equation*}\label{eq_proof_stokes_vol}\int_{\pa T_\ep}\varphi_{\delta}\,\omega_M \leq C  \hn^{k-1}(E_{\le \delta} \cap \pa T_\ep).  \end{equation*}
 Thanks to Lemma \ref{lem_stokes_mesure} (applied to the definable family $B_{\delta,\ep}:= E_{\le \delta} \cap \pa T_\ep$, $B_{0,0}:=cl(E)$),
this entails that:
$$\limsup_{\delta\to 0} \limsup_{\ep\to 0} \int_{\pa T_\ep}\varphi_{\delta}\,\omega_M=0.$$
As a matter of fact, since for each $\delta>0$ we have $\omega=\varphi_\delta \omega+\psi _\delta\omega$, equality (\ref{eq_2}) reduces to show that:
\begin{equation}\label{eq_proof_stokes_psi}
 \lim_{\delta\to 0} \lim_{\ep\to 0} \int_{\pa T_\ep}\psi_\delta \,\omega_M=\int_{fr(M)} \omega.
\end{equation}
It follows from Lemma \ref{lem_stokes_manifold} (applied to $\psi_\delta \,\omega$ which induces a compactly supported stratified form on the manifold with boundary $cl(M) \setminus E$) that
$$ \lim_{\ep\to 0} \int_{\pa T_\ep}\psi_\delta\, \omega=\int_{fr( M)\setminus E} \psi_\delta\,\omega=\int_{fr( M)} \psi_\delta\,\omega.$$
 Passing to the limit as $\delta>0$ tends to zero and applying Lebesgue's Dominated Convergence Theorem, we get (\ref{eq_proof_stokes_psi}). 
\end{proof}

%
%

We  now give a version of Stokes' formula on definable singular simplices, which generalizes the classical Stokes' formula for smooth simplices.
\begin{thm}\label{thm_stokes}
Let $(X,\Sigma)$ be a  definable stratified set. If $\omega$ is a differentiable stratified $(k-1)$-form on $(X,\Sigma)$ then we have for all  $c \in C_k(X)$:
$$\int_c d\omega =\int_{\partial c} \omega. $$
\end{thm}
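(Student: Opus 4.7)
By the linearity of both integrals in the chain $c$, the theorem reduces to proving $\int_\sigma d\omega = \int_{\partial\sigma}\omega$ for a single definable singular $k$-simplex $\sigma:\Delta_k\to X$. The strategy is to decompose $\Delta_k$ into strata on which $\sigma$ is smooth, apply a Stokes-type identity on each top-dimensional piece via the truncation argument of Theorem \ref{thm_stokes_leaves}, and verify that the interior contributions cancel in pairs while those coming from $\partial\Delta_k$ reassemble into $\int_{\partial\sigma}\omega$.

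By Proposition \ref{pro_strat_mapping} one obtains stratifications $\hat\Sigma$ of $\Delta_k$ (compatible with the face decomposition of $\partial\Delta_k$) and $\Sigma'$ of $X$ refining $\Sigma$ such that $\sigma:(\Delta_k,\hat\Sigma)\to(X,\Sigma')$ is a stratified mapping. Refining further --- invoking Propositions \ref{pro_existence}, \ref{pro_w_stratifying} and \ref{pro_frontier_condition_whitney}, and, if necessary, a definable triangulation built from Theorem \ref{thm_existence_cell_dec} --- we may assume that $\hat\Sigma$ has connected strata, is Whitney $(b)$ regular, satisfies the frontier condition, each top-dimensional stratum is weakly normal, and every $(k-1)$-stratum $T\in\hat\Sigma^{(k-1)}$ not contained in $\partial\Delta_k$ lies in the frontier of exactly two top-dimensional strata, on opposite sides of $T$. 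For each $S\in\hat\Sigma^{(k)}$ let $S'\in\Sigma'$ be the stratum containing $\sigma(S)$ and set $\alpha_S:=\sigma_{|S}^*\omega_{S'}$, a smooth $(k-1)$-form on $S$ with $d\alpha_S=\sigma_{|S}^*(d\omega)_{S'}$. By the definition of the integral on singular simplices, $\int_\sigma d\omega=\sum_{S\in\hat\Sigma^{(k)}}\int_S d\alpha_S$.

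The core step is to establish, for each $S\in\hat\Sigma^{(k)}$, the local identity
$$\int_S d\alpha_S=\sum_{T\in\hat\Sigma^{(k-1)},\,T\subset fr(S)}\varepsilon(S,T)\int_T(\sigma_{|T})^*\omega_{S''(T)},$$
with $S''(T)\in\Sigma'$ containing $\sigma(T)$ and $\varepsilon(S,T)=\pm 1$ the induced-orientation sign. Since $\sigma$ is only continuous, $\alpha_S$ need not extend to a stratified form on $cl(S)$, so Theorem \ref{thm_stokes_leaves} does not apply directly; we replay its truncation argument. Pick a definable $\ccc^1$ function $\rho_S$ on $S$ vanishing exactly on $fr(S)$ (via Proposition \ref{pro_approx}), apply the classical Stokes formula on $T_\varepsilon^S=\{\rho_S\geq\varepsilon\}$, and let $\varepsilon\to 0^+$. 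The left-hand side converges to $\int_S d\alpha_S$ by Lebesgue's dominated convergence, using the $L^1$-integrability of $\sigma_{|S}^*(d\omega)_{S'}$ --- a consequence of boundedness of $d\omega$ from Proposition \ref{pro_bounded}, the uniform multiplicity bound of Corollary \ref{cor_cc_families}, and the volume bound of Proposition \ref{borne unif pour les volumes}. For the right-hand side, decompose $\partial T_\varepsilon^S$ according to which $(k-1)$-stratum $T$ of $\hat\Sigma$ it accumulates on: near such $T$, the tangent plane $T_x(\partial T_\varepsilon^S)$ converges to $T_yT$ as $x\to y\in T$, so only the tangential derivatives of $\sigma_{|S}$ --- which coincide in the limit with those of $\sigma_{|T}$ --- enter the integrand, yielding convergence to $\int_T(\sigma_{|T})^*\omega_{S''(T)}$ by the stratified form property of $\omega$, together with Lemma \ref{lem_stokes_manifold} and the measure estimates of Proposition \ref{pro_vol_A_t_B_t} controlling the residual pieces; faces $T$ with $\dim S''(T)<k-1$ contribute zero because $\omega_{S''(T)}\equiv 0$.

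Summing the local identities over all $S\in\hat\Sigma^{(k)}$, each $(k-1)$-stratum $T$ not contained in $\partial\Delta_k$ is shared by exactly two top strata with opposite induced orientations, while the limiting form $(\sigma_{|T})^*\omega_{S''(T)}$ is the same on both sides by the argument above, so the two contributions cancel. Only the $(k-1)$-strata lying in $\partial\Delta_k$ survive, and their aggregate equals $\int_{\partial\sigma}\omega$ by the definition of integration on the boundary chain. The main technical difficulty is the pass-to-the-limit on each top-dimensional stratum --- an adaptation of the truncation argument of Theorem \ref{thm_stokes_leaves} to pull-back forms that are only $L^1$ rather than bounded stratified forms on the closure --- made possible here by the fact that the boundary integrals involve only tangential derivatives of $\sigma$ along each limiting $(k-1)$-stratum.
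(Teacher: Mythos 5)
Your proposal identifies the right difficulty but does not actually resolve it, and the gap is genuine. The crucial unproven step is the boundary limit: you assert that since $T_x(\partial T_\varepsilon^S)$ converges to $T_yT$, ``only the tangential derivatives of $\sigma_{|S}$ --- which coincide in the limit with those of $\sigma_{|T}$ --- enter the integrand.'' This is precisely the horizontally $\ccc^1$ property for the stratified mapping $\sigma:(\Delta_k,\hat\Sigma)\to(X,\Sigma')$, and it does not follow from $\sigma$ being merely continuous and definable. Proposition~\ref{pro_h_hor_C1}, the only tool in the paper producing horizontally $\ccc^1$ stratifications, requires $|d_x\sigma|$ to be bounded on relatively compact subsets of $reg(\sigma)$; a definable singular simplex $\sigma\in C_k(X)$ has no such bound. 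Without it, the pull-back $\alpha_S=\sigma_{|S}^*\omega_{S'}$ need not be bounded near $T$, Proposition~\ref{pro_bounded} does not apply to it, and the convergence of $\int_{\partial T_\varepsilon^S}\alpha_S$ is in doubt: even if the tangent planes to $\partial T_\varepsilon^S$ converge to $T_yT$, they do so only up to an error depending on $\varepsilon$, and $d_x\sigma_{|S}$ applied to those approximately tangent directions can blow up as fast as the error is small. The cancellation argument for interior $(k-1)$-strata then also lacks a foundation, since it presupposes that both one-sided boundary limits exist and are equal. In short, you cannot ``replay the truncation argument of Theorem~\ref{thm_stokes_leaves}'' because that argument is built on the hypothesis that the form is a genuine stratified form on the closure, which $\sigma^*\omega$ is not.

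The paper sidesteps all of this by lifting to the graph $\Gamma_\sigma\subset\Delta_k\times\R^n$. The projection $\pi_2:\Gamma_\sigma\to X$ is $1$-Lipschitz (it is a linear projection), so Proposition~\ref{pro_h_hor_C1} applies to $\pi_2$ and yields stratifications making it horizontally $\ccc^1$; thus $\beta:=\pi_2^*\omega$ \emph{is} a stratified form on $\Gamma_\sigma$. Moreover $\Gamma_\sigma$ is homeomorphic to $\Delta_k$ via $\pi_1$, hence a $\ccc^0$ manifold with boundary, hence weakly normal, so Theorem~\ref{thm_stokes_leaves} applies on the nose to $(\Gamma_\sigma,\beta)$ with no delicate boundary analysis. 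One then transports the identity back to $\Delta_k$ via $\pi_1$ (a diffeomorphism on strata), using $\sigma=\pi_2\circ\pi_1^{-1}$. This lift is what makes the pull-back controllable; the approach of staying on $\Delta_k$ and pulling back through the unbounded-derivative map $\sigma$ would need substantial extra work (e.g.\ a Thom-type condition for $\sigma$, or a direct proof of boundedness of the relevant tangential minors), none of which is supplied in your argument.
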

 \begin{proof}  It is of course enough to carry out the proof for one single definable simplex $\sigma:\Delta_k \to X$.
 We denote by $\pi_1:\Gamma_\sigma \to \Delta_k$ and $\pi_2:\Gamma_\sigma \to X$ the natural projections (where $\Gamma_\sigma$ is the graph of $\sigma$). By Proposition \ref{pro_h_hor_C1}, there are stratifications $\tilde{\Sigma}$  of $\Gamma_\sigma$ and $\Sigma'$ of $X$  making of $\pi_2$ a horizontally $\ccc^1$ stratified mapping. We may assume that $\Sigma'$ refines $\Sigma$, and hence that $\omega$ is a stratified form on $(X,\Sigma')$.  Note that  $\beta:=\pi_2^*\omega$ is a stratified form on $(\Gamma_\sigma,\tilde{\Sigma})$, which,   by Theorem \ref{thm_stokes_leaves} (applied with $M$ equal to the interior of the $\ccc^0$ manifold with boundary $\Gamma_\sigma$ and therefore $fr(M)$ equal to the boundary of this manifold), entails that
 \begin{equation}\label{eq_beta_stokes}\int_{\Gamma_\sigma} d\beta=\int_{\pa \Gamma_\sigma} \beta. \end{equation}
 As $\pi_{1|S}$ is a diffeomorphism onto its image for all $S\in \tilde{\Sigma}$ and since $\pi_2\circ \pi_1^{-1}=\sigma$,  we have  $\sigma^*\omega_S=\pi_1^{-1*}\beta_{\tilde{S}}$ and $\sigma^*d\omega_S=d\pi_1^{-1*}\beta_{\tilde{S}}$, for every $S\in \Sigma'$ and $\tilde{S} \in \tilde{\Sigma}$ satisfying $\pi_2(\tilde{S})\subset S$.  Hence,
    $$\int_{\Delta_k} \sigma^*d\omega=\int_{\Delta_k} \pi_1^{-1*}d\beta=\int_{\Gamma_\sigma} d\beta \overset{(\ref{eq_beta_stokes})}=\int_{\pa \Gamma_\sigma}\beta=\int_{\pa \Delta_k}  \pi_1^{-1*}\beta= \int_{\pa \Delta_k}\sigma^*\omega,$$
    yielding the formula claimed in the statement of the theorem.
\end{proof}

\paragraph{Historical notes.} The results about integration of globally subanalytic functions are taken from \cite{lionrolin,lionrolinlog,comtelionrolin} (see also \cite{parucamb}). The density was originally introduced for complex analytic sets by P. Lelong, and studied in the subanalytic category by  K. Kurdyka and G. Raby \cite{kr}. The stability of the measure under $\alpha$-approximations of the identity (Theorem \ref{thm_alpha_approx_invariance_des_vol}) was studied in \cite{vdensity} in order to show the continuity of the density on Whitney stratified sets. This problem was actually investigated earlier by G. Comte in \cite{comte} who established the continuity of the Lelong number under the slightly stronger Kuo-Verdier's $(w)$ condition. Stokes' formula on subanalytic sets was proved by W. Paw\l ucki in \cite{pawluckistokes}. The present form is indeed a variation that was established in \cite{vstokes}.




\cleardoublepage
\phantomsection
\addcontentsline{toc}{chapter}{Bibliography}

\cleardoublepage 
\phantomsection
\addcontentsline{toc}{chapter}{List of symbols}

 \printnomenclature
\cleardoublepage
\phantomsection
\addcontentsline{toc}{chapter}{Index}

\printindex

\end{document}